\tikzstyle{vertex}=[circle, draw, inner sep=0pt, minimum size=6pt]
\theoremstyle{plain}
   \newtheorem{thm}{Theorem}[section]
   \newtheorem{theorem}{Theorem}[section]
   \newtheorem{proposition}[theorem]{Proposition}
   \newtheorem{lemma}[theorem]{Lemma}
   \newtheorem{corollary}[theorem]{Corollary}
   \newtheorem{conjecture}[theorem]{Conjecture}
\theoremstyle{definition}
   \newtheorem{definition}[theorem]{Definition}
   \newtheorem{example}[theorem]{Example}
   \newtheorem{remark}[theorem]{Remark}
\swapnumbers \theoremstyle{theorems}
\def\endproof{\hfill$\square$\medskip}
\begin{document}

\setlength{\baselineskip}{1.2\baselineskip}
\vspace{-.35in}
\title[Positivity for QCA from orbifolds]
{Positivity for quantum cluster algebras from orbifolds}
\author{Min Huang}
\address{
School of Mathematics (Zhuhai) \\
Sun Yat-sen University \\
}
\email{huangm97@mail.sysu.edu.cn}

\date{}

\begin{abstract}
Let $(S,M,U)$ be a marked orbifold with or without punctures and let $\mathcal A_v$ be a quantum cluster algebra from $(S,M,U)$ with arbitrary coefficients and quantization. We provide combinatorial formulas for quantum Laurent expansion of quantum cluster variables of $\mathcal A_v$ concerning an arbitrary quantum seed. Consequently, the positivity for the quantum cluster algebra $\mathcal A_v$ is proved.
\end{abstract}
\maketitle

\tableofcontents

\section{Introduction}

Cluster algebras are commutative algebras introduced by Fomin-Zelevinsky \cite{fz1} around the year 2000. Quantum cluster algebras were later introduced by Berenstein-Zelevinsky, \cite{BZ}. The theory of cluster algebras is related to numerous other fields, including Lie theory, representation theory of algebras, dynamical systems, Teichm\"{u}ller theory, combinatorics, number theory, topology, and mathematical physics.

A cluster algebra is a subalgebra of a rational function field with a distinguished set of generators, called cluster variables. Different cluster variables are related by an iterated procedure, called mutation. Cluster variables are rational functions by construction. In \cite{fz1}, Fomin and Zelevinsky proved that they are Laurent polynomials of initial cluster variables, known as the Laurent phenomenon. Laurent polynomials were proven to have non-negative coefficients, known as positivity, see \cite{LS,GHKK}.

Berenstein-Zelevinsky \cite{BZ} proved that the Laurent phenomenon has a quantum version in the quantum setting. The cluster variables are quantum Laurent polynomials of initial ones. The coefficients, were conjectured to be in $\mathbb N[q^{\pm 1/2}]$, known as positivity conjecture for quantum cluster algebras, where $q$ is the quantum parameter. Kimura and Qin \cite{KQ} proved the positivity conjecture for the acyclic skew-symmetric quantum cluster algebras, and Davison \cite{D} proved this conjecture for the skew-symmetric case. The author \cite{H1} proved the case for quantum cluster algebras from unpunctured orbifolds. There is no noteworthy work
performed for skew-symmetrizable cases in general.

The original motivation of Fomin and Zelevinsky was to provide a combinatorial characterization of the (dual) canonical bases in quantum groups (see \cite{L,K}) and the total positivity in algebraic groups. They conjectured that the cluster structure should serve as an algebraic framework for the study of the ``dual canonical bases" in various coordinate rings and their $q$-deformations. In Particular, they conjectured all cluster monomials belong to the dual canonical bases. This was recently proved in \cite{KKKO,Q,Q1}. Generally, it can be very difficult to write the dual canonical bases explicitly. From this point of view, finding an explicit (quantum) Laurent expansion formula for (quantum)
cluster monomials represents a noteworthy step in research in cluster theory.

Finding an explicit formula for cluster variables attracted a lot of attention. The Caldero-Chapoton map or cluster character is a way to provide the expansion formula in terms of Euler-Poincar\'{e} characteristics, see \cite{CC,CK,CK1,FK,P,PP,PP1,DE,HL,CHL}. Rupel \cite{R} generalized the Caldero-Chapoton map to the quantum case.
However, since Euler-Poincar\'{e} can be negative, the
positivity of the coefficients is not immediately implied. Extensive research has been conducted in the other direction
on (quantum) cluster algebras from surfaces, introduced by Fomin-Shapiro-Thurston \cite{FST}. The
expansion formula is well-studied by Musiker, Schiffler, Thomas, Williams, et al. in
a series of studies, see \cite{ST,S,MS,MSW,Y}. Rupel \cite{R1} gave the explicit quantum Laurent expansion formula for the quantum cluster algebras of type $A$. Canakci-Lampe \cite{CL} gave an explicit quantum Laurent expansion formula for the quantum cluster algebras of type $A$ and of the Kronecker type. The author \cite{H,H1} provided an expansion formula for quantum cluster algebras from unpunctured surfaces (orbifolds). Berenstein-Retakh \cite{BR} deals with a fully noncommutative version of cluster algebras arising from surfaces.

(Quantum) cluster algebras from orbifolds are an essential class of (quantum) cluster algebras. Almost all skew-symmetrizable (quantum) cluster algebras of finite mutation type are in this class, see \cite{FST3}. Using the unfolding method, the positivity for cluster algebras from orbifolds can be deduced from the positivity for the cluster algebras from surfaces, see \cite{FST3}. However, the unfolding method does not provide any information on the $q$-coefficients. Thus, we do not know the quantum Laurent expansion formula for quantum cluster algebras from orbifolds, even that is known for quantum cluster algebras from surfaces. The study solves the positivity conjecture for such a class of quantum cluster algebras by giving an expansion formula of any quantum cluster variable for any quantum cluster. We would generalize the methods in \cite{H2} from the commutative case to the quantum case.

The study is organized as follows. Sections \ref{sec:pre} are devoted to preliminaries. In Subsection \ref{sec:poset}, we recall the key tool partition bijection which is defined in \cite{H} and give a criterion for two partition bijections are inverse to each other, Proposition \ref{Pro-parin}. Subsections \ref{sec:C}, \ref{sec:QC} and \ref{sec:QCO} are preliminaries on (quantum) cluster algebras and (quantum) cluster algebras from orbifolds. In Subsection \ref{sec:OMCP}, we recall orbifold morphism and canonical polygon defined in \cite{BR}. Subsections \ref{sec:P}, \ref{Sec-CPB} are preliminaries on some heart combinatoric notations, snake graphs, perfect matchings, and complete $(T^o,\gamma)$-path.

In Section \ref{Sec-iso}, for an orbifold $\Sigma$, we generalize the result in \cite{MSW} and show that changing the tags at a puncture $p$ gives an isomorphism between two quantum cluster algebras from $\Sigma$, see Proposition \ref{Pro-iso}. To give a Laurent expansion formula for any quantum cluster variables concerning any cluster, by Proposition \ref{Pro-iso}, it suffices to restrict to three cases, see Remark \ref{Rem-3cases}.

In Sections \ref{Sec-threesets} and \ref{sec:VM}, given an ideal triangulation $T^o$, we introduce three classes of lattices for ordinary arcs, one end tagged arcs and two ends tagged arcs, and construct valuation maps on them. These lattices are the index sets for the Laurent expansion formulas and the valuation maps provide the powers of the quantum parameter.

The main results in this paper are given in Section \ref{sec:EFP}. To be precise, we provide Laurent expansion formulas in Theorems \ref{Thm-1}, \ref{Thm-2}, \ref{Thm-M3} and the positivity for this class of quantum cluster algebras, Theorem \ref{thm:P}.

We prove Theorems \ref{Thm-1}, \ref{Thm-2}, \ref{Thm-M3} in Section \ref{sec:PROOF}. Our strategy is the following: for a given tagged arc $\beta$, $\beta^{(q)}$ or $\beta^{(p,q)}$, we first show that the expansion formulas given in Theorems \ref{Thm-1}, \ref{Thm-2}, \ref{Thm-M3} do not depend on the choice of ideal triangulations. Since any two ideal triangulations are connected by a sequence of flips, it suffices to show the expansion formulas given in Theorems \ref{Thm-1}, \ref{Thm-2}, \ref{Thm-M3} are invariant concerning two ideal triangulations related by a flip, see Theorem \ref{thm-sum4}. We then show that the expansion formulas hold for a specially chosen ideal triangulation, see Propositions \ref{prop-equ4}, \ref{lem:step2}, \ref{prop-equ5}, \ref{lem:step3} and \ref{prop:ll}.

Sections \ref{Sec-parcom}, \ref{sec:PB}, \ref{sec-pb}, \ref{sec:CPP} and \ref{sec:TEQ} are devoted to the proof of Theorem \ref{thm-sum4}. The naive idea is the following: for any ideal triangulation $T^o$ and a non-self-folded arc $\alpha$, denote $T'^o=\mu_\alpha(T^o)$ and by $LP(T^o)$ the Laurent expansion formula concerning $T^o$ for a given tagged arc $\beta$, $\beta^{(q)}$ or $\beta^{(p,q)}$. For each term $X$ in $LP(T^o)$, assume that the index of the quantum cluster variable $X_\alpha$ in $X$ is $n(X)$. By the exchange formula of quantum cluster variables, we have $X$ is the sum of $2^{n(x)}$ terms in $LP(T'^o)$ if $n(X)\geq 0$ and the sum of $X$ with $2^{-n(x)}-1$ terms in $LP(T^o)$ is one term in $LP(T'^o)$ if $n(X)<0$. This allows us to give a partition bijection between the terms of $LP(T^o)$ and $LP(T'^o)$, accordingly, we have a partition bijection between the index sets. We realize this in Section \ref{sec-pb}. Sections \ref{Sec-parcom} and \ref{sec:PB} are preliminaries to give the partition bijection in Section \ref{sec-pb}. We prove in \ref{sec:CPP} that has a nice compatible property with the lattice structures in Section \ref{Sec-threesets}. In Section \ref{sec:TEQ}, we deal with the powers of the quantum parameter and coefficients under the partition bijection.

\medskip

{\bf Acknowledgements:}\;
The author would like to thank Prof. Arkady Berenstein, Prof. Vladimir Retakh and Dr. Eugen Rogozinnikov for inspirit discussion. This project is supported by
the National Natural Science Foundation of China (No.12101617).

\newpage

\section{Preliminary and some notation}\label{sec:pre}

\subsection{Poset and partition bijection}\label{sec:poset}

\subsubsection{Poset} Herein, we recall some notation in poset.

\begin{definition}
A binary relation $\leq$ is a \emph{partial order} on a set $\mathcal P$ if it satisfies the following properties.
\begin{enumerate}[(1)]
    \item (reflexivity) $P\leq P$ for all $P\in \mathcal P$;
    \item (antisymmetry) $P\leq Q$ and $Q\leq P$ imply $P=Q$;
    \item (transitivity) $P\leq Q$ and $Q\leq R$ imply $P\leq R$.
\end{enumerate}

Denote $P\geq Q$ if $Q\leq P$. A \emph{poset} is a set with a partial order.
\end{definition}

We say that a poset $\mathcal P$ is \emph{connected} if for any $P,Q\in \mathcal P$ there exists a sequence of elements $P_0=P,P_1,\cdots, P_s=Q$ such that $P_i\leq P_{i+1}$ if $i$ is even and $P_i\geq P_{i+1}$ if $i$ is odd.

\begin{definition} Let $\mathcal P$ be a poset. For any $P,Q\in \mathcal P$, we say that $Q$ \emph{covers} $P$ if $P<Q$ and there is no other element $R$ such that $P<R<Q$.
\end{definition}

\subsubsection{Partition bijection}

\begin{definition}\label{Def-parbi} Let $\mathcal P$ be a finite set.
\begin{enumerate}[$(1)$]
    \item A \emph{partition} of $\mathcal P$ is a finite collection of subsets $\mathcal P_i,i\in I$ such that $\cup_{i\in I} \mathcal P_i=\mathcal P$ and $\mathcal P_i\cap \mathcal P_j=\emptyset$ for any different $i,j\in I$.
    \item Let $\mathcal P, \mathcal P'$ be finite subsets. A \emph{partition bijection} from $\mathcal P$ to $\mathcal P'$ is a bijection from some partition of $\mathcal P$ to some partition of $\mathcal P'$, denoted by $\varphi:\mathcal P\overset{par}{\to} \mathcal P'$.
    \item Let $\varphi:\mathcal P\overset{par}{\to} \mathcal P'$ and $\varphi':\mathcal P'\overset{par}{\to} \mathcal P$ be two partition bijections. We say that $\varphi$ and $\varphi'$ are \emph{inverse} to each other if they are inverse to each other as bijective maps.
\end{enumerate}

\end{definition}

\begin{example}
Let $\mathcal P=\{x,y,x^{-2}y^2,x^{-2}yz,x^{-2}zy,x^{-2}z^2\}$ and $\mathcal P'=\{x'^{-1}y,x'^{-1}z,y,x'^2\}$. Then $\{\{x\},\{y\},\{x^{-2}y^2,x^{-2}yz,x^{-2}zy,x^{-2}z^2\}\}$ is a partition of $\mathcal P$ and $\{\{x'^{-1}y,x'^{-1}z\},\{y\},\{x'^2\}\}$ is a partition of $\mathcal P'$.

One see that $\{x\}\to \{x'^{-1}y,x'^{-1}z\}$, $\{y\}\to \{y\}$ and $\{x^{-2}y^2,x^{-2}yz,x^{-2}zy,x^{-2}z^2\} \to \{x'^2\}$ give a partition bijection from $\mathcal P$ to $\mathcal P'$.
\end{example}

\begin{remark}\label{Rmk-par}
Let $\mathcal P,\mathcal P'$ be finite sets. From Definition \ref{Def-parbi}, to give a partition bijection from $\mathcal P$ to $\mathcal P'$ is equivalent to associating each $P\in \mathcal P'$ with a non-empty subset $\varphi(P)\in \mathcal P'$ such that the following conditions hold.
\begin{enumerate}[(i)]
    \item For any $P,Q\in \mathcal P$, we have either $\varphi(P)\cap \varphi(Q)=\emptyset$ or $\varphi(P)=\varphi(Q)$;
    \item $\bigcup_{P\in \mathcal P}\varphi(P)=\mathcal P'$.
\end{enumerate}
\end{remark}

We will frequently use this equivalence definition throughout this paper. The following result is useful to judge if two given partition bijections are inverse to each other.

\begin{proposition}\label{Pro-parin}
Let $\mathcal P,\mathcal P'$ be finite sets. Let $\varphi:\mathcal P\overset{par}{\to} \mathcal P'$ and $\varphi':\mathcal P'\overset{par}{\to} \mathcal P$ be two partition bijections. Then $\varphi$ and $\varphi'$ are inverse to each other is equivalent to the following condition: for any $P\in \mathcal P,P'\in\mathcal P'$ we have $P'\in \varphi(P)$ if and only if $P\in \varphi'(P')$.
\end{proposition}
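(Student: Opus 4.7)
The plan is to work throughout with the elementwise description of partition bijections given in Remark \ref{Rmk-par}: such a $\varphi$ is encoded by an assignment $P \mapsto \varphi(P) \subseteq \mathcal{P}'$ whose fibers form the source partition of $\mathcal{P}$ (so $P$ and $Q$ lie in a common block iff $\varphi(P) = \varphi(Q)$) and whose distinct values form the target partition of $\mathcal{P}'$. In this language the biconditional to be proved is precisely the equality of the two bipartite relations $R = \{(P,P') : P' \in \varphi(P)\}$ and $R' = \{(P,P') : P \in \varphi'(P')\}$ on $\mathcal{P} \times \mathcal{P}'$. Since $\varphi$ is recovered from $R$ by taking row-slices ($\varphi(P) = \{P' : (P,P') \in R\}$, a block because all $P$ in one $\varphi$-fiber share the same image), the statement $R = R'$ is exactly the symmetric condition relating $\varphi$ and $\varphi'$.

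For the forward direction I would write $\varphi$ explicitly as a bijection between partitions $\{A_i\}_{i \in I}$ of $\mathcal{P}$ and $\{B_j\}_{j \in J}$ of $\mathcal{P}'$ via some index bijection $\sigma : I \to J$, so $\varphi(A_i) = B_{\sigma(i)}$; being inverse to $\varphi'$ means $\varphi'(B_j) = A_{\sigma^{-1}(j)}$. For $P \in A_i$ and $P' \in B_j$, the conditions $P' \in \varphi(P)$ and $P \in \varphi'(P')$ both reduce to the single equation $j = \sigma(i)$, which yields the claimed equivalence.

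For the converse, the key preliminary is showing that the partitions of $\mathcal{P}$ implicit in $\varphi$ and in $\varphi'$ agree (and symmetrically for those of $\mathcal{P}'$), so that ``inverse to each other'' is even meaningful. Fix $P \in \mathcal{P}$ and any $P'_0 \in \varphi(P)$; the $\varphi'$-block of $P$ is then $\varphi'(P'_0)$. Using the hypothesis, $Q \in \varphi'(P'_0) \iff P'_0 \in \varphi(Q)$, and since $P'_0$ already lies in $\varphi(P)$ while distinct $\varphi$-blocks are disjoint, this forces $\varphi(Q) = \varphi(P)$; hence $Q$ lies in the $\varphi$-block of $P$, and the reverse inclusion is immediate. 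Once the partitions match, the same computation shows $\varphi' \circ \varphi$ acts as the identity on each block of $\mathcal{P}$, and by symmetry $\varphi \circ \varphi'$ is the identity on each block of $\mathcal{P}'$.

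There is no genuine analytic or combinatorial obstacle here; the proposition is essentially a tautology once the reformulation in Remark \ref{Rmk-par} is accepted. The only point requiring a little care is the bookkeeping in the converse direction, namely verifying that the two \emph{implicitly chosen} partitions coincide before the compositions $\varphi' \circ \varphi$ and $\varphi \circ \varphi'$ can be unambiguously discussed. This is the one step that does not reduce to pure substitution.
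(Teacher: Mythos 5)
Your proof is correct and follows essentially the same route as the paper: in the converse direction both arguments hinge on the identity $\{Q : \varphi(Q)=\varphi(P)\} = \varphi'(P'_0)$ for $P'_0 \in \varphi(P)$, which shows the source partition of $\varphi$ coincides with the target partition of $\varphi'$ (and symmetrically), after which $\varphi$ and $\varphi'$ are visibly inverse. The paper states this a bit more compactly but the substance of the two arguments is the same.
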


\begin{proof}
The necessity is clear. Now we prove the sufficiency. As $\varphi:\mathcal P\overset{par}{\to} \mathcal P'$ is a partition bijection, $\varphi$ gives a bijection between the partition $\bigsqcup_{P\in \mathcal P}\{Q\mid \varphi(Q)=\varphi(P)\}$ of $\mathcal P$ and the partition bijection $\bigsqcup_{P\in \mathcal P}\varphi(P)$ of $\mathcal P'$. Similarly, $\varphi'$ gives a bijection between the partition $\bigsqcup_{P'\in \mathcal P'}\{Q'\mid \varphi'(Q')=\varphi'(P')\}$ of $\mathcal P'$ and the partition bijection $\bigsqcup_{P'\in \mathcal P'}\varphi'(P')$ of $\mathcal P$. According to the condition, for any $P\in \mathcal P$ and $P'\in \varphi(P)$ we have $\{Q\mid \varphi(Q)=\varphi(P)\}=\varphi'(P')$. Thus the partitions $\bigsqcup_{P\in \mathcal P}\{Q\mid \varphi(Q)=\varphi(P)\}$ and $\bigsqcup_{P'\in \mathcal P'}\varphi'(P')$ of $\mathcal P$ coincide. Similarly, the partitions $\bigsqcup_{P'\in \mathcal P'}\{Q'\mid \varphi'(Q')=\varphi'(P')\}$ and $\bigsqcup_{P\in \mathcal P}\varphi(P)$ of $\mathcal P'$ coincide. The result follows.
\end{proof}

\subsection{Cluster algebra}\label{sec:C} In this subsection, we recall the definitions of cluster algebra and geometric type cluster algebra in \cite{fz1}. Throughout, for any $a\in \mathbb Z$, denote by $[a]_+=max\{0,a\}$. Given a vector ${\bf a}=(a_1,\cdots,a_k)\in \mathbb Z^k$ for some $k\in \mathbb N$, denote by ${\bf a}_+=([a_1]_+,\cdots,[a_k]_+)$ and denote by ${\bf a}_-=(-{\bf a})_+$.

A triple $(\mathbb P,\oplus,\cdot)$ is called a \emph{semifield} if $(\mathbb P,\cdot)$ is an abelian multiplicative group and $(\mathbb P,\oplus)$ is a commutative semigroup such that $``\oplus"$ is distributive with respect to $``\cdot"$. A \emph{tropical semifield} ${\rm trop}(u_1,\cdots,u_l)$ is a semifield freely generated by $u_1,\cdots,u_l$ as abelian groups with $\oplus$ defined by $\prod_ju_j^{a_j}\oplus \prod_ju_j^{b_j}=\prod_ju_j^{min(a_j,b_j)}$. Let $(\mathbb P,\oplus,\cdot)$ be a semifield. The group ring $\mathbb {ZP}$ will be used as \emph{ground ring}. Give an integer $n$, let $\mathcal F$ be the rational functions field in $n$ independent variables, with coefficients in $\mathbb {QP}$.

\begin{definition}

A seed $t$ in $\mathcal F$ consists a triple $(x(t),y(t),B(t))$, where

\begin{itemize}
\item $x(t)=\{x_1(t),\cdots,x_n(t)\}$ such that $\mathcal F$ is freely generated by $x(t)$ over $\mathbb {QP}$.
\item $y(t)=\{y_1(t),\cdots,y_n(t)\}\subseteq \mathbb P$.
\item $B(t)=(b_{ij})$ is an $n\times n$ skew-symmetrizable integer matrix.
\end{itemize}
\end{definition}

\begin{definition}

Given a seed $t$ in $\mathcal F$, for any $k\in [1,n]$, we define the \emph{mutation} of $t$ at the $k$-th direction to be the new seed $t'=\mu_k(t)=(x(t'),y(t'),B(t'))$, where

\begin{itemize}
\item \[\begin{array}{ccl} x_i(t') &=&
\left\{\begin{array}{ll}
x_i(t), &\mbox{if $i\neq k$}, \\
\frac{y_k(t)\prod x_i(t)^{[b_{ik}]_{+}}+\prod x_i(t)^{[-b_{ik}]_{+}}}{(y_k(t)\oplus 1)x_k(t)}, &\mbox{otherwise}.
\end{array}\right.
\end{array}\]
\item \begin{equation}\label{eq-y} \begin{array}{ccl}y_i(t') &=&
\left\{\begin{array}{ll}
y^{-1}_k(t), &\mbox{if $i=k$}, \\
y_i(t)y_k(t)^{[b_{ki}]_{+}}(1\oplus y_k(t))^{-b_{ki}}, &\mbox{otherwise}.
\end{array}\right.
\end{array}\end{equation}
\item $B(t')=(b'_{ij})$ is determined by $B(t)=(b_{ij})$:
\[\begin{array}{ccl} b'_{ij} &=&
\left\{\begin{array}{ll}
-b_{ij}, &\mbox{if $i=k$ or $j=k$}, \\
b_{ij}+[b_{ik}]_{+}[b_{kj}]_{+}-[-b_{ik}]_{+}[-b_{kj}]_{+}, &\mbox{otherwise}.
\end{array}\right.
\end{array}\]
\end{itemize}

\end{definition}

\begin{definition}

A \emph{cluster algebra} $\mathcal A$ (of rank $n$) over $\mathbb P$ is defined by the following steps.
\begin{enumerate}[$(1)$]
\item Choose an initial seed $t_0=(x(t_0),y(t_0),B(t_0))$.
\item Get all the seeds $t$ which are obtained from $t_0$ by iterated mutations at directions $k\in[1,n]$.
\item Define $\mathcal A=\mathbb{ZP}[x_i(t)]_{t,i\in [1,n]}$.
\item $x(t)$ is called a \emph{cluster} for any $t$.
\item $x_i(t)$ is called a \emph{cluster variable} for any $i\in [1,n]$ and $t$.
\item A monomial in $x(t)$ is called a \emph{cluster monomial} for any $t$.
\item $y(t)$ is called a \emph{coefficient tuple} for any $t$.
\item $B(t)$ is called an \emph{exchange matrix} for any $t$.
\end{enumerate}
In particular, when $\mathbb P={\rm Trop}(u_1,\cdots,u_l)$, we say that $\mathcal A$ is of \emph{geometric type}.

\end{definition}

In case $\mathbb P={\rm Trop}(u_1,\cdots,u_l)$, denote by $m=n+l$. For a seed $t$ in $\mathcal F$, $y_j(t)=\prod u_i^{c_{ij}}$ for some integers $c_{ij}$. Thus we can write $t$ as $\widetilde x(t),\widetilde B(t)$, where
\begin{enumerate}[$(1)$]
\item $\widetilde x(t)=\{x_1(t),\cdots,x_n(t),x_{n+1}(t)=u_1,\cdots,x_m(t)=u_l\}$.
\item $\widetilde B(t)=(b_{ij})$ is an $m\times n$ with $b_{ij}=c_{i-n,j}$ for $i\in [n+1,m]$.
\end{enumerate}

Under this convention, the mutation of $t$ at direction $k$ is $t'=\mu_k(t)=(\widetilde x(t'),\widetilde B(t'))$, where

\begin{enumerate}[$(1)$]
\item \[\begin{array}{ccl} x_i(t') &=&
\left\{\begin{array}{ll}
x_i(t), &\mbox{if $i\neq k$}, \\
\frac{\prod x_i(t)^{[b_{ik}]_{+}}+\prod x_i(t)^{[-b_{ik}]_{+}}}{x_k(t)}, &\mbox{otherwise}.
\end{array}\right.
\end{array}\]
\item $\widetilde B(t')=(b'_{ij})$ is determined by $\widetilde B(t)=(b_{ij})$:
\[\begin{array}{ccl} b'_{ij} &=&
\left\{\begin{array}{ll}
-b_{ij}, &\mbox{if $i=k$ or $j=k$}, \\
b_{ij}+[b_{ik}]_{+}[b_{kj}]_{+}-[-b_{ik}]_{+}[-b_{kj}]_{+}, &\mbox{otherwise}.
\end{array}\right.
\end{array}\]
\end{enumerate}

\subsection{Quantum cluster algebra}\label{sec:QC} In this subsection, we recall the definition of quantum cluster algebra in \cite{BZ}.

Fix two integers $n\leq m$. Let $\widetilde B$ be an $m\times n$ integer matrix. Let $\Lambda$ be an $m\times m$ skew-symmetric integer matrix. We call $(\widetilde B,\Lambda)$ \emph{compatible} if $\widetilde B^T \Lambda=(D\;\;0)$ for some diagonal matrix $D$ with positive entries, where $\widetilde B^T$ is the transpose of $\widetilde B$. Note that in this case, the upper $n\times n$ submatrix of $\widetilde B$ is skew-symmetrizable and $\widetilde B$ is full rank.

\begin{definition}

Let $v$ be the quantum parameter.

\begin{enumerate}[$(1)$]
    \item A \emph{quantum seed} $t$ consists a compatible pair $(\widetilde B(t),\Lambda(t))$ and a collection of indeterminate $X_i(t),i\in [1,m]$, called \emph{quantum cluster variables}, $\widetilde B(t)$ is called \emph{extended exchange matrix}, $\Lambda(t)$ is called \emph{quantum commutative matrix}.
    \item Let $\{{\bf e}_i\}$ be the standard basis of $\mathbb Z^m$ and $X(t)^{{\bf e}_i}=X_i(t)$. Define the corresponding \emph{quantum torus} $\mathcal T(t)$ to be the algebra which is freely generated by $X(t)^{{\bf a}},{\bf a}\in \mathbb Z^m$ as $\mathbb Z[v^{\pm 1}]$-module, with multiplication on these elements defined by
    \begin{equation*}
        X(t)^{{\bf a}}X(t)^{{\bf b}}=v^{\Lambda(t)({\bf a},{\bf b})}X(t)^{{\bf a}+{\bf b}},
    \end{equation*}
where $\Lambda(t)(,)$ means the bilinear form on $\mathbb Z^m$ such that
\begin{equation*}
    \Lambda(t)({\bf e}_i,{\bf e}_j)=\Lambda(t)_{ij}.
\end{equation*}

\end{enumerate}

\end{definition}

\begin{definition}

For any $k\in [1,n]$, we define the \emph{mutation} of $t$ at the $k$-th direction to be the new seed $t'=\mu_k(t)=((X_i(t')_{i\in [1,m]}), \widetilde B(t'),\Lambda(t'))$, where
\begin{enumerate}[$(1)$]
\item $X_i(t')=X_i(t)$ for $i\neq t$,
\item $X_k(t')=X(t)^{-{\bf e}_k+\sum_i[b_{ik}]_{+}{\bf e}_i}+X(t)^{-{\bf e}_k+\sum_i[-b_{ik}]_{+}{\bf e}_i}$.
\item $\widetilde{B}(t')=(b'_{ij})$ is determined by $\widetilde B(t)=(b_{ij})$ such that
\[\begin{array}{ccl} b'_{ij} &=&
\left\{\begin{array}{ll}
-b_{ij}, &\mbox{if $i=k$ or $j=k$}, \\
b_{ij}+[b_{ik}]_{+}[b_{kj}]_{+}-[-b_{ik}]_{+}[-b_{kj}]_{+}, &\mbox{otherwise}.
\end{array}\right.
\end{array}\]
\item $\Lambda(t')$ is skew-symmetric and satisfies:
\[\begin{array}{ccl} \Lambda(t')_{ij} &=&
\left\{\begin{array}{ll}
\Lambda(t)_{ij}, &\mbox{if $i,j\neq k$}, \\
\Lambda(t)({\bf e}_i,-{\bf e}_k+\sum_{l}[b_{lk}]_{+}{\bf e}_l), &\mbox{if $i\neq k=j$},
\end{array}\right.
\end{array}\]
\end{enumerate}

\end{definition}

One see that $(\widetilde B(t'), \Lambda(t'))$ is compatible since $\widetilde B^T(t)\Lambda(t)=\widetilde B^T(t')\Lambda(t')$. The quantum torus $\mathcal T(t')$ for the new seed $t'$ can be defined similarly.

\begin{definition}

A \emph{quantum cluster algebra} $\mathcal A_v$ is defined by the following steps.
\begin{enumerate}[$(1)$]
\item Choose an initial seed $t_0=((X_1,\cdots,X_m), \widetilde B,\Lambda)$.
\item Get all the seeds $t$ are obtained from $t_0$ by iterated mutations at directions $k\in [1,n]$.
\item Define $\mathcal A_v=\mathbb Z[v^{\pm1}]\langle X_i(t)\rangle_{t,i\in [1,m]}$.
\item $X_{n+1},\cdots, X_m$ are called \emph{frozen variables} or \emph{coefficients}.
\item A quantum cluster variable in $t$ is called a \emph{quantum cluster variable} of $\mathcal A_v$.
\item $X(t)^{{\bf a}}$ for some $t$ and ${\bf a}\in \mathbb N^m$ is called a \emph{quantum cluster monomial}.
\end{enumerate}

\end{definition}

Note that by specializing $v$ to 1, we get a commutative cluster algebra $\mathcal A_v|_{v=1}$.

\begin{theorem}(Quantum Laurent Phenomenon, \cite{BZ})
Let $\mathcal A_v$ be a quantum cluster algebra and $t$ be a seed. For any quantum cluster variable $X$, we have $X\in \mathcal T(t)$.

\end{theorem}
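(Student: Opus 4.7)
The plan is to adapt the classical proof of the Laurent phenomenon, following the strategy of Berenstein--Zelevinsky. The approach is to introduce the \emph{upper quantum cluster algebra} and sandwich $\mathcal A_v$ between the quantum cluster algebra and the quantum torus, rather than attempting to trace the iterated exchange formulas directly in $\mathcal T(t)$.

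First, I would fix a seed $t$ and define
\[
\mathcal U_v(t) := \bigcap_{k=0}^{n} \mathcal T(\mu_k(t))
\]
inside the skew-field of fractions of $\mathcal T(t)$, where $\mu_0(t):=t$. Clearly $X_i(t)\in \mathcal U_v(t)$ for the frozen indices $i>n$, and the mutation formula
$X_k(\mu_k(t)) = X(t)^{-{\bf e}_k+\sum_i [b_{ik}]_+{\bf e}_i}+X(t)^{-{\bf e}_k+\sum_i[-b_{ik}]_+{\bf e}_i}$
shows that each mutated generator already lies in $\mathcal T(t)$, and by symmetry also in each $\mathcal T(\mu_j(t))$; so $X_i(\mu_k(t))\in \mathcal U_v(t)$ for every $i,k$. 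Hence the generators obtained after one mutation all lie in $\mathcal U_v(t)$.

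The core step is the \textbf{invariance theorem}: $\mathcal U_v(t)=\mathcal U_v(\mu_k(t))$ for every $k\in[1,n]$. Once this is proved, iterating gives $\mathcal U_v(t)=\mathcal U_v(t')$ for any seed $t'$ reachable from $t$, so every quantum cluster variable $X_i(t')$ lies in $\mathcal U_v(t)\subseteq \mathcal T(t)$, which is exactly the desired statement. Granting invariance, the proof ends in a single line.

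The hard part is establishing the invariance theorem, and this is where the quantum structure requires genuine care. The plan is to fix $k$ and a second index $\ell\neq k$, and show that any element lying in $\mathcal T(\mu_k(t))\cap \mathcal T(\mu_\ell(t))$ automatically lies in $\mathcal T(\mu_\ell(\mu_k(t)))$; together with the trivial inclusion into $\mathcal T(\mu_k(t))=\mathcal T(\mu_k(\mu_k(t)))$, this yields $\mathcal U_v(t)\subseteq \mathcal U_v(\mu_k(t))$, and the reverse inclusion follows by involutivity of mutation. To carry this out I would:
\begin{enumerate}[(a)]
\item verify that the compatibility $\widetilde B^T\Lambda=(D\ 0)$ is preserved under mutation (so all the tori involved are well defined and their generators quasi-commute with the correct powers of $v$);
\item use the explicit mutation formulas to rewrite every generator of $\mathcal T(\mu_\ell(\mu_k(t)))$ as a quantum Laurent polynomial in the generators of $\mathcal T(\mu_k(t))$, taking advantage of the fact that the exchange binomial for $X_\ell$ involves only the variables $X_i(t)$ with $i\neq k$, which are unchanged by $\mu_k$;
\item reduce the intersection problem to a two-variable statement about quantum Laurent polynomials in $\mathbb Z[v^{\pm 1}]\langle X_k^{\pm 1},X_\ell^{\pm 1}\rangle$ satisfying appropriate divisibility conditions modulo the exchange binomials; the $v$-commutation of $X_k$ and $X_\ell$ (determined by $\Lambda(t)_{k\ell}$) allows one to imitate the classical coprimality argument verbatim after keeping track of a single scalar factor $v^{c}$ in each monomial exchange.
\end{enumerate}
The main obstacle is step (c): in the classical setting one simply invokes that the two exchange binomials are coprime in a polynomial ring, whereas here one must argue coprimality inside a quantum torus. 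The honest way is to observe that specializing $v\mapsto 1$ recovers the classical statement, and that the quantum binomials $P_k,P_\ell$ are (up to a unit $v^c$) monic in $X_k$, $X_\ell$ respectively, so Euclidean-style division in each variable separately goes through; combining the two divisions with the compatibility relation $\widetilde B^T\Lambda=(D\ 0)$ forces the remainder to be a genuine quantum Laurent polynomial in the mutated variables, completing the invariance step and hence the theorem.
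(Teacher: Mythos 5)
The paper does not reprove this result—it simply cites \cite{BZ}—and your proposal reproduces the strategy of that reference: introduce the upper quantum cluster algebra $\mathcal U_v(t)=\bigcap_k \mathcal T(\mu_k(t))$, verify that one-step mutated generators land in it, and reduce everything to the invariance theorem $\mathcal U_v(t)=\mathcal U_v(\mu_k(t))$, itself proved via a two-variable coprimality argument inside the quantum torus. One small caution: in step (c) the aside that "specializing $v\mapsto 1$ recovers the classical statement" is not itself evidence that the quantum version holds—the quantum coprimality is a genuinely stronger statement and must be proved directly, which your remark about the exchange binomials being monic (up to a unit $v^c$) in $X_k$ and $X_\ell$ correctly identifies as the way to do it; Berenstein--Zelevinsky's Lemma on Laurent polynomials divisible by powers of two exchange binomials is exactly this Euclidean-division argument, so your outline is sound.
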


\begin{conjecture}(Positivity Conjecture, \cite{BZ})\label{conj-positive}
Let $\mathcal A_v$ be a quantum cluster algebra and $t$ be a seed. For any quantum cluster variable $X$ of $\mathcal A_v$,
$$X\in \mathbb N[v^{\pm1}]\langle X(t)^{{\bf a}}\mid {\bf a}\in \mathbb Z^m\rangle.$$

\end{conjecture}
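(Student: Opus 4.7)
The plan is to deduce the positivity statement as a direct corollary of explicit combinatorial Laurent expansion formulas whose terms are manifestly positive. Concretely, a quantum cluster algebra $\mathcal A_v$ associated to the marked orbifold $(S,M,U)$ has its quantum cluster variables in bijection with tagged arcs on $(S,M,U)$, and its quantum seeds (up to permutation of indices) in bijection with tagged triangulations. So it suffices to expand $X_\gamma$, for each tagged arc $\gamma$, with respect to the quantum seed attached to a given tagged triangulation $T$, and show every coefficient lies in $\mathbb N[v^{\pm 1/2}]$.

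First I would invoke Proposition \ref{Pro-iso}: changing the tagging at a puncture induces an isomorphism of quantum cluster algebras, so that we may assume $T$ is an ordinary (untagged) ideal triangulation $T^o$. Then $\gamma$ falls into three types, namely an ordinary arc $\beta$, a one-end tagged arc $\beta^{(q)}$, or a two-end tagged arc $\beta^{(p,q)}$, which is exactly the trichotomy identified in Remark \ref{Rem-3cases}. For each of these three types I would establish an explicit quantum Laurent expansion formula, namely Theorems \ref{Thm-1}, \ref{Thm-2}, \ref{Thm-M3}, of the schematic shape
\begin{equation*}
X_\gamma \;=\; \sum_{P \in \mathcal L(\gamma,T^o)} v^{\,\nu(P)}\, X(T^o)^{\,\mathbf a(P)},
\end{equation*}
where $\mathcal L(\gamma,T^o)$ is the index lattice (perfect matchings on a snake graph, or complete $(T^o,\gamma)$-paths, as prepared in Subsections \ref{sec:P}--\ref{Sec-CPB} and Sections \ref{Sec-threesets}--\ref{sec:VM}), $\mathbf a(P) \in \mathbb Z^m$ is the exponent vector, and $\nu(P) \in \tfrac 12 \mathbb Z$ is the valuation supplied by the valuation maps from Section \ref{sec:VM}. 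Because every coefficient in such a sum is a monomial $v^{\nu(P)} \in \mathbb N[v^{\pm 1/2}]$, positivity of $X_\gamma$ with respect to the initial seed is immediate. An arbitrary seed reduces to this by applying the formula after a sequence of mutations and using that mutation preserves positivity at each step (this is where the formulas must also handle arbitrary triangulations, not only the initial one).

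The main obstacle is proving the expansion formulas themselves, and the strategy, announced around Theorem \ref{thm-sum4}, is flip-invariance plus a base case. Since any two ideal triangulations are connected by flips, one shows that the right-hand side $LP(T^o)$ of the proposed formula is invariant under $T^o \mapsto \mu_\alpha(T^o)$. For this I would construct a partition bijection (in the sense of Definition \ref{Def-parbi} and Proposition \ref{Pro-parin}) between the index lattices of $T^o$ and of $\mu_\alpha(T^o)$ which realizes, term-by-term, the quantum exchange relation $X_\alpha X'_\alpha = X^{-\mathbf e_\alpha + [\widetilde b_\alpha]_+} + X^{-\mathbf e_\alpha + [-\widetilde b_\alpha]_+}$: each term of $LP(T^o)$ is grouped according to the power $n(X)$ of $X_\alpha$, then matched with $2^{|n(X)|}$ terms on the other side. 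The compatibility of this partition bijection with the lattice structures is exactly what will be proved in Section \ref{sec:CPP}, and the tracking of the quantum parameter and coefficient will be carried out in Section \ref{sec:TEQ}; these two ingredients are the hard technical core. The base case, i.e.\ verifying the formula for one carefully chosen triangulation adapted to $\gamma$, is handled in Propositions \ref{prop-equ4}, \ref{lem:step2}, \ref{prop-equ5}, \ref{lem:step3}, \ref{prop:ll}, and is comparatively routine once the index set and valuation map are in place.

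In summary, the proof I would pursue is: (1) reduce to three types of tagged arcs relative to an ordinary triangulation via the tag-change isomorphism; (2) write down the expansion formulas $LP(T^o)$ in terms of the three lattices with their valuation maps, so that manifest positivity is built into the formula; (3) prove $LP(T^o)$ is independent of $T^o$ by matching terms across a flip through a valuation-compatible partition bijection; (4) verify the formula for one convenient triangulation. The hardest part, by a clear margin, is step (3) in the quantum setting, because one must simultaneously track the lattice bijection, the $v$-exponent $\nu(P)$ coming from the quantum commutation matrix $\Lambda$, and the coefficient contributions, and show that they assemble correctly under the quantum exchange relation --- this is the point at which the argument genuinely goes beyond the commutative flip-invariance of \cite{H2}.
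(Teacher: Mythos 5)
Your proposal is correct and follows essentially the same route as the paper: reduce to three arc types via the tag-change isomorphism (Proposition \ref{Pro-iso}, Remark \ref{Rem-3cases}), write down Laurent expansion formulas with manifestly non-negative $v$-monomial coefficients indexed by the three lattices, prove flip-invariance via a valuation-compatible partition bijection (Theorem \ref{thm-sum4}), verify a base triangulation, and then read off Theorem \ref{thm:P}. One small imprecision: the clause ``using that mutation preserves positivity at each step'' is unnecessary and slightly misleading --- since Theorems \ref{Thm-1}, \ref{Thm-2}, \ref{Thm-M3} are proved for \emph{every} ideal triangulation, one simply applies the formula directly at the target seed, as your parenthetical already notes; also the coefficients lie in $\mathbb N[v^{\pm 1}]$ (the valuation $w$ is $\mathbb Z$-valued), not $\mathbb N[v^{\pm 1/2}]$, since in this paper $v$ already plays the role of the square root.
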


\begin{remark}
The positivity conjecture was proved by Davison \cite{D} for the skew-symmetric case and by the author for the unpunctured orbifold case \cite{H1}.
\end{remark}

\subsection{(Quantum) Cluster algebras from orbifolds}\label{sec:QCO} In this subsection, we recall some combinatorial notation from orbifolds, please refer to \cite{FST,FST3,FST4} for more details.

Let $S$ be a connected oriented Riemann orbifold with boundary. Fix a non-empty set $M$ of marked points in the closure of $S$ with at least one marked point on each boundary component. Fix a finite set $U$ in the interior of $S$ such that $U\cap M=\emptyset$. We call the triple $(S, M, U)$ an \emph{orbifold}. Marked points in the interior of $S$ are called \emph{punctures}. The points in $U$ are called \emph{orbifold points.} Each orbifold point in $U$ comes with a weight $\frac{1}{2}$ or $2$. Throughout this paper, we assume that $U$ contains no orbifold point with weight $2$.

\begin{definition}\cite{FST3,FST4}
An arc $\beta$ in $(S,M,U)$ is a curve in $S$ considered up to relative isotopy (of $S\setminus (M\cup U)$) modulo endpoints such that
\begin{enumerate}[$\bullet$]
    \item one of the following holds:
    \begin{enumerate}[$-$]
        \item either both endpoints of $\beta$ belong to $M$ (and then $\beta$ is called an \emph{ordinary arc})
        \item or one endpoint belongs to $M$ and another belongs to $U$ (then $\beta$ is called a \emph{pending arc});
    \end{enumerate}
    \item $\beta$ has no self-intersections, except that its endpoints may coincide;
    \item except for the endpoints, $\beta$ and $M\cup U\cup \partial S$ are disjoint;
    \item $\beta$ does not cut out a monogon not containing points of $M$;
    \item $\beta$ is not homotopic to a boundary segment.
\end{enumerate}

 If $\gamma$ is an arc incident to a puncture $p$, denote by $l_p(\gamma)$ the loop based on another endpoint of $\gamma$ and encloses $\gamma$.

An \emph{oriented arc} $\overrightarrow \beta$ is an arc $\beta$ with an orientation. Denote by $s(\overrightarrow \beta)$ and $t(\overrightarrow \beta)$ the starting point and ending point, respectively, of $\overrightarrow\beta$.

\end{definition}

Note that we do not allow both endpoints of $\beta$ to be in $U$.

\begin{definition}\cite{FST3,FST4}
Two arcs $\beta$ and $\beta'$ are \emph{compatible} if the following hold:
\begin{enumerate}[$\bullet$]
    \item they do not intersect in the interior of $S$;
    \item if both $\beta$ and $\beta'$ are pending arcs, then the endpoints of $\beta$ and $\beta'$ that are orbifold points do not coincide (i.e., two pending arcs may share a marked point, but not an orbifold point).
\end{enumerate}
\end{definition}

\begin{definition}\cite{FST,FST3,FST4}
An \emph{ideal triangulation} of $(S,M,U)$ is a maximal collection of distinct pairwise compatible arcs.
\end{definition}

The arcs of an ideal triangulation cut $S$ into triangles. See Fig. \ref{Fig:tri} for a list of possible ideal triangles.

\begin{figure}[ht]
\includegraphics{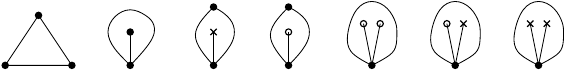}

\caption{List of possible ideal triangles}\label{Fig:tri}
\end{figure}

A triangle with two arcs folded is called a \emph{self-folded triangle}. The folded arc is called the \emph{radius}. Let $T^o$ be an ideal triangulation of $(S,M,U)$. If $\alpha$ is not the radius of some self-folded triangles, then there is a unique arc $\alpha'\neq \alpha$ such that $(T^o\setminus \{\alpha\})\cup \{\alpha'\}$ is an ideal triangulation. Denote $\mu_\alpha(T^o)=(T^o\setminus \{\alpha\})\cup \{\alpha'\}$ and we call $\mu_\alpha(T^o)$ is the \emph{flip} of $T^o$ at $\alpha$ and $\alpha'$ is obtained from $T^o$ by flip at $\alpha$.

\begin{lemma}\label{lem:flip-p}
For each puncture $p$, let $T_1$ and $T_2$ be ideal triangulations without self-folded triangles enclose $p$. Then there exists a sequence of ideal triangulations $T_1=T^0, T^1,\cdots, T^n=T_2$ without self-folded triangles enclose $p$ such that $T^i$ and $T^{i+1}$ are related by a flip for all $i=0,\cdots, n-1$.
\end{lemma}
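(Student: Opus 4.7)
The plan is to invoke the connectivity of the full flip graph of ideal triangulations of $(S,M,U)$ (classical for surfaces, extended to orbifolds in \cite{FST3}), then upgrade any connecting path to one staying in the subset $\mathcal{T}(p)$ of triangulations with no self-folded triangle enclosing $p$. The crucial reformulation is: $T\in \mathcal{T}(p)$ if and only if $p$ is the endpoint of at least two arcs of $T$; for if $p$ were the endpoint of a unique arc $r$, then $r$ would necessarily be the radius of a self-folded triangle with enclosing loop $l_p(r)$.

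First I would fix any flip sequence $T_1 = T^0, T^1, \ldots, T^n = T_2$ and induct on the number $N$ of indices $k$ for which $T^k \notin \mathcal{T}(p)$. Whenever $N \geq 1$, isolate a maximal consecutive subrun $T^i, \ldots, T^j$ lying outside $\mathcal{T}(p)$ and flanked by $T^{i-1}, T^{j+1} \in \mathcal{T}(p)$. Each $T^k$ in the subrun has its own radius $r_k$ and enclosing loop $\ell_k$, but since the radius cannot be flipped and flipping the loop would immediately produce a triangulation in $\mathcal{T}(p)$ (contradicting maximality), every intra-subrun flip takes place strictly outside the disk $D$ bounded by $\ell_k$. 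Consequently the radius $r_k \equiv r$ and loop $\ell_k \equiv \ell$ are constant along the subrun, and the entry and exit flips each exchange $\ell$ with an arc from $p$ to a vertex on the boundary of $D$; call these arcs $\alpha$ (in $T^{i-1}$) and $\beta$ (in $T^{j+1}$).

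The heart of the proof is constructing a bypass from $T^{i-1}$ to $T^{j+1}$ staying within $\mathcal{T}(p)$. My plan is to perform the same sequence of outside flips, but with $\alpha$ playing the role previously played by $\ell$: each outside arc flipped in the original subrun is still present in the bypass, and triangles not touching $\ell$ flip identically; for the unique triangle touching $\ell$ in each $T^k$, a local case analysis shows the corresponding flip can be carried out in the presence of $\alpha$ instead. At every intermediate stage the triangulation retains both $r$ and (a representative of) $\alpha$ as arcs at $p$, placing it in $\mathcal{T}(p)$. A final single flip converts the representative of $\alpha$ into $\beta$, landing at $T^{j+1}$. This strictly decreases $N$, and the induction closes.

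The main obstacle I expect is the bypass step: the careful verification that outside flips transfer correctly when $\alpha$ replaces $\ell$, especially when the flipped arc belongs to the outside triangle adjacent to $\ell$, so that the local quadrilateral structure must be re-examined. An alternative approach that sidesteps this bookkeeping is to translate to the tagged setting of \cite{FST}, where $T \in \mathcal{T}(p)$ corresponds to a tagged triangulation with no arc notched at $p$, and then to argue connectivity of the ``unnotched-at-$p$'' subgraph of the tagged flip graph by exploiting that every tagged arc is flippable.
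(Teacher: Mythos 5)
Your route is genuinely different from the paper's. The paper observes that the radius $\gamma$ of the self-folded triangle persists through the entire bad subrun (a radius cannot be flipped), cuts $S$ open along $\gamma$ to obtain a smaller orbifold $\widetilde S$ in which $p$ becomes a boundary marked point and hence cannot be enclosed by any loop arc, connects the two cut triangulations by a flip sequence inside $\widetilde S$ via \cite[Proposition 3.8]{FST}, and re-glues; the resulting detour automatically avoids self-folded triangles at $p$. Your bypass argument stays inside $S$, reformulates membership in $\mathcal T(p)$ as ``at least two arcs end at $p$,'' and re-plays the subrun with the loop $\ell$ replaced by its flip $\alpha$. The cutting argument is the cleaner of the two: it eliminates the offending configuration globally and requires no case analysis. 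Yours is more elementary (you never leave $S$ or cite a second connectivity result), but the local case analysis you correctly flag as the crux is not as painless as ``the corresponding flip can be carried out in the presence of $\alpha$.'' Concretely, if the intra-subrun flip $T^k\to T^{k+1}$ flips an arc $\tau$ belonging to the outer triangle $(\ell,a,b)$, then in the bypass triangulation $T^{k,\mathrm{byp}}:=(T^k\setminus\{\ell\})\cup\{\alpha^k\}$ (where $\alpha^k$ is the flip of $\ell$ in $T^k$) the flip quadrilateral of $\tau$ has sides $r,\alpha^k$ where $T^k$ had $\ell$ and the other outer side, so $\mu_\tau$ produces a \emph{different} arc, ending at $p$. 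One then checks that the correct bypass move is \emph{two} flips, $\mu_{\alpha^k}\circ\mu_\tau$, whose intermediate triangulation still has $r$ together with at least one further arc at $p$ (hence still lies in $\mathcal T(p)$) and whose target is exactly $T^{k+1,\mathrm{byp}}$. With this bookkeeping in place the bypass lands precisely on $T^{j,\mathrm{byp}}=T^{j+1}$ — so the ``final single flip'' in your sketch is in fact not needed — and your induction on the number of bad indices strictly decreases, which is a somewhat tidier invariant than the paper's. Your closing remark about the tagged setting is essentially a reformulation of the cutting argument rather than a third route.
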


\begin{proof}
By \cite[Proposition 3.8]{FST}, $T_1$ and $T_2$ are connected by a sequence of flips. We may assume that there exists a sequence of ideal triangulations $T_1=\hat T^0, \hat T^1,\cdots, \hat T^m=T_2$ without self-folded triangles enclose $p$ such that $\hat T^i$ and $\hat T^{i+1}$ are related by a flip for all $i=0,\cdots, m-1$. We prove the result by induction on the number $N$ of the loops in $\bigcup_{i=0}^m \hat T^i$ enclosed only $p$. If $N=0$ then we are done. Assume $N>1$ and $l_p\in \hat T_\ell\cap \hat T_{\ell'}$ is a loop enclose only $p$ such that $\ell<\ell'$, $l_p\notin \hat T^i$ for $0\leq i<\ell$, $l_p\in \hat T^i$ for $\ell\leq i\leq \ell'$ and $l_p\notin \hat T_{\ell'+1}$. Then $l_p$ is in some self-folded triangle in $\hat T^i$ for $\ell\leq i\leq \ell'$. Denote by $\gamma$ the radius. Thus $\gamma\in \hat T_{\ell-1}\cap \hat T_{\ell'+1}$. Cutting along $\gamma$, we obtain a new orbifold $\widetilde S$. $\gamma$ becomes to two boundary arcs $\gamma^\pm$ in $\hat S$. Since $\gamma\in \hat T_{\ell-1}\cap \hat T_{ell'+1}$, we have $(\hat T_{\ell-1}\setminus\{\gamma\})\cup \{\gamma^\pm\}$ and $(\hat T_{\ell+1}\setminus\{\gamma\})\cup \{\gamma^\pm\}$ are two triangulations of $\widetilde S$. By \cite[Proposition 3.8]{FST}, $(\hat T_{\ell-1}\setminus\{\gamma\})\cup \{\gamma^\pm\}$ and $(\hat T_{\ell+1}\setminus\{\gamma\})\cup \{\gamma^\pm\}$ are connected by a sequence of flips. Assume that $\widetilde T_1=(\hat T_{\ell-1}\setminus\{\gamma\})\cup \{\gamma^\pm\}, \widetilde T_2,\cdots, \widetilde T_{m'}=(\hat T_{\ell+1}\setminus\{\gamma\})\cup \{\gamma^\pm\}$ is a sequence of ideal triangulations of $\widetilde S$ such that $\widetilde T_{i+1}$ and $\widetilde T_i$ are related by a flip for $1\leq i\leq m'-1$. For each $1\leq i\leq m'-1$, let $\bar T_i=(\widetilde T_i\setminus \{\gamma^\pm\})\cup \{\gamma\}$. Then $\bar T_i$ is an ideal triangulation of $S$ and $\bar T_i,\bar T_{i+1}$ are related by a flip, and $\bar T_1=\hat T_{\ell-1}, \bar T_{m'}=\hat T_{\ell+1}$. Consider the flip sequence $\hat T^0=T_1,\cdots, \hat T_{\ell-1}=\bar T_1, \bar T_2,\cdots \bar T_{m'}=\hat T_{\ell+1},\cdots, \hat T_m=T_2$. We have the number of the loops in $\bigcup_{i=0}^{\ell-1} \hat T^i\cup \bigcup_{i=\ell+1}^{m} \hat T^i\cup \bigcup_{i=0}^{m'} \bar T^i$ enclosed only $p$ is strictly than $N$. Then the result is followed by induction.
\end{proof}

The following lemma is similar to Lemma \ref{lem:flip-p}.

\begin{lemma}\label{lem:flip-pq}
For punctures $p$ and $q$, let $T_1$ and $T_2$ be ideal triangulations without self-folded triangles enclose $p$ or $q$. Then there exists a sequence of ideal triangulations $T_1=T^0, T^1,\cdots, T^n=T_2$ without self-folded triangles enclose $p$ or $q$ such that $T^i$ and $T^{i+1}$ are related by a flip for all $i=0,\cdots, n-1$.
\end{lemma}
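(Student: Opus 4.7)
The plan is to mirror the strategy of Lemma \ref{lem:flip-p} almost verbatim, with a single additional invocation of Lemma \ref{lem:flip-p} itself inside the inductive step to handle the second puncture. By \cite[Proposition 3.8]{FST}, start with some flip sequence $T_1=\hat T^0,\hat T^1,\ldots,\hat T^m=T_2$. Let $N$ denote the total number of loops in $\bigcup_{i=0}^m \hat T^i$ that enclose only $p$ or only $q$, and induct on $N$. The base case $N=0$ is immediate, since a self-folded triangle enclosing $p$ (resp. $q$) contributes its outer loop to the count.

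For the inductive step, pick such a bad loop $l$, and without loss of generality assume $l$ encloses only $p$. Choose indices $\ell\leq\ell'$ so that $l\in\hat T^i$ exactly when $\ell\leq i\leq\ell'$. Exactly as in the proof of Lemma \ref{lem:flip-p}, the radius $\gamma$ of the self-folded triangle containing $l$ lies in $\hat T^{\ell-1}\cap\hat T^{\ell'+1}$. Cut $S$ along $\gamma$ to obtain an orbifold $\widetilde S$ in which $\gamma$ has become two boundary arcs $\gamma^\pm$; crucially, the puncture $p$ (an endpoint of $\gamma$) is absorbed into the boundary of $\widetilde S$ and is no longer a puncture, while $q$ remains a puncture of $\widetilde S$. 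Set
\[
\widetilde T_1=(\hat T^{\ell-1}\setminus\{\gamma\})\cup\{\gamma^\pm\},\qquad \widetilde T_2=(\hat T^{\ell'+1}\setminus\{\gamma\})\cup\{\gamma^\pm\},
\]
both ideal triangulations of $\widetilde S$. Apply Lemma \ref{lem:flip-p} in $\widetilde S$ (with its single remaining puncture $q$) to connect $\widetilde T_1$ and $\widetilde T_2$ by a flip sequence $\widetilde T_1=\widetilde T^0,\widetilde T^1,\ldots,\widetilde T^{m'}=\widetilde T_2$ in which no intermediate triangulation contains a self-folded triangle enclosing $q$.

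Reglue each $\widetilde T^i$ to an ideal triangulation $\bar T_i=(\widetilde T^i\setminus\{\gamma^\pm\})\cup\{\gamma\}$ of $S$. Replace the segment $\hat T^{\ell-1},\hat T^\ell,\ldots,\hat T^{\ell'+1}$ of the original sequence by $\hat T^{\ell-1}=\bar T_0,\bar T_1,\ldots,\bar T_{m'}=\hat T^{\ell'+1}$. Along the new segment, $\gamma$ is present in every triangulation, so the loop $l$ cannot appear (it could only arise as the outer loop of a self-folded triangle with radius $\gamma$, whose appearance would require flipping $\gamma$). Moreover, no $\bar T_i$ contains a self-folded triangle enclosing $q$, by construction in $\widetilde S$. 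Hence the modified flip sequence has strictly smaller count of bad loops than the original, and the induction closes.

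The one delicate point I expect is the topological verification that cutting along the radius $\gamma$ genuinely removes $p$ from the punctures of $\widetilde S$, so that Lemma \ref{lem:flip-p} applies cleanly with $q$ as the sole distinguished puncture; this uses that both endpoints of $\gamma$ near $p$ open onto the new boundary component. One must also check that loops in $\widetilde S$ enclosing only $q$ correspond to loops in $S$ enclosing only $q$ under the regluing (so that the Lemma \ref{lem:flip-p} guarantee transfers back), which follows because $\gamma$ is disjoint from $q$ and its small neighborhood. Beyond this standard surface-surgery bookkeeping, the proof is a direct reduction to Lemma \ref{lem:flip-p}.
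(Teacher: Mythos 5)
Your overall plan — mirror the proof of Lemma \ref{lem:flip-p} and invoke Lemma \ref{lem:flip-p} itself in the inductive step to control the second puncture — is the natural one and essentially what the paper intends, but as written there is a genuine gap in the step where you invoke Lemma \ref{lem:flip-p} inside $\widetilde S$. After cutting along the radius $\gamma$ of a self-folded triangle enclosing $p$, you form $\widetilde T_1=(\hat T^{\ell-1}\setminus\{\gamma\})\cup\{\gamma^\pm\}$ and $\widetilde T_2=(\hat T^{\ell'+1}\setminus\{\gamma\})\cup\{\gamma^\pm\}$ and then apply Lemma \ref{lem:flip-p} with respect to $q$ to connect them. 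But Lemma \ref{lem:flip-p} requires its two endpoint triangulations to contain \emph{no} self-folded triangle enclosing $q$, and you never verify this. The triangulations $\hat T^{\ell-1}$ and $\hat T^{\ell'+1}$ are arbitrary intermediate stages of the initial flip sequence produced by \cite[Proposition 3.8]{FST}, so there is no reason they avoid $q$-self-folded triangles; and since $\gamma$ ends at $p$ (and, unless $q$ is the base of the loop, is disjoint from a neighbourhood of $q$), cutting along $\gamma$ does not destroy such a $q$-self-folded triangle. Hence $\widetilde T_1$ or $\widetilde T_2$ may contain one, and Lemma \ref{lem:flip-p} simply does not apply in $\widetilde S$. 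The ``without loss of generality'' framing masks that the argument is not symmetric: whichever of $p,q$ you treat first, the other one can obstruct the application of Lemma \ref{lem:flip-p} in $\widetilde S$ in exactly this way.

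The gap is fixable by ordering the two punctures rather than treating them symmetrically. First apply Lemma \ref{lem:flip-p} (with respect to $p$) to produce a flip sequence $T_1=\hat T^0,\ldots,\hat T^m=T_2$ in which \emph{no} intermediate triangulation has a self-folded triangle enclosing $p$. Then induct on the number of indices $i$ for which $\hat T^i$ contains a self-folded triangle enclosing $q$, and perform your cut-and-reglue surgery on the radius $\gamma$ of such a $q$-triangle. Now $q$ is absorbed into the boundary of $\widetilde S$, so $\widetilde T_1,\widetilde T_2$ automatically have no self-folded triangle at $q$, and because the ambient sequence was already clean with respect to $p$, they have none at $p$ either. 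So Lemma \ref{lem:flip-p} (with respect to $p$) applies in $\widetilde S$ whenever $p$ is still a puncture there, and \cite[Proposition 3.8]{FST} suffices when $p$ is also absorbed into the boundary (in which case the condition is vacuous). After regluing, the new segment contains $\gamma$ throughout, hence no self-folded triangle at $q$, and none at $p$ by the choice of inner flip sequence; so the inductive quantity strictly decreases and the argument closes. (Incidentally, inducting on the number of ``bad indices'' rather than on the number of distinct bad loops is what makes the strict decrease clean, since a given loop could recur in a later segment of the sequence; this same care is also implicit in the proof of Lemma \ref{lem:flip-p}.)
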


\begin{definition}\cite[Definition 7.1]{FST}(Tagged arc)
Each arc $\beta$ in $(S,M,U)$ has two ends obtained by arbitrarily cutting $\beta$ into three pieces, then throwing out the middle one. We think of the two ends as locations near the endpoints to be used for labeling (``tagging") an arc. A tagged arc is an ordinary arc in which each end has been tagged in one of two ways, plain or notched, so that the following conditions are satisfied:
\begin{enumerate}[$\bullet$]
    \item the arc does not cut out a once-punctured monogon;
    \item an endpoint lying on the boundary is tagged plain;
    \item ends of a pending arc being orbifold points are always tagged plain;  and
    \item both ends of a loop are tagged in the same way.
\end{enumerate}
\end{definition}

\begin{definition}\cite[Definition 7.4]{FST} (Compatibility of tagged arcs) Two tagged arcs $\beta$ and $\beta'$ are called \emph{compatible} if the following conditions are satisfied:
\begin{enumerate}[$\bullet$]
    \item The untagged versions of $\beta$ and $\beta'$ are compatible;
    \item If the untagged versions of $\beta$ and $\beta'$ are different, and $\beta$ and $\beta'$ share an endpoint $a$, then the ends of $\beta$ and $\beta'$ connecting to $a$ must be tagged in the same way;
    \item If the untagged versions of $\beta$ and $\beta'$ coincide, then at least one end of $\beta$ must be tagged in the same way as the corresponding end of $\beta'$.
\end{enumerate}

\end{definition}

\begin{definition}\cite{FST,FST3,FST4}
A \emph{tagged triangulation} of $(S,M,U)$ is a maximal collection of distinct pairwise compatible tagged arcs.
\end{definition}

\begin{definition}

Let $o$ be an orbifold point. For a tagged pending arc $\beta$ connecting a marked point $i$ and $o$, denote by $sl(\beta)$ the tagged loop at $i$ around $o$ which tagged the same with $\beta$ at $i$. We call $sl(\beta)$ the \emph{special loop} associated with $\beta$. See Fig. \ref{Fig:SL}.

\begin{figure}[ht]

\includegraphics{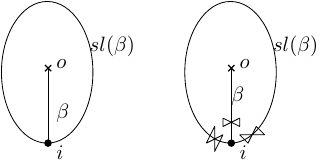}

\caption{Special loop}\label{Fig:SL}
\end{figure}

\end{definition}

Note that special loop is not an arc in $\Sigma=(S,M,U)$.

\smallskip

Throughout this paper, the notation $\gamma$, $\gamma^{(q)}$ and $\gamma^{(p,q)}$ respectively mean the following three cases,
\begin{enumerate}
    \item $\gamma$ is an ordinary arc or a pending arc starting from $p$ and ending at $q$;
    \item $\gamma^{(p)}$ is the tagged arc with underlying arc $\gamma$ and tagged notched at $q$;
    \item $\gamma^{(p,q)}$ is the tagged arc with underlying arc $\gamma$ and tagged notched at $p,q$.
\end{enumerate}

In particular, if $p=q$ then $\gamma^{(p)}=\gamma^{(p,q)}$.

\begin{figure}[h]
\includegraphics{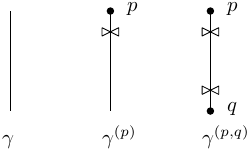}

\end{figure}

For any ideal triangulation $T^o$, replace all the loops $l_p(\gamma)$ in self-folded triangles enclose puncture $p$ with radius $\gamma$ by $\gamma^{(p)}$, we obtain a tagged triangulation $T$, we call $T$ the tagged triangulation corresponding to $T^o$.

Conversely, for any tagged triangulation $T$, first replace all pairs of tagged arcs of the form $\gamma,\gamma^{(p)}$ by $\gamma,l_p(\gamma)$ and then replace $\gamma^{(p)}$ and $\gamma^{(p,q)}$ by $\gamma$, we obtain an ideal triangulation, denoted by $l(T)$.

If $\gamma$ is a pending arc which incident an orbifold point $o$, denote by $l(\gamma)$ the loop cutting out a monogon enclosing $o$ and radius $\gamma$, then the \emph{crossing number} $N(\gamma,\gamma')$ of $\gamma$ with $\gamma'$ is the minimum of the numbers of crossings of arcs $\alpha$ and $\alpha'$, where $\alpha$ is isotopic to $l(\gamma)$ and $\alpha'$ is isotopic to $\gamma'$. Note that $N(\gamma,\gamma')\neq N(\gamma,\gamma')$ generally if one is an ordinary arc and another is a pending arc.

 For $\tau\in T$, denote by $w(\tau)$ the weight of $\tau$.

For two non-boundary non-self-folded arcs $\tau$, $\tau'$ in an ideal triangulation $T^o$ and a non-self-folded triangle $\Delta$ in $T^o$, define
\[\begin{array}{ccl} b^{T^o,\Delta}_{\tau\tau'}=
\left\{\begin{array}{ll}
w(\tau'), &\mbox{if $\tau,\tau'$ are sides of $\Delta$ and $\tau'$ following $\tau$ in the clockwise order}, \\
-w(\tau'), &\mbox{if $\tau,\tau'$ are sides of $\Delta$ and $\tau$ following $\tau'$ in the clockwise order}, \\
0, &\mbox{otherwise.}
\end{array}\right.
\end{array}\]
and $b^{T^o}_{\tau\tau'}=\sum_{\Delta}b^{T^o,\Delta}_{\tau\tau'}$, where $\Delta$ runs over all non-self-folded triangle $\Delta$ in $T^o$. Denote by $l(\tau)$ the loop encloses $\tau$ if $\tau$ is a self-fold arc and $\tau$ otherwise. For any two non-boundary arcs $\tau,\tau'\in T^o$, define $b^{T^o,\Delta}_{\tau\tau'}=b^{T^o,\Delta}_{l(\tau)l(\tau')}$.

We say the matrix $B^{T^o}=(b^{T^o}_{\tau,\tau'})$ the \emph{signed adjacency matrix} of $T$, see \cite{FST3,FST4}. Then $B^{T^o}$ is a skew-symmetrizable matrix. In fact, let $D^{T^o}=diag(w(\tau))_{\tau\in T^o}$, then $D^{T^o}B^{T^o}$ is skew-symmetric.

For any non-self-folded $\tau\in T^o$, we have $B^{\mu_{\tau}(T^o)}=\mu_{\tau}(B^{T^o})$.

\begin{definition} Let $(S,M,U)$ be an orbifold.
\begin{enumerate}[$(1)$]
    \item We say that a cluster algebra $\mathcal A$ is \emph{coming from $(S, M, U)$} if there exists a tagged triangulation $T$ such that $B^{T}$ is an exchange matrix of $\mathcal A$.
    \item We say that a quantum cluster algebra $\mathcal A_v$ is \emph{coming from $(S, M, U)$} if the specialized cluster algebra $\mathcal A_v|_{v=1}$ is coming from $(S,M,U)$.
\end{enumerate}
\end{definition}

\begin{proposition}\label{Pro-11}\cite{FST,FST3,BZ}
Let $\mathcal A_v$ be a quantum cluster algebra from $\Sigma=(S,M,U)$.
\begin{enumerate}[$(1)$]
\item If $\Sigma$ is not a closed surface with one puncture, then there are bijections
\begin{equation*}
    \{\text{Tagged arcs in }\Sigma\} \rightarrow \{\text{Quantum cluster variables of }\mathcal A_v\}, \gamma\mapsto X_\gamma.
\end{equation*}

\begin{equation*}
    \{\text{Tagged triangulation of }\Sigma\} \rightarrow \{\text{Quantum clusters of }\mathcal A\}, T\mapsto X_T.
\end{equation*}

\item If $\Sigma$ is a closed surface with exactly one puncture, then there are bijections
\begin{equation*}
    \{\text{Ordinary arcs in }\Sigma\} \rightarrow \{\text{Quantum cluster variables of }\mathcal A\}, \gamma\mapsto X_\gamma.
\end{equation*}

\begin{equation*}
    \{\text{Ideal triangulation of }\Sigma\} \rightarrow \{\text{Quantum clusters of }\mathcal A\}, T\mapsto X_T.
\end{equation*}
\end{enumerate}
\end{proposition}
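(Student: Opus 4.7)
The plan is to lift the commutative bijections between tagged triangulations and seeds, which are established for orbifolds in \cite{FST3,FST4} (extending the surface case \cite{FST}), through the specialization $v\mapsto 1$, using the Berenstein--Zelevinsky quantum mutation formalism from Section \ref{sec:QC}. By hypothesis, $\mathcal{A}_v|_{v=1}$ is a commutative cluster algebra coming from $\Sigma$, so the commutative analog of the proposition is already known, and the quantum statement should be a direct lift.

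Concretely, I would fix an initial tagged triangulation $T_0$ (respectively, ideal triangulation in case (2)) whose signed adjacency matrix $B^{T_0}$ is the extended exchange matrix of the initial quantum seed $t_0$; such a $T_0$ exists by the hypothesis that $\mathcal{A}_v$ comes from $\Sigma$. Any other tagged (respectively, ideal) triangulation $T$ is connected to $T_0$ by a finite sequence of flips, by connectivity of the flip graph \cite{FST,FST3}; in case (2), Lemma \ref{lem:flip-p} further ensures that flip sequences can be chosen avoiding illegal moves at the puncture. Each flip at a non-self-folded arc corresponds to a quantum seed mutation preserving the compatibility of $(\widetilde{B},\Lambda)$, so this sequence produces a quantum seed whose cluster we label $X_T$ together with quantum cluster variables $X_\gamma$ for $\gamma\in T$. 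Injectivity of $T\mapsto X_T$ follows from injectivity in the commutative case via the $v=1$ specialization; surjectivity is immediate, since every quantum cluster is obtained from $t_0$ by iterated mutation, and each such mutation sequence translates back to a flip sequence. Bijectivity of $\gamma\mapsto X_\gamma$ then follows because each quantum cluster variable appears in some quantum cluster.

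The main subtlety is well-definedness of $T\mapsto X_T$, i.e., independence of the chosen flip sequence. At the commutative level this is the content of \cite{FST,FST3}; lifting to the quantum level uses Berenstein--Zelevinsky's observation that, once $(\widetilde{B}(t_0),\Lambda(t_0))$ is fixed, the mutation rules for $\Lambda(t)$ depend deterministically on the intermediate matrices $\widetilde{B}$ and are consistent under the involution $\mu_k^2 = \mathrm{id}$ together with the classical path-independence; hence any two flip sequences joining $T_0$ to $T$ yield the same quantum seed. For case (2), the tagged flip graph of a once-punctured closed surface decomposes into two components exchanged by toggling the tag at the unique puncture, and one restricts to the component matching the plain initial triangulation $T_0$, at which point the argument proceeds identically with ordinary arcs and ideal triangulations in place of tagged arcs and tagged triangulations.
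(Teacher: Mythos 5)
The paper does not prove this proposition; it is cited directly from \cite{FST,FST3,BZ}, so there is no in-text argument to compare against. That said, your proposal purports to reconstruct a proof, and the crux of it — path-independence of the quantum seed — is not handled correctly.

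You claim that once $(\widetilde{B}(t_0),\Lambda(t_0))$ is fixed, path-independence of $\Lambda(t)$ follows from (a) the mutation rule for $\Lambda$ depending deterministically on $(\Lambda,\widetilde{B})$, (b) the involution $\mu_k^2 = \mathrm{id}$, and (c) classical path-independence. But (c) asserts only that two flip sequences from $T_0$ to $T$ produce the same \emph{endpoint} exchange matrix $\widetilde{B}(T)$; it says nothing about the intermediate matrices $\widetilde{B}$ visited along the two paths, and these are generically different. Since the $\Lambda$-mutation rule is applied at every step of the path using the current $\widetilde{B}$, two paths with different intermediate $\widetilde{B}$'s could, a priori, transport $\Lambda(t_0)$ to different $\Lambda$'s. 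The facts (a)--(c) therefore do not logically imply that the endpoint $\Lambda$'s agree, and your ``hence'' is a non-sequitur. The actual content needed here is the nontrivial statement that the quantum exchange graph is canonically isomorphic to the exchange graph of the commutative specialization. In \cite{BZ} this is established via the quantum Laurent phenomenon (quantum cluster variables are Laurent in the initial quantum cluster, and are pinned down by their $v=1$ specialization together with the commutation data), not by the formal involution/path-independence argument you sketch. Without invoking that result or reproducing its Laurent-polynomial argument, the well-definedness of $T\mapsto X_T$ is unproven.

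Two smaller issues: first, the reference to Lemma \ref{lem:flip-p} is misplaced — that lemma concerns connecting two ideal triangulations without self-folded triangles enclosing a puncture by a flip sequence of the same kind, which is used elsewhere in the paper (in the proofs of the expansion formulas), not for this bijection. For case (2) what you actually need is the classical FST result that, for a once-punctured closed surface, the exchange graph of the commutative cluster algebra is identified with the flip graph of ideal (not tagged) triangulations. Second, your injectivity/surjectivity discussion of $T\mapsto X_T$ implicitly assumes well-definedness; it should come after the path-independence is secured, not be presented independently of it.
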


\subsection{Orbifold morphism and canonical polygon}\label{sec:OMCP}

\begin{definition}\cite[Definition 3.5]{BR}
For two orbifolds $(S,M,U)$ and $(S',M',U')$, we say that a continuous map $f:S\to S'$ is a \emph{morphism} if
\begin{enumerate}[$\bullet$]
\item $f^{-1}(M'\cup U')\subseteq M\cup U$ and $f(U)\subseteq U'$;
\item For each point $p\in S\setminus I^f$, there exists a neighborhood $S_p$ of $p$ in $S$ such that the restriction of $f$ to $S_p$ is injective, where $I^f:=f^{-1}(U')\setminus U$;
\item For each point $p\in I^f$, there exists a neighborhood $S_p$ of $p$ in $S$ such that the restriction of $f$ to $S_p$ is a two-fold cover of $f(S_p)$ ramified at $p$.
\end{enumerate}
\end{definition}

\begin{theorem}\cite[Theorem 3.21]{BR}
Let $T^o$ be a triangulation of $(S,M,U)$. Then for each $r$-gon $Q=(\gamma'_1,\cdots,\gamma'_r$ in $(S,M,U)$, there exists an $n$-gon $P=(\gamma_1,\cdots,\gamma_n)\in (T_0(Q,T^o))^n$ for some $n\geq r$, a triangulation $\Delta$ of $P_n$ (the $n$-gon), and an order-preserving embedding $\iota:[r]\hookrightarrow [n]$ such that:
\begin{enumerate}[$(a)$]
\item $\gamma_{ij}\in T_0(Q,T^o)$ if and only if $(i,j)\in \Delta$;
\item $\gamma'_k=\gamma_{\iota(k),\iota(k^+)}$ for all $k\in [r]$ (i.e., $Q$ is a ``sub-polygon" of $P$).
\end{enumerate}
\end{theorem}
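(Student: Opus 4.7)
The plan is to build the canonical polygon $P$ by ``developing'' the triangulation $T^o$ along the boundary of $Q$. Each edge $\gamma'_k$ of $Q$ crosses a finite (possibly empty) sequence of arcs of $T^o$, and these crossings cut a thin strip around $\gamma'_k$ into a chain of triangles that unfolds to a planar fan; gluing these fans cyclically at the vertices of $Q$ produces $P$. The orbifold case is reduced to the surface case by passing to a local two-fold cover near each orbifold point (Subsection \ref{sec:OMCP}), so crossings and triangles can be read off in the usual way.

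For each edge $\gamma'_k$ with $\gamma'_k\notin T^o$, list the arcs $\tau^k_1,\ldots,\tau^k_{d_k}\in T^o$ it crosses in order, yielding a chain of triangles $\Delta^k_0,\ldots,\Delta^k_{d_k}$ of $T^o$ whose union unfolds to a planar $(d_k+2)$-gon $P^k$ with $\gamma'_k$ as a distinguished diagonal; by construction, all sides and internal diagonals of $P^k$ lie in $T_0(Q,T^o)$. If $\gamma'_k\in T^o$, set $P^k$ to be the single edge $\gamma'_k$. Now glue the fans $P^1,\ldots,P^r$ cyclically: at each vertex $v_k$ of $Q$ shared by $\gamma'_{k-1}$ and $\gamma'_k$, identify the terminal vertex of $P^{k-1}$ with the initial vertex of $P^k$ that lies above $v_k$. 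The result is an $n$-gon $P=(\gamma_1,\ldots,\gamma_n)$ with $n=\sum_k(d_k+1)\geq r$ and $\gamma_i\in T_0(Q,T^o)$; define $\iota:[r]\hookrightarrow[n]$ by sending $k$ to the initial index of $P^k$ in the cyclic order of $P$, and let $\Delta$ consist of the internal diagonals of each fan $P^k$ together with the chords $\gamma'_k=\gamma_{\iota(k),\iota(k^+)}$, which are now diagonals of $P$.

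Property (b) is immediate from the definition of $\iota$. For property (a), the edges of $\Delta$ lie in $T_0(Q,T^o)$ by construction (internal diagonals of the fans are arcs of $T^o$ and the $\gamma'_k$ are edges of $Q$); conversely, any chord $\gamma_{ij}$ of $P$ that belongs to $T_0(Q,T^o)$ must be disjoint in the interior of $P$ from the arcs of $T^o$ that were used to build the fans, so it coincides either with a fan-internal diagonal or with one of the $\gamma'_k$, and therefore appears in $\Delta$.

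The main obstacle will be the cyclic gluing step at vertices of $Q$ that are punctures surrounded by loops or self-folded triangles of $T^o$: there the fans $P^{k-1}$ and $P^k$ may spiral around the puncture a different number of times, so one must verify that identifying their endpoints above $v_k$ still produces a planar $n$-gon whose chord structure is described by $\Delta$. An analogous delicacy arises at vertices that are orbifold points (endpoints of pending edges of $Q$), where the local two-fold cover from Subsection \ref{sec:OMCP} must be used to check that adjacent fans fit together consistently.
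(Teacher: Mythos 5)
The statement you are trying to prove is not proved in this paper at all: it is quoted verbatim from \cite[Theorem 3.21]{BR} and used as a black box (and in fact the set $T_0(Q,T^o)$ that appears in it is never defined in this paper — it belongs to the BR framework). So there is no internal proof here for me to compare your argument against; the only meaningful comparison would be with the proof in \cite{BR} itself.

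As for the proposal on its own terms: the fan-unfold-and-glue picture is the right geometric intuition for canonical polygons, and the verification of property (b) is immediate from your definition of $\iota$. But as written this is a sketch, not a proof, and you essentially say so yourself. Your closing paragraph names two unresolved ``main obstacles'' — adjacent fans spiraling different amounts around a puncture enclosed by loops or self-folded triangles, and consistency of the gluing at orbifold points via the local two-fold cover — and then stops. Those are exactly the places where the naive cyclic gluing can fail to produce a planar $n$-gon, so deferring them leaves the heart of the argument unfinished. In addition, the converse half of property (a) is asserted rather than argued: you claim that any chord $\gamma_{ij}$ of $P$ lying in $T_0(Q,T^o)$ must be interior-disjoint from the arcs of $T^o$ used to build the fans, but without pinning down the actual definition of $T_0(Q,T^o)$ from \cite{BR} that implication has no content, and even with the definition it requires a careful argument that a chord of the developed polygon which projects to an admissible curve cannot cut across a fan. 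To make this a complete proof you would need to (i) state the definition of $T_0(Q,T^o)$, (ii) resolve the spiraling/orbifold gluing issues, e.g.\ by an induction on the total number of crossings $\sum_k d_k$ or by cutting along a maximal tree as BR do, and (iii) give the two-sided argument for (a).
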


In particular, if $Q=(\gamma,\bar\gamma)$ is the digon formed by an arc $\gamma$ and its inverse, then we call the polygon $P_{T^o}(\gamma)$ the \emph{canonical polygon} of $\gamma$ with respect to $T^o$.

\begin{example}
In Figure \ref{F-cp}, the left picture is an annulus with one marked point on each boundary and triangulation $\{1,2\}$. The arc $\gamma$ crosses through triangles $(1,3,2), (2,4,1),(1,3,2)$ and $(2,4,1)$ consecutively. Thus the canonical polygon for $\gamma$ is as shown in the right picture.
\begin{figure}[h]

\includegraphics{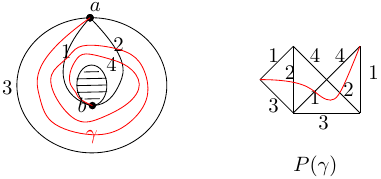}

\caption{Canonical polygon}\label{F-cp}
\end{figure}
\end{example}

\subsection{Snake graph and perfect matching}\label{sec:P}

We first recall the definition of an abstract snake graph \cite{CS}. A tile is considered as a graph with four vertices and four edges in the obvious way. Throughout this paper, we denote by $N(G)$ (resp.  $S(G),W(G),E(G)$) the north (resp. south, west, east) edge of a tile $G$.

\begin{figure}[h]

\includegraphics{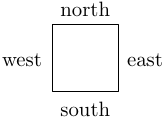}

\end{figure}

\begin{definition}\cite{CS}
A \emph{snake graph} $G$ is a connected graph consisting of a finite sequence of tiles $G_1,G_2,\cdots, G_c$ with $c\geq 1$, such that for each $i=1,2,\cdots,c-1$

\begin{enumerate}[(i)]
    \item  $G_i$ and $G_{i+1}$ share exactly one edge which is either the north edge of $G_i$ and the south edge of $G_{i+1}$ or the east edge of $G_i$ and the west edge of $G_{i+1}$.
    \item  $G_i$ and $G_j$ have no edge in common whenever $|i-j|\geq  2$.
    \item  $G_i$ and $G_j$ are disjoint whenever $|i-j|\geq 3$.
\end{enumerate}
\end{definition}

Let $H$ be a graph. In this paper, we denote by $edge(H)$ the edge set of $H$.

\begin{definition} (\cite[Definition 4.6]{MSW})
A \emph{perfect matching} of a graph $G$ is a subset $P$ of the edges of $G$ such that each vertex of $G$ is incident to exactly one edge of $P$. Denote by $\mathcal P(G)$ the set of all perfect matchings of $G$.

\end{definition}

\begin{definition}(\cite{MSW1,H,H1})
Let $P$ be a perfect matching of a snake graph $G$. We say that $P$ can \emph{twist} on a tile $G_i$ if there are two edges of $G_i$ in $P$. The perfect matching obtained by replacing the two edges with the remaining two edges of $G_i$ is called the \emph{twist} of $P$ at $G_i$, denoted by $\mu_{G_i}(P)$.
\end{definition}

\begin{figure}[h]

\includegraphics{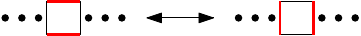}

\centerline{Twist of perfect matching}

\end{figure}

\begin{definition}
For a perfect matching $P$, we call an edge in $P$ \emph{$\tau$-mutable} if it is an edge of some tile with diagonal labeled $\tau$ that $P$ can twist on.
\end{definition}

Next, we recall the construction of the snake graph $G_{T,\gamma}$ and its perfect matching. For more details, see \cite[Section 4]{MSW}, \cite{CS}.

Let $T^o$ be an ideal triangulation and $\gamma$ be a curve connecting two marked points. Let $p_0$ be the starting point of $\gamma$, and let $p_{c+1}$ be its endpoint. Assume that $\gamma$ crosses $T^o$ at $p_1,\cdots,p_c$ sequentially.

Let $\tau_{i_j}$ be the arc in $T^o$ containing $p_j$. Let $\Delta_{j-1}$ and $\Delta_{j}$ be the two ideal triangles in $T$ on either side of $\tau_{i_j}$.

For each $p_j$, we associate a \emph{tile} $G_j$ as follows. Define $\Delta_1^j$ and $\Delta_2^j$ to be two triangles with edges labeled as in $\Delta_{j-1}$ and $\Delta_{j}$, further, the orientations of $\Delta_1^j$ and $\Delta_2^j$ both agree with those of $\Delta_{j-1}$ and $\Delta_{j}$ if $j$ is odd; the orientations of $\Delta_1^j$ and $\Delta_2^j$ both disagree with those of $\Delta_{j-1}$ and $\Delta_{j}$ otherwise. We glue $\Delta_1^j$ and $\Delta_2^j$ at the edge labeled $\tau_{i_j}$, so that the orientations of $\Delta_1^j$ and $\Delta_2^j$ both either agree or disagree with those of $\Delta_{j-1}$ and $\Delta_{j}$. We say the edge labeled $\tau_{i_j}$ the \emph{diagonal} of $G_j$.

The two arcs $\tau_{i_j}$ and $\tau_{i_{j+1}}$ form two edges of the triangle $\Delta_j$. Denote the third edge of $\Delta_j$ by $\tau_{[\gamma_j]}$. After gluing the tiles $G(p_j)$ and $G(p_{j+1})$ at the edge labeled $\tau_{[\gamma_j]}$ for $1\leq j<d-1$ step by step, we obtain a graph, denote as $\overline{G_{T^o,\gamma}}$. Let $G_{T^o,\gamma}$ be the graph obtained from $\overline{G_{T^o,\gamma}}$ by removing the diagonal of each tile.

In particular, when $\gamma\in T$, let $G_{T^o,\gamma}$ be the graph with one only edge labeled $\gamma$.

Denote \[\begin{array}{ccl} rel(G_i,T^o)=
\left\{\begin{array}{ll}
1, &\mbox{if the orientations of $G_i$ and $T^o$ coincide}, \\
-1, &\mbox{if the orientations of $G_i$ and $T^o$ are different.}
\end{array}\right.
\end{array}\]

\begin{definition} (\cite[Definition 4.7]{MSW})
Let $a_1$ and $a_2$ be the two edges of $G_{T^o,\gamma}$ which lie in the counterclockwise direction from the diagonal of $G_1$. Then the \emph{minimal matching} $P_{-}$ is defined as the unique matching which contains only boundary edges and does not contain edges $a_1$ or $a_2$. The \emph{maximal matching} $P_{+}$ is the other matching with only boundary edges.
\end{definition}

\begin{lemma}\cite[Lemma 2.4]{H}\label{max-min}
Let $a$ be an edge of the tile $G_j$. If $a$ is in the maximal/minimal perfect matching of $G_{T,\gamma}$, then $a$ lies in the counterclockwise/clockwise direction from the diagonal of $G_j$ when $j$ is odd and lies in the clockwise/counterclockwise direction from the diagonal of $G_j$ when $j$ is even.
\end{lemma}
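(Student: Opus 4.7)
The plan is to argue by induction on the tile index $j$, with the base case $j=1$ essentially immediate from the definitions of $P_+$ and $P_-$. For $j=1$, the edges $a_1,a_2$ are by construction the two edges of $G_1$ lying counterclockwise from its diagonal; $P_+$ contains them (and only boundary edges), while $P_-$ contains none of them (and only boundary edges). So an edge of $G_1$ belongs to $P_+$ iff it is one of the CCW edges, and an edge of $G_1$ belongs to $P_-$ iff it is one of the remaining (CW) edges. This handles the odd base case, which is consistent with what the lemma claims for $j$ odd.

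For the inductive step, the bridge between consecutive tiles is the shared edge $e:=e_{j,j+1}$, which is the unique edge of the snake graph internal to $G_j\cup G_{j+1}$. Since $P_+$ and $P_-$ contain only boundary edges, $e\notin P_\pm$. The decisive combinatorial fact, built into the construction of $G_{T^o,\gamma}$, is the alternation clause ``the orientations of $\Delta_1^j,\Delta_2^j$ both agree with those of $\Delta_{j-1},\Delta_j$ if $j$ is odd, and both disagree otherwise.'' This alternation is equivalent to saying that $e$ lies on the CCW side of the diagonal in one of $G_j,G_{j+1}$ and on the CW side of the diagonal in the other, i.e.\ $e$ ``switches side'' when read from $G_j$ to $G_{j+1}$.

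Using this, I would run the induction as follows. Suppose by IH that for odd $j$ the edges of $G_j$ in $P_+$ are exactly its CCW edges (and dually the CW ones for $P_-$). Each vertex of $G_j$ not covered by an edge of $G_j$ in $P_+$ must be covered by $e$ or by an edge of $G_{j+1}$; since $e\notin P_+$, those shared vertices are covered by edges of $G_{j+1}$ in $P_+$. Tracking which two vertices of $G_{j+1}$ are forced (via the sharing pattern of $e$, which sits at $G_{j+1}$ on the CW side by the switching fact) identifies the two edges of $G_{j+1}$ in $P_+$ as exactly the pair on the CW side of the diagonal of $G_{j+1}$, which is precisely the assertion for $j+1$ even. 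The symmetric argument handles the step from even to odd $j$ and from $P_+$ to $P_-$.

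The main obstacle is the case analysis hiding in the phrase ``the two edges of $G_j$ in $P_+$'': for a tile in the middle of a bend of the snake, the CCW edges of $G_j$ need not by themselves cover the four vertices of $G_j$, so the matching on $G_j$ must cooperate with the matchings on both neighbors. Handling this gracefully requires categorizing tiles by whether the snake goes straight or bends at position $j$, and verifying in each case that the switching property of $e$ together with the IH produces a consistent perfect matching on $G_1\cup\cdots\cup G_{j+1}$. The clean invariant that makes the bookkeeping work is precisely the parity-dependent CCW/CW dichotomy in the statement, which is why the induction closes.
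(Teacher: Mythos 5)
The paper does not prove this lemma; it cites it directly from \cite{H}, so there is no in-paper argument to compare yours against. Assessed on its own, your proposal captures the right mechanism (the parity-dependent alternation flows from the orientation-reversal clause in the construction, and the fact that interior shared edges are never in $P_\pm$ propagates information tile to tile), but it is an outline rather than a proof. Two steps are asserted without argument, and they are exactly where the content lies. First, the ``switching fact'' --- that the shared edge $e$ between $G_j$ and $G_{j+1}$ sits on opposite sides of the diagonal in the two tiles --- is claimed to be ``equivalent'' to the alternating-orientation clause, but that equivalence is itself what needs proof: in $G_j$ the shared edge $\tau_{[\gamma_j]}$ is compared against the diagonal $\tau_{i_j}$, while in $G_{j+1}$ it is compared against a \emph{different} diagonal $\tau_{i_{j+1}}$, so flipping the orientation of the half-tile is not by itself a statement about which side of the (new) diagonal the edge sits on. Second, the inductive step is never carried out; you name the bend bookkeeping as the obstacle and then assert the induction ``closes,'' without checking a single case. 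As written, this would not pass as a complete proof.

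A shorter argument bypasses the induction and the case analysis at bends. Every vertex of the snake graph $G_{T^o,\gamma}$ lies on the outer boundary, which is a single cycle of even length $2c+2$. A perfect matching of the snake graph consisting only of boundary edges is therefore exactly a perfect matching of this cycle, and an even cycle has precisely two such matchings, namely the two alternating classes of boundary edges; by definition $P_-$ is the class avoiding $a_1,a_2$ and $P_+$ is the other. The lemma then reduces to a local computation: for each tile $G_j$, determine on which side of its diagonal each of its boundary edges lies, using the orientation clause (agree if $j$ odd, disagree if $j$ even) applied to a single half-tile at a time. The alternation around the boundary cycle then translates directly into the stated CCW/CW parity dichotomy, with no induction and no bend/straight case split to close. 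This would also make the relationship with Proposition~\ref{prop-mm} in the paper (the $N/S$ versus $W/E$ description in terms of $\rel(G_i,T^o)$) transparent, which your induction does not.
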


\subsection{Complete $(T^o,\gamma)$-path}\label{Sec-CPB}

Let $T^o$ be an ideal triangulation and $\gamma$ be an arc. Choose an orientation of $\gamma$, assume that $\overrightarrow\gamma$ crosses $T^o$ at $p_1,\cdots,p_c$ sequentially. Denote $p_0=s(\overrightarrow\gamma)$ and $p_{c+1}=t(\overrightarrow\gamma)$. Suppose that $p_j\in \tau_{i_j}$ for $j=1,\cdots,c$.

\begin{definition} \cite{MS,S} A sequences of oriented arcs $\overrightarrow\xi=(\overrightarrow{\xi}_{\hspace{-2pt}1},\cdots,\overrightarrow{\xi}_{\hspace{-2pt}2c+1})$ is called a \emph{complete $(T^o,\gamma)$-path} if the following
axioms hold:
\begin{enumerate}[$(T1)$]
\item $\xi_i\in T^o$ for all $i\in \{1,\cdots,2c+1\}$;
\item $s(\overrightarrow\xi_{\hspace{-2pt}1})=s(\overrightarrow\gamma)$, $t(\overrightarrow\xi_{\hspace{-2pt}2c+1})=t(\overrightarrow\gamma)$ and $s(\overrightarrow\xi_{\hspace{-2pt}i+1})=t(\overrightarrow\xi_{\hspace{-2pt}i})$ for all $i\in \{1,\cdots, 2c\}$;
\item The even arcs are precisely the arcs crossed by $\gamma$ in order, that is, $\xi_{2k} = \tau_{i_k}$ for all $k\in\{1,\cdots,c\}$;
\item For all $k=0,1,2,\cdots,c$, the segment $\overrightarrow\gamma_{\hspace{-2pt}k}$ of $\overrightarrow\gamma$ starting from $p_k$ and ending at $p_{k+1}$ is homotopic to the segment of the path $\overrightarrow\xi$ starting at the point $p_k$ following $\overrightarrow\xi_{\hspace{-2pt}2k}, \overrightarrow\xi_{\hspace{-2pt}2k+1}$ and $\overrightarrow\xi_{\hspace{-2pt}2k+2}$ until the point $p_{k+1}$.
\end{enumerate}
Denote by $\mathcal {CP}(T^o,\gamma)$ the set of all complete $(T^o,\gamma)$-paths.
\end{definition}

For any complete $(T^o,\gamma)$-path $\overrightarrow\xi=(\overrightarrow\xi_{\hspace{-2pt}1},\cdots,\overrightarrow\xi_{\hspace{-2pt}2c+1})$ and arc $\zeta\in T^o$, denote by
\begin{equation}\label{equ-m1}
m(\overrightarrow\xi,\zeta)=\sum_{i=1}^{2c+1}(-1)^{i-1}\delta_{\xi_i,\zeta},
\end{equation}
where
\begin{equation}\label{equ-delta}
        \delta_{\xi_i,\zeta}=
        \begin{cases}
        1, &\mbox{if $\xi_i=\zeta$},\\
        0, &\mbox{if $\xi_i\neq \zeta$}.
        \end{cases}
    \end{equation}

By \cite[Theorem 4.4]{MS}, there is a natural bijective map from $\mathcal P(G_{T^o,\gamma})$ to $\mathcal {CP}(T^o,\gamma)$. Roughly speaking, for any perfect matching $P$, by taking the diagonals of all the tiles, we get a complete path, see the following figure for an illustration.

\begin{figure}[h]

\includegraphics{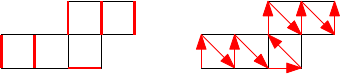}

\end{figure}

\begin{definition}
We say that two complete $(T^o,\gamma)$-paths $\overrightarrow\xi$ and $\overrightarrow\zeta$ are related by a \emph{twist at $\alpha\in T^o$} if there is an even $j$ such that $\overrightarrow\xi_{\hspace{-2pt}i}=\overrightarrow\zeta_{\hspace{-2pt}i}$ for $i\neq j-1,j,j+1$ and $(\overrightarrow\xi_{\hspace{-2pt}j-1},\overrightarrow\zeta_{\hspace{-2pt}{j+1}},\overleftarrow\xi_{\hspace{-2pt}j+1},\overleftarrow\zeta_{\hspace{-2pt}j-1})$ is a quadrilateral in $T^o$ with diagonal $\alpha$. In this case, we see that $\xi_{j-1}, \xi_{j+1}$ are $\alpha$-twist-able edges in $\overrightarrow\xi$.
\end{definition}

\newpage

\section{An isomorphism of quantum cluster algebras}\label{Sec-iso}

Let $\Sigma$ be an orbifold and $\mathcal A_v(\Sigma)$ be a quantum cluster algebra from $\Sigma$. For any puncture $q$ and tagged arc $\beta$, denote by $\beta^{(q)}$ the tagged arc obtained from $\beta$ by changing the tags at $q$. Let $T$ be a tagged triangulation of $\Sigma$. Denote by $T^{(q)}=\{\beta^{(q)}\mid \beta\in T\}$. Then $T^{(q)}$ is a tagged triangulation and $B(T)=B(T^{(q)})$. Suppose that the extended exchange matrix and the quantum commutative matrix of $\mathcal A_v(\Sigma)$ at $T$ are $\widetilde B(T)=\left(\begin{array}{c} B(T) \\ C(T)\end{array}\right)$ and $\Lambda(T)$, respectively. Assume that $\Sigma$ is not a closed surface with exactly one puncture, let $\mathcal A^{(q)}_v$ be the quantum cluster algebra from $\Sigma$ such that the extended exchange matrix and quantum commutative matrix at $T^{(q)}$ are $\widetilde B(T)$ and $\Lambda(T)$, respectively.

By Proposition \ref{Pro-11}, for any tagged arc $\beta$, denote by $X_\beta$ and $X^{(q)}_\beta$ the quantum cluster variables of $\mathcal A_v(\Sigma)$ and $\mathcal A^{(q)}_v(\Sigma)$, respectively corresponding to $\beta$. Then we have the following proposition. Denote by $X_{n+1},\cdots,X_m$ the frozen quantum cluster variables of $\mathcal A_v(\Sigma)$, denote by $X^{(q)}_{n+1},\cdots,X^{(q)}_m$ the frozen quantum cluster variables of $\mathcal A^{(q)}_v(\Sigma)$.

\begin{lemma}\label{Lem-comq}
Let $T$ be a tagged triangulation and $q$ be a puncture. For any tagged arc $\beta\in T$, we have
$\mu_{\beta^{(q)}}(T^{(q)})=(\mu_\beta(T))^{(q)}.$
\end{lemma}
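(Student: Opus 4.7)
The plan is to observe that the tag-changing map $\beta\mapsto\beta^{(q)}$ is an involution on the set of tagged arcs of $\Sigma$ that preserves pairwise compatibility, and then reduce the lemma to the uniqueness of the new arc produced by a flip in a tagged triangulation.

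First I would check that the operation $(-)^{(q)}$ preserves the compatibility relation of Definition~2.14. If $\alpha,\beta$ are compatible tagged arcs, then their underlying arcs are still compatible after the operation (which does not alter underlying arcs), and the conditions on shared endpoints only involve the agreement of tags at that endpoint. Since $(-)^{(q)}$ flips tags only at $q$, it flips the tags of $\alpha$ and $\beta$ at $q$ simultaneously, so the local agreement of tags at any common endpoint is unchanged. Moreover, $(-)^{(q)}$ is clearly an involution. Hence $T\mapsto T^{(q)}$ is a bijection on the set of tagged triangulations of $\Sigma$.

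Next I would use the uniqueness of flips in tagged triangulations: for a tagged triangulation $T$ and $\beta\in T$, there is a unique tagged arc $\beta'\neq\beta$ such that $(T\setminus\{\beta\})\cup\{\beta'\}$ is again a tagged triangulation; this is $\mu_\beta(T)=(T\setminus\{\beta\})\cup\{\beta'\}$. Applying $(-)^{(q)}$ to both sides and using that this operation acts element-wise on $T$,
\[
(\mu_\beta(T))^{(q)} = (T^{(q)}\setminus\{\beta^{(q)}\})\cup\{\beta'^{(q)}\}.
\]
By the previous paragraph, the left-hand side is a tagged triangulation containing $T^{(q)}\setminus\{\beta^{(q)}\}$, and $\beta'^{(q)}\neq\beta^{(q)}$ since $\beta'\neq\beta$ and $(-)^{(q)}$ is an involution. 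The uniqueness of the flip applied to $\beta^{(q)}\in T^{(q)}$ then forces
\[
\mu_{\beta^{(q)}}(T^{(q)}) = (T^{(q)}\setminus\{\beta^{(q)}\})\cup\{\beta'^{(q)}\} = (\mu_\beta(T))^{(q)},
\]
which is exactly the desired identity.

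The only genuinely delicate point is the preservation-of-compatibility check, since one must verify that each clause of the tagged-compatibility definition survives the simultaneous tag flip at $q$; the other steps are formal consequences of the involution property and of flip uniqueness for tagged triangulations. I would therefore concentrate the written argument on the compatibility verification and treat the rest as a one-line appeal to uniqueness.
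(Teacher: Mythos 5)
Your proposal is correct and follows essentially the same route as the paper: both show that the tag-flip $(-)^{(q)}$ preserves compatibility of tagged arcs and then conclude by the uniqueness of flips. The only difference is that the paper simply cites \cite[Remark 5.13]{FT} for the compatibility preservation, whereas you verify it directly from Definition 7.4.
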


\begin{proof}
Suppose that $\mu_\beta(T)=(T\setminus \{\beta\})\cup \{\beta'\}$ for some tagged arc $\beta'$. Then for any $\alpha\in T\setminus \{\beta\}$ we have $\beta'$ is compatible with $\alpha$, by \cite[Remark 5.13]{FT}, $\beta'^{(q)}$ is compatible with $\alpha^{(q)}$. It follows that
$\mu_{\beta^{(q)}}(T^{(q)})=(\mu_\beta(T))^{(q)}.$
\end{proof}

\begin{proposition}\label{Pro-iso}
Assume that $\Sigma$ is not a closed surface with exactly one puncture. Then there is a $\mathbb Z[v^{\pm1}]$-algebra isomorphism $\sigma: \mathcal A_v(\Sigma)\to \mathcal A^{(q)}_v(\Sigma)$ which satisfies
\begin{enumerate}[$(1)$]
\item $\sigma(X_\beta)=X^{(q)}_{\beta^{(q)}}$ for all tagged arc $\beta$.
\item $\sigma$ preserves mutations.
\end{enumerate}
\end{proposition}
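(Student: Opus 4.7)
The plan is to build $\sigma$ as an isomorphism of quantum tori at the initial tagged triangulation $T$ and then propagate it across every quantum seed by induction on mutation distance.

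First, observe that $\beta \mapsto \beta^{(q)}$ is an involution on the set of tagged arcs of $\Sigma$ (arcs without an end at $q$ and boundary segments are fixed), and it preserves compatibility. In particular $T^{(q)}$ is a tagged triangulation with $B(T^{(q)}) = B(T)$, since tagging does not alter signed adjacency. By the very definition of $\mathcal A^{(q)}_v(\Sigma)$, its extended exchange matrix and quantum commutative matrix at $T^{(q)}$ coincide with those of $\mathcal A_v(\Sigma)$ at $T$. Hence the quantum tori $\mathcal T(T)$ and $\mathcal T(T^{(q)})$ have identical structure constants, and the rule
\[
\sigma_0(X_\beta) = X^{(q)}_{\beta^{(q)}} \ \text{for } \beta \in T, \qquad \sigma_0(X_i) = X^{(q)}_i \ \text{for } i\in [n+1,m],
\]
extends uniquely to a $\mathbb Z[v^{\pm 1}]$-algebra isomorphism $\sigma_0 : \mathcal T(T) \to \mathcal T(T^{(q)})$.

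Next I propagate the identification across all seeds. Proceed by induction on the number of flips needed to reach a tagged triangulation $T'$ from $T$, claiming that for every $\beta \in T'$ the variable $X_\beta$ corresponds to the cluster variable at the seed of $\mathcal A^{(q)}_v(\Sigma)$ indexed by $(T')^{(q)}$ in position $\beta^{(q)}$, and that $\sigma_0$ sends the former to the latter. The base $T' = T$ is built into the definition of $\sigma_0$. For the inductive step, flip $T'$ at some $\beta\in T'$ to get $T'' = \mu_\beta(T')$ with new arc $\beta^{\sharp}$. Lemma \ref{Lem-comq} gives $(T'')^{(q)} = \mu_{\beta^{(q)}}((T')^{(q)})$, whose new arc is $(\beta^{\sharp})^{(q)}$. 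The quantum exchange relation expressing $X_{\beta^{\sharp}}$ in terms of $\{X_\alpha : \alpha \in T'\}$ has exponents determined by $\widetilde B(T')$ and quasi-commutation constants determined by $\Lambda(T')$; the corresponding relation for $X^{(q)}_{(\beta^{\sharp})^{(q)}}$ uses the same matrices, because equality of the extended exchange and quantum commutative matrices at $T$ versus $T^{(q)}$ is preserved under seed mutation (the mutation formulas are identical matrix-level operations). Applying $\sigma_0$ and using the inductive hypothesis on the monomials $X_\alpha$ yields $\sigma_0(X_{\beta^{\sharp}}) = X^{(q)}_{(\beta^{\sharp})^{(q)}}$, completing the induction.

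Finally, by Proposition \ref{Pro-11} every tagged arc of $\Sigma$ belongs to at least one tagged triangulation, and under the assumption that $\Sigma$ is not a once-punctured closed surface the flip graph on tagged triangulations is connected, so every such triangulation is reachable from $T$ by flips. Therefore $\sigma_0$ restricts to a bijection between the quantum cluster variables, yielding the desired isomorphism $\sigma : \mathcal A_v(\Sigma) \to \mathcal A^{(q)}_v(\Sigma)$ with $\sigma(X_\beta) = X^{(q)}_{\beta^{(q)}}$; assertion (2) on preservation of mutations is exactly the content of the induction step. The only genuinely delicate point is verifying that the exchange binomial for $X_{\beta^{\sharp}}$ maps correctly; this is routine once one notes that Lemma \ref{Lem-comq} together with the standard matrix-mutation formulas forces the extended exchange matrix and quantum commutative matrix to remain equal at paired seeds $T'$ and $(T')^{(q)}$ throughout the mutation process.
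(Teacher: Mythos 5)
Your proof is correct and follows essentially the same route as the paper: identify the quantum tori at $T$ and $T^{(q)}$ via $X_\beta\mapsto X^{(q)}_{\beta^{(q)}}$, use Lemma \ref{Lem-comq} together with the stability of the matrix identities $\widetilde B^{(q)}(T'^{(q)})=\widetilde B(T')$, $\Lambda^{(q)}(T'^{(q)})=\Lambda(T')$ under the mutation formulas to propagate the identification across flips, and invoke connectivity of the flip graph (valid precisely because $\Sigma$ is not a once-punctured closed surface). The only cosmetic difference is that the paper spells out the column/row indexing and records the equation $\widetilde B^{(q)}(\mu_{\beta^{(q)}}T^{(q)})=B(\mu_\beta T)$ explicitly, whereas you summarize this as the matrix-mutation formulas forcing the paired seeds to stay matched; the mathematical content is the same.
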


\begin{proof}
For any tagged triangulation $T'$, denote by $\widetilde B^{(q)}(T')$ and $\Lambda^{(q)}(T')$ the extended exchange matrix and quantum commutative matrix, respectively of $\mathcal A^{(q)}_v(\Sigma)$ at $T'$, denote by $\mathcal T(T')$ and $\mathcal T^{(q)}(T')$ the quantum torus at $T'$ for $\mathcal A_v(\Sigma)$ and $\mathcal A^{(q)}_v(\Sigma)$, respectively. Then we have $\widetilde B^{(q)}(T^{(q)})=\widetilde B(T)$ and $\Lambda^{(q)}(T^{(q)})=\Lambda(T)$, moreover, for any $\beta\in T$, the column (resp. row) of $\widetilde B(T)$ which is indexed by $X_\beta$ equals the column (resp. row) of $\widetilde B^{(q)}(T^{(q)})$ which is indexed by $X_{\beta^{(q)}}$; for any $i\in \{n+1,\cdots,m\}$ the row of $\widetilde B(T)$ which is indexed by $X_\beta$ equals the row of $\widetilde B^{(q)}(T^{(q)})$ which is indexed by $X_{\beta^{(q)}}$. It is also true for the quantum commutative matrices $\Lambda(T)$ and $\Lambda^{(q)}(T^{(q)})$.

Clearly, we have a $\mathbb Z[v^{\pm1}]$-algebra isomorphism of quantum torus
\begin{equation*}
\sigma: \mathcal T(T) \to \mathcal T^{(q)}(T^{(q)}), X_\alpha\mapsto X^{(q)}_{\alpha^{(q)}}.
\end{equation*}

For any $\beta\in T$, suppose that $\mu_\beta(T)=(T\setminus \{\beta\})\cup \{\beta'\}$ for some tagged arc $\beta'$. As $\widetilde B^{(q)}(T^{(q)})=\widetilde B(T)$, by Lemma \ref{Lem-comq} we have $\sigma(X_{\beta'})=X_{\beta'^{(q)}}$, moreover, we have
\begin{equation*}
\widetilde B^{(q)}(\mu_{\beta^{(q)}}T^{(q)})=\mu_{X^{(q)}_{\beta^{(q)}}}(B^{(q)}(T^{(q)}))=\mu_{X_{\beta}}(B(T))=B(\mu_\beta T).
\end{equation*}
Similarly, we have
$
\Lambda^{(q)}(\mu_{\beta^{(q)}}T^{(q)})=\Lambda(\mu_\beta T).$

Therefore, $\sigma$ induces a $\mathbb Z[v^{\pm1}]$-algebra isomorphism of quantum torus
\begin{equation*}
\sigma: \mathcal T(\mu_\tau T) \to \mathcal T^{(q)}(\mu_{\tau^{(q)}}T^{(q)}), X_{\alpha}\mapsto X^{(q)}_{\alpha^{(q)}}.
\end{equation*}

As $\Sigma$ is not a closed surface with exactly one puncture, any two tagged triangulations of $\Sigma$ are connected by a sequence of flips. By induction we see that for any tagged triangulation $T'$, $\sigma$ induces a $\mathbb Z[v^{\pm1}]$-algebra isomorphism of quantum torus
\begin{equation*}
\sigma: \mathcal T(T') \to \mathcal T^{(q)}(T'^{(q)}), X_{\alpha}\mapsto X^{(q)}_{\alpha^{(q)}}.
\end{equation*}
The result follows.
\end{proof}

\begin{remark}\label{Rem-3cases}
Let $T^o$ be an ideal triangulation of $\Sigma$ and $\beta$ be any arc. Assume that $\widetilde\beta$ starts from $p$ and ends at $q$. Let $T$ be the tagged triangulation corresponding to $T^o$. By Proposition \ref{Pro-iso}, to give an expansion formula of $X_\gamma$ concerning a quantum seed $X_\Delta$ for any tagged arc $\gamma$ and tagged triangulation $\Delta$, it suffices to restrict to the following three cases:
\begin{enumerate}[$(i)$]
    \item $\gamma=\widetilde\beta$ and $\Delta=T$;
    \item $q$ is a puncture with $q\neq p$,
    $\gamma=\widetilde\beta^{(q)}$, $\Delta=T$ contains no arcs tagged notched at $q$;
    \item $p,q$ are punctures, $\gamma=\widetilde\beta^{(p,q)}$, $\Delta=T$ contains no arcs tagged notched at $p$ or $q$.
\end{enumerate}
\end{remark}

\newpage

\section{Three lattices associated with tagged arcs}\label{Sec-threesets}

Fix an ideal triangulation $T^o$ of $\Sigma$ and an arc $\beta$. Let $T$ be the corresponding tagged triangulation of $T^o$.
 Let \begin{equation}\label{Eq-wideg}
        \widetilde \beta=
            \begin{cases}
                \beta, & \mbox{if $\beta$ is not a pending arc of weight $1/2$},\vspace{1mm}\\
                sl(\beta), & \mbox{if $\beta$ is a pending arc of weight $1/2$ }.
            \end{cases}
\end{equation}

Denote by $G_1,\cdots, G_c$ the tiles of $G_{T^o,\widetilde\beta}$ in order. Assume that $\widetilde\beta$ starts from $p$ and ends at $q$. In this section, we recall the three lattices $\mathcal L(T^o,\widetilde\beta)$, $\mathcal L(T^o,\widetilde\beta^{(q)})$ and $\mathcal L(T^o,\widetilde\beta^{(p,q)})$ constructed in \cite{H2}, which will be the index sets for our expansion formulas in the quantum case. These three lattices also appear in \cite{BHR,H} for providing expansion formulas for tagged curves in (non-commutative) cluster algebras from $\Sigma$.

\subsection{$\mathcal L(T^o,\widetilde\beta)$}

Let $\mathcal L(T^o,\widetilde\beta)=\mathcal P(G_{T^o,\widetilde\beta})$. For any $P\in \mathcal L(T^o,\widetilde\beta)$ can twist on a tile $G_i$, let $P<\mu_{G_i}(P)$ if $W(G_i),E(G_i)\in P, rel(G_i,T^o)=1$ or $N(G_i),S(G_i)\in P, rel(G_i,T^o)=-1$.

\begin{proposition}\label{Pro-la}\cite{CS1,MSW1}
$\mathcal L(T^o,\widetilde\beta)$ with above order forms a lattice.
\end{proposition}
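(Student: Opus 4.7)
The plan is to exhibit $\mathcal L(T^o,\widetilde\beta)$ as the lattice of order ideals of an auxiliary finite poset on the tile set $\{G_1,\ldots,G_c\}$, so that the lattice property follows from Birkhoff's representation theorem. This is the standard strategy used in \cite{CS1,MSW1}, and I would adapt it here while keeping careful track of the orientation signs $\mathrm{rel}(G_i,T^o)$.

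First, I would associate to each perfect matching $P$ its symmetric difference with the minimal matching $P_-$. By Lemma \ref{max-min}, the boundary of any union of consecutive tiles starting from $G_1$ differs from $P_-$ in a controlled way, and in fact $P\triangle P_-$ is always the edge-boundary of a well-defined subset $T(P)\subseteq\{G_1,\ldots,G_c\}$. The assignment $P\mapsto T(P)$ is injective. I would then define a partial order $\preceq$ on tiles by declaring $G_i\preceq G_j$ whenever for every $P\in\mathcal P(G_{T^o,\widetilde\beta})$ we have $G_j\in T(P)\Rightarrow G_i\in T(P)$; concretely, this encodes the forcing induced by shared edges between consecutive tiles (twisted by the signs $\mathrm{rel}(G_i,T^o)$ so that tiles with opposite orientation relative to $T^o$ enter $T(P)$ in the reverse direction). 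The image of $P\mapsto T(P)$ is then precisely the collection of order ideals of $(\{G_1,\ldots,G_c\},\preceq)$.

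Next, I would verify that the cover relation of the order stated in the proposition matches the cover relation for order ideals: a twist $P\lessdot\mu_{G_i}(P)$ corresponds exactly to $T(\mu_{G_i}(P))=T(P)\cup\{G_i\}$ with $G_i$ a maximal new element in the ideal sense. The sign convention --- requiring $W(G_i),E(G_i)\in P$ when $\mathrm{rel}(G_i,T^o)=1$ and $N(G_i),S(G_i)\in P$ when $\mathrm{rel}(G_i,T^o)=-1$ --- is precisely what makes $T(\mu_{G_i}(P))$ strictly larger than $T(P)$ by the single tile $G_i$, again using Lemma \ref{max-min} to identify which side of the diagonal the matching lies on relative to $P_-$. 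Given this dictionary, for any $P,Q\in\mathcal L(T^o,\widetilde\beta)$, meets and joins exist and are determined by
\begin{equation*}
T(P\wedge Q)=T(P)\cap T(Q),\qquad T(P\vee Q)=T(P)\cup T(Q),
\end{equation*}
both of which are order ideals of $(\{G_1,\ldots,G_c\},\preceq)$ and hence correspond to genuine perfect matchings. Distributivity is automatic from Birkhoff.

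The main obstacle will be pinning down the forcing relation $\preceq$ on tiles and verifying that the image of $P\mapsto T(P)$ is exactly the set of order ideals. This requires a careful case analysis of how two consecutive tiles share an edge (north/south glue versus east/west glue) and how the orientation signs $\mathrm{rel}(G_i,T^o)$ propagate across such glues --- a sign flip at a glue must be absorbed into the direction of $\preceq$ so that the twist covering relation is consistent along the whole snake. Once this combinatorial dictionary is established, antisymmetry of $\leq$ on $\mathcal L(T^o,\widetilde\beta)$ follows from the strict monotonicity of $|T(P)|$ along covers, and the lattice conclusion is formal.
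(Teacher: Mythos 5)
The paper does not prove Proposition~\ref{Pro-la}; it cites the result to~\cite{CS1,MSW1}, so there is no in-paper argument to compare against. Your Birkhoff strategy is indeed the one those references pursue, so the overall plan is sound.

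The genuine gap is that the central step is presented as if it were automatic when it is in fact equivalent to the statement being proved. You define $G_i\preceq G_j$ by the condition that $G_j\in T(P)\Rightarrow G_i\in T(P)$ for \emph{every} perfect matching $P$, and then assert that the image of $P\mapsto T(P)$ is exactly the set of order ideals of $(\{G_1,\dots,G_c\},\preceq)$. But for any family $\mathcal F$ of subsets of a finite set, defining $\preceq$ this way always makes every member of $\mathcal F$ an order ideal; the nontrivial content of Birkhoff's theorem is that $\mathcal F$ equals \emph{all} order ideals precisely when $\mathcal F$ is closed under intersection and union (and contains $\emptyset$, $\{G_1,\dots,G_c\}$). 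That closure property is exactly the lattice structure you are trying to establish, so the argument as written is circular. To break the circle you must define $\preceq$ concretely (via the edge-sharing pattern along the snake, with the sign flips you mention) and then independently verify two things: that each $T(P)$ is an order ideal of this concrete poset, and that every order ideal arises as some $T(P)$ (a surjectivity check, typically done by building the matching tile by tile). You flag this as ``the main obstacle'' but do not fill it in, and without it the proposal does not yet constitute a proof. A smaller unproved claim is that $P\,\triangle\,P_-$ is always the boundary of a union of tiles; this is a standard lemma about snake graphs but should be cited or argued.
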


The following proposition is an immediate consequence.

\begin{proposition}\label{prop-mm}
\begin{enumerate}[$(1)$]
\item The maximum element $P_+$ in $\mathcal P(G_{T^o,\widetilde\beta})$ is the perfect matching contains only boundary edges such that for any $e\in P_+\cap edge(G_i)$ we have $e\in \{N(G_i),S(G_i)\}$ if $rel(G_i,T^o)=1$ and $e\in \{W(G_i),E(G_i)\}$ if $rel(G_i,T^o)=-1$;
\item The minimum element $P_-$ in $\mathcal P(G_{T^o,\widetilde\beta})$ is the perfect matching contains only boundary edges such that for any $e\in P_+\cap edge(G_i)$ we have $e\in \{W(G_i),E(G_i)\}$ if $rel(G_i,T^o)=1$ and $e\in \{N(G_i),S(G_i)\}$ if $rel(G_i,T^o)=-1$.
\end{enumerate}
\end{proposition}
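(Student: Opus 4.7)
The plan is to identify $P_+$ and $P_-$ as described in the proposition with the previously-defined maximal/minimal matchings (from the Definition preceding Lemma~\ref{max-min}), and then verify they are genuinely the top and bottom of the lattice by a \emph{local extremum} argument that leverages the lattice structure given by Proposition~\ref{Pro-la}.

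First I would establish a parity/relative-orientation dictionary: by the snake-graph construction (which alternates whether the orientations of $\Delta_1^j,\Delta_2^j$ agree or disagree with the ambient triangles), one has $rel(G_j,T^o)=1$ for odd $j$ and $rel(G_j,T^o)=-1$ for even $j$. Combining this with Lemma~\ref{max-min}, the boundary edges of the previously-defined $P_+$ (resp.\ $P_-$) lie counterclockwise (resp.\ clockwise) from the diagonal exactly when $rel(G_j,T^o)=1$, and the reverse when $rel(G_j,T^o)=-1$. A one-tile inspection then shows that ``the two boundary edges on the counterclockwise side of the diagonal'' are exactly $\{N(G_j),S(G_j)\}$ when $rel(G_j,T^o)=1$ and $\{W(G_j),E(G_j)\}$ when $rel(G_j,T^o)=-1$; likewise for the clockwise side. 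This matches the descriptions in (1) and (2) of the proposition verbatim, so the claimed sets are genuine perfect matchings (the previously-defined $P_\pm$) containing only boundary edges.

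Next I would prove that the matching $P_+$ from the proposition is a local maximum in the order. Suppose $P_+$ can twist on some tile $G_i$; then both of $P_+$'s edges on $G_i$ belong to $P_+$. By the proposition's description these two edges are $\{N(G_i),S(G_i)\}$ if $rel(G_i,T^o)=1$, and $\{W(G_i),E(G_i)\}$ if $rel(G_i,T^o)=-1$. In either case, the hypothesis for $P_+<\mu_{G_i}(P_+)$ in the order definition fails (indeed it succeeds for the reverse comparison), so $\mu_{G_i}(P_+)<P_+$. Therefore no covering twist goes upward from $P_+$, i.e.\ $P_+$ is a maximal element. Because $\mathcal L(T^o,\widetilde\beta)$ is a finite lattice by Proposition~\ref{Pro-la}, it has a unique maximum, and every maximal element coincides with it; hence $P_+$ is the global maximum. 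The symmetric argument — twisting on a tile where $P_-$ carries $\{W,E\}$ (when $rel=1$) or $\{N,S\}$ (when $rel=-1$) can only produce a strictly larger matching — shows $P_-$ is the minimum.

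The only genuine subtlety is the one-tile geometric translation between the ``counterclockwise/clockwise from the diagonal'' language of Lemma~\ref{max-min} and the $\{N,S\}$ vs.\ $\{W,E\}$ language used in the proposition. This amounts to tracking how the orientation of the ambient triangle (and the even/odd alternation in the snake graph construction) positions the diagonal of $G_i$ inside the fixed compass-frame of the tile, and is a routine case check that I would carry out once and apply to both signs of $rel(G_i,T^o)$. Everything else follows formally from the lattice axioms plus the finiteness of $\mathcal L(T^o,\widetilde\beta)$.
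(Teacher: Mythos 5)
The core of your argument is exactly the ``immediate consequence'' the paper has in mind, and it is correct: by the very definition of the order on $\mathcal P(G_{T^o,\widetilde\beta})$ preceding Proposition~\ref{Pro-la}, a twist at $G_i$ increases $P$ precisely when $P$ carries $\{W(G_i),E(G_i)\}$ with $rel(G_i,T^o)=1$, or $\{N(G_i),S(G_i)\}$ with $rel(G_i,T^o)=-1$; hence a matching with the pattern described in (1) has no upward cover, is maximal, and therefore equals the top of the finite lattice from Proposition~\ref{Pro-la}. The dual argument gives (2).

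The one place I would push back is the ``one-tile inspection'' asserting that the two edges on the counterclockwise side of the diagonal are $\{N(G_j),S(G_j)\}$. The diagonal is the common edge of the two glued triangles, so it runs corner to corner; it therefore separates the four boundary edges into two pairs of \emph{adjacent} edges, and $N(G_j)$, $S(G_j)$ always sit on opposite sides of it. So that identification cannot be literally correct; and indeed, in a two-tile example one finds $P_+\cap edge(G_2)=\{E(G_2),W(G_2)\}$, two edges that cannot both lie on one side of the diagonal, so Lemma~\ref{max-min} cannot be read as locating $\{N,S\}$ or $\{W,E\}$ on a single side. Fortunately this step is only serving to show that the set described in the proposition really is a perfect matching. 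You can get that more cleanly by noting that $G_{T^o,\widetilde\beta}$ has exactly two all-boundary-edge matchings (MSW's $P_\pm$); they necessarily alternate between the $\{N,S\}$ and $\{W,E\}$ configurations along the snake, while $rel(G_j,T^o)$ alternates by parity of $j$, so exactly one of the two matchings fits the description in (1) and the other fits (2). With that replacement your argument is complete and matches what the paper treats as immediate.
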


It follows that any two perfect matchings are related by a sequence of twists.

\begin{lemma}\label{lem-H1}
The Hasse graph of $\mathcal L(T^o,\widetilde\beta)$ is connected.
\end{lemma}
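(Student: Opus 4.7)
The plan is to reduce connectedness of the Hasse graph to the already-established fact (stated immediately after Proposition~\ref{prop-mm}) that any two perfect matchings of $G_{T^o,\widetilde\beta}$ are related by a sequence of twists. Concretely, I will show that whenever $P\in\mathcal L(T^o,\widetilde\beta)$ can twist on a tile $G_i$, the pair $\{P,\mu_{G_i}(P)\}$ is a covering pair in the poset $\mathcal L(T^o,\widetilde\beta)$, hence an edge in the Hasse graph. Combined with the fact that any two matchings are joined by a sequence of twists, this will immediately give the connectedness.

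First I would fix $P$ and a twistable tile $G_i$ and, without loss of generality, assume $P<\mu_{G_i}(P)$ in the order defined above (so $P$ contains the two edges on one ``side'' of $G_i$ and $\mu_{G_i}(P)$ contains the other two). Suppose toward contradiction that there were $Q$ with $P<Q<\mu_{G_i}(P)$. Since the order on $\mathcal L(T^o,\widetilde\beta)$ is generated by twists (this is how the order was introduced), the transitions $P\to Q$ and $Q\to \mu_{G_i}(P)$ are each realized by sequences of increasing twists at tiles $G_{j_1},\ldots,G_{j_s}$ and $G_{k_1},\ldots,G_{k_t}$. The composite sequence then converts $P$ into $\mu_{G_i}(P)$; but $P$ and $\mu_{G_i}(P)$ differ only on the four edges of $G_i$, so every twist outside $G_i$ that appears must be canceled later. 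A careful local analysis shows this is impossible unless the composite reduces to a single twist at $G_i$, forcing $s+t=1$, contradicting $P<Q<\mu_{G_i}(P)$. Thus $\mu_{G_i}(P)$ covers $P$.

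Having established that twists are exactly (among) the covering relations, the Hasse graph contains every twist-edge. By the remark after Proposition~\ref{prop-mm}, any two $P,P'\in\mathcal L(T^o,\widetilde\beta)$ are linked by a finite chain $P=P_0,P_1,\ldots,P_N=P'$ with $P_{j+1}=\mu_{G_{i_j}}(P_j)$ for tiles $G_{i_j}$. Each consecutive pair $\{P_j,P_{j+1}\}$ is a Hasse edge by the previous paragraph, so $P$ and $P'$ lie in the same component of the Hasse graph. This proves the lemma.

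The main obstacle is the cover claim in the second paragraph: ruling out intermediate elements $Q$ between $P$ and $\mu_{G_i}(P)$. The subtlety is that an increasing twist at some $G_j\neq G_i$ might a priori be applied to $P$, and one needs that such a twist forces an edge change outside the four edges of $G_i$ which then cannot be undone by a decreasing sequence back to $\mu_{G_i}(P)$. This requires the standard local combinatorics of snake graph matchings (adjacent tiles share exactly one edge and only interior edges of tiles can be exchanged), and is the only step where I would spell out the snake-graph geometry in detail.
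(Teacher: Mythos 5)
Your overall plan is sound: connect any two matchings by a chain of twists, and argue that each twist is a Hasse edge. But the step you flag as the ``main obstacle'' is exactly where the proposal falls short, and the heuristic you offer for it is misleading rather than incomplete. You write that ``every twist outside $G_i$ that appears must be canceled later,'' but the two chains you produce (from $P$ up to $Q$ and from $Q$ up to $\mu_{G_i}(P)$) are both monotone increasing, so nothing can be canceled along the way; there is no mechanism in a monotone chain for undoing an earlier twist. What actually rules out an intermediate $Q$ with $P<Q<\mu_{G_i}(P)$ is a strictly monotone integer invariant: for a matching $R$, let $\rho(R)$ be the number of tiles enclosed by the symmetric difference $R\ominus P_-$. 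This is precisely the exponent of the monomial $\Psi$ appearing in the paper's proof of Proposition~\ref{Prop-v1}, and each increasing twist raises $\rho$ by exactly one. If $P<Q<\mu_{G_i}(P)$ then $\rho(\mu_{G_i}(P))-\rho(P)\ge 2$, contradicting $\rho(\mu_{G_i}(P))=\rho(P)+1$. Once the cover claim is established this way, your chaining argument does prove the lemma.

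It is also worth noting there is a shorter route that the placement of this lemma (directly after Propositions~\ref{Pro-la} and~\ref{prop-mm}) suggests the author has in mind, and which sidesteps identifying the cover relations at all: by Proposition~\ref{Pro-la}, $\mathcal L(T^o,\widetilde\beta)$ is a finite lattice, hence has a minimum element $P_-$; any maximal chain from $P_-$ up to a given $P$ is a path in the Hasse graph, so every vertex is connected to $P_-$. Connectedness is then automatic. Your route, once repaired with the $\rho$ invariant, gives more information (that twists \emph{are} the covers, which is later used implicitly), but as a proof of the lemma alone it does more work than necessary.
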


\subsection{$\mathcal L(T^o,\widetilde\beta^{(q)})$} \label{delta1}
Herein we assume that $q$ is a puncture in $\Sigma$ and $q\neq p$. We label clockwise the triangles in $T^o$ incident to $q$ by $\Delta_1(q),\Delta_2(q),\cdots, \Delta_t(q)$ such that either $\widetilde\beta$ crosses $\Delta_1(q)$ or $\widetilde \beta$ is the common side of $\Delta_1(q)$ and $\Delta_t(q)$, see Figure \ref{F-tq}. Denote
$$\Delta(T^o,q)=\{\Delta_1(q),\Delta_2(q),\cdots, \Delta_t(q)\}.$$

\begin{figure}[h]

\centerline{\includegraphics[width=7cm]{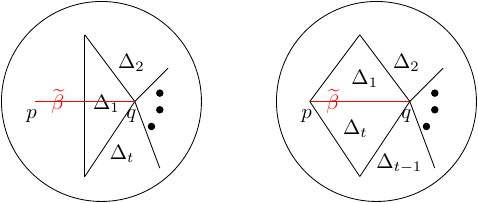}}

\caption{Triangles incident to $q$}\label{F-tq}
\end{figure}

Let
\begin{equation}\label{Eq-L1}
 \mathcal L(T^o,\widetilde\beta^{(q)})=\mathcal P(G_{T^o,\widetilde\beta})\times\{\Delta_1(q),\cdots, \Delta_t(q)\}.
\end{equation}

Set $\Delta_{t+1}(q)=\Delta_1(q)$. Denote by $\tau_j(q)$ the common side of $\Delta_j(q)$ and $\Delta_{j+1}(q)$ for $j=1,\cdots, t$. Let $\tau_0(q)=\tau_t(q)$. Thus $\tau_{j-1}(q)$ and $\tau_j(q)$ are two sides of $\Delta_j(q)$. Denote by $\tau_{[j]}(q)$ the third side of $\Delta_j(q)$. We have $\tau_t(q)=\widetilde\beta$ when $\widetilde\beta\in T^o$.

\subsubsection{$\widetilde\beta\notin T^o$} Herein we consider the case that $\widetilde\beta\notin T^o$. Define
\begin{equation}\label{E1q}
        E_1(q)=
            \begin{cases}
                E(G_c), & \mbox{if $rel( G_c,  T^o)=1$},\vspace{1mm}\\
                N(G_c), & \mbox{if $rel( G_c,  T^o)=-1$,}
            \end{cases}\;\;\;
        E_2(q)=
            \begin{cases}
                N(G_c), & \mbox{if $rel(G_c,  T^o)=1$},\vspace{1mm}\\
                E(G_c), & \mbox{if $rel(G_c,  T^o)=-1$.}
            \end{cases}
\end{equation}

Thus $E_1(q)$ is labeled $\tau_t(q)$ and $E_2(q)$ is labeled $\tau_1(q)$. By Proposition \ref{prop-mm}, we have $E_1(q)\in P_-$ and $E_2(q)\in P_+$.

For any $P\in \mathcal P(G_{T^o,\widetilde\beta})$, it is clear that either $E_1(q)\in P$ or $E_2(q)\in P$.

It is proved in \cite{H2} that $\mathcal L(  T^o,\widetilde\beta^{(q)})$ forms a lattice with minimum element $(P_-,\Delta_1(q))$ and maximum element $(P_+,\Delta_1(q))$, under the partial order induced by the following.

\begin{enumerate}
\item For any $P\in \mathcal P(G_{  T^o,\widetilde\beta})$,
\begin{enumerate}
\item if $E_1(q)\in P$ then
\begin{equation*}
 (P,\Delta_1(q))<(P,\Delta_2(q))<(P,\Delta_3(q))<\cdots<(P,\Delta_t(q));
\end{equation*}
\item if $E_2(q)\in P$ then
\begin{equation*}
(P,\Delta_2(q))<\cdots<(P,\Delta_{t-1}(q))<(P,\Delta_t(q))<(P,\Delta_1(q)).
\end{equation*}
\end{enumerate}
\item For any $j\in \{1,\cdots, t\}$, $(P,\Delta_j(q))< (Q,\Delta_j(q))$ if $P<Q$.
\end{enumerate}

The following lemmas are immediate.

\begin{lemma}\label{Lem-cover}
Let $P$ be a perfect matching which can twist on $G_l$ such that $P>\mu_{G_l}P$. Then for any $\Delta_j(q)$, we have $(P,\Delta_j(q))$ covers $(\mu_{G_l}P,\Delta_j(q))$ unless $j=1$ and $l=c$.
\end{lemma}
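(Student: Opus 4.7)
The plan is to exploit the projection $\pi\colon \mathcal{L}(T^o,\widetilde\beta^{(q)}) \to \mathcal{P}(G_{T^o,\widetilde\beta})$, $(Q,\Delta_k(q)) \mapsto Q$, which is order-preserving because each generating relation of the partial order on $\mathcal{L}(T^o,\widetilde\beta^{(q)})$ either keeps the first coordinate fixed or replaces it by a cover in $\mathcal{P}(G_{T^o,\widetilde\beta})$ (Proposition \ref{Pro-la}). Since $P$ covers $\mu_{G_l}P$ there, any element $X=(Q,\Delta_k(q))$ strictly between $(\mu_{G_l}P,\Delta_j(q))$ and $(P,\Delta_j(q))$ must project to $\mu_{G_l}P$ or $P$, with $k\neq j$. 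This reduces the analysis to two cases: Case A with $Q=\mu_{G_l}P$, where $\Delta_k$ must lie strictly above $\Delta_j$ in $\mu_{G_l}P$'s $\Delta$-chain, and Case B with $Q=P$, where $\Delta_k$ must lie strictly below $\Delta_j$ in $P$'s $\Delta$-chain.

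In either case the remaining inequality compares an element over $\mu_{G_l}P$ with one over $P$; projecting through $\pi$ shows that any ascending chain realizing it contains exactly one $(c)$-step of the form $(\mu_{G_l}P,\Delta_{k'}) < (P,\Delta_{k'})$, whence $k'$ is simultaneously constrained in $\mu_{G_l}P$'s cyclic chain and in $P$'s cyclic chain. When $l\neq c$, the edges $E_1(q),E_2(q)$ lie in $\mathrm{edge}(G_c)$ and are untouched by a twist at $G_l$, so $P$ and $\mu_{G_l}P$ induce identical cyclic $\Delta$-chains and both sets of simultaneous constraints are directly contradictory. When $l=c$, the inequality $P>\mu_{G_c}P$ combined with the defining formulas (\ref{E1q}) gives $E_2(q)\in P$ and $E_1(q)\in \mu_{G_c}P$, so the chains are $\Delta_1 < \Delta_2 < \cdots < \Delta_t$ for $\mu_{G_c}P$ and $\Delta_2 < \cdots < \Delta_t < \Delta_1$ for $P$. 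For $j\neq 1$ the admissible index $k$ sits in $\{2,\ldots,t\}$, where the two orderings coincide numerically, and the paired constraints again contradict each other.

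For the exceptional case $j=1$, $l=c$, the element $(\mu_{G_c}P,\Delta_1)$ is the minimum of $\mu_{G_c}P$'s chain and $(P,\Delta_1)$ is the maximum of $P$'s chain; one checks directly that $(\mu_{G_c}P,\Delta_k) < (P,\Delta_k) < (P,\Delta_1)$ for every $k\in\{2,\ldots,t\}$ exhibits strict intermediates, matching the claimed exception. The main obstacle will be the transitive-closure bookkeeping in Cases A and B: an intermediate $X$ need not be reached in a single elementary step, so one must argue cleanly that every ascending chain from a $(\mu_{G_l}P,\ast)$-element to a $(P,\ast)$-element factors as a block of $\Delta$-shifts, one $(c)$-transition at a uniquely determined index $\Delta_{k'}$, and a further block of $\Delta$-shifts. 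Once this normal form is isolated, the remaining verifications are short arithmetic comparisons of $k'$ in two cyclic orderings.
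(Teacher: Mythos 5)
Your argument is correct, and it supplies the details behind what the paper dismisses with ``The following lemmas are immediate.'' Since the paper offers no written proof, there is no approach to compare against; but the overall structure you use (project to the first coordinate via the order-preserving map $\pi$, use that a single twist is a cover in $\mathcal P(G_{T^o,\widetilde\beta})$ so any intermediate has first coordinate $\mu_{G_l}P$ or $P$, and then reduce to the normal form ``$\Delta$-shifts, one type-(2) transition, $\Delta$-shifts'' and compare the two cyclic $\Delta$-chains) is exactly the right skeleton, and the case analysis you describe closes cleanly. Two small remarks. First, your citation of Proposition~\ref{Pro-la} for ``a twist is a cover in $\mathcal P(G_{T^o,\widetilde\beta})$'' is not quite what that proposition says (it only asserts the lattice structure); the fact that single twists are covers is more precisely what the $\Psi$-grading in the proof of Proposition~\ref{Prop-v1} gives (each twist multiplies $\Psi$ by one $\chi_l$). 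Second, you say each generating relation ``replaces [the first coordinate] by a cover''; relation (2) is stated for arbitrary $P<Q$, not just covers, but since you only use that it is order-preserving this is harmless. An alternative and arguably shorter route, closer to the spirit of the paper's ``immediate,'' is to use the grading $\Psi(P,\Delta_j(q))$ introduced in the proof of Proposition~\ref{Pro-vm}: $\deg\Psi$ is a rank function, $\deg\Psi(P,\Delta_j(q)) - \deg\Psi(\mu_{G_l}P,\Delta_j(q)) = 1$ unless $j=1$ and $l=c$ (where it is $1+t$), and in a ranked poset adjacent rank-1 comparable pairs are covers. Your chain-factorization argument buys the same conclusion without invoking the rank function, at the cost of the transitive-closure bookkeeping you flagged.
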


\begin{lemma}\label{Lem-cover2}
For any $(P,\Delta_j(q))\in \mathcal L(T^o,\widetilde\beta^{(q)})$,
\begin{enumerate}[$(1)$]
\item if $E_1(q)\in P$, then $(P,\Delta_j(q))$ covers $(P,\Delta_{j-1}(q))$ for all $j\neq 1$;
\item if $E_2(q)\in P$, then $(P,\Delta_j(q))$ covers $(P,\Delta_{j-1}(q))$ for all $j\neq 2$.
\end{enumerate}
\end{lemma}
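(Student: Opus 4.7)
The plan is to reduce the problem to the structure of the totally ordered fibers via an order-preserving projection. Consider the map $\pi : \mathcal L(T^o, \widetilde\beta^{(q)}) \to \mathcal L(T^o, \widetilde\beta)$ given by $(P, \Delta_k(q)) \mapsto P$. First I would verify that $\pi$ preserves the partial order. The partial order on $\mathcal L(T^o, \widetilde\beta^{(q)})$ is the transitive closure of the generating relations (1) and (2) displayed just above the lemma: relations of type~(1) fix the first coordinate, while relations of type~(2) replace $P$ by a strictly larger perfect matching at a fixed $\Delta_j(q)$. Each generating step therefore projects either to an equality or to a strict inequality in $\mathcal L(T^o, \widetilde\beta)$, so $\pi$ is monotone.

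Next I would record the direct fiber-chain inequality $(P, \Delta_{j-1}(q)) < (P, \Delta_j(q))$ in both cases of the lemma. In case~(1), since $E_1(q) \in P$ and $j \neq 1$, this is part of the chain $(P,\Delta_1(q))<(P,\Delta_2(q))<\cdots<(P,\Delta_t(q))$. In case~(2), since $E_2(q) \in P$ and $j \neq 2$, it follows from the chain $(P,\Delta_2(q))<\cdots<(P,\Delta_t(q))<(P,\Delta_1(q))$, where for $j=1$ we read $\Delta_{j-1}(q)$ as $\Delta_t(q)$. In every situation covered by the lemma, $\Delta_{j-1}(q)$ and $\Delta_j(q)$ are consecutive in the totally ordered $P$-fiber; conversely, the excluded indices $j=1$ in case~(1) and $j=2$ in case~(2) are precisely those for which $(P, \Delta_j(q))$ is not strictly above $(P, \Delta_{j-1}(q))$ in the fiber.

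To upgrade this to a covering relation I would argue by contradiction: suppose some $(Q, \Delta_k(q))$ satisfies $(P, \Delta_{j-1}(q)) < (Q, \Delta_k(q)) < (P, \Delta_j(q))$. Applying $\pi$ forces $P \leq Q \leq P$, so $Q = P$ and the intermediate element lies in the $P$-fiber. But that fiber is totally ordered and $\Delta_{j-1}(q), \Delta_j(q)$ are consecutive in it, so no such $\Delta_k(q)$ exists, a contradiction.

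The only nontrivial ingredient is the monotonicity of $\pi$, and this is immediate from inspection of the list of generating relations; the rest is combinatorial bookkeeping of the two types of fiber chains, so I do not anticipate any substantive obstacle. In particular, the argument parallels the proof of Lemma~\ref{Lem-cover} for covers that change only the first coordinate, with the roles of the two coordinates interchanged.
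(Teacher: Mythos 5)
Your projection argument is correct, and it captures exactly why the paper can call this lemma ``immediate'': the relations of type~(2) strictly increase the $\mathcal P(G_{T^o,\widetilde\beta})$-coordinate, so the fiber over any fixed $P$ under your map $\pi$ inherits precisely the linear chain of type~(1), and the excluded indices ($j=1$, resp.\ $j=2$) are exactly where the chain wraps around. The paper gives no proof (it declares the lemma immediate from the order definitions), and your formalization is in the same spirit; the only thing worth spelling out a touch more explicitly is that the restriction of the full order to a $P$-fiber \emph{equals} the chain from~(1), which follows from the strictness you already noted for type~(2) steps.
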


\begin{example}
In Figure \ref{F-l2}, we show an example of the Hasse graph of $\mathcal L(T^o,\widetilde\beta^{(q)})$.

\begin{figure}[h]
\centerline{\includegraphics[width=7cm]{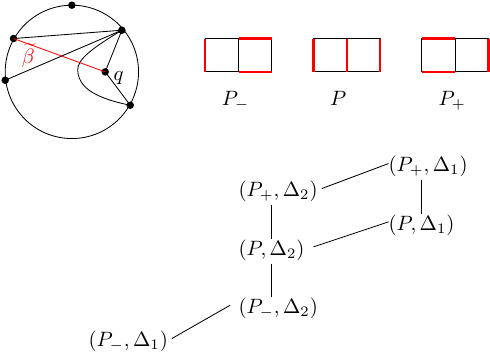}}
\caption{Hasse graph of $\mathcal L(T^o,\widetilde\beta^{(q)})$}\label{F-l2}
\end{figure}

\end{example}

\subsubsection{$\widetilde\beta\in   T^o$.} Herein we consider the case that $\widetilde\beta\in   T^o$. It is clear that $\mathcal L(T^o,\widetilde\beta^{(q)})$ forms a lattice under the following partial order.
\begin{equation*}
 (P_{\widetilde\beta},\Delta_1(q))<(P_{\widetilde\beta},\Delta_2(q))<\cdots<(P_{\widetilde\beta},\Delta_{t-1}(q))<(P_{\widetilde\beta},\Delta_t(q)).
\end{equation*}

The following Lemma follows by Lemma \ref{lem-H1}.

\begin{lemma}\label{lem-H2}
The Hasse graph of $\mathcal L(T^o,\widetilde\beta^{(q)})$ is connected.
\end{lemma}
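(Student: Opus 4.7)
The plan is essentially to read off the result from the definitions, since the key simplification in this subsubsection is that $\widetilde\beta \in T^o$. Under this hypothesis the snake graph $G_{T^o,\widetilde\beta}$ is, by construction, the graph consisting of a single edge labeled $\widetilde\beta$. Consequently $\mathcal P(G_{T^o,\widetilde\beta})=\{P_{\widetilde\beta}\}$ is a singleton, whose Hasse graph is a single vertex and hence connected (this is Lemma \ref{lem-H1} in the degenerate case). By the defining formula (\ref{Eq-L1}), the underlying set of $\mathcal L(T^o,\widetilde\beta^{(q)})$ therefore reduces to $\{(P_{\widetilde\beta},\Delta_j(q)) : 1\leq j\leq t\}$.

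Next I would invoke the partial order exhibited just before the lemma statement, namely the chain
\[
(P_{\widetilde\beta},\Delta_1(q))<(P_{\widetilde\beta},\Delta_2(q))<\cdots<(P_{\widetilde\beta},\Delta_{t-1}(q))<(P_{\widetilde\beta},\Delta_t(q)).
\]
From this chain structure the covering relations are exactly $(P_{\widetilde\beta},\Delta_j(q))\prec (P_{\widetilde\beta},\Delta_{j+1}(q))$ for $j=1,\ldots,t-1$, so the Hasse diagram is a path on $t$ vertices, which is connected.

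There is no real obstacle: the whole content is the observation that when the snake graph degenerates to a single edge, the two-factor lattice $\mathcal P(G_{T^o,\widetilde\beta})\times\{\Delta_1(q),\ldots,\Delta_t(q)\}$ collapses to the second factor with its natural chain order. I would phrase the proof as a one-line reduction to the degenerate Lemma \ref{lem-H1} plus the obvious fact that a finite chain has a connected (in fact linear) Hasse graph.
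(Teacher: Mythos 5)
Your argument is only half of what the lemma requires. Although Lemma~\ref{lem-H2} is typeset inside the subsubsection headed ``$\widetilde\beta\in T^o$,'' it is in fact invoked in the proof of Proposition~\ref{Pro-vm} precisely in the case $\widetilde\beta\notin T^o$: ``We now consider the case that $\widetilde\beta\notin T^o$. The uniqueness holds as the Hasse graph of $\mathcal L(T^o,\widetilde\beta^{(q)})$ is connected (By Lemma~\ref{lem-H2}).'' The subsequent lemmas~\ref{lem:diamond} and~\ref{lem:cop}, which refer to tiles $G_l$ of $G_{T^o,\widetilde\beta}$, likewise presuppose $\widetilde\beta\notin T^o$. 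So the lemma must cover both cases, and the $\widetilde\beta\notin T^o$ case is exactly the one where $\mathcal P(G_{T^o,\widetilde\beta})$ is not a singleton and the statement has content.

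The missing ingredient is the reduction of the $\widetilde\beta\notin T^o$ case to Lemma~\ref{lem-H1} — which is precisely what the paper's hint ``The following Lemma follows by Lemma~\ref{lem-H1}'' signals. Concretely: for each fixed $P\in\mathcal P(G_{T^o,\widetilde\beta})$ the slice $\{(P,\Delta_j(q)):1\le j\le t\}$ is a chain under the partial order defined in Subsection~\ref{delta1} (one of the two $t$-element chains depending on whether $E_1(q)\in P$ or $E_2(q)\in P$), hence the Hasse graph restricted to it is a path. If $P$ covers $Q$ in $\mathcal P(G_{T^o,\widetilde\beta})$ via a twist on some tile, then $(P,\Delta_j(q))$ and $(Q,\Delta_j(q))$ are comparable for every $j$, giving an edge between the two slices. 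Since the Hasse graph of $\mathcal P(G_{T^o,\widetilde\beta})$ is connected (Lemma~\ref{lem-H1}), the slices are all mutually reachable and the full Hasse graph of $\mathcal L(T^o,\widetilde\beta^{(q)})$ is connected. (Alternatively one can observe that a finite lattice with a unique minimum automatically has a connected Hasse graph, since every element lies at the top of a maximal chain descending to the minimum; but that route does not explain the paper's explicit appeal to Lemma~\ref{lem-H1}.) Your observation that the $\widetilde\beta\in T^o$ case collapses to the chain $\{(P_{\widetilde\beta},\Delta_j(q))\}_j$ is correct, but it is not the heart of the lemma.
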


\begin{lemma}\label{lem:diamond}
Assume that ${\bf P}=(P,\Delta_{j}(q))$ covers ${\bf Q}=(\mu_{G_l}P,\Delta_j(q))$ such that $\tau_{i_l}=\alpha_k\neq \alpha_{k-1},\alpha_{k+1}$ for some $k\in \{1,2,3,4\}$.
\begin{enumerate}[$(1)$]
\item In case $k\in \{1,3\}$, if ${\bf P}_1=(P,\Delta_{j+1}(q))$ covers ${\bf P}$ with $\tau_j(q)=\alpha$, then ${\bf Q}_1=(\mu_{G_l}P,\Delta_{j+1}(q))$ covers ${\bf Q}=(\mu_{G_l}P,\Delta_j(q))$ and ${\bf P}_1$ covers ${\bf Q}_1$.
\item In case $k\in \{2,4\}$, if ${\bf P}$ covers ${\bf P}_1=(P,\Delta_{j-1}(q))$ with $\tau_{j-1}(q)=\alpha$, then ${\bf Q}=(\mu_{G_l}P,\Delta_j(q))$ covers ${\bf Q}_1=(\mu_{G_l}P,\Delta_{j-1}(q))$ covers and ${\bf P}_1$ covers ${\bf Q}_1$.
\end{enumerate}
\end{lemma}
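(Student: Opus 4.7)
The plan is to verify the three covering relations that constitute the lemma's ``diamond'' by applying the explicit description of the partial order on $\mathcal L(T^o,\widetilde\beta^{(q)})$ from Subsection \ref{delta1}, together with Lemmas \ref{Lem-cover} and \ref{Lem-cover2}. Conceptually, the assertion says that on this lattice the two local operations --- ``twist $P$ on the tile $G_l$'' and ``advance the second coordinate from $\Delta_j(q)$ to $\Delta_{j\pm 1}(q)$'' --- commute, yielding a diamond with corners ${\bf P}_1,{\bf P},{\bf Q}_1,{\bf Q}$ in the Hasse diagram.

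First I would translate the hypothesis that ${\bf P}$ covers ${\bf Q}=(\mu_{G_l}P,\Delta_j(q))$: by the product-type definition of the order, this is exactly the cover of $\mu_{G_l}P$ by $P$ in $\mathcal L(T^o,\widetilde\beta)$, i.e., $P$ is obtained from $\mu_{G_l}P$ by the order-increasing twist at $G_l$ described in Proposition \ref{prop-mm}. Next I would extract the key consequence of the restriction $\tau_{i_l}=\alpha_k\ne \alpha_{k-1},\alpha_{k+1}$: the edges of $G_l$ swapped by the twist do not include the distinguished edges $E_1(q),E_2(q)$ of $G_c$. Consequently the twist at $G_l$ preserves whether $E_1(q)$ or $E_2(q)$ lies in the matching; that is, $E_1(q)\in P$ if and only if $E_1(q)\in\mu_{G_l}P$, and similarly for $E_2(q)$. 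This invariance is what allows the second-coordinate moves at $q$ to be transported from $P$ to $\mu_{G_l}P$.

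With the invariance in place, Lemma \ref{Lem-cover2} applied to $P$ gives the cover of ${\bf P}$ by ${\bf P}_1$ (in case $k\in\{1,3\}$) with the specified $\tau_j(q)=\alpha$, and the same lemma applied to $\mu_{G_l}P$ gives the cover of ${\bf Q}$ by ${\bf Q}_1$; the symmetric argument with ${\bf P}$ covering ${\bf P}_1$ handles case $k\in\{2,4\}$. The remaining cover of ${\bf Q}_1$ by ${\bf P}_1$ is a twist at $G_l$ performed in the second coordinate $\Delta_{j+1}(q)$ (respectively $\Delta_{j-1}(q)$), and Lemma \ref{Lem-cover} promotes it to a genuine covering relation provided we are not in the excluded exceptional case $j=1$, $l=c$. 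The main obstacle I anticipate is precisely ruling out this exceptional configuration: I plan to use the condition $\tau_{i_l}\ne \alpha_{k\pm 1}$ together with the identification $\tau_j(q)=\alpha$ (or $\tau_{j-1}(q)=\alpha$) to show that the position $(j,l)=(1,c)$ is incompatible with the hypotheses, at which point the diamond follows; case $k\in\{2,4\}$ is then handled by a symmetric argument.
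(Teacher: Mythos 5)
The central issue is your ``invariance'' step: you assert that the twist at $G_l$ cannot touch the distinguished edges $E_1(q),E_2(q)$ of the last tile $G_c$, so membership of $E_1(q)$ (resp. $E_2(q)$) in the matching is preserved when passing from $P$ to $\mu_{G_l}P$. You present this as a ``key consequence'' of the hypothesis $\tau_{i_l}=\alpha_k\neq\alpha_{k-1},\alpha_{k+1}$, but that condition only prevents $\alpha_k$ from being a radius of a self-folded triangle; it says nothing about whether $G_l$ coincides with the last tile $G_c$. When $G_l=G_c$, the twist necessarily exchanges the pair $\{N(G_c),S(G_c)\}$ with $\{E(G_c),W(G_c)\}$, and since $E_1(q)$ and $E_2(q)$ are one from each pair, the twist does swap their membership. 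So the invariance you rely on to transport the $\Delta_j(q)\to\Delta_{j+1}(q)$ cover from $P$ to $\mu_{G_l}P$ is simply false in the case $G_l=G_c$, and nothing you have said rules that case out.

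Your closing remark identifies the pair $(j,l)=(1,c)$ as the ``exceptional configuration'' to rule out, but that is a different exclusion: $(j,l)=(1,c)$ is the excluded case of Lemma~\ref{Lem-cover}, which you need for the edge ${\bf P}_1>{\bf Q}_1$ of the diamond, whereas the invariance you invoke for the edge ${\bf Q}_1>{\bf Q}$ requires $l\neq c$ outright, regardless of the value of $j$. The paper does not try to prove the invariance. Instead it argues by contradiction: suppose ${\bf Q}_1$ fails to cover ${\bf Q}$; by Lemma~\ref{Lem-cover2} one is in one of two configurations, and in the nontrivial one the hypothesis ``${\bf P}_1$ covers ${\bf P}$'' forces $E_2(q)\in P$ while $E_1(q)\in\mu_{G_l}P$, which forces $G_l=G_c$; then $\tau_{i_c}=\alpha_1$ together with the geometry of the quadrilateral around $\alpha$ gives $\tau_t(q)=\alpha_4$, contradicting $\tau_j(q)=\tau_t(q)=\alpha$. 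It is exactly this geometric use of $\tau_j(q)=\alpha$ that is missing from your argument: you need it to exclude $G_l=G_c$ (in the configuration where the failure would occur), not just to exclude $(j,l)=(1,c)$. Once you replace the unjustified invariance by this contradiction argument, the rest of your plan (appealing to Lemmas~\ref{Lem-cover2} and~\ref{Lem-cover} for the remaining two covers) is sound and matches the paper's.
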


\begin{proof}
We shall only prove (1). We may assume that $k=1$. We first prove ${\bf Q}_1$ covers ${\bf Q}$. Otherwise, by Lemma \ref{Lem-cover2}, we have either $E_1(q)\in \mu_{G_l}P, \Delta_{j+1}(q)=\Delta_1(q)$ or $E_2(q)\in \mu_{G_l}P, \Delta_{j}(q)=\Delta_1(q)$. In case $E_1(q)\in \mu_{G_l}P, \Delta_{j+1}(q)=\Delta_1(q)$, as ${\bf P}_1$ covers ${\bf P}$ we have $E_2(q)\in P$ by Lemma \ref{Lem-cover2}. Thus $G_l=G_c$. As $\tau_{i_c}=\alpha_1$, we have $\tau_t(q)=\alpha_4$. It contradicts to $\tau_j(q)=\tau_t(q)=\alpha$.
We then prove ${\bf P}_1$ covers ${\bf Q}_1$. Otherwise, by Lemma \ref{Lem-cover}, we have $G_l=G_c$ and $\Delta_{j+1}(q)=\Delta_1(q)$, it is impossible from the above discussion.
\end{proof}

\begin{lemma}\label{lem:cop}
Assume that ${\bf P}=(P,\Delta_{j}(q))$ covers ${\bf Q}=(\mu_{G_l}P,\Delta_j(q))$ such that $\tau_{i_l}=\alpha_k\neq \alpha_{k-1},\alpha_{k+1}$ for some $k\in \{1,2,3,4\}$.
\begin{enumerate}[$(1)$]
\item In case $k\in \{1,3\}$, if $\tau_j(q)=\alpha$ but $(P,\Delta_{j+1}(q))$ does not cover ${\bf P}$, then $G_l\neq G_c$ and $m_{\eta_\alpha}({\bf P})=m_{\eta_\alpha}({\bf Q})=1$.
\item In case $k\in \{2,4\}$, if $\tau_{j-1}(q)=\alpha$ but ${\bf P}$ does not cover $(P,\Delta_{j-1}(q))$, then $G_l\neq G_c$ and $m_{\eta_\alpha}({\bf P})=m_{\eta_\alpha}({\bf Q})=1$.
\end{enumerate}
\end{lemma}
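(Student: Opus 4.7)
The plan is to mirror the proof of Lemma \ref{lem:diamond}, combining Lemma \ref{Lem-cover} with Lemma \ref{Lem-cover2}, only now one extracts a numerical equality instead of a covering relation. I shall prove part (1); part (2) follows by reversing the direction of ``covers above'' versus ``is covered from below'' around ${\bf P}$, which swaps the roles of $E_1(q)$ and $E_2(q)$ and of $k\in\{1,3\}$ with $k\in\{2,4\}$.

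The first step is to contrapose Lemma \ref{Lem-cover2} applied to the pair $\bigl((P,\Delta_{j+1}(q)),\,{\bf P}\bigr)$. The hypothesis that $(P,\Delta_{j+1}(q))$ fails to cover ${\bf P}$ forces one of the two exceptional cases there, namely either (a) $E_1(q)\in P$ and $j=t$, or (b) $E_2(q)\in P$ and $j=1$. In each sub-case the equality $\tau_j(q)=\alpha$ then identifies which boundary edge of $G_c$ is labelled $\alpha$: in (a) that edge is $E_1(q)$, while in (b) it is $E_2(q)$.

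The second step rules out $G_l=G_c$ by contradiction. If $G_l=G_c$, then $\tau_{i_l}=\tau_{i_c}$ is the diagonal of $G_c$, and the four edges of $G_c$ — whose positions are dictated by $rel(G_c,T^o)$ via (\ref{E1q}) — occupy specific sides of the quadrilateral around $\tau_{i_c}$. Combining this with $E_1(q)\in P$ or $E_2(q)\in P$ from step one, together with Proposition \ref{prop-mm} and the assumption $k\in\{1,3\}$ (which places $\alpha_k$ opposite to $\tau_j(q)=\alpha$ at $q$), yields the same kind of parity obstruction already used at the end of the proof of Lemma \ref{lem:diamond}. Hence $G_l\neq G_c$.

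Finally, for $m_{\eta_\alpha}({\bf P})=m_{\eta_\alpha}({\bf Q})=1$: since $G_l\neq G_c$ the local twist $P\leftrightarrow \mu_{G_l}P$ leaves all edges near the puncture cap at $q$ unchanged, so $m_{\eta_\alpha}({\bf P})=m_{\eta_\alpha}({\bf Q})$, and the common value can be read off the cap alone. With $\tau_j(q)=\alpha$ and $j$ forced as in step one, the local picture at $\Delta_{j}(q)$ shows exactly one $\alpha$-labelled contribution from the cap, giving the value $1$. The main obstacle is bookkeeping at the boundary tile $G_c$: the cyclic labelling $\Delta_1(q),\dots,\Delta_t(q)$, the asymmetric extremal choice of $\Delta_1(q)$ in the $E_1(q)$- versus $E_2(q)$-partial order, the orientation $rel(G_c,T^o)$, and the position of $\alpha_k$ among $\{\alpha_1,\alpha_2,\alpha_3,\alpha_4\}$ must be tracked simultaneously. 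This bookkeeping is essentially that already carried out in Lemma \ref{lem:diamond}, and once organized carefully the conclusion falls out.
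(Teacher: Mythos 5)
Your overall scaffolding matches the paper's proof: apply Lemma~\ref{Lem-cover2} to $\bigl((P,\Delta_{j+1}(q)),\,{\bf P}\bigr)$ to force one of the two exceptional sub-cases, identify the $\alpha$-labelled boundary edge of $G_c$ in each, then rule out $G_l=G_c$ and read off $m_{\eta_\alpha}$. Steps one and two are right. However, step three — ruling out $G_l=G_c$ by ``the same kind of parity obstruction already used at the end of the proof of Lemma~\ref{lem:diamond}'' — has a gap that would sink the argument in your sub-case (b).

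The two sub-cases genuinely require two different arguments, neither of which is the obstruction from Lemma~\ref{lem:diamond}. In sub-case (a) ($E_1(q)\in P$, $j=t$), one can either use a label-incompatibility ($\tau_{i_c}=\tau_{[1]}(q)\in\{\alpha_2,\alpha_4\}$ but $\tau_{i_l}=\alpha_k$ with $k\in\{1,3\}$, and $\alpha_k\neq\alpha_{k\pm1}$) or the paper's more direct perfect-matching obstruction: $E_1(q)\in P$ shares a corner vertex with both edges of the opposite pair of $G_c$, and $P>\mu_{G_c}P$ would force that opposite pair into $P$, impossible. Either works. But in sub-case (b) ($E_2(q)\in P$, $j=1$, $\Delta_j(q)=\Delta_1(q)$), the label argument fails outright: there $\tau_{i_c}=\tau_{[1]}(q)\in\{\alpha_1,\alpha_3\}$, which is precisely the set containing $\tau_{i_l}=\alpha_k$ for $k\in\{1,3\}$, so $\tau_{i_l}=\tau_{i_c}$ is not excluded. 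The perfect-matching obstruction also fails in (b), because $E_2(q)$ lies on the same pair of opposite edges that $P>\mu_{G_c}P$ would force into $P$ (check both values of $rel(G_c,T^o)$), so no conflict arises. The correct route in sub-case (b) is the one the paper uses: since $\Delta_j(q)=\Delta_1(q)$, i.e.\ $j=1$, and ${\bf P}$ covers ${\bf Q}$ by hypothesis, Lemma~\ref{Lem-cover} directly forbids $l=c$. Your plan, which treats the two sub-cases uniformly and imports the Lemma~\ref{lem:diamond} contradiction, would not recover this. Your step four is fine in spirit (once $G_l\neq G_c$ is secured, $E_1(q)$ or $E_2(q)$ is not an edge of $G_l$, hence unchanged by the twist), though ``leaves all edges near the puncture cap unchanged'' overstates it — what matters is only the one relevant edge of $G_c$.
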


\begin{proof}
We shall only prove one. As ${\bf P}_1=(P,\Delta_{j+1}(q))$ does not cover ${\bf P}$, by Lemma \ref{Lem-cover2}, we have either $E_1(q)\in P, \Delta_{j+1}(q)=\Delta_1(q)$ or $E_2(q)\in P, \Delta_{j}(q)=\Delta_1(q)$.

In case $E_1(q)\in P, \Delta_{j+1}(q)=\Delta_1(q)$, as $\tau_j(q)=\alpha$, we have $\tau_{[1]}(q)=\alpha_2$ or $\alpha_4$. Thus $\tau_{i_c}=\tau_{[1]}(q)=\alpha_2$ or $\alpha_4$. It follows that $E_1(q)$ is labeled $\alpha$ and thus $m_{\eta_\alpha}({\bf P})=1$. $E_1(q)\in P$ implies $G_l\neq G_c$. Thus, $m_{\eta_\alpha}({\bf Q})=1$.

In case $E_2(q)\in P, \Delta_{j}(q)=\Delta_1(q)$, as $\tau_j(q)=\alpha$, we have $\tau_{[1]}(q)=\alpha_1$ or $\alpha_3$. Thus $\tau_{i_c}=\tau_{[1]}(q)=\alpha_1$ or $\alpha_3$. It follows that $E_2(q)$ is labeled $\alpha$ and thus $m_{\eta_\alpha}({\bf P})=1$. As ${\bf P}=(P,\Delta_{j}(q))$ covers ${\bf Q}=(\mu_{G_l}P,\Delta_j(q))$, $G_l\neq G_c$. Thus, $m_{\eta_\alpha}({\bf Q})=1$.

From $\tau_j(q)=\alpha$, we see that $m_{\eta_\alpha+1}({\bf P})=-1$. Therefore, in both cases we have $(\eta_\alpha,\eta_\alpha+1)$ is an ${\bf m}({\bf P};\alpha)$ and ${\bf m}({\bf Q};\alpha)$-pair.
\end{proof}

\begin{lemma}\label{lem:diamond1}
Assume that ${\bf P}=(P,\Delta_{j+1}(q))$ covers ${\bf Q}=(P,\Delta_j(q))$ such that $\tau_{j}(q)=\alpha_k\neq \alpha_{k-1},\alpha_{k+1}$ for some $k\in \{1,2,3,4\}$. Assume that $P$ can twist on some $G_l$ with $\tau_{i_l}=\alpha$.
\begin{enumerate}[$(1)$]
\item In case $k\in \{1,3\}$, if $P<\mu_{G_l}P$, then $(\mu_{G_l}P,\Delta_{j+1}(q))$ covers ${\bf P}$ and $(\mu_{G_l}P,\Delta_j(q))$, $(\mu_{G_l}P,\Delta_j(q))$ covers ${\bf Q}$.
\item In case $k\in \{2,4\}$, if $P>\mu_{G_l}P$, then ${\bf P}$ covers $(\mu_{G_l}P,\Delta_{j+1}(q))$, and $(\mu_{G_l}P,\Delta_{j}(q))$ is covered by ${\bf Q}$ and $(\mu_{G_l}P,\Delta_{j+1}(q))$.
\end{enumerate}
\end{lemma}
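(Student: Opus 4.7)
The plan is to verify each of the three asserted covering relations directly from the two-layer description of the partial order on $\mathcal L(T^o,\widetilde\beta^{(q)})$: horizontal moves $(P,\Delta_j(q))\lessgtr(\mu_{G_l}P,\Delta_j(q))$ inherited from the lattice $\mathcal P(G_{T^o,\widetilde\beta})$, and vertical moves $(P,\Delta_j(q))\lessgtr(P,\Delta_{j'}(q))$ controlled, via Lemma \ref{Lem-cover2}, by which of $E_1(q),E_2(q)$ belongs to $P$. I will prove part $(1)$ in detail; part $(2)$ will follow by the symmetric argument obtained by reversing the roles of $P,\mu_{G_l}P$ and of $E_1(q),E_2(q)$.

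For the two horizontal relations, namely that $(\mu_{G_l}P,\Delta_{j+1}(q))$ covers ${\bf P}$ and that $(\mu_{G_l}P,\Delta_j(q))$ covers ${\bf Q}$, I would apply Lemma \ref{Lem-cover} to the pair $\mu_{G_l}P>P$. The only excluded configuration in that lemma has second index equal to $1$ together with $l=c$; this potentially rules out only the cover at index $j$ when $j=1$, since $j+1=1$ cannot happen in our indexing. The residual case $j=1,\ l=c$ will be dismissed in the last paragraph together with the vertical cover.

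For the vertical relation $(\mu_{G_l}P,\Delta_{j+1}(q))$ covers $(\mu_{G_l}P,\Delta_j(q))$, I would split on whether $G_l=G_c$. When $G_l\neq G_c$, the twist at $G_l$ does not touch either of $E_1(q),E_2(q)$, so the membership condition (i.e.\ $E_1(q)\in P$ with appropriate range of $j$, or $E_2(q)\in P$ with $j\neq 1$) that produced the original cover ${\bf P}$ over ${\bf Q}$ via Lemma \ref{Lem-cover2} transfers verbatim to $\mu_{G_l}P$, yielding the desired cover. When $G_l=G_c$ we have $\alpha=\tau_{i_c}$, and the hypothesis $P<\mu_{G_c}P$ combined with Proposition \ref{prop-mm} forces $E_1(q)\in P$ and $E_2(q)\in\mu_{G_c}P$; Lemma \ref{Lem-cover2} then gives the vertical cover over $\mu_{G_c}P$ \emph{except} precisely when $j=1$.

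The main obstacle is therefore the single exceptional case $G_l=G_c$ with $j=1$. Here the hypothesis $\tau_j(q)=\alpha_k$ with $k\in\{1,3\}$ and $\alpha_k\neq\alpha_{k-1},\alpha_{k+1}$ carries the load: identifying the four boundary edges of $G_c$ with $\alpha_1,\alpha_2,\alpha_3,\alpha_4$ about the quadrilateral with diagonal $\alpha$, the edge labeled $\tau_1(q)=E_2(q)$ sits at a position corresponding to $\alpha_{k'}$ with $k'\in\{2,4\}$ whenever $k\in\{1,3\}$ (the two twist-exchanged pairs are on opposite sides of the diagonal), so the equality $\tau_1(q)=\tau_j(q)=\alpha_k$ would contradict $\alpha_k\neq\alpha_{k\pm 1}$. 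This simultaneously excludes $j=1$ in the vertical analysis and validates the residual horizontal cover from the previous paragraph, completing part $(1)$. Part $(2)$ is the mirror argument: the hypothesis $P>\mu_{G_l}P$ flips the role of $E_1(q)$ and $E_2(q)$ in the $G_l=G_c$ branch, and the shifted constraint $k\in\{2,4\}$ together with $\tau_{j-1}(q)=\alpha_k$ rules out the symmetric exceptional index by the same position analysis on $G_c$.
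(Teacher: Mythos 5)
There is a genuine, if repairable, gap in your dismissal of the horizontal cover at index $j+1$. You assert that ``$j+1=1$ cannot happen in our indexing,'' but the triangle indexing is cyclic: $\Delta_{t+1}(q)=\Delta_1(q)$ (see the convention set at the start of Subsection \ref{delta1}), so $\Delta_{j+1}(q)=\Delta_1(q)$ does occur, precisely when $j=t$, and $j=t$ is a legitimate instance of the hypothesis (the cover $(P,\Delta_1(q))\gtrdot(P,\Delta_t(q))$ is exactly what happens when $E_2(q)\in P$). Thus the excluded configuration of Lemma \ref{Lem-cover} with second index $1$ and $l=c$ is not automatically avoided for the cover $(\mu_{G_l}P,\Delta_{j+1}(q))\gtrdot(P,\Delta_{j+1}(q))$, and your argument leaves it undischarged.

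The case is in fact vacuous, but that has to be shown rather than waved away: if $G_l=G_c$ and $P<\mu_{G_c}P$ then (as you yourself observe later, via Proposition \ref{prop-mm} and the definition (\ref{E1q})) $E_1(q)\in P$, while the hypothesis with $j=t$ forces $E_2(q)\in P$; since exactly one of $E_1(q),E_2(q)$ lies in $P$, the configuration $j=t$, $l=c$ cannot arise. This is precisely how the paper discharges it. So your ingredients are already in place and the repair is small, but the claim ``$j+1=1$ cannot happen'' is incorrect as a reason and must be replaced by this vacuousness argument. With that fix, your proof follows essentially the same route as the paper's: a split into the two horizontal covers via Lemma \ref{Lem-cover} and the vertical cover via Lemma \ref{Lem-cover2}, plus the geometric fact that $\tau_{[1]}(q)=\alpha$ forces $\tau_1(q)\in\{\alpha_2,\alpha_4\}$ to eliminate the $j=1$, $l=c$ configuration.
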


\begin{proof}
We shall only prove (1).

We first prove $(\mu_{G_l}P,\Delta_{j+1}(q))$ covers ${\bf P}$. Otherwise, by Lemma \ref{Lem-cover} we have $G_l=G_c$ and $\Delta_{j+1}(q)=\Delta_1(q)$. As $P<\mu_{G_l}P$, we see that $E_1(q)\in P$. Thus ${\bf P}=(P,\Delta_1(q))$ does not cover ${\bf Q}=(P,\Delta_t(q))$, a contradiction.

We then prove $(\mu_{G_l}P,\Delta_{j+1}(q))$ covers $(\mu_{G_l}P,\Delta_j(q))$. Otherwise, by Lemma \ref{Lem-cover2} we have either $E_1(q)\in \mu_{G_l}P, \Delta_{j+1}(q)=\Delta_1(q)$ or $E_2(q)\in \mu_{G_l}P, \Delta_{j}(q)=\Delta_1(q)$. If $E_1(q)\in \mu_{G_l}P, \Delta_{j+1}(q)=\Delta_1(q)$, as $P<\mu_{G_l}P$, we have $E_1(q)\in P, \Delta_{j+1}(q)=\Delta_1(q)$, it is contradictions to ${\bf P}$ covers to ${\bf Q}$. If $E_2(q)\in \mu_{G_l}P, \Delta_{j}(q)=\Delta_1(q)$, as ${\bf P}$ covers to ${\bf Q}$, we see that $G_l=G_c$. Thus $\tau_{[j]}(q)=\alpha$ and hence $\tau_{j}(q)\in \{\alpha_2,\alpha_4\}$, contradicts to $\tau_{j}(q)\in \{\alpha_1,\alpha_3\}$.

Last, we prove $(\mu_{G_l}P,\Delta_j(q))$ covers ${\bf Q}$. Otherwise, by Lemma \ref{Lem-cover} we have $G_l=G_c$ and $\Delta_{j}(q)=\Delta_1(q)$. Thus $\tau_{[j]}(q)=\alpha$ and hence $\tau_{j}(q)\in \{\alpha_2,\alpha_4\}$, contradicts to $\tau_{j}(q)\in \{\alpha_1,\alpha_3\}$.

The proof is complete.
\end{proof}

\subsection{$\mathcal L(  T^o,\widetilde \beta^{(p,q)})$} \label{delta2}
In this section, we assume that $p,q$ are punctures. As the once notched case, we label clockwise the triangles in $  T^o$ incident to $p$ by $\Delta_1(p),\Delta_2(p),\cdots, \Delta_s(p)$ such that either $\widetilde\beta$ crosses $\Delta_1(p)$ or $\widetilde\beta$ is the common side of $\Delta_1(p)$ and $\Delta_s(p)$. See Figure \ref{F-tq1}.
Denote $$\Delta(T^o,p)=\{\Delta_1(p),\Delta_2(p),\cdots, \Delta_s(p)\}.$$

Set $\Delta_{s+1}(p)=\Delta_1(p)$. Denote by $\tau_i(p)$ the common side of the triangles $\Delta_i(p)$ and $\Delta_{i+1}(p)$ for $i=1,\cdots, s$. Set $\tau_0(p)=\tau_s(p)$. Thus $\tau_{i-1}(p)$ and $\tau_i(p)$ are two sides of $\Delta_i(p)$. Denote by $\tau_{[i]}(p)$ the third edge of $\Delta_i(p)$.

In particular, if $\widetilde\beta\in T^o$ then $\tau_s(p)=\tau_t(q)=\widetilde\beta$, $\tau_{[s]}(p)=\tau_1(q), \tau_{[t]}(q)=\tau_1(p)$.

\begin{figure}[h]

\centerline{\includegraphics[width=7cm]{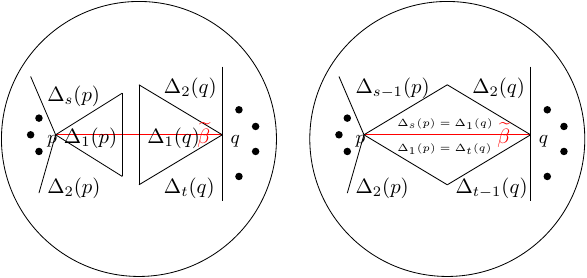}}
\caption{Triangles incident to $p$ and $q$}\label{F-tq1}
\end{figure}

Let
\begin{equation}\label{Eq-L2}
\begin{array}{rcl}
\mathcal L(  T^o,\widetilde\beta^{(p,q)})\hspace{-2mm}&= &\hspace{-2mm}
\{\Delta_1(p),\cdots, \Delta_s(p)\}\times \mathcal P(G_{  T^o,\widetilde\beta})\times\{\Delta_1(q),\cdots, \Delta_t(q)\}\vspace{2mm}\\
\hspace{-2mm}&=&\hspace{-2mm}
\{\Delta_1(p),\cdots, \Delta_s(p)\}\times \mathcal L(  T^o,\widetilde\beta^{(q)}).
\end{array}
\end{equation}

\subsubsection{$\widetilde\beta\notin   T^o$.} Herein we suppose that $\widetilde\beta\notin   T^o$. Similar to (\ref{E1q}) in Section \ref{delta1}, define
\begin{equation}\label{E1p}
        E_1(p)=
            \begin{cases}
                W(G_1), & \mbox{if $rel(G_1,  T^o)=1$},\vspace{1mm}\\
                S(G_1), & \mbox{if $rel(G_1,  T^o)=-1$,}
            \end{cases} \;\;\;
                    E_2(p)=
            \begin{cases}
                S(G_1), & \mbox{if $rel(G_1,  T^o)=1$},\vspace{1mm}\\
                W(G_1), & \mbox{if $rel(G_1,  T^o)=-1$.}
            \end{cases}
\end{equation}

Then $E_1(p)$ is labeled $\tau_s(p)$ and $E_2(p)$ is labeled $\tau_1(p)$. By Proposition \ref{prop-mm}, we have $E_1(p)\in P_-$ and $E_2(p)\in P_+$.

For any $P\in \mathcal P(G_{  T^o,\beta})$, it is clear that either $E_1(p)\in P$ or $E_2(p)\in P$.

It is proved in \cite{H2} that $\mathcal L(  T^o,\widetilde\beta^{(p,q)})$ forms a lattice with minimum element $(\Delta_1(p),P_-,\Delta_1(q))$ and maximum element $(\Delta_1(p),P_+,\Delta_1(q))$, under the partial order induced by the following relations.

\begin{enumerate}
\item For any $(P,\Delta_j(q))\in \mathcal P(G_{T^o,\widetilde\beta})\times \Delta(T^o,q)$,
\begin{enumerate}
\item if $E_1(p)\in P$ then
\begin{equation*}
 (\Delta_1(q),P,\Delta_j(q))<(\Delta_2(q),P,\Delta_j(q))<\cdots<(\Delta_s(q),P,\Delta_j(q));
\end{equation*}
\item if $E_2(q)\in P$ then
\begin{equation*}
 (\Delta_2(q),P,\Delta_j(q))<(\Delta_3(q),P,\Delta_j(q))<\cdots<(\Delta_s(q),P,\Delta_j(q))<(\Delta_1(q),P,\Delta_j(q));
\end{equation*}
\end{enumerate}
\item For any $i\in \{1,\cdots, t\}$, $(\Delta_i(p),P,\Delta_a(q))< (\Delta_i(p),Q,\Delta_b(q))$ if $(P,\Delta_a(q))<(Q,\Delta_b(q))$ in $\mathcal L(T^o,\widetilde\beta^{(q)})$.
\end{enumerate}

The following two lemmas follow immediately by the partial order defined above, Lemma \ref{Lem-cover} and Lemma \ref{Lem-cover2}.

\begin{lemma}\label{Lem-cover1}
For any $P$ can twist on $G_l$ with $\mu_{G_l}P>P$, then for any $\Delta_i(p)$ and $\Delta_j(q)$, we have $(\Delta_i(p),\mu_{G_l}P,\Delta_j(q))$ covers $(\Delta_i(p),P,\Delta_j(q))$ unless $i=1, l=1$ or $j=1, l=c$.
\end{lemma}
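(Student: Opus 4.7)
The plan is to reduce this statement to Lemma \ref{Lem-cover} by exploiting the layered definition of the partial order on $\mathcal L(T^o, \widetilde\beta^{(p,q)})$: it is built out of the partial order on $\mathcal L(T^o, \widetilde\beta^{(q)})$ in the last two coordinates, together with the chain orders on $\Delta(T^o, p)$ in the first coordinate. Since the two elements being compared agree in the first coordinate $\Delta_i(p)$, condition (2) of the partial order immediately equates the relation $(\Delta_i(p), P, \Delta_j(q)) < (\Delta_i(p), \mu_{G_l}P, \Delta_j(q))$ with $(P, \Delta_j(q)) < (\mu_{G_l}P, \Delta_j(q))$ in $\mathcal L(T^o, \widetilde\beta^{(q)})$, and the latter holds by the hypothesis $\mu_{G_l}P > P$.

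Next, I would invoke Lemma \ref{Lem-cover}: in $\mathcal L(T^o, \widetilde\beta^{(q)})$, the relation $(P, \Delta_j(q)) < (\mu_{G_l}P, \Delta_j(q))$ is a cover unless $j = 1$ and $l = c$. Outside that exception, the pair $(P, \Delta_j(q)) \lessdot (\mu_{G_l}P, \Delta_j(q))$ is a candidate cover for the three-coordinate lattice, and I must verify that no intermediate element can arise via a detour through a different first coordinate $\Delta_{i'}(p)$. Such a detour is possible only when the chain order on $\{\Delta_1(p), \ldots, \Delta_s(p)\}$ differs between $P$ and $\mu_{G_l}P$; by the definitions of $E_1(p)$ and $E_2(p)$ as edges of $G_1$, this happens precisely when $l = 1$. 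Therefore for all $l \neq 1$ the chain orders on $\Delta(T^o, p)$ coincide and the cover lifts directly.

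It remains to treat $l = 1$. Here the hypothesis $\mu_{G_1}P > P$ combined with the definition of the twist order on perfect matchings forces $E_1(p) \in P$ and $E_2(p) \in \mu_{G_1}P$, so the chain order on $\Delta(T^o, p)$ transitions from $\Delta_1(p) < \Delta_2(p) < \cdots < \Delta_s(p)$ (for $P$) to $\Delta_2(p) < \cdots < \Delta_s(p) < \Delta_1(p)$ (for $\mu_{G_1}P$); the only position that changes is that of $\Delta_1(p)$, which moves from minimum to maximum. Consequently if $i \neq 1$ the element $\Delta_i(p)$ retains the same relative position and no intermediate can be inserted, whereas if $i = 1$ then the strictly increasing chain $(\Delta_1(p), P, \Delta_j(q)) < (\Delta_2(p), P, \Delta_j(q)) < (\Delta_2(p), \mu_{G_1}P, \Delta_j(q)) < (\Delta_1(p), \mu_{G_1}P, \Delta_j(q))$ (whose middle inequality is a cover in $\mathcal L(T^o,\widetilde\beta^{(q)})$ by Lemma \ref{Lem-cover} after the reduction of the second $q$-coordinate, and the outer inequalities are instances of condition (1)) exhibits a strict intermediate, explaining the exclusion.

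The main (minor) obstacle is to rigorously rule out more exotic detours through $(\Delta_{i'}(p), Q, \Delta_{j'}(q))$ with all three coordinates differing. This reduces to a transitivity argument using Lemma \ref{Lem-cover2} to see that any chain of covers through condition (1) combined with condition (2) steps must monotonically move the first coordinate through the chain order, so that without a change of chain order (the $l \neq 1$ case) no return to $\Delta_i(p)$ is possible; this is a short but slightly tedious case check entirely parallel to the arguments for $\mathcal L(T^o, \widetilde\beta^{(q)})$ already handled in Lemma \ref{Lem-cover}.
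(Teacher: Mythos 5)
Your proof is correct and takes essentially the same route as the paper: the paper simply states that Lemma \ref{Lem-cover1} "follows immediately by the partial order defined above, Lemma \ref{Lem-cover} and Lemma \ref{Lem-cover2}," and your argument makes that deduction explicit by lifting the cover $(P,\Delta_j(q)) \lessdot (\mu_{G_l}P,\Delta_j(q))$ to the three-coordinate lattice and ruling out detours through other first coordinates using the chain order on $\Delta(T^o,p)$. The only minor imprecision is the phrase "retains the same relative position" for $l=1, i\neq 1$ (the relative position of $\Delta_1(p)$ does shift from below $\Delta_i(p)$ to above it); what rescues the argument is that any intermediate $(\Delta_{i'}(p),\cdot,\cdot)$ would need $i' > i$ in the $E_1(p)$-chain and yet $i' \le i$ (with $i'\neq 1$) in the $E_2(p)$-chain, which is impossible when $i\neq 1$.
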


\begin{lemma}\label{Lem-cover3}
For any $(\Delta_i(p),P,\Delta_j(q))\in \mathcal L(T^o,\widetilde\beta^{(p,q)})$,
\begin{enumerate}[$(1)$]
\item if $E_1(q)\in P$, then $(\Delta_i(p),P,\Delta_j(q))$ covers $(\Delta_i(p),P,\Delta_{j-1}(q))$ for all $j\neq 1$;
\item if $E_2(q)\in P$, then $(\Delta_i(p),P,\Delta_j(q))$ covers $(\Delta_i(p),P,\Delta_{j-1}(q))$ for all $j\neq 2$;
\item if $(\Delta_i(p),P,\Delta_j(q))>(\Delta_i(p),P,\Delta_{j-1}(q))$ then $(\Delta_i(p),P,\Delta_j(q))$ covers $(\Delta_i(p),P,\Delta_{j-1}(q))$;
\item if $E_1(p)\in P$, then $(\Delta_i(p),P,\Delta_j(q))$ covers $(\Delta_{i-1}(p),P,\Delta_{j}(q))$ for all $i\neq 1$;
\item if $E_2(p)\in P$, then $(\Delta_i(p),P,\Delta_j(q))$ covers $(\Delta_{i-1}(p),P,\Delta_{j}(q))$ for all $i\neq 2$;
\item if $(\Delta_i(p),P,\Delta_j(q))>(\Delta_{i-1}(p),P,\Delta_{j}(q))$ then $(\Delta_i(p),P,\Delta_j(q))$ covers $(\Delta_{i-1}(p),P,\Delta_{j}(q))$.
\end{enumerate}
\end{lemma}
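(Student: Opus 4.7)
The plan is a direct verification from the defining generating relations of the partial order, splitting naturally into the $q$-coordinate claims (1)--(3) and the $p$-coordinate claims (4)--(6), which are parallel by the symmetric roles of $p$ and $q$ in the definition of $\mathcal L(T^o,\widetilde\beta^{(p,q)})$. In both halves the strategy is the same: establish the strict inequality from the appropriate defining clause, then use a projection argument onto one coordinate factor to rule out strict intermediates.

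For parts (1)--(3), the hypotheses mirror those of Lemma \ref{Lem-cover2} applied inside $\mathcal L(T^o,\widetilde\beta^{(q)})$, which yields that $(P,\Delta_j(q))$ covers $(P,\Delta_{j-1}(q))$ in the smaller lattice. I would then use clause (2) of the definition of the order on $\mathcal L(T^o,\widetilde\beta^{(p,q)})$ to lift this to a strict inequality $(\Delta_i(p),P,\Delta_j(q)) > (\Delta_i(p),P,\Delta_{j-1}(q))$, and rule out any strict intermediate $(\Delta_{i'}(p),P',\Delta_{j'}(q))$ by projecting onto the last two coordinates: type-(1) generating relations keep $(P,\Delta_j(q))$ fixed, and type-(2) generating relations respect the order in $\mathcal L(T^o,\widetilde\beta^{(q)})$, so any chain in $\mathcal L(T^o,\widetilde\beta^{(p,q)})$ between the two triples projects to a chain in $\mathcal L(T^o,\widetilde\beta^{(q)})$ between $(P,\Delta_{j-1}(q))$ and $(P,\Delta_j(q))$. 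Lemma \ref{Lem-cover2} forbids a strict intermediate there, forcing $P' = P$ and $\Delta_{j'}(q) \in \{\Delta_{j-1}(q),\Delta_j(q)\}$; a short case check on the first coordinate, using the total chain structure from clause (1) of the order, eliminates the remaining possibilities.

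For parts (4)--(6), the statements follow directly from the structure of clause (1) in the definition of the order: the $s$ triples obtained by varying the first coordinate with $(P,\Delta_j(q))$ fixed form a total chain (either (1a) or (1b), depending on whether $E_1(p)$ or $E_2(p)$ lies in $P$) whose adjacent steps are covers by construction. The hypotheses $i \neq 1$ in (4) and $i \neq 2$ in (5) precisely ensure that $\Delta_{i-1}(p)$ and $\Delta_i(p)$ are adjacent in the chain, avoiding the single wrap-around step; (6) is immediate once both triples are known to lie in the same total chain, since strict inequality there forces a cover. The main subtle point, rather than a true obstacle, is the commutation observation that any chain of generating steps can be reordered into a $p$-phase followed by a $q$-phase, because each type of generating relation leaves the coordinates modified by the other type untouched. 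This commutation legitimates the projection arguments above, and once it is noted the lemma reduces to the previously established covering lemmas \ref{Lem-cover} and \ref{Lem-cover2}.
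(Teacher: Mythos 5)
Your proposal is correct and fills in exactly the detail that the paper glosses over --- the paper merely asserts that the lemma ``follows immediately by the partial order defined above, Lemma~\ref{Lem-cover} and Lemma~\ref{Lem-cover2}.'' The coordinate-projection argument you outline, combined with the observation that clause-(1) and clause-(2) generating steps affect disjoint coordinates (so any covering chain between triples with matching $p$-coordinate must consist only of clause-(2) steps, and vice versa), is the natural way to make the paper's one-line remark rigorous, and it correctly reduces parts (1)--(3) to Lemma~\ref{Lem-cover2} and parts (4)--(6) to the total-chain structure of the $p$-factor.
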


See the following example of the Hasse diagram for $\mathcal L(T^o,\widetilde\beta^{(p,q)})$, where $\Sigma$ is the twice punctured digon and $T^o$ is shown in the left below Figure.

\begin{figure}[h]

\centerline{\includegraphics{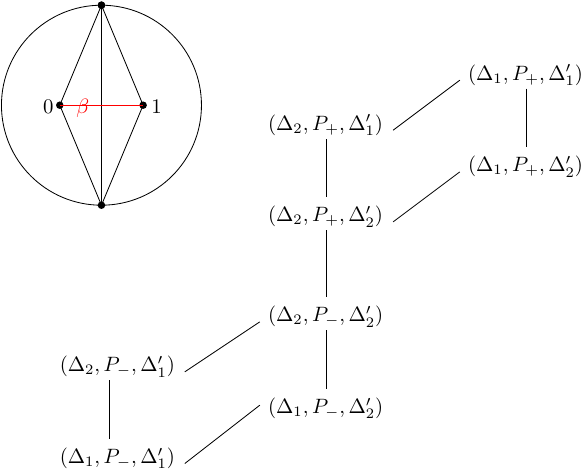}}

\caption{{\rm Hasse diagram of $\mathcal L(T^o,\widetilde\beta^{(0,1)})$ }}\label{Fig-Hasse1}
\end{figure}

\subsubsection{$\widetilde\beta\in T^o$} Herein we suppose that $\widetilde\beta\in  T^o$.

$\bullet$ If $s,t\geq 2$, then $\mathcal L(  T^o,\widetilde\beta^{(p,q)})$ forms a lattice with the following order.

\begin{enumerate}
\item  \begin{equation*}
 (\Delta_1(p),P_{\widetilde\beta},\Delta_t(q))<(\Delta_1(p),P_{\widetilde\beta},\Delta_{1}(q))<\cdots<(\Delta_1(p),P_{\widetilde\beta},\Delta_{t-2}(q))<(\Delta_1(p),P_{\widetilde\beta},\Delta_{t-1}(q)).
\end{equation*}
\item For $i=2,\cdots,s-1$,
\begin{equation*}
 (\Delta_i(p),P_{\widetilde\beta},\Delta_1(q))<(\Delta_i(p),P_{\widetilde\beta},\Delta_2(q))<\cdots<(\Delta_i(p),P_{\widetilde\beta},\Delta_{t-1}(q))<(\Delta_i(p),P_{\widetilde\beta},\Delta_t(q)).
\end{equation*}
\item
\begin{equation*}
 (\Delta_s(p),P_{\widetilde\beta},\Delta_2(q))<(\Delta_s(p),P_{\widetilde\beta},\Delta_{3}(q))<\cdots<(\Delta_s(p),P_{\widetilde\beta},\Delta_{t}(q))<(\Delta_s(p),P_{\widetilde\beta},\Delta_{1}(q)).
\end{equation*}
\item For $j=1,\cdots,t$,
\begin{equation*}
 (\Delta_1(p),P_{\widetilde\beta},\Delta_j(q))<(\Delta_2(p),P_{\widetilde\beta},\Delta_{j}(q))<\cdots<(\Delta_{s-1}(p),P_{\widetilde\beta},\Delta_{j}(q))<(\Delta_s(p),P_{\widetilde\beta},\Delta_{j}(q)).
\end{equation*}
\end{enumerate}

See the Hasse diagram of $\mathcal L(T^o,\widetilde\beta^{(p,q)})$ in Figure \ref{Fig-Hasse}.

\begin{figure}[h]

\centerline{\includegraphics{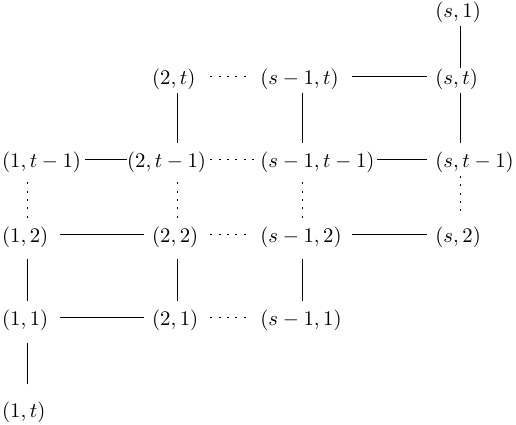}}

\caption{{\rm Hasse diagram of $\mathcal L(T^o,\widetilde\beta^{(p,q)})$ in case $\widetilde\beta\in   T^o$} and $s\geq 2$}\label{Fig-Hasse}
\end{figure}

$\bullet$ If $s=1$ or $t=1$, we may assume that $s=1$, then $\mathcal L(  T^o,\widetilde\beta^{(p,q)})$ forms a lattice with the following order.
\begin{equation*}
 (\Delta_1(p),P_{\widetilde\beta},\Delta_t(q))<(\Delta_1(p),P_{\widetilde\beta},\Delta_{1}(q))<\cdots<(\Delta_1(p),P_{\widetilde\beta},\Delta_{t-2}(q))<(\Delta_1(p),P_{\widetilde\beta},\Delta_{t-1}(q)).
\end{equation*}

The following lemma follows by Lemma \ref{lem-H2}.

\begin{lemma}\label{lem-H3}
The Hasse graph of $\mathcal L(T^o,\widetilde\beta^{(p,q)})$ is connected.
\end{lemma}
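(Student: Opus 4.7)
The plan is to exploit that $\mathcal P(G_{T^o,\widetilde\beta})=\{P_{\widetilde\beta}\}$ is a singleton when $\widetilde\beta\in T^o$, so that the underlying set of $\mathcal L(T^o,\widetilde\beta^{(p,q)})$ reduces to the Cartesian product $\{\Delta_i(p)\}_{i=1}^s\times\{\Delta_j(q)\}_{j=1}^t$. I would first dispose of the degenerate case $s=1$ (or, symmetrically, $t=1$): the displayed partial order is a single chain and its Hasse graph is a path, hence connected.

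For the generic case $s,t\geq 2$, I would identify two natural families of chains inside the Hasse graph, matching the two ``rows vs.\ columns'' that appear in the definition. First, for each fixed $i\in\{1,\dots,s\}$, the relations (1), (2), (3) in the definition of the partial order describe a (cyclically shifted) total order on the subset $\{(\Delta_i(p),P_{\widetilde\beta},\Delta_j(q))\}_{j=1}^t$. Up to the cyclic relabeling of the $\Delta_j(q)$'s this subset carries exactly the Hasse structure of $\mathcal L(T^o,\widetilde\beta^{(q)})$, which is connected by Lemma \ref{lem-H2}. Second, for each fixed $j\in\{1,\dots,t\}$, relation (4) yields a chain on $\{(\Delta_i(p),P_{\widetilde\beta},\Delta_j(q))\}_{i=1}^s$, which is Hasse-connected since any chain is so.

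Given two arbitrary elements $(\Delta_i(p),P_{\widetilde\beta},\Delta_j(q))$ and $(\Delta_{i'}(p),P_{\widetilde\beta},\Delta_{j'}(q))$, I would connect them by a two-step route through $(\Delta_i(p),P_{\widetilde\beta},\Delta_{j'}(q))$: first the row through $i$ moves the $q$-index from $j$ to $j'$, then the column through $j'$ moves the $p$-index from $i$ to $i'$. No genuine obstacle is anticipated; the only point requiring explicit verification is that the displayed chain relations are in fact covering relations of the poset, which is transparent since those chains exhaust the order comparisons restricted to each row and column. Thus the argument really is a direct consequence of Lemma \ref{lem-H2} together with the product-with-a-chain structure coming from the index set $\{\Delta_i(p)\}_{i=1}^s$.
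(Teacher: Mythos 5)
Your argument treats only the case $\widetilde\beta\in T^o$, where $\mathcal P(G_{T^o,\widetilde\beta})$ is the singleton $\{P_{\widetilde\beta}\}$, so that the index set collapses to $\{\Delta_i(p)\}_{i=1}^s\times\{\Delta_j(q)\}_{j=1}^t$. That is not the full statement. Although Lemma~\ref{lem-H3} happens to appear at the end of the subsubsection devoted to $\widetilde\beta\in T^o$, the lemma is about $\mathcal L(T^o,\widetilde\beta^{(p,q)})$ in general, and the paper uses it (e.g., in the proof of Proposition~\ref{pro-vpq}) precisely in the case $\widetilde\beta\notin T^o$, where the lattice is
$\{\Delta_1(p),\ldots,\Delta_s(p)\}\times\mathcal L(T^o,\widetilde\beta^{(q)})$
and the middle factor $\mathcal P(G_{T^o,\widetilde\beta})$ is a nontrivial set of perfect matchings with its own lattice structure. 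Your proposal simply does not address this case; there is no row--column grid to exploit, and you must instead argue that the $s$ copies of $\mathcal L(T^o,\widetilde\beta^{(q)})$ sitting in each $\{\Delta_i(p)\}\times\mathcal L(T^o,\widetilde\beta^{(q)})$ are linked to each other via the cyclic chains in the $p$-direction (relations involving $E_1(p),E_2(p)$), and that the covering relations survive or can be refined inside the product poset. So the proof as written has a genuine gap.

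Two further remarks. First, for the case you did treat, the statement that the displayed chain steps are genuine covering relations deserves a sentence of justification (e.g., that an intermediate element in a different row/column would have to both increase the $p$-coordinate and decrease the $q$-coordinate along the generated order, which is impossible), rather than being dismissed as "transparent." Second, note that there is a shorter route covering both cases at once: by the cited result of \cite{H2}, $\mathcal L(T^o,\widetilde\beta^{(p,q)})$ is a finite lattice with a minimum element ${\bf P}_-$, and in any finite poset with a minimum every element is joined to ${\bf P}_-$ by a saturated chain, i.e., by a path in the Hasse graph; hence the Hasse graph is automatically connected. The paper's own phrasing ("follows by Lemma~\ref{lem-H2}") instead runs the layered argument, which is closer in spirit to what you attempted but requires handling the nontrivial middle factor.
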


The following two lemmas are similar to Lemma \ref{lem:cop}.

\begin{lemma}\label{lem:cop1}
Assume that ${\bf P}=(\Delta_i(p),P,\Delta_{j}(q))$ covers ${\bf Q}=(\Delta_i(p),\mu_{G_l}P,\Delta_j(q))$ such that $\tau_{i_l}=\alpha_k\neq \alpha_{k-1},\alpha_{k+1}$ for some $k\in \{1,2,3,4\}$.
\begin{enumerate}[$(1)$]
\item In case $k\in \{1,3\}$, if $\tau_i(p)=\alpha$ but $(\Delta_{i+1}(p),P,\Delta_{j}(q))$ does not cover ${\bf P}$, then $G_l\neq G_1$ and $m_{1}({\bf P})=m_{1}({\bf Q})=1$. 
\item In case $k\in \{1,3\}$, if $\tau_j(q)=\alpha$ but $(\Delta_i(p),P,\Delta_{j+1}(q))$ does not cover ${\bf P}$, then $G_l\neq G_c$ and $m_{\eta_\alpha}({\bf P})=m_{\eta_\alpha}({\bf Q})=1$.
\item In case $k\in \{2,4\}$, if $\tau_{i-1}(p)=\alpha$ but ${\bf P}$ does not cover $(\Delta_{i-1}(p),P,\Delta_{j}(q))$, then $G_l\neq G_1$ and $m_{1}({\bf P})=m_{1}({\bf Q})=1$.
\item In case $k\in \{2,4\}$, if $\tau_{j-1}(q)=\alpha$ but ${\bf P}$ does not cover $(\Delta_i(p),P,\Delta_{j-1}(q))$, then $G_l\neq G_c$ and $m_{\eta_\alpha}({\bf P})=m_{\eta_\alpha}({\bf Q})=1$.
\end{enumerate}
\end{lemma}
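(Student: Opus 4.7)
The plan is to decompose Lemma \ref{lem:cop1} into two symmetric pairs and treat each in direct analogy with Lemma \ref{lem:cop}. Parts (2) and (4) concern non-covering along the $q$-coordinate of $\mathcal L(T^o, \widetilde\beta^{(p,q)})$, with the $p$-coordinate $\Delta_i(p)$ serving as a passive parameter; parts (1) and (3) concern non-covering along the $p$-coordinate and follow by the symmetric argument applied at the initial tile $G_1$ in place of the terminal tile $G_c$.

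For parts (2) and (4), I will observe that the $q$-coordinate order on $\mathcal L(T^o, \widetilde\beta^{(p,q)})$, controlled by Lemma \ref{Lem-cover3}(1)--(3), coincides structurally with the $q$-coordinate order on $\mathcal L(T^o, \widetilde\beta^{(q)})$ given by Lemma \ref{Lem-cover2}. Consequently the proof of Lemma \ref{lem:cop} carries over verbatim: failure of the relevant covering forces either $E_1(q) \in P$ with $\Delta_{j+1}(q) = \Delta_1(q)$, or $E_2(q) \in P$ with $\Delta_j(q) = \Delta_1(q)$; combined with $\tau_j(q) = \alpha$ (resp.\ $\tau_{j-1}(q) = \alpha$) and the constraint $k \in \{1,3\}$ (resp.\ $\{2,4\}$), one pins down $\tau_{[1]}(q) = \tau_{i_c}$ in the correct pair and concludes that the edge $E_1(q)$ or $E_2(q)$ of $G_c$ carries the label $\alpha$, giving $m_{\eta_\alpha}({\bf P}) = 1$. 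Since that edge lies in $P$ and ${\bf P}$ covers ${\bf Q}$, no valid twist of $P$ can occur at $G_c$, forcing $G_l \neq G_c$ and $m_{\eta_\alpha}({\bf Q}) = m_{\eta_\alpha}({\bf P}) = 1$.

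For parts (1) and (3), I will mirror this on the $p$-side, using Lemma \ref{Lem-cover3}(4)--(6) in place of parts (1)--(3). For part (1), failure of $(\Delta_{i+1}(p), P, \Delta_j(q))$ to cover ${\bf P}$ forces either $E_1(p) \in P$ with $i = s$ (so $\Delta_{i+1}(p) = \Delta_1(p)$), or $E_2(p) \in P$ with $i = 1$. Combined with $\tau_i(p) = \alpha$ and $k \in \{1,3\}$, the quadrilateral geometry around $\alpha$ pins down $\tau_{[1]}(p) = \tau_{i_1}$, so the appropriate edge $E_1(p)$ or $E_2(p)$ of $G_1$ (labeled $\tau_s(p)$ or $\tau_1(p)$ respectively by \eqref{E1p}) bears the label $\alpha$, giving $m_1({\bf P}) = 1$. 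Its membership in $P$ together with $P > \mu_{G_l} P$ prevents a twist at $G_1$, yielding $G_l \neq G_1$ and $m_1({\bf Q}) = 1$. Part (3) is handled identically in the $k \in \{2,4\}$ direction, exchanging the roles of ${\bf P}$ and $(\Delta_{i-1}(p), P, \Delta_j(q))$.

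The main obstacle is the orientation and reflection bookkeeping needed to transport the $q$-side argument from Lemma \ref{lem:cop} (referencing $\tau_{[1]}(q)$ and the edges $E, N$ of $G_c$) to the $p$-side (referencing $\tau_{[1]}(p)$ and the edges $W, S$ of $G_1$). Because $\widetilde\beta$ starts at $p$ and ends at $q$, the defining conventions for $E_1(p), E_2(p)$ in \eqref{E1p} are dual to those for $E_1(q), E_2(q)$ in \eqref{E1q}, and $P_-, P_+$ interact with the two ends oppositely. Verifying that each sub-case correctly identifies which boundary edge of $G_1$ carries the label $\alpha$, depending on $\mathrm{rel}(G_1, T^o)$ and the configuration around $\alpha$, is the only delicate point; once the symmetry between the two ends is aligned, the remainder follows the Lemma \ref{lem:cop} template line by line.
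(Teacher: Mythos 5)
Your proposal is correct and follows exactly the strategy the paper intends: the paper states only that Lemmas \ref{lem:cop1}, \ref{lem:cop2}, \ref{lem:cop3} "are similar to Lemma \ref{lem:cop}," and your decomposition — treating parts (2), (4) as the direct analogue on the $q$-side with $\Delta_i(p)$ carried passively, and parts (1), (3) as the mirror argument at the $p$-end using Lemma \ref{Lem-cover3}(4)--(6), the edges $E_1(p),E_2(p)$ of $G_1$, and index $m_1$ in place of $m_{\eta_\alpha}$ — is precisely the fleshing-out the paper leaves implicit.
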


\begin{lemma}\label{lem:cop2}
Assume that ${\bf P}=(\Delta_{i+1}(p),P,\Delta_{j}(q))$ covers ${\bf Q}=(\Delta_{i}(p),P,\Delta_j(q))$ such that $\tau_{i}(p)=\alpha_k\neq \alpha_{k-1},\alpha_{k+1}$ for some $k\in \{1,2,3,4\}$.
\begin{enumerate}[$(1)$]
\item In case $k\in \{1,3\}$, if $\tau_j(q)=\alpha$ but $(\Delta_{i+1}(p),P,\Delta_{j+1}(q))$ does not cover ${\bf P}$, then $m_{\eta_\alpha}({\bf P})=m_{\eta_\alpha}({\bf Q})=1$.
\item In case $k\in \{2,4\}$, if $\tau_{j-1}(q)=\alpha$ but ${\bf P}$ does not cover $(\Delta_{i+1}(p),P,\Delta_{j-1}(q))$, then $m_{\eta_\alpha}({\bf P})=m_{\eta_\alpha}({\bf Q})=1$.
\end{enumerate}
\end{lemma}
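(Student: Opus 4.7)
My plan is to mirror the proof of Lemma \ref{lem:cop}, exploiting the observation that in the present setting ${\bf P}$ and ${\bf Q}$ share both the matching $P$ and the $q$-component $\Delta_j(q)$, so the equality $m_{\eta_\alpha}({\bf P}) = m_{\eta_\alpha}({\bf Q})$ is automatic once one notes that $m_{\eta_\alpha}$ depends only on the matching and the $q$-component (its definition in \eqref{E1q} and the description of $\eta_\alpha$ are purely tied to the terminal tile $G_c$ and its relation to $\Delta(T^o,q)$). The real content is therefore to pin down that this common value equals $1$.

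For part (1), I would first apply Lemma \ref{Lem-cover2} (equivalently the $q$-side statements (1)--(3) of Lemma \ref{Lem-cover3}) to the hypothesis that $(\Delta_{i+1}(p),P,\Delta_{j+1}(q))$ fails to cover ${\bf P}$. This extracts the familiar dichotomy: either $E_1(q)\in P$ with $\Delta_{j+1}(q)=\Delta_1(q)$, or $E_2(q)\in P$ with $\Delta_j(q)=\Delta_1(q)$. In each branch, the hypothesis $\tau_j(q)=\alpha$ together with $\alpha=\alpha_k$ for $k\in\{1,3\}$ forces $\tau_{[1]}(q)\in\{\alpha_2,\alpha_4\}$, and consequently $\tau_{i_c}=\tau_{[1]}(q)$. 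From this the relevant boundary edge $E_1(q)$ (respectively $E_2(q)$) of the terminal tile $G_c$ is identified as labeled $\alpha$, and it lies in $P$, contributing the single unit to $m_{\eta_\alpha}({\bf P})$.

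Part (2) is the symmetric statement: the failure of ${\bf P}$ to cover $(\Delta_{i+1}(p),P,\Delta_{j-1}(q))$ produces the analogous dichotomy on $(E_1(q),E_2(q))$, and the same tile-labeling argument applied under $\tau_{j-1}(q)=\alpha$ with $k\in\{2,4\}$ exhibits the boundary $\alpha$-edge in $P$.

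I expect the only subtle point to be the bookkeeping of $rel(G_c,T^o)$, which determines which specific edge of $G_c$ is $E_1(q)$ versus $E_2(q)$ and therefore confirms the $\alpha$-labeling in each scenario. This is exactly the same case analysis already carried out in the proof of Lemma \ref{lem:cop}, and no new obstruction arises from the covering being in the $\Delta(p)$-direction here, since Lemma \ref{Lem-cover3} decouples the $p$- and $q$-conditions, and the hypothesis $\tau_i(p)=\alpha_k$ plays no role in the computation of $m_{\eta_\alpha}$.
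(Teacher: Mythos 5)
Your plan follows exactly the paper's intended route: mirror the proof of Lemma \ref{lem:cop}, using Lemma \ref{Lem-cover3} to extract the dichotomy on $(E_1(q),E_2(q))$ and then identify the terminal tile's boundary edge as $\alpha$-labeled. You also correctly observe the simplification that makes this version easier: since ${\bf P}$ and ${\bf Q}$ share the same matching $P$ (and $m_{\eta_\alpha}$ is determined by $P$ alone, being an entry of ${\bf m}(P,\alpha)$, not by either triangle component), the equality $m_{\eta_\alpha}({\bf P})=m_{\eta_\alpha}({\bf Q})$ is automatic, and the ``$G_l\neq G_c$'' step of Lemma \ref{lem:cop}'s proof has no analogue here. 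Two details need fixing, however. The clause ``together with $\alpha=\alpha_k$'' misstates the hypothesis: the lemma assumes $\tau_i(p)=\alpha_k$, and the diagonal $\alpha$ is a different arc from every $\alpha_k$; as you note at the end this hypothesis plays no role in the computation of $m_{\eta_\alpha}$, so the clause should simply be dropped rather than reinterpreted. And the two branches are not symmetric as your phrasing suggests: if $E_1(q)\in P$ with $\Delta_{j+1}(q)=\Delta_1(q)$ one gets $\tau_{[1]}(q)\in\{\alpha_2,\alpha_4\}$ and it is $E_1(q)$ that is labeled $\alpha$, while if $E_2(q)\in P$ with $\Delta_j(q)=\Delta_1(q)$ one gets $\tau_{[1]}(q)\in\{\alpha_1,\alpha_3\}$ and it is $E_2(q)$ that is labeled $\alpha$. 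With these repairs the argument matches the proof of Lemma \ref{lem:cop}.
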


\begin{lemma}\label{lem:cop3}
Assume that ${\bf P}=(\Delta_{i}(p),P,\Delta_{j+1}(q))$ covers ${\bf Q}=(\Delta_{i}(p),P,\Delta_j(q))$ such that $\tau_{j}(q)=\alpha_k\neq \alpha_{k-1},\alpha_{k+1}$ for some $k\in \{1,2,3,4\}$.
\begin{enumerate}[$(1)$]
\item In case $k\in \{1,3\}$, if $\tau_i(p)=\alpha$ but $(\Delta_{i+1}(p),P,\Delta_{j+1}(q))$ does not cover ${\bf P}$, then $m_{1}({\bf P})=m_{1}({\bf Q})=1$.
\item In case $k\in \{2,4\}$, if $\tau_{i-1}(p)=\alpha$ but ${\bf P}$ does not cover $(\Delta_{i-1}(p),P,\Delta_{j+1}(q))$, then $m_{1}({\bf P})=m_{1}({\bf Q})=1$.
\end{enumerate}
\end{lemma}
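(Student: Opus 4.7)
The plan is to imitate closely the proofs of Lemmas \ref{lem:cop}, \ref{lem:cop1}, and \ref{lem:cop2}. The crucial observation that simplifies matters is that ${\bf P} = (\Delta_i(p), P, \Delta_{j+1}(q))$ and ${\bf Q} = (\Delta_i(p), P, \Delta_j(q))$ share the same perfect matching $P$ and the same triangle $\Delta_i(p)$ at the $p$-end; they differ only in the $q$-end triangle. Since $m_1$ is determined entirely by the configuration near $p$---explicitly, by which of $E_1(p), E_2(p)$ lies in $P$ together with the choice of $\Delta_i(p)$---the equality $m_1({\bf P}) = m_1({\bf Q})$ is immediate. It therefore suffices to show $m_1({\bf P}) = 1$ under each of the two sets of hypotheses.

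For part (1), I would argue that the failure of $(\Delta_{i+1}(p), P, \Delta_{j+1}(q))$ to cover ${\bf P}$ forces, via Lemma \ref{Lem-cover3}(4)(5), one of two configurations: either $E_1(p) \in P$ with $\Delta_{i+1}(p) = \Delta_1(p)$ (so $i = s$), or $E_2(p) \in P$ with $\Delta_i(p) = \Delta_1(p)$ (so $i = 1$). In the first configuration, $\tau_i(p) = \tau_s(p) = \alpha$, and by the definition in (\ref{E1p}) the edge $E_1(p)$ carries the label $\tau_s(p) = \alpha$; since $E_1(p) \in P$, this edge contributes the arc $\alpha$ to the first position of the complete $(T^o, \widetilde\beta^{(p,q)})$-path associated with ${\bf P}$, yielding $m_1({\bf P}) = 1$. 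In the second configuration, $\tau_i(p) = \tau_1(p) = \alpha$, and $E_2(p)$ is labeled $\tau_1(p) = \alpha$, again giving $m_1({\bf P}) = 1$.

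Part (2) follows from the symmetric argument: the failure of ${\bf P}$ to cover $(\Delta_{i-1}(p), P, \Delta_{j+1}(q))$ forces either $E_1(p) \in P$ with $i = 1$ or $E_2(p) \in P$ with $i = 2$; in either case the hypothesis $\tau_{i-1}(p) = \alpha$ combined with (\ref{E1p}) makes the corresponding boundary edge $E_1(p)$ or $E_2(p)$ of $P$ labeled $\alpha$, and so $m_1({\bf P}) = 1$.

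The main place to exercise care will be the cyclic indexing convention $\Delta_{s+1}(p) = \Delta_1(p)$ and $\tau_0(p) = \tau_s(p)$, which must be tracked when $i = 1$ or $i = s$. The parity condition on $k$ is not used to derive the numerical equality $m_1({\bf P}) = 1$; rather, it is required only to guarantee that the covering relation on the $q$-side (${\bf P}$ covers ${\bf Q}$) sits consistently alongside the constraint at $p$, as in the proofs of Lemmas \ref{lem:cop1} and \ref{lem:cop2}. No substantial obstacle is anticipated beyond this bookkeeping.
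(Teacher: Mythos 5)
Your proposal is essentially correct and matches the approach the paper intends (the paper omits the proof, asserting it is ``similar to Lemma~\ref{lem:cop}''). Two small remarks. First, your stated reason for $m_1({\bf P}) = m_1({\bf Q})$ is slightly off: $m_1$ is the first coordinate of ${\bf m}(P,\alpha)$ and therefore depends on $P$ alone, \emph{not} on the choice of $\Delta_i(p)$ (that dependence belongs to $m_0$); the equality holds because ${\bf P}$ and ${\bf Q}$ literally share the same $P$, which is an even shorter argument than the one you give. Second, to conclude $m_1({\bf P}) = 1$ from ``$E_1(p)$ (resp.\ $E_2(p)$) is labeled $\alpha$ and lies in $P$'', one still needs $\tau_{i_1} = \tau_{[1]}(p) \neq \alpha$ so that $\hat\alpha_1$ is the boundary edge and not the diagonal of $G_1$; the paper's proof of Lemma~\ref{lem:cop} performs the corresponding check ($\tau_{i_c}=\tau_{[1]}(q) \in \{\alpha_1,\dots,\alpha_4\}$, hence $\neq\alpha$) explicitly, and here it follows because $\tau_{[1]}(p)$ is a side of the quadrilateral adjacent to $\alpha$. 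Both of these are routine and do not affect the correctness of your outline.
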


The following two lemmas can be proved similar to Lemma \ref{lem:diamond}.

\begin{lemma}\label{lem:diamond4}
Assume that ${\bf P}=(\Delta_{i+1}(p),P,\Delta_{j}(q))$ covers ${\bf Q}=(\Delta_i(p),P,\Delta_j(q))$ such that $\tau_{i}(p)=\alpha_k\neq \alpha_{k-1},\alpha_{k+1}$ for some $k\in \{1,2,3,4\}$.
\begin{enumerate}[$(1)$]
\item In case $k\in \{1,3\}$, if $(\Delta_{i+1}(p),P,\Delta_{j+1}(q))$ covers ${\bf P}$ with $\tau_j(q)=\alpha$, then $(\Delta_{i+1}(p),P,\Delta_{j+1}(q))$ covers $(\Delta_i(p),P,\Delta_{j+1}(q))$ and $(\Delta_i(p),P,\Delta_{j+1}(q))$ covers ${\bf Q}$.
\item In case $k\in \{2,4\}$, if ${\bf P}$ covers $(\Delta_{i+1}(p),P,\Delta_{j-1}(q))$ with $\tau_{j-1}(q)=\alpha$, then $(\Delta_{i}(p),P,\Delta_{j-1}(q))$ is covered by ${\bf Q}$ and $(\Delta_{i+1}(p),P,\Delta_{j-1}(q))$.
\end{enumerate}
\end{lemma}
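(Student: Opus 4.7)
The plan is to follow the template of Lemma \ref{lem:diamond}, but inside the $(p,q)$-lattice $\mathcal L(T^o,\widetilde\beta^{(p,q)})$. The key structural observation is that both coverings to be established are purely ``structural'' moves in the $p$- or $q$-direction at a \emph{fixed} perfect matching $P$; no twist is involved. Hence the auxiliary data $E_1(p),E_2(p),E_1(q),E_2(q)\in P$ that governs the chain orderings is the same throughout the argument, so the exceptional ``wrap-around'' indices occurring in Lemma \ref{Lem-cover3} happen at the same places in each copy of the chain.

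For case~(1), I would set ${\bf P}':=(\Delta_{i+1}(p),P,\Delta_{j+1}(q))$ and ${\bf R}:=(\Delta_i(p),P,\Delta_{j+1}(q))$ and proceed in two steps. First, to show that ${\bf R}$ covers ${\bf Q}$: the hypothesis that ${\bf P}'$ covers ${\bf P}$ yields a strict inequality $(P,\Delta_{j+1}(q))>(P,\Delta_j(q))$ in $\mathcal L(T^o,\widetilde\beta^{(q)})$; by clause~(2) in the definition of the order on $\mathcal L(T^o,\widetilde\beta^{(p,q)})$ this transports to ${\bf R}>{\bf Q}$, and Lemma \ref{Lem-cover3}(3) upgrades the strict inequality to a covering. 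Second, to show that ${\bf P}'$ covers ${\bf R}$: the hypothesis that ${\bf P}$ covers ${\bf Q}$ gives $\Delta_{i+1}(p)>\Delta_i(p)$ in the $p$-direction chain determined by $P$, which is independent of the $q$-coordinate, hence ${\bf P}'>{\bf R}$, and Lemma \ref{Lem-cover3}(6) upgrades this to a covering. Case~(2) is the obvious variant: $\Delta_{j+1}(q)$ is replaced by $\Delta_{j-1}(q)$, the hypothesis that ${\bf P}$ covers $(\Delta_{i+1}(p),P,\Delta_{j-1}(q))$ supplies the required strict inequality in the $q$-direction, and the same two applications of Lemma \ref{Lem-cover3}(3), (6) produce the desired coverings.

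The potential obstacle is to verify that the exceptional cases inside Lemma \ref{Lem-cover3} do not obstruct either step. This is resolved by the observation that items (3) and (6) of that lemma impose no extra restriction on the indices: they assert that \emph{any} strict inequality in the respective direction is automatically a covering. Combined with the fact that strict inequalities in a given direction at fixed $P$ depend only on the corresponding coordinates and not on the transverse coordinate, the transport of inequalities required in both steps is immediate from the order axioms on $\mathcal L(T^o,\widetilde\beta^{(p,q)})$. The geometric hypothesis $\tau_i(p)=\alpha_k\neq\alpha_{k-1},\alpha_{k+1}$ plays the same role as in Lemma \ref{lem:diamond}: it constrains the local quadrilateral configuration at $p$, but is not needed for the lattice-theoretic bookkeeping above.
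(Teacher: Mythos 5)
Your proof is correct and follows the lattice-theoretic route the paper indicates with ``can be proved similar to Lemma~\ref{lem:diamond}'', with the useful simplification that here all four elements $(\Delta_{\bullet}(p),P,\Delta_{\bullet}(q))$ share the same perfect matching $P$, so the $E_1/E_2$ chain orientations never change and Lemma~\ref{Lem-cover3}(3),(6) upgrade the transported strict inequalities to coverings directly, without the case-by-case contradiction argument that Lemma~\ref{lem:diamond}'s proof needed to reconcile $P$ with $\mu_{G_l}P$. Your observation that the hypotheses $\tau_i(p)=\alpha_k\neq\alpha_{k-1},\alpha_{k+1}$ and $\tau_j(q)=\alpha$ are not used is also correct; they are carried in the statement to pin down the configuration in which the lemma is invoked later, not because the order-theoretic argument requires them.
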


\begin{lemma}\label{lem:diamond5}
Assume that ${\bf P}=(\Delta_{i}(p),P,\Delta_{j+1}(q))$ covers ${\bf Q}=(\Delta_i(p),P,\Delta_j(q))$ such that $\tau_{j}(q)=\alpha_k\neq \alpha_{k-1},\alpha_{k+1}$ for some $k\in \{1,2,3,4\}$.
\begin{enumerate}[$(1)$]
\item In case $k\in \{1,3\}$, if $(\Delta_{i+1}(p),P,\Delta_{j+1}(q))$ covers ${\bf P}$ with $\tau_i(p)=\alpha$, then $(\Delta_{i+1}(p),P,\Delta_{j+1}(q))$ covers $(\Delta_{i+1}(p),P,\Delta_{j}(q))$ and $(\Delta_{i+1}(p),P,\Delta_{j}(q))$ covers ${\bf Q}$.
\item In case $k\in \{2,4\}$, if ${\bf P}$ covers $(\Delta_{i-1}(p),P,\Delta_{j+1}(q))$ with $\tau_{i-1}(p)=\alpha$, then $(\Delta_{i-1}(p),P,\Delta_{j}(q))$ is covered by ${\bf Q}$ and $(\Delta_{i-1}(p),P,\Delta_{j+1}(q))$.
\end{enumerate}
\end{lemma}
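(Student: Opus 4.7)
The plan is to follow the strategy of Lemma~\ref{lem:diamond}, but the situation here is in fact simpler: all four corners of the diamond share the \emph{same} perfect matching $P$, so the delicate case analysis in the proof of Lemma~\ref{lem:diamond} (where twisting $P$ could flip the membership of $E_1(q)$ versus $E_2(q)$) does not occur. The key tool will be Lemma~\ref{Lem-cover3}.

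For part~(1), I would first verify that $(\Delta_{i+1}(p),P,\Delta_{j+1}(q))$ covers $(\Delta_{i+1}(p),P,\Delta_j(q))$. Since ${\bf P}=(\Delta_i(p),P,\Delta_{j+1}(q))$ covers ${\bf Q}=(\Delta_i(p),P,\Delta_j(q))$ and these differ only in the $q$-index, Lemma~\ref{Lem-cover3}(1) or~(2) gives that either $E_1(q)\in P$ with $j+1\neq 1$, or $E_2(q)\in P$ with $j+1\neq 2$. Because $P$ is unchanged and these conditions depend only on $P$ and the $q$-index, the same clause of Lemma~\ref{Lem-cover3}, applied now in the $\Delta_{i+1}(p)$-slice, yields the desired cover. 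Next, from the hypothesis that $(\Delta_{i+1}(p),P,\Delta_{j+1}(q))$ covers ${\bf P}$, Lemma~\ref{Lem-cover3}(4) or~(5) forces either $E_1(p)\in P$ with $i+1\neq 1$ or $E_2(p)\in P$ with $i+1\neq 2$. Transferring this classification to the $\Delta_j(q)$-slice gives, again by Lemma~\ref{Lem-cover3}(4) or~(5), that $(\Delta_{i+1}(p),P,\Delta_j(q))$ covers ${\bf Q}$.

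Part~(2) will then be handled by the entirely symmetric argument: the two given covers provide the $E_1/E_2$-classifications of $P$ in the $p$- and $q$-directions at the two adjacent slices, and Lemma~\ref{Lem-cover3} produces the two missing covers in the opposite slices, yielding that $(\Delta_{i-1}(p),P,\Delta_j(q))$ is covered by both ${\bf Q}$ and $(\Delta_{i-1}(p),P,\Delta_{j+1}(q))$.

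The only possible obstacle is the wrap-around indices excluded by clauses (1), (2), (4), (5) of Lemma~\ref{Lem-cover3}. However, each forbidden value of the relevant index would, if present, directly contradict one of the given covering hypotheses (for example, $j+1=2$ together with $E_2(q)\in P$ would place ${\bf P}$ strictly below ${\bf Q}$ in $\mathcal L(T^o,\widetilde\beta^{(p,q)})$, rather than above it), so the transfer of the covering relation from one slice to the parallel one is automatic. Consequently the geometric hypotheses $\tau_j(q)=\alpha_k$, $\tau_i(p)=\alpha$, and the specific value of $k$ — which were essential in Lemma~\ref{lem:diamond} for contradicting configurations produced by a twisted $P$ — are not actually invoked in the internal proof of this diamond lemma; they are inherited from the ambient context in which the lemma will later be applied.
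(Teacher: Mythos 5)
Your argument for the case $\widetilde\beta\notin T^o$ is correct and amounts to a clean reformulation of what the paper intends by ``proved similar to Lemma~\ref{lem:diamond}'': because no twist of $P$ is involved, the $E(q)$- and $E(p)$-status of $P$ stay fixed, so Lemma~\ref{Lem-cover3} shows that the $q$-direction covering relation depends only on $P$ and the $q$-index (and symmetrically for $p$), and a cover therefore transfers automatically from the $\Delta_i(p)$-slice to the $\Delta_{i+1}(p)$-slice. In that regime you are also right that each ``forbidden'' wrap-around index excluded by Lemma~\ref{Lem-cover3} would directly contradict one of the given covering hypotheses.

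The closing claim that the geometric hypotheses $\tau_j(q)=\alpha_k$, $\tau_i(p)=\alpha$ ``are not actually invoked'' is too strong, and this is a genuine gap. Lemma~\ref{Lem-cover3} (and the whole $E_1(q)/E_2(q)$ language) is only defined when $\widetilde\beta\notin T^o$. When $\widetilde\beta\in T^o$ the lattice $\mathcal L(T^o,\widetilde\beta^{(p,q)})$ is still in play (see the $\widetilde\beta\in T^o$ part of Subsection~\ref{delta2}), but there the $q$-chain at a fixed $p$-slice \emph{does} depend on $i$: it is $\Delta_t(q)<\Delta_1(q)<\cdots<\Delta_{t-1}(q)$ at $i=1$, $\Delta_1(q)<\cdots<\Delta_t(q)$ at $2\le i\le s-1$, and $\Delta_2(q)<\cdots<\Delta_t(q)<\Delta_1(q)$ at $i=s$. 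Concretely, the cover $(\Delta_1(p),P_{\widetilde\beta},\Delta_1(q))$ over $(\Delta_1(p),P_{\widetilde\beta},\Delta_t(q))$ (the case $j=t$, $i=1$) does not survive passage to the $\Delta_2(p)$-slice, where $\Delta_1(q)$ sits at the bottom. So the slice-to-slice transfer fails outright in that case, and one either needs the geometric hypotheses to exclude the offending indices or needs to state (as the paper arguably intends, given the comparison to Lemma~\ref{lem:diamond}) that the lemma is only for $\widetilde\beta\notin T^o$. Your proof should make this restriction explicit rather than asserting that the hypotheses are inert.
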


The following two lemmas can be proved similar to Lemma \ref{lem:diamond1}.

\begin{lemma}\label{lem:diamond2}
Assume that ${\bf P}=(\Delta_i(p),P,\Delta_{j+1}(q))$ covers ${\bf Q}=(\Delta_i(p),P,\Delta_j(q))$ such that $\tau_{j}(q)=\alpha_k\neq \alpha_{k-1},\alpha_{k+1}$ for some $k\in \{1,2,3,4\}$. Assume that $P$ can twist on some $G_l$ with $\tau_{i_l}=\alpha$.
\begin{enumerate}[$(1)$]
\item In case $k\in \{1,3\}$, if $P<\mu_{G_l}P$, then $(\Delta_i(p),\mu_{G_l}P,\Delta_{j+1}(q))$ covers ${\bf P}$ and $(\Delta_i(p),\mu_{G_l}P,\Delta_j(q))$, $(\Delta_i(p),\mu_{G_l}P,\Delta_j(q))$ covers ${\bf Q}$.
\item In case $k\in \{2,4\}$, if $P>\mu_{G_l}P$, then ${\bf P}$ covers $(\Delta_i(p),\mu_{G_l}P,\Delta_{j+1}(q))$, and $(\Delta_i(p),\mu_{G_l}P,\Delta_{j}(q))$ is covered by ${\bf Q}$ and $(\Delta_i(p),\mu_{G_l}P,\Delta_{j+1}(q))$.
\end{enumerate}
\end{lemma}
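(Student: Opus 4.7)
The plan is to reprise the three-step argument used for Lemma \ref{lem:diamond1} inside the three-factor lattice $\mathcal L(T^o,\widetilde\beta^{(p,q)})$, with the $\Delta_i(p)$-coordinate carried along unchanged throughout. Because only the middle coordinate $P$ and the third coordinate $\Delta_j(q)$ vary in the statement, the requisite covering relations are governed by Lemma \ref{Lem-cover1} (for moves in $P$) and Lemma \ref{Lem-cover3} (for moves in $\Delta(q)$). I treat part (1); part (2) is handled dually by reversing the order, exactly as in the proof of Lemma \ref{lem:diamond1}.

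For part (1) I need to establish three cover relations:
$(\Delta_i(p),\mu_{G_l}P,\Delta_{j+1}(q))$ covers ${\bf P}$; $(\Delta_i(p),\mu_{G_l}P,\Delta_{j+1}(q))$ covers $(\Delta_i(p),\mu_{G_l}P,\Delta_j(q))$; and $(\Delta_i(p),\mu_{G_l}P,\Delta_j(q))$ covers ${\bf Q}$. Each is proved by contradiction. For the first, Lemma \ref{Lem-cover1} confines failure to $(i,l)=(1,1)$ or $(j+1,l)=(1,c)$. The latter combined with $P<\mu_{G_c}P$ forces $E_1(q)\in P$ and $\Delta_{j+1}(q)=\Delta_1(q)$, which by Lemma \ref{Lem-cover3} would prevent ${\bf P}$ from covering ${\bf Q}$, contrary to hypothesis. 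The former is analogously ruled out by unpacking the interaction between $\tau_{i_1}=\alpha$ and $\Delta_1(p)$.

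For the second cover, Lemma \ref{Lem-cover3} places the obstruction into one of two buckets: $E_1(q)\in\mu_{G_l}P$ with $\Delta_{j+1}(q)=\Delta_1(q)$, or $E_2(q)\in\mu_{G_l}P$ with $\Delta_j(q)=\Delta_1(q)$. The first contradicts ${\bf P}$ covering ${\bf Q}$ because $P<\mu_{G_l}P$ propagates $E_1(q)\in\mu_{G_l}P$ back to $E_1(q)\in P$. The second forces $G_l=G_c$, so $\tau_{[j]}(q)=\alpha$ and hence $\tau_j(q)\in\{\alpha_2,\alpha_4\}$, contradicting $k\in\{1,3\}$. The third cover is settled by the same case analysis via Lemma \ref{Lem-cover1}, with the $(j,l)=(1,c)$ exception excluded by the identical geometric incompatibility $\tau_j(q)\in\{\alpha_2,\alpha_4\}$ versus $k\in\{1,3\}$.

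The main obstacle compared to Lemma \ref{lem:diamond1} is the broader list of exceptions in Lemma \ref{Lem-cover1}: with the extra $\Delta_i(p)$-factor there is a new configuration $(i,l)=(1,1)$ to rule out in Steps 1 and 3. The crucial observation is that this exception demands a specific geometric coincidence between the diagonal $\tau_{i_1}=\alpha$ of the first tile and the triangulation of the polygon around $p$; since the hypothesis $\tau_j(q)=\alpha_k$ with $k\in\{1,3\}$ pins $\alpha$ relative to the $q$-neighborhood, one checks that the $(1,1)$-exception cannot coexist with the compatibility forced by ${\bf P}$ covering ${\bf Q}$. Once that is confirmed, the argument closes exactly as in the $q$-only case.
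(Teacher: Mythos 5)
Your three-step structure (applying Lemma~\ref{Lem-cover1} for the two covers involving a twist of $P$ and Lemma~\ref{Lem-cover3} for the cover between consecutive $\Delta_j(q)$'s) is the right framework, and your treatment of the $(j+1,l)=(1,c)$ exception is correct and faithfully mirrors the proof of Lemma~\ref{lem:diamond1}: there $P<\mu_{G_c}P$ forces $E_1(q)\in P$, which together with $\Delta_{j+1}(q)=\Delta_1(q)$ gives $(\Delta_i(p),P,\Delta_1(q))<(\Delta_i(p),P,\Delta_t(q))$, contradicting ${\bf P}$ covers ${\bf Q}$. The treatment of steps 2 and 3 is likewise sound.

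The gap is in your dismissal of the new exception $(i,l)=(1,1)$. You claim it is ``analogously ruled out'', but the analogous computation only gives $E_1(p)\in P$ (since $P<\mu_{G_1}P$ and the diagonal of $G_1$ is $\alpha$) together with $\Delta_i(p)=\Delta_1(p)$. Unlike the $(1,c)$ case, this yields no tension with the hypothesis: the assumed cover ${\bf P}=(\Delta_1(p),P,\Delta_{j+1}(q))$ over ${\bf Q}=(\Delta_1(p),P,\Delta_j(q))$ is a relation entirely in the $q$-coordinate, and both ${\bf P}$ and ${\bf Q}$ carry the same $\Delta_1(p)$, so knowing $E_1(p)\in P$ and $\Delta_i(p)=\Delta_1(p)$ places $(\Delta_1(p),P,-)$ at the bottom of the $p$-chain for each fixed $q$-coordinate but contradicts nothing. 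In fact when $i=1$, $l=1$ and $E_1(p)\in P$ the element $(\Delta_2(p),P,\Delta_{j+1}(q))$ sits strictly between $(\Delta_1(p),P,\Delta_{j+1}(q))$ and $(\Delta_1(p),\mu_{G_1}P,\Delta_{j+1}(q))$, so the first asserted cover genuinely fails in that configuration. Hence the exception cannot simply be ``unpacked''; it must be \emph{excluded} by showing the configuration $i=1$, $l=1$ is incompatible with the standing hypotheses. Your sentence that $\tau_j(q)=\alpha_k$ with $k\in\{1,3\}$ ``pins $\alpha$ relative to the $q$-neighborhood'' and that ``one checks'' incompatibility is precisely the missing argument; without it the proof of part (1) is incomplete. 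The same omission affects step 3, where the $(i,l)=(1,1)$ exception also has to be ruled out, and it propagates to the dual proof of part (2).
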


\begin{lemma}\label{lem:diamond3}
Assume that ${\bf P}=(\Delta_{i+1}(p),P,\Delta_{j}(q))$ covers ${\bf Q}=(\Delta_i(p),P,\Delta_j(q))$ such that $\tau_{i}(p)=\alpha_k\neq \alpha_{k-1},\alpha_{k+1}$ for some $k\in \{1,2,3,4\}$. Assume that $P$ can twist on some $G_l$ with $\tau_{i_l}=\alpha$.
\begin{enumerate}[$(1)$]
\item In case $k\in \{1,3\}$, if $P<\mu_{G_l}P$, then $(\Delta_{i+1}(p),\mu_{G_l}P,\Delta_{j}(q))$ covers ${\bf P}$ and $(\Delta_i(p),\mu_{G_l}P,\Delta_j(q))$, $(\Delta_i(p),\mu_{G_l}P,\Delta_j(q))$ covers ${\bf Q}$.
\item In case $k\in \{2,4\}$, if $P>\mu_{G_l}P$, then ${\bf P}$ covers $(\Delta_{i+1}(p),\mu_{G_l}P,\Delta_{j}(q))$, and $(\Delta_i(p),\mu_{G_l}P,\Delta_{j}(q))$ is covered by ${\bf Q}$ and $(\Delta_i(p),\mu_{G_l}P,\Delta_{j+1}(q))$.
\end{enumerate}
\end{lemma}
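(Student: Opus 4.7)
The plan is to adapt the proof of Lemma \ref{lem:diamond1} in a symmetric fashion, transferring the argument from the $q$-coordinate to the $p$-coordinate. I will focus on part (1), since part (2) is completely analogous with all the relevant inequalities reversed. There are three cover relations to verify: that $(\Delta_{i+1}(p),\mu_{G_l}P,\Delta_{j}(q))$ covers ${\bf P}$, that $(\Delta_{i+1}(p),\mu_{G_l}P,\Delta_{j}(q))$ covers $(\Delta_i(p),\mu_{G_l}P,\Delta_j(q))$, and that $(\Delta_i(p),\mu_{G_l}P,\Delta_j(q))$ covers ${\bf Q}$. Each of these will be attacked by contradiction, using Lemma \ref{Lem-cover1} and Lemma \ref{Lem-cover3} to identify the finitely many degenerate configurations that could obstruct a cover, and then ruling each of them out via the hypothesis that $\tau_i(p)=\alpha_k$ is in a non-degenerate position with $k\in\{1,3\}$.

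First, to see that $(\Delta_{i+1}(p),\mu_{G_l}P,\Delta_{j}(q))$ covers ${\bf P}=(\Delta_{i+1}(p),P,\Delta_{j}(q))$, Lemma \ref{Lem-cover1} tells us the only way this can fail is if the index $i+1=1$ with $l=1$, or if $j=1$ with $l=c$. In the first case, $\tau_i(p)=\tau_s(p)$ would coincide with a side of $G_1$, forcing $\tau_i(p)\in\{\alpha_{k-1},\alpha_{k+1}\}$ and contradicting the hypothesis $\tau_i(p)=\alpha_k\neq\alpha_{k\pm 1}$. The second case is handled exactly as in the proof of Lemma \ref{lem:diamond1}, using that $P<\mu_{G_l}P$ forces $E_1(q)\in P$ together with the fact that ${\bf P}$ covers ${\bf Q}$ (so the $p$-cover is consistent with $\Delta_j(q)$). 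The third cover $(\Delta_i(p),\mu_{G_l}P,\Delta_j(q))$ over ${\bf Q}$ is treated identically, with $i+1$ replaced by $i$ in the case analysis.

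For the remaining cover $(\Delta_{i+1}(p),\mu_{G_l}P,\Delta_{j}(q))$ over $(\Delta_i(p),\mu_{G_l}P,\Delta_j(q))$, I will invoke Lemma \ref{Lem-cover3}. If the cover fails, then either $E_1(p)\in\mu_{G_l}P$ with $\Delta_{i+1}(p)=\Delta_1(p)$, or $E_2(p)\in\mu_{G_l}P$ with $\Delta_{i}(p)=\Delta_1(p)$. Since $P<\mu_{G_l}P$ and $E_1(p),E_2(p)$ are edges of $G_1$, whichever of $E_1(p), E_2(p)$ lies in $\mu_{G_l}P$ also lies in $P$ unless $l=1$; if $l=1$ we get $\tau_{i_l}$ is a side of $G_1$ and the hypothesis on $\alpha_k$ rules this out, exactly as above. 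If $l\neq 1$, then the exceptional configuration is inherited by $P$, contradicting the assumption that ${\bf P}$ covers ${\bf Q}$ in $\mathcal L(T^o,\widetilde\beta^{(p,q)})$.

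The main technical nuisance, not a deep obstacle, is simply the bookkeeping around the cyclic indexing of $\Delta_i(p)$ versus $\Delta_j(q)$ and the two exceptional tiles $G_1,G_c$. There are more boundary cases to check than in Lemma \ref{lem:diamond1} because both the $p$-direction and the $q$-direction have their own set of exceptional covers, and one must verify that the $p$-twist in the lattice does not accidentally interact with the $q$-coordinate. However, because the cover ${\bf P}>{\bf Q}$ is purely in the $p$-direction with a fixed $\Delta_j(q)$, the $q$-structure is preserved automatically, and the proof reduces mechanically to the pattern of Lemma \ref{lem:diamond1}.
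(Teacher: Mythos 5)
Your proposed ``symmetric adaptation'' from Lemma \ref{lem:diamond1} overlooks the asymmetry that makes the original proof work, and both of your two case analyses for the first cover are flawed.

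For the obstruction $i+1=1$, $l=1$, you claim a geometric contradiction: that $\tau_i(p)=\tau_s(p)$ would be forced into $\{\alpha_{k-1},\alpha_{k+1}\}=\{\alpha_2,\alpha_4\}$. This is false. When $l=1$, the diagonal of $G_1$ is $\alpha$, and $E_1(p)$ is labeled $\tau_s(p)$; by Proposition \ref{prop-mm} and Lemma \ref{max-min}, $E_1(p)\in P_-$ lies in the \emph{clockwise} direction from $\alpha$, hence $\tau_s(p)\in\{\alpha_1,\alpha_3\}$, which is perfectly compatible with the hypothesis $\tau_i(p)=\alpha_k$, $k\in\{1,3\}$. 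The correct argument here is \emph{order-theoretic}, exactly as in the first step of Lemma \ref{lem:diamond1}'s proof: $P<\mu_{G_1}P$ forces $E_1(p)\in P$, so by the partial order on $\mathcal L(T^o,\widetilde\beta^{(p,q)})$ the element $(\Delta_1(p),P,\Delta_j(q))={\bf P}$ sits at the bottom of its $p$-chain and therefore cannot cover ${\bf Q}=(\Delta_s(p),P,\Delta_j(q))$, a contradiction. Incidentally, the geometric argument you tried to use \emph{is} what is needed for the third cover (where the obstruction is $i=1$, $l=1$, and then $\tau_i(p)=\tau_1(p)$ labels $E_2(p)\in P_+$, giving $\tau_1(p)\in\{\alpha_2,\alpha_4\}$), so ``treated identically, with $i+1$ replaced by $i$'' is wrong: the two obstructions $i+1=1$ and $i=1$ need different arguments.

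The more serious gap is the obstruction $j=1$, $l=c$, which you dismiss as ``handled exactly as in the proof of Lemma \ref{lem:diamond1}'' with the parenthetical ``(so the $p$-cover is consistent with $\Delta_j(q)$).'' This parenthetical is not an argument. In Lemma \ref{lem:diamond1}, $E_1(q)\in P$ yields a contradiction \emph{because} the hypothesized cover ${\bf P}>{\bf Q}$ is in the $q$-direction: $\Delta_{j+1}(q)=\Delta_1(q)$ combined with $E_1(q)\in P$ puts $(P,\Delta_1(q))$ at the bottom of its $q$-chain, so it cannot cover $(P,\Delta_t(q))$. In Lemma \ref{lem:diamond3} the hypothesized cover is in the $p$-direction: ${\bf P}=(\Delta_{i+1}(p),P,\Delta_1(q))$ covers ${\bf Q}=(\Delta_i(p),P,\Delta_1(q))$ imposes a constraint only through Lemma \ref{Lem-cover3}(4)--(5) on $E_1(p)$ or $E_2(p)$, which is entirely unrelated to $E_1(q)\in P$. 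Likewise the geometric hypothesis $\tau_i(p)=\alpha_k$ lives at the $p$-end of $\widetilde\beta$, while the obstruction $l=c$ concerns the tile $G_c$ at the $q$-end, so it supplies no leverage either. You have not given an argument for this case, and nothing in what you wrote suggests how one would go; this is precisely the point where the two-puncture lattice genuinely differs from the one-puncture lattice of Lemma \ref{lem:diamond1}. (A minor further confusion: in your discussion of the middle cover, when $l=1$ you write that ``$\tau_{i_l}$ is a side of $G_1$''---but $\tau_{i_l}=\alpha$ is the \emph{diagonal} of $G_1$, not a side.)
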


\newpage

\section{Valuation maps}\label{sec:VM}

Let $T^o$ be an ideal triangulation and $\beta$ be an arc. Assume that $s(\widetilde\beta)=p, t(\widetilde \beta=q)$. In this section, for $\mathcal L=\mathcal L(  T^o,\widetilde\beta),\mathcal L(  T^o,\widetilde\beta^{(q)})$ or $\mathcal L(  T^o,\widetilde\beta^{(p,q)})$ we construct a map
$w:\mathcal L\to \mathbb Z$,
which will act as the quantum coefficients in the expansion formula for quantum cluster variables. To this end, we should introduce some integers.

Let $\alpha$ be an arc in $T^o$. For any perfect matching $P\in \mathcal P(G_{T^o,\widetilde\beta})$, denote
\begin{equation}\label{Equ-mp}
m(P;\alpha)=\text{number of edges labeled } \alpha \text{ in } P.
\end{equation}

For any snake subgraph $G$ of $G_{T^o,\widetilde\beta}$, denote
\begin{equation}\label{Equ-n}
n(G;\alpha)=\text{number of diagonals labeled } \alpha \text{ of } G.
\end{equation}

In the case that $\widetilde\beta\in T^o$, $G_{T^o,\widetilde\beta}$ is the graph with two vertices and one edge labeled $\widetilde\beta$ connecting them, it has a unique perfect matching $P_{\widetilde\beta}$. Set $m(P_{\widetilde\beta};\alpha)=1$ and $n(G_{T^o,\widetilde\beta};\alpha)=0$.

With the notation in Subsection \ref{delta1}, for any $j=1,\cdots,t$, denote
\begin{equation}\label{Eq-mj}
\begin{array}{rcl}
m(\Delta_j(q);\alpha)\hspace{-2mm}&= &\hspace{-2mm}
\text{number of edges labeled } \alpha \text{ in } \{\tau_{[j]}(q)\}\vspace{2mm}\\
\hspace{-2mm}&-&\hspace{-2mm}
\text{number of edges labeled } \alpha \text{ in } \{\tau_{j-1}(q),\tau_{j}(q)\}.
\end{array}
\end{equation}

Similarly, with the notation in Subsection \ref{delta2}, for any $i=1,\cdots,s$, denote
\begin{equation}\label{Eq-mi}
\begin{array}{rcl}
m(\Delta_i(p);\alpha)\hspace{-2mm}&= &\hspace{-2mm}
\text{number of edges labeled } \alpha \text{ in } \{\tau_{[i]}(p)\}\vspace{2mm}\\
\hspace{-2mm}&-&\hspace{-2mm}
\text{number of edges labeled } \alpha \text{ in } \{\tau_{i-1}(p),\tau_{i}(p)\}.
\end{array}
\end{equation}

For any $l=1,\cdots,c$, denote by $G^+_l$ (resp. $G^-_l$) the snake subgraph of $G_{  T^o,\beta}$ formed by the tiles $G_{l+1},\cdots,G_c$ (resp. $G_{1},\cdots,G_{l-1}$).

Moreover, if $P$ can twist on a tile $G_l$ for some $l=1,\cdots,c$, denote
\begin{equation*}
m^{\pm}(P,G_l;\alpha)=\text{number of edges labeled } \alpha \text{ in } P\cap (edge(G^{\pm}_l)\setminus edge(G_l)).
\end{equation*}

\subsection{Valuation map on $\mathcal L(  T^o,\widetilde\beta)$}

In this section, we construct a map $$w:\mathcal L(  T^o,\widetilde\beta)\to \mathbb Z.$$

\begin{definition}\label{Def-ome}
Let $P$ be a perfect matching of $G_{  T^o,\widetilde\beta}$. Suppose that $P$ can twist on a tile $G_l$ for some $l=1,\cdots,c$. Assume that the diagonal of $G_l$ is labeled $\tau_{i_l}$. The \emph{gradient number $\Omega(P;G_l)$ of $P$ at $G_l$} is defined to be
\begin{equation*}
\Omega(P;G_l)=d(\tau_{i_l})[\left(m^{+}(P,G_l;\tau_{i_l})-m^{-}(P,G_l;\tau_{i_l})\right)-\left(n(G^+_l;\tau_{i_l})-n(G^-_l;\tau_{i_l})\right)].
\end{equation*}
\end{definition}

\begin{lemma}\label{lem-gra}
Suppose that $P$ can twist on two tiles $G_l$ and $G_r$ with $r-l>1$ such that $\mu_{G_l}P,\mu_{G_r}P>P$. Then
$$\Omega(P;G_l)+\Omega(\mu_{G_l}P;G_r)=\Omega(P;G_r)+\Omega(\mu_{G_r}P;G_l).$$
\end{lemma}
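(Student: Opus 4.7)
The plan is to expand both sides of the claimed identity using the definition of $\Omega$ and to reduce the question, via the skew-symmetrizability of $B^{T^o}$, to a local computation on each common ideal triangle. Rewriting the identity as $\Omega(P;G_l)-\Omega(\mu_{G_r}P;G_l)=\Omega(P;G_r)-\Omega(\mu_{G_l}P;G_r)$, I first note that the terms $n(G^\pm_l;\tau_{i_l})$ and $n(G^\pm_r;\tau_{i_r})$ depend only on the snake graph and not on the matching, and hence cancel inside each bracket. Writing $M(P;G_k):=m^+(P,G_k;\tau_{i_k})-m^-(P,G_k;\tau_{i_k})$, what remains is
\begin{equation*}
d(\tau_{i_l})\bigl[M(P;G_l)-M(\mu_{G_r}P;G_l)\bigr] = d(\tau_{i_r})\bigl[M(P;G_r)-M(\mu_{G_l}P;G_r)\bigr].
\end{equation*}

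Without loss of generality assume $l<r$. Since $r-l>1$, the tiles $G_l$ and $G_r$ share no edges, and so the twists $\mu_{G_l}$ and $\mu_{G_r}$ act on disjoint sets of edges of the snake graph. Let $\delta_k(\alpha)$ denote the signed change in the number of edges labeled $\alpha$ inside $edge(G_k)$ when passing from $P$ to $\mu_{G_k}P$. Because every edge of $G_l$ lies in $edge(G^-_r)\setminus edge(G_r)$ and every edge of $G_r$ lies in $edge(G^+_l)\setminus edge(G_l)$, one obtains
\begin{equation*}
M(\mu_{G_l}P;G_r)-M(P;G_r)=-\delta_l(\tau_{i_r}),\qquad M(\mu_{G_r}P;G_l)-M(P;G_l)=\delta_r(\tau_{i_l}).
\end{equation*}
Substituting, the claim is reduced to the single identity $d(\tau_{i_l})\delta_r(\tau_{i_l})+d(\tau_{i_r})\delta_l(\tau_{i_r})=0$.

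To establish this identity, the key observation is that $\delta_k(\alpha)$ decomposes as a sum of per-triangle contributions $\delta_k^\Delta(\alpha)$ over the two triangles $\Delta\in\{\Delta_{k-1},\Delta_k\}$ of $T^o$ adjacent to $\tau_{i_k}$, and $\delta_k^\Delta(\alpha)$ is nonzero only if $\alpha$ labels a non-$\tau_{i_k}$ side of $\Delta$. In particular, both $\delta_l(\tau_{i_r})$ and $\delta_r(\tau_{i_l})$ are supported on the common ideal triangles of $\tau_{i_l}$ and $\tau_{i_r}$ in $T^o$, and these sets coincide. For each such common triangle $\Delta$, comparing the clockwise order of the sides of $\Delta$ in $T^o$ with the snake graph orientation $rel(G_k,T^o)$ and the twist direction fixed by the hypothesis $\mu_{G_k}P>P$ identifies the local contribution $d(\tau_{i_k})\delta_k^\Delta(\alpha)$ with $\pm b^{T^o,\Delta}_{\tau_{i_k},\alpha}\cdot d(\tau_{i_k})/w(\alpha)$. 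Summing over all common triangles, the required vanishing follows from the skew-symmetrizability identity $d(\tau)b^{T^o}_{\tau,\tau'}=-d(\tau')b^{T^o}_{\tau',\tau}$ of the signed adjacency matrix.

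The main obstacle is the sign bookkeeping in the last step: one must simultaneously track the orientations $rel(G_l,T^o)$ and $rel(G_r,T^o)$, the clockwise ordering of sides inside each common triangle $\Delta$, and the fixed twist direction imposed by the hypothesis. The case where $\tau_{i_l}$ and $\tau_{i_r}$ bound a digon (so they share two ideal triangles) deserves particular attention: there the entry $b^{T^o}_{\tau_{i_l},\tau_{i_r}}$ vanishes because the two triangles contribute with opposite clockwise orientations, and one checks that the two per-triangle contributions to $\delta_l(\tau_{i_r})$ and $\delta_r(\tau_{i_l})$ also cancel in pairs, preserving the identity. Once this case analysis is dispatched, the generic case reduces immediately to the per-triangle computation above.
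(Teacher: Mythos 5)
Your proposal is essentially the same proof as the paper's: both reduce the identity, after cancelling the $n(G^\pm;\cdot)$ terms, to the statement that the change in $m^{+}(P,G_l;\tau_{i_l})$ produced by twisting at $G_r$ and the change in $m^{-}(P,G_r;\tau_{i_r})$ produced by twisting at $G_l$ are governed by the $B$-matrix entries $b^{T^o}_{\tau_{i_l}\tau_{i_r}}$ and $b^{T^o}_{\tau_{i_r}\tau_{i_l}}$, after which skew-symmetrizability finishes. The ``sign bookkeeping'' you defer is exactly what the paper resolves by splitting into $b^{T^o}_{\tau_{i_l}\tau_{i_r}}=0$ and $\neq 0$ and using the hypothesis $\mu_{G_l}P,\mu_{G_r}P>P$ to pin down which edges of $G_l$ labeled $\tau_{i_r}$ lie in $P$ (and which of $G_r$ labeled $\tau_{i_l}$ do not); your per-triangle decomposition of $\delta_k(\alpha)$ is a somewhat more granular bookkeeping device for the same computation, and your treatment of the digon case corresponds to the paper's $b=0$ case.
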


\begin{proof}
We first consider the case that $b^{T^o}_{\tau_{i_l}\tau_{i_r}}=0$. Then we have
$$m^{\pm}(P,G_l;\tau_{i_l})=m^{\pm}(\mu_{G_r}P,G_l;\tau_{i_l}),\hspace{5mm} m^{\pm}(P,G_r;\tau_{i_r})=m^{\pm}(\mu_{G_l}P,G_r;\tau_{i_r}).$$
Thus the result follows in this case.

We then consider the case that $b^{T^o}_{\tau_{i_l}\tau_{i_r}}\neq 0$. We may assume that $b^{T^o}_{\tau_{i_l}\tau_{i_r}}>0$. As $\mu_{G_l}P,\mu_{G_r}P>P$, the edge labeled $\tau_{i_r}$ of $G_{i_l}$ is in $P$ and the edge labeled $\tau_{i_l}$ of $G_{i_r}$ is not in $P$. Therefore, we have
$$m^{+}(\mu_{G_r}P,G_l;\tau_{i_l})=m^{+}(P,G_l;\tau_{i_l})+b^{T^o}_{\tau_{i_l}\tau_{i_r}},\hspace{5mm} m^{-}(\mu_{G_r}P,G_l;\tau_{i_l})=m^{-}(P,G_l;\tau_{i_l}),$$
$$m^{-}(\mu_{G_l}P,G_r;\tau_{i_l})=m^{-}(P,G_r;\tau_{i_l})+b^{T^o}_{\tau_{i_r}\tau_{i_l}},\hspace{5mm} m^{+}(\mu_{G_l}P,G_r;\tau_{i_l})=m^{+}(P,G_r;\tau_{i_l}).$$
Then the result follows by $d(\tau_{i_l})b^{T^o}_{\tau_{i_l}\tau_{i_r}}=-d(\tau_{i_r})b^{T^o}_{\tau_{i_r}\tau_{i_l}}$, as $B(T^o)$ is skew-symmetrizable.
\end{proof}

\begin{proposition}\label{Prop-v1}
There is a unique map
$$w:\mathcal L(  T^o,\widetilde\beta)\to \mathbb Z$$ satisfies the following conditions:
\begin{enumerate}[$(1)$]
\item (Initial condition) $w(P_-)=0$, where $P_-$ is the minimum element in $\mathcal L(T^o,\widetilde\beta)$,
\item (Recurrence condition)
$w(\mu_{G_l}(P))-w(P)=\Omega(P;G_l)$
for any $P\in \mathcal L(  T^o,\beta)$ such that $P$ can twist on $G_l$ with $P<\mu_{G_l}(P)$.
\end{enumerate}

\end{proposition}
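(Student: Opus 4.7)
The plan is to propagate the recurrence outward from $P_-$ and then verify that the resulting function is well-defined. Uniqueness is immediate: by Lemma \ref{lem-H1} (together with the description of $P_-$ as the unique minimum in Proposition \ref{prop-mm}), every $P \in \mathcal L(T^o,\widetilde\beta)$ is reachable from $P_-$ by an increasing saturated chain
$$P_- = P^{(0)} < P^{(1)} < \cdots < P^{(k)} = P,$$
in which each cover has the form $P^{(i+1)} = \mu_{G_{l_i}}(P^{(i)})$ for some tile $G_{l_i}$ on which $P^{(i)}$ can twist. The recurrence (2), iterated, forces
$$w(P) = \sum_{i=0}^{k-1} \Omega(P^{(i)}; G_{l_i}),$$
so the value at $P$ is pinned down by $w(P_-)=0$.

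For existence, I would adopt the telescoping sum above as the definition of $w(P)$. The content to check is path-independence: the sum should not depend on the chosen saturated chain from $P_-$ to $P$. Since $\mathcal L(T^o,\widetilde\beta)$ is a finite lattice (Proposition \ref{Pro-la}), any two saturated chains between the same endpoints differ by a finite sequence of \emph{diamond exchanges}, where a diamond consists of a bottom element $P$, two upward covers $\mu_{G_l}P$ and $\mu_{G_r}P$, and a common upper cover $\mu_{G_l}\mu_{G_r}P=\mu_{G_r}\mu_{G_l}P$. Well-definedness thus reduces to the local identity
$$\Omega(P;G_l) + \Omega(\mu_{G_l}P;G_r) \;=\; \Omega(P;G_r) + \Omega(\mu_{G_r}P;G_l)$$
for every such diamond.

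For tiles with $|r-l|>1$, this identity is exactly Lemma \ref{lem-gra}, which does the bulk of the work by playing the skew-symmetrizer $d(\tau_{i_l})b^{T^o}_{\tau_{i_l}\tau_{i_r}}=-d(\tau_{i_r})b^{T^o}_{\tau_{i_r}\tau_{i_l}}$ off against the changes in $m^{\pm}$ caused by flipping at one tile versus the other. The main obstacle that remains is the adjacent case $|r-l|=1$, in which $G_l$ and $G_{l+1}$ share an edge and the proof of Lemma \ref{lem-gra} does not apply verbatim. I expect to treat this by a short case analysis on the matching $P$ restricted to $G_l \cup G_{l+1}$: simultaneous upward twistability at both tiles sharply restricts which edges belong to $P$, and in each resulting local configuration, performing one twist leaves untouched the edge counts $m^{\pm}(\,\cdot\,,G_?;\tau_?)$ governing the other gradient number. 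Once this adjacent case is settled, combining it with Lemma \ref{lem-gra} verifies the diamond identity in full generality and yields the existence of $w$.
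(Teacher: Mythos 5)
Your strategy—uniqueness from connectedness of the Hasse graph, existence from telescoping the recurrence along a saturated chain from $P_-$ and checking path-independence—tracks the paper's proof closely, with Lemma \ref{lem-gra} doing the same commutation work in both. However, the reduction to ``diamond exchanges'' cannot rest merely on $\mathcal L(T^o,\widetilde\beta)$ being a finite lattice: a finite lattice need not be graded, and even graded lattices do not automatically carry the property that two maximal chains between the same endpoints are linked by diamond moves. The paper sidesteps abstract lattice theory with an explicit grading: for each $P$ it sets $\Psi(P)=\prod_{j\in J}\chi_j$, where $J$ indexes the tiles enclosed by $P\ominus P_-$, observes that each covering step multiplies $\Psi$ by exactly one $\chi_j$, and concludes that all saturated chains from $P_-$ to $P$ have length $|J|$. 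That is what licenses the induction on rank; some such argument, or at least an appeal to the distributivity of the perfect-matching lattice of a snake graph, is needed in your version.

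The more substantive gap is your treatment of the adjacent case $|r-l|=1$, which you present as ``the main obstacle that remains'' and propose to resolve by a local case analysis. In fact there is nothing to analyze: the case is vacuous. Since $rel(G_i,T^o)$ alternates along the snake graph, a perfect matching $P$ cannot be twistable in the same direction (both upward, or both downward) at two adjacent tiles $G_l$ and $G_{l+1}$—the pairs of opposite edges required in each tile would force the vertex where the shared edge meets a neighbouring edge of one tile to be covered twice. This is exactly why the paper can write ``$|l-r|>1$ as $P_{a-1},P'_{a-1}<P$'' without elaboration. Your plan asserts that ``performing one twist leaves untouched the edge counts\ldots in each resulting local configuration'' over a set of configurations that is actually empty; spotting this vacuousness is the missing observation, and with it Lemma \ref{lem-gra} alone carries the argument.
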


\begin{proof}
We shall only consider the case that $\widetilde\beta\notin T^o$. The uniqueness of $w$ is clear since the Hasse graph of $\mathcal P(G_{T^o,\widetilde\beta})$ is connected (by Lemma \ref{lem-H1}).

We now show the existence. For any $P\in\mathcal P(G_{T^o,\beta})$ and two chains $P_-=P_0<P_1<\cdots<P_a=P$, $P_-=P'_0<P'_1<\cdots<P'_{a'}=P$ such that $P_{i+1}$ covers $P_i$ and $P'_{i+1}$ covers $P'_i$ for any $i$, it suffices to prove that $w_1(P)=w_2(P)$, where $w_1(P)$ (resp. $ w_2(P)$) is obtained via the chain $P_-=P_0<P_1<\cdots<P_{a}=P$ (resp. $P_-=P'_0<P'_1<\cdots<P'_{a'}=P$). Suppose that $P\ominus P_-$ encloses the union of tiles $\bigcup_{j\in J} G_{j}$ for some $J\subseteq \{1,\cdots, c\}$. Denote $\Psi(P)=\prod_{j\in J}\chi_j$. It is clear that $\frac{\Psi(\mu_{G_l}Q)}{\Psi(Q)}=\chi_l$ for any $Q$ can twist on $G_l$ with $Q<\mu_{G_l}Q$. Therefore $a=a'=|J|$. We say that $a=|J|$ is the distance between $P$ and $P_-$.

Next, we show that $w_1(P)=w_2(P)$ by induction on the distance $a$ between $P$ and $P_-$. If $a=0$ then $P=P_-$ and thus $w_1(P)=w_2(P)=0$. Suppose that $P_{a-1}=\mu_{G_l}P$ and $P'_{a-1}=\mu_{G_r}P$. Assume that $w_1(P)=w_2(P)$ holds for all the cases that the distance is less than $a$. In particular, we have $w_1(P_{a-1})=w_2(P'_{a-1})$.

If $P_{a-1}=P'_{a-1}$ then we have
$$w_1(P)=w_1(P_{a-1})+\Omega(P_{a-1};G_l)=w_2(P_{a-1})+\Omega(P_{a-1};G_l)=w_2(P).$$

If $P_{a-1}\neq P'_{a-1}$, we have $|l-r|>1$ as $P_{a-1}, P'_{a-1}<P$. Thus $\mu_{G_l}\mu_{G_r}P=\mu_{G_r}\mu_{G_l}P$. Therefore,
\begin{equation*}
\begin{array}{rcl}
w_1(P)&=&w_1(P_{s-1})+\Omega(P_{a-1};G_l)\vspace{2mm}\\
\hspace{-2mm}&=&\hspace{-2mm}
\left(w_1(\mu_{G_r}\mu_{G_l}P)+\Omega(\mu_{G_r}\mu_{G_l}P;G_r)\right)+\Omega(\mu_{G_l}P;G_l)\vspace{2mm}\\
\hspace{-2mm}&=&\hspace{-2mm}
\left(w_2(\mu_{G_l}\mu_{G_r}P)+\Omega(\mu_{G_l}\mu_{G_r}P;G_l)\right)+\Omega(\mu_{G_r}P;G_r)\vspace{2mm}\\
\hspace{-2mm}&=&\hspace{-2mm}
w_2(\mu_{G_r}P)+\Omega(\mu_{G_r}P;G_r)\vspace{2mm}\\
\hspace{-2mm}&=&\hspace{-2mm}
w_2(P'_{a-1})+\Omega(\mu_{G_r}P;G_r)\vspace{2mm}\\
\hspace{-2mm}&=&\hspace{-2mm}
w_2(P),
\end{array}
\end{equation*}
where the third equality follows by induction hypothesis and Lemma \ref{lem-gra}. The result follows.
\end{proof}

\subsection{Valuation map on $\mathcal L(T^o,\widetilde\beta^{(q)})$}
Suppose that $q$ is a puncture and $q\neq p$. Assume that $T$ contains no arc tagged notched at $q$. In this section, we construct a map
$$w:\mathcal L(  T^o,\widetilde\beta^{(q)})\to \mathbb Z.$$

\begin{definition}\label{Def-gra1}
For $(P,\Delta_j(q))\in \mathcal L(  T^o,\widetilde\beta^{(q)})$ and $\zeta\in T^o$, the \emph{gradient number $\Omega(P,\underline{\Delta_j(q)};\zeta)$ of $(P,\Delta_j(q))$ at $\Delta_j(q)$} is defined to be
\begin{equation*}
\Omega(P,\underline{\Delta_j(q)};\zeta)=d(\zeta)\left(-m(P;\zeta)+n(G_{T^o,\widetilde\beta};\zeta)\right)=-d(\zeta)\hat m(P;\zeta).
\end{equation*}
\end{definition}

In particular, if $\zeta=\tau_j(q)$, we write $\Omega(P,\underline{\Delta_j(q)};\zeta)$ as $\Omega(P,\underline{\Delta_j(q)})$.

\begin{definition}\label{Def-gra2}
For $(P,\Delta_j(q))\in \mathcal L(T^o,\widetilde\beta^{(q)})$, assume that $P$ can twist on a tile $G_l$ with diagonal labeled $\tau_{i_l}$, the \emph{gradient number $\Omega(P,\Delta_j(q);G_l)$ of $(P,\Delta_j(q))$ at $G_l$} is defined to be
\begin{equation*}
\Omega(P,\Delta_j(q);G_l)=\Omega(P;G_l)+d(\tau_{i_l})m(\Delta_j(q);\tau_{i_l}).
\end{equation*}
\end{definition}

\begin{lemma}\label{Lem-det}
Suppose that $P$ can twist on two tiles $G_l$ and $G_r$ with $r-l>1$ such that $\mu_{G_l}P,\mu_{G_r}P>P$. Then for any $\Delta_j(q)\in \Delta(T^o,q)$ we have
$$\Omega(P,\Delta_j(q);G_l)+\Omega(\mu_{G_l}P,\Delta_j(q);G_r)=\Omega(P,\Delta_j(q);G_r)+\Omega(\mu_{G_r}P,\Delta_j(q);G_l).$$
\end{lemma}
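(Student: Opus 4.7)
The statement is a direct upgrade of Lemma \ref{lem-gra} to the once-notched setting, so my plan is to bootstrap from that lemma rather than redo the matching bookkeeping. By Definition \ref{Def-gra2} we have
\begin{equation*}
\Omega(P,\Delta_j(q);G_l)=\Omega(P;G_l)+d(\tau_{i_l})\,m(\Delta_j(q);\tau_{i_l}),
\end{equation*}
and the correction term $d(\tau_{i_l})\,m(\Delta_j(q);\tau_{i_l})$ depends only on the diagonal label $\tau_{i_l}$ of the tile $G_l$ and on the triangle $\Delta_j(q)$ via formula \eqref{Eq-mj}; neither ingredient involves the perfect matching. Therefore this correction is unchanged when we replace $P$ by $\mu_{G_r}P$ (or $\mu_{G_l}P$), since the underlying snake graph and its diagonal labels are intrinsic to $(T^o,\widetilde\beta)$.

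Using this, the plan is to expand both sides of the desired identity. The left-hand side becomes
\begin{equation*}
\Omega(P;G_l)+\Omega(\mu_{G_l}P;G_r)+d(\tau_{i_l})\,m(\Delta_j(q);\tau_{i_l})+d(\tau_{i_r})\,m(\Delta_j(q);\tau_{i_r}),
\end{equation*}
and the right-hand side becomes
\begin{equation*}
\Omega(P;G_r)+\Omega(\mu_{G_r}P;G_l)+d(\tau_{i_r})\,m(\Delta_j(q);\tau_{i_r})+d(\tau_{i_l})\,m(\Delta_j(q);\tau_{i_l}).
\end{equation*}
The two pairs of correction terms match termwise and cancel, reducing the claim to
\begin{equation*}
\Omega(P;G_l)+\Omega(\mu_{G_l}P;G_r)=\Omega(P;G_r)+\Omega(\mu_{G_r}P;G_l),
\end{equation*}
which is precisely Lemma \ref{lem-gra} applied to $G_l$ and $G_r$ (the hypotheses $r-l>1$ and $\mu_{G_l}P,\mu_{G_r}P>P$ are inherited verbatim).

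In short, there is no real obstacle: the content of the lemma is entirely absorbed by \ref{lem-gra}, and the only thing to verify carefully is that the added terms really are independent of $P$ and so appear identically on both sides. If I wanted a slightly cleaner presentation I would state this as a one-line corollary of \ref{lem-gra} together with the trivial observation that $m(\Delta_j(q);\tau_{i_l})$ is a function of $(\Delta_j(q),\tau_{i_l})$ alone.
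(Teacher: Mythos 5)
Your proof is correct and is exactly the argument the paper intends (the paper's proof is the one-liner ``It follows by Lemma \ref{lem-gra} and Definition \ref{Def-gra2}.''). You have simply made explicit the observation that the correction term $d(\tau_{i_l})\,m(\Delta_j(q);\tau_{i_l})$ depends only on the tile's diagonal label and the triangle, not on the matching, so it cancels on both sides, reducing the identity to Lemma \ref{lem-gra}.
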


\begin{proof}
It follows by Lemma \ref{lem-gra} and Definition \ref{Def-gra2}.
\end{proof}

\begin{lemma}\label{Lem-det1}
Suppose that $P$ can twist on a tile $G_l$ with $\mu_{G_l}P>P$. Then for any $\Delta_j(q)\in \Delta(T^o,q)$ we have
$$\Omega(P,\Delta_j(q);G_l)+\Omega(\mu_{G_l}P,\underline{\Delta_j(q)})=\Omega(P,\underline{\Delta_j(q)})+\Omega(P,\Delta_{j+1}(q);G_l).$$
\end{lemma}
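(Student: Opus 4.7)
The plan is to prove the identity by direct substitution and reduction to a local combinatorial equality, then to verify that equality via the skew-symmetrizability relation on $B^{T^o}$.

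First I would expand both sides using Definition \ref{Def-gra2} and Definition \ref{Def-gra1}. Writing $\Omega(P,\Delta_j(q);G_l)=\Omega(P;G_l)+d(\tau_{i_l})m(\Delta_j(q);\tau_{i_l})$ and similarly for $\Delta_{j+1}(q)$, the two $\Omega(P;G_l)$ contributions cancel when the identity is rearranged. Using $\Omega(P,\underline{\Delta_j(q)})=-d(\tau_j(q))(m(P;\tau_j(q))-n(G_{T^o,\widetilde{\beta}};\tau_j(q)))$ and the analogous formula for $\mu_{G_l}P$, the $n(G_{T^o,\widetilde{\beta}};\tau_j(q))$ contributions cancel. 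What remains to show is the local identity
\begin{equation*}
d(\tau_{i_l})\bigl(m(\Delta_j(q);\tau_{i_l})-m(\Delta_{j+1}(q);\tau_{i_l})\bigr)=d(\tau_j(q))\bigl(m(\mu_{G_l}P;\tau_j(q))-m(P;\tau_j(q))\bigr).
\end{equation*}

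For the left-hand side, I would expand via formula (\ref{Eq-mj}). Since $\tau_j(q)$ is the common side of the two triangles $\Delta_j(q)$ and $\Delta_{j+1}(q)$, the $\delta_{\tau_j(q),\tau_{i_l}}$ contributions from the $\{\tau_{j-1}(q),\tau_j(q)\}$ part of $\Delta_j(q)$ and the $\{\tau_j(q),\tau_{j+1}(q)\}$ part of $\Delta_{j+1}(q)$ cancel, leaving
\begin{equation*}
m(\Delta_j(q);\tau_{i_l})-m(\Delta_{j+1}(q);\tau_{i_l})=\delta_{\tau_{[j]}(q),\tau_{i_l}}-\delta_{\tau_{j-1}(q),\tau_{i_l}}-\delta_{\tau_{[j+1]}(q),\tau_{i_l}}+\delta_{\tau_{j+1}(q),\tau_{i_l}}.
\end{equation*}
The four surviving deltas, weighted by $d(\tau_{i_l})$, encode precisely the contribution of triangles $\Delta_j(q)$ and $\Delta_{j+1}(q)$ to $b^{T^o}_{\tau_{i_l},\tau_j(q)}$ following the clockwise-order rule in the definition of $b^{T^o,\Delta}$. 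Since $\tau_j(q)$ lies in only these two triangles in $T^o$, the left side is exactly $d(\tau_{i_l})b^{T^o}_{\tau_{i_l},\tau_j(q)}$ up to a global sign fixed by the clockwise labeling convention around $q$.

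For the right-hand side, $P$ and $\mu_{G_l}P$ differ only on the four boundary edges of the tile $G_l$. The twist from $P<\mu_{G_l}P$ swaps the "clockwise" pair $\{W(G_l),E(G_l)\}$ for the "counterclockwise" pair $\{N(G_l),S(G_l)\}$ (or vice versa, determined by $rel(G_l,T^o)$). By the snake graph construction, these four edges are labeled by the four non-diagonal sides of the two triangles in $T^o$ adjacent to $\tau_{i_l}$, and the pairing is compatible with clockwise order around $\tau_{i_l}$. Applying the same book-keeping used in the proof of Lemma \ref{lem-gra}, the quantity $m(\mu_{G_l}P;\tau_j(q))-m(P;\tau_j(q))$ equals the unweighted clockwise count defining $b^{T^o}_{\tau_j(q),\tau_{i_l}}/d(\tau_{i_l})$, so multiplying by $d(\tau_j(q))$ yields $d(\tau_j(q))b^{T^o}_{\tau_j(q),\tau_{i_l}}/d(\tau_{i_l})\cdot d(\tau_{i_l})$. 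By skew-symmetrizability, $d(\tau_{i_l})b^{T^o}_{\tau_{i_l},\tau_j(q)}=-d(\tau_j(q))b^{T^o}_{\tau_j(q),\tau_{i_l}}$, so the two sides match after the consistent sign from the two clockwise conventions is tracked.

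The main obstacle will be the orientation bookkeeping: carefully aligning the "clockwise around $q$" convention implicit in the labeling $\Delta_1(q),\ldots,\Delta_t(q)$ with the "clockwise inside a triangle" convention defining $b^{T^o,\Delta}$, and with the parity flip captured by $rel(G_l,T^o)$ in the snake graph. Once these conventions are normalized so that both sides compute the same signed contribution of the two triangles containing $\tau_j(q)$ (equivalently, the two triangles containing $\tau_{i_l}$), equality follows from skew-symmetrizability. Degenerate cases where $\Delta_j(q)$ or $\Delta_{j+1}(q)$ is self-folded, or where $\tau_{i_l}$ coincides with more than one side, require a routine check using $l(\tau)$ in place of $\tau$ as in the definition of $b^{T^o,\Delta}$, but introduce no new ideas.
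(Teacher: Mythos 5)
Your proposal is correct and follows the same overall skeleton as the paper's proof: expand both sides via Definitions \ref{Def-gra1} and \ref{Def-gra2}, cancel $\Omega(P;G_l)$ and the $n(G_{T^o,\widetilde\beta};\tau_j(q))$ terms, reduce to the local identity relating the two differences of $m$-values, identify each side with a signed adjacency number, and close with skew-symmetrizability $d(\tau_{i_l})b^{T^o}_{\tau_{i_l}\tau_j(q)}=-d(\tau_j(q))b^{T^o}_{\tau_j(q)\tau_{i_l}}$.

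Where you diverge is in how the local identity is established. You propose unwinding both sides to a sum of Kronecker deltas over the four non-$\tau_j(q)$ sides of $\Delta_j(q)$ and $\Delta_{j+1}(q)$, and then matching these against the clockwise-order rule defining $b^{T^o,\Delta}$. That works, but it forces you to reconcile three separate orientation conventions (clockwise around $q$, clockwise inside a triangle, and the $rel(G_l,T^o)$ parity of the tile), which you rightly flag as the main obstacle. The paper sidesteps this: it splits on whether $b^{T^o}_{\tau_{i_l},\tau_j(q)}$ vanishes, and in the nonzero case (WLOG $>0$) invokes the hypothesis $\mu_{G_l}P>P$ to pin down directly that the edge of $G_l$ labeled $\tau_j(q)$ lies in $P$, which gives $m(\mu_{G_l}P;\tau_j(q))-m(P;\tau_j(q))=b^{T^o}_{\tau_j(q),\tau_{i_l}}$ and $m(\Delta_j(q);\tau_{i_l})-m(\Delta_{j+1}(q);\tau_{i_l})=-b^{T^o}_{\tau_{i_l},\tau_j(q)}$ without any global sign-chasing. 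Your route is viable and more explicit, but the paper's exploits the standing hypothesis $\mu_{G_l}P>P$ to absorb the orientation bookkeeping into a single case condition.
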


\begin{proof}
By definition, we have
\begin{equation*}
\begin{array}{rcl}
\Omega(P,\Delta_j(q);G_l)+\Omega(\mu_{G_l}P,\underline{\Delta_j(q)})\hspace{-2mm}&=&\hspace{-2mm} \Omega(P;G_l)+d(\tau_{i_l})m(\Delta_j(q);\tau_{i_l})\vspace{2mm}\\
\hspace{-2mm}&+&\hspace{-2mm}
d(\tau_{j}(q))(-m(\mu_{G_l}(P);\tau_j(q))+n(G_{T^o,\widetilde\beta};\tau_j(q))),
\end{array}
\end{equation*}
\begin{equation*}
\begin{array}{rcl}
\Omega(P,\underline{\Delta_j(q)})+\Omega(P,\Delta_{j+1}(q);G_l)\hspace{-2mm}&=&\hspace{-2mm} d(\tau_{j}(q))(-m(P;\tau_j(q))+n(G_{T^o,\widetilde\beta};\tau_j(q)))\vspace{2mm}\\
\hspace{-2mm}&+&\hspace{-2mm}
 \Omega(P;G_l)+d(\tau_{i_l})m(\Delta_{j+1}(q);\tau_{i_l}),
\end{array}
\end{equation*}

We first consider the case that $b^{T^o}_{\tau_{i_l},\tau_{j}(q)}=0$. Then we have
$$m(P;\tau_{j}(q))=m(\mu_{G_l}P;\tau_{j}(q)),\hspace{5mm} m(\Delta_j(q);\tau_{i_l})=m(\Delta_{j+1}(q);\tau_{i_l}).$$
Thus the result follows in this case.

We then consider the case that $b^{T^o}_{\tau_{i_l},\tau_{j}(q)}\neq 0$. We may assume that $b^{T^o}_{\tau_{i_l},\tau_{j}(q)}>0$. As $\mu_{G_l}P>P$, the edge labeled $\tau_{j}(q)$ of $G_{i_l}$ is in $P$. Therefore, we have
$$m(P;\tau_{j}(q))=m(\mu_{G_l}P;\tau_{j}(q))-b^{T^o}_{\tau_j(q),\tau_{i_l}},\hspace{5mm} m(\Delta_j(q);\tau_{i_l})=m(\Delta_{j+1}(q);\tau_{i_l})-b^{T^o}_{\tau_{i_l},\tau_{j}(q)}.$$
Then the result follows by $d(\tau_{i_l})b^{T^o}_{\tau_{i_l}\tau_{j}(q)}=-d(\tau_{j}(q))b^{T^o}_{\tau_{j}(q)\tau_{i_l}}$, as $B(T^o)$ is skew-symmetrizable.

The proof is complete.
\end{proof}

\begin{proposition}\label{Pro-vm}
Assume that $q$ is a puncture and $q\neq p$. Assume that $T$ contains no arc tagged notched at $q$. Then there is a unique map
$$w:\mathcal L(T^o,\widetilde\beta^{(q)})\to \mathbb Z$$ satisfies the following conditions:
\begin{enumerate}[$(1)$]
\item (Initial condition) $w({\bf P}_-)=0$, where ${\bf P}_-=(P_-,\Delta_1(q))$ is the minimum element in $\mathcal L(  T^o,\widetilde\beta^{(q)})$,
\item (Recurrence condition)
\begin{enumerate}
\item
For any $(P,\Delta_j(q))$ such that $(P,\Delta_{j+1}(q))$ covers $(P,\Delta_j(q))$,
\begin{equation*}
w(P,\Delta_{j+1}(q))-w(P,\Delta_j(q))=\Omega(P,\underline{\Delta_j(q)}).
\end{equation*}
\item
For any $(P,\Delta_j(q)), (Q,\Delta_j(q))\in \mathcal L(T^o,\beta^{(q)})$ such that $(Q,\Delta_j(q))$ covers $(P,\Delta_j(q))$, in particular $P<Q$ are related by a twist on some tile $G_l$,
\begin{equation*}
w(Q,\Delta_j(q))-w(P,\Delta_j(q))=\Omega(P,\Delta_j(q);G_l).
\end{equation*}
\end{enumerate}
\end{enumerate}
\end{proposition}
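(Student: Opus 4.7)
The plan is to mimic the proof of Proposition \ref{Prop-v1}, replacing the single-parameter induction on perfect matchings with a two-parameter induction that also tracks the triangle index $j$ labeling $\Delta_j(q)$.

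First I would establish uniqueness. By Lemma \ref{lem-H2} the Hasse graph of $\mathcal L(T^o,\widetilde\beta^{(q)})$ is connected, so any element ${\bf Q}$ is joined to ${\bf P}_-$ by a sequence of cover/covered-by steps. The initial condition fixes $w({\bf P}_-)=0$, and the two recurrence clauses determine $w({\bf Q}')-w({\bf Q})$ whenever ${\bf Q}'$ covers ${\bf Q}$, so $w$ is forced along any such sequence.

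For existence, I would introduce a distance $d({\bf Q})$ from ${\bf P}_-$, namely the length of any saturated increasing chain from ${\bf P}_-$ up to ${\bf Q}$. Using an invariant analogous to the $\Psi$ in the proof of Proposition \ref{Prop-v1} (counting twisted tiles weighted by an extra marker recording how many times a triangle transition of type (2a) has occurred), one checks that $d({\bf Q})$ is well-defined. I would then define $w({\bf Q})$ by choosing any saturated increasing chain from ${\bf P}_-$ to ${\bf Q}$ and summing the increments prescribed by (2a) and (2b), and prove by induction on $d({\bf Q})$ that the value is independent of the chosen chain. The inductive step reduces, as in Proposition \ref{Prop-v1}, to verifying that any two distinct covers ${\bf Q}_1, {\bf Q}_2$ of a common ${\bf Q}$ admit a common element ${\bf Q}^*$ above them such that the two diamond paths from ${\bf Q}$ to ${\bf Q}^*$ contribute the same total increment to $w$.

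The diamond verifications split into three cases. \emph{Case A:} both covers are of type (2b), i.e.\ ${\bf Q}_1=(\mu_{G_l}P,\Delta_j(q))$ and ${\bf Q}_2=(\mu_{G_r}P,\Delta_j(q))$ with $|l-r|\geq 2$; then $\mu_{G_l}\mu_{G_r}P=\mu_{G_r}\mu_{G_l}P$ and equality of the two diagonal sums is exactly Lemma \ref{Lem-det}. \emph{Case B:} one cover is of type (2a) and one of type (2b), say ${\bf Q}_1=(P,\Delta_{j+1}(q))$ and ${\bf Q}_2=(\mu_{G_l}P,\Delta_j(q))$; the expected common continuation is ${\bf Q}^*=(\mu_{G_l}P,\Delta_{j+1}(q))$, and Lemma \ref{Lem-det1} provides precisely the identity needed to close the diamond. \emph{Case C:} both covers are of type (2a), which cannot happen because for fixed $P$ the elements $(P,\Delta_j(q))$ form a totally ordered chain.

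The main obstacle will be Case B in the exceptional situations isolated by Lemma \ref{Lem-cover} and Lemma \ref{Lem-cover2}, namely when $j=1$ and $l=c$, or when the wrap-around between $\Delta_t(q)$ and $\Delta_1(q)$ prevents the naive common continuation ${\bf Q}^*=(\mu_{G_l}P,\Delta_{j+1}(q))$ from actually covering both ${\bf Q}_1$ and ${\bf Q}_2$ in the Hasse graph. For these corner cases I would invoke Lemmas \ref{lem:diamond} and \ref{lem:diamond1}, which record the genuine diamond structure of the Hasse graph in precisely these configurations, and Lemma \ref{lem:cop}, whose content is the compensating equality of the relevant multiplicities that ensures the gradient contributions still cancel in pairs. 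Once these corner diamonds are handled, the induction on $d({\bf Q})$ closes and the existence of $w$ follows.
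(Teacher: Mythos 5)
Your structural plan matches the paper's argument closely: uniqueness from connectivity (Lemma~\ref{lem-H2}), existence via induction on the distance to ${\bf P}_-$ measured by a $\Psi$-type invariant, and diamond-closing using Lemma~\ref{Lem-det} for a (2b)--(2b) pair and Lemma~\ref{Lem-det1} for a (2a)--(2b) pair, with the (2a)--(2a) case impossible since the triangle coordinate is totally ordered for fixed $P$. Where you go astray is the handling of the corner cases. The lemmas you cite there --- Lemmas~\ref{lem:diamond}, \ref{lem:diamond1}, \ref{lem:cop} --- are not applicable: they are stated under hypotheses tied to a flip at a specific arc $\alpha$ (conditions of the form $\tau_{i_l}=\alpha_k\neq\alpha_{k-1},\alpha_{k+1}$, references to $\eta_\alpha$, etc.) and are used much later in the paper to control the interaction between the partition bijection $\pi$ and the two lattices $\mathcal L$ and $\mathcal L'$. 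They say nothing about the internal Hasse-graph structure of a single fixed lattice $\mathcal L(T^o,\widetilde\beta^{(q)})$, which is what you need here.

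The actual mechanism in the paper is different and simpler: the exceptional configurations identified by Lemmas~\ref{Lem-cover} and~\ref{Lem-cover2} are shown not to arise given the covering hypotheses, so the naive common vertex of the diamond does cover (respectively is covered by) both elements. The key observation making this work --- implicit in the paper's ``since otherwise $\textbf P<\textbf P'_{a-1}$'' step --- is that a twist at the last tile $G_c$ with $P>\mu_{G_c}P$ forces $E_2(q)\in P$, which puts $(P,\Delta_1(q))$ at the \emph{top} of the triangle chain rather than the bottom, contradicting the assumed cover relation; a symmetric remark handles the other wrap-around. Once these configurations are ruled out, Lemmas~\ref{Lem-cover} and~\ref{Lem-cover2} immediately supply the genuine diamond, and Lemmas~\ref{Lem-det} and~\ref{Lem-det1} close it numerically. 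So your proposal has the correct skeleton, but to complete it you should replace the appeal to Lemmas~\ref{lem:diamond}, \ref{lem:diamond1}, \ref{lem:cop} by a direct exclusion of the corner configurations along the lines just described.
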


\begin{proof}
The result holds for the case $\widetilde\beta\in T^o$ as the Hasse graph of $\mathcal L(T^o,\widetilde\beta^{(q)})$ does not have any un-oriented cycles in this case.

We now consider the case that $\widetilde\beta\notin T^o$. The uniqueness holds as the Hasse graph of $\mathcal L(T^o,\widetilde\beta^{(q)})$ is connected (By Lemma
\ref{lem-H2}). We now show the existence. For any $\textbf{P}=(P,\Delta_j(q))\in \mathcal L(T^o,\widetilde\beta^{(q)})$, for any two chains $\textbf{P}_0=\textbf{P}_-<\textbf {P}_1<\cdots<\textbf P_a=\textbf P$ and $\textbf{P}'_0=\textbf{P}_-<\textbf P'_1<\cdots<\textbf P'_{a'}=\textbf P$ such that $\textbf P_{i+1}$ covers $\textbf P_i$ and $\textbf P'_{i+1}$ covers $\textbf P'_i$ for all $i$, it suffices to show that $w_1(\textbf P)=w_2(\textbf P)$, where $w_1(\textbf P)$ (resp. $w_1(\textbf P)$) is determined by the chain $\textbf{P}_0=\textbf{P}_-<\textbf {P}_1<\cdots<\textbf P_a=\textbf P$ (resp. $\textbf{P}'_0=\textbf{P}_-<\textbf P'_1<\cdots<\textbf P'_{a'}=\textbf P$).

Suppose that $P\ominus P_-$ encloses the union of tiles $\bigcup_{i\in J} G_{i}$ for some $J\subseteq \{1,\cdots, c\}$. Denote
   \begin{equation*}
        \Psi(\textbf P)=
        \begin{cases}
        \prod_{i\in J}\chi_i,  &\mbox{if $E_1(q)\in P$ and $j=1$},\\
        \prod_{i\in J}\chi_i(\kappa_{1}\kappa_{2}\cdots \kappa_{t-1}\kappa_{t}),  &\mbox{if $E_2(q)\in P$ and $j=1$},\\
        \prod_{i\in J}\chi_i(\kappa_{1}\kappa_{2}\cdots \kappa_{j-1}),  &\mbox{otherwise},
        \end{cases}
    \end{equation*}
where $E_1(q)$ and $E_2(q)$ are the edges of $G_c$ determined by (\ref{E1q}) in Section \ref{Sec-threesets}.
Thus $\Psi(\mu_{G_l}Q,\Delta)=\Psi(Q,\Delta)\chi_l$ if $(\mu_{G_l}Q,\Delta)$ covers $(Q,\Delta)$ and $\Psi(Q,\Delta_{l+1}(q))=\Psi(Q,\Delta_l(q))\kappa_l$ if $(Q,\Delta_{l+1}(q))$ covers $(Q,\Delta_l(q))$. It follows that $a=a'$ equals the exponent of $\Psi(\textbf P)$. We say that $a=a'$ is the distance between $\textbf P_-$ and $\textbf P$. We prove that $w_1(\textbf P)=w_2(\textbf P)$ by induction on the distance $a$.

In case $a=0$ we have $\textbf P=\textbf P_-$, it follows that $w_1(\textbf P)=w_2(\textbf P)=0$. Assume that $w_1(\textbf P)=w_2(\textbf P)$ holds for any case that the distance is less than $a$.

If $\textbf P_{a-1}=\textbf P'_{a-1}$ then $$w_1(\textbf P)-w_1(\textbf P_{a-1})=w_2(\textbf P)-w_2(\textbf P_{a-1}).$$
Thus by hypothesis, we have
$$w_1(\textbf P)=w_2(\textbf P).$$

If $\textbf P_{a-1}\neq \textbf P'_{a-1}$, we have either $\textbf P_{a-1}=(\mu_{G_{l}}P,\Delta_j(q)), \textbf P'_{a-1}=(\mu_{G_{r}}P,\Delta_j(q))$ for some $l,r$ with $P$ can twist on $G_l$ and $G_r$ or $\textbf P_{a-1}=(\mu_{G_{l}}P,\Delta_j(q)), \textbf P'_{a-1}=(P,\Delta_{j-1}(q))$ for some $l$.

(1)~ If $\textbf P_{a-1}=(\mu_{G_{l}}P,\Delta_j(q)), \textbf P'_{a-1}=(\mu_{G_{r}}P,\Delta_j(q))$, as $\textbf P_{a-1}, \textbf P'_{a-1}<\textbf P$, we have that $\mu_{G_l}P, \mu_{G_r}P<P$ and $|l-r|>1$. Consequently, $\mu_{G_l}\mu_{G_r}P=\mu_{G_r}\mu_{G_l}P$, moreover, as $\textbf P$ covers $\textbf P_{s-1}$, $l=c$ and $j=1$ can not hold simultaneously by Lemma \ref{Lem-cover}. Similarly, $r=c$ and $j=1$ can not hold simultaneously. Thus by Lemma \ref{Lem-cover}, we have both $\textbf P_{a-1}$ and $\textbf P'_{a-1}$ cover $(\mu_{G_r}\mu_{G_l}P,\Delta_j(q))$. Therefore,
\begin{equation*}
\begin{array}{rcl}
w_1(\textbf P)&=&w_1(\textbf P_{a-1})+\Omega(\mu_{G_l}P,\Delta_j(q);G_l)\vspace{2mm}\\
\hspace{-2mm}&=&\hspace{-2mm}
\left(w_1(\mu_{G_r}\mu_{G_l}P,\Delta_j(q))+\Omega(\mu_{G_r}\mu_{G_l}P,\Delta_j(q);G_r)\right)+\Omega(\mu_{G_l}P,\Delta_j(q);G_l)
\vspace{2mm}\\
\hspace{-2mm}&=&\hspace{-2mm}
w_2(\mu_{G_l}\mu_{G_r}P,\Delta_j(q))+\left(\Omega(\mu_{G_l}\mu_{G_r}P,\Delta_j(q);G_l)+\Omega(\mu_{G_r}P,\Delta_j(q);G_r)\right)\vspace{2mm}\\
\hspace{-2mm}&=&\hspace{-2mm}
w_2(\mu_{G_r}P,\Delta_j(q))+\Omega(\mu_{G_r}P,\Delta_j(q);G_r)\vspace{2mm}\\
\hspace{-2mm}&=&\hspace{-2mm}
w_2(\textbf P'_{a-1})+\Omega(\mu_{G_r}P,\Delta_j(q);G_r)\vspace{2mm}\\
\hspace{-2mm}&=&\hspace{-2mm}
w_2(\textbf P),
\end{array}
\end{equation*}
where the third equality follows by the induction hypothesis and Lemma \ref{Lem-det}.

(2)~ If $\textbf P_{a-1}=(\mu_{G_{l}}P,\Delta_j(q)), \textbf P'_{a-1}=(P,\Delta_{j-1}(q))$, we see that $l=c$ and $j-1=1$ can not hold simultaneously, since otherwise $\textbf P=(P,\Delta_j(q))<\textbf P'_{a-1}=(P,\Delta_{j-1}(q))$. Thus, $\textbf P'_{a-1}=(P,\Delta_{j-1}(q))$ covers $(\mu_{G_l}P,\Delta_{j-1}(q))$ by Lemma \ref{Lem-cover}. We also see that $l=c$ and $j=1$ can not hold simultaneously, since otherwise $\textbf P=(P,\Delta_j(q))$ does not cover $\textbf P_{a-1}=(\mu_{G_l}P,\Delta_{j}(q))$. As $\textbf P=(P,\Delta_{j}(q))$ covers $\textbf P'_{a-1}=(P,\Delta_{j-1}(q))$, by Lemma \ref{Lem-cover2}, we thus have $\textbf P_{a-1}=(\mu_{G_l}P,\Delta_{j}(q))$ covers $(\mu_{G_l}P,\Delta_{j-1}(q))$.

 Therefore,
\begin{equation*}
\begin{array}{rcl}
w_1(\textbf P)&=&w_1(\textbf P_{a-1})+\Omega(\mu_{G_l}P,\Delta_j(q);G_l)\vspace{2mm}\\
\hspace{-2mm}&=&\hspace{-2mm}
\left(w_1(\mu_{G_l}P,\Delta_{j-1}(q))+\Omega(\mu_{G_l}P,\underline{\Delta_{j-1}(q)})\right)+\Omega(\mu_{G_l}P,\Delta_j(q);G_l)
\vspace{2mm}\\
\hspace{-2mm}&=&\hspace{-2mm}
w_2(\mu_{G_l}P,\Delta_{j-1}(q))+\Omega(\mu_{G_l}P,\Delta_{j-1}(q);G_l)+\Omega(P,\underline{\Delta_{j-1}(q)})\vspace{2mm}\\
\hspace{-2mm}&=&\hspace{-2mm}
w_2(P,\Delta_{j-1}(q))+\Omega(P,\underline{\Delta_{j-1}(q)})\vspace{2mm}\\
\hspace{-2mm}&=&\hspace{-2mm}
w_2(\textbf P'_{a-1})+\Omega(P,\underline{\Delta_{j-1}(q)})\vspace{2mm}\\
\hspace{-2mm}&=&\hspace{-2mm}
w_2(\textbf P),
\end{array}
\end{equation*}
where the third equality follows by induction hypothesis and Lemma \ref{Lem-det1}.

The proof is complete.
\end{proof}

\subsection{Valuation map on $\mathcal L(T^o,\widetilde\beta^{(p,q)})$}

Suppose that $p$ and $q$ are punctures. Assume that $T$ contains no arcs tagged notched at $p$ or $q$. In this section, we construct a map
$$w:\mathcal L(T^o,\widetilde\beta^{(p,q)})\to \mathbb Z.$$

\begin{definition}\label{Def-gra5}
For any $(\Delta_i(p),P,\Delta_j(q))\in \mathcal L(T^o,\widetilde\beta^{(p,q)})$ and $\zeta\in T^o$, the \emph{gradient number $\Omega(\underline{\Delta_i(p)},P,\Delta_j(q);\zeta)$ of $(\Delta_i(p),P,\Delta_j(q))$ at $\Delta_i(p)$} is defined to be
\begin{equation*}
\Omega(\underline{\Delta_i(p)},P,\Delta_j(q);\zeta)=d(\zeta)[m(P;\zeta)-n(G_{T^o,\widetilde\beta};\zeta)+m(\Delta_j(q);\zeta)].
\end{equation*}
\end{definition}

In particular, if $\zeta=\tau_i(p)$, we write $\Omega(\underline{\Delta_i(p)},P,\Delta_j(q);\zeta)$ as $\Omega(\underline{\Delta_i(p)},P,\Delta_j(q))$.

\begin{definition}\label{Def-gra4}
For any $(\Delta_i(p),P,\Delta_j(q))\in \mathcal L(T^o,\widetilde\beta^{(p,q)})$ and $\zeta\in T^o$, the \emph{gradient number $\Omega(\Delta_i(p),P,\underline{\Delta_j(q)};\zeta)$ of $(\Delta_i(p),P,\Delta_j(q))$ at $\Delta_j(q)$} is defined to be
\begin{equation*}
\Omega(\Delta_i(p),P,\underline{\Delta_j(q)};\zeta)=-d(\zeta)[m(P;\zeta)-n(G_{T^o,\widetilde\beta};\zeta)+m(\Delta_i(p);\zeta)].
\end{equation*}
\end{definition}

In particular, if $\zeta=\tau_j(q)$, we write $\Omega(\Delta_i(p),P,\underline{\Delta_j(q)};\zeta)$ as $\Omega(\Delta_i(p),P,\underline{\Delta_j(q)})$.

\begin{definition}\label{Def-gra3}
Assume that $(\Delta_i(p),P,\Delta_j(q))\in \mathcal L(T^o,\widetilde\beta^{(p,q)})$ and $P$ can twist on a tile $G_l$ for some $l\in \{1,\cdots,c\}$. Assume that the diagonal of $G_l$ is labeled $\tau_{i_l}$. The \emph{gradient number $\Omega(\Delta_i(p),P,\Delta_j(q);G_l)$ of $(\Delta_i(p),P,\Delta_j(q))$ at $G_l$} is defined to be
\begin{equation*}
\Omega(\Delta_i(p),P,\Delta_j(q);G_l)=\Omega(P;G_l)+d(\tau_{i_l})[m(\Delta_j(q);\tau_{i_l})-m(\Delta_i(p);\tau_{i_l})].
\end{equation*}
\end{definition}

\begin{lemma}\label{Lem-det2}
Suppose that $P$ can twist on two tiles $G_l$ and $G_r$ with $r-l>1$ such that $\mu_{G_l}P,\mu_{G_r}P>P$. Then for any $\Delta_i(p)\in \Delta(T^o,p)$, $\Delta_j(q)\in \Delta(T^o,q)$ we have
\begin{equation*}
\begin{array}{rcl}
&&
\Omega(\Delta_i(p),P,\Delta_j(q);G_l)+\Omega(\Delta_i(p),\mu_{G_l}P,\Delta_j(q);G_r)\vspace{2mm}\\
\hspace{-2mm}&=&\hspace{-2mm}
\Omega(\Delta_i(p),P,\Delta_j(q);G_r)+\Omega(\Delta_i(p),\mu_{G_r}P,\Delta_j(q);G_l).
\end{array}
\end{equation*}
\end{lemma}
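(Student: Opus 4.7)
The plan is to reduce Lemma \ref{Lem-det2} to the already-established Lemma \ref{lem-gra}, in exactly the same way that Lemma \ref{Lem-det} was obtained from Lemma \ref{lem-gra}. The key observation is that by Definition \ref{Def-gra3} we have the splitting
\begin{equation*}
\Omega(\Delta_i(p),P,\Delta_j(q);G_l)=\Omega(P;G_l)+d(\tau_{i_l})\bigl[m(\Delta_j(q);\tau_{i_l})-m(\Delta_i(p);\tau_{i_l})\bigr],
\end{equation*}
and the correction term $d(\tau_{i_l})[m(\Delta_j(q);\tau_{i_l})-m(\Delta_i(p);\tau_{i_l})]$ depends only on the tile $G_l$ (through its diagonal label $\tau_{i_l}$) and on the fixed triangles $\Delta_i(p), \Delta_j(q)$; it does \emph{not} depend on the perfect matching $P$.

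Consequently, when we apply the splitting to each of the four terms in the claimed identity, the correction terms contributed by $G_l$ occur once on each side of the equation (once from $\Omega(\Delta_i(p),P,\Delta_j(q);G_l)$ on the left-hand side and once from $\Omega(\Delta_i(p),\mu_{G_r}P,\Delta_j(q);G_l)$ on the right-hand side), and similarly the correction terms contributed by $G_r$ occur once on each side. Thus all the correction terms cancel, and the identity to be proved reduces to
\begin{equation*}
\Omega(P;G_l)+\Omega(\mu_{G_l}P;G_r)=\Omega(P;G_r)+\Omega(\mu_{G_r}P;G_l),
\end{equation*}
which is precisely Lemma \ref{lem-gra}, applicable since the hypotheses $r-l>1$ and $\mu_{G_l}P, \mu_{G_r}P > P$ are exactly those of that lemma.

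There is no genuine obstacle here: the whole proof is a one-line bookkeeping argument once the splitting from Definition \ref{Def-gra3} is isolated. The only thing one should double-check is that the hypothesis ``$T$ contains no arcs tagged notched at $p$ or $q$'' plays no role at this local combinatorial step (it is used elsewhere to guarantee that $\Delta(T^o,p)$ and $\Delta(T^o,q)$ are well-defined via the plain ideal triangulation $T^o$), and that $\tau_{i_l}$ and $\tau_{i_r}$ need not be distinct from the sides of $\Delta_i(p)$ or $\Delta_j(q)$ — but even if they coincide, the correction terms are still $P$-independent and the cancellation goes through unchanged.
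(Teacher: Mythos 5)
Your proposal is correct and is exactly the argument the paper intends: the paper's own proof of Lemma \ref{Lem-det2} is the single sentence ``It follows by Lemma \ref{lem-gra} and Definition \ref{Def-gra3},'' and your write-up simply makes explicit the $P$-independence of the correction term and the resulting cancellation.
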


\begin{proof}
It follows by Lemma \ref{lem-gra} and Definition \ref{Def-gra3}.
\end{proof}

\begin{lemma}\label{Lem-det3}
Suppose that $P$ can twist on a tile $G_l$ such that $\mu_{G_l}P>P$. Then for any $\Delta_i(p)\in \Delta(T^o,p)$, $\Delta_j(q)\in \Delta(T^o,q)$ we have
\begin{enumerate}[$(1)$]
\item \begin{equation*}
\begin{array}{rcl}
&&
\Omega(\Delta_i(p),P,\Delta_j(q);G_l)+\Omega(\Delta_i(p),\mu_{G_l}P,\underline{\Delta_j(q)})\vspace{2mm}\\
\hspace{-2mm}&=&\hspace{-2mm}
\Omega(\Delta_i(p),P,\underline{\Delta_j(q)})+\Omega(\Delta_i(p),P,\Delta_{j+1}(q);G_l).
\end{array}
\end{equation*}
\item \begin{equation*}
\begin{array}{rcl}
&&
\Omega(\Delta_i(p),P,\Delta_j(q);G_l)+\Omega(\underline{\Delta_i(p)},\mu_{G_l}P,\Delta_j(q))\vspace{2mm}\\
\hspace{-2mm}&=&\hspace{-2mm}
\Omega(\underline{\Delta_i(p)},P,\Delta_j(q))+\Omega(\Delta_{i+1}(p),P,\Delta_{j}(q);G_l).
\end{array}
\end{equation*}
\end{enumerate}
\end{lemma}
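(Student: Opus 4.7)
The plan is to prove both (1) and (2) by direct expansion of the gradient numbers using Definitions \ref{Def-gra5}, \ref{Def-gra4}, \ref{Def-gra3}, and \ref{Def-ome}; both identities should reduce to the combinatorial identity already established in the proof of Lemma \ref{Lem-det1}.

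For part (1), after substituting the definitions, the $\Omega(P;G_l)$ contributions appear on both sides and cancel. The terms involving $m(\Delta_i(p);\cdot)$ -- namely the $-d(\tau_{i_l}) m(\Delta_i(p);\tau_{i_l})$ parts coming from the two $\Omega(\Delta_i(p),\cdot,\Delta_\ast(q);G_l)$ pieces, together with the $-d(\tau_j(q)) m(\Delta_i(p);\tau_j(q))$ parts coming from the two $\Omega(\Delta_i(p),\cdot,\underline{\Delta_j(q)})$ pieces -- appear symmetrically on both sides and cancel, as do the $d(\tau_j(q))\, n(G_{T^o,\widetilde\beta};\tau_j(q))$ contributions. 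What remains is precisely
\begin{equation*}
d(\tau_{i_l})\bigl[m(\Delta_j(q);\tau_{i_l}) - m(\Delta_{j+1}(q);\tau_{i_l})\bigr] = d(\tau_j(q))\bigl[m(\mu_{G_l}P;\tau_j(q)) - m(P;\tau_j(q))\bigr],
\end{equation*}
which is exactly the identity verified in the proof of Lemma \ref{Lem-det1}. So (1) follows immediately.

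For part (2), the same strategy applies with the roles of $p$ and $q$ essentially swapped. After cancelling $\Omega(P;G_l)$ and the common terms involving $d(\tau_{i_l})m(\Delta_j(q);\tau_{i_l})$, $d(\tau_i(p))\, n(G_{T^o,\widetilde\beta};\tau_i(p))$, and $d(\tau_i(p))m(\Delta_j(q);\tau_i(p))$, what remains is
\begin{equation*}
d(\tau_{i_l})\bigl[m(\Delta_{i+1}(p);\tau_{i_l}) - m(\Delta_i(p);\tau_{i_l})\bigr] = d(\tau_i(p))\bigl[m(P;\tau_i(p)) - m(\mu_{G_l}P;\tau_i(p))\bigr],
\end{equation*}
which differs from the identity obtained in (1) only by an overall sign on each side (reflecting the opposite signs in Definitions \ref{Def-gra5} and \ref{Def-gra4}, i.e.\ $+d(\tau_i(p))$ versus $-d(\tau_j(q))$). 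It is proved by the same case analysis: when $b^{T^o}_{\tau_{i_l},\tau_i(p)} = 0$ both sides vanish by the triangle combinatorics; otherwise, the hypothesis $\mu_{G_l}P > P$ forces the edge labeled $\tau_i(p)$ of $G_l$ to lie in exactly one of $P$ and $\mu_{G_l}P$, so both sides equal $\mp d(\tau_{i_l}) b^{T^o}_{\tau_{i_l},\tau_i(p)} = \pm d(\tau_i(p)) b^{T^o}_{\tau_i(p),\tau_{i_l}}$ by skew-symmetrizability of $B^{T^o}$.

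The main obstacle is purely bookkeeping: tracking which of the many summands in the expansion cancel, especially in view of the sign disagreement between Definitions \ref{Def-gra5} and \ref{Def-gra4}. Once the correct cancellations are made, both parts reduce mechanically to the skew-symmetrizability argument already executed in Lemma \ref{Lem-det1}, so no new combinatorial input is required beyond what is present there.
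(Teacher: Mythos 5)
Your proposal is correct and follows essentially the same route as the paper: expand both sides using the definitions of the gradient numbers, cancel the common terms, and reduce the residual identity to the same case analysis on $b^{T^o}_{\tau_{i_l},\tau_j(q)}$ (resp.\ $b^{T^o}_{\tau_{i_l},\tau_i(p)}$) together with skew-symmetrizability that appears in the proof of Lemma \ref{Lem-det1}. The paper only spells out part (1) and declares part (2) ``similar,'' so your explicit working of the sign bookkeeping for part (2) is a slightly more complete rendering of the same argument, not a different one.
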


\begin{proof}
We shall only prove (1) as the proof of (2) is similar.

By definition, we have
\begin{equation*}
\begin{array}{rcl}
&& \Omega(\Delta_i(p),P,\Delta_j(q);G_l)+\Omega(\Delta_i(p),\mu_{G_l}P,\underline{\Delta_j(q)})\vspace{2mm}\\
\hspace{-2mm}&=&\hspace{-2mm} \Omega(P;G_l)+d(\tau_{i_l})[m(\Delta_j(q);\tau_{i_l})-m(\Delta_i(p);\tau_{i_l})]\vspace{2mm}\\
\hspace{-2mm}&-&\hspace{-2mm}
d(\tau_j(q))[m(\mu_{G_l}P;\tau_j(q))-n(G_{T^o,\widetilde\beta};\tau_j(q))+m(\Delta_i(p);\tau_j(q))]
\end{array}
\end{equation*}
and
\begin{equation*}
\begin{array}{rcl}
&& \Omega(\Delta_i(p),P,\underline{\Delta_j(q)})+\Omega(\Delta_i(p),P,\Delta_{j+1}(q);G_l)\vspace{2mm}\\
\hspace{-2mm}&=&\hspace{-2mm}
-d(\tau_j(q))[m(P;\tau_j(q))-n(G_{T^o,\widetilde\beta};\tau_j(q))+m(\Delta_i(p);\tau_j(q))]\vspace{2mm}\\
\hspace{-2mm}&+&\hspace{-2mm}
\Omega(P;G_l)+d(\tau_{i_l})[m(\Delta_{j+1}(q);\tau_{i_l})-m(\Delta_i(p);\tau_{i_l})].
\end{array}
\end{equation*}

We first consider the case that $b^{T^o}_{\tau_{i_l},\tau_{j}(q)}=0$. Then we have
$$m(P;\tau_{j}(q))=m(\mu_{G_l}P;\tau_{j}(q)),\hspace{5mm} m(\Delta_j(q);\tau_{i_l})=m(\Delta_{j+1}(q);\tau_{i_l}).$$
Thus the result follows in this case.

We then consider the case that $b^{T^o}_{\tau_{i_l},\tau_{j}(q)}\neq 0$. We may assume that $b^{T^o}_{\tau_{i_l},\tau_{j}(q)}>0$. As $\mu_{G_l}P>P$, the edge labeled $\tau_{j}(q)$ of $G_{i_l}$ is in $P$. Therefore, we have
$$m(P;\tau_{j}(q))=m(\mu_{G_l}P;\tau_{j}(q))-b^{T^o}_{\tau_j(q),\tau_{i_l}},\hspace{5mm} m(\Delta_j(q);\tau_{i_l})=m(\Delta_{j+1}(q);\tau_{i_l})-b^{T^o}_{\tau_{i_l},\tau_{j}(q)}.$$
Then the result follows by $d(\tau_{i_l})b^{T^o}_{\tau_{i_l},\tau_j(q)}=-d(\tau_{j}(q))b^{T^o}_{\tau_{j}(q),\tau_{i_l}}$, as $B(T^o)$ is skew-symmetrizable.

The proof is complete.
\end{proof}

\begin{lemma}\label{Lem-det4}
For any $(\Delta_i(p),P,\Delta_j(q))\in \mathcal L(T^o,\widetilde\beta^{(p,q)})$ we have
\begin{equation*}
\begin{array}{rcl}
&&
\Omega(\underline{\Delta_i(p)},P,\Delta_j(q))+\Omega(\Delta_{i+1}(p),P,\underline{\Delta_j(q)})\vspace{2mm}\\
\hspace{-2mm}&=&\hspace{-2mm}
\Omega(\Delta_i(p),P,\underline{\Delta_j(q)})+\Omega(\underline{\Delta_i(p)},P,\Delta_{j+1}(q)).
\end{array}
\end{equation*}
\end{lemma}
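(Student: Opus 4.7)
The plan is to reduce the identity to the skew-symmetrizability of the signed adjacency matrix $B^{T^o}$ via a local computation about the indicator-type functions $m(\Delta_\bullet;\cdot)$. First I would expand each of the four $\Omega$-terms using Definitions \ref{Def-gra5} and \ref{Def-gra4}. The pieces $d(\tau_i(p))[m(P;\tau_i(p)) - n(G_{T^o,\widetilde\beta};\tau_i(p))]$ and $-d(\tau_j(q))[m(P;\tau_j(q)) - n(G_{T^o,\widetilde\beta};\tau_j(q))]$ appear identically on both sides and cancel, reducing the claim to the purely combinatorial identity
\begin{equation*}
d(\tau_i(p))\bigl[m(\Delta_j(q);\tau_i(p)) - m(\Delta_{j+1}(q);\tau_i(p))\bigr] = d(\tau_j(q))\bigl[m(\Delta_{i+1}(p);\tau_j(q)) - m(\Delta_i(p);\tau_j(q))\bigr].
\end{equation*}

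Next I would establish the local formula $m(\Delta_k(r);\alpha) - m(\Delta_{k+1}(r);\alpha) = -b^{T^o}_{\alpha,\tau_k(r)}$ valid for any puncture $r$, any index $k$, and any arc $\alpha \in T^o$, which is essentially a restatement of the definition of the signed adjacency matrix. It is proved by a direct case analysis: the left-hand side is nonzero only when $\alpha$ equals one of the sides $\tau_{k-1}(r), \tau_{[k]}(r)$ of $\Delta_k(r)$ or $\tau_{k+1}(r), \tau_{[k+1]}(r)$ of $\Delta_{k+1}(r)$, and in each such case the indicator values from (\ref{Eq-mj}) match precisely (with the clockwise-order sign convention) the contribution of the relevant triangle to $b^{T^o}_{\alpha,\tau_k(r)} = \sum_\Delta b^{T^o,\Delta}_{\alpha,\tau_k(r)}$. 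The same identity is tacitly invoked in the proof of Lemma \ref{Lem-det1}, where the line $m(\Delta_j(q);\tau_{i_l}) = m(\Delta_{j+1}(q);\tau_{i_l}) - b^{T^o}_{\tau_{i_l},\tau_j(q)}$ is exactly this formula.

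Substituting the local formula into the reduced identity (with $(r,k,\alpha)=(q,j,\tau_i(p))$ on the left side and $(r,k,\alpha)=(p,i,\tau_j(q))$ on the right, keeping track of the sign swap caused by the reversed order of the difference) collapses the reduced identity to
\begin{equation*}
-d(\tau_i(p))\, b^{T^o}_{\tau_i(p),\tau_j(q)} = d(\tau_j(q))\, b^{T^o}_{\tau_j(q),\tau_i(p)},
\end{equation*}
which is precisely the skew-symmetrizability relation $d(\alpha)b^{T^o}_{\alpha,\beta} = -d(\beta)b^{T^o}_{\beta,\alpha}$ guaranteed since $B^{T^o}$ is skew-symmetrizable. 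The main delicate point will be the case analysis establishing the local formula, in particular handling the degenerate situations where some of $\tau_{[k]}(r), \tau_{[k+1]}(r), \tau_{k\pm 1}(r)$ coincide (as can occur near self-folded triangles) or where $\alpha = \tau_k(r)$ itself; however, these verifications are routine and run in parallel with the sign-tracking already carried out in Lemmas \ref{Lem-det1} and \ref{Lem-det3}.
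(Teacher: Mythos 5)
Your proposal is correct and follows essentially the same route as the paper: both expand the four $\Omega$-terms using Definitions \ref{Def-gra5} and \ref{Def-gra4}, cancel the $P$- and $n$-contributions, and reduce to the same local identity relating differences of $m(\Delta_\bullet;\cdot)$ to entries of $B^{T^o}$, closed via skew-symmetrizability. The only packaging difference is that you state the adjacent-triangle computation as a standalone local formula $m(\Delta_k(r);\alpha)-m(\Delta_{k+1}(r);\alpha)=-b^{T^o}_{\alpha,\tau_k(r)}$, whereas the paper carries out the same content in place, by a case split on whether $b^{T^o}_{\tau_i(p),\tau_j(q)}$ vanishes.
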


\begin{proof}
By definition, we have
\begin{equation*}
\begin{array}{rcl}
&& \Omega(\underline{\Delta_i(p)},P,\Delta_j(q))+\Omega(\Delta_{i+1}(p),P,\underline{\Delta_j(q)})\vspace{2mm}\\
\hspace{-2mm}&=&\hspace{-2mm}
d(\tau_i(p))[m(P;\tau_i(p))-n(G_{T^o,\widetilde\beta};\tau_i(p))+m(\Delta_j(q);\tau_i(p))]\vspace{2mm}\\
\hspace{-2mm}&-&\hspace{-2mm}
d(\tau_j(q))[m(P;\tau_j(q))-n(G_{T^o,\widetilde\beta};\tau_j(q))+m(\Delta_{i+1}(p);\tau_j(q))],
\end{array}
\end{equation*}

\begin{equation*}
\begin{array}{rcl}
&& \Omega(\Delta_i(p),P,\underline{\Delta_j(q)})+\Omega(\underline{\Delta_i(p)},P,\Delta_{j+1}(q))\vspace{2mm}\\
\hspace{-2mm}&=&\hspace{-2mm}
-d(\tau_j(q))[m(P;\tau_j(p))-n(G_{T^o,\widetilde\beta};\tau_j(q))+m(\Delta_i(p);\tau_j(q))]\vspace{2mm}\\
\hspace{-2mm}&+&\hspace{-2mm}
d(\tau_i(p))[m(P;\tau_i(p))-n(G_{T^o,\widetilde\beta};\tau_i(p))+m(\Delta_{j+1}(q);\tau_i(p))],
\end{array}
\end{equation*}

We first consider the case that $b^{T^o}_{\tau_{i}(p),\tau_{j}(q)}=0$. Then we have
$$m(\Delta_i(p);\tau_{j}(q))=m(\Delta_{i+1}(p);\tau_{j}(q)),\hspace{5mm} m(\Delta_j(q);\tau_{i}(p))=m(\Delta_{j+1}(q);\tau_{i}(p)).$$
Thus the result follows in this case.

We then consider the case that $b^{T^o}_{\tau_{i}(p),\tau_{j}(q)}\neq 0$. We may assume that $b^{T^o}_{\tau_{i}(p),\tau_{j}(q)}>0$. Then we have
$$ m(\Delta_i(p);\tau_{j}(q))=m(\Delta_{i+1}(p);\tau_{j}(q))-b^{T^o}_{\tau_{j}(q),\tau_{i}(p)},\hspace{2mm} m(\Delta_j(q);\tau_{i}(p))=m(\Delta_{j+1}(q);\tau_{i}(p))-b^{T^o}_{\tau_{i}(p),\tau_{j}(q)}.$$
Then the result follows by $d(\tau_{i}(p))b^{T^o}_{\tau_{i}(p),\tau_{j}(q)}=-d(\tau_{j}(q))b^{T^o}_{\tau_{j}(q),\tau_{i}(p)}$, as $B(T^o)$ is skew-symmetrizable.

The proof is complete.
\end{proof}

\begin{proposition}\label{pro-vpq}
With the foregoing notation. Suppose that $p$ and $q$ are punctures and $T$ contains no arcs tagged notched at $p$ or $q$. There is a unique map
$$w:\mathcal L(T^o,\widetilde\beta^{(p,q)})\to \mathbb Z$$
satisfies the following conditions:

\hspace{-1cm}(1) (Initial condition) $w({\bf P}_-)=0$, where ${\bf P}_-$ is the minimum element in $\mathcal L(T^o,\widetilde\beta^{(p,q)})$,

\hspace{-1cm}(2) (Recurrence condition)

\hspace{-1cm}(a) For any $(\Delta_i(p),P,\Delta_j(q))$ such that $(\Delta_{i+1}(p),P,\Delta_{j}(q))$ covers $(\Delta_i(p),P,\Delta_j(q))$,
\begin{equation*}
w(\Delta_{i+1}(p),P,\Delta_{j}(q))-w(\Delta_i(p),P,\Delta_j(q))=\Omega(\underline{\Delta_i(p)},P,\Delta_j(q)).
\end{equation*}
\hspace{-0.5cm}(b) For any $(\Delta_i(p),P,\Delta_j(q))$ such that $(\Delta_i(p),P,\Delta_{j+1}(q))$ covers $(\Delta_i(p),P,\Delta_j(q))$,
\begin{equation*}
w(\Delta_i(p),P,\Delta_{j+1}(q))-w(\Delta_i(p),P,\Delta_j(q))=\Omega(\Delta_i(p),P,\underline{\Delta_j(q)}).
\end{equation*}
\hspace{-0.5cm}(c) For any $(\Delta_i(p),Q,\Delta_j(q)),(\Delta_i(p),P,\Delta_j(q))\in \mathcal L(T^o,\widetilde\beta^{(p,q)})$ such that $(\Delta_i(p),Q,\Delta_j(q))$ covers $(\Delta_i(p),P,\Delta_j(q))$, in particular, $Q>P$ are related by a twist on some tile $G_l$,
\begin{equation*}
w(\Delta_i(p),Q,\Delta_j(q))-w(\Delta_i(p),P,\Delta_j(q))=\Omega(\Delta_i(p),P,\Delta_j(q);G_l).
\end{equation*}
\end{proposition}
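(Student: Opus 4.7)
The plan is to mirror the strategy used for Propositions~\ref{Prop-v1} and \ref{Pro-vm}, extending it to handle the third coordinate. I will first dispose of uniqueness: since the Hasse graph of $\mathcal L(T^o,\widetilde\beta^{(p,q)})$ is connected by Lemma~\ref{lem-H3}, and since conditions (2)(a)--(c) determine $w(\mathbf{Q})-w(\mathbf{P})$ along every covering relation, any two candidates for $w$ that satisfy the initial condition must coincide. The substantive content is existence, which I will prove by showing that the value assigned to any element $\mathbf{P}\in\mathcal L(T^o,\widetilde\beta^{(p,q)})$ is independent of the chain $\mathbf{P}_-=\mathbf{P}_0<\mathbf{P}_1<\cdots<\mathbf{P}_a=\mathbf{P}$ of covers used to compute it.

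As in the proof of Proposition~\ref{Pro-vm}, I would introduce a monomial $\Psi(\mathbf{P})$ in auxiliary variables $\chi_i$ (for the tiles), $\kappa_j$ (for the $q$-triangles) and $\lambda_i$ (for the $p$-triangles) whose exponent captures the distance of $\mathbf{P}$ from $\mathbf{P}_-$ along any chain of covers; its definition will depend on the cases $E_1(p)/E_2(p)\in P$ and $E_1(q)/E_2(q)\in P$ as in the once-notched proof. With such a $\Psi$ in hand, chain-length is a well-defined integer, and I would do induction on the distance $a$. The base case $a=0$ is immediate from (1).

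For the induction step, suppose $\mathbf{P}_{a-1}\neq \mathbf{P}'_{a-1}$ are the penultimate elements of two chains ending at $\mathbf{P}=(\Delta_i(p),P,\Delta_j(q))$. The two predecessors are obtained by moving one of the three coordinates down, which gives $\binom{3}{2}=3$ mixed cases together with the three purely-in-one-coordinate cases:
\begin{enumerate}[(i)]
\item both $\mathbf{P}_{a-1},\mathbf{P}'_{a-1}$ differ from $\mathbf{P}$ only in the matching coordinate, say $P\mapsto \mu_{G_l}P$ and $P\mapsto \mu_{G_r}P$ with $|l-r|>1$;
\item one differs in the $\Delta(q)$-coordinate and the other in the matching;
\item one differs in the $\Delta(p)$-coordinate and the other in the matching;
\item one differs in the $\Delta(p)$-coordinate and the other in the $\Delta(q)$-coordinate.
\end{enumerate}
In case (i) the two mutations commute, and Lemma~\ref{Lem-det2} is precisely the compatibility needed to conclude $w_1(\mathbf{P})=w_2(\mathbf{P})$ by the induction hypothesis, exactly as in Proposition~\ref{Pro-vm}. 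Cases (ii) and (iii) are handled by parts (1) and (2) of Lemma~\ref{Lem-det3} respectively. Case (iv) is new compared with Proposition~\ref{Pro-vm} and is handled by Lemma~\ref{Lem-det4}. In every subcase one must also check that the four ``corner'' covers of the relevant square in the Hasse diagram are actually covers; this uses Lemmas~\ref{Lem-cover1} and \ref{Lem-cover3} together with the exclusion conditions appearing there (such as ruling out $i=1,l=1$ or $j=1,l=c$), and will require the same kind of short argument as in Proposition~\ref{Pro-vm}. Once all four cases are verified, the induction goes through and $w$ is well-defined.

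The main obstacle I anticipate is the case-bookkeeping in case (iv), where both $\mathbf{P}_{a-1}$ and $\mathbf{P}'_{a-1}$ are obtained by demotions in the triangle coordinates at different punctures. One must verify that in the square with corners $(\Delta_{i-1}(p),P,\Delta_j(q))$, $(\Delta_i(p),P,\Delta_{j-1}(q))$, $\mathbf{P}_{a-1}$, $\mathbf{P}$, all relevant relations are covers (so that the inductive equation makes sense), and then apply Lemma~\ref{Lem-det4}. This last closure step is where the interaction between the two punctures enters and where the delicate parts of the lattice structure in Subsection~\ref{delta2} (notably the defining order on $(\Delta_i(p),P,\Delta_j(q))$ which couples $E_1(p),E_2(p)$ with $\Delta_j(q)$) become crucial. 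Modulo this verification, the proof of independence---and hence existence---is a direct extension of the argument given for Proposition~\ref{Pro-vm}.
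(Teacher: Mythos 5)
Your proposal follows essentially the same strategy as the paper's proof: uniqueness from connectedness of the Hasse graph (Lemma~\ref{lem-H3}), a monomial $\Psi$ (the paper uses $\vartheta_i$ where you write $\lambda_i$) to get a well-defined distance function for induction, and a four-way case split in the inductive step resolved by Lemmas~\ref{Lem-det2}, \ref{Lem-det3}, \ref{Lem-det4} together with the cover-verification Lemmas~\ref{Lem-cover1} and \ref{Lem-cover3}. The paper handles your case (iv) exactly as you anticipate, closing the square at $(\Delta_{i-1}(p),P,\Delta_{j-1}(q))$ via Lemma~\ref{Lem-cover3} before applying Lemma~\ref{Lem-det4}.
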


\begin{proof}
The uniqueness holds as the Hasse graph of $\mathcal L(T^o,\widetilde\beta^{(p,q)})$ is connected (by Lemma \ref{lem-H3}). We now show the existence. For any $\textbf{P}=(\Delta_i(p),P,\Delta_j(q))\in \mathcal L(T^o,\widetilde\beta^{(p,q)})$, for any two chains $\textbf{P}_0=\textbf{P}_-<\textbf {P}_1<\cdots<\textbf P_a=\textbf P$ and $\textbf{P}'_0=\textbf{P}_-<\textbf P'_1<\cdots<\textbf P'_{a'}=\textbf P$ such that $\textbf P_{i+1}$ covers $\textbf P_i$ and $\textbf P'_{i+1}$ covers $\textbf P'_i$ for all $i$, it suffices to show that $w_1(\textbf P)=w_2(\textbf P)$, where $w_1(\textbf P)$ (resp. $w_2(\textbf P)$) is determined by the chain $\textbf{P}_0=\textbf{P}_-<\textbf {P}_1<\cdots<\textbf P_a=\textbf P$ (resp. $\textbf{P}'_0=\textbf{P}_-<\textbf P'_1<\cdots<\textbf P'_{a'}=\textbf P$).

If $\widetilde\beta\notin T^o$, denote
   \begin{equation*}
        \Psi(\Delta_i(p),P,\Delta_j(q))=
        \begin{cases}
        \Psi(P,\Delta_j(q)), &\mbox{if $E_1(p)\in P$ and $i=1$},\\
        \Psi(P,\Delta_j(q))\vartheta_{1}\vartheta_{2}\cdots \vartheta_{s-1}\vartheta_{s},  &\mbox{if $E_2(p)\in P$ and $i=1$},\\
        \Psi(P,\Delta_j(q))\vartheta_{1}\vartheta_{2}\cdots \vartheta_{i-1},  &\mbox{otherwise},
        \end{cases}
    \end{equation*}
    where $E_1(p)$ and $E_2(p)$ are the edges of $G_1$ determined by (\ref{E1p}) in Section \ref{Sec-threesets} and $\Psi(P,\Delta_j(q))$ is given in the proof of Proposition \ref{Pro-vm}.

If $\widetilde\beta\in T^o$, denote
  \begin{equation}\label{Eq-hpq}
        \Psi(\Delta_i(p),P_{\widetilde\beta},\Delta_j(q))=
        \begin{cases}
        1, &\mbox{if $i=1$ and $j=t$},\\
        \vartheta_{1}\vartheta_{2}\cdots \vartheta_{s}\kappa_1\cdots \kappa_t,  &\mbox{if $i=s$ and $j=1$},\\
        \vartheta_{1}\vartheta_{2}\cdots \vartheta_{i-1}\kappa_1\cdots \kappa_{j-1}\kappa_t,  &\mbox{otherwise}.
        \end{cases}
    \end{equation}

Thus, $\Psi(\Delta,\mu_{G_l}Q,\Delta')=\Psi(\Delta,Q,\Delta')\chi_l$ if $(\Delta,\mu_{G_l}Q,\Delta')$ covers $(\Delta,Q,\Delta')$, $\Psi(\Delta,Q,\Delta_{l+1}(q))=\Psi(\Delta,Q,\Delta_l(q))\kappa_l$ if $(\Delta,Q,\Delta_{l+1}(q))$ covers $(\Delta, Q,\Delta_l(q))$, and $\Psi(\Delta_{l+1}(p),Q,\Delta')=\Psi(\Delta_l(p),Q,\Delta')\vartheta_l$ if $(\Delta_{l+1}(p),Q,\Delta')$ covers $(\Delta_l(p), Q,\Delta')$. It follows that $a=a'$ equals the exponent of $\Psi(\textbf P)$. We say that $a=a'$ is the distance between $\textbf P_-$ and $\textbf P$. We prove that $w_1(\textbf P)=w_2(\textbf P)$ by induction on the distance $a$.

In case $a=0$ we have $\textbf P=\textbf P_-$. It follows that $w_1(\textbf P)=w_2(\textbf P)=0$ in this case. Assume that $w_1(\textbf P)=w_2(\textbf P)$ holds for any case that the distance is less than $a$.

If $\textbf P_{a-1}=\textbf P'_{a-1}$ then $w_1(\textbf P)-w_1(\textbf P_{a-1})=w_2(\textbf P)-w_2(\textbf P_{a-1}).$
Thus by the hypothesis we have
$w_1(\textbf P)=w_2(\textbf P).$

We now consider the case that $\textbf P_{a-1}\neq \textbf P'_{a-1}$. There are four possibilities:
\begin{enumerate}[$(1)$]
\item $\textbf P_{a-1}=(\Delta_{i}(p),\mu_{G_l}P,\Delta_j(q))$ and $\textbf P'_{a-1}=(\Delta_i(p),\mu_{G_r}P,\Delta_j(q))$ for some $G_l$ and $G_r$ such that $P$ can twist on $G_l$ and $G_r$;
\item $\textbf P_{a-1}=(\Delta_{i-1}(p),P,\Delta_j(q))$ and $\textbf P'_{a-1}=(\Delta_i(p),\mu_{G_l}P,\Delta_j(q))$ for some $G_l$ such that $P$ can twist on $G_l$;
\item $\textbf P_{a-1}=(\Delta_{i}(p),P,\Delta_{j-1}(q))$ and $\textbf P'_{a-1}=(\Delta_i(p),\mu_{G_l}P,\Delta_j(q))$ for some $G_l$ such that $P$ can twist on $G_l$;
\item $\textbf P_{a-1}=(\Delta_{i-1}(p),P,\Delta_{j}(q))$ and $\textbf P'_{a-1}=(\Delta_i(p),P,\Delta_{j-1}(q))$.
\end{enumerate}

We shall only prove cases (1) (3) and (4) as cases (2) and (3) are dual.

For the case (1), we have $|l-r|>1$ and $\textbf P_{a-1}$ covers $(\Delta_i(p),\mu_{G_r}\mu_{G_l}P,\Delta_j(q))$ and $\textbf P'_{a-1}$ covers $(\Delta_i(p),\mu_{G_l}\mu_{G_r}P,\Delta_j(q))$. Thus we have
\begin{equation*}
\begin{array}{rcl}
w_1(\textbf P)&=&w_1(\textbf P_{a-1})+\Omega(\Delta_i(p),\mu_{G_l}P,\Delta_j(q);G_l)\vspace{2mm}\\
\hspace{-2mm}&=&\hspace{-2mm}
\left(w_1(\Delta_i(p),\mu_{G_r}\mu_{G_l}P,\Delta_j(q))+\Omega(\Delta_i(p),\mu_{G_r}\mu_{G_l}P,\Delta_j(q);G_r)\right)\vspace{2mm}\\
&+&\Omega(\Delta_i(p),\mu_{G_l}P,\Delta_j(q);G_l)
\vspace{2mm}\\
\hspace{-2mm}&=&\hspace{-2mm}
w_2(\Delta_i(p),\mu_{G_l}\mu_{G_r}P,\Delta_j(q))+(\Omega(\Delta_i(p),\mu_{G_l}\mu_{G_r}P,\Delta_j(q);G_l)
\vspace{2mm}\\
&+&\Omega(\Delta_i(p),\mu_{G_r}P,\Delta_j(q);G_r))\vspace{2mm}\\
\hspace{-2mm}&=&\hspace{-2mm}
w_2(\Delta_i(p),\mu_{G_r}P,\Delta_j(q))+\Omega(\Delta_i(p),\mu_{G_r}P,\Delta_j(q);G_r))\vspace{2mm}\\
\hspace{-2mm}&=&\hspace{-2mm}
w_2(\textbf P'_{a-1})+\Omega(\Delta_i(p),\mu_{G_r}P,\Delta_j(q);G_r))\vspace{2mm}\\
\hspace{-2mm}&=&\hspace{-2mm}
w_2(\textbf P),
\end{array}
\end{equation*}
where the third equality follows by the induction hypothesis and Lemma \ref{Lem-det2}.

For the case (3), as $\textbf P=(\Delta_i(p),P,\Delta_j(q))$ covers $\textbf P_{a-1}=(\Delta_i(p),\mu_{G_l}P,\Delta_{j}(q))$, we have $E_2(q)\in P$ and $j\neq 1$.
Thus, $E_1(q)\in \mu_{G_l}P$ and $\textbf P_{a-1}=(\Delta_i(p),\mu_{G_l}P,\Delta_{j}(q))$ covers $(\Delta_i(p),\mu_{G_l}P,\Delta_{j-1}(q))$ by Lemma \ref{Lem-cover3}. We also have that $l=c$ and $j-1=1$ can not hold simultaneously, since otherwise $\textbf P=(\Delta_i(p),P,\Delta_j(q))<\textbf P'_{a-1}=(\Delta_i(p),P,\Delta_{j-1}(q))$.
Thus by Lemma \ref{Lem-cover1}, $\textbf P'_{a-1}=(\Delta_i(p),P,\Delta_{j-1}(q))$ covers $(\Delta_i(p),\mu_{G_l}P,\Delta_{j-1}(q))$.

Therefore,
\begin{equation*}
\begin{array}{rcl}
w_1(\textbf P)\hspace{-2mm}&=&\hspace{-2mm} w_1(\textbf P_{a-1})+\Omega(\Delta_i(p),\mu_{G_l}P,\Delta_j(q);G_l)\vspace{2mm}\\
\hspace{-2mm}&=&\hspace{-2mm}
w_1(\Delta_i(p),\mu_{G_l}P,\Delta_{j-1}(q))+\Omega(\Delta_i(p),\mu_{G_l}P,\underline{\Delta_{j-1}(q)})
\vspace{2mm}\\
\hspace{-2mm}&+&\hspace{-2mm}
\Omega(\Delta_i(p),\mu_{G_l}P,\Delta_j(q);G_l)
\vspace{2mm}\\
\hspace{-2mm}&=&\hspace{-2mm}
w_2(\Delta_i(p),\mu_{G_l}P,\Delta_{j-1}(q))+\Omega(\Delta_i(p),\mu_{G_l}P,\Delta_{j-1}(q);G_l)
\vspace{2mm}\\
\hspace{-2mm}&+&\hspace{-2mm}
\Omega(\Delta_i(p),P,\underline{\Delta_{j-1}(q)})\vspace{2mm}\\
\hspace{-2mm}&=&\hspace{-2mm}
w_2(\Delta_i(p),P,\Delta_{j-1}(q))+\Omega(\Delta_i(p),P,\underline{\Delta_{j-1}(q)})\vspace{2mm}\\
\hspace{-2mm}&=&\hspace{-2mm}
w_2(\textbf P'_{a-1})+\Omega(\Delta_i(p),P,\underline{\Delta_{j-1}(q)})\vspace{2mm}\\
\hspace{-2mm}&=&\hspace{-2mm}
w_2(\textbf P),
\end{array}
\end{equation*}
where the third equality follows by induction hypothesis and Lemma \ref{Lem-det3}.

For the case (4), as $\textbf P$ covers $\textbf P_{a-1}=(\Delta_{i-1}(p),P,\Delta_{j}(q))$ and $\textbf P'_{a-1}=(\Delta_i(p),P,\Delta_{j-1}(q))$, we have $\textbf P_{a-1}=(\Delta_{i-1}(p),P,\Delta_{j}(q))$ and $\textbf P'_{a-1}=(\Delta_i(p),P,\Delta_{j-1}(q))$ covers $(\Delta_{i-1}(p),P,\Delta_{j-1}(q))$ by Lemma \ref{Lem-cover3}.

Therefore,
\begin{equation*}
\begin{array}{rcl}
w_1(\textbf P)\hspace{-2mm}&=&\hspace{-2mm} w_1(\textbf P_{a-1})+\Omega(\underline{\Delta_{i-1}(p)},\mu_{G_l}P,\Delta_j(q))\vspace{2mm}\\
\hspace{-2mm}&=&\hspace{-2mm}
w_1(\Delta_{i-1}(p),P,\Delta_{j-1}(q))+\Omega(\Delta_{i-1}(p),P,\underline{\Delta_{j-1}(q)})
\vspace{2mm}\\
\hspace{-2mm}&+&\hspace{-2mm}
\Omega(\underline{\Delta_{i-1}(p)},\mu_{G_l}P,\Delta_j(q))
\vspace{2mm}\\
\hspace{-2mm}&=&\hspace{-2mm}
w_2(\Delta_{i-1}(p),P,\Delta_{j-1}(q))+\Omega(\underline{\Delta_{i-1}(p)},P,\Delta_{j-1}(q))
\vspace{2mm}\\
\hspace{-2mm}&+&\hspace{-2mm}
\Omega(\Delta_{i}(p),\mu_{G_l}P,\underline{\Delta_{j-1}(q)})\vspace{2mm}\\
\hspace{-2mm}&=&\hspace{-2mm}
w_2(\Delta_i(p),P,\Delta_{j-1}(q))+\Omega(\Delta_{i}(p),\mu_{G_l}P,\underline{\Delta_{j-1}(q)})\vspace{2mm}\\
\hspace{-2mm}&=&\hspace{-2mm}
w_2(\textbf P'_{a-1})+\Omega(\Delta_{i}(p),\mu_{G_l}P,\underline{\Delta_{j-1}(q)})\vspace{2mm}\\
\hspace{-2mm}&=&\hspace{-2mm}
w_2(\textbf P),
\end{array}
\end{equation*}
where the third equality follows by induction hypothesis and Lemma \ref{Lem-det4}.

The proof is complete.
\end{proof}

\newpage

\section{Expansion formulas and the positivity}\label{sec:EFP}
Let $\Sigma$ be an orbifold and $\mathcal A_v(\Sigma)$ be a quantum cluster algebra arising from $\Sigma$. Let $\beta$ be an arc in $\Sigma$ and $T^o$ be an ideal triangulation. Let $\widetilde\beta$ be the curve associated with $\beta$ given by (\ref{Eq-wideg}). Let $T$ be the corresponding tagged triangulation. Assume that $\widetilde\beta$ starts from $p$ and ends at $q$. In this section using the lattices $\mathcal L(T^o,\widetilde\beta)$, $\mathcal L(T^o,\widetilde\beta^{(q)})$ and $\mathcal L(T^o,\widetilde\beta^{(p,q)})$, respectively in Section \ref{Sec-threesets}, we give expansion formulas for quantum cluster variables $\widetilde\beta$, $\widetilde\beta^{(q)}$ and $\widetilde\beta^{(p,q)}$, respectively. As a corollary, we obtain the positivity for quantum cluster algebras from orbifolds.

\subsection{Expansion formula for ordinary arcs.}

\begin{definition}\cite{MSW}\label{Def-weight} Suppose that the diagonal of $G_{T^o,\widetilde\beta}$ are labeled $\tau_{i_1},\cdots, \tau_{i_c}$. Let $P$ be a perfect matching of $G_{T^o,\widetilde\beta}$ with edges labeled by $\tau_{k_1},\cdots, \tau_{k_r}$. Suppose that $P\ominus P_-$ encloses the union of tiles $\bigcup_{j\in J} G_{j}$ for some $J\subseteq \{1,\cdots, c\}$.
\begin{enumerate}[$(1)$]
    \item The \emph{weight} $x^{T}(P)$ of $P$ is defined to be
    \begin{equation*}
        x^{T}(P):=\frac{x_{\tau_{k_1}}\cdots x_{\tau_{k_r}}}{x_{\tau_{i_1}}\cdots x_{\tau_{i_c}}},
    \end{equation*}
    \item The \emph {height monomial} $h^T(P)$ of $P$ is defined to be
    \begin{equation*}
        h^T(P):=\prod_{j\in J}h^T_{i_j},
    \end{equation*}
    \item The \emph {specialized height monomial} $y^T(P)$ of $P$ is defined to be
    \begin{equation*}
        y^T(P):=\Phi(h^T(P)),
    \end{equation*}
    where $\Phi$ is given by
    \begin{equation}\label{Eq-phi}
        \Phi(h^T_{\tau_i})=
            \begin{cases}
                y^T_{\tau_i}, & \mbox{if $\tau_i$ is not a side of a self-folded triangle},\vspace{1mm}\\
                \frac{y^T_{r}\vspace{2mm}}{\vspace{2mm}y^T_{r^{(p)}}}, & \mbox{if $\tau_i$ is a radius $r$ to puncture $p$ in a self-folded triangle},\vspace{1mm}\\
                y^T_{r^{(p)}}, &  \mbox{if $\tau_i$ is a loop in a self-folded triangle with radius $r$ to a puncture $p$.}
            \end{cases}
    \end{equation}
    \item The \emph {quantum weight} $X^{T}(P)$ of $P$ is defined to be the element $X^{a(P)}$ such that
    $$X^{a(P)}|_{v=1}=\frac{x^{T}(P)y^T(P)}{\bigoplus_{R\in \mathcal P(G_{T^o,\widetilde\beta})}y^T(R)}.$$
\end{enumerate}

\end{definition}

In particular, if $\widetilde\beta\in T^o$ we have $x^{T}(P_{\widetilde\beta})=x_{\widetilde\beta}, y^T(P_{\widetilde\beta})=1$, where $P_{\widetilde\beta}$ is the unique perfect matching of $G_{T^o,\widetilde\beta}$.

For an arc $\gamma$ incident to a puncture $p$, we let the weight $x_{l_q(\gamma)}=x_{\gamma}x_{\gamma^{(q)}}$.

\begin{remark}
\begin{enumerate}[$(1)$]
\item Note that the weight of a perfect matching slightly differs from the original definition in \cite{MSW}, here we quotient the crossing monomial.
\item For convenience, we denote
\begin{equation*}
        y^{T^o}_{\tau_i}=
            \begin{cases}
                y^T_{\tau_i}, & \mbox{if $\tau_i$ is not a side of a self-folded triangle},\vspace{1mm}\\
                \frac{y^T_{r}\vspace{2mm}}{\vspace{2mm}y^T_{r^{(p)}}}, & \mbox{if $\tau_i$ is a radius $r$ to puncture $p$ in a self-folded triangle},\vspace{1mm}\\
                y^T_{r^{(p)}}, &  \mbox{if $\tau_i$ is a loop in a self-folded triangle with radius $r$ to a puncture $p$.}
            \end{cases}
    \end{equation*}
    Then $\Phi(h^T_{\tau_i})=y^{T^o}_{\tau_i}.$
\end{enumerate}
\end{remark}

For $i\in \{1,2,3,4\}$, if $\alpha_i$ is a radius of some self-folded triangle in $T^o$, then $\alpha$ is the loop.

\begin{lemma}\label{lem-yf1}
Let $T^o$ be an ideal triangulation and $T$ be the corresponding tagged triangulation. For any arc $\alpha\in T^o$ which can be flipped, denote $T'^o=(T^o\setminus \{\alpha\})\cup \{\alpha'\}$ be flipped ideal triangulation and $T'$ the corresponding tagged triangulation. Suppose that $\alpha$ is a diagonal of the quadrilateral $(\alpha_1,\alpha_2,\alpha_3,\alpha_4)$ in $T^o$ and $\alpha_1,\alpha_3$ are in the clockwise direction of $\alpha$. Then
\begin{enumerate}[$(1)$]
\item if $\tau\neq \alpha,\alpha_1,\alpha_2,\alpha_3,\alpha_4$, then $y^{T'^o}_{\tau}=y^{T^o}_{\tau}$,
\item $y^{T'^o}_{\alpha'}=(y^{T^o}_{\alpha})^{-1}$,
\item For $i\in \{1,2,3,4\}$,
\begin{enumerate}
\item if $\alpha_i$ is not a radius of any self-folded triangles in $T^o$ or $T'^o$, i.e., $\alpha_i\neq \alpha_{i-1},\alpha_{i+1}$, then we have
$y_{\alpha_i}^{T^o}=y_{\alpha_i}^{T'^o}(y_{\alpha'}^{T'^o})^{[-b^{T^o}_{\alpha \alpha_i}]_+}(1\oplus y_{\alpha'}^{T'^o})^{b^{T^o}_{\alpha \alpha_i}},$
\item if $\alpha_i$ is a radius of some self-folded triangle in $T'^o$ to some puncture $p$, then we have
$$y_{\alpha_i}^{T^o}=y_{\alpha_i}^{T'^o}y_{l_p(\alpha_i)}^{T'^o},$$
\end{enumerate}
\item $x_{\alpha}=\frac{x_{\alpha_2}x_{\alpha_4}+x_{\alpha_1}x_{\alpha_3}y^{T'^o}_{\alpha'}}{x_{\alpha'}\cdot(1\oplus y^{T'^o}_{\alpha'})}$.
\end{enumerate}
\end{lemma}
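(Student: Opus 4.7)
The plan is to deduce all four parts from the standard $x$- and $y$-mutation formulas of the cluster algebra attached to the tagged triangulation, using the map $\Phi$ from \eqref{Eq-phi} as the bridge between $y^{T^o}_\tau$ (indexed by arcs of the ideal triangulation) and the genuine coefficients $y^T_\tau$ (indexed by tagged arcs). First I would fix the correspondence $l(T)=T^o$ and $l(T')=T'^o$: each arc $\tau\in T^o$ either equals its own image in $T$, or, when $\tau=l_p(\gamma)$ is the loop of a self-folded triangle with radius $\gamma$, corresponds to the notched arc $\gamma^{(p)}\in T$. Since the excerpt records $B^{T'^o}=\mu_\alpha(B^{T^o})$ and since $\alpha$ is by assumption not a radius, the flip $T^o\to T'^o$ is identified with a single mutation of the underlying cluster algebra at the index $\tilde\alpha\in T$ which is the tagged image of $\alpha$; similarly $\alpha'$ corresponds to $\tilde\alpha'\in T'$.

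Next I would treat the four parts in order. For (1), note that $\tau\neq\alpha,\alpha_1,\ldots,\alpha_4$ shares no triangle with $\alpha$ in $T^o$, so $b^{T^o}_{\tau\alpha}=0$ and the $y$-mutation rule gives $y^{T'}_{\tilde\tau}=y^T_{\tilde\tau}$; moreover every self-folded triangle not containing $\alpha$ is unaffected by the flip, so $\Phi$ acts identically on $h^T_\tau$ and $h^{T'}_\tau$. For (2), the standard mutation gives $y^{T'}_{\tilde\alpha'}=(y^T_{\tilde\alpha})^{-1}$, and because $\alpha$ (resp.\ $\alpha'$) is not a radius in $T^o$ (resp.\ $T'^o$), the branch of $\Phi$ that applies to $\alpha$ and $\alpha'$ is either the identity or the loop/notched branch, and inverting transports cleanly through $\Phi$. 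For (3a), if $\alpha_i\neq\alpha_{i\pm 1}$ in both $T^o$ and $T'^o$, the self-folded status of $\alpha_i$ is unchanged, $\Phi$ acts identically before and after, and the formula is just the standard $y$-mutation at $\tilde\alpha$, solved for $y^{T^o}_{\alpha_i}$. For (3b), in the degenerate configuration where the flip creates a self-folded triangle with $\alpha_i$ as radius and loop $l_p(\alpha_i)$, the tagged arc $l_p(\alpha_i)$ becomes $\alpha_i^{(p)}\in T'$; by definition of $\Phi$ one has $y^{T'^o}_{\alpha_i}=y^{T'}_{\alpha_i}/y^{T'}_{\alpha_i^{(p)}}$ and $y^{T'^o}_{l_p(\alpha_i)}=y^{T'}_{\alpha_i^{(p)}}$, whose product telescopes to $y^{T'}_{\alpha_i}$, and a careful computation of $b^{T^o}_{\alpha\alpha_i}$ in this degenerate quadrilateral will show this agrees with $y^{T^o}_{\alpha_i}$ under mutation. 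Finally, for (4), the exchange relation at $\tilde\alpha$ in the cluster algebra gives the binomial identity $x_{\tilde\alpha}x_{\tilde\alpha'}=$(monomial in $x_{\alpha_1},x_{\alpha_3}$ with $y$-factor)$+$(monomial in $x_{\alpha_2},x_{\alpha_4}$), which rearranges to the displayed formula once the coefficient denominator $(1\oplus y^{T'^o}_{\alpha'})$ is inserted.

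The main obstacle will be part (3b): one must verify, in the degenerate quadrilateral where two opposite sides coincide to form a loop enclosing a puncture $p$, that the exchange matrix entry $b^{T^o}_{\alpha\alpha_i}$ receives exactly the combined contribution from the radius $\alpha_i$ and the loop $l_p(\alpha_i)$, so that the standard $y$-mutation at $\tilde\alpha$ reproduces the product $y^{T'^o}_{\alpha_i}y^{T'^o}_{l_p(\alpha_i)}$ on the right-hand side. This amounts to a careful case analysis of how the loop and radius interact with the two triangles adjacent to $\alpha$, which is the only place where the combinatorics of self-folded triangles genuinely intervenes; everything else is a direct translation of the commutative cluster algebra mutation rules.
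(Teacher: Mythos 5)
Your proposal follows the same route as the paper's proof: reduce everything to the $y$- and $x$-mutation formulas for the tagged triangulations $T,T'$, then translate back through the map $\Phi$ of \eqref{Eq-phi}. Parts (1), (2), and (4) are handled essentially as the paper does. However, you have inverted where the actual work lies between (3a) and (3b).

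For (3a), the hypothesis $\alpha_i\neq\alpha_{i-1},\alpha_{i+1}$ only forbids $\alpha_i$ from being a \emph{radius}; it does not prevent $\alpha_i$ from being a \emph{loop} of a self-folded triangle elsewhere, nor does it prevent $\alpha$ itself from being such a loop. The paper's proof of (3a) therefore runs a four-way case analysis (neither a loop; only $\alpha$ a loop with radius $\alpha_2=\alpha_3$ to some $p$; only $\alpha_1$ a loop with some radius $r$ to $q$; both loops). When $\alpha$ is a loop, the mutation on the tagged side is at $\alpha_2^{(p)}$, not at $\alpha$, and one must check that $b^T_{\alpha_2^{(p)}\,\cdot}$ matches $b^{T^o}_{\alpha\,\cdot}$. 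Your blanket phrase ``a direct translation of the commutative mutation rules'' skips exactly these cases, which is where the self-folded combinatorics genuinely intervene.

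For (3b), which you call ``the main obstacle,'' the situation is the opposite: it is a one-line consequence of the mutation formula. If $\alpha_i=\alpha_{i+1}$ (say), then $\alpha_i$ is a side of \emph{both} triangles adjacent to $\alpha$ in $T^o$, and these two triangles contribute opposite signs to $b^{T^o}_{\alpha\alpha_i}$, giving $b^T_{\alpha\alpha_i}=0$. Hence the $y$-mutation at $\alpha$ fixes $y_{\alpha_i}$, i.e.\ $y^{T'}_{\alpha_i}=y^{T}_{\alpha_i}$, and the identity you need is exactly the telescoping you already wrote: $y^{T'^o}_{\alpha_i}\,y^{T'^o}_{l_p(\alpha_i)} = (y^{T'}_{\alpha_i}/y^{T'}_{\alpha_i^{(p)}})\cdot y^{T'}_{\alpha_i^{(p)}} = y^{T'}_{\alpha_i} = y^{T}_{\alpha_i} = y^{T^o}_{\alpha_i}$. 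Your phrasing that $b^{T^o}_{\alpha\alpha_i}$ ``receives the combined contribution from the radius $\alpha_i$ and the loop $l_p(\alpha_i)$'' is a misconception: $l_p(\alpha_i)=\alpha'$ is not an arc of $T^o$ and contributes nothing to $b^{T^o}$; the correct observation is that the two copies of $\alpha_i$ contribute with opposite signs and cancel. Once you see this, (3b) is immediate, and the careful bookkeeping you were anticipating there should instead be spent on the loop sub-cases of (3a).
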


\begin{proof}
(1) It is clear.

(2) It is clear if $\alpha,\alpha'$ are not sides of self-folded triangles. We may assume one of $\alpha,\alpha'$ is a side of a self-folded triangle, without loss of generality, we may assume that $\alpha$ is a loop of a self-folded triangle with the radius $\alpha_3=\alpha_4$ to a puncture $p$. Then $y^{T'^o}_{\alpha'}=y^{T'}_{\alpha'}=(y^{T}_{\alpha_3^{(p)}})^{-1}=(y^{T^o}_{\alpha})^{-1}$.

(3) We may assume $i=1$.
For the statement (a), if $\alpha_1$ and $\alpha$ are not a side of a self-folded triangle in $T^o$ or $T'^o$, then
$$y^{T'^o}_{\alpha_1}=y^{T'}_{\alpha_1}=y^{T}_{\alpha_1}(y^{T}_{\alpha})^{b^{T^o}_{\alpha\alpha_1}}(1\oplus y^{T^o}_{\alpha})^{-b^{T^o}_{\alpha\alpha_1}}=y^{T^o}_{\alpha_1}(1\oplus (y^T_{\alpha})^{-1})^{-b^{T^o}_{\alpha\alpha_1}}=y^{T^o}_{\alpha_1}(1\oplus y^{T'^o}_{\alpha'})^{-b^{T^o}_{\alpha\alpha_1}}.$$

If $\alpha_1$ is not a side of a self-folded triangle in $T^o$ or $T'^o$ but $\alpha$ is a side of a self-folded triangle, then $\alpha_2=\alpha_3$ is the radius to some puncture $p$, then
$$y^{T'^o}_{\alpha_1}=y^{T'}_{\alpha_1}=y^{T}_{\alpha_1}(y^{T}_{\alpha_2^{(p)}})^{b^{T}_{\alpha_2^{(p)}\alpha_1}}(1\oplus y^T_{\alpha_2^{(p)}})^{-b^{T}_{\alpha_2^{(p)}\alpha_1}}=y^{T^o}_{\alpha_1}(1\oplus (y^{T^o}_{\alpha})^{-1})^{-b^{T^o}_{\alpha\alpha_1}}=y^{T^o}_{\alpha_1}(1\oplus y^{T'^o}_{\alpha'})^{-b^{T^o}_{\alpha\alpha_1}}.$$

If $\alpha_1$ is a side of a self-folded triangle in $T^o$, but $\alpha$ is not a side of a self-folded triangle in $T^o$, then $\alpha_1$ is a loop of a self-folded triangle with radius $r$ to a puncture $p$. Thus we have
$$y^{T'^o}_{\alpha_1}=y^{T'}_{r^{(p)}}=y^{T}_{r^{(p)}}(y^{T}_{\alpha})^{b^{T}_{\alpha r^{(p)}}}(1\oplus y^T_{\alpha})^{-b^{T}_{\alpha r^{(p)}}}=y^{T^o}_{\alpha_1}(1\oplus (y^{T^o}_{\alpha})^{-1})^{-b^{T^o}_{\alpha\alpha_1}}=y^{T^o}_{\alpha_1}(1\oplus y^{T'^o}_{\alpha'})^{-b^{T^o}_{\alpha\alpha_1}}.$$

If $\alpha_1$ and $\alpha$ are sides of some self-folded triangles in $T^o$, then $\alpha_1, \alpha$ are loops of some self-folded triangles. Assume that $\alpha_2=\alpha_3$ is the radius to some puncture $p$ and $\alpha_1$ is the loop of a self-folded triangle with radius $r$ to a puncture $q$. Thus we have
$$y^{T'^o}_{\alpha_1}=y^{T'}_{r^{(q)}}=y^{T}_{r^{(q)}}(y^{T}_{\alpha_2^{(p)}})^{b^{T}_{\alpha_2^{(p)} r^{(p)}}}(1\oplus y^T_{\alpha_2^{(p)}})^{-b^{T}_{\alpha_2^{(p)} r^{(p)}}}=y^{T^o}_{\alpha_1}(1\oplus (y^{T^o}_{\alpha})^{-1})^{-b^{T^o}_{\alpha\alpha_1}}=y^{T^o}_{\alpha_1}(1\oplus y^{T'^o}_{\alpha'})^{-b^{T^o}_{\alpha\alpha_1}}.$$

The result follows.

The statement (b) follows immediately by $y_{\alpha_1}^{T'}=y_{\alpha_1}^{T}$.

(4) As $x_{\alpha'}=\frac{y^{T^o}_{\alpha}x_{\alpha_2}x_{\alpha_4}+x_{\alpha_1}x_{\alpha_3}}{x_{\alpha}\cdot(1\oplus y^{T^o}_{\alpha})}$, we have
$$x_{\alpha}=\frac{y^{T^o}_{\alpha}x_{\alpha_2}x_{\alpha_4}+x_{\alpha_1}x_{\alpha_3}}{x_{\alpha'}\cdot(1\oplus y^{T^o}_{\alpha})}=\frac{x_{\alpha_2}x_{\alpha_4}+(y^{T^o}_{\alpha})^{-1}x_{\alpha_1}x_{\alpha_3}}{x_{\alpha'}\cdot((y^{T^o}_{\alpha})^{-1}\oplus 1)}=
\frac{x_{\alpha_2}x_{\alpha_4}+x_{\alpha_1}x_{\alpha_3}y^{T'^o}_{\alpha'}}{x_{\alpha'}\cdot(1\oplus y^{T'^o}_{\alpha'})}.$$
\end{proof}

The following lemma follows by Definition \ref{Def-weight} (2) (3).

\begin{lemma}\label{Lem-yf}
Let $P,Q$ be two perfect matchings. If $P<Q$ are related by a twist on a tile with diagonal labeled $\tau_i$ then
$\frac{y^T(Q)}{y^T(P)} =
            y^{T^o}_{\tau_i}.$
\end{lemma}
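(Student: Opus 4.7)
My plan is to deduce Lemma \ref{Lem-yf} directly from Definition \ref{Def-weight}, the description of $P_-$ in Proposition \ref{prop-mm}, and the definition of the partial order on $\mathcal L(T^o,\widetilde\beta)$. The proof is essentially a bookkeeping exercise tracking how the set of tiles enclosed by $P \ominus P_-$ changes under a single twist.

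First I would unpack the definitions. By Definition \ref{Def-weight}, $y^T(P) = \Phi(h^T(P))$ with $h^T(P) = \prod_{j \in J_P} h^T_{i_j}$, where $J_P \subseteq \{1,\ldots,c\}$ indexes the tiles enclosed by the cycles of $P \ominus P_-$, and similarly for $Q$. Since $\Phi$ is a multiplicative map sending $h^T_{\tau_i}$ to $y^{T^o}_{\tau_i}$, the ratio $y^T(Q)/y^T(P)$ equals $\prod_{j \in J_Q \setminus J_P} y^{T^o}_{\tau_{i_j}}$ divided by $\prod_{j \in J_P \setminus J_Q} y^{T^o}_{\tau_{i_j}}$. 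It therefore suffices to prove that $J_Q = J_P \cup \{l\}$ with $l \notin J_P$, where $G_l$ is the unique tile on which the twist takes place (so $\tau_{i_l} = \tau_i$).

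Next I would identify the symmetric difference. Since $Q = \mu_{G_l}(P)$ is obtained from $P$ by replacing the two $P$-edges of $G_l$ with the remaining two edges of $G_l$, we have $P \ominus Q = \partial G_l$, the $4$-cycle bounding $G_l$, and consequently $Q \ominus P_- = (P \ominus P_-) \ominus \partial G_l$. To control this, I would invoke the hypothesis $P < Q$: by definition of the partial order, this means $\{W(G_l),E(G_l)\} \subset P$ when $rel(G_l,T^o) = 1$, and $\{N(G_l),S(G_l)\} \subset P$ when $rel(G_l,T^o) = -1$. By Proposition \ref{prop-mm}, these are exactly the two edges of $G_l$ belonging to $P_-$. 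Hence $P$ and $P_-$ agree on every edge of $G_l$, so $\partial G_l$ is disjoint from $P \ominus P_-$; equivalently $l \notin J_P$. Taking the symmetric difference with the (now disjoint) cycle $\partial G_l$ therefore enlarges the enclosed region by precisely the single tile $G_l$, giving $J_Q = J_P \cup \{l\}$.

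Combining these two steps yields $h^T(Q)/h^T(P) = h^T_{\tau_i}$, and applying $\Phi$ via formula \eqref{Eq-phi} gives $y^T(Q)/y^T(P) = \Phi(h^T_{\tau_i}) = y^{T^o}_{\tau_i}$, as claimed. The only mildly delicate point is the geometric assertion that adjoining $\partial G_l$ to the cycle collection $P \ominus P_-$ inflates the enclosed region by exactly $G_l$; this relies on the snake-graph property that each tile shares an edge only with its immediate neighbors $G_{l\pm 1}$, so near $G_l$ the cycle $\partial G_l$ either appears as a new isolated component of $Q \ominus P_-$ or merges cleanly with an adjacent cycle of $P \ominus P_-$ into a single larger cycle enclosing one more tile. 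I anticipate this is the only step that requires a brief geometric justification; the rest is pure unraveling of definitions.
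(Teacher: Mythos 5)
Your overall strategy — reduce the ratio $y^T(Q)/y^T(P)$ to the assertion $J_Q = J_P \cup \{l\}$ and then argue this by examining $Q\ominus P_- = (P\ominus P_-)\ominus\partial G_l$ — is the right skeleton (the paper treats this as immediate from Definition~\ref{Def-weight}(2)(3)), but the key step is flawed.

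The sentence ``By Proposition \ref{prop-mm}, these are exactly the two edges of $G_l$ belonging to $P_-$'' misreads the proposition. Proposition~\ref{prop-mm}(2) only asserts the \emph{containment} $P_-\cap edge(G_l)\subseteq\{W(G_l),E(G_l)\}$ when $rel(G_l,T^o)=1$; it does not say this is an equality. And the equality genuinely fails whenever $W(G_l)$ or $E(G_l)$ is an interior edge of the snake graph: $P_-$ consists of boundary edges only, so such an edge cannot lie in $P_-$, even though it lies in $P$ (because $P\cap edge(G_l)=\{W(G_l),E(G_l)\}$ by the twist hypothesis). For instance, in a horizontal snake with three tiles and $rel(G_1,T^o)=1$, one has $E(G_1)=W(G_2)$ interior, so $P_-\cap edge(G_1)=\{W(G_1)\}$ while $P\cap edge(G_1)=\{W(G_1),E(G_1)\}$; thus $E(G_1)\in P\ominus P_-$ and $\partial G_1$ is \emph{not} disjoint from $P\ominus P_-$. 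So both the premise ``$P$ and $P_-$ agree on every edge of $G_l$'' and the disjointness conclusion are false in general. Your final paragraph already concedes that $\partial G_l$ ``merges cleanly with an adjacent cycle,'' which contradicts the disjointness you asserted two sentences earlier and leaves the real content unproved: the toggle $J_Q=J_P\triangle\{l\}$ is automatic from $Q\ominus P_-=(P\ominus P_-)\ominus\partial G_l$, but you still must show the toggle \emph{adds} $G_l$ (i.e., $l\notin J_P$) rather than removes it, and that direction is exactly what the flawed disjointness step was supposed to deliver. The correct observation is that $N(G_l),S(G_l)$ lie in neither $P$ nor $P_-$ (so a cycle of $P\ominus P_-$ never crosses the north or south edge of $G_l$, ruling out $G_l$ from the enclosed region), whereas both lie in $Q\ominus P_-$ after the twist — that local fact, rather than the disjointness of all of $\partial G_l$, is what pins down the direction and gives $J_Q=J_P\cup\{l\}$.
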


The following theorem is the first main result of this paper.

\begin{theorem}\label{Thm-1}
Let $\Sigma$ be an orbifold without orbifold points of weight $2$. Let $T^o$ be an ideal triangulation of $\Sigma$ and $\beta$ be an arc. Then in the quantum cluster algebra $\mathcal A_v(\Sigma)$ we have
$$X_\beta=\sum_{P\in \mathcal L(T^o,\widetilde\beta)} v^{w(P)}X^{T}(P),$$
where $w:\mathcal L(  T^o,\widetilde\beta)\to \mathbb Z$ is given by Proposition \ref{Prop-v1}.
\end{theorem}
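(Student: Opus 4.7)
The plan is to follow the strategy outlined in the introduction: establish that the right-hand side
\[
LP(T^o) := \sum_{P \in \mathcal L(T^o, \widetilde\beta)} v^{w(P)} X^T(P)
\]
is invariant under flips of $T^o$, and then verify the formula directly on a convenient base triangulation. Since any two ideal triangulations are connected by a sequence of flips (\cite[Proposition 3.8]{FST}), these two steps together give the theorem.

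\textbf{Base case.} Choose $T^o$ so that $\widetilde\beta \in T^o$ (this is possible whenever $\beta$ is not a pending arc of weight $1/2$; the pending-arc case is handled by first flipping to introduce a special loop via an auxiliary argument). Then $G_{T^o,\widetilde\beta}$ consists of a single edge labeled $\widetilde\beta$, the lattice $\mathcal L(T^o,\widetilde\beta)$ has a unique element $P_{\widetilde\beta}$ with $w(P_{\widetilde\beta})=0$, $x^T(P_{\widetilde\beta})=x_{\widetilde\beta}$, $y^T(P_{\widetilde\beta})=1$, and hence $X^T(P_{\widetilde\beta})=X_{\widetilde\beta}=X_\beta$. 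So $LP(T^o)=X_\beta$.

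\textbf{Flip invariance.} Given $T'^o=\mu_\alpha(T^o)$ at a non-self-folded arc $\alpha$, I would construct a partition bijection $\varphi:\mathcal L(T^o,\widetilde\beta)\overset{par}{\to}\mathcal L(T'^o,\widetilde\beta)$, as sketched in the introduction. For each $P\in\mathcal L(T^o,\widetilde\beta)$, let $n(P)$ be the signed count of tiles of $G_{T^o,\widetilde\beta}$ with diagonal labeled $\alpha$ at which $P$ ``points toward'' $\alpha'$ (precisely, $n(G_{T^o,\widetilde\beta};\alpha)-m(P;\alpha)$ or its negative, depending on conventions). Substituting the quantum exchange relation between $X_\alpha$ and $X_{\alpha'}$ (the quantum analog of Lemma \ref{lem-yf1}(4)) into $v^{w(P)}X^T(P)$ expresses it as a sum of $2^{|n(P)|}$ quantum monomials which are intended to be exactly the $X^{T'}$-images of a block of perfect matchings in $\mathcal L(T'^o,\widetilde\beta)$. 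Applying the analogous construction in the reverse direction and checking the hypothesis of Proposition \ref{Pro-parin} would show that $\varphi$ and its reverse counterpart $\varphi'$ are inverse partition bijections. The classical (commutative) part — that the $x$-monomials and $y$-coefficient products match after pairing — follows from Lemmas \ref{lem-yf1} and \ref{Lem-yf}, and was essentially handled in \cite{H2}.

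\textbf{Main obstacle.} The heart of the argument, and the new content beyond the commutative case, is verifying that the $v$-powers match across the partition bijection. One must show that the valuation $w$ on $\mathcal L(T'^o,\widetilde\beta)$ (defined abstractly through Proposition \ref{Prop-v1}) differs from $w$ on $\mathcal L(T^o,\widetilde\beta)$ by exactly the quantum-torus commutation corrections that appear when the binomial exchange relation $X_\alpha X_{\alpha'}=v^{c_1}X^{{\bf a}}+v^{c_2}X^{{\bf b}}$ is substituted. This is where the gradient number $\Omega(P;G_l)$ of Definition \ref{Def-ome} was designed: its precise form, together with its compatibility identities (Lemma \ref{lem-gra} and its generalizations), is exactly what makes the recursive $v$-exponents absorb the $\Lambda(T^o)({\bf a},{\bf b})$ correction terms. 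Verifying this compatibility in detail, and checking it for each of the local configurations near $\alpha$, will be the content of Sections \ref{sec:CPP} and \ref{sec:TEQ}, culminating in Theorem \ref{thm-sum4} which asserts the flip invariance of $LP(T^o)$. With flip invariance established and the base case in hand, the proof is complete.
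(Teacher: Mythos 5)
Your proposal follows the paper's proof exactly: establish flip invariance of the right-hand side via Theorem \ref{thm-sum4} (itself the culmination of the partition-bijection and valuation-compatibility analysis in Sections \ref{Sec-parcom}--\ref{sec:TEQ}), then check the formula when the arc lies in the initial triangulation and conclude by connectedness of the flip graph. The only cosmetic difference is that the paper states its base case as $\beta\in T^o$ rather than $\widetilde\beta\in T^o$, sidestepping the special-loop caveat you flag for pending arcs of weight $1/2$.
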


\subsection{Expansion formula for one end tagged notched arcs.}
Suppose that $q$ is a puncture and $q\neq p$. Thus $\widetilde\beta=\beta$. Assume $T$ contains no arc tagged notched at $q$. In this section, we give a quantum expansion formula for $\beta^{(q)}=\widetilde\beta^{(q)}$.

For any triangle $\Delta_j(q)$ incident to $q$, recall that the three sides are $\tau_{j-1}(q),\tau_{j}(q)$ and $\tau_{[j]}(q)$.

\begin{definition}\label{Def-wei1}
With the notation in Section \ref{delta1}. For any $(P,\Delta_j(q))\in \mathcal L(T^o,\widetilde\beta^{(q)})$ with $P\in \mathcal P(G_{T^o,\widetilde\beta})$ and $j\in\{1,\cdots,t\}$,
\begin{enumerate}[$(1)$]
\item The \emph{weight} $x^{T}(\Delta_j(q))$ of $\Delta_j(q)$ is defined to be
\begin{equation*}
       x^{T}(\Delta_j(q)):=x^{-1}_{\tau_{j-1}(q)}x_{\tau_{[j]}(q)}x^{-1}_{\tau_{j}(q)}.
    \end{equation*}
\item The \emph{weight} $x^{T}(P,\Delta_j(q))$ of $(P,\Delta_j(q))$ is defined to be
    \begin{equation}\label{Eq-xPi}
       x^{T}(P,\Delta_j(q)):=x^{T}(P)x^{T}(\Delta_j(q))=x^{T}(P)x^{-1}_{\tau_{j-1}(q)}x_{\tau_{[j]}(q)}x^{-1}_{\tau_{j}(q)}.
    \end{equation}
\item The \emph {height monomial} $h^T(P,\Delta_j(q))$ of $(P,\Delta_j(q))$ is defined as follows:
\begin{enumerate}
  \item if $\widetilde\beta(=\beta)\notin T^o$ then
   \begin{equation}\label{Eq-hPi}
        h^T(P,\Delta_j(q)):=
        \begin{cases}
        h^T(P),  &\mbox{if $E_1(q)\in P$ and $j=1$},\\
        h^T(P)h^{T}_{\tau_1(q)}h^{T}_{\tau_2(q)}\cdots h^{T}_{\tau_{t-1}(q)}h^{T}_{\tau_t(q)},  &\mbox{if $E_2(q)\in P$ and $j=1$},\\
        h^T(P)h^{T}_{\tau_1(q)}h^{T}_{\tau_2(q)}\cdots h^{T}_{\tau_{j-1}(q)},  &\mbox{otherwise},
        \end{cases}
    \end{equation}
    where $E_1(q)$ and $E_2(q)$ are the edges of $G_c$ determined by (\ref{E1q}) in Section \ref{Sec-threesets}.
  \item if $\widetilde\beta(=\beta)\in T^o$ then
  \begin{equation*}
  h^T(P,\Delta_j(q)):=h^{T}_{\tau_1(q)}h^{T}_{\tau_2(q)}\cdots h^{T}_{\tau_{j-1}(q)}.
  \end{equation*}
  \end{enumerate}
    \item The \emph {specialized height monomial} $y^T(P,\Delta_j(q))$ of $(P,\Delta_j(q))$ is defined to be
    \begin{equation}\label{Eq-yPi}
        y^T(P,\Delta_j(q)):=\Phi(h^T(P,\Delta_j(q))),
    \end{equation}
    where $\Phi$ is given by (\ref{Eq-phi}) in Definition \ref{Def-weight}.
    \item The \emph {quantum weight} $X^{T}(P,\Delta_j(q))$ of $(P,\Delta_j(q))$ is defined to be the element $X^{a(P,\Delta_j(q))}$ such that
    $$X^{a(P,\Delta_j(q))}|_{v=1}=\frac{x^{T}(P,\Delta_j(q))y^T(P,\Delta_j(q))}{\bigoplus_{(R,\Delta)\in \mathcal L(T^o,\widetilde\beta^{(q)})}y^T(R,\Delta)}.$$
\end{enumerate}
\end{definition}

The following lemmas follow by Definition \ref{Def-wei1}.

\begin{lemma}\label{lem-ycover}
Assume $P>Q$ are related by a twist on a tile $G$ with diagonal labeled $\tau$ and $(P,\Delta_j(q))$ covers $(Q,\Delta_j(q))$. Then $\frac{y^T(P,\Delta_j(q))}{y^T(Q,\Delta_j(q))}=y^{T^o}_{\tau}$
\end{lemma}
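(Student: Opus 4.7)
The plan is to reduce the claim to the already-established Lemma \ref{Lem-yf}. Write $h^T(P,\Delta_j(q))=h^T(P)\cdot \Pi(P,j)$, where $\Pi(P,j)$ is the ``extra'' product of factors $h^T_{\tau_l(q)}$ prescribed by the piecewise definition \eqref{Eq-hPi}; since $\Phi$ is multiplicative, after applying $\Phi$ it will suffice to establish the monomial identity $\Pi(P,j)=\Pi(Q,j)$, because then
\[
\frac{y^T(P,\Delta_j(q))}{y^T(Q,\Delta_j(q))}=\frac{\Phi(h^T(P))}{\Phi(h^T(Q))}=\frac{y^T(P)}{y^T(Q)}=y^{T^o}_{\tau}
\]
by Lemma \ref{Lem-yf}. (The case $\widetilde\beta\in T^o$ is vacuous since $G_{T^o,\widetilde\beta}$ has a unique perfect matching, so no twist $P>Q$ exists.)

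To prove $\Pi(P,j)=\Pi(Q,j)$, I would split on whether the twist tile $G$ equals the last tile $G_c$ or not. If $G\ne G_c$, then the twist only alters edges of $G$ and leaves $edge(G_c)$ untouched, so $E_1(q)\in P\iff E_1(q)\in Q$ and likewise for $E_2(q)$. Hence $P$ and $Q$ fall into the same branch of \eqref{Eq-hPi}, and inside each branch the extra factor depends only on that Boolean condition together with $j$; since $j$ is the same on both sides, $\Pi(P,j)=\Pi(Q,j)$.

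If on the other hand $G=G_c$, then by Lemma \ref{Lem-cover} the covering relation $(P,\Delta_j(q))\succ (Q,\Delta_j(q))$ forces $j\ne 1$, so both $P$ and $Q$ satisfy the ``otherwise'' clause of \eqref{Eq-hPi}. Thus $\Pi(P,j)=\Pi(Q,j)=\prod_{l=1}^{j-1}h^T_{\tau_l(q)}$, a quantity depending only on $j$.

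The only subtle point in the bookkeeping is the $G=G_c$ case, where one might worry that the twist interchanges $E_1(q)$ and $E_2(q)$ between $P$ and $Q$ and so pushes them into different branches of \eqref{Eq-hPi}; the appeal to Lemma \ref{Lem-cover} is exactly what rules this out by forcing $j\ne 1$, which collapses both matchings into the common ``otherwise'' branch. Combining the two cases with Lemma \ref{Lem-yf} yields the stated ratio, completing the proof.
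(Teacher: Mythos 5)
Your proof is correct. The paper itself gives no argument, merely asserting that the lemma "follows by Definition \ref{Def-wei1}"; your write-up is the faithful unpacking of that assertion, and you correctly identify and resolve the only genuine subtlety (the $G=G_c$ case, where the twist swaps which of $E_1(q)$, $E_2(q)$ belongs to the matching), using the covering hypothesis together with Lemma \ref{Lem-cover} to force $j\neq 1$. One small phrasing nit: when $G=G_{c-1}$ the twist \emph{can} alter the edge shared between $G_{c-1}$ and $G_c$, so "leaves $edge(G_c)$ untouched" is not literally true; what saves the argument is that $E_1(q),E_2(q)\in\{N(G_c),E(G_c)\}$ are never the shared edge, so membership of $E_1(q),E_2(q)$ is indeed preserved by any twist on a tile other than $G_c$.
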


\begin{lemma}\label{lem-ycover1}
If $(P,\Delta_{j+1}(q))$ covers $(P,\Delta_j(q))$ then
        $\frac{y^T(P,\Delta_{j+1}(q))}{y^T(P,\Delta_j(q))}
    =  y^{T^o}_{\tau_j(q)}.$
\end{lemma}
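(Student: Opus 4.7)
The plan is to unfold the definitions and check each case. By Definition \ref{Def-wei1}(4),
\[
\frac{y^T(P,\Delta_{j+1}(q))}{y^T(P,\Delta_j(q))} \;=\; \Phi\!\left(\frac{h^T(P,\Delta_{j+1}(q))}{h^T(P,\Delta_j(q))}\right),
\]
so the lemma reduces to showing that whenever $(P,\Delta_{j+1}(q))$ covers $(P,\Delta_j(q))$, the ratio of height monomials is exactly $h^T_{\tau_j(q)}$, and then to invoke the identity $\Phi(h^T_{\tau}) = y^{T^o}_{\tau}$ recorded right after Definition \ref{Def-weight}.

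First I would dispose of the trivial case $\widetilde\beta\in T^o$: here $P=P_{\widetilde\beta}$ and $h^T(P,\Delta_j(q))=h^T_{\tau_1(q)}\cdots h^T_{\tau_{j-1}(q)}$, so the ratio is immediate. For the main case $\widetilde\beta\notin T^o$, I would split according to which of $E_1(q),E_2(q)$ lies in $P$, using Lemma \ref{Lem-cover2} to pin down which pairs $(j,j+1)$ actually arise as covering pairs. When $E_1(q)\in P$, the covering pairs are $j=1,\dots,t-1$ and we are in the ``otherwise'' branch of (\ref{Eq-hPi}) for both $\Delta_j(q)$ (or use the $E_1, j=1$ branch if $j=1$) and $\Delta_{j+1}(q)$; in both situations the quotient telescopes to $h^T_{\tau_j(q)}$. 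When $E_2(q)\in P$, the covering pairs are $j=2,\dots,t-1$ together with the cyclic case $j=t$ (where $\Delta_{j+1}=\Delta_1$), and for $j=2,\dots,t-1$ the same telescoping in the ``otherwise'' branch applies.

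The only slightly delicate case is the wrap-around $j=t$ with $E_2(q)\in P$, where $(P,\Delta_1(q))$ covers $(P,\Delta_t(q))$. Here $h^T(P,\Delta_t(q))=h^T(P)h^T_{\tau_1(q)}\cdots h^T_{\tau_{t-1}(q)}$ by the ``otherwise'' branch, while $h^T(P,\Delta_1(q))=h^T(P)h^T_{\tau_1(q)}\cdots h^T_{\tau_t(q)}$ by the $E_2,j=1$ branch, so the quotient is $h^T_{\tau_t(q)}=h^T_{\tau_j(q)}$, as required. Applying $\Phi$ then yields $y^{T^o}_{\tau_j(q)}$ in every case.

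I do not expect any genuine obstacle here: the statement is essentially a bookkeeping consequence of the carefully designed piecewise definition (\ref{Eq-hPi}), combined with the explicit description of the covering relations of $\mathcal L(T^o,\widetilde\beta^{(q)})$ in Lemma \ref{Lem-cover2}. The one thing that requires attention is matching the branches of (\ref{Eq-hPi}) with the branches of Lemma \ref{Lem-cover2}; once this matching is made, the telescoping is automatic and the application of $\Phi$ is justified by its definition on generators $h^T_\tau$.
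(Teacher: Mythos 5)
Your proof is correct and follows the same route as the paper, which simply notes that the lemma ``follows by Definition \ref{Def-wei1}'' — i.e., by unfolding the height monomial formula (\ref{Eq-hPi}) case by case, matching the branches to the covering relations of Lemma \ref{Lem-cover2}, and applying $\Phi$. Your case analysis, including the wrap-around $j=t$ with $E_2(q)\in P$, is accurate.
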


The following theorem is the second main result of this paper.

\begin{theorem}\label{Thm-2}
Let $\Sigma$ be an orbifold without orbifold points of weight $2$. Let $T^o$ be an ideal triangulation of $\Sigma$ and $T$ be the corresponding tagged triangulation. For any $\beta$, let $\widetilde\beta$ be the curve associated with $\beta$  by (\ref{Eq-wideg}). Assume that $\widetilde\beta$ connects $p$ and a puncture $q$ with $q\neq p$. If $T$ contains no arc tagged notched at $q$, then in the quantum cluster algebra $\mathcal A_v(\Sigma)$ we have
$$X_{\beta^{(q)}}=\sum_{(P,\Delta_j(q))\in \mathcal L(T^o,\widetilde\beta^{(q)})} v^{w(P,\Delta_j(q))}X^{T}(P,\Delta_j(q)),$$
where $w:\mathcal L(T^o,\widetilde\beta^{(q)})\to \mathbb Z$ is given by Proposition \ref{Pro-vm}.
\end{theorem}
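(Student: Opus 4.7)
The plan is to follow a two-step strategy analogous to the scheme laid out in the introduction: establish flip invariance of the right-hand side, then verify it on one conveniently chosen triangulation. Combining the two gives the theorem, since any two ideal triangulations of $\Sigma$ that avoid self-folded triangles enclosing $q$ are connected by a sequence of such flips by Lemma \ref{lem:flip-p}, and the assumption that $T$ contains no arc tagged notched at $q$ is preserved along the flip sequence.

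For the base case I would take an ideal triangulation $T^o$ in which $\widetilde\beta \in T^o$. Then $G_{T^o,\widetilde\beta}$ is a single edge, the lattice $\mathcal L(T^o,\widetilde\beta^{(q)}) = \{P_{\widetilde\beta}\} \times \{\Delta_1(q),\ldots,\Delta_t(q)\}$ is a chain, and the proposed sum collapses to an expression that can be identified with $X_{\beta^{(q)}}$ by applying the quantum exchange relation at $q$ to the initial cluster variable $X_{\widetilde\beta}$. This is verified by induction on $t$, matching the definition of $w$ in Proposition \ref{Pro-vm} against the standard form of the quantum mutation at the arc obtained by reflecting $\widetilde\beta$ at $q$.

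For flip invariance, let $T'^o = \mu_\alpha T^o$ for a non-self-folded $\alpha \in T^o$. I would construct a partition bijection
\[
\varphi \colon \mathcal L(T^o,\widetilde\beta^{(q)}) \overset{par}{\to} \mathcal L(T'^o,\widetilde\beta^{(q)})
\]
dictated by the quantum exchange for $X_\alpha$: letting $n({\bf P})$ denote the exponent of $X_\alpha$ in the term $v^{w({\bf P})} X^T({\bf P})$, expansion produces $2^{n({\bf P})}$ terms of the $T'^o$-sum when $n({\bf P}) \geq 0$, while $2^{-n({\bf P})}-1$ additional $T^o$-terms combine with the given one to form a single $T'^o$-term when $n({\bf P}) < 0$. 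Proposition \ref{Pro-parin} then certifies that this is a genuine partition bijection. To finish, I would verify three compatibilities on each block of $\varphi$: (a) the classical Laurent monomials $x^T({\bf P})$ transform correctly under $x_\alpha \leftrightarrow x_{\alpha'}$, via Lemma \ref{lem-yf1}(4); (b) the specialized height monomials $y^T({\bf P})$ transform according to Lemma \ref{lem-yf1}(2)--(3); and (c) the $v$-powers produced by $w$, together with those coming from the $\Lambda$-twists in the quantum exchange, line up across the bijection, which relies on the gradient recurrences in Lemmas \ref{lem-gra}, \ref{Lem-det}, and \ref{Lem-det1}.

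The hard part will be the $v$-power compatibility at the boundary strata of $\mathcal L(T^o,\widetilde\beta^{(q)})$. When $\alpha$ coincides with $\widetilde\beta$ itself, or with $\tau_t(q)$, or with some $\tau_{[j]}(q)$ adjacent to $q$, the exchange relation interacts simultaneously with the matching structure on $\mathcal P(G_{T^o,\widetilde\beta})$ and with the cyclic structure on $\{\Delta_1(q),\ldots,\Delta_t(q)\}$; furthermore the end tile $G_c$ interacts with the $E_1(q)/E_2(q)$ dichotomy that controls the partial order on the lattice. These are precisely the configurations anticipated by Lemmas \ref{lem:cop}, \ref{lem:diamond}, and \ref{lem:diamond1}, and the matching of $v$-weights there will pivot on the skew-symmetrizability identity $d(\tau_{i_l}) b^{T^o}_{\tau_{i_l}\tau_j(q)} = -d(\tau_j(q)) b^{T^o}_{\tau_j(q)\tau_{i_l}}$ already exploited in Proposition \ref{Pro-vm}. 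Once this case analysis closes, the theorem follows.
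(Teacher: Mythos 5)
Your overall strategy — flip invariance plus a base case at a carefully chosen triangulation — is indeed the structure of the paper's proof, and your reliance on Lemma \ref{lem:flip-p} to keep the flip sequence inside the class of triangulations satisfying the hypothesis is correct and necessary. Where the proposal has a genuine gap is in the base case.

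You take as base case a triangulation $T^o$ with $\widetilde\beta \in T^o$, and claim the resulting sum over $\{P_{\widetilde\beta}\}\times\{\Delta_1(q),\ldots,\Delta_t(q)\}$ "can be identified with $X_{\beta^{(q)}}$ by applying the quantum exchange relation at $q$ to the initial cluster variable $X_{\widetilde\beta}$," and then "by induction on $t$." There is no single quantum exchange relation at a puncture: $q$ is not an arc, and $\beta$ and $\beta^{(q)}$ are not related by a single mutation. Changing the tag at $q$ corresponds (when $\beta\in T$) to a chain of mutations circling $q$, and the phrase "the arc obtained by reflecting $\widetilde\beta$ at $q$" does not correspond to any tagged arc a single mutation produces from $T$. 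So the proposed "direct verification" has no clear mechanism, and "induction on $t$" is not an induction at all unless one first supplies a reduction from $t$ to $t-1$ — which would itself require a flip-invariance argument for this particular stratum, i.e., essentially the full content of what you are trying to avoid.

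The paper sidesteps this by choosing an even more degenerate base triangulation: $T_0^o$ containing both $\beta$ and $l_q(\beta)$, so that in the corresponding tagged triangulation $T_0$ the arc $\beta^{(q)}$ is itself an initial cluster variable. Then $\mathcal L(T_0^o,\widetilde\beta^{(q)})$ is a singleton, $w\equiv 0$, and the formula is tautologically true. Of course $T_0$ violates the hypothesis of the theorem (it contains the notched arc $\beta^{(q)}$), so Theorem \ref{thm-sum4} does not directly carry the formula out of $T_0$; the paper introduces the auxiliary Lemma \ref{lem:step1} specifically to cross this single exceptional flip, landing on the triangulation $T'_0=\mu_{l_q(\beta)}T_0^o$ (which is the type of triangulation you proposed to start from, but now with the formula already established). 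From there Lemma \ref{lem:flip-p} and Theorem \ref{thm-sum4} propagate as you describe. To repair your proposal you would either need to prove your base case by first unwinding to the singleton case — which is exactly the paper's route — or supply a genuine computation replacing the unsubstantiated appeal to "the quantum exchange relation at $q$."
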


\subsection{Expansion formula for two ends tagged notched arcs.}
Suppose that $p$ and $q$ are punctures. Assume that $T$ contains no arc tagged notched at $p$ or $q$.
If $\beta$ is a pending arc incident to puncture $p$, let $X_{\widetilde\beta^{(p,q)}}=X_{\beta^{(p)}}$. Herein, we give an expansion formula for $\widetilde\beta^{(p,q)}$.

For any triangle $\Delta_i(p)$ incident to $p$, recall that the three sides are $\tau_{i-1}(p),\tau_{i}(p)$ and $\tau_{[i]}(p)$.

\begin{definition}\label{Def-wei2}
With the notation in Section \ref{delta2}. Assume that $p$ and $q$ are punctures. For any $(\Delta_i(p),P,\Delta_j(q))\in \mathcal L(T^o,\widetilde\beta^{(p,q)})$ with $P\in \mathcal P(G_{T^o,\widetilde\beta})$ and $i\in\{1,\cdots,s\}, j\in \{1,\cdots, t\}$,
\begin{enumerate}[$(1)$]
\item The \emph{weight} $x^{T}(\Delta_i(p))$ of $\Delta_i(p)$ is defined to be
\begin{equation*}
       x^{T}(\Delta_i(p)):=x^{-1}_{\tau_{i-1}(p)}x_{\tau_{[i]}(p)}x^{-1}_{\tau_{i}(p)}.
    \end{equation*}
\item The \emph{weight} $x^{T}(\Delta_j(q))$ of $\Delta_j(q)$ is defined to be
\begin{equation*}
       x^{T}(\Delta_j(q)):=x^{-1}_{\tau_{j-1}(q)}x_{\tau_{[j]}(q)}x^{-1}_{\tau_{j}(q)}.
    \end{equation*}
\item The \emph{weight} $x^T\left(\Delta_i(p), P,\Delta_j(q)\right)$ of $(\Delta_i(p),P,\Delta_j(q))$ is defined to be
    \begin{equation}\label{Eq-xdelta}
    x^T\left(\Delta_i(p),P,\Delta_j(q)\right):=x^{-1}_{\tau_{i-1}(p)}x_{\tau_{[i]}(p)}x^{-1}_{\tau_{i}(p)}x^{T}(P)x^{-1}_{\tau_{j-1}(q)}x_{\tau_{[j]}(q)}x^{-1}_{\tau_{j}(q)}.
    \end{equation}
\item The \emph {height monomial} $h^T(\Delta_i(p),P,\Delta_j(q))$ of $(\Delta_i(p),P,\Delta_j(q))$ is defined as follows:
\begin{enumerate}
  \item if $\widetilde\beta\notin T^o$ then
   \begin{equation*}
        h^T(\Delta_i(p),P,\Delta_j(q)):=
        \begin{cases}
        h^T(P,\Delta_j(q)), &\mbox{if $E_1(p)\in P$ and $i=1$},\\
        h^T(P,\Delta_j(q))h^{T}_{\tau_1(p)}h^{T}_{\tau_2(p)}\cdots h^{T}_{\tau_{s-1}(p)}h^{T}_{\tau_s(p)},  &\mbox{if $E_2(p)\in P$ and $i=1$},\\
        h^T(P,\Delta_j(q))h^{T}_{\tau_1(p)}h^{T}_{\tau_2(p)}\cdots h^{T}_{\tau_{i-1}(p)},  &\mbox{otherwise},
        \end{cases}
    \end{equation*}
    where $E_1(p)$ is the edge of $G_1$ determined by (\ref{E1p}) in Section \ref{Sec-threesets}.
  \item if $\widetilde\beta\in T^o$ and $s,t\geq 2$ then
  \begin{equation}\label{Eq-hpq}
        h^T(\Delta_i(p),P_\beta,\Delta_j(q)):=
        \begin{cases}
        1, &\mbox{if $i=1$ and $j=t$},\\
        h^{T}_{\tau_1(p)}h^{T}_{\tau_2(p)}\cdots h^{T}_{\tau_{s-1}(p)}h^{T}_{\tau_1(q)}\cdots h^{T}_{\tau_{t-1}(q)}(h^T_{\beta})^2,  &\mbox{if $i=s$ and $j=1$},\\
        h^{T}_{\tau_1(p)}h^{T}_{\tau_2(p)}\cdots h^{T}_{\tau_{i-1}(p)}h^{T}_{\tau_1(q)}\cdots h^{T}_{\tau_{j-1}(q)}h^{T}_\beta,  &\mbox{otherwise}.
        \end{cases}
    \end{equation}
\end{enumerate}
    \item The \emph {specialized height monomial} $y^T(\Delta_i(p),P,\Delta_j(q))$ of $(\Delta_i(p),P,\Delta_j(q))$ is defined to be
    \begin{equation*}
        y^T(\Delta_i(p),P,\Delta_j(q)):=\Phi(h^T(\Delta_i(p),P,\Delta_j(q))),
    \end{equation*}
    where $\Phi$ is given by (\ref{Eq-phi}) in Definition \ref{Def-weight}.
    \item The \emph {quantum weight} $X^{T}(\Delta_i(p),P,\Delta_j(q))$ of $(\Delta_i(p),P,\Delta_j(q))$ is defined to be the element $X^{a(\Delta_i(p),P,\Delta_j(q))}$ such that
    $$X^{a(\Delta_i(p),P,\Delta_j(q))}|_{v=1}=\frac{x^{T}(\Delta_i(p),P,\Delta_j(q))y^T(\Delta_i(p),P,\Delta_j(q))}{{\bigoplus_{(\Delta,R,\Delta')\in \mathcal L(T^o,\widetilde\beta^{(p,q)})}y^T(\Delta,R,\Delta')}}.$$
\end{enumerate}
\end{definition}

From the definition of weight $  x^T\left(\Delta_i(p),P,\Delta_j(q)\right)$ and $  x^T\left(P,\Delta_j(q)\right)$, we have
\begin{equation*}
      x^T\left(\Delta_i(p),P,\Delta_j(q)\right)=x^{T}(\Delta_i(p))  x^T\left(P,\Delta_j(q)\right).
\end{equation*}

The following lemmas follow by Definition \ref{Def-wei2} (4) (5).

\begin{lemma}\label{lem-dcover1}
Let $P,Q$ be two perfect matchings such that $P>Q$ are related by a twist on a tile $G$ with diagonal labeled $\tau$. If $(\Delta_i(p),P,\Delta_j(q))$ covers $(\Delta_i(p),Q,\Delta_j(q))$ then
 \begin{equation*}
        \frac{y^T(\Delta_i(p),P,\Delta_j(q))}{y^T(\Delta_i(p),Q,\Delta_j(q))}=
                y^{T^o}_{\tau}.
  \end{equation*}
\end{lemma}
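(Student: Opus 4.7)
My plan is to reduce the claim to the one-end analogue Lemma \ref{lem-ycover} by verifying that the factor in $h^T(\Delta_i(p),P,\Delta_j(q))$ coming from the $\Delta_i(p)$-contribution is unchanged under the twist. Writing $h^T(\Delta_i(p),P,\Delta_j(q)) = h^T(P,\Delta_j(q)) \cdot F(P,i)$ according to Definition \ref{Def-wei2}(4), the goal will be to show $F(P,i) = F(Q,i)$; then applying $\Phi$ to the resulting identity $h^T(\Delta_i(p),P,\Delta_j(q))/h^T(\Delta_i(p),Q,\Delta_j(q)) = h^T(P,\Delta_j(q))/h^T(Q,\Delta_j(q))$ together with Lemma \ref{lem-ycover} will deliver the desired $y^{T^o}_\tau$.

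I would first dispose of the case $\widetilde\beta \in T^o$, where $G_{T^o,\widetilde\beta}$ has a unique perfect matching and the hypothesis is vacuous. For $\widetilde\beta \notin T^o$, I would then observe that the covering hypothesis in the big lattice, with $\Delta_i(p)$ held fixed, descends to a covering of $(P,\Delta_j(q))$ over $(Q,\Delta_j(q))$ in $\mathcal L(T^o,\widetilde\beta^{(q)})$, since any intermediate element in that slice would yield an intermediate element in $\mathcal L(T^o,\widetilde\beta^{(p,q)})$. Inspecting the definition of $F(P,i)$, its $P$-dependence enters only through the dichotomy $E_1(p)\in P$ versus $E_2(p)\in P$, and only when $i=1$; since both $E_1(p)$ and $E_2(p)$ are edges of the tile $G_1$, the equality $F(P,i)=F(Q,i)$ is immediate whenever $i\geq 2$ or $G\neq G_1$.

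The hard part will be the residual case $i=1$ and $G=G_1$, where the twist actually swaps $E_1(p)$ and $E_2(p)$ and hence a priori $F(P,1)\neq F(Q,1)$. My strategy is to rule this case out by showing it is precluded by the covering hypothesis. Writing $P=\mu_{G_1}Q$ with $P>Q$, a short matching computation (splitting on the two possibilities for $\mathrm{rel}(G_1,T^o)$ and using the orientation rule defining the partial order on $\mathcal L(T^o,\widetilde\beta)$) shows that $E_1(p)\in Q$ and $E_2(p)\in P$. The order rules for $\mathcal L(T^o,\widetilde\beta^{(p,q)})$ in Section \ref{delta2} then produce the chain
\[
(\Delta_1(p),Q,\Delta_j(q)) < (\Delta_2(p),Q,\Delta_j(q)) < (\Delta_2(p),P,\Delta_j(q)) < (\Delta_1(p),P,\Delta_j(q)),
\]
so $(\Delta_1(p),P,\Delta_j(q))$ cannot cover $(\Delta_1(p),Q,\Delta_j(q))$. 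This matches the exceptional clause in Lemma \ref{Lem-cover1} and finishes the argument. A small edge case to sweep up is $s=1$, where the chain above degenerates; there $p$ is incident to a single triangle and a direct check using the simplified lattice structure yields the same conclusion.
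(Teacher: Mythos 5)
Your proof is correct and, since the paper's own ``proof'' is just the one-line assertion that Lemmas~\ref{lem-dcover1}--\ref{lem-dcover3} follow from Definition~\ref{Def-wei2}(4)(5), yours is a genuine elaboration that fills in all the implicit steps. The key idea --- factor $h^T(\Delta_i(p),P,\Delta_j(q))=h^T(P,\Delta_j(q))\cdot F(P,i)$, reduce to Lemma~\ref{lem-ycover} by showing $F(P,i)=F(Q,i)$, and rule out the residual case $i=1$, $G=G_1$ by exhibiting a length-$3$ chain that contradicts the covering hypothesis --- is exactly what the paper leaves to the reader, and it also matches the exceptional clause of Lemma~\ref{Lem-cover1}. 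The descent of the covering hypothesis to $\mathcal L(T^o,\widetilde\beta^{(q)})$ via order rule~(2) is correctly justified, as is the matching computation showing $E_1(p)\in Q$ and $E_2(p)\in P$ when $P=\mu_{G_1}Q>Q$.

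One small imprecision: the ``edge case $s=1$'' that you propose to resolve by ``a direct check'' does not in fact arise. Under the standing assumption for this lattice (that $T$ contains no arc tagged notched at $p$, stated at the start of the subsection and used repeatedly elsewhere, e.g.\ in the proof of Lemma~\ref{lem:covera3}), $l(T)=T^o$ has no self-folded triangle at $p$, and $s=1$ would force $\Delta_1(p)$ to be self-folded. So the correct disposition of this case is that it is vacuous, not that a direct computation gives the same conclusion --- and indeed a direct computation in a hypothetical $s=1$ situation would produce an extra factor $y^{T^o}_{\tau_1(p)}$ in the ratio, since $F(P,1)$ and $F(Q,1)$ differ by $h^T_{\tau_1(p)}$. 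This does not affect the validity of your argument, but the phrasing should be corrected.
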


\begin{lemma}\label{lem-dcover2}
If $(\Delta_{i+1}(p),P,\Delta_{j}(q))$ covers $(\Delta_i(p),P,\Delta_j(q))$ then
 \begin{equation*}
        \frac{y^T(\Delta_{i+1}(p),P,\Delta_{j}(q))}{y^T(\Delta_i(p),P,\Delta_j(q))}=
y^{T^o}_{\tau_i(p)}.
    \end{equation*}

\end{lemma}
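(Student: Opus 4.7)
The plan is to prove this by a direct computation from the explicit definition of $h^T(\Delta_i(p),P,\Delta_j(q))$ in Definition \ref{Def-wei2}(4). Since $y^T=\Phi\circ h^T$ and $\Phi$ is a multiplicative map with $\Phi(h^T_{\tau_i(p)})=y^{T^o}_{\tau_i(p)}$, it suffices to verify that
\[
\frac{h^T(\Delta_{i+1}(p),P,\Delta_j(q))}{h^T(\Delta_i(p),P,\Delta_j(q))}=h^T_{\tau_i(p)}
\]
under the hypothesis that $(\Delta_{i+1}(p),P,\Delta_j(q))$ covers $(\Delta_i(p),P,\Delta_j(q))$; then apply $\Phi$ to both sides.

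First I would handle the case $\widetilde\beta\notin T^o$. By Lemma \ref{Lem-cover3}(4)--(6), such a cover occurs in precisely two mutually exclusive situations governed by $E_1(p)$ and $E_2(p)$, and in each I would check the ratio against the piecewise formula in Definition \ref{Def-wei2}(4)(a). If $E_1(p)\in P$, then $i\in\{1,\dots,s-1\}$; both numerator and denominator fall in the ``otherwise'' branch (with the convention that the empty product when $i=1$ covers the $E_1(p)\in P$, $i=1$ branch), and the ratio telescopes to $h^T_{\tau_i(p)}$. If $E_2(p)\in P$, then either $i\in\{2,\dots,s-1\}$ (again both in the ``otherwise'' branch, producing $h^T_{\tau_i(p)}$), or $i=s$, where the numerator uses the ``$E_2(p)\in P$ and $i=1$'' branch $h^T(P,\Delta_j(q))h^T_{\tau_1(p)}\cdots h^T_{\tau_s(p)}$ while the denominator is $h^T(P,\Delta_j(q))h^T_{\tau_1(p)}\cdots h^T_{\tau_{s-1}(p)}$, yielding again $h^T_{\tau_s(p)}=h^T_{\tau_i(p)}$.

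Next I would handle the case $\widetilde\beta\in T^o$ using the explicit formula (\ref{Eq-hpq}). When $s\geq 2$, the cover hypothesis restricts $i$ to the range $\{1,\dots,s-1\}$ together with the wrap-around $i=s$, and one reads off the ratio directly from (\ref{Eq-hpq}): for generic $i$ both entries fall in the ``otherwise'' branch and the ratio is $h^T_{\tau_i(p)}$; for $i=s$ the numerator uses the ``$i=s$, $j=1$'' branch with the $(h^T_\beta)^2$ factor, but the denominator for $(\Delta_s(p),P_\beta,\Delta_j(q))$ (whose own cover partner involves shifting $j$) matches so that the excess factor beyond $h^T_{\tau_s(p)}$ cancels --- here one must carefully use the cover hypothesis to ensure $j$ is compatible with the boundary conventions. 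The case $s=1$ is vacuous as no $p$-cover occurs.

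The only real obstacle I anticipate is the bookkeeping around the boundary/wrap-around indices in the $\widetilde\beta\in T^o$ case, where the piecewise formula in (\ref{Eq-hpq}) contains the exceptional $(h^T_\beta)^2$ factor at $(i,j)=(s,1)$ and the empty value at $(1,t)$. Confirming that the cover hypothesis always lands in the correct pairing of branches requires invoking the partial order description at the beginning of Subsection \ref{delta2} rather than relying only on Lemma \ref{Lem-cover3}; once that compatibility is established, the ratio simplifies as desired.
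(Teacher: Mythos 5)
Your overall strategy is the right one and matches the paper's own (unproved) assertion that the lemma "follows by Definition \ref{Def-wei2} (4)(5)": compute the height-monomial ratio at the level of $h^T$ in each branch of the piecewise formula and apply the multiplicative map $\Phi$. The $\widetilde\beta\notin T^o$ case is handled correctly using Lemma \ref{Lem-cover3} (4)--(6): if $E_1(p)\in P$ the covers are $i\in\{1,\dots,s-1\}$, if $E_2(p)\in P$ they are $i\in\{2,\dots,s-1\}\cup\{s\}$, and in every such case your telescoping gives exactly $h^T_{\tau_i(p)}$.

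The $\widetilde\beta\in T^o$ case, however, contains a genuine confusion. When $\widetilde\beta\in T^o$ the $p$-direction order in Subsection \ref{delta2}(4) reads $\Delta_1(p)<\Delta_2(p)<\cdots<\Delta_s(p)$ for every $j$, so there is no wrap-around cover $\Delta_1(p)\succ\Delta_s(p)$; your claim that "the cover hypothesis restricts $i$ to $\{1,\dots,s-1\}$ together with the wrap-around $i=s$" is false, and your entire discussion of the $i=s$ branch with the $(h^T_\beta)^2$ factor analyzes a case that never arises. More seriously, you miss that \emph{not every consecutive pair} in the chain $\Delta_1(p)<\cdots<\Delta_s(p)$ is a Hasse-diagram cover. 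Using the monomial $\Psi$ from the proof of Proposition \ref{pro-vpq} (formula (\ref{Eq-hpq})), one checks that $(\Delta_2(p),P_\beta,\Delta_t(q))$ sits at distance $t+1$, not $1$, from the minimum $(\Delta_1(p),P_\beta,\Delta_t(q))$, so it does not cover it; dually $(\Delta_s(p),P_\beta,\Delta_1(q))$ does not cover $(\Delta_{s-1}(p),P_\beta,\Delta_1(q))$. These two excluded pairs $(i,j)\in\{(1,t),(s-1,1)\}$ are precisely the situations where the naive ratio from (\ref{Eq-hpq}) would pick up extra factors (for instance $h^T_{\tau_1(p)}h^T_{\tau_1(q)}\cdots h^T_{\tau_{t-1}(q)}h^T_\beta$ rather than just $h^T_{\tau_1(p)}$). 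So the lemma is saved not by any cancellation, as you hoped, but because those $(i,j)$ are simply not covers. For the genuine covers, both $(\Delta_{i+1}(p),P_\beta,\Delta_j(q))$ and $(\Delta_i(p),P_\beta,\Delta_j(q))$ fall in the "otherwise" branch of (\ref{Eq-hpq}) and the ratio telescopes to $h^T_{\tau_i(p)}$ as desired. You flagged the boundary bookkeeping as a worry, but the resolution is the opposite of what you expected: there is nothing to cancel, there is just nothing to check.
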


\begin{lemma}\label{lem-dcover3}
If $(\Delta_i(p),P,\Delta_{j+1}(q))$ covers $(\Delta_i(p),P,\Delta_j(q))$ then
 \begin{equation*}
        \frac{y^T(\Delta_i(p),P,\Delta_{j+1}(q))}{y^T(\Delta_i(p),P,\Delta_j(q))}
        =
        y^{T^o}_{\tau_j(q)}.
    \end{equation*}

\end{lemma}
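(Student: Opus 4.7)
The plan is to prove Lemma \ref{lem-dcover3} by reducing to the one-notched analogue (Lemma \ref{lem-ycover1}) whenever possible and otherwise doing a short case-by-case calculation.

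First I would split according to whether $\widetilde\beta\in T^o$. When $\widetilde\beta\notin T^o$, Definition \ref{Def-wei2}(4)(a) shows that
\[
h^T(\Delta_i(p),P,\Delta_j(q)) \;=\; h^T(P,\Delta_j(q))\cdot M_{i,P},
\]
where $M_{i,P}$ depends only on $i$, $P$, and whether $E_1(p)$ or $E_2(p)$ lies in $P$ but not on $j$. Hence
\[
\frac{h^T(\Delta_i(p),P,\Delta_{j+1}(q))}{h^T(\Delta_i(p),P,\Delta_j(q))}
\;=\; \frac{h^T(P,\Delta_{j+1}(q))}{h^T(P,\Delta_j(q))}.
\]
By clause (2) of the partial order on $\mathcal L(T^o,\widetilde\beta^{(p,q)})$ in Subsection \ref{delta2}, the hypothesis that $(\Delta_i(p),P,\Delta_{j+1}(q))$ covers $(\Delta_i(p),P,\Delta_j(q))$ forces $(P,\Delta_{j+1}(q))$ to cover $(P,\Delta_j(q))$ in $\mathcal L(T^o,\widetilde\beta^{(q)})$. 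Applying $\Phi$ and invoking Lemma \ref{lem-ycover1} then yields the claim.

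When $\widetilde\beta\in T^o$, the factorisation above fails, so I would argue by direct case analysis from Definition \ref{Def-wei2}(4)(b), guided by the explicit covering relations (1)--(3) at the end of Subsection \ref{delta2}. For intermediate $i$ with $2\le i\le s-1$ and covers with $1\le j\le t-1$, both numerator and denominator use the \emph{otherwise} formula, and the ratio is immediately $h^T_{\tau_j(q)}$. The rows $i=1$ and $i=s$ are cyclic in $j$, and the wrap-around covers (namely $i=1,\,j=t$ with $\Delta_{j+1}(q)=\Delta_1(q)$, and $i=s,\,j=t$ with $\Delta_{j+1}(q)=\Delta_1(q)$) cross between the \emph{special} branches ($h^T=1$ in the first, and the factor carrying $(h^T_\beta)^2$ in the second) and the \emph{otherwise} branch. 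In each such wrap-around cover, a direct calculation gives the ratio $h^T_\beta$, and since $\tau_t(q)=\widetilde\beta=\beta$ (Subsection \ref{delta2}) this coincides with $h^T_{\tau_j(q)}$. In every case, applying $\Phi$ converts $h^T_{\tau_j(q)}$ into $y^{T^o}_{\tau_j(q)}$, completing the proof. The case $s=1$ or $t=1$ is analogous and slightly simpler, using only the cyclic chain at $i=1$.

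The main obstacle is the bookkeeping in the case $\widetilde\beta\in T^o$. The three branches of the piecewise definition of $h^T(\Delta_i(p),P_{\widetilde\beta},\Delta_j(q))$ are designed precisely so that, combined with the squared factor $(h^T_\beta)^2$ and the identification $\tau_t(q)=\beta$, the wrap-around transitions contribute exactly the same ratio $h^T_\beta$ that the smooth transitions contribute as $h^T_{\tau_j(q)}$. Once one tabulates which $(i,j)$-pairs give covers versus non-covers in the partial order (1)--(4) of Subsection \ref{delta2}, the verification is elementary.
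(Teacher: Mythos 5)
Your proof is correct and takes essentially the same route the paper does (the paper simply asserts that these lemmas follow from Definition \ref{Def-wei2}(4),(5)). The factorisation $h^T(\Delta_i(p),P,\Delta_j(q))=h^T(P,\Delta_j(q))\cdot M_{i,P}$ for $\widetilde\beta\notin T^o$, together with the observation that a cover at fixed $\Delta_i(p)$ descends to a cover in $\mathcal L(T^o,\widetilde\beta^{(q)})$ (cf.\ Lemmas \ref{Lem-cover2} and \ref{Lem-cover3}(1),(2)), cleanly reduces to Lemma \ref{lem-ycover1}; your case check for $\widetilde\beta\in T^o$, including the two wrap-around covers yielding $h^T_\beta=h^T_{\tau_t(q)}$, is accurate.
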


The following theorem is the third main result of this paper.

\begin{theorem}\label{Thm-M3}
Let $\Sigma$ be an orbifold without orbifold points of weight $2$. Let $T^o$ be an ideal triangulation of $\Sigma$ and $\beta$ be an arc. Let $T$ be the corresponding tagged triangulation of $T^o$. Let $\widetilde\beta$ be the curve associated with $\beta$ given by (\ref{Eq-wideg}). Assume that $\widetilde\beta$ connects two punctures $p$ and $q$.
If $T$ contains no arc tagged notched at $p$ or $q$, then in the quantum cluster algebra $\mathcal A_v(\Sigma)$ we have
$$X_{\widetilde\beta^{(p,q)}}=\sum_{(\Delta_i(p),P,\Delta_j(q))\in \mathcal L(T^o,\widetilde\beta^{(p,q)})} v^{w(\Delta_i(p),P,\Delta_j(q))}X^{T}(\Delta_i(p),P,\Delta_j(q)),$$
where $w:\mathcal L(T^o,\widetilde\beta^{(p,q)})\to \mathbb Z$ is given by Proposition \ref{pro-vpq}.
\end{theorem}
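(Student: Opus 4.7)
The plan is to follow the strategy outlined in the introduction: first establish that the proposed expansion formula is invariant under flips of the ideal triangulation, and then verify it directly for one carefully chosen initial triangulation. Since any two ideal triangulations without self-folded triangles enclosing $p$ or $q$ are connected by a sequence of such flips (Lemma \ref{lem:flip-pq}), and $T$ is assumed to contain no arc tagged notched at $p$ or $q$ so we stay within a class where the formula in Definition \ref{Def-wei2} is defined, flip invariance plus a single base case suffices.

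For the flip invariance step, fix a non-self-folded arc $\alpha \in T^o$ with $\alpha \neq \widetilde\beta$ and let $T'^o = \mu_\alpha(T^o)$. The quantum exchange relation, in the form given by Lemma \ref{lem-yf1} together with the multiplicative factor $v^{\Lambda(t)(\cdot,\cdot)}$ coming from the quantum torus, expresses each $X_\alpha$-monomial as a binomial; consequently each quantum weight $X^T(\Delta_i(p),P,\Delta_j(q))$ in which $X_\alpha$ appears with index $n({\bf P}) \geq 0$ expands into a sum of $2^{n({\bf P})}$ monomials in the $T'^o$-cluster, while a term with $n({\bf P}) < 0$ combines with $2^{-n({\bf P})}-1$ other terms from the $T^o$-formula to form a single $T'^o$-monomial. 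Matching up terms on both sides is precisely the partition-bijection mechanism of Remark \ref{Rmk-par} and Proposition \ref{Pro-parin}. Concretely, I would construct a partition bijection $\varphi: \mathcal L(T^o,\widetilde\beta^{(p,q)}) \overset{par}{\to} \mathcal L(T'^o,\widetilde\beta^{(p,q)})$ and verify, via Proposition \ref{Pro-parin}, that the naturally defined reverse map is its inverse, so no term is double counted. One then checks that this partition is compatible with the lattice structure so that the gradient-number identities of Lemmas \ref{Lem-det2}, \ref{Lem-det3}, \ref{Lem-det4} translate the covers-to-covers comparison (using Lemmas \ref{lem:cop1}, \ref{lem:cop2}, \ref{lem:cop3}, \ref{lem:diamond4}, \ref{lem:diamond5}, \ref{lem:diamond2}, \ref{lem:diamond3}) into matching $v$-powers on the two sides.

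The base case would be chosen to minimize combinatorial complexity: take an ideal triangulation in which $\widetilde\beta \in T^o$, so that $\mathcal L(T^o,\widetilde\beta^{(p,q)})$ reduces to $\Delta(T^o,p) \times \{P_{\widetilde\beta}\} \times \Delta(T^o,q)$ with an explicit Hasse diagram recalled after (\ref{Eq-hpq}). In that situation $X_{\widetilde\beta^{(p,q)}}$ may be computed by a double application of the exchange relation at $\widetilde\beta$ — first producing $X_{\widetilde\beta^{(q)}}$ by mutating one end (using the already-established Theorem \ref{Thm-2} for one-end tagged arcs) and then producing $X_{\widetilde\beta^{(p,q)}}$ by applying Proposition \ref{Pro-iso} or by mutating at the pre-image of $\widetilde\beta$ — and the resulting Laurent polynomial can be compared term-by-term with the right-hand side of the claimed formula, where the height-monomial expression (\ref{Eq-hpq}) and the weights in Definition \ref{Def-wei2} give an immediate match.

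The hard part will be the quantum-power bookkeeping under the partition bijection: unlike the commutative case of \cite{H2}, where terms only need to match as Laurent monomials, here one must track the factor $v^{w(\cdot)}$ as one passes between the two lattices, using the three separate gradient numbers $\Omega(\underline{\Delta_i(p)},\cdot,\cdot)$, $\Omega(\cdot,\cdot,\underline{\Delta_j(q)})$ and $\Omega(\cdot,\cdot,\cdot;G_l)$ of Definitions \ref{Def-gra5}, \ref{Def-gra4}, \ref{Def-gra3}. Because the Hasse diagram of $\mathcal L(T^o,\widetilde\beta^{(p,q)})$ has three independent directions — twists on tiles, rotations in the fan at $p$, and rotations in the fan at $q$ — a single flip at $\alpha$ can simultaneously alter the perfect-matching coordinate and, when $\alpha$ is adjacent to $\tau_{[i]}(p)$ or $\tau_{[j]}(q)$, shift the triangle-fan coordinates as well. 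Coordinating the three contributions against the quantum exchange formula is where the technical core of the argument lies; the compatibility $\widetilde B^T \Lambda = (D\;0)$ (equivalently the skew-symmetrizability identity $d(\tau_i)b^{T^o}_{\tau_i \tau_j} = -d(\tau_j)b^{T^o}_{\tau_j \tau_i}$ already exploited in the lemmas of Section \ref{sec:VM}) will be the linchpin that unifies the three partial $v$-power computations into a single identity. This justifies distributing the proof over Sections \ref{Sec-parcom}--\ref{sec:TEQ} as announced in the introduction.
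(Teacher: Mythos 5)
Your overall strategy — establish flip invariance of the proposed formula (Theorem \ref{thm-sum4}) via a partition bijection between $\mathcal L(T^o,\widetilde\beta^{(p,q)})$ and $\mathcal L(T'^o,\widetilde\beta^{(p,q)})$, then verify the formula in a specially chosen base triangulation using Theorem \ref{Thm-2} together with the isomorphism $\sigma$ of Proposition \ref{Pro-iso} — matches the paper's plan. But there are two genuine gaps.

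First, you do not treat $p = q$, and this case cannot be absorbed into your base case. When $p = q$ the tagged arc $\widetilde\beta^{(q)}$ \emph{is} $\widetilde\beta^{(p,q)}$ (a loop notched at its one endpoint), so ``first producing $X_{\widetilde\beta^{(q)}}$ via Theorem \ref{Thm-2}, then applying $\sigma$'' collapses; moreover Theorem \ref{Thm-2} has the hypothesis $q\neq p$. The paper deals with $p=q$ by choosing an \emph{auxiliary} arc $\gamma\neq\widetilde\beta$ incident to $p$ with its other endpoint elsewhere, putting $\gamma,\gamma^{(p)}$ in the base tagged triangulation (so $\Delta(T_0^o,p)$ is a single self-folded triangle), and invoking Theorem \ref{Thm-1} for ordinary arcs in $\mathcal A_v^{(p)}(\Sigma)$ rather than Theorem \ref{Thm-2}. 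In this situation $\widetilde\beta\notin T_0^o$ in general, so ``take $\widetilde\beta\in T^o$'' is not an available choice.

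Second, even for $p\neq q$, the base triangulation $T_0$ (with $\beta,\beta^{(p)}\in T_0$) \emph{contains} an arc tagged notched at $p$, so it lies outside the class of tagged triangulations covered by the Theorem's hypothesis and by the standing Assumption governing Theorem \ref{thm-sum4}. You therefore cannot pass from $T_0$ to a general $T$ by the general flip invariance; one needs a separate, explicit flip computation (Propositions \ref{lem:step2} and \ref{lem:step3} in the paper, essentially a hand-check of the partition bijection under the one flip that removes the self-folded triangle) before Lemma \ref{lem:flip-pq} and Theorem \ref{thm-sum4} take over. Your phrase ``the resulting Laurent polynomial can be compared term-by-term with the right-hand side of the claimed formula'' skips over exactly this bridging step. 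What the base case buys is the identity $\sigma\bigl(X^{T_0}(\Delta,P_\beta,\Delta_j(q))\bigr)=X^{T_0,(p)}(P_\beta,\Delta_j(q))$ (Proposition \ref{prop-equ4} / \ref{prop-equ5}), after which Theorem \ref{Thm-2} (resp.\ Theorem \ref{Thm-1}) inside $\mathcal A_v^{(p)}(\Sigma)$ and $\sigma(X_{\widetilde\beta^{(p,q)}})=X^{(p)}_{\widetilde\beta^{(q)}}$ (resp.\ $X^{(p)}_{\widetilde\beta}$) close the loop; only then does one flip out of the self-folded region.
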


As an immediate corollary of Theorem \ref{Thm-1}, Theorem \ref{Thm-2}, and Theorem \ref{Thm-M3}, we obtain the positivity for quantum cluster algebras from orbifolds.

\begin{theorem}\label{thm:P}
Positivity conjecture holds for quantum cluster algebras from orbifolds without orbifold points of weight $2$.
\end{theorem}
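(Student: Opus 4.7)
The plan is to derive Theorem \ref{thm:P} as an immediate corollary of the three Laurent expansion formulas \ref{Thm-1}, \ref{Thm-2} and \ref{Thm-M3}. By Proposition \ref{Pro-11} the quantum cluster variables of $\mathcal{A}_v(\Sigma)$ are indexed by tagged arcs and the quantum seeds by tagged triangulations, so verifying Conjecture \ref{conj-positive} reduces to showing that for every tagged arc $\gamma$ and every tagged triangulation $T'$, the expansion of $X_\gamma$ in the quantum torus $\mathcal{T}(X_{T'})$ has coefficients in $\mathbb{N}[v^{\pm 1}]$.

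First I would carry out the reduction of Remark \ref{Rem-3cases}. Applying the isomorphism $\sigma$ of Proposition \ref{Pro-iso} iteratively at each puncture where $T'$ has a notched tag, one may assume $T'=T$ is the tagged triangulation associated with some ideal triangulation $T^o$; a further application of $\sigma$ at the punctures where $\gamma$ is notched reduces to the case $\gamma\in\{\widetilde\beta,\widetilde\beta^{(q)},\widetilde\beta^{(p,q)}\}$ with $T$ free of notches at the relevant punctures. Because $\sigma$ is a $\mathbb{Z}[v^{\pm 1}]$-algebra isomorphism that preserves quantum cluster variables and the quantum torus structure seed by seed, positivity is transported unchanged through this reduction.

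In each of the three normalized cases, the relevant expansion theorem takes the form
\[
X_\gamma=\sum_{\mathbf{P}\in \mathcal{L}} v^{w(\mathbf{P})}\, X^T(\mathbf{P}),
\]
where $\mathcal{L}$ is the appropriate lattice from Section \ref{Sec-threesets}, $w(\mathbf{P})\in \mathbb{Z}$, and by Definitions \ref{Def-weight}, \ref{Def-wei1} or \ref{Def-wei2} each quantum weight $X^T(\mathbf{P})$ is a quantum Laurent monomial $X_T^{\mathbf{a}(\mathbf{P})}$ for a well-defined $\mathbf{a}(\mathbf{P})\in \mathbb{Z}^m$. Since $v^{w(\mathbf{P})}\in \mathbb{N}[v^{\pm 1}]$, every summand lies in $\mathbb{N}[v^{\pm 1}]\langle X_T^{\mathbf{a}}\mid \mathbf{a}\in \mathbb{Z}^m\rangle$, and hence so does $X_\gamma$. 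The main obstacle is therefore not this corollary itself but the derivation of the three expansion formulas, which occupies the later sections and in particular rests on showing that the valuation maps of Section \ref{sec:VM} produce the correct quantum powers under flip-equivalence; once those formulas are in hand, the positivity statement is essentially tautological.
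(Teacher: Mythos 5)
Your proposal is correct and follows precisely the approach taken in the paper: the paper states Theorem \ref{thm:P} as an immediate corollary of Theorems \ref{Thm-1}, \ref{Thm-2}, and \ref{Thm-M3}, relying on Proposition \ref{Pro-iso} and Remark \ref{Rem-3cases} for the reduction to the three normalized cases and on the fact that each term $v^{w(\mathbf{P})}X^T(\mathbf{P})$ is a $v$-monomial times a quantum Laurent monomial. Your write-up merely makes explicit what the paper leaves tacit in the phrase ``immediate corollary.''
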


\begin{remark}
In case $\Sigma$ be an orbifold with or without orbifold points of weight $2$, our method can be also applied to its associated orbifold $\hat\Sigma$, which is obtained from $\Sigma$ by making all orbifold points of weight $2$ to punctures.
\end{remark}

\newpage

\section{Partition bijection between perfect matchings}\label{Sec-parcom}

Let $T^o$ be an ideal triangulation and $\gamma$ be a curve connecting two marked points $p$ and $q$. Choose an orientation of $\gamma$, assume that $\gamma$ crosses $T^o$ at $p_1,\cdots,p_c$ sequentially. Denote $p_0=p$ and $p_{c+1}=q$. Suppose that $p_j\in \tau_{i_j}$ for $j=1,\cdots,c$. Given a non-self-folded arc $\alpha\in T^o$, denote $T'^o=\mu_\alpha(T^o)$ and by $\alpha'$ the new arc obtained.

In this section we construct a partition bijection $\varphi^{T^o}_\alpha: \mathcal {P}(G_{T^o,\gamma})\to \mathcal {P}(G_{T'^o,\gamma})$. To this end, we first construct a partition bijection $\phi^{T^o}_\alpha: \mathcal {CP}(T^o,\gamma)\to \mathcal {CP}(T'^o,\gamma)$ between the complete $(T^o,\gamma)$-paths.

We start with the polygon case.

\subsection{Polygon case} In this subsection, let $\Sigma$ be the $n$-gon, i.e., the disk with $n$ marked points labeled $1,\cdots,n$ clockwise on the boundary.

\subsubsection{Partition bijection $\phi^{T^o}_\alpha$}\label{Sec-PB-A} For any complete $(T^o,\gamma)$-path $\overrightarrow\xi$ and $\zeta\in T^o$, recall $m(\overrightarrow\xi,\zeta)$ given in (\ref{equ-m1}). In the case that $\Sigma$ is a polygon, we have $m(\overrightarrow\xi,\zeta)\in \{1,0,-1\}$.

Assume that $\alpha=(a,c)$ is the diagonal of the quadrilateral $(a,b,c,d)$ in $T^o$, where $a,b,c,d$ are in the clockwise order. Then $\alpha'=(b,d)$.

Let $\overrightarrow\xi=(\overrightarrow\xi_{\hspace{-2pt}1},\cdots,\overrightarrow\xi_{\hspace{-2pt}2c+1})$ be a complete $(T^o,\gamma)$-path. We would like to construct a set of complete $(T'^o,\gamma)$-paths as follows.

\medskip

Case I: $\gamma$ crosses with two parallel edges of the quadrilateral $(a,b,c,d)$. We may assume that $\gamma$ crosses $\cdots, (a,b), (a,c), (c,d),\cdots $ sequentially. Then $\xi_{2j-2}=(a,b), \xi_{2j}=(a,c)$ and $\xi_{2j+2}=(c,d)$ for some $j\in \{2,\cdots,c-1\}$.

\begin{figure}[h]

\centerline{\includegraphics{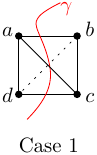}}

\end{figure}

(1)~ If $\overrightarrow\xi_{\hspace{-2pt}2j}=(a,c)$ then $(\overrightarrow\xi_{\hspace{-2pt}2j-2},\overrightarrow\xi_{\hspace{-2pt}2j-1},\overrightarrow\xi_{\hspace{-2pt}2j},\overrightarrow\xi_{\hspace{-2pt}2j+1},\overrightarrow\xi_{\hspace{-2pt}2j+2})=((a,b),(b,a),(a,c),(c,d),(d,c))$. We have $m(\overrightarrow\xi,\alpha)=-1$.

In this case we replace $(\overrightarrow\xi_{\hspace{-2pt}2j-2},\overrightarrow\xi_{\hspace{-2pt}2j-1},\overrightarrow\xi_{\hspace{-2pt}2j},\overrightarrow\xi_{\hspace{-2pt}2j+1},\overrightarrow\xi_{\hspace{-2pt}2j+2})$ by $((a,b),(b,d),(d,b),(b,d),(d,c))$ in the sequence $\overrightarrow\xi$.

(2)~ If $\overrightarrow\xi_{\hspace{-2pt}2j}=(c,a)$ then there are four possibilities of $(\overrightarrow\xi_{\hspace{-2pt}2j-2},\overrightarrow\xi_{\hspace{-2pt}2j-1},\overrightarrow\xi_{\hspace{-2pt}2j},\overrightarrow\xi_{\hspace{-2pt}2j+1},\overrightarrow\xi_{\hspace{-2pt}2j+2})$:\\
$((b,a),(a,c),(c,a),(a,c),(c,d))$, $((b,a),(a,c),(c,a),(a,d),(d,c))$, $((a,b),(b,c),(c,a),(a,c),(c,d))$ and $((a,b),(b,c),(c,a),(a,d),(d,c))$.

(2.1)~ In case $(\overrightarrow\xi_{\hspace{-2pt}2j-2},\overrightarrow\xi_{\hspace{-2pt}2j-1},\overrightarrow\xi_{\hspace{-2pt}2j},\overrightarrow\xi_{\hspace{-2pt}2j+1},\overrightarrow\xi_{\hspace{-2pt}2j+2})=((b,a),(a,c),(c,a),(a,c),(c,d))$, we have $m(\overrightarrow\xi,\alpha)=1$.

In this case we replace $(\overrightarrow\xi_{\hspace{-2pt}2j-2},\overrightarrow\xi_{\hspace{-2pt}2j-1},\overrightarrow\xi_{\hspace{-2pt}2j},\overrightarrow\xi_{\hspace{-2pt}2j+1},\overrightarrow\xi_{\hspace{-2pt}2j+2})$ by $((b,a),(a,d),(d,b),(b,c),(c,d))$ or $((b,a),(a,b),(b,d),(d,c),(c,d))$ in the sequence $\overrightarrow\xi$.

(2.2)~ In case $(\overrightarrow\xi_{\hspace{-2pt}2j-2},\overrightarrow\xi_{\hspace{-2pt}2j-1},\overrightarrow\xi_{\hspace{-2pt}2j},\overrightarrow\xi_{\hspace{-2pt}2j+1},\overrightarrow\xi_{\hspace{-2pt}2j+2})=((b,a),(a,c),(c,a),(a,d),(d,c))$, we have $m(\overrightarrow\xi,\alpha)=0$.

In this case we replace $(\overrightarrow\xi_{\hspace{-2pt}2j-2},\overrightarrow\xi_{\hspace{-2pt}2j-1},\overrightarrow\xi_{\hspace{-2pt}2j},\overrightarrow\xi_{\hspace{-2pt}2j+1},\overrightarrow\xi_{\hspace{-2pt}2j+2})$ by $((b,a),(a,d),(d,b),(b,d),(d,c))$ in the sequence $\overrightarrow\xi$.

(2.3)~ In case $(\overrightarrow\xi_{\hspace{-2pt}2j-2},\overrightarrow\xi_{\hspace{-2pt}2j-1},\overrightarrow\xi_{\hspace{-2pt}2j},\overrightarrow\xi_{\hspace{-2pt}2j+1},\overrightarrow\xi_{\hspace{-2pt}2j+2})=((a,b),(b,c),(c,a),(a,c),(c,d))$,
we have $m(\overrightarrow\xi,\alpha)=0$.

In this case we replace $(\overrightarrow\xi_{\hspace{-2pt}2j-2},\overrightarrow\xi_{\hspace{-2pt}2j-1},\overrightarrow\xi_{\hspace{-2pt}2j},\overrightarrow\xi_{\hspace{-2pt}2j+1},\overrightarrow\xi_{\hspace{-2pt}2j+2})$ by $((a,b),(b,d),(d,b),(b,c),(c,d))$ in the sequence $\overrightarrow\xi$.

(2.4)~ In case $(\overrightarrow\xi_{\hspace{-2pt}2j-2},\overrightarrow\xi_{\hspace{-2pt}2j-1},\overrightarrow\xi_{\hspace{-2pt}2j},\overrightarrow\xi_{\hspace{-2pt}2j+1},\overrightarrow\xi_{\hspace{-2pt}2j+2})=((a,b),(b,c),(c,a),(a,d),(d,c))$, we have $m(\overrightarrow\xi,\alpha)=-1$.

In this case we replace $(\overrightarrow\xi_{\hspace{-2pt}2j-2},\overrightarrow\xi_{\hspace{-2pt}2j-1},\overrightarrow\xi_{\hspace{-2pt}2j},\overrightarrow\xi_{\hspace{-2pt}2j+1},\overrightarrow\xi_{\hspace{-2pt}2j+2})$ by $((a,b),(b,d),(d,b),(b,d),(d,c))$ in the sequence $\overrightarrow\xi$.

In each case, one can see that the new sequences $\overrightarrow\xi'$ obtained are complete $(T'^o,\gamma)$-paths and satisfy
$m(\overrightarrow\xi',\alpha')=-m(\overrightarrow\xi,\alpha).$

\medskip

Case II: $\gamma$ crosses with $(a,c)$ and two adjacent edges of the quadrilateral $(a,b,c,d)$. We may assume that $\gamma$ crosses with $\cdots, (a,b), (a,c), (a,d),\cdots $ sequentially. Then $\xi_{2j-2}=(a,b), \xi_{2j}=(a,c)$ and $\xi_{2j+2}=(a,d)$ for some $j\in \{2,\cdots,c-1\}$.

\begin{figure}[h]

\centerline{\includegraphics{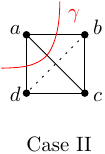}}

\end{figure}

(1)~ If $\overrightarrow\xi_{\hspace{-2pt}2j}=(a,c)$ then $(\overrightarrow\xi_{\hspace{-2pt}2j-2},\overrightarrow\xi_{\hspace{-2pt}2j-1},\overrightarrow\xi_{\hspace{-2pt}2j},\overrightarrow\xi_{\hspace{-2pt}2j+1},\overrightarrow\xi_{\hspace{-2pt}2j+2})=((a,b),(b,a),(a,c),(c,a),(a,d))$ or $((a,b),(b,a),(a,c),(c,d),(d,a))$.

(1.1)~ In case $(\overrightarrow\xi_{\hspace{-2pt}2j-2},\overrightarrow\xi_{\hspace{-2pt}2j-1},\overrightarrow\xi_{\hspace{-2pt}2j},\overrightarrow\xi_{\hspace{-2pt}2j+1},\overrightarrow\xi_{\hspace{-2pt}2j+2})=((a,b),(b,a),(a,c),(c,a),(a,d))$, we have $m(\overrightarrow\xi,\alpha)=0$.

In this case we replace $(\overrightarrow\xi_{\hspace{-2pt}2j-2},\overrightarrow\xi_{\hspace{-2pt}2j-1},\overrightarrow\xi_{\hspace{-2pt}2j},\overrightarrow\xi_{\hspace{-2pt}2j+1},\overrightarrow\xi_{\hspace{-2pt}2j+2})$ by $((a,b),(b,a),(a,d))$ in the sequence $\overrightarrow\xi$.

(1.2)~ In case $(\overrightarrow\xi_{\hspace{-2pt}2j-2},\overrightarrow\xi_{\hspace{-2pt}2j-1},\overrightarrow\xi_{\hspace{-2pt}2j},\overrightarrow\xi_{\hspace{-2pt}2j+1},\overrightarrow\xi_{\hspace{-2pt}2j+2})=((a,b),(b,a),(a,c),(c,d),(d,a))$, we have $m(\overrightarrow\xi,\alpha)=-1$.

In this case we replace $(\overrightarrow\xi_{\hspace{-2pt}2j-2},\overrightarrow\xi_{\hspace{-2pt}2j-1},\overrightarrow\xi_{\hspace{-2pt}2j},\overrightarrow\xi_{\hspace{-2pt}2j+1},\overrightarrow\xi_{\hspace{-2pt}2j+2})$ by $((a,b),(b,d),(d,a))$ in the sequence $\overrightarrow\xi$.

(2)~ If $\overrightarrow\xi_{\hspace{-2pt}2j}=(c,a)$ then $(\overrightarrow\xi_{\hspace{-2pt}2j-2},\overrightarrow\xi_{\hspace{-2pt}2j-1},\overrightarrow\xi_{\hspace{-2pt}2j},\overrightarrow\xi_{\hspace{-2pt}2j+1},\overrightarrow\xi_{\hspace{-2pt}2j+2})=((a,b),(b,c),(c,a),(a,d),(d,a))$ or $((b,a),(a,c),(c,a),(a,d),(d,a))$.

(2.1)~ In case $(\overrightarrow\xi_{\hspace{-2pt}2j-2},\overrightarrow\xi_{\hspace{-2pt}2j-1},\overrightarrow\xi_{\hspace{-2pt}2j},\overrightarrow\xi_{\hspace{-2pt}2j+1},\overrightarrow\xi_{\hspace{-2pt}2j+2})=((a,b),(b,c),(c,a),(a,d),(d,a))$, we have $m(\overrightarrow\xi,\alpha)=-1$.

In this case we replace $(\overrightarrow\xi_{\hspace{-2pt}2j-2},\overrightarrow\xi_{\hspace{-2pt}2j-1},\overrightarrow\xi_{\hspace{-2pt}2j},\overrightarrow\xi_{\hspace{-2pt}2j+1},\overrightarrow\xi_{\hspace{-2pt}2j+2})$ by $((a,b),(b,d),(d,a))$ in the sequence $\overrightarrow\xi$.

(2.2)~ In case $(\overrightarrow\xi_{\hspace{-2pt}2j-2},\overrightarrow\xi_{\hspace{-2pt}2j-1},\overrightarrow\xi_{\hspace{-2pt}2j},\overrightarrow\xi_{\hspace{-2pt}2j+1},\overrightarrow\xi_{\hspace{-2pt}2j+2})=((b,a),(a,c),(c,a),(a,d),(d,a))$, we have $m(\overrightarrow\xi,\alpha)=0$.

In this case we replace $(\overrightarrow\xi_{\hspace{-2pt}2j-2},\overrightarrow\xi_{\hspace{-2pt}2j-1},\overrightarrow\xi_{\hspace{-2pt}2j},\overrightarrow\xi_{\hspace{-2pt}2j+1},\overrightarrow\xi_{\hspace{-2pt}2j+2})$ by $((b,a),(a,d),(d,a))$ in the sequence $\overrightarrow\xi$.

In each case, one can see that the new sequences $\overrightarrow\xi'$ obtained are complete $(T'^o,\gamma)$-paths and satisfy
$m(\overrightarrow\xi',\alpha')=-m(\overrightarrow\xi,\alpha).$

\medskip

Case III:  $\gamma$ crosses two adjacent edges of the quadrilateral $(a,b,c,d)$ but not cross $(a,c)$. We may assume that $\gamma$ crosses with $\cdots, (a,b), (b,c),\cdots $ sequentially. Then $\xi_{2j-2}=(a,b), \xi_{2j}=(b,c)$ for some $j\in \{2,\cdots,c\}$. Then we have $(\overrightarrow\xi_{\hspace{-2pt}2j-2},\overrightarrow\xi_{\hspace{-2pt}2j-1},\overrightarrow\xi_{\hspace{-2pt}2j})=((b,a),(a,b),(b,c))$ or $((a,b),(b,c),(c,b))$ or $((b,a),(a,c),(c,b))$.

\begin{figure}[h]

\centerline{\includegraphics{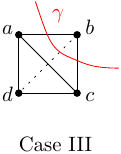}}

\end{figure}

(1)~ If $(\overrightarrow\xi_{\hspace{-2pt}2j-2},\overrightarrow\xi_{\hspace{-2pt}2j-1},\overrightarrow\xi_{\hspace{-2pt}2j})=((b,a),(a,b),(b,c))$, we have $m(\overrightarrow\xi,\alpha)=0$. In this case we replace $(\overrightarrow\xi_{\hspace{-2pt}2j-2},\overrightarrow\xi_{\hspace{-2pt}2j-1},\overrightarrow\xi_{\hspace{-2pt}2j})$ by $((b,a)$, $(a,b),(b,d),(d,b),(b,c))$ in the sequence $\overrightarrow\xi$.

(2)~ If $(\overrightarrow\xi_{\hspace{-2pt}2j-2},\overrightarrow\xi_{\hspace{-2pt}2j-1},\overrightarrow\xi_{\hspace{-2pt}2j})=((a,b),(b,c),(c,b))$, we have $m(\overrightarrow\xi,\alpha)=0$. In this case we replace $(\overrightarrow\xi_{\hspace{-2pt}2j-2},\overrightarrow\xi_{\hspace{-2pt}2j-1},\overrightarrow\xi_{\hspace{-2pt}2j})$ by $((a,b)$, $(b,d),(d,b),(b,c),(c,b))$ in the sequence $\overrightarrow\xi$.

(3)~ If $(\overrightarrow\xi_{\hspace{-2pt}2j-2},\overrightarrow\xi_{\hspace{-2pt}2j-1},\overrightarrow\xi_{\hspace{-2pt}2j})=((b,a),(a,c),(c,b))$, we have $m(\overrightarrow\xi,\alpha)=1$. In this case we replace $(\overrightarrow\xi_{\hspace{-2pt}2j-2},\overrightarrow\xi_{\hspace{-2pt}2j-1},\overrightarrow\xi_{\hspace{-2pt}2j})$ by $((b,a)$, $(a,d),(d,b),(b,c),(c,b))$ or $((b,a),(a,b),(b,d),(d,c),(c,b))$ in the sequence $\overrightarrow\xi$.

In each case, one can see that the new sequences $\overrightarrow\xi'$ obtained are complete $(T'^o,\gamma)$-paths and satisfy
$m(\overrightarrow\xi',\alpha')=-m(\overrightarrow\xi,\alpha).$

\medskip

Case IV: $\gamma$ crosses with $(a,c)$ and exactly one edge of the quadrilateral $(a,b,c,d)$. We may assume that $\gamma$ crosses with $(a,c), (a,d),\cdots $ sequentially. Then $\xi_{2}=(a,c), \xi_{4}=(a,d)$. Thus we have $(\overrightarrow\xi_{\hspace{-2pt}1},\overrightarrow\xi_{\hspace{-2pt}2},\overrightarrow\xi_{\hspace{-2pt}3},\overrightarrow\xi_{\hspace{-2pt}4})=((b,a),(a,c),(c,a),(a,d))$ or $((b,a),(a,c),(c,d),(d,a))$ or $((b,c),(c,a),(a,d),(d,a))$.

\begin{figure}[h]

\centerline{\includegraphics{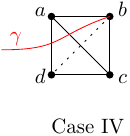}}

\end{figure}

(1)~ If $(\overrightarrow\xi_{\hspace{-2pt}1},\overrightarrow\xi_{\hspace{-2pt}2},\overrightarrow\xi_{\hspace{-2pt}3},\overrightarrow\xi_{\hspace{-2pt}4})=((b,a),(a,c),(c,a),(a,d))$, we have $m(\overrightarrow\xi,\alpha)=0$. In this case we replace $(\overrightarrow\xi_{\hspace{-2pt}1},\overrightarrow\xi_{\hspace{-2pt}2},\overrightarrow\xi_{\hspace{-2pt}3},\overrightarrow\xi_{\hspace{-2pt}4})$ by $((b,a)$, $(a,d))$ in the sequence $\overrightarrow\xi$.

(2)~ If $(\overrightarrow\xi_{\hspace{-2pt}1},\overrightarrow\xi_{\hspace{-2pt}2},\overrightarrow\xi_{\hspace{-2pt}3},\overrightarrow\xi_{\hspace{-2pt}4})=((b,a),(a,c),(c,d),(d,a))$, we have $m(\overrightarrow\xi,\alpha)=-1$. In this case we replace $(\overrightarrow\xi_{\hspace{-2pt}1},\overrightarrow\xi_{\hspace{-2pt}2},\overrightarrow\xi_{\hspace{-2pt}3},\overrightarrow\xi_{\hspace{-2pt}4})$ by $((b,d)$, $(d,a))$ in the sequence $\overrightarrow\xi$.

(3)~ If $(\overrightarrow\xi_{\hspace{-2pt}1},\overrightarrow\xi_{\hspace{-2pt}2},\overrightarrow\xi_{\hspace{-2pt}3},\overrightarrow\xi_{\hspace{-2pt}4})=((b,c),(c,a),(a,d),(d,a))$, we have $m(\overrightarrow\xi,\alpha)=-1$. In this case we replace $(\overrightarrow\xi_{\hspace{-2pt}1},\overrightarrow\xi_{\hspace{-2pt}2},\overrightarrow\xi_{\hspace{-2pt}3},\overrightarrow\xi_{\hspace{-2pt}4})$ by $((b,d)$, $(d,a))$ in the sequence $\overrightarrow\xi$.

In each case, one can see that the new sequences $\overrightarrow\xi'$ obtained are complete $(T'^o,\gamma)$-paths and satisfy
$m(\overrightarrow\xi',\alpha')=-m(\overrightarrow\xi,\alpha).$

\medskip

Case V: $\gamma$ crosses with exactly one edge of the quadrilateral $(a,b,c,d)$ but does not cross with $(a,c)$. We may assume that $\gamma$ crosses with $(b,c),\cdots $ sequentially. Then $\xi_{2}=(b,c)$. Thus we have $(\overrightarrow\xi_{\hspace{-2pt}1},\overrightarrow\xi_{\hspace{-2pt}2})=((a,b),(b,c))$ or $((a,c),(c,b))$.

\begin{figure}[h]

\centerline{\includegraphics{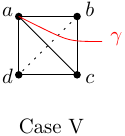}}

\end{figure}

(1)~ If $(\overrightarrow\xi_{\hspace{-2pt}1},\overrightarrow\xi_{\hspace{-2pt}2})=((a,b),(b,c))$, we have $m(\overrightarrow\xi,\alpha)=0$. In this case we replace $(\overrightarrow\xi_{\hspace{-2pt}1},\overrightarrow\xi_{\hspace{-2pt}2})$ by $((a,b),(b,d),(d,b),(b,c))$ in the sequence $\overrightarrow\xi$.

(2)~ If $(\overrightarrow\xi_{\hspace{-2pt}1},\overrightarrow\xi_{\hspace{-2pt}2})=((a,c),(c,b))$, we have $m(\overrightarrow\xi,\alpha)=1$. In this case we replace $(\overrightarrow\xi_{\hspace{-2pt}1},\overrightarrow\xi_{\hspace{-2pt}2})$ by $((a,b),(b,d),(d,c),(c,b))$ or $((a,d),(d,b),(b,c),(c,b))$ in the sequence $\overrightarrow\xi$.

In each case, one can see that the new sequences $\overrightarrow\xi'$ obtained are complete $(T'^o,\gamma)$-paths and satisfy
$m(\overrightarrow\xi',\alpha')=-m(\overrightarrow\xi,\alpha).$

\medskip

From the above discussion, we have the following observation.

\begin{lemma}\label{Lem-mid}
Let $\overrightarrow\xi=(\overrightarrow\xi_{\hspace{-2pt}1},\cdots,\overrightarrow\xi_{\hspace{-2pt}2c+1})$ be a complete $(T^o,\gamma)$-path. Assume that $\xi_j=\alpha=(a,c)$ for some even $j$ such that $\xi_{j-1}\neq \alpha\neq \xi_{j+1}$.
\begin{enumerate}[$(1)$]
\item If $\overrightarrow\xi_{\hspace{-2pt}j}=(a,c)$ then $(\overrightarrow\xi_{\hspace{-2pt}j-1},\overrightarrow\xi_{\hspace{-2pt}j},\overrightarrow\xi_{\hspace{-2pt}j+1})=((b,a),(a,c),(c,d))$ or $((d,a),(a,c),(c,b))$.
\item If $\overrightarrow\xi_{\hspace{-2pt}j}=(c,a)$ then $(\overrightarrow\xi_{\hspace{-2pt}j-1},\overrightarrow\xi_{\hspace{-2pt}j},\overrightarrow\xi_{\hspace{-2pt}j+1})=((b,c),(c,a),(a,d))$ or $((d,c),(c,a),(a,b))$.
\end{enumerate}
\end{lemma}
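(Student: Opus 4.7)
The plan is to combine axioms (T1), (T2), (T4) of a complete $(T^o,\gamma)$-path. Write $j=2k$, so that $\xi_j = \tau_{i_k} = \alpha$ and $p_k \in \alpha$. I treat case (1); case (2) is identical upon swapping the roles of $a$ and $c$. Note that the arc $\alpha=(a,c)$ is a common side of exactly two triangles of $T^o$, namely $\Delta_+ := (a,b,c)$ and $\Delta_- := (a,c,d)$.

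First, (T2) forces $t(\overrightarrow\xi_{j-1}) = s(\overrightarrow\xi_j) = a$ and $s(\overrightarrow\xi_{j+1}) = t(\overrightarrow\xi_j) = c$; together with (T1) and the hypothesis $\xi_{j\pm 1} \neq \alpha$, this shows that $\xi_{j-1}$ is an arc of $T^o$ incident to $a$ and distinct from $\alpha$, and $\xi_{j+1}$ is an arc of $T^o$ incident to $c$ and distinct from $\alpha$. Next, the sub-arc $\overrightarrow\gamma_{k-1}$ crosses $\alpha$ transversely at $p_k$, so it lies inside exactly one of $\Delta_\pm$; call this triangle $\Delta_1$, and let $\Delta_2$ denote the other triangle, which then necessarily contains $\overrightarrow\gamma_k$. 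Applying (T4) to $\overrightarrow\gamma_{k-1}$, its $\overrightarrow\xi$-lift is the concatenation $\overrightarrow\xi_{j-2}, \overrightarrow\xi_{j-1}$ followed by the segment of $\alpha$ from $a$ to $p_k$. Since this lift is isotopic rel endpoints to a curve lying inside $\Delta_1$, and its tail already covers the $\alpha$-portion from $a$ to $p_k$, the arc $\overrightarrow\xi_{j-1}$ must itself be isotopic to a curve inside $\Delta_1$ ending at $a$; because $\xi_{j-1}$ is an arc of the triangulation, it must be a side of $\Delta_1$ at $a$, and since $\xi_{j-1}\neq\alpha$ it is the unique remaining such side, namely $(a,b)$ if $\Delta_1 = \Delta_+$ and $(a,d)$ if $\Delta_1 = \Delta_-$. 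The symmetric argument applied to $\overrightarrow\gamma_k$ identifies $\xi_{j+1}$ as the unique side of $\Delta_2$ at $c$ distinct from $\alpha$.

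Enumerating the two possibilities $\{\Delta_1, \Delta_2\} = \{\Delta_+, \Delta_-\}$ then yields exactly the two configurations $(\overrightarrow\xi_{j-1},\overrightarrow\xi_j,\overrightarrow\xi_{j+1}) = ((b,a),(a,c),(c,d))$ and $((d,a),(a,c),(c,b))$, completing case (1); case (2) is proved identically. The main subtlety I anticipate is the homotopy step: one must verify that (T4) genuinely forces $\xi_{j-1}$ to collapse onto a boundary side of $\Delta_1$ rather than permitting a longer arc of $T^o$ in its homotopy class. This is the case because $\overrightarrow\gamma_{k-1}$ is a simple sub-arc contained in $\Delta_1$, and the second half of the $\overrightarrow\xi$-lift already fills the portion of $\Delta_1$ along $\alpha$ from $a$ to $p_k$, leaving only the boundary side from $s(\overrightarrow\xi_{j-1})$ to $a$ as the possible homotopy class for $\overrightarrow\xi_{j-1}$ inside the triangle.
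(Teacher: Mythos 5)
The paper obtains this lemma purely as an observation from the preceding explicit case analysis (Cases I--V), whereas you attempt a direct topological argument. There is, however, a gap at the central homotopy step. You claim that (T4) forces $\overrightarrow\xi_{\hspace{-2pt}j-1}$ to be isotopic to a curve inside $\Delta_1$ and hence to be a side of $\Delta_1$. But in the polygon case the ambient surface is a disk, so any two paths with the same endpoints are homotopic rel endpoints; (T4) as literally stated is therefore vacuous and does not, by itself, constrain $\xi_{j-1}$ at all. Your resolution of the ``main subtlety'' (that the tail of the lift already covers the $\alpha$-portion, leaving only the boundary side as the possible homotopy class for $\xi_{j-1}$) has the same problem: in a disk there is only one homotopy class of paths with given endpoints. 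The constraint on $\xi_{j-1}$ actually comes from (T3), which you use implicitly when writing $\xi_j = \tau_{i_k}$ but never invoke for $\xi_{j-2}$, and you list only (T1), (T2), (T4) in your plan.

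The argument becomes airtight once (T3) is brought in explicitly. By (T3), $\xi_{j-2}=\tau_{i_{k-1}}$ and $\xi_j=\tau_{i_k}=\alpha$ are consecutive crossings of $\gamma$, hence two sides of the triangle $\Delta_{k-1}$ traversed by $\gamma_{k-1}$ (this is your $\Delta_1$, one of $\Delta_{\pm}$). By (T2), $s(\overrightarrow\xi_{\hspace{-2pt}j-1})=t(\overrightarrow\xi_{\hspace{-2pt}j-2})$ is a vertex of $\Delta_1$, and $t(\overrightarrow\xi_{\hspace{-2pt}j-1})=s(\overrightarrow\xi_{\hspace{-2pt}j})=a$ is also a vertex of $\Delta_1$. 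In a triangulated polygon the only arc of $T^o$ joining two vertices of a triangle of $T^o$ is one of that triangle's sides; since $\xi_{j-1}\neq\alpha$ and loops are excluded, $\xi_{j-1}$ is the remaining side of $\Delta_1$ at $a$. The symmetric reasoning for $\xi_{j+1}$ via $\Delta_2$, together with $\{\Delta_1,\Delta_2\}=\{\Delta_+,\Delta_-\}$, then yields exactly the two stated configurations. (The boundary cases $j=2$ and $j=2c$, where $\xi_{j\mp 2}$ does not exist, need a word: there $p_0=s(\overrightarrow\gamma)$ or $p_{c+1}=t(\overrightarrow\gamma)$ is a vertex of the relevant triangle by the same crossing-structure observation, and (T2) supplies the endpoint.)
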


Therefore, for any complete $(T^o,\gamma)$-path $\overrightarrow\xi$, we obtain a set of complete $(T'^o,\gamma)$-paths, denoted by $\phi^{T^o}_\alpha(\overrightarrow\xi)$, moreover, for any $\overrightarrow\xi'\in \phi^{T^o}_\alpha(\overrightarrow\xi)$, we have $m(\overrightarrow\xi',\alpha')=-m(\overrightarrow\xi,\alpha).$

The construction of $\phi^{T^o}_\alpha$ can be summarized as follows:

\textbf{Construction of $\phi^{T^o}_\alpha$:} given any complete $(T^o,\gamma)$-path $\overrightarrow\xi=(\overrightarrow\xi_{\hspace{-2pt}1},\cdots,\overrightarrow\xi_{\hspace{-2pt}2c+1})$.

If $m(\overrightarrow\xi,\alpha)=1$, then the set $\phi^{T^o}_\alpha(\overrightarrow\xi)$ contains all sequences of oriented arcs $\overrightarrow\xi'$, where $\overrightarrow\xi'$ is obtained via the following recipe:
\begin{enumerate}[$(i)$]
\item First remove a pair $(\overrightarrow\xi_{\hspace{-2pt}j},\overrightarrow\xi_{\hspace{-2pt}j+1})$ in case $\xi_j=\xi_{j+1}=\alpha$ and $\xi_{j-1}\neq \alpha$;
\item Next for any $\xi_j=\alpha$ with odd $j$,
\begin{enumerate}
\item if $\overrightarrow\xi_{\hspace{-2pt}j}=(a,c)$, then replace $\overrightarrow\xi_{\hspace{-2pt}j}$ by $(a,b),(b,d),(d,c)$ or $(a,d),(d,b),(b,c) $;
\item if $\overrightarrow\xi_{\hspace{-2pt}j}=(c,a)$, then replace $\overrightarrow\xi_{\hspace{-2pt}j}$ by $(c,b),(b,d),(d,a)$ or $(c,d),(d,b),(b,a)$;
\end{enumerate}
\end{enumerate}

If $m(\overrightarrow\xi,\alpha)=0$, then the set $\phi^{T^o}_\alpha(\overrightarrow\xi)$ contains all sequences of oriented arcs $\overrightarrow\xi'$, where $\overrightarrow\xi'$ can be obtained via the following recipe:

\begin{enumerate}[$(i)$]
\item First remove a pair $(\overrightarrow\xi_{\hspace{-2pt}j},\overrightarrow\xi_{\hspace{-2pt}j+1})$ in case $\xi_j=\xi_{j+1}=\alpha$ and $\xi_{j-1}\neq \alpha$;
\item Next, if $\gamma$ crosses with $\alpha'=(b,d)$, then add
$(b,d),(d,b)$ or $(d,b),(b,d)$ to make the sequences to complete $(T'^o,\gamma)$-paths;
\end{enumerate}

If $m(\overrightarrow\xi,\alpha)=-1$, then $\xi_j=\alpha$ for some even $j$ such that $\xi_{j-1}\neq \alpha\neq \xi_{j+1}$. The set $\phi^{T^o}_\alpha(\overrightarrow\xi)$ contains all sequences of oriented arcs $\overrightarrow\xi'$, where $\overrightarrow\xi'$ can be obtained via the following recipe:

\begin{enumerate}[$(i)$]
\item First, $(\overrightarrow\xi_{\hspace{-2pt}j-1},\overrightarrow\xi_{\hspace{-2pt}j},\overrightarrow\xi_{\hspace{-2pt}j+1})=((b,a),(a,c),(c,d))$; $((b,c),(c,a),(a,d))$; $((d,a),(a,c),(c,b))$ or $((d,c),(c,a),(a,b))$ by Lemma \ref{Lem-mid}.
\begin{enumerate}[$(a)$]
\item if $(\overrightarrow\xi_{\hspace{-2pt}j-1},\overrightarrow\xi_{\hspace{-2pt}j},\overrightarrow\xi_{\hspace{-2pt}j+1})=((b,a),(a,c),(c,d))$ or $((b,c),(c,a),(a,d))$, then replace it by $(b,d)$;
\item if $(\overrightarrow\xi_{\hspace{-2pt}j-1},\overrightarrow\xi_{\hspace{-2pt}j},\overrightarrow\xi_{\hspace{-2pt}j+1})=((d,a),(a,c),(c,b))$ or $((d,c),(c,a),(a,b))$, then replace it by $(d,b)$;
\end{enumerate}
\item Next, if $\gamma$ crosses with $\alpha'=(b,d)$, then add
$(b,d),(d,b)$ or $(d,b),(b,d)$ to make the sequences to complete $(T'^o,\gamma)$-paths.
\end{enumerate}

\begin{remark}
In particular, if $\gamma$ crosses neither $\alpha$ nor $\alpha'$ then $\mathcal {CP}(T^o,\gamma)=\mathcal {CP}(T'^o,\gamma)$ and $\phi^{T^o}_\alpha$ is the identity map.
\end{remark}

In summary, we obtain the following proposition.

\begin{proposition}\label{Lem-parp}
With the foregoing notation. For any complete $(T^o,\gamma)$-path $\overrightarrow\xi$, we have
\begin{enumerate}[$(1)$]
\item $\phi^{T^o}_\alpha(\overrightarrow \xi)\subset \mathcal{CP}(T'^o,\gamma)$;
\item
$|\phi^{T^o}_\alpha(\overrightarrow \xi)|=2^{[m(\overrightarrow\xi,\alpha)]_+}$,
moreover, if $m(\overrightarrow\xi,\alpha)=1$, assume that $\phi^{T^o}_\alpha(\overrightarrow \xi)=\{\overrightarrow\xi',\overrightarrow\xi''\}$, then $\overrightarrow\xi'$ and $\overrightarrow\xi''$ are related by a twist at $\alpha'$;
\item $\phi^{T^o}_\alpha(\overrightarrow \xi)=\phi^{T^o}_\alpha(\overrightarrow \zeta)$ for some $\zeta\neq \xi$ if and only if
$m(\overrightarrow\xi,\alpha)=-1$,
and $\overrightarrow \xi$ and $\overrightarrow \zeta$ are related by a twist on $\alpha$;
\item for any $\overrightarrow\xi'\in \phi^{T^o}_\alpha(\overrightarrow \xi)$, we have
$m(\overrightarrow{\xi},\alpha)=-m(\overrightarrow\xi',\alpha')$;
\item for any $(a,b), (b,c), (c,d), (a,d), (a,c)\neq \zeta\in T^o$ and $\overrightarrow\xi'\in \phi^{T^o}_\alpha(\overrightarrow \xi)$, we have
$m(\overrightarrow\xi,\zeta)=m(\overrightarrow\xi',\zeta)$.
\end{enumerate}

\end{proposition}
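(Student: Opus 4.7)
The plan is to verify parts (1)--(5) by directly inspecting the case analysis already completed in Cases I--V of Subsection~\ref{Sec-PB-A}. Every local configuration of $\overrightarrow\xi$ at the quadrilateral $(a,b,c,d)$ is enumerated there, together with the explicit replacement used to build $\phi^{T^o}_\alpha(\overrightarrow\xi)$, so the proof amounts to reading off the required properties from that list.

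First I would establish (1). Axiom $(T1)$ is immediate: no replacement introduces the arc $\alpha$ and every inserted arc belongs to $T'^o=(T^o\setminus\{\alpha\})\cup\{\alpha'\}$. Axiom $(T2)$ holds because every local replacement preserves the starting and ending marked points of the modified segment. Axiom $(T3)$ is verified by observing that after the substitution the even-indexed entries are precisely the arcs of $T'^o$ crossed by $\gamma$ in order; in particular, when $\gamma$ crosses the new arc $\alpha'=(b,d)$, the recipe inserts a pair $(b,d),(d,b)$ or $(d,b),(b,d)$ in the correct position. Axiom $(T4)$ follows because each replacement segment is contained in the quadrilateral $(a,b,c,d)$ and is homotopic, relative to endpoints, to the original segment of $\overrightarrow\xi$; this is the reason the local substitutions were designed.

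For (2), tally the number of replacement options in each subcase. The subcases producing two choices are exactly Case~I(2.1), Case~III(3) and Case~V(2), and these are precisely the ones with $m(\overrightarrow\xi,\alpha)=1$. In each of these, the two resulting paths $\overrightarrow\xi',\overrightarrow\xi''$ differ only in the orientation of the inserted detour through the tile with diagonal $\alpha'$, so they are related by a twist at $\alpha'$. All remaining subcases yield a single replacement, and the two possible insertion patterns $(b,d),(d,b)$ vs.\ $(d,b),(b,d)$ needed for axiom $(T3)$ are forced by the surrounding orientations, hence produce no new multiplicity. This gives $|\phi^{T^o}_\alpha(\overrightarrow\xi)|=2^{[m(\overrightarrow\xi,\alpha)]_+}$. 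For (3), the only collision $\phi^{T^o}_\alpha(\overrightarrow\xi)=\phi^{T^o}_\alpha(\overrightarrow\zeta)$ with $\overrightarrow\xi\neq\overrightarrow\zeta$ occurs when $m(\overrightarrow\xi,\alpha)=-1$: by Lemma~\ref{Lem-mid} the local triple around $\xi_j=\alpha$ is either $((b,a),(a,c),(c,d))$ or $((b,c),(c,a),(a,d))$ (both collapsing to $(b,d)$), or $((d,a),(a,c),(c,b))$ or $((d,c),(c,a),(a,b))$ (both collapsing to $(d,b)$); the two pre-images are related by a twist at $\alpha$. Outside the $m=-1$ cases, the recipe is injective.

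For (4) and (5), one inspects the definition \eqref{equ-m1} on each resulting $\overrightarrow\xi'$. In the cases where $m(\overrightarrow\xi,\alpha)=1$, the replacement introduces exactly one odd-indexed occurrence of $\alpha'$ (and no pair $\alpha',\alpha'$), so $m(\overrightarrow\xi',\alpha')=-1$; in the cases $m(\overrightarrow\xi,\alpha)=0$, the inserted $\alpha'$-entries come in canceling even/odd pairs when $\gamma$ crosses $\alpha'$, and are absent otherwise, giving $m(\overrightarrow\xi',\alpha')=0$; in the cases $m(\overrightarrow\xi,\alpha)=-1$, a single even-indexed $\alpha'$ appears, giving $m(\overrightarrow\xi',\alpha')=1$. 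Statement (5) is immediate since every replacement modifies only the multiplicities of $\alpha$, $\alpha'$, $(a,b)$, $(b,c)$, $(c,d)$, $(a,d)$ in $\overrightarrow\xi$ and leaves the entries labelled by any other $\zeta\in T^o$ untouched.

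The main obstacle I foresee is purely bookkeeping: one must carefully confirm that when $\gamma$ crosses the quadrilateral several times, the local substitutions at different positions are mutually independent, so that the local counts in (2) multiply consistently with the global identity $|\phi^{T^o}_\alpha(\overrightarrow\xi)|=2^{[m(\overrightarrow\xi,\alpha)]_+}$, and that no unintended collision occurs in (3) across distinct positions. Since the substitutions are strictly local to each crossing of $\alpha$ or $\alpha'$, this independence holds, and the verification reduces to the finite list of subcases treated in Cases~I--V.
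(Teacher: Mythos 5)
Your proposal is correct and matches the paper's (implicit) proof, which consists precisely of reading off the five claims from the explicit case analysis in Cases I--V of Subsection~\ref{Sec-PB-A}; the paper simply summarizes that analysis into Proposition~\ref{Lem-parp} without writing a separate argument. One minor inaccuracy worth noting: in your justification of (4) you have the parity convention of $m(\cdot,\cdot)$ reversed --- in the definition $m(\overrightarrow\xi,\zeta)=\sum_i(-1)^{i-1}\delta_{\xi_i,\zeta}$ odd indices contribute $+1$ and even indices $-1$, whereas you assert that a single odd-indexed $\alpha'$-entry gives $m(\overrightarrow\xi',\alpha')=-1$ (it gives $+1$), and similarly for the $m(\overrightarrow\xi,\alpha)=-1$ case; moreover in the $m=-1$ subcases I(1) and I(2.4) the replacement actually produces three $\alpha'$-entries, not one, though the net count is still $+1$. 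Since the identity $m(\overrightarrow\xi',\alpha')=-m(\overrightarrow\xi,\alpha)$ is checked explicitly at the end of each subcase in the paper, these slips do not affect the validity of the overall argument.
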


Similarly, we can construct $\phi^{T^o}_{\alpha'}$.

\begin{theorem}\label{parbi1}
With the foregoing notation. $\phi^{T^o}_\alpha$ and $\phi^{T'^o}_{\alpha'}$ are partition bijections between $\mathcal{CP}(T^o,\gamma)$ and  $\mathcal{CP}(T'^o,\gamma)$, moreover, $\phi^{T^o}_\alpha$ and  $\phi^{T'^o}_{\alpha'}$ are inverse to each other.
\end{theorem}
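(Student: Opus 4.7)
The plan is to verify the two partition-bijection axioms of Remark \ref{Rmk-par} for $\phi^{T^o}_\alpha$, and then invoke Proposition \ref{Pro-parin} to conclude that $\phi^{T^o}_\alpha$ and $\phi^{T'^o}_{\alpha'}$ are inverse to each other. Most of the ingredients are already packaged in Proposition \ref{Lem-parp}, so the work consists of (i) unpacking these properties into the language of partitions, and (ii) performing a careful symmetric case-check to verify that the two constructions undo one another.

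First I will check that $\phi^{T^o}_\alpha$ is a partition bijection from $\mathcal{CP}(T^o,\gamma)$ to $\mathcal{CP}(T'^o,\gamma)$. By Proposition \ref{Lem-parp}(1) every $\phi^{T^o}_\alpha(\overrightarrow\xi)$ is a non-empty subset of $\mathcal{CP}(T'^o,\gamma)$. For the disjointness axiom of Remark \ref{Rmk-par}, note that by Proposition \ref{Lem-parp}(3), if $\phi^{T^o}_\alpha(\overrightarrow\xi)\cap\phi^{T^o}_\alpha(\overrightarrow\zeta)\neq\emptyset$ with $\overrightarrow\xi\neq\overrightarrow\zeta$, then necessarily $m(\overrightarrow\xi,\alpha)=-1$ and $\overrightarrow\xi,\overrightarrow\zeta$ are related by a twist at $\alpha$; the explicit recipe then forces $\phi^{T^o}_\alpha(\overrightarrow\xi)=\phi^{T^o}_\alpha(\overrightarrow\zeta)$. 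For surjectivity, given $\overrightarrow\xi'\in\mathcal{CP}(T'^o,\gamma)$, I will run the recipe of $\phi^{T'^o}_{\alpha'}$ on $\overrightarrow\xi'$; by the symmetric construction (swap the roles of $\alpha$ and $\alpha'$), the resulting complete $(T^o,\gamma)$-paths $\overrightarrow\xi$ all satisfy $\overrightarrow\xi'\in\phi^{T^o}_\alpha(\overrightarrow\xi)$. This simultaneously delivers surjectivity of $\phi^{T^o}_\alpha$ and a candidate for the inverse partition bijection $\phi^{T'^o}_{\alpha'}$.

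Next, with Proposition \ref{Pro-parin} in hand, it suffices to prove the symmetric equivalence: for any $\overrightarrow\xi\in\mathcal{CP}(T^o,\gamma)$ and $\overrightarrow\xi'\in\mathcal{CP}(T'^o,\gamma)$,
\[
\overrightarrow\xi'\in\phi^{T^o}_\alpha(\overrightarrow\xi)\quad\Longleftrightarrow\quad\overrightarrow\xi\in\phi^{T'^o}_{\alpha'}(\overrightarrow\xi').
\]
By Proposition \ref{Lem-parp}(4), $m(\overrightarrow\xi,\alpha)=-m(\overrightarrow\xi',\alpha')$, so the trichotomy $m(\overrightarrow\xi,\alpha)\in\{1,0,-1\}$ is interchanged with $m(\overrightarrow\xi',\alpha')\in\{-1,0,1\}$. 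This reduces the equivalence to three matching cases (plus the orientation distinction of $\overrightarrow\xi_j=(a,c)$ versus $(c,a)$), each of which I will verify directly from the explicit recipe in Cases I--V of Section \ref{Sec-PB-A}: the operations prescribed by $\phi^{T'^o}_{\alpha'}$ on a $\overrightarrow\xi'$ in the image of $\phi^{T^o}_\alpha$ exactly invert the substitutions prescribed by $\phi^{T^o}_\alpha$, and the ``add a $(b,d),(d,b)$ pair'' step in one direction is inverted by the ``remove a diagonal pair'' step in the other. Property (5) of Proposition \ref{Lem-parp} guarantees nothing away from the quadrilateral $(a,b,c,d)$ changes, so only these local substitutions need to be matched up.

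The main obstacle will be bookkeeping the case analysis in the last step: there are several subcases (parallel edges, adjacent edges, endpoint on $\alpha$), each with two orientations of $\overrightarrow\xi_j$, and one has to check that the two-element fibers that arise when $m(\overrightarrow\xi,\alpha)=1$ (Proposition \ref{Lem-parp}(2)) are correctly collapsed by $\phi^{T'^o}_{\alpha'}$ to a single preimage, while the collapsed pair in the case $m(\overrightarrow\xi,\alpha)=-1$ (Proposition \ref{Lem-parp}(3)) is correctly split into two preimages by $\phi^{T'^o}_{\alpha'}$. Once this inversion of cardinalities is confirmed case by case, the equivalence above is immediate and Proposition \ref{Pro-parin} completes the proof.
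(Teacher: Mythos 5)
Your plan follows the same skeleton as the paper's proof: verify the partition-bijection axioms from Proposition \ref{Lem-parp}, establish the symmetric equivalence $\overrightarrow\xi'\in\phi^{T^o}_\alpha(\overrightarrow\xi)\iff\overrightarrow\xi\in\phi^{T'^o}_{\alpha'}(\overrightarrow\xi')$, and then invoke Proposition \ref{Pro-parin}. However, the disjointness step as you state it is circular. You cite Proposition \ref{Lem-parp}(3) to conclude that nonempty intersection $\phi^{T^o}_\alpha(\overrightarrow\xi)\cap\phi^{T^o}_\alpha(\overrightarrow\zeta)\neq\emptyset$ (with $\overrightarrow\xi\neq\overrightarrow\zeta$) forces $m(\overrightarrow\xi,\alpha)=-1$ and a twist relation; but \ref{Lem-parp}(3) only characterizes when the images are \emph{equal}, which is precisely the statement you are trying to prove. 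Deducing ``nonempty intersection $\Rightarrow m=-1$'' already requires the symmetric equivalence from the reverse recipe, which you only plan to check afterwards.

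The paper's argument avoids this: take a common element $\overrightarrow\xi'\in\phi^{T^o}_\alpha(\overrightarrow\xi)\cap\phi^{T^o}_\alpha(\overrightarrow\zeta)$; Proposition \ref{Lem-parp}(4) gives $m(\overrightarrow\xi,\alpha)=m(\overrightarrow\zeta,\alpha)=-m(\overrightarrow\xi',\alpha')$, and then Proposition \ref{Lem-parp}(2) controls the cardinalities. If $m(\overrightarrow\xi,\alpha)\le 0$ both images are the singleton $\{\overrightarrow\xi'\}$; if $m(\overrightarrow\xi,\alpha)=1$ both images are the pair $\{\overrightarrow\xi',\overrightarrow\xi''\}$ with $\overrightarrow\xi''$ the twist of $\overrightarrow\xi'$ at $\alpha'$. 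In both cases the two images coincide, with no need to assume $m=-1$. Replacing your appeal to \ref{Lem-parp}(3) by this $(4)+(2)$ argument fixes the gap, and the rest of your plan then matches the paper's (you also want to cite \ref{Lem-parp}(2), not (1), for non-emptiness, since (1) is merely inclusion in $\mathcal{CP}(T'^o,\gamma)$).
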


\begin{proof}
We first show that $\phi^{T^o}_\alpha$ and $\phi^{T'^o}_{\alpha'}$ are partition bijections. We shall only consider $\phi^{T^o}_\alpha$. By Proposition \ref{Lem-parp} (2), we have $\phi^{T^o}_\alpha(\overrightarrow\xi)\neq \emptyset$ for any $\overrightarrow\xi\in \mathcal{CP}(T^o,\gamma)$.

Suppose that $\phi^{T^o}_\alpha(\overrightarrow\xi)\cap \phi^{T^o}_\alpha(\overrightarrow\zeta)\neq \emptyset$ for some $\overrightarrow\xi,\overrightarrow\zeta\in \mathcal{CP}(T^o,\gamma)$. Assume that $\overrightarrow\xi'\in \phi^{T^o}_\alpha(\overrightarrow\xi)\cap \phi^{T^o}_\alpha(\overrightarrow\zeta)$. Then $m(\overrightarrow{\xi},\alpha)=m(\overrightarrow{\zeta},\alpha)=-m(\overrightarrow{\xi'},\alpha')$ by Proposition \ref{Lem-parp} (4). If $m(\overrightarrow{\xi},\alpha)=m(\overrightarrow{\zeta},\alpha)\leq 0$ then $\phi^{T^o}_\alpha(\overrightarrow\xi)= \phi^{T^o}_\alpha(\overrightarrow\zeta)=\{\overrightarrow\xi'\}$ by Proposition \ref{Lem-parp} (2). If $m(\overrightarrow{\xi},\alpha)=m(\overrightarrow{\zeta},\alpha)=1$, then $\phi^{T^o}_\alpha(\overrightarrow\xi)= \phi^{T^o}_\alpha(\overrightarrow\zeta)=\{\overrightarrow\xi',\overrightarrow \xi''\}$ by Proposition \ref{Lem-parp} (2), where $\overrightarrow\xi''$ is the complete $(T'^o,\gamma)$-path related to $\overrightarrow\xi'$ by a twist on $\alpha'$. Therefore we have either $\phi^{T^o}_\alpha(\overrightarrow\xi)\cap \phi^{T^o}_\alpha(\overrightarrow\zeta)=\emptyset$ or $\phi^{T^o}_\alpha(\overrightarrow\xi)=\phi^{T^o}_\alpha(\overrightarrow\zeta)$ for any $\overrightarrow\xi,\overrightarrow\zeta\in \mathcal{CP}(T^o,\gamma)$.

According to the construction of $\phi^{T^o}_\alpha$ and $\phi^{T'^o}_{\alpha'}$, we see that $\overrightarrow\xi'\in \phi^{T^o}_\alpha(\overrightarrow\xi)$ if and only if $\overrightarrow\xi\in \phi^{T'^o}_{\alpha'}(\overrightarrow\xi')$ for any $\overrightarrow\xi\in \mathcal{CP}(T^o,\gamma)$ and $\overrightarrow\xi'\in \mathcal{CP}(T'^o,\gamma)$. Thus $\bigcup_{\overrightarrow\xi\in \mathcal{CP}(T^o,\gamma)}\phi^{T^o}_\alpha(\overrightarrow\xi)=\mathcal{CP}(T'^o,\gamma)$. Therefore, $\phi^{T^o}_\alpha$ is a partition bijection by Remark \ref{Rmk-par}. Similarly, $\phi^{T'^o}_\alpha$ is a partition bijection. By Proposition \ref{Pro-parin}, $\phi^{T^o}_\alpha$ and $\phi^{T'^o}_{\alpha'}$ are inverse to each other.
\end{proof}

\subsubsection{Partition bijection $\phi^{T^o}_{\alpha_1,\cdots,\alpha_k}$}
Let $\alpha_1,\cdots,\alpha_k$ be a set of arcs in $T^o$ such that $\alpha_i,\alpha_j$ are not two sides of any triangle in $T^o$ for any $i\neq j$. Denote $T'^o=\mu_{\alpha_k}\circ \cdots \circ \mu_{\alpha_1}(T^o)$. Denote by $\alpha'_i$ the arc obtained from $T^o$ by flip at $\alpha_i$. Assume that $\alpha_i=(a_i,c_i)$ and $\alpha_i$ is a diagonal of the quadrilateral $(a_i,b_i,c_i,d_i)$ in $T^o$, where $a_i,b_i,c_i,d_i$ are in the clockwise order.

For any given complete $(T^o,\gamma)$-path $\overrightarrow\xi=(\overrightarrow\xi_{\hspace{-2pt}1},\cdots,\overrightarrow\xi_{\hspace{-2pt}2c+1})$, we construct a subset $\phi^{T^o}_{\alpha_1,\cdots,\alpha_k}(\overrightarrow\xi)$ of complete $(T'^o,\gamma)$-paths, which is obtained from $\overrightarrow\xi$ via the following receipt:
\begin{enumerate}[$(i)$]
\item First, remove all the pairs $(\overrightarrow\xi_{\hspace{-2pt}j},\overrightarrow\xi_{\hspace{-2pt}j+1})$ such that $\xi_j=\xi_{j+1}=\alpha_i$ for some $i\in \{1,\cdots,k\}$ and $\xi_{j-1}\neq \alpha_i$;
\item Next, for any $i\in \{1,\cdots,k\}$,
      \begin{enumerate}[$(a)$]
      \item if $m(\overrightarrow\xi,\alpha_i)=1$, we have $\xi_{j_i}=\alpha_i$ for some odd $j_i$, then replace $\overrightarrow\xi_{\hspace{-2pt}j_i}$ by $(a_i,b_i),(b_i,d_i),(d_i,c_i)$ or $(a_i,d_i),(d_i,b_i),(b_i,c_i) $ in case $\overrightarrow\xi_{\hspace{-2pt}j_i}=(a,c)$ and replace $\overrightarrow\xi_{\hspace{-2pt}j_i}$ by $(c_i,b_i),(b_i,d_i),(d_i,a_i)$ or $(c_i,d_i),(d_i,b_i),(b_i,a_i)$ in case $\overrightarrow\xi_{\hspace{-2pt}j_i}=(c_i,a_i)$;
      \item if $m(\overrightarrow\xi,\alpha_i)=-1$, we have $\xi_{j_i}=\alpha_i$ for some even $j_i$, then replace $(\overrightarrow\xi_{\hspace{-2pt}j_i-1},\overrightarrow\xi_{\hspace{-2pt}j_i},\overrightarrow\xi_{\hspace{-2pt}j_i+1})$ by $(b_i,d_i)$ in case $(\overrightarrow\xi_{\hspace{-2pt}j_i-1},\overrightarrow\xi_{\hspace{-2pt}j_i},\overrightarrow\xi_{\hspace{-2pt}j_i+1})=((b_i,a_i),(a_i,c_i),(c_i,d_i))$ or $((b_i,c_i),(c_i,a_i),(a_i,d_i))$, and replace $(\overrightarrow\xi_{\hspace{-2pt}j_i-1},\overrightarrow\xi_{\hspace{-2pt}j_i},\overrightarrow\xi_{\hspace{-2pt}j_i+1})$ by $(d_i,b_i)$ in case $(\overrightarrow\xi_{\hspace{-2pt}j_i-1},\overrightarrow\xi_{\hspace{-2pt}j_i},\overrightarrow\xi_{\hspace{-2pt}j_i+1})=((d_i,a_i),(a_i,c_i),(c_i,b_i))$ or $((d_i,c_i),(c_i,a_i),(a_i,b_i))$;
      \end{enumerate}
\item Last, for all the $\alpha'_i=(b_i,d_i)$ such that $\gamma$ crosses with $\alpha'_i$, then add
$(b_i,d_i),(d_i,b_i)$ or $(d_i,b_i),(b_i,d_i)$ to make the sequences to complete $(T'^o,\gamma)$-paths.
\end{enumerate}

As $\alpha_i,\alpha_j$ are not two sides of any triangle in $T^o$ for any $i\neq j$, the above construction is well-defined and we have
\begin{equation}\label{Eq-par}
\phi^{T^o}_{\alpha_1,\cdots,\alpha_k}(\overrightarrow\xi)=\phi^{\mu_{\alpha_{k-1}}\circ \cdots\circ \mu_{\alpha_1}T^o}_{\alpha_k}\left(\cdots\phi^{\mu_{\alpha_1}T^o}_{\alpha_2}(\phi^{T^o}_{\alpha_1}(\overrightarrow\xi))\right),
\end{equation}
moreover, $\phi^{T^o}_{\alpha_1,\cdots,\alpha_k}$ does not depend on the order of $\alpha_1,\cdots,\alpha_k$.

We can also construct a subset $\phi^{T'^o}_{\alpha'_1,\cdots,\alpha'_k}(\overrightarrow \xi')$ for any complete $(T'^o,\gamma)$-path $\overrightarrow \xi'$.

We have the following proposition which generalizes Proposition \ref{Lem-parp}.

\begin{proposition}\label{prop-par11}
Let $\alpha_1,\cdots,\alpha_k$ be a set of arcs in $T^o$ such that $\alpha_i$ and $\alpha_j$ are not two sides of any triangle in $T^o$ for any $i\neq j$. For any complete $(T^o,\gamma)$-path $\overrightarrow\xi$, we have
\begin{enumerate}[$(1)$]
\item $\phi^{T^o}_{\alpha_1,\cdots,\alpha_k}(\overrightarrow \xi)\subset \mathcal{CP}(T'^o,\gamma)$,
\item
$|\phi^{T^o}_{\alpha_1,\cdots,\alpha_k}(\overrightarrow\xi)|=2^{\sum_{i=1}^k[m(\overrightarrow\xi,\alpha_i)]_+}$,
moreover, for any $\alpha_i$ with $m(\overrightarrow\xi,\alpha_i)=1$ and $\overrightarrow\xi'\in \phi^{T^o}_{\alpha_1,\cdots,\alpha_k}(\overrightarrow\xi)$, there exists $\overrightarrow\xi''\in \phi^{T^o}_{\alpha_1,\cdots,\alpha_k}(\overrightarrow\xi)$ such that $\overrightarrow\xi'$ and $\overrightarrow\xi''$ are related by a twist on $\alpha'_i$;
\item if $m(\overrightarrow\xi,\alpha_i)=-1$ for some $\alpha_i$ then we have $\phi^{T^o}_{\alpha_1,\cdots,\alpha_k}(\overrightarrow\xi)=\phi^{T^o}_{\alpha_1,\cdots,\alpha_k}(\overrightarrow\zeta)$, where $\overrightarrow\zeta$ is the complete $(\prod_{i=1}^k\mu_{\alpha_i}(T^o),\gamma)$-path which is related to $\overrightarrow\xi$ by a twist at $\alpha_i$;
\item for any $i\in \{1,\cdots,k\}$ and $\overrightarrow\xi'\in \phi^{T^o}_{\alpha_1,\cdots,\alpha_k}(\overrightarrow\xi)$, we have
$m(\overrightarrow{\xi},\alpha_i)=-m(\overrightarrow\xi',\alpha'_i).$
\item If $\zeta\in T^o$ and $\zeta\neq (a_i,b_i), (b_i,c_i), (c_i,d_i), (a_i,d_i), (a_i,c_i)$ for all $i$, then for any $\overrightarrow\xi'\in \phi^{T^o}_{\alpha_1,\cdots,\alpha_k}(\overrightarrow\xi)$, we have
$m(\overrightarrow\xi,\zeta)=m(\overrightarrow\xi',\zeta).$
\end{enumerate}

\end{proposition}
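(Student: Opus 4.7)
The plan is to prove Proposition \ref{prop-par11} by induction on $k$, leveraging the iterated description (\ref{Eq-par}) together with the single-flip case Proposition \ref{Lem-parp}. The base case $k=1$ is exactly Proposition \ref{Lem-parp}, so it suffices to examine the inductive step, in which I would apply $\phi^{T^o}_{\alpha_1}$ first and then $\phi^{\mu_{\alpha_1}T^o}_{\alpha_2,\ldots,\alpha_k}$.

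The central observation, which I would state first as a preparatory lemma, is that the hypothesis ``$\alpha_i$ and $\alpha_j$ are not two sides of any triangle in $T^o$'' forces the quadrilateral $(a_j,b_j,c_j,d_j)$ surrounding $\alpha_j$ to be disjoint from $\alpha_i$ as an interior diagonal, and in fact $\alpha_i\notin\{(a_j,b_j),(b_j,c_j),(c_j,d_j),(a_j,d_j)\}$ as well. Consequently flipping $\alpha_i$ neither flips any side of the quadrilateral surrounding $\alpha_j$ nor coincides with $\alpha_j'$, so $\alpha_j$ survives in $\mu_{\alpha_i}(T^o)$ with the same quadrilateral. This establishes that (\ref{Eq-par}) is well-defined and independent of the ordering of $\alpha_1,\ldots,\alpha_k$, and furthermore that items (4) and (5) of Proposition \ref{Lem-parp} can be chained: applying $\phi^{\mu_{\alpha_1}T^o}_{\alpha_2}$ after $\phi^{T^o}_{\alpha_1}$ preserves the value of $m(\cdot,\alpha'_1)$ because $\alpha'_1\notin\{(a_2,b_2),(b_2,c_2),(c_2,d_2),(a_2,d_2),(a_2,c_2)\}$ (none of the four sides around $\alpha_2$ is altered by the $\alpha_1$-flip), and it also preserves $m(\cdot,\alpha_j)$ for $j\geq 3$ as well as $m(\cdot,\zeta)$ for any $\zeta$ avoiding the listed arcs.

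With this preparatory step in hand, the five claims follow by iterating:
\begin{itemize}
\item Claim (1) is immediate, as each single-step $\phi$ outputs complete paths in the mutated triangulation.
\item Claim (4) follows by chaining: at step one $m(\overrightarrow\xi,\alpha_1)=-m(\overrightarrow\xi',\alpha_1')$ for any $\overrightarrow\xi'\in\phi^{T^o}_{\alpha_1}(\overrightarrow\xi)$, and subsequent flips preserve both $m(\cdot,\alpha_j')$ and $m(\cdot,\alpha_i)$ for $i>1$ by the preparatory lemma.
\item Claim (5) is the same chaining, applied to a ``generic'' $\zeta$ that avoids all the flipped-quadrilateral edges.
\item Claim (2) is proved by induction: $|\phi^{T^o}_{\alpha_1}(\overrightarrow\xi)|=2^{[m(\overrightarrow\xi,\alpha_1)]_+}$ and, for each $\overrightarrow\xi'$ in that set, the induction hypothesis gives $|\phi^{\mu_{\alpha_1}T^o}_{\alpha_2,\ldots,\alpha_k}(\overrightarrow\xi')|=2^{\sum_{i\geq 2}[m(\overrightarrow\xi',\alpha_i)]_+}=2^{\sum_{i\geq 2}[m(\overrightarrow\xi,\alpha_i)]_+}$, where the last equality uses the preservation from Claim (5). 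The twist-pair description in Claim (2) follows from Proposition \ref{Lem-parp}(2) at the step that flips $\alpha_i$, combined with the fact that further flips do not touch $\alpha_i'$ or its surrounding quadrilateral.
\item Claim (3) is handled symmetrically: if $m(\overrightarrow\xi,\alpha_i)=-1$, apply a flip at $\alpha_i$ first (reordering is legitimate); then Proposition \ref{Lem-parp}(3) merges $\overrightarrow\xi$ with its $\alpha_i$-twist $\overrightarrow\zeta$, and the remaining flips yield the same set.
\end{itemize}

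The main obstacle will be the bookkeeping for the preparatory lemma: I need to verify rigorously that the non-adjacency hypothesis truly protects every relevant arc from the unintended side effects of the other flips, so that the pieces of $\overrightarrow\xi$ modified by $\phi^{T^o}_{\alpha_i}$ are disjoint from the pieces modified by $\phi^{\mu_{\alpha_i}T^o}_{\alpha_j}$. The delicate point is that $\alpha_i$ and $\alpha_j$ may share a marked endpoint or even lie in adjacent triangles; the hypothesis rules out only that they are two sides of a single triangle. I expect this to be handled by a short but careful case analysis distinguishing whether the quadrilaterals are disjoint, share a single vertex, or share a single edge (which must then be a boundary of both quadrilaterals), in each case confirming that the local replacements prescribed by $\phi^{T^o}_{\alpha_i}$ act only on segments of $\overrightarrow\xi$ that $\phi^{\mu_{\alpha_i}T^o}_{\alpha_j}$ leaves untouched.
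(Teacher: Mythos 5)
Your proposal is correct and follows the route the paper implicitly intends: the paper states this proposition without proof, treating it as an immediate generalization of Proposition \ref{Lem-parp} via the iterated description (\ref{Eq-par}), and you make that induction explicit. The one point worth flagging is that your preparatory lemma (that the non-adjacency hypothesis keeps the quadrilaterals around the various $\alpha_i$ pairwise triangle-disjoint, so each local modification leaves the others' data untouched and $\alpha_i'\notin\mu_{\alpha_j}T^o$ for $j\neq i$) is indeed the crux that the paper leaves unstated, and your claim (2) additionally needs the observation that the subsets $\phi^{\mu_{\alpha_1}T^o}_{\alpha_2,\ldots,\alpha_k}(\overrightarrow\xi')$ over the different $\overrightarrow\xi'\in\phi^{T^o}_{\alpha_1}(\overrightarrow\xi)$ are pairwise disjoint, which again follows because those $\overrightarrow\xi'$ differ by a twist at $\alpha_1'$, an arc not lying in any of the remaining quadrilaterals.
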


The following theorem generalizes Theorem \ref{parbi1}.

\begin{theorem}\label{thm-par11}
With the foregoing notation. $\phi^{T^o}_{\alpha_1,\cdots,\alpha_k}$ and $\phi^{T'^o}_{\alpha'_1,\cdots,\alpha'_k}$ are partition bijections between $\mathcal{CP}(T^o,\gamma)$ and  $\mathcal{CP}(T'^o,\gamma)$ inverse to each other.
\end{theorem}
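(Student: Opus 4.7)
The plan is to proceed by induction on $k$, using the factorization (\ref{Eq-par}) to reduce to the single-flip case already established in Theorem \ref{parbi1}. The base $k=1$ is exactly Theorem \ref{parbi1}. For the inductive step, set $T_1=\mu_{\alpha_{k-1}}\circ\cdots\circ\mu_{\alpha_1}(T^o)$, so that (\ref{Eq-par}) rewrites as
\[
\phi^{T^o}_{\alpha_1,\cdots,\alpha_k}(\overrightarrow\xi)=\bigcup_{\overrightarrow\eta\in \phi^{T^o}_{\alpha_1,\cdots,\alpha_{k-1}}(\overrightarrow\xi)}\phi^{T_1}_{\alpha_k}(\overrightarrow\eta).
\]
By hypothesis $\phi^{T^o}_{\alpha_1,\cdots,\alpha_{k-1}}$ is a partition bijection $\mathcal{CP}(T^o,\gamma)\overset{par}{\to}\mathcal{CP}(T_1,\gamma)$ with inverse $\phi^{T_1}_{\alpha'_1,\cdots,\alpha'_{k-1}}$, and by Theorem \ref{parbi1} the same holds for $\phi^{T_1}_{\alpha_k}$ and $\phi^{T'^o}_{\alpha'_k}$.

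First I would verify the partition axioms of Remark \ref{Rmk-par}. Non-emptiness is immediate; surjectivity onto $\mathcal{CP}(T'^o,\gamma)$ follows by composing the surjectivities of the two constituent bijections. For the equivalence axiom, suppose $\phi^{T^o}_{\alpha_1,\cdots,\alpha_k}(\overrightarrow\xi)\cap \phi^{T^o}_{\alpha_1,\cdots,\alpha_k}(\overrightarrow\zeta)\neq\emptyset$. Choose witnesses $\overrightarrow\eta\in \phi^{T^o}_{\alpha_1,\cdots,\alpha_{k-1}}(\overrightarrow\xi)$ and $\overrightarrow\eta'\in \phi^{T^o}_{\alpha_1,\cdots,\alpha_{k-1}}(\overrightarrow\zeta)$ with $\phi^{T_1}_{\alpha_k}(\overrightarrow\eta)\cap \phi^{T_1}_{\alpha_k}(\overrightarrow\eta')\neq\emptyset$. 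By Theorem \ref{parbi1}, $\phi^{T_1}_{\alpha_k}(\overrightarrow\eta)=\phi^{T_1}_{\alpha_k}(\overrightarrow\eta')$, and by Proposition \ref{Lem-parp}(3) either $\overrightarrow\eta=\overrightarrow\eta'$ or $\overrightarrow\eta$ and $\overrightarrow\eta'$ are related by a twist on $\alpha_k$ with $m(\overrightarrow\eta,\alpha_k)=-1$. In the first subcase induction gives $\phi^{T^o}_{\alpha_1,\cdots,\alpha_{k-1}}(\overrightarrow\xi)=\phi^{T^o}_{\alpha_1,\cdots,\alpha_{k-1}}(\overrightarrow\zeta)$, and then $\phi^{T^o}_{\alpha_1,\cdots,\alpha_k}(\overrightarrow\xi)=\phi^{T^o}_{\alpha_1,\cdots,\alpha_k}(\overrightarrow\zeta)$ follows. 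In the twist subcase one uses that the twist at $\alpha_k$ is compatible with the previous partition: since $\alpha_k$ shares no triangle with any $\alpha_i$ ($i<k$), twisting $\overrightarrow\eta$ at $\alpha_k$ produces a companion of $\overrightarrow\eta'$ already lying in the same equivalence class of the partition on $\mathcal{CP}(T_1,\gamma)$ induced by $\phi^{T^o}_{\alpha_1,\cdots,\alpha_{k-1}}$, so induction again yields equality of the image sets.

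The inverse statement will be proved via Proposition \ref{Pro-parin}: I would show that $\overrightarrow\xi'\in \phi^{T^o}_{\alpha_1,\cdots,\alpha_k}(\overrightarrow\xi)$ iff $\overrightarrow\xi\in \phi^{T'^o}_{\alpha'_1,\cdots,\alpha'_k}(\overrightarrow\xi')$. Unwinding (\ref{Eq-par}) in both directions reduces this to the conjunction of the inverse-ness of $\phi^{T_1}_{\alpha_k}/\phi^{T'^o}_{\alpha'_k}$ (Theorem \ref{parbi1}) and of $\phi^{T^o}_{\alpha_1,\cdots,\alpha_{k-1}}/\phi^{T_1}_{\alpha'_1,\cdots,\alpha'_{k-1}}$ (induction). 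Here the key permutation identity is $\phi^{T'^o}_{\alpha'_1,\cdots,\alpha'_k}=\phi^{T_1'}_{\alpha'_k}\circ \phi^{T'^o}_{\alpha'_1,\cdots,\alpha'_{k-1}}$ with $T_1'$ obtained by flipping $\alpha'_1,\ldots,\alpha'_{k-1}$ in $T'^o$, which coincides with $T_1$ up to permutation of flips because the $\alpha_i$ are pairwise non-adjacent, so mutations at the $\alpha_i$'s commute.

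The main obstacle will be the bookkeeping in the twist subcase of the equivalence axiom: when $m(\overrightarrow\eta,\alpha_k)=-1$, one must trace how this $-1$ contribution interacts with the fibers of the earlier map $\phi^{T^o}_{\alpha_1,\cdots,\alpha_{k-1}}$ — i.e., show that the $\alpha_k$-twist partner of $\overrightarrow\eta$ lies in $\phi^{T^o}_{\alpha_1,\cdots,\alpha_{k-1}}(\overrightarrow\zeta)$ for some $\overrightarrow\zeta$ equivalent to $\overrightarrow\xi$ under $\phi^{T^o}_{\alpha_1,\cdots,\alpha_k}$. This is where the hypothesis that $\alpha_i,\alpha_j$ share no triangle is essential: it guarantees that the local rewriting rules of Section \ref{Sec-PB-A} at distinct $\alpha_i$ act on disjoint windows of the path $\overrightarrow\xi$, so the rewrites commute and the tracking of multi-step twists reduces to $k$ independent single-step twists controlled by Proposition \ref{prop-par11}(3)(4)(5).
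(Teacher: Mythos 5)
Your inductive strategy coincides with the paper's proof for the \emph{inverse} half: both unwind \eqref{Eq-par}, apply Theorem \ref{parbi1} to the last flip, invoke the inductive hypothesis on the first $k-1$, and then cite Proposition \ref{Pro-parin}. But the paper does \emph{not} prove the partition axiom by induction. It argues directly: if $\overrightarrow\xi'\in\phi^{T^o}_{\alpha_1,\cdots,\alpha_k}(\overrightarrow\xi)\cap\phi^{T^o}_{\alpha_1,\cdots,\alpha_k}(\overrightarrow\zeta)$, then Proposition \ref{prop-par11}(4) gives $m(\overrightarrow\xi,\alpha_i)=m(\overrightarrow\zeta,\alpha_i)=-m(\overrightarrow\xi',\alpha'_i)$ for every $i$, so the full $m$-vectors of $\overrightarrow\xi$ and $\overrightarrow\zeta$ agree; then Proposition \ref{prop-par11}(2) identifies both $\phi^{T^o}_{\alpha_1,\cdots,\alpha_k}(\overrightarrow\xi)$ and $\phi^{T^o}_{\alpha_1,\cdots,\alpha_k}(\overrightarrow\zeta)$ with the orbit of $\overrightarrow\xi'$ under twists at the $\alpha'_i$ with $m(\overrightarrow\xi,\alpha_i)=1$, so they coincide. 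No induction is needed for that half, and the $m$-vector argument circumvents exactly the bookkeeping you flag as the main obstacle.

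In your inductive treatment of the partition axiom, the twist subcase has a genuine gap, and the crucial claim is in fact false as stated. You assert that when $\overrightarrow\eta$ and $\overrightarrow\eta'$ are related by a twist at $\alpha_k$ with $m(\overrightarrow\eta,\alpha_k)=-1$, the path $\overrightarrow\eta'$ ``already lies in the same equivalence class of the partition on $\mathcal{CP}(T_1,\gamma)$ induced by $\phi^{T^o}_{\alpha_1,\cdots,\alpha_{k-1}}$.'' By your own inductive hypothesis, that target partition is the source partition of the inverse $\phi^{T_1}_{\alpha'_1,\cdots,\alpha'_{k-1}}$, whose parts are generated by twists at the arcs $\alpha'_1,\ldots,\alpha'_{k-1}$ only. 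Since $\alpha_k\in T^o\cap T_1$ but $\alpha'_1,\ldots,\alpha'_{k-1}\notin T^o$, the arc $\alpha_k$ is not among them, and because $\alpha_k$ shares no triangle with any $\alpha_i$ ($i<k$), the $\alpha_k$-window is disjoint from every $\alpha'_j$-window. Consequently $\overrightarrow\eta$ and $\overrightarrow\eta'$ differ precisely inside a window that $\phi^{T_1}_{\alpha'_1,\cdots,\alpha'_{k-1}}$ never touches, so their images are disjoint and they lie in \emph{different} parts of the $(k-1)$-partition. The correct repair runs the other way: use the disjointness of windows to lift the $\alpha_k$-twist from $T_1$ back to $T^o$, concluding that $\overrightarrow\xi$ (or a $\phi^{T^o}_{\alpha_1,\cdots,\alpha_{k-1}}$-fiber companion of it) and $\overrightarrow\zeta$ are related by an $\alpha_k$-twist with $m=-1$, and then observe from step (i) of the construction of $\phi^{T^o}_{\alpha_1,\cdots,\alpha_k}$ that this twist does not change the image set. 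This can be made to work, but it is considerably more delicate than the paper's $m$-vector argument, which you should compare against.
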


\begin{proof}
It is clear that $\phi^{T^o}_{\alpha_1,\cdots,\alpha_k}(\overrightarrow\xi)\neq \emptyset$ for any $\overrightarrow\xi\in\mathcal{CP}(T^o,\gamma)$.

Suppose that $\phi^{T^o}_{\alpha_1,\cdots,\alpha_k}(\overrightarrow\xi)\cap \phi^{T^o}_{\alpha_1,\cdots,\alpha_k}(\overrightarrow\zeta)\neq \emptyset$ for some $\overrightarrow\xi$ and $\overrightarrow\zeta$. Assume that $\overrightarrow\xi'\in\phi^{T^o}_{\alpha_1,\cdots,\alpha_k}(\overrightarrow\xi)\cap \phi^{T^o}_{\alpha_1,\cdots,\alpha_k}(\overrightarrow\zeta)$. Then $m(\overrightarrow{\xi},\alpha_i)=m(\overrightarrow\zeta,\alpha_i)=-m(\overrightarrow\xi',\alpha'_i)$ for any $i$ by Proposition \ref{prop-par11} (4). By Proposition \ref{prop-par11} (2), we see that $\phi^{T^o}_{\alpha_1,\cdots,\alpha_k}(\overrightarrow\xi)=\phi^{T^o}_{\alpha_1,\cdots,\alpha_k}(\overrightarrow\zeta)$ contains all $\overrightarrow \xi''$ which are related to $\overrightarrow \xi'$ by a sequence of twists on some $\alpha'_i$ with $m(\overrightarrow\xi,\alpha_i)=1$. Therefore, we have
$\phi^{T^o}_{\alpha_1,\cdots,\alpha_k}(\overrightarrow\xi)=\phi^{T^o}_{\alpha_1,\cdots,\alpha_k}(\overrightarrow\zeta)$.

For any $\overrightarrow\xi\in \mathcal{CP}(T^o,\gamma)$ and $\overrightarrow\xi'\in \mathcal {CP}(T'^o,\gamma)$, we show that $\overrightarrow\xi'\in \phi^{T^o}_{\alpha_1,\cdots,\alpha_k}(\overrightarrow\xi)$ if and only if $\overrightarrow\xi\in \phi^{T'^o}_{\alpha'_1,\cdots,\alpha'_k}(\overrightarrow\xi')$ by induction on $k$. If $k=1$, it follows by Theorem \ref{parbi1}. Suppose that it is true for the cases less than $k$. If $\overrightarrow\xi'\in \phi^{T^o}_{\alpha_1,\cdots,\alpha_k}(\overrightarrow\xi)$, then $\overrightarrow\xi'\in \phi^{T^o}_{\alpha_k}(\overrightarrow \eta)$ for some $\overrightarrow \eta\in \phi^{T^o}_{\alpha_1,\cdots,\alpha_{k-1}}(\overrightarrow\xi)$. By induction hypothesis, we have $\overrightarrow \xi\in \phi^{\prod_{i=1}^{k-1}\mu_{\alpha_i}T^o}_{\alpha'_1,\cdots,\alpha'_{k-1}}(\overrightarrow\eta)$ and $\overrightarrow\eta\in \phi^{\prod_{i=1}^{k}\mu_{\alpha_i}T^o}_{\alpha'_k}(\overrightarrow\xi')$. Thus,
$\overrightarrow \xi\in \phi^{\prod_{i=1}^{k-1}\mu_{\alpha_i}T^o}_{\alpha'_1,\cdots,\alpha'_{k-1}}(\phi^{\prod_{i=1}^{k}\mu_{\alpha_i}T^o}_{\alpha'_k}(\overrightarrow\xi'))=\phi^{T'^o}_{\alpha'_1,\cdots,\alpha'_{k}}(\overrightarrow\xi').$ The statement is proved.

By Remark \ref{Rmk-par}, $\phi^{T^o}_{\alpha_1,\cdots,\alpha_k}$ is a partition bijection. Similarly, $\phi^{T'^o}_{\alpha'_1,\cdots,\alpha'_k}$ is a partition bijection. By Proposition \ref{Pro-parin}, $\phi^{T^o}_{\alpha_1,\cdots,\alpha_k}$ and $\phi^{T'^o}_{\alpha'_1,\cdots,\alpha'_k}$ are inverse to each other.
\end{proof}

\begin{remark}\label{rem-eq}
$\phi^{T^o}_{\alpha_1,\cdots,\alpha_k}(\overrightarrow\xi)\cap \phi^{T^o}_{\alpha_1,\cdots,\alpha_k}(\overrightarrow\zeta)\neq \emptyset$ if and only if $\phi^{T^o}_{\alpha_1,\cdots,\alpha_k}(\overrightarrow\xi)= \phi^{T^o}_{\alpha_1,\cdots,\alpha_k}(\overrightarrow\zeta)$ if and only if $\overrightarrow\xi$ and $\overrightarrow\zeta$ are related by a sequence of twists on some $\alpha_i$ such that $m(\overrightarrow\xi,\alpha_i)=-1$.
\end{remark}

\subsubsection{Partition bijection $\varphi^{T^o}_\alpha$} Recall that there are natural bijective maps from $\mathcal {CP}(T^o,\gamma)$ to $\mathcal P(G_{T^o,\gamma})$ and from $\mathcal {CP}(T'^o,\gamma)$ to $\mathcal P(G_{T'^o,\gamma})$, see Subsection \ref{Sec-CPB}. Therefore, the partition bijection $\phi^{T^o}_\alpha$ from $\mathcal {CP}(T^o,\gamma)$ to  $\mathcal {CP}(T'^o,\gamma)$ induces a partition bijection $\varphi^{T^o}_\alpha$ from $\mathcal {P}(G_{T^o,\gamma})$ to  $\mathcal {P}(G_{T'^o,\gamma})$, that is, we have the following commutative diagram.

\medskip

\centerline{\xymatrix{
  & \mathcal {CP}(T^o,\gamma) \ar[d]^\cong \ar[r]^{\phi^{T^o}_{\alpha}}  &  \mathcal {CP}(T'^o,\gamma)\ar[d]_\cong           \\
  & \mathcal {P}(G_{T^o,\gamma}) \ar[r]^{\varphi^{T^o}_{\alpha}}  &  \mathcal {P}(G_{T'^o,\gamma}).                       }}

\medskip

We also have the inverse partition bijection $\varphi^{T'^o}_{\alpha'}:\mathcal {P}(G_{T'^o,\gamma})\to \mathcal {P}(G_{T^o,\gamma})$ of $\varphi^{T^o}_\alpha$.

Under the bijection between $\mathcal {CP}(T^o,\gamma)$ and $\mathcal P(G_{T^o,\gamma})$, for any $P\in \mathcal P(G_{T^o,\gamma})$, denote by $\overrightarrow\xi(P)$ the perfect matching of $G_{T^o,\gamma}$ corresponding to $P$.
For any $\zeta\in T^o$, let
\begin{equation*}
\hat m(P,\zeta)=m(\overrightarrow\xi(P),\zeta),
\end{equation*}
where $m(\overrightarrow\xi(P),\zeta)$ is given by (\ref{equ-m1}).

In view of (\ref{equ-m1}) and (\ref{Equ-mp}) (\ref{Equ-n}), by the bijection between $\mathcal P(G_{T^o,\gamma})$ and $\mathcal {CP}(T^o,\gamma)$, we have
\begin{equation*}
\hat m(P,\zeta)=m(P;\zeta)-n(G_{T^o,\gamma};\zeta).
\end{equation*}

Similarly we can define $\hat m(P',\zeta)$ for any $P'\in \mathcal P(G_{T'^o,\gamma})$ and $\zeta\in T'^o$.

Assume that $\alpha=(a,c)$ and $\alpha$ is the diagonal of the quadrilateral $(a,b,c,d)$ in $T^o$. Suppose that $a,b,c,d$ are in the clockwise order.

\begin{proposition}\label{Lem-tile-1}
With the foregoing notation. Let $P\in \mathcal P(G_{T^o,\gamma})$.
\begin{enumerate}[$(1)$]
\item We have
$|\varphi_{\alpha}^{T^o}(P)|=2^{[\hat m(P,\alpha)]_+}$,
moreover,
\begin{enumerate}
\item if $\hat m(P,\alpha)=1$, then there is a tile $G'$ of $G_{T'^o,\gamma}$ with diagonal labeled $\alpha'$ such that any $P'\in \varphi_{\alpha}^{T^o}$ can twist on $G'$ and $\varphi_{\alpha}^{T^o}$ is closed under the twist on $G'$;
\item if $\hat m(P,\alpha)=-1$, then there is a tile $G$ of $G_{T^o,\gamma}$ with diagonal labeled $\alpha$ such that $P$ can twist on $G$ and $\varphi_{\alpha}^{T^o}(P)=\varphi_{\alpha}^{T^o}(\mu_{G}P)$;
\end{enumerate}
\item For any $P'\in \varphi^{T^o}_\alpha(P)$, we have
$\hat m(P,\alpha)=-\hat m(P',\alpha').$
\item For any $\zeta\in T^o$ such that $\zeta\neq (a,b), (b,c), (c,d), (a,d), (a,c)$ and $P'\in \phi^{T^o}_\alpha(P)$, we have
\begin{equation*}
\hat m(P,\zeta)=\hat m(P',\zeta).
\end{equation*}
\end{enumerate}
\end{proposition}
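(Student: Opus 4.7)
The plan is to derive Proposition \ref{Lem-tile-1} from Proposition \ref{Lem-parp} by transporting it through the natural bijection between $\mathcal{P}(G_{T^o,\gamma})$ and $\mathcal{CP}(T^o,\gamma)$ recalled in Subsection \ref{Sec-CPB}; by definition $\varphi_\alpha^{T^o}$ makes the square commute, so one only has to translate each clause using this dictionary. The key items of the dictionary are: for $P \in \mathcal{P}(G_{T^o,\gamma})$, the even entries of $\overrightarrow\xi(P)$ are the diagonals of the tiles of $G_{T^o,\gamma}$ in order, the odd entries record the $P$-edges between consecutive tiles, the identity $\hat m(P,\zeta)=m(\overrightarrow\xi(P),\zeta)$ holds by the very definition of $\hat m$, and a twist of $P$ on a tile $G_i$ with diagonal labeled $\tau_{i_i}$ corresponds exactly to a twist of $\overrightarrow\xi(P)$ at $\tau_{i_i}$.

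With this dictionary in hand, parts (2) and (3) of the proposition are immediate verbatim translations of parts (4) and (5) of Proposition \ref{Lem-parp}. Only part (1) requires care. The cardinality statement $|\varphi_\alpha^{T^o}(P)|=2^{[\hat m(P,\alpha)]_+}$ follows from Proposition \ref{Lem-parp} (2). In subcase (a), where $\hat m(P,\alpha)=1$, Proposition \ref{Lem-parp} (2) guarantees that the two elements $\overrightarrow\xi',\overrightarrow\xi''$ of $\phi_\alpha^{T^o}(\overrightarrow\xi(P))$ are related by a twist at $\alpha'$; writing $\xi'_j=\xi''_j=\alpha'$ for the relevant even index $j$, I would identify the tile $G'$ of $G_{T'^o,\gamma}$ of index $j/2$ as the unique tile with diagonal $\alpha'$ witnessing the twist, and observe that the two complete paths correspond to perfect matchings related by $\mu_{G'}$, so that $\varphi_\alpha^{T^o}(P)$ is closed under $\mu_{G'}$. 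For subcase (b), where $\hat m(P,\alpha)=-1$, the path $\overrightarrow\xi(P)$ has an even index $j$ with $\xi_j=\alpha$ and $\xi_{j\pm 1}\ne\alpha$, so the tile $G$ of $G_{T^o,\gamma}$ of index $j/2$ has diagonal $\alpha$ and $P$ can twist on $G$; its twist $\mu_G P$ corresponds to the unique path $\overrightarrow\zeta$ obtained from $\overrightarrow\xi(P)$ by a twist at $\alpha$, and Proposition \ref{Lem-parp} (3) gives $\phi_\alpha^{T^o}(\overrightarrow\xi(P))=\phi_\alpha^{T^o}(\overrightarrow\zeta)$, whence $\varphi_\alpha^{T^o}(P)=\varphi_\alpha^{T^o}(\mu_G P)$.

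The main obstacle is pinning down this correspondence between a twist of a perfect matching on a specific tile and the corresponding twist of a complete path at the labelling arc, especially when more than one tile of the snake graph carries the same diagonal label; the hypothesis $\hat m(P,\alpha)=\pm 1$ is exactly what singles out an unambiguous tile (when $\hat m(P,\alpha)=-1$ the crossings of $\gamma$ with $\alpha$ cancel in pairs except for one, and when $\hat m(P,\alpha)=1$ a symmetric analysis applies after the flip). Once this local identification is in place, every assertion of the proposition reduces to a clause already proved for complete paths.
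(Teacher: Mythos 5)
Your proposal is correct and takes essentially the same approach as the paper: the paper's own proof is simply ``It follows immediately by Proposition \ref{Lem-parp},'' and you have spelled out the dictionary between perfect matchings and complete $(T^o,\gamma)$-paths that the paper leaves implicit. One small remark: since Proposition \ref{Lem-tile-1} lives in the polygon subsection, $\gamma$ is a simple arc in a disk and hence crosses each diagonal of $T^o$ at most once, so the tile with diagonal labeled $\alpha$ (resp.\ $\alpha'$), when it exists, is already unique; the ambiguity you worry about at the end only arises in the general orbifold setting of Subsection \ref{Sec-varphi}, where it is handled via the labels $\hat\alpha_1,\dots,\hat\alpha_{\eta_\alpha}$ in the canonical polygon.
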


\begin{proof}
It follows immediately by Proposition \ref{Lem-parp}.
\end{proof}

\begin{proposition}\label{prop-com}
Assume $P$ can twist on a tile $G_l$ with diagonal labeled $\tau_{i_l}$ and $P>\mu_{G_l}P$.
   \begin{enumerate}[$(1)$]
    \item if $\tau_{i_l}\neq \alpha,(a,b),(b,c),(c,d),(a,d)$ then there is a tile $G'_{[l]}$ of $G_{T'^o,\gamma}$ with diagonal labeled $\tau_{i_l}$ such that: any $P'\in \varphi_\alpha^{T^o}(P)$ can twist on $G'_{[l]}$, $P'>\mu_{G'_{[l]}}(P')$, and
        $$\varphi_\alpha^{T^o}(\mu_{G_l}P)=\mu_{G'_{[l]}}(\varphi_\alpha^{T^o}(P)).$$
    \item if $\tau_{i_l}=\alpha$, then $\hat m(P,\alpha)=-1$ and
        $\{P,\mu_{G_l}P\}=\varphi_{\alpha'}^{T'^o}(P'),$
    where $\varphi_{\alpha}^{T^o}(P)=\{P'\}$.
    \item if $\tau_{i_l}\in \{(a,b),(c,d)\}$, then $\hat m(P,\alpha)=\hat m(\mu_{G_l}P,\alpha)-1$.
     For $P'\in \varphi_\alpha^{T^o}(P)$, there are tiles $G'_{[l]}$ and $G'_{[l']}$  of $G_{T'^o,\gamma}$ with diagonal labeled $\tau_{i_l}$ and $(b,d)$, respectively, such that
     \begin{enumerate}
     \item $|[l]-[l']|=1$,
     \item $P'$ can twist on $G'_{[l]}$ and $\mu_{G'_{[l]}}P'$ can twist on $G'_{[l']}$,
     \item $P'>\mu_{G'_{[l]}}P'>\mu_{G'_{[l']}}\mu_{G'_{[l]}}P'$,
     \item $\mu_{G'_{[l]}}P'\in \varphi_\alpha^{T^o}(\mu_{G_l}P)$.
    \end{enumerate}
     \item if $\tau_{i_l}\in \{(b,c),(a,d)\}$, then $\hat m(P,\alpha)=\hat m(\mu_{G_l}P,\alpha)+1$.
     For $P'\in \varphi_\alpha^{T^o}(\mu_{G_l}P)$, there are tiles $G'_{[l]}$ and $G'_{[l']}$  of $G_{T'^o,\gamma}$ with diagonal labeled $\tau_{i_l}$ and $(b,d)$, respectively, such that
     \begin{enumerate}
     \item $|[l]-[l']|=1$,
     \item $P'$ can twist on $G'_{[l]}$ and $\mu_{G'_{[l]}}P'$ can twist on $G'_{[l']}$,
     \item $P'<\mu_{G'_{[l]}}P'<\mu_{G'_{[l']}}\mu_{G'_{[l]}}P'$,
     \item $\mu_{G'_{[l]}}P'\in \varphi_\alpha^{T^o}(P)$.
    \end{enumerate}
      \end{enumerate}
\end{proposition}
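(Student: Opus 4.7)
The plan is to translate everything to the language of complete $(T^o,\gamma)$-paths via the natural bijection $\mathcal{P}(G_{T^o,\gamma}) \cong \mathcal{CP}(T^o,\gamma)$, under which (i) a twist of a perfect matching $P$ on a tile $G_l$ with diagonal labeled $\tau_{i_l}$ corresponds to a twist of the associated path $\overrightarrow{\xi}(P)$ at $\tau_{i_l}$, (ii) the partition bijection $\varphi^{T^o}_\alpha$ corresponds to $\phi^{T^o}_\alpha$, and (iii) the integers $\hat m(P,\zeta)$ coincide with $m(\overrightarrow{\xi}(P),\zeta)$. All four parts then become statements about how the rewriting recipe defining $\phi^{T^o}_\alpha$ interacts with path-level twists, for which I will invoke Proposition \ref{Lem-parp} together with direct inspection of Cases I--V of Subsection \ref{Sec-PB-A}.

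For part (1), the hypothesis $\tau_{i_l}\notin\{\alpha,(a,b),(b,c),(c,d),(a,d)\}$ ensures that the portion of $\overrightarrow{\xi}(P)$ rewritten by $\phi^{T^o}_\alpha$ is disjoint from the portion affected by a twist at $\tau_{i_l}$. Hence the two operations commute; combined with Proposition \ref{Lem-parp} (5), which gives $m(\overrightarrow{\xi}',\tau_{i_l})=m(\overrightarrow{\xi},\tau_{i_l})$ for each $\overrightarrow{\xi}'\in\phi^{T^o}_\alpha(\overrightarrow{\xi})$, we obtain a unique tile $G'_{[l]}$ of $G_{T'^o,\gamma}$ with diagonal $\tau_{i_l}$ on which every $P'\in\varphi_\alpha^{T^o}(P)$ can twist, with matching twist direction, so that $\varphi^{T^o}_\alpha(\mu_{G_l}P)=\mu_{G'_{[l]}}\varphi^{T^o}_\alpha(P)$. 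For part (2), if $\tau_{i_l}=\alpha$ then $P$ and $\mu_{G_l}P$ are related by a twist at $\alpha$, so Proposition \ref{Lem-parp} (3) gives $\varphi^{T^o}_\alpha(P)=\varphi^{T^o}_\alpha(\mu_{G_l}P)$ and $\hat m(P,\alpha)=-1$; Proposition \ref{Lem-parp} (2) then forces $|\varphi^{T^o}_\alpha(P)|=1$, and the symmetric statement $\{P,\mu_{G_l}P\}=\varphi^{T'^o}_{\alpha'}(P')$ is immediate from the inverse-partition-bijection characterization in Proposition \ref{Pro-parin}.

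Parts (3) and (4) are dual, so it suffices to establish (3), and within (3) the two possibilities $\tau_{i_l}=(a,b)$ and $\tau_{i_l}=(c,d)$ are interchanged by the obvious symmetry of the quadrilateral. Fix $\tau_{i_l}=(a,b)$. A twist at $(a,b)$ in $T^o$ changes how $\overrightarrow{\xi}$ turns through the quadrilateral around $\alpha$, and this interacts with the rewriting rule. Running through the local configurations of Cases I--V of Subsection \ref{Sec-PB-A} that involve the edge $(a,b)$, one verifies in each subcase that (a) exactly one $\alpha$-crossing in $\overrightarrow{\xi}$ is either created or killed, yielding $\hat m(P,\alpha)=\hat m(\mu_{G_l}P,\alpha)-1$, and (b) for each $\overrightarrow{\xi}'\in\phi^{T^o}_\alpha(\overrightarrow{\xi}(P))$ there is a corresponding $\overrightarrow{\eta}'\in\phi^{T^o}_\alpha(\overrightarrow{\xi}(\mu_{G_l}P))$ obtained from $\overrightarrow{\xi}'$ by a pair of consecutive twists at $(a,b)$ and then at $\alpha'=(b,d)$ on adjacent tiles $G'_{[l]}, G'_{[l']}$ of $G_{T'^o,\gamma}$ with $|[l]-[l']|=1$. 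The monotonicity chain $P'>\mu_{G'_{[l]}}P'>\mu_{G'_{[l']}}\mu_{G'_{[l]}}P'$ is then read off from Proposition \ref{prop-mm}, by checking that the twisted edges lie on the correct sides of the diagonals relative to $rel(G'_{[l]},T'^o)$ and $rel(G'_{[l']},T'^o)$.

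The main obstacle is the finite but nontrivial case analysis in (3) and (4): one has to traverse each of the local configurations of Subsection \ref{Sec-PB-A} in which a twist on a quadrilateral side is available and check that the rewriting produces the asserted double-twist picture with the claimed monotonicity. There is no genuine conceptual difficulty beyond careful bookkeeping of arrow orientations, since every rewrite is local and touches at most five consecutive entries of $\overrightarrow{\xi}$; the payoff is a clean diagrammatic description of how $\varphi^{T^o}_\alpha$ transports twists, which will be used in the subsequent section to compare weights and powers of the quantum parameter.
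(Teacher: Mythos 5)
Your proposal is correct and follows essentially the same route as the paper: the paper's own proof is the single line ``It follows by the construction of $\phi^{T^o}_\alpha$,'' and you unpack exactly that — translate to complete $(T^o,\gamma)$-paths, invoke Proposition \ref{Lem-parp} (together with Proposition \ref{Pro-parin} for part (2)), and then run the local case analysis of Subsection \ref{Sec-PB-A} for parts (3)--(4). The level of detail and the key auxiliary results you lean on match the paper's intended argument.
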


\begin{proof}
It follows by the construction of $\phi^{T^o}_\alpha$.
\end{proof}

\begin{example}
Let $\Sigma$ be the hexagon. If $T^o$ and $T'^o$ are triangulations as shown in Figure \ref{Fig:ex},

\begin{figure}[h]
\centerline{\includegraphics{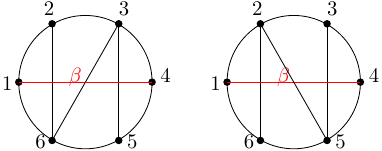}}
\caption{}\label{Fig:ex}
\end{figure}

then the snake graphs $G_{T^o,\gamma}$, $G_{T'^o,\gamma}$ and the partition bijections $\varphi_{\alpha}^{T^o}$, $\varphi_{\alpha'}^{T'^o}$ are shown in Figure \ref{Fig:ex2}.

\begin{figure}[h]
\centerline{\includegraphics{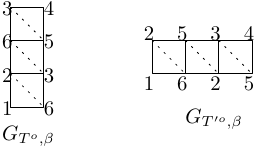}}
\caption{}\label{Fig:ex2}
\end{figure}

If $T^o$ and $T'^o$ are triangulations as shown in Figure \ref{Fig:ex3}, then the snake graphs $G_{T^o,\gamma}$, $G_{T'^o,\gamma}$ and the partition bijections $\varphi_{\alpha}^{T^o}$, $\varphi_{\alpha'}^{T'^o}$ are shown in Figure \ref{Fig:ex3}.

\begin{figure}[h]
\centerline{\includegraphics{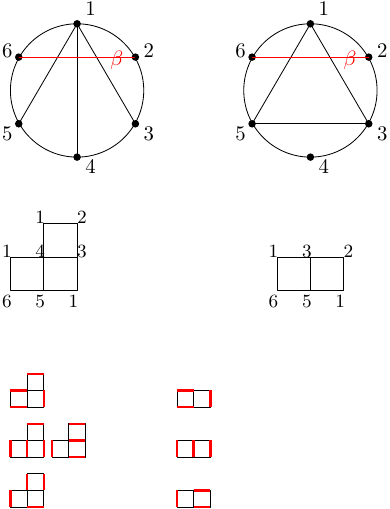}}
\caption{}\label{Fig:ex3}
\end{figure}
\end{example}

According to Lemma \ref{max-min}, we have the following observation.

\begin{lemma}\label{lem-p-}
\begin{enumerate}[$(1)$]
\item If $\hat m(P_-,\alpha)=1$, then one of the following cases happens: (1) $\gamma$ crosses $(a,b), \alpha, (c,d)$ consecutively; (2) $\gamma$ starts from $a$ then crosses $(c,d)$; (3) $\gamma$ starts from $c$ then crosses $(a,b)$.

\item If $\hat m(P_-,\alpha)=-1$, then one of the following cases happens: (1) $\gamma$ crosses $(a,d), \alpha, (b,c)$ consecutively; (2) $\gamma$ starts from $d$ then crosses $\alpha$, $(b,c)$ sequentially; (3) $\gamma$ starts from $b$ then crosses $\alpha, (a,d)$ sequentially.
\end{enumerate}
\end{lemma}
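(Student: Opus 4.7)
The plan is to pass via the bijection $\mathcal{P}(G_{T^o,\gamma})\cong\mathcal{CP}(T^o,\gamma)$ and analyse the complete $(T^o,\gamma)$-path $\overrightarrow\xi=\overrightarrow\xi(P_-)$ directly. Since $\xi_{2k}=\tau_{i_k}$ for every $k$, one has
$$
\hat m(P_-,\alpha)=m(\overrightarrow\xi,\alpha)=\#\{i\text{ odd}:\xi_i=\alpha\}-\#\{k:\tau_{i_k}=\alpha\},
$$
and since $\Sigma$ is a polygon, $\gamma$ crosses each arc at most once. So $\hat m(P_-,\alpha)=+1$ means either (a) $\gamma$ does not cross $\alpha$ and exactly one odd transit $\xi_i$ equals $\alpha$, or (b) $\gamma$ crosses $\alpha$ once and both adjacent odd transits $\xi_{2k\pm1}$ equal $\alpha$; dually, $\hat m(P_-,\alpha)=-1$ forces $\gamma$ to cross $\alpha$ once with neither adjacent transit equal to $\alpha$.

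First I would use Proposition \ref{prop-mm} together with the path bijection to characterise $\overrightarrow\xi(P_-)$ as the unique complete path that, tile by tile, picks up the transit arcs prescribed by the clockwise-edge rule. Translated back to $T^o$, this says $\overrightarrow\xi(P_-)$ hugs one fixed side of $\gamma$ (the clockwise side), so each transit arc $\xi_{2k-1}$ is the unique arc of $T^o$ joining the endpoint of $\tau_{i_{k-1}}$ to that of $\tau_{i_k}$ lying on this side, and $\xi_1$, $\xi_{2c+1}$ are the corresponding boundary transits from $p_0$ and to $p_{c+1}$. Next, for case (b) I apply Lemma \ref{Lem-mid} at the unique even $j$ with $\xi_j=\alpha$: the four possible forms of $(\overrightarrow\xi_{j-1},\overrightarrow\xi_j,\overrightarrow\xi_{j+1})$ are reduced to one by the clockwise rule, and imposing $\xi_{j-1}=\xi_{j+1}=\alpha$ then forces the neighbouring crossings to be precisely $(a,b)$ and $(c,d)$, yielding case (1) of part (1).

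For case (a), with $\gamma$ not crossing $\alpha$, an odd transit $\xi_i$ can only equal $\alpha$ at the initial or final segment $\xi_1$ or $\xi_{2c+1}$, because any interior transit between two consecutive crossings is confined to the triangle containing that portion of $\gamma$, which cannot have $\alpha$ as a side unless one of the crossings is on the opposite side of $\alpha$. A direct check using the clockwise rule at the first (or last) tile shows that this happens exactly when $\gamma$ starts from $a$ with first crossing $(c,d)$, or starts from $c$ with first crossing $(a,b)$, producing cases (2) and (3) of part (1). Part (2) is then settled by the dual argument: $\hat m(P_-,\alpha)=-1$ forces $\gamma$ to cross $\alpha$ with neither neighbouring transit equal to $\alpha$, which by Lemma \ref{Lem-mid} and the clockwise rule pins the neighbours down to $(a,d)$ and $(b,c)$, or to the boundary configurations in which $\gamma$ starts at $b$ or $d$.

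The main obstacle lies in translating the tile-level ``clockwise'' rule of Proposition \ref{prop-mm} into the geometric ``clockwise side of $\gamma$'' in $T^o$. This translation depends on $rel(G_j,T^o)$ and on the position of $\alpha$ among the four edges of $G_j$; I would verify it for one sign of $rel$ using the orientation convention of the snake graph construction, and deduce the other by the built-in parity symmetry of tiles. Once this dictionary is in place, the six configurations listed in parts (1) and (2) follow directly from Lemma \ref{Lem-mid}.
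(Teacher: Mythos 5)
Your overall strategy — passing to the complete $(T^o,\gamma)$-path $\overrightarrow\xi(P_-)$ and counting occurrences of $\alpha$ — is the natural one, and it is essentially the computation the paper has in mind when it says "According to Lemma \ref{max-min}, we have the following observation." However, two of the steps as you state them do not go through.

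First, in case (a) your justification for why an interior odd transit $\xi_{2k+1}$ ($1\le k\le c-1$) cannot equal $\alpha$ is incorrect as phrased. You claim the triangle $\Delta_k$ "cannot have $\alpha$ as a side unless one of the crossings is on the opposite side of $\alpha$," but if $\gamma$ crosses $(a,b)$ and then $(b,c)$ without crossing $\alpha$, the triangle $\Delta_k=(a,b,c)$ does have $\alpha$ as a side while both crossings $(a,b),(b,c)$ lie on the same ($b$-) side — so your stated criterion is violated rather than established. The correct reason is specific to $P_-$: for $1\le k\le c-1$ the transit $\xi_{2k+1}$ lies in $\{\tau_{i_k},\tau_{i_{k+1}},\tau_{[\gamma_k]}\}$, and the edge labeled $\tau_{[\gamma_k]}$ is precisely the interior edge shared by the tiles $G_k$ and $G_{k+1}$, which by the definition of $P_-$ (only boundary edges, Definition \ref{Def-weight} / \cite[Def.\ 4.7]{MSW}) is never in $P_-$; so $\xi_{2k+1}\in\{\tau_{i_k},\tau_{i_{k+1}}\}$, both of which are arcs crossed by $\gamma$ and hence different from $\alpha$. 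Second, for case (b) you invoke Lemma \ref{Lem-mid}, but its hypothesis is exactly $\xi_{j-1}\neq\alpha\neq\xi_{j+1}$, which is the negation of what you want to impose; none of the four forms listed there has $\xi_{j\pm 1}=\alpha$, so "imposing $\xi_{j-1}=\xi_{j+1}=\alpha$" on those forms is vacuous. For that sub-case you must argue directly from Proposition \ref{prop-mm} about which of the two boundary edges of $G_{k-1}$ (labeled $\alpha$ or $\tau_{i_{k-1}}$) is selected by $P_-$, and similarly for $G_{k+1}$; this pins down $\tau_{i_{k-1}}=(a,b)$ and $\tau_{i_{k+1}}=(c,d)$.

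Finally, the heart of the lemma is precisely the "direct check using the clockwise rule at the first (or last) tile" which you announce but do not carry out; without it you cannot rule out the complementary configurations (e.g.\ $\gamma$ starts from $a$ with first crossing $(b,c)$) which a priori also make $\alpha$ a candidate for $\xi_1$. The paper is equally terse — it records the lemma as an observation from Lemma \ref{max-min} — so your attempt to expand it is welcome, but as written it leans on one false geometric claim and one inapplicable lemma. Once both are replaced by the $P_-$-specific argument above, the approach should close.
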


\subsubsection{Partition bijection $\varphi^{T^o}_{\alpha_1,\cdots,\alpha_k}$} Let $\alpha_1,\cdots,\alpha_k$ be a set of arcs in $T^o$ such that $\alpha_i,\alpha_j$ are not two sides of any triangle in $T^o$ for any $i\neq j$. Denote by $\alpha'_i$ the arc obtained from $T^o$ by flip at $\alpha_i$. Assume that $\alpha_i=(a_i,c_i)$ and $\alpha_i$ is the diagonal of the quadrilateral $(a_i,b_i,c_i,d_i)$ in $T^o$, where $a_i,b_i,c_i,d_i$ are in the clockwise order. Denote $T'^o=\mu_{\alpha_k}\circ \cdots \circ \mu_{\alpha_1}(T^o)$.

The partition bijection $\phi^{T^o}_{\alpha_1,\cdots,\alpha_k}:\mathcal {CP}(T^o,\gamma) \to  \mathcal {CP}(T'^o,\gamma)$ induces a partition bijection $\varphi^{T^o}_{\alpha_1,\cdots,\alpha_k}:\mathcal {P}(G_{T^o,\gamma}) \to  \mathcal {P}(G_{T'^o,\gamma})$, i.e., we have the following commutative diagram.

{\centering \xymatrixcolsep{7pc}\xymatrix{
  & \mathcal {CP}(T^o,\gamma) \ar[d]^\cong \ar[r]^{\phi^{T^o}_{\alpha_1,\cdots,\alpha_k}}  &  \mathcal {CP}(T'^o,\gamma)\ar[d]_\cong           \\
  & \mathcal {P}(G_{T^o,\gamma}) \ar[r]^{\varphi^{T^o}_{\alpha_1,\cdots,\alpha_k}}  &  \mathcal {P}(G_{T'^o,\gamma}).                       }}

We have the inverse partition bijection $\varphi^{T'^o}_{\alpha'_1,\cdots,\alpha'_k}\mathcal {P}(G_{T'^o,\gamma})\to \mathcal {P}(G_{T^o,\gamma})$ of $\varphi^{T^o}_{\alpha_1,\cdots,\alpha_k}$.

The following proposition generalizes Proposition \ref{Lem-tile-1}, follows by Proposition \ref{prop-par11}.

\begin{proposition}\label{Lem-tile-2}
With the foregoing notation. Let $P\in \mathcal P(G_{T^o,\gamma})$.
\begin{enumerate}[$(1)$]
\item We have
$|\varphi_{\alpha_1,\cdots,\alpha_k}^{T^o}(P)|=2^{\sum_{i=1}^k[\hat m(P,\alpha_i)]_+}$,
moreover,
\begin{enumerate}
\item if $\hat m(P,\alpha_i)=1$ for some $i$, then there is a tile $G'$ of $G_{T'^o,\gamma}$ with diagonal labeled $\alpha'_i$ such that any $P'\in \varphi_{\alpha_1,\cdots,\alpha_k}^{T^o}$ can twist on $G'$ and $\varphi_{\alpha_1,\cdots,\alpha_k}^{T^o}$ is closed under the twist on $G'$;
\item if $\hat m(P,\alpha_i)=-1$ for some $i$, then there is a tile $G$ of $G_{T^o,\gamma}$ with diagonal labeled $\alpha_i$ such that $P$ can twist on $G$ and $\varphi_{\alpha_1,\cdots,\alpha_k}^{T^o}(P)=\varphi_{\alpha_1,\cdots,\alpha_k}^{T^o}(\mu_{G}P)$;
\end{enumerate}
\item For any $P'\in \varphi^{T^o}_{\alpha_1,\cdots,\alpha_k}(P)$ and $i\in \{1,\cdots,k\}$, we have
$\hat m(P,\alpha_i)=-\hat m(P',\alpha'_i).$
\item For any $P'\in \phi^{T^o}_{\alpha_1,\cdots,\alpha_k}(P)$ and $\zeta\in T^o$ with $\zeta\neq (a_i,b_i), (b_i,c_i), (c_i,d_i), (a_i,d_i), (a_i,c_i)$ for all $i$, we have
$\hat m(P,\zeta)=\hat m(P',\zeta).$
\item For any $P'\in \phi^{T^o}_{\alpha_1,\cdots,\alpha_k}(P)$ and $\zeta\in \{(a_i,b_i), (b_i,c_i), (c_i,d_i), (a_i,d_i)\}$, the number of non-$\alpha_i$-mutable edges labeled $\zeta$ in $P$ equals the number of non-$\alpha'_i$-mutable edges labeled $\zeta$ in $P'$.
\end{enumerate}
\end{proposition}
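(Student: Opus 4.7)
The plan is to transport everything through the commutative diagram linking $\phi^{T^o}_{\alpha_1,\cdots,\alpha_k}$ and $\varphi^{T^o}_{\alpha_1,\cdots,\alpha_k}$, so that the statement follows from Proposition~\ref{prop-par11} together with a dictionary between mutability of edges in perfect matchings and the local shape of complete paths. Recall that under the bijection $\mathcal{P}(G_{T^o,\gamma})\cong \mathcal{CP}(T^o,\gamma),\;P\mapsto \overrightarrow\xi(P)$, one has
\begin{equation*}
\hat m(P,\zeta)=m(\overrightarrow\xi(P),\zeta)=m(P;\zeta)-n(G_{T^o,\gamma};\zeta),
\end{equation*}
and an edge $e\in P$ labeled $\zeta$ lies on a tile $G$ with diagonal $\tau$ on which $P$ can twist precisely when the segment of $\overrightarrow\xi(P)$ through $\tau$ occupies the corresponding $\tau$-twist quadrilateral in $T^o$.

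First I would handle (1), (2), (3) as direct translations of the analogous clauses of Proposition~\ref{prop-par11}. For (1), $|\varphi^{T^o}_{\alpha_1,\cdots,\alpha_k}(P)|=2^{\sum_i[\hat m(P,\alpha_i)]_+}$ is just the cardinality statement in Proposition~\ref{prop-par11}(2). For the twist structure in (1)(a): when $\hat m(P,\alpha_i)=1$, the recipe of $\phi^{T^o}_{\alpha_1,\cdots,\alpha_k}$ at $\alpha_i$ makes an independent binary choice between the two triangulations of the quadrilateral $(a_i,b_i,c_i,d_i)$ with diagonal $\alpha'_i$; under the path$\leftrightarrow$matching bijection this binary choice is exactly a twist on a tile $G'\subset G_{T'^o,\gamma}$ whose diagonal is labeled $\alpha'_i$, and $\varphi^{T^o}_{\alpha_1,\cdots,\alpha_k}(P)$ is closed under that twist. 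For (1)(b): when $\hat m(P,\alpha_i)=-1$, the recipe collapses the $\alpha_i$-segment $(b_i,a_i)(a_i,c_i)(c_i,d_i)$ (or its variants) to a single $(b_i,d_i)$, and an $\overrightarrow\xi(P)$ with such a segment is precisely the image under twisting on the $\alpha_i$-tile $G$ of another complete path producing the same output, giving $\varphi^{T^o}_{\alpha_1,\cdots,\alpha_k}(P)=\varphi^{T^o}_{\alpha_1,\cdots,\alpha_k}(\mu_GP)$. Parts (2) and (3) then follow word-for-word from Proposition~\ref{prop-par11}(4) and (5).

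The content in (4) is what requires real work, and I expect it to be the main obstacle. Fix $\zeta\in\{(a_i,b_i),(b_i,c_i),(c_i,d_i),(a_i,d_i)\}$; an edge $e\in P$ labeled $\zeta$ is $\alpha_i$-mutable exactly when $e$ and an adjacent edge labeled in $\{(a_i,b_i),(b_i,c_i),(c_i,d_i),(a_i,d_i)\}$ sit on a common tile of $G_{T^o,\gamma}$ whose diagonal is $\alpha_i$. Translated to the path $\overrightarrow\xi(P)$, this amounts to the $\zeta$-step participating, as $\xi_{j-1}$ or $\xi_{j+1}$, in an $\alpha_i$-crossing triple of the shape analyzed in Section~\ref{Sec-PB-A}, i.e., one of the templates that the recipe of $\phi^{T^o}_{\alpha_1,\cdots,\alpha_k}$ explicitly rewrites. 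Conversely, if $e$ is \emph{non}-$\alpha_i$-mutable, the corresponding $\zeta$-step in $\overrightarrow\xi(P)$ lies outside every $\alpha_i$-local rewrite window, so it is preserved verbatim by the recipe and appears unchanged in every $\overrightarrow\xi(P')$ with $P'\in\varphi^{T^o}_{\alpha_1,\cdots,\alpha_k}(P)$. Translating back, $e$ corresponds to an edge $e'\in P'$ labeled $\zeta$, and by the same local criterion in the flipped triangulation $T'^o$, $e'$ is non-$\alpha'_i$-mutable. To turn this into a count, I will define an injection from non-$\alpha_i$-mutable $\zeta$-edges in $P$ to non-$\alpha'_i$-mutable $\zeta$-edges in any chosen $P'$ via the step-by-step recipe, then apply the same argument with $\varphi^{T'^o}_{\alpha'_1,\cdots,\alpha'_k}$ and a preimage $P$ of $P'$ to get the reverse injection; since $\varphi^{T^o}_{\alpha_1,\cdots,\alpha_k}$ and $\varphi^{T'^o}_{\alpha'_1,\cdots,\alpha'_k}$ are mutually inverse partition bijections by Theorem~\ref{thm-par11}, the two injections give the desired equality of counts.

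The hard part will be bookkeeping in the previous paragraph: the same label $\zeta=(a_i,b_i)$ may appear in $\overrightarrow\xi(P)$ both inside and outside an $\alpha_i$-local pattern, and simultaneously near several of the flipped $\alpha_j$'s (although the hypothesis that $\alpha_i,\alpha_j$ never share a triangle rules out genuine interference between distinct $\alpha_j$'s—so I can reduce to $k=1$ locally and iterate via (\ref{Eq-par})). Once the $k=1$ case is checked by running through Cases I–V of Section~\ref{Sec-PB-A} and verifying in each case that the rewrite rules preserve $\zeta$-edges not participating in the $\alpha$-triple and convert each participating $\zeta$-edge into an $\alpha'$-mutable edge (or delete it along with its partner), the general case follows by induction on $k$, using that distinct $\alpha_j$'s act on disjoint portions of the path.
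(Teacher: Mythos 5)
Your proposal is correct and follows essentially the same route as the paper: translate through the commutative diagram to the complete-path picture, invoke Proposition~\ref{prop-par11} for the cardinality, the twist structure, the sign reversal $\hat m(P,\alpha_i)=-\hat m(P',\alpha'_i)$, and the invariance of $\hat m(\,\cdot\,,\zeta)$ for $\zeta$ away from the quadrilateral. The paper's stated justification is simply ``follows by Proposition~\ref{prop-par11},'' which in fact does not literally cover part~(4) — that proposition's item~(5) only treats $\zeta\neq (a_i,b_i),(b_i,c_i),(c_i,d_i),(a_i,d_i),(a_i,c_i)$ — so you have correctly identified that item~(4) is the genuinely new content, and your plan (run Cases I–V of Subsection~\ref{Sec-PB-A}; observe that the $\zeta$-steps removed in the collapse when $m=-1$ are precisely $\alpha_i$-mutable, the $\zeta$-steps created in the expansion when $m=1$ are $\alpha'_i$-mutable, and untouched $\zeta$-steps preserve their mutability status; reduce to $k=1$ since the $\alpha_i$'s share no triangle; obtain the count via mutual injections using Theorem~\ref{thm-par11}) is a sound way to fill that in. One small slip: you write that an edge is $\alpha_i$-mutable when it and an \emph{adjacent} edge in $\{(a_i,b_i),\dots\}$ lie on the $\alpha_i$-tile; since two edges of a perfect matching on the same tile are necessarily opposite, you should say ``opposite'' — but this does not affect the argument.
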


The following proposition follows by Proposition \ref{prop-com}.

\begin{proposition}\label{Prop-mutation3}
Assume $P$ can twist on a tile $G_l$ with diagonal labeled $\tau_{i_l}$ and $P>\mu_{G_l}P$.
   \begin{enumerate}[$(1)$]
    \item If $\tau_{i_l}\neq (a_i,b_i), (b_i,c_i), (c_i,d_i), (a_i,d_i), \alpha_i$ for any $i$, then there is a tile $G'_{[l]}$ of $G_{T'^o,\gamma}$ with diagonal labeled $\tau_{i_l}$ such that: for any $P'\in \varphi_{\alpha_1,\cdots,\alpha_k}^{T^o}(P)$, $P'$ can twist on $G'_{[l]}$, $P'>\mu_{G'_{[l]}}(P')$, and
        $\varphi_{\alpha_1,\cdots,\alpha_k}^{T^o}(\mu_{G_l}P)=\mu_{G'_{[l]}}(\varphi_{\alpha_1,\cdots,\alpha_k}^{T^o}(P)).$
    \item If $\tau_{i_l}=\alpha_i$ for some $i$, then
    $\mu_{G_l}P \in \varphi_{\alpha'_1,\cdots,\alpha'_k}^{T'^o}(P')$
    for any $P'\in \varphi_{\alpha_1,\cdots,\alpha_k}^{T^o}(P)$.
\item If $\tau_{i_l}\in \{(a_i,b_i), (c_i,d_i)\}$ for some $i$, then $\hat m(P,\alpha_i)=\hat m(\mu_{G_l}P,\alpha_i)-1$. Moreover, there is a tile $G'_{[l]}$ of $G_{T'^o,\gamma}$ with diagonal labeled $\tau_{i_l}$ such that: any $P'\in \varphi_{\alpha_1,\cdots,\alpha_k}^{T^o}(P)$ can twist on $G'_{[l]}$ and
    $P'>\mu_{G'_{[l]}}P'\in \varphi_{\alpha_1,\cdots,\alpha_k}^{T^o}(\mu_{G_l}P)$.
\item If $\tau_{i_l}\in \{(b_i,c_i), (a_i,d_i)\}$ for some $i$, then $\hat m(P,\alpha_i)=\hat m(\mu_{G_l}P,\alpha_i)+1$. Moreover, there is a tile $G'_{[l]}$ of $G_{T'^o,\gamma}$ with diagonal labeled $(a_i,b_i)$ such that: any $P'\in \varphi_{\alpha_1,\cdots,\alpha_k}^{T^o}(\mu_{G_l}P)$ can twist on $G'_{[l]}$ and
    $P'<\mu_{G'_{[l]}}P'\in \varphi_{\alpha_1,\cdots,\alpha_k}^{T^o}(\mu_{G_l}P)$.
\end{enumerate}
\end{proposition}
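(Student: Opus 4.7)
The plan is to proceed by induction on $k$, exploiting equation (\ref{Eq-par}) together with the fact, established just before Theorem \ref{thm-par11}, that the partition bijection $\varphi^{T^o}_{\alpha_1,\cdots,\alpha_k}$ is independent of the order in which the arcs $\alpha_1,\ldots,\alpha_k$ are listed. The base case $k=1$ is precisely Proposition \ref{prop-com}. For the inductive step, in each of the four cases we will reorder the sequence of flips so that a convenient $\alpha_i$ is flipped last, and then combine the single-flip statement of Proposition \ref{prop-com} with the inductive hypothesis applied to the first $k-1$ flips.

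For case (1), $\tau_{i_l}$ is generic relative to every $\alpha_i$ and every quadrilateral $(a_i,b_i,c_i,d_i)$. I will apply Proposition \ref{prop-com}(1) at each of the $k$ single-flip steps in the composition (\ref{Eq-par}); at each step the diagonal $\tau_{i_l}$ remains generic with respect to the next flip, because the hypothesis that no two $\alpha_i,\alpha_j$ share a triangle ensures that flipping $\alpha_j$ leaves $\tau_{i_l}$ and the quadrilaterals around the remaining $\alpha_i$'s untouched. Consequently the tile $G_l$ propagates to a well-defined tile $G'_{[l]}$ in $G_{T'^o,\gamma}$ whose diagonal is still labeled $\tau_{i_l}$, and the twist commutes past the entire composite partition bijection. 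For case (2), I place the distinguished $\alpha_i$ (with $\tau_{i_l}=\alpha_i$) last in the flip sequence. For the first $k-1$ flips the arc $\tau_{i_l}=\alpha_i$ plays the role of a generic arc (again by the no-shared-triangle hypothesis), so case (1) already treated transports the twist on $G_l$ to a twist on a tile with diagonal still labeled $\alpha_i$ in the intermediate snake graph. Proposition \ref{prop-com}(2) applied at the last step then gives the desired identification $\mu_{G_l}P \in \varphi^{T'^o}_{\alpha'_1,\ldots,\alpha'_k}(P')$.

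Cases (3) and (4) proceed analogously: I reorder so that the $\alpha_i$ with $\tau_{i_l}$ in its surrounding quadrilateral is flipped last, use the already established case (1) argument to transport the twist on $G_l$ through the first $k-1$ flips to a twist on a tile with diagonal $\tau_{i_l}$ in the intermediate triangulation, and then invoke Proposition \ref{prop-com}(3) or (4) at the final flip to produce the asserted tile $G'_{[l]}$ in $G_{T'^o,\gamma}$ and the identification of $\mu_{G'_{[l]}}P'$ as lying in $\varphi^{T^o}_{\alpha_1,\ldots,\alpha_k}(\mu_{G_l}P)$ (respectively $\varphi^{T^o}_{\alpha_1,\ldots,\alpha_k}(P)$). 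The equality of crossing numbers $\hat m(P,\alpha_i)=\hat m(\mu_{G_l}P,\alpha_i)\pm 1$ follows from the single-flip version together with Proposition \ref{Lem-tile-2}(3), which guarantees that the intermediate flips do not disturb the crossing numbers at the distinguished $\alpha_i$.

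The main obstacle is the bookkeeping required to verify that the genericity hypothesis of case (1) of Proposition \ref{prop-com} is preserved at every intermediate stage of the composition, and that the intermediate tiles can be coherently identified with tiles in the final snake graph $G_{T'^o,\gamma}$. This reduces to a careful analysis of how the snake graph changes under a single flip at an arc disjoint from the relevant quadrilateral, combined with the hypothesis that the $\alpha_i$'s are mutually non-adjacent in triangles---so that the local modification near $\alpha_j$ cannot introduce $\tau_{i_l}$ as a new side of the quadrilateral around any $\alpha_i$ with $i\neq j$. Once this invariance is established, the case-by-case reduction to Proposition \ref{prop-com} becomes routine.
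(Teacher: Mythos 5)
Your proposal is correct and follows essentially the same route as the paper, which dispatches the statement in one line (``The following proposition follows by Proposition~\ref{prop-com}''): you are spelling out precisely the intended reduction, iterating Proposition~\ref{prop-com} along the single-flip factorization~(\ref{Eq-par}) and using the order-independence of $\phi^{T^o}_{\alpha_1,\ldots,\alpha_k}$ together with the no-shared-triangle hypothesis to preserve the genericity of $\tau_{i_l}$ at each intermediate step.
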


\subsection{General case} In this subsection, we assume that $\Sigma$ is an arbitrary orbifold without orbifold points of weight $2$. Let $T^o$ be an ideal triangulation and $\alpha\in T^o$ be a non-self-folded arc. Denote $T'^o=\mu_\alpha(T^o)$ and by $\alpha'$ the new edge obtained. Let $\gamma$ be an arc in $\Sigma$. Let $P_n$ be the canonical polygon of $\gamma$ in $T^o$ with morphism $f:P_n\to \Sigma$ and triangle $\hat T^o$. Assume that $f(\hat\gamma)=\gamma$ for some diagonal $\hat\gamma$ in $P_n$.

Assume that $\alpha$ is a diagonal of the quadrilateral $(\alpha_1,\alpha_2,\alpha_3,\alpha_4)$ in $T^o$ and $\alpha_1,\alpha_3$ are in the clockwise direction of $\alpha$.

\begin{figure}[h]
\centerline{\includegraphics[width=2cm]{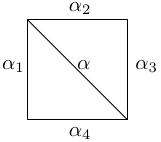}}


\end{figure}

Denote by $\eta_\alpha$ the cardinality of $f^{-1}(\alpha)$. We may label $f^{-1}(\alpha)$ as $\hat\alpha_1,\cdots,\hat\alpha_{\eta_\alpha}$ in order according to the orientation of $\hat\gamma$. We may further assume that $\hat\alpha_i$ is not a boundary edge in $P_n$ for all $i\in \{1,\cdots,\eta_\alpha\}$, otherwise we may extend $P_n$ to a larger polygon. As $\alpha$ is not a folded arc, $\hat\alpha_i$ and $\hat\alpha_j$ are not in a same triangle in $\hat T^o$ for $i\neq j$. Denote $\hat T'^o=\mu_{\hat\alpha_{\eta_\alpha}}\cdots \mu_{\hat\alpha_1}\hat T^o$.

\subsubsection{Partition bijection $\phi^{T^o}_\alpha$}

Using Theorem \ref{thm-par11}, as $\hat\alpha_i$ and $\hat\alpha_j$ are not in a same triangle in $\hat T^o$ for $i\neq j$, we have a partition bijection
\begin{equation}\label{Eq-bi1}
\phi^{\hat T^o}_{\hat\alpha_1,\cdots,\hat\alpha_{\eta_\alpha}}: \mathcal {CP}(\hat T^o,\hat \gamma) \to \mathcal {CP}(\hat T'^o,\hat\gamma).
\end{equation}

Since $f$ induces bijections between $\mathcal {CP}(\hat  T^o,\hat  \gamma)$ and $\mathcal {CP}(T^o,\gamma)$, and between $\mathcal {CP}(\hat  T'^o,\hat  \gamma)$ and $\mathcal {CP}(T'^o,\gamma)$, we have $\phi^{\hat T^o}_{\hat\alpha_1,\cdots,\hat\alpha_{\eta_\alpha}}$ induces a partition bijection
\begin{equation}\label{Eq-bi}
\phi^{T^o}_{\alpha}: \mathcal {CP}(T^o,\gamma)\to \mathcal {CP}(T'^o,\gamma).
\end{equation}

That is, we have the following commutative diagram.

{\centering \xymatrixcolsep{7pc}\xymatrix{
  & \mathcal {CP}(\hat T^o,\hat \gamma) \ar[d]^\cong \ar[r]^{\phi^{\hat T^o}_{\hat\alpha_1,\cdots,\hat\alpha_{\eta_\alpha}}}  &  \mathcal {CP}(\hat T'^o,\hat \gamma)\ar[d]_\cong           \\
  & \mathcal {CP}(T^o,\gamma) \ar[r]^{\phi^{T^o}_{\alpha}}  &  \mathcal {CP}(T'^o,\gamma).                       }}

\medskip

For any complete $(T^o,\gamma)$-path $\overrightarrow\xi=(\overrightarrow\xi_{\hspace{-2pt}1},\cdots,\overrightarrow\xi_{\hspace{-2pt}2c+1})$, denote by $\overrightarrow{\hat \xi}=(\overrightarrow{\hat\xi}_{\hspace{-2pt}1},\cdots,\overrightarrow{\hat \xi}_{\hspace{-2pt}2c+1})$ the corresponding complete $(\hat T^o,\hat \gamma)$-path under $f$. We may associate $\overrightarrow\xi$ with an integer vector
\begin{equation}\label{equ-m2}
{\bf m}(\overrightarrow{\xi},\alpha)=(m(\overrightarrow{\hat\xi},\hat\alpha_1),\cdots, m(\overrightarrow{\hat \xi},\hat\alpha_{\eta_\alpha})),
\end{equation}
where each $m(\overrightarrow{\hat \xi},\hat\alpha_i)$ is given by (\ref{equ-m1}).

For any $i\in \{1,\cdots, \eta_\alpha\}$, denote by $\hat\alpha'_i$ the new arc in $P_n$ obtained from $\hat T^o$ by flip at $\hat\alpha_i$. For any complete $(T'^o,\gamma)$-path $\overrightarrow\xi'=(\overrightarrow\xi'_{\hspace{-2pt}1},\cdots,\overrightarrow\xi'_{\hspace{-2pt}2c'+1})$, denote by $\overrightarrow{\hat \xi'}=(\overrightarrow{\hat\xi'}_{\hspace{-2pt}1},\cdots,\overrightarrow{\hat \xi'}_{\hspace{-2pt}2c'+1})$ the corresponding complete $(\mu_{\hat\alpha_{\eta_\alpha}}\cdots \mu_{\hat\alpha_1}\hat T^o,\hat \beta)$-path under $f$. Similarly, we may associate $\overrightarrow\xi'$ with an integer vector
\begin{equation*}
{\bf m}(\overrightarrow\xi',\alpha')=(m(\overrightarrow{\hat \xi'},\hat\alpha'_1),\cdots, m(\overrightarrow{\hat \xi'},\hat\alpha'_{\eta_\alpha})).
\end{equation*}

Similarly, we define the partition bijection
\begin{equation}\label{Eq-biin}
\phi^{T'^o}_{\alpha'}: \mathcal {CP}(T'^o,\gamma)\to \mathcal {CP}(T^o,\gamma),
\end{equation}
which is induced by the partition bijection $\phi^{\hat T'^o}_{\hat\alpha'_1,\cdots,\hat\alpha'_{\eta_\alpha}}$.

\subsubsection{Partition bijection $\varphi^{T^o}_\alpha$}\label{Sec-varphi}
Since $\mathcal {CP}(T^o,\gamma)\cong\mathcal P(G_{T^o,\gamma})$ and $\mathcal {CP}(T'^o,\gamma)\cong\mathcal P(G_{T'^o,\gamma})$, we have the partition bijection $\phi^{T^o}_{\alpha}: \mathcal {CP}(T^o,\gamma)\to \mathcal {CP}(T'^o,\gamma)$ in (\ref{Eq-bi}) induces partition bijections $\varphi^{T^o}_{\alpha}:\mathcal P(G_{T^o,\gamma})\to\mathcal P(G_{T'^o,\gamma})$. Similarly, we have the inverse partition bijection $\varphi^{T'^o}_{\alpha'}:\mathcal P(G_{T'^o,\gamma})\to\mathcal P(G_{T^o,\gamma})$.

In this subsection, we study the properties of $\varphi^{T^o}_{\alpha}$ and $\varphi^{T'^o}_{\alpha'}$.

For any $P\in \mathcal P(G_{T^o,\gamma})$, denote by $\overrightarrow\xi(P)$ the corresponding complete $(T^o,\gamma)$-path and $\hat P$ the corresponding perfect matching of $G_{\hat T^o,\hat\gamma}$, let
\begin{equation*}
{\bf m}(P,\alpha)={\bf m}(\overrightarrow\xi(P),\alpha),
\end{equation*}
where ${\bf m}(\overrightarrow\xi(P),\alpha)$ is given by (\ref{equ-m2}).

Assume that ${\bf m}(P,\alpha)=(m_1(P),\cdots,m_{\eta_\alpha}(P))$. In view of (\ref{Equ-mp}) and (\ref{Equ-n}), we have
$$\sum_{i=1}^{\eta(\alpha)} m_i(P)=m(P;\alpha)-n(G_{T^o,\gamma};\alpha).$$

Similarly we can define ${\bf m}(P';\alpha')$ for any $P'\in \mathcal P(G_{T'^o,\alpha'})$.

Among $f^{-1}(\alpha)=\{\hat\alpha_1,\cdots,\hat\alpha_{\eta_\alpha}\}$, assume that $\hat\gamma$ crosses $\hat\alpha_{i_1},\cdots, \hat\alpha_{i_{l}}$ sequentially,  among $\{\hat\alpha'_1,\cdots,\hat\alpha'_{\eta_\alpha}\}$, assume that $\hat\gamma$ crosses $\hat\alpha'_{j_1},\cdots, \hat\alpha'_{j_{l'}}$ sequentially.

The following two Propositions follows by Proposition \ref{Lem-tile-2} (1) (2).

\begin{proposition}\label{Lem-tile-11}
 Assume that ${\bf m}(P,\alpha)=(m_1(P),\cdots,m_{\eta_\alpha}(P))$.
\begin{enumerate}[$(1)$]
\item If $m_k(P)=-1$ for some $k\in \{1,\cdots,\eta_\alpha\}$, then there exists a tile $G(k)$ of $G_{T^o,\gamma}$ with diagonal labeled $\alpha$ such that $P$ can twist on the tile $G(k)$.
\item If $m_k(P)=1$ for some $k\in \{1,\cdots,\eta_\alpha\}$, then there exists a tile $G'(k)$ of $G_{T'^o,\gamma}$ with diagonal labeled $\alpha'$ such that any $P'\in \varphi_{\alpha}^{T^o}(P)$ can twist on $G'(k)$.
\end{enumerate}
\end{proposition}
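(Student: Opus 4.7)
The plan is to reduce both parts directly to the polygon case via the canonical polygon $P_n$ and its morphism $f:P_n\to \Sigma$, and then transport the conclusion downstairs. The key point is that since $f$ identifies the triangulation $\hat T^o$ of $P_n$ with $T^o$ along $\hat\gamma$, the snake graphs $G_{\hat T^o,\hat\gamma}$ and $G_{T^o,\gamma}$ are in a natural label-preserving bijection of tiles (both record, tile by tile, the triangle containing the corresponding crossing point of $\hat\gamma$ with $\hat T^o$ respectively of $\gamma$ with $T^o$). The same is true for $G_{\hat T'^o,\hat\gamma}$ and $G_{T'^o,\gamma}$, since the $\eta_\alpha$ simultaneous flips at $\hat\alpha_1,\dots,\hat\alpha_{\eta_\alpha}$ in $P_n$ (which are pairwise not in a common triangle because $\alpha$ is non-self-folded) project under $f$ to the single flip $\mu_\alpha(T^o)=T'^o$ in $\Sigma$. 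Under these identifications perfect matchings, twistable tiles, twists, and diagonal/edge labels all correspond.

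Let $\hat P$ denote the perfect matching of $G_{\hat T^o,\hat\gamma}$ corresponding to $P$. By the definition of ${\bf m}(P,\alpha)$ via the associated complete paths, one has $m_k(P)=\hat m(\hat P,\hat\alpha_k)$ for each $k\in\{1,\dots,\eta_\alpha\}$. Since the arcs $\hat\alpha_1,\dots,\hat\alpha_{\eta_\alpha}$ satisfy the hypothesis of Proposition \ref{Lem-tile-2} (no two in a common triangle of $\hat T^o$), that proposition applies with $(\alpha_1,\dots,\alpha_k)=(\hat\alpha_1,\dots,\hat\alpha_{\eta_\alpha})$.

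For (1), the assumption $m_k(P)=-1$ translates into $\hat m(\hat P,\hat\alpha_k)=-1$, so Proposition \ref{Lem-tile-2}(1)(b) produces a tile $\hat G(k)$ of $G_{\hat T^o,\hat\gamma}$ with diagonal labeled $\hat\alpha_k$ on which $\hat P$ can twist. Transporting through the snake-graph identification yields a tile $G(k)$ of $G_{T^o,\gamma}$ whose diagonal is labeled $f(\hat\alpha_k)=\alpha$ and on which $P$ can twist. For (2), $m_k(P)=1$ gives $\hat m(\hat P,\hat\alpha_k)=1$, so Proposition \ref{Lem-tile-2}(1)(a) produces a tile $\hat G'(k)$ of $G_{\hat T'^o,\hat\gamma}$ with diagonal labeled $\hat\alpha'_k$ on which every element of $\varphi^{\hat T^o}_{\hat\alpha_1,\dots,\hat\alpha_{\eta_\alpha}}(\hat P)$ can twist. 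Because $\varphi^{T^o}_\alpha$ is by construction the transport of $\varphi^{\hat T^o}_{\hat\alpha_1,\dots,\hat\alpha_{\eta_\alpha}}$ along $f$, and $f(\hat\alpha'_k)=\alpha'$, this yields a tile $G'(k)$ of $G_{T'^o,\gamma}$ with diagonal labeled $\alpha'$ on which every $P'\in\varphi^{T^o}_\alpha(P)$ can twist.

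The only real thing to check carefully is that the morphism $f$ faithfully transfers the snake-graph and twist structures in both directions, in particular that the sequence of flips $\mu_{\hat\alpha_{\eta_\alpha}}\cdots\mu_{\hat\alpha_1}$ on $P_n$ really does cover $\mu_\alpha$ on $\Sigma$ (so that the downstairs tile labeled $\alpha$ or $\alpha'$ is unambiguously identified with the upstairs tile labeled $\hat\alpha_k$ or $\hat\alpha'_k$). Once this identification is in place, both parts follow as pure translations of Proposition \ref{Lem-tile-2}(1)(a),(b); no further combinatorial work is required.
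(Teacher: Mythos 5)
Your proposal is correct and follows essentially the same route as the paper: the paper disposes of this proposition with a single sentence, ``The following two Propositions follows by Proposition~\ref{Lem-tile-2} (1) (2),'' and your write-up simply makes explicit the canonical-polygon reduction, the identification $m_k(P)=\hat m(\hat P,\hat\alpha_k)$, and the transport of the twistable tiles through the morphism $f$ that the paper leaves implicit.
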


\begin{proposition}\label{Lem-parvar}
For any $P\in \mathcal P(G_{T^o,\gamma})$ with ${\bf m}(P,\alpha)=(m_1(P),\cdots,m_{\eta_\alpha}(P))$, we have
\begin{equation*}
|\varphi^{T^o}_{\alpha}(P)|=2^{\sum_{i=1}^{\eta_\alpha}[m_i(P)]_{+}},
\end{equation*}
more precisely, for any $P'\in \varphi^{T^o}_{\alpha}(P)$,
\begin{enumerate}[$(1)$]
\item we have ${\bf m}(P',\alpha')=-{\bf m}(P,\alpha)$,
\item for any $k$ such that $m_k(P)=1$, $P'$ can twist on the tiles $G'(k)$ and $\mu_{G'(k)}P'\in \varphi^{T^o}_{\alpha}(P)$, where $G'(k)$ are the tiles given in Proposition \ref{Lem-tile-11}.
\end{enumerate}
\end{proposition}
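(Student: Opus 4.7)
The plan is to deduce Proposition~\ref{Lem-parvar} from its polygon counterpart, Proposition~\ref{Lem-tile-2}, by transporting the statement through the canonical polygon $f:P_n\to\Sigma$ for $\gamma$ with lift $\hat\gamma$. Everything on the $\Sigma$-side is defined (see Subsection~\ref{Sec-varphi}) so that the following square commutes:

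\begin{equation*}
\xymatrix{
\mathcal{P}(G_{\hat T^o,\hat\gamma}) \ar[d]^{f_*}_{\cong} \ar[r]^{\varphi^{\hat T^o}_{\hat\alpha_1,\ldots,\hat\alpha_{\eta_\alpha}}} & \mathcal{P}(G_{\hat T'^o,\hat\gamma})\ar[d]_{f_*}^{\cong}\\
\mathcal{P}(G_{T^o,\gamma}) \ar[r]^{\varphi^{T^o}_\alpha} & \mathcal{P}(G_{T'^o,\gamma}).
}
\end{equation*}
Hence if $\hat P=f_*^{-1}(P)$, then $\varphi^{T^o}_\alpha(P)=f_*\bigl(\varphi^{\hat T^o}_{\hat\alpha_1,\ldots,\hat\alpha_{\eta_\alpha}}(\hat P)\bigr)$ and $f_*$ is a bijection on the nose (not merely a partition bijection), so cardinalities match.

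First, I would unpack the compatibility of the valuations. By the very definition of ${\bf m}(P,\alpha)$ in~(\ref{equ-m2}) and the identification $\hat m(\hat P,\hat\alpha_i)=m(\overrightarrow{\hat\xi}(\hat P),\hat\alpha_i)$ given just after~(\ref{Eq-biin}), each coordinate $m_i(P)$ equals $\hat m(\hat P,\hat\alpha_i)$. Since $\alpha$ is not self-folded, the lifts $\hat\alpha_1,\ldots,\hat\alpha_{\eta_\alpha}$ never share a triangle of $\hat T^o$, so Proposition~\ref{Lem-tile-2} applies to the family $\{\hat\alpha_1,\ldots,\hat\alpha_{\eta_\alpha}\}$. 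Part~(1) of that proposition then yields
\[
|\varphi^{\hat T^o}_{\hat\alpha_1,\ldots,\hat\alpha_{\eta_\alpha}}(\hat P)|
=2^{\sum_{i=1}^{\eta_\alpha}[\hat m(\hat P,\hat\alpha_i)]_+}
=2^{\sum_{i=1}^{\eta_\alpha}[m_i(P)]_+},
\]
and pushing forward by $f_*$ preserves this count, giving the cardinality assertion.

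Next, for claim~(1) I would apply Proposition~\ref{Lem-tile-2}(2) to each lifted index: for every $\hat P'\in\varphi^{\hat T^o}_{\hat\alpha_1,\ldots,\hat\alpha_{\eta_\alpha}}(\hat P)$ and each $i$ we have $\hat m(\hat P',\hat\alpha'_i)=-\hat m(\hat P,\hat\alpha_i)$. Setting $P'=f_*(\hat P')$ and reading off coordinates gives ${\bf m}(P',\alpha')=-{\bf m}(P,\alpha)$. For claim~(2), given $k$ with $m_k(P)=1$, Proposition~\ref{Lem-tile-2}(1)(a) supplies a tile $\hat G'(k)$ of $G_{\hat T'^o,\hat\gamma}$ with diagonal $\hat\alpha'_k$ on which every member of $\varphi^{\hat T^o}_{\hat\alpha_1,\ldots,\hat\alpha_{\eta_\alpha}}(\hat P)$ can twist, and the set is closed under that twist. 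Define $G'(k):=f_*(\hat G'(k))$; because the snake graph $G_{T'^o,\gamma}$ is built tile-by-tile from the successive crossings of $\gamma$ with $T'^o$ and $f$ maps crossings of $\hat\gamma$ bijectively onto crossings of $\gamma$, $f_*$ intertwines twists at corresponding tiles. Consequently $P'$ twists on $G'(k)$ and $\mu_{G'(k)}P'\in\varphi^{T^o}_\alpha(P)$.

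The main obstacle is the bookkeeping needed to certify that the pushforward $f_*$ really intertwines the tile-twist operation: one must verify that a twist-able tile of $G_{\hat T'^o,\hat\gamma}$ descends to a twist-able tile of $G_{T'^o,\gamma}$ whose twist matches the image of the upstairs twist. This is essentially the content of the canonical-polygon construction recalled in Subsection~\ref{sec:OMCP}: $f$ sends the ordered sequence of triangles crossed by $\hat\gamma$ to the ordered sequence crossed by $\gamma$, so the induced isomorphism of snake graphs preserves edge labels, diagonals, and boundary data tile-by-tile. Once that compatibility is granted, all remaining assertions are formal consequences of Proposition~\ref{Lem-tile-2}, and the proof reduces to the three short displays above.
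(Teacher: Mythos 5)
Your proposal is correct and takes essentially the same route as the paper, which simply asserts that Propositions~\ref{Lem-tile-11} and~\ref{Lem-parvar} ``follow by Proposition~\ref{Lem-tile-2}~(1)~(2)'' via the canonical-polygon construction. Your version spells out the lift to $\hat P$, the identification $m_i(P)=\hat m(\hat P,\hat\alpha_i)$, the disjointness of the $\hat\alpha_i$ in triangles, and the intertwining of tile-twists under $f_*$ — exactly the details the paper leaves implicit.
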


\begin{lemma}\label{Lem-var}
For any $P\in \mathcal \mathcal P(G_{T^o,\gamma})$ and $P'\in \varphi^{T^o}_{\alpha}(P)$, for any $(\alpha\neq )\tau\in T^o$,
\begin{enumerate}[$(1)$]
\item if $\tau\neq \alpha_1,\alpha_2,\alpha_3,\alpha_4$, then we have $\hat m(P;\tau)=\hat m(P';\tau)$;
\item if $\tau\in \{\alpha_1,\alpha_2,\alpha_3,\alpha_4\}$, then the number of edges in $P$ labeled $\tau$ which are not $\alpha$-mutable equals the number of edges in $P'$ labeled $\tau$ which are not $\alpha'$-mutable.
\end{enumerate}
\end{lemma}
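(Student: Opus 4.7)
The plan is to reduce everything to the polygon statement (Proposition \ref{Lem-tile-2} parts (3) and (4)) via the canonical polygon $P_n$ and its morphism $f\colon P_n\to \Sigma$ from Subsection \ref{sec:OMCP}, since $\varphi^{T^o}_\alpha$ was defined in Subsection \ref{Sec-varphi} precisely as the descent of $\varphi^{\hat T^o}_{\hat\alpha_1,\ldots,\hat\alpha_{\eta_\alpha}}$ along $f$. Concretely, I would lift $P$ to the corresponding perfect matching $\hat P\in\mathcal P(G_{\hat T^o,\hat\gamma})$ and $P'$ to $\hat P'\in\mathcal P(G_{\hat T'^o,\hat\gamma})$, so that by construction $\hat P'\in \varphi^{\hat T^o}_{\hat\alpha_1,\ldots,\hat\alpha_{\eta_\alpha}}(\hat P)$.

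The first step is a bookkeeping lemma relating the orbifold quantities to their polygon counterparts. Since $f$ is a local diffeomorphism away from orbifold points and the tiles of $G_{T^o,\gamma}$ lift tile-by-tile to tiles of $G_{\hat T^o,\hat\gamma}$ with matching diagonal labels, for any $\tau\in T^o$ one obtains
\begin{equation*}
\hat m(P;\tau)=\sum_{\hat\tau\in f^{-1}(\tau)\cap \hat T^o}\hat m(\hat P;\hat\tau),
\end{equation*}
and the number of edges labeled $\tau$ in $P$ that are not $\alpha$-mutable equals the total, over all $\hat\tau\in f^{-1}(\tau)\cap\hat T^o$, of the number of edges labeled $\hat\tau$ in $\hat P$ that are not $\hat\alpha_i$-mutable for any $i$. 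The key geometric fact that makes this lift clean is that, because $\alpha$ is not self-folded, the arcs $\hat\alpha_1,\ldots,\hat\alpha_{\eta_\alpha}$ never share a triangle in $\hat T^o$, so the $\alpha$-mutable tiles of $G_{T^o,\gamma}$ are in bijection with the union of the $\hat\alpha_i$-mutable tiles of $G_{\hat T^o,\hat\gamma}$, and analogously for $P'$ and $\alpha'$.

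For part (1), fix $\tau\neq \alpha,\alpha_1,\alpha_2,\alpha_3,\alpha_4$. Then every preimage $\hat\tau\in f^{-1}(\tau)\cap\hat T^o$ is different from each $\hat\alpha_i$ and from each neighbor $(a_i,b_i),(b_i,c_i),(c_i,d_i),(a_i,d_i)$ of the quadrilateral around $\hat\alpha_i$ (because $f$ maps the quadrilateral around $\hat\alpha_i$ to the quadrilateral around $\alpha$). Therefore Proposition \ref{Lem-tile-2}(3) applies to each $\hat\tau$, giving $\hat m(\hat P;\hat\tau)=\hat m(\hat P';\hat\tau)$, and summing over $\hat\tau$ yields $\hat m(P;\tau)=\hat m(P';\tau)$. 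For part (2), fix $\tau=\alpha_k$. Each $\hat\tau\in f^{-1}(\alpha_k)\cap\hat T^o$ is either disjoint from every quadrilateral around some $\hat\alpha_i$, in which case Proposition \ref{Lem-tile-2}(3) preserves the edge count, or it is one of the four sides of the quadrilateral around some $\hat\alpha_i$, in which case Proposition \ref{Lem-tile-2}(4) says that the non-$\hat\alpha_i$-mutable count of edges labeled $\hat\tau$ is preserved. Summing these contributions and using the bijection between (non-)$\alpha$-mutable edges on the orbifold side and (non-)$\hat\alpha_i$-mutable edges on the polygon side gives the desired equality.

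The main obstacle will be the careful verification of the lifting dictionary in the first step: one must confirm that an edge of $G_{T^o,\gamma}$ labeled $\alpha_k$ is $\alpha$-mutable if and only if its unique lift to $G_{\hat T^o,\hat\gamma}$ is $\hat\alpha_i$-mutable for some $i$, and likewise on the $T'^o$ side. This is a local, tile-by-tile check using that $f$ sends the quadrilateral around each $\hat\alpha_i$ to the quadrilateral $(\alpha_1,\alpha_2,\alpha_3,\alpha_4)$ around $\alpha$, together with the definition of $\varphi^{T^o}_\alpha$ as the descent of $\varphi^{\hat T^o}_{\hat\alpha_1,\ldots,\hat\alpha_{\eta_\alpha}}$. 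Once this dictionary is in place, the lemma is a direct consequence of Proposition \ref{Lem-tile-2} applied in the polygon.
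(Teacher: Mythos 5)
Your proof is correct and follows exactly the paper's approach: the paper's own proof is the one-liner ``It follows by Proposition \ref{Lem-tile-2} (3) (4),'' and your argument simply spells out the lift--apply--descend reduction through the canonical polygon and the morphism $f$ that is implicit in that citation. The bookkeeping dictionary and the case split by whether a preimage $\hat\tau$ lies in the quadrilateral around some $\hat\alpha_i$ are precisely the unstated details the paper is invoking.
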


\begin{proof}
It follows by Proposition \ref{Lem-tile-2} (3) (4).
\end{proof}

The following Proposition follows by Propositions \ref{Lem-tile-2} and \ref{Prop-mutation3}.

\begin{proposition}\label{Prop-mutation4}
Assume that $P$ can twist on a tile $G_l$ with diagonal labeled $\tau_{i_l}$ and $P>\mu_{G_l}P$. Assume that ${\bf m}(P,\alpha)=(m_1(P),\cdots,m_{\eta_\alpha}(P))$,
    \begin{enumerate}[$(1)$]
    \item if $\tau_{i_l}\neq \alpha, \alpha_1,\alpha_2,\alpha_3,\alpha_4$, then there is a tile $G'_{[l]}$ of $G_{T'^o,\gamma}$ with diagonal labeled $\tau_{i_l}$ such that any $P'\in \varphi_\alpha^{T^o}(P)$ can twist on $G'_{[l]}$, $P'>\mu_{G'_{[l]}}(P')$, and
        $\varphi_\alpha^{T^o}(\mu_{G_l}P)=\mu_{G'_{[l]}}(\varphi_\alpha^{T^o}(P)),$
        moreover, the following equalities hold:\\
     (a) $m^{\pm}(P,G_l;\tau_{i_l})=m^{\pm}(P',G'_{[l]};\tau_{i_l})$;
     (b) $n(G_l^{\pm};\tau_{i_l})=n(G'^{\pm}_{[l]};\tau_{i_l})$.\\
     In fact $G'_{[l]}$ corresponds to the same crossing point of $\gamma$ and $\tau_{i_l}$ as $G_l$.
    \item if $\tau_{i_l}=\alpha$, then $m_k(P)=-1$ and $G_{l}=G(k)$ for some $k$, where $G(k)$ is the tile given in Proposition \ref{Lem-tile-11}, moreover, $\mu_{G_l}P \in \varphi_{\alpha'}^{T'^o}(P')$
    for any $P'\in \varphi_{\alpha}^{T^o}(P)$.
    \item if $\tau_{i_l}\in \{\alpha_1,\alpha_2,\alpha_3,\alpha_4\}$ and $\tau_{i_l}$ is not the radius of any self-folded triangle in $T^o$ or $T'^o$, without loss of generality, we may assume $\tau_{i_l}=\alpha_1$, then there exists a tile $G'_{[l]}$ of $G_{T'^o,\gamma}$ with diagonal labeled $\alpha_1$ such that any $P'\in \varphi_{\alpha}^{T^o}(\mu_{G_l}P)$ can twist on $G'_{[l]}$ and $P'>\mu_{G'_{[l]}}P'\in \varphi_{\alpha}^{T^o}(P)$. Moreover, if the $\alpha'$-mutable edges in $P'$ are labeled $\alpha_{2},\alpha_{4}$ then\\
        (a) $m^{\pm}(P,G_l;\alpha_1)=m^{\pm}(P',G'_{[l]};\alpha_1)$,
        (b) $n(G_l^{\pm};\alpha_1)=n(G'^{\pm}_{[l]};\alpha_1)$.
    \end{enumerate}
\end{proposition}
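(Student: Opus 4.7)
The strategy is to lift the proposition from the polygon case (Proposition \ref{Prop-mutation3}) to a general orbifold via the canonical polygon morphism $f\colon P_n\to \Sigma$ associated to $\gamma$ (as in Subsection \ref{sec:OMCP}), using the definition of $\varphi^{T^o}_\alpha$ given in Subsection \ref{Sec-varphi}. The key point is that $f$ induces bijections $\mathcal P(G_{T^o,\gamma})\cong \mathcal P(G_{\hat T^o,\hat\gamma})$ and $\mathcal P(G_{T'^o,\gamma})\cong \mathcal P(G_{\hat T'^o,\hat\gamma})$, and, tile-by-tile, the snake graph $G_{T^o,\gamma}$ is identified with $G_{\hat T^o,\hat\gamma}$; in particular, each tile $G_l$ of $G_{T^o,\gamma}$ lifts to a unique tile $\hat G_l$ of $G_{\hat T^o,\hat\gamma}$, and the twist $\mu_{G_l}$ corresponds to $\mu_{\hat G_l}$ under the identification $P\leftrightarrow \hat P$. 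By construction of $\varphi^{T^o}_\alpha$, the identity $\varphi^{T^o}_\alpha(P)$ corresponds to $\varphi^{\hat T^o}_{\hat\alpha_1,\cdots,\hat\alpha_{\eta_\alpha}}(\hat P)$ under the same identification.

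For part (1), since $\tau_{i_l}\neq \alpha,\alpha_1,\alpha_2,\alpha_3,\alpha_4$, the lifted diagonal $\hat\tau_{i_l}$ of $\hat G_l$ avoids the quadrilaterals around every $\hat\alpha_i$, so Proposition \ref{Prop-mutation3}(1) applies and produces a tile $\hat G'_{[l]}$ of $G_{\hat T'^o,\hat\gamma}$ with diagonal labeled $\hat\tau_{i_l}$ such that $\varphi^{\hat T^o}_{\hat\alpha_1,\cdots,\hat\alpha_{\eta_\alpha}}$ intertwines $\mu_{\hat G_l}$ and $\mu_{\hat G'_{[l]}}$. Pushing forward along $f$ yields the required tile $G'_{[l]}$ in $G_{T'^o,\gamma}$; since $\hat G_l$ and $\hat G'_{[l]}$ correspond to the same crossing point of $\hat\gamma$ with $\hat\tau_{i_l}$, the equalities (a) $m^{\pm}(P,G_l;\tau_{i_l})=m^{\pm}(P',G'_{[l]};\tau_{i_l})$ and (b) $n(G_l^{\pm};\tau_{i_l})=n(G'^{\pm}_{[l]};\tau_{i_l})$ are read off from Proposition \ref{Lem-tile-2}(3)(4) (equivalently, from Proposition \ref{Prop-mutation3}(1) combined with the fact that non-$\hat\alpha_i$-mutable labels are preserved tile-by-tile under $\varphi^{\hat T^o}_{\hat\alpha_1,\cdots,\hat\alpha_{\eta_\alpha}}$). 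For part (2), the twist at $G_l$ lifts to a twist on some $\hat G_l$ whose diagonal is one of the $\hat\alpha_k$; Proposition \ref{Prop-mutation3}(2) then gives $\mu_{\hat G_l}\hat P\in \varphi^{\hat T'^o}_{\hat\alpha'_1,\cdots,\hat\alpha'_{\eta_\alpha}}(\hat P')$ for every $\hat P'\in \varphi^{\hat T^o}_{\hat\alpha_1,\cdots,\hat\alpha_{\eta_\alpha}}(\hat P)$, and the claim $m_k(P)=-1$ follows by applying Proposition \ref{Lem-tile-2}(1)(b) to the corresponding $\hat\alpha_k$.

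Part (3) is the most delicate: the assumption that $\tau_{i_l}\in\{\alpha_1,\alpha_2,\alpha_3,\alpha_4\}$ is not the radius of any self-folded triangle in $T^o$ or $T'^o$ is exactly what guarantees that locally around the tile $G_l$ the picture in $\Sigma$ is the same as in the polygon (no identification of $\alpha_j$ with $\alpha_{j\pm 1}$). Thus the lift $\hat G_l$ sits inside one of the quadrilaterals around an $\hat\alpha_k$, and Proposition \ref{Prop-mutation3}(3) (applied with $\alpha_i=\alpha_1$ in the notation there) produces a tile $\hat G'_{[l]}$ of $G_{\hat T'^o,\hat\gamma}$ whose diagonal is labeled $\alpha_1$ and such that $\hat P'>\mu_{\hat G'_{[l]}}\hat P'\in \varphi^{\hat T^o}_{\hat\alpha_1,\cdots,\hat\alpha_{\eta_\alpha}}(\mu_{\hat G_l}\hat P)$; pushing along $f$ gives the desired tile $G'_{[l]}$. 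For the additional equalities (a) and (b) one must be slightly careful: the hypothesis that the $\alpha'$-mutable edges of $P'$ are labeled $\alpha_2,\alpha_4$ says that $\hat G'_{[l]}$ is adjacent in $G_{\hat T'^o,\hat\gamma}$ to the tile $\hat G'(k)$ with diagonal $\hat\alpha'_k$ (by Proposition \ref{Prop-mutation3}(3)(a) $|[l]-[l']|=1$), so that the two $\alpha_1$-edges of $\hat G_l$ and $\hat G'_{[l]}$ literally sit at the same crossing of $\hat\gamma$ with $\hat\alpha_1$; the counts $m^{\pm}(\cdot;\alpha_1)$ and $n(\cdot;\alpha_1)$ then agree as in part (1). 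The main obstacle is therefore not a separate argument for the orbifold case, but rather verifying that the locality (i.e., single-crossing-point) identification that drives the polygon case continues to hold after pushing along $f$; once the non-radius hypothesis is imposed, this verification is immediate tile-by-tile.
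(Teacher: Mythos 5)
The proposal is correct and takes essentially the same approach the paper intends: the paper's proof is the one-line remark that the proposition ``follows by Propositions \ref{Lem-tile-2} and \ref{Prop-mutation3}'', and what you do is spell out precisely how one lifts along the canonical polygon morphism $f$ (identifying $G_{T^o,\gamma}$ with $G_{\hat T^o,\hat\gamma}$ tile by tile so that $\varphi_\alpha^{T^o}$ is computed as $\varphi^{\hat T^o}_{\hat\alpha_1,\dots,\hat\alpha_{\eta_\alpha}}$), applies the polygon-case twist compatibility, and pushes the conclusions back to $\Sigma$. Your observation that the non-radius hypothesis in part (3) is exactly the condition ensuring no identification among $\alpha_1,\dots,\alpha_4$ (so that the local picture lifts faithfully and Proposition \ref{Prop-mutation3}(3)(4) applies verbatim) is the right way to read that hypothesis; this matches the role it plays in the paper, where the self-folded case $\alpha_1=\alpha_2$ is treated separately in Proposition \ref{dProp-mutation4}.
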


\begin{proposition}\label{dProp-mutation4}
Assume that $\alpha_1=\alpha_2$ and the diagonal of $G_l$ is labeled $\alpha_1$. We further assume that $rel(G_l,T^o)=1$ and $N(G_l), W(G_l)$ are labeled $\alpha$. If $P$ can twist on $G_l$ and $P>Q:=\mu_{G_l}P$, denote ${\bf m}(P,\alpha)=(m_1(P),\cdots,m_{\eta_\alpha}(P)), {\bf m}(Q,\alpha)=(m_1(Q),\cdots,m_{\eta_\alpha}(Q))$, then $m_k(P)=m_{k}(Q)-1, m_{k+1}(P)=m_{k+1}(Q)+1$ for some $k$ and $m_i(P)=m_{i}(Q)$ for any $i\neq k,k+1$. Consequently, we have $(m_k(P), m_{k+1}(P))=(-1,1)$, $(0,0)$ or $(0,1)$.
\begin{enumerate}[$(1)$]
\item In case $(m_k(P), m_{k+1}(P))=(-1,1)$, we have $(m_k(\mu_{G_l}P), m_{k+1}(\mu_{G_l}P))=(0,0)$. Assume the tile left to $G'(k+1)$ is $G'$. Then the diagonal of $G'$ is labeled $\alpha_1$ and
any $Q'\in \varphi_{\alpha}^{T^o}(Q)$ can twist on $G'$, see Figure \ref{Fig:bigon1}. Moreover,\\
(a) $\mu_{G'}Q',\mu_{G'(k+1)}\mu_{G'}Q'\in \varphi_{\alpha}^{T^o}(P)$,
(b) $Q'<\mu_{G'}Q'<\mu_{G'(k+1)}\mu_{G'}Q'$,\\
(c),\\
(d) $\Omega(Q';G')=\Omega(Q;G_l)+\Omega(P;G_{l+1})$.\\
In particular, $\varphi_{\alpha}^{T^o}(P)=\{\mu_{G'}Q',\mu_{G'(k+1)}\mu_{G'}Q'\mid Q'\in \varphi_{\alpha}^{T^o}(Q)\}$.

\begin{figure}[h]
\centerline{\includegraphics{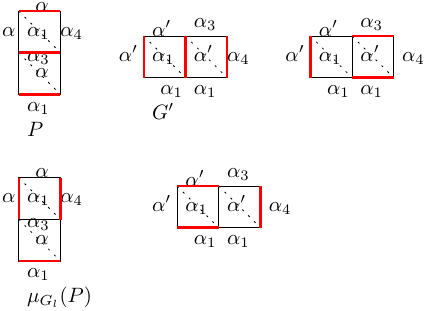}}
\caption{}\label{Fig:bigon1}
\end{figure}

\item In case $(m_k(P), m_{k+1}(P))=(0,0)$, we have $(m_k(Q), m_{k+1}(\mu_{G_l}P))=(1,-1)$. Assume the tile right to $G'(k)$ is $G'$. Then the diagonal of $G'$ is labeled $\alpha_1$ and
any $P'\in \varphi_{\alpha}^{T^o}(P)$ can twist on $G'$, see Figure \ref{Fig:bigon2}. Moreover,\\
(a) $\mu_{G'}P',\mu_{G'(k)}\mu_{G'}P'\in \varphi_{\alpha}^{T^o}(Q)$, (b) $P'>\mu_{G'}P'>\mu_{G'(k)}\mu_{G'}P'$,\\
(c) $\Omega(Q;G_l)=\Omega(\mu_{G'(k)}\mu_{G'}P';G'(k))+\Omega(\mu_{G'}P';G')$,\\
In particular, $\varphi_{\alpha}^{T^o}(Q)=\{\mu_{G'}P',\mu_{G'(k)}\mu_{G'}P' \mid P'\in \varphi_{\alpha}^{T^o}(P)\}$.

\begin{figure}[h]
\centerline{\includegraphics{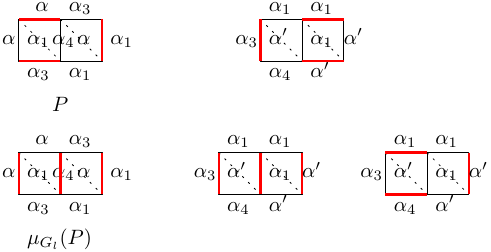}}
\caption{}\label{Fig:bigon2}
\end{figure}

\item In case $(m_k(P), m_{k+1}(P))=(0,1)$, we have $(m_k(Q), m_{k+1}(Q))=(1,0)$. There is a unique tile $G'$ between $G'(k)$ and $G'(k+1)$ and its diagonal is labeled $\alpha_1$. Assume that $P'\in \varphi_{\alpha}^{T^o}(P)$ with $P'<\mu_{G'(k+1)}P'$. Then $P'$ can twist on $G'$, see Figure \ref{Fig:bigon3}. Moreover,\\
(a) $\mu_{G'} P',\mu_{G'(k)}\mu_{G'}P'\in \varphi_{\alpha}^{T^o}(Q)$, (b) $\mu_{G'(k+1)}P'>P'>\mu_{G'}P'>\mu_{G'(k)}\mu_{G'}P'$,\\
(c) $\Omega(Q;G_l)=\Omega(\mu_{G'(k)}\mu_{G'}P';G'(k))+\Omega(\mu_{G'}P';G')$.

\begin{figure}[h]
\centerline{\includegraphics{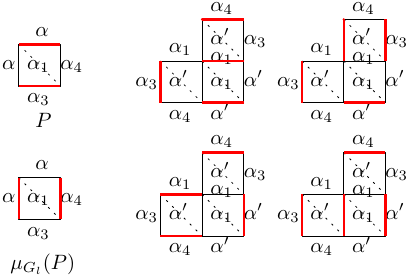}}
\caption{}\label{Fig:bigon3}
\end{figure}
\item In all cases, for any $Q'\in \varphi_{\alpha}^{T^o}(Q)$ we have
$$m(Q;\alpha_1)-n(G_{T^o,\widetilde\beta};\alpha_1)=m(Q';\alpha_1)-n(G_{T'^o,\widetilde\beta};\alpha_1)+m(Q';\alpha')-n(G_{T'^o,\widetilde\beta};\alpha').$$
\end{enumerate}
\end{proposition}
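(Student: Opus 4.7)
The plan is to reduce the entire proposition to the polygon case handled in Proposition~\ref{Prop-mutation3}, by lifting through the canonical polygon morphism $f : P_n \to \Sigma$. The hypothesis $\alpha_1 = \alpha_2$ means that $\alpha$ and $\alpha_1$ bound a self-folded configuration in $T^o$, and under the unfolding $f$ the tile $G_l$ (diagonal $\alpha_1$, north and west both labelled $\alpha$, $rel(G_l,T^o) = 1$) corresponds to an honest quadrilateral tile $\hat G_l$ in $G_{\hat T^o, \hat\gamma}$ whose diagonal is some $\hat\alpha_1$ among the lifts of $\alpha_1$, and whose two $\alpha$-sides lift to \emph{consecutive} lifts $\hat\alpha_k, \hat\alpha_{k+1}$ of $\alpha$. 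First I would pin this lift down explicitly, together with the lifts $\hat G'(k)$ and $\hat G'(k+1)$ of the tiles $G'(k), G'(k+1)$ from Proposition~\ref{Lem-tile-11}, so that the twist $P \to Q = \mu_{G_l}P$ corresponds on the cover to a twist at $\hat G_l$ that exchanges the roles of $\hat\alpha_k$ and $\hat\alpha_{k+1}$.

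Next, the shift $m_k(P) = m_k(Q) - 1$, $m_{k+1}(P) = m_{k+1}(Q)+1$, and $m_i(P) = m_i(Q)$ otherwise follow from applying Proposition~\ref{Prop-mutation3} parts~(3) and~(4) to the quadrilateral containing $\hat\alpha_k$ and $\hat\alpha_{k+1}$ respectively, since the twist at $\hat G_l$ exchanges one $\hat\alpha_k$-adjacent edge with one $\hat\alpha_{k+1}$-adjacent edge. Combined with ${\bf m} \in \{-1,0,1\}^{\eta_\alpha}$ (Proposition~\ref{Lem-parvar}), this restricts $(m_k(P), m_{k+1}(P))$ to the three listed configurations $(-1,1), (0,0), (0,1)$, which I would treat in turn. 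In each case the tile $G' \subset G_{T'^o,\gamma}$ with diagonal labelled $\alpha_1$ is identified as the image under $f$ of the unique tile of $G_{\hat T'^o,\hat\gamma}$ sandwiched between $\hat G'(k)$ and $\hat G'(k+1)$ (which in $T'^o$ are joined through a tile with diagonal $\alpha_1$, because $\alpha_1 = \alpha_2$ forces this adjacency). The twistability of each $Q' \in \varphi_\alpha^{T^o}(Q)$ (resp.\ $P' \in \varphi_\alpha^{T^o}(P)$) on $G'$, and the chain of covering relations in (a), (b) of each case, then follow from Proposition~\ref{Prop-mutation3} applied once at each of $\hat\alpha_k$ and $\hat\alpha_{k+1}$. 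The membership assertions $\mu_{G'}Q' \in \varphi_\alpha^{T^o}(P)$ etc.\ are read off from the explicit description of $\phi^{T^o}_\alpha$ (Subsection~\ref{Sec-PB-A}) after transferring along the bijection $\mathcal{CP}\leftrightarrow \mathcal P(G)$.

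For the gradient number identities~(c) in each case, I would expand both sides using Definition~\ref{Def-ome}, and observe that the only edges whose contributions can differ between the $T^o$-side and the $T'^o$-side are those labelled $\tau_{i_l} = \alpha_1$ or $\alpha$ on the one hand, and $\alpha_1$ or $\alpha'$ on the other; the balance is then forced, exactly as in the proof of Lemma~\ref{lem-gra}, by the skew-symmetrizability of $B(T^o)$ together with the counting information recorded in Proposition~\ref{Prop-mutation4}~(3)(a)(b). Part~(4) follows similarly by invoking Proposition~\ref{Lem-parvar}(1) (i.e.\ ${\bf m}(Q',\alpha') = -{\bf m}(Q,\alpha)$) and summing the $k$-th and $(k+1)$-st components, which are the only ones contributing the shift between the $\alpha_1$- and $\alpha'$-multiplicities in the respective snake graphs. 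The main obstacle is the bookkeeping in the case-by-case matching: one must correctly identify $G', G'(k), G'(k+1)$ as specific tiles of $G_{T'^o,\gamma}$ (not just of $G_{\hat T'^o,\hat\gamma}$), and check, in each sub-case, that the described family on the right really exhausts the full $\varphi_\alpha^{T^o}$-class on the left. I expect sub-case~(3) to be the most delicate, because the asymmetric profile $(m_k(P), m_{k+1}(P)) = (0,1)$ forces one to split $\varphi_\alpha^{T^o}(P)$ by the value of $\mu_{G'(k+1)}P'$ before identifying the twistable structure.
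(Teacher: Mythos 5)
Your high-level plan---lift to the canonical polygon, read off the $m$-shift from Proposition~\ref{Prop-mutation3}, and then translate back---is sensible and roughly compatible with the machinery the paper builds, but there are two concrete gaps. First, your claim that $\mathbf m\in\{-1,0,1\}^{\eta_\alpha}$ ``restricts $(m_k(P),m_{k+1}(P))$ to the three listed configurations'' is not true as stated: the same constraint also allows $(-1,0)$, and nothing in your argument eliminates it. The paper rules it out by a geometric observation specific to the $\alpha_1=\alpha_2$ situation, namely that an arc $\widetilde\beta$ cannot cross $\alpha,\alpha_1,\alpha$ consecutively (the region bounded when $\alpha_1=\alpha_2$ is degenerate, so such a crossing pattern would force $\widetilde\beta$ to double back on itself). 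Without some such argument your case split is incomplete, and this is not a cosmetic omission: the $(-1,0)$ pattern would have no analogue of the tile $G'$ you rely on.

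Second, for the gradient identities you appeal to ``the counting information recorded in Proposition~\ref{Prop-mutation4}~(3)(a)(b)'', but that part of Proposition~\ref{Prop-mutation4} is stated explicitly under the hypothesis that $\tau_{i_l}$ is \emph{not} the radius of a self-folded triangle in $T^o$ or $T'^o$---which is precisely the hypothesis that fails when $\alpha_1=\alpha_2$ (in $T'^o$ the arc $\alpha_1$ becomes the radius of the self-folded triangle with loop $\alpha'$). That is the entire reason the paper needs the present proposition as a separate statement, so invoking Proposition~\ref{Prop-mutation4}~(3)(a)(b) here is circular. The paper instead re-derives the $m^\pm$ and $n$ bookkeeping directly for this self-folded configuration (comparing $m^{\pm}(Q,G_l;\alpha_1)$ to the sum of contributions from $G'$ and the adjacent $G'(k{+}1)$ on the $T'^o$ side), and that computation---not skew-symmetrizability alone---is what yields $\Omega(Q;G_l)=\Omega(Q';G')+\Omega(\mu_{G'}Q';G'(k{+}1))$ and the multiplicity identity in part~(4). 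Your sketch identifies the right objects and the right places where the work has to happen, but the two steps above are genuine gaps, not mere bookkeeping.
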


\begin{proof}
Since $P>Q$, we have $N(G_l)\in P, W(G_l)\in Q$. Thus $m_k(P)=m_{k}(Q)-1, m_{k+1}(P)=m_{k+1}(Q)+1$ for some $k$ and $m_i(P)=m_{i}(Q)$ for any $i\neq k,k+1$. It follows that $m_k(P)\in \{-1,0\}$ and $m_{k+1}(P)\in \{0,1\}$. Since $\widetilde\beta$ can not cross $\alpha,\alpha_1,\alpha$ consecutively, we have $(m_k(P),m_{k+1}(P))\neq (-1,0)$. Therefore, $(m_k(P), m_{k+1}(P))=(-1,1)$, $(0,0)$ or $(0,1)$.

We only consider the case that $(m_k(P), m_{k+1}(P))=(-1,1)$, as the other cases can be proved similarly. We have the diagonal of $G_{l-1}$ is labeled $\alpha$ and $S(G_{l-1})\in P\cap Q$. We may assume $rel(G'(k+1),T'^o)=1$. As $E(G_l)$ is labeled $\alpha_4$, from the construction of snake groups, we have $E(G'(k+1))$ is labeled $\alpha_4$ and the diagonal of $G'$ is labeled $\alpha_1$. For any $Q'\in \varphi_{\alpha}^{T^o}(Q)$, since $m_k(Q)=m_k(P)+1=0$, we have $m_k(Q')=0$ by Proposition \ref{Lem-parvar}. Thus we have $N(G')\in Q'$. As $S(G_{l-1})$ is labeled $\alpha_1$ and $S(G_{l-1})\in Q$, we have $S(G')\in Q'$. Thus $Q'$ can twist on $G'$. For any $P'\in \varphi_{\alpha}^{T^o}(P)$, since $m_k(P)=-1$, we have $m_k(P')=1$ by Proposition \ref{Lem-parvar}. Thus we have $W(G')\in P'$. Therefore, $\mu_{G'}Q'\in \varphi_{\alpha}^{T^o}(P)$. By Proposition \ref{Lem-tile-11} (2), $\mu_{G'}Q'$ can twist on $G'(k+1)$ and $\mu_{G'(k+1)}\mu_{G'}Q'\in \varphi_{\alpha}^{T^o}(P)$.
$Q'<\mu_{G'}Q'<\mu_{G'(k+1)}\mu_{G'}Q'$ is immediate. By Proposition \ref{Lem-tile-2} (3) (4), we have
$$m^{\pm}(Q,G_l;\alpha_1)=m^{\pm}(Q',G';\alpha_1)+m^{\pm}(\mu_{G'}Q',G'(k+1);\alpha')-n((G'(k+1))^{\pm};\alpha'),$$
$$m^{+}(Q',G';\alpha_1)=m^{+}(Q,G_l;\alpha_1)+m^{+}(P,G_{l-1};\alpha)-n(G_{l-1}^{+};\alpha)-1,$$
$$m^{-}(Q',G';\alpha_1)=m^{-}(Q,G_l;\alpha_1)+m^{-}(P,G_{l-1};\alpha)-n(G_{l-1}^{-};\alpha)-1.$$
We have $n(G_l^{\pm};\alpha_1)=n(G'^{\pm};\alpha_1)$.
Therefore, we have $$\Omega(Q;G_l)=\Omega(Q';G')+\Omega(\mu_{G'}Q';G'(k+1)),\;\;\;\Omega(Q';G')=\Omega(Q;G_l)+\Omega(P;G_{l+1})$$ and
$$m(Q;\alpha_1)-n(G_{T^o,\widetilde\beta};\alpha_1)=m(Q';\alpha_1)-n(G_{T'^o,\widetilde\beta};\alpha_1)+m(Q';\alpha')-n(G_{T'^o,\widetilde\beta};\alpha').$$
\end{proof}

\begin{lemma}\label{lem-p-12}
We have $P'_-\in \varphi^{T^o}_{\alpha}(P_-)$.
\end{lemma}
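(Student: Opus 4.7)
The plan is to reduce to the polygon case via the canonical polygon and then handle a single flip by explicit local analysis.

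First, I would invoke the commutative diagram in Subsection \ref{Sec-varphi} defining $\varphi^{T^o}_\alpha$ through the canonical polygon morphism $f\colon P_n\to \Sigma$: this reduces the lemma to the analogous statement for the minimum matchings of $G_{\hat T^o,\hat\gamma}$ and $G_{\hat T'^o,\hat\gamma}$ under $\varphi^{\hat T^o}_{\hat\alpha_1,\ldots,\hat\alpha_{\eta_\alpha}}$. Since minimum matchings are determined tile-by-tile by the relative orientations $rel(G_j,T^o)$ (Proposition \ref{prop-mm}) and the tiles of $G_{\hat T^o,\hat\gamma}$ are in orientation-preserving bijection with those of $G_{T^o,\gamma}$, the lift of $P_-$ coincides with the minimum matching of $G_{\hat T^o,\hat\gamma}$, and likewise for $P'_-$. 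Thus I may assume $\Sigma$ is the polygon $P_n$.

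Next, using the factorization (\ref{Eq-par}), I would reduce by induction on $\eta_\alpha$ to the single-flip case: it suffices to show that if $\alpha$ is a single non-self-folded diagonal in a polygon triangulation $T^o$ and $T'^o=\mu_\alpha T^o$, then $P'_- \in \varphi^{T^o}_\alpha(P_-)$.

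For this single-flip polygon case, let $G_l$ be the (at most one) tile of $G_{T^o,\gamma}$ with diagonal labeled $\alpha$, and let $G'$ be the corresponding tile of $G_{T'^o,\gamma}$ with diagonal labeled $\alpha'$. The local form of $\overrightarrow\xi(P_-)$ near $G_l$ is pinned down by Proposition \ref{prop-mm} and Lemma \ref{max-min}; the local form of $\overrightarrow\xi(P'_-)$ near $G'$ is determined in the same way, with $rel(G',T'^o)$ in place of $rel(G_l,T^o)$. I would then run through Cases I--V of Subsection \ref{Sec-PB-A}, using Lemma \ref{lem-p-} to control the sign of $\hat m(P_-,\alpha)$ in each geometric configuration, and verify that among the images produced by the construction of $\phi^{T^o}_\alpha$ there is always one whose local piece at the flipped quadrilateral agrees with that of $\overrightarrow\xi(P'_-)$. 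Away from the quadrilateral, the two complete paths agree tile-by-tile by Lemma \ref{Lem-var}, so the chosen global complete path is exactly $\overrightarrow\xi(P'_-)$, which by the bijection $\mathcal{CP}(T'^o,\gamma)\cong \mathcal P(G_{T'^o,\gamma})$ corresponds to $P'_-$.

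The main obstacle will be the case-by-case matching in the last step. For each of the five cases in Subsection \ref{Sec-PB-A}, one must verify, for every possible value of $rel(G_l,T^o)$ and each sub-case of Lemma \ref{lem-p-}, that the ``boundary-edges-only'' configuration forced by $P'_-$ in $G_{T'^o,\gamma}$ is realized by one of the choices the construction of $\phi^{T^o}_\alpha$ makes available. The underlying idea is simple, but the bookkeeping over orientations and endpoint positions is delicate; Lemma \ref{lem-p-} is the key structural input that makes this bookkeeping tractable.
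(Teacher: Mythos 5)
Your plan takes a genuinely different route from the paper. The paper's proof is a short indirect argument: let $P'$ be the minimum element of the finite set $\varphi^{T^o}_\alpha(P_-)$ inside the lattice $\mathcal P(G_{T'^o,\widetilde\beta})$, and show that no twist decreases $P'$. For a tile with diagonal labeled $\alpha'$, a decreasing twist would land back in $\varphi^{T^o}_\alpha(P_-)$ (Proposition \ref{Lem-parvar}), contradicting minimality of $P'$ within that class; for the remaining tiles, Propositions \ref{Prop-mutation4} and \ref{dProp-mutation4} (applied with the roles of $T^o$ and $T'^o$ swapped) would transfer a decreasing twist of $P'$ to a decreasing twist of $P_-$, contradicting the global minimality of $P_-$. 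Hence $P'$ is a local, therefore global, minimum and $P'=P'_-$. Your plan instead reduces to the single-flip polygon case via the canonical polygon and the factorization (\ref{Eq-par}), then does an explicit local check of $\overrightarrow\xi(P_-)$ against Cases I--V of Subsection \ref{Sec-PB-A}, pinned down by Proposition \ref{prop-mm} and Lemma \ref{lem-p-}. Both reduction steps are sound and the final case enumeration would go through, but it is substantially more laborious than the paper's lattice-theoretic argument, which sidesteps the quadrilateral geometry entirely. One small imprecision: your claim that ``away from the quadrilateral the two complete paths agree tile-by-tile by Lemma \ref{Lem-var}'' cites the wrong fact --- that lemma only controls the net counts $\hat m(\cdot,\tau)$; what you actually need is the stronger observation, immediate from the construction of $\phi^{T^o}_\alpha$, that the map leaves the complete path unchanged outside the quadrilateral. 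This does not undermine the plan, but the citation should be corrected.
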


\begin{proof}
Assume $P'$ is the minimum element in $\varphi^{T^o}_{\alpha}(P_-)$. For any tile $G'$ that $P'$ can twist on, as $P_-$ is the minimum perfect matching, we have $P'<\mu_{G'}P'$ by Proposition \ref{Prop-mutation4} and Proposition \ref{dProp-mutation4}. It follows that $P'=P'_-$.
\end{proof}

The following result follows by Lemma \ref{lem-p-}.

\begin{lemma}\label{lem-p--}
For $P_-$, we have $m_i(P_-)m_j(P_-)\geq 0$ for any $i,j\in \{1,\cdots,\eta_\alpha\}$.
\end{lemma}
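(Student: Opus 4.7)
The plan is to lift to the canonical polygon $P_n$ and apply Lemma \ref{lem-p-} independently to each preimage $\hat\alpha_i$ of $\alpha$. By the construction in Section \ref{Sec-varphi}, $m_i(P_-)=\hat m(\hat P_-,\hat\alpha_i)$, where $\hat P_-$ is the minimum perfect matching of $G_{\hat T^o,\hat\gamma}$. Since $P_n$ is a polygon and hence carries no self-folded triangles, each $\hat\alpha_i$ is a non-self-folded diagonal of $\hat T^o$, so Lemma \ref{lem-p-} applies to each $\hat\alpha_i$ separately and in particular forces $m_i(P_-)\in\{-1,0,1\}$ along with a description of $\hat\gamma$ near $\hat\alpha_i$ whenever $m_i(P_-)\neq 0$.

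Suppose for contradiction that $m_i(P_-)=+1$ and $m_j(P_-)=-1$ for some $i,j$. Since the canonical-polygon morphism $f:P_n\to\Sigma$ is orientation-preserving, every lifted quadrilateral $Q_k$ of $(a,b,c,d)$ inherits the same clockwise order $(a_k,b_k,c_k,d_k)$ as the quadrilateral $(a,b,c,d)$ in $T^o$. By Lemma \ref{lem-p-}(1), the behavior of $\hat\gamma$ near $\hat\alpha_i$ falls into one of three configurations, all aligned with the sides $(a_i,b_i),(c_i,d_i)$ of $Q_i$ or with the vertices $a_i,c_i$; symmetrically, by Lemma \ref{lem-p-}(2), the behavior near $\hat\alpha_j$ falls into one of three configurations aligned with $(a_j,d_j),(b_j,c_j)$ or with $b_j,d_j$. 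I would then enumerate the $3\times 3$ combinations: in the purely endpoint-based subcases (cases (b)-(c) of each part of Lemma \ref{lem-p-}), the curve $\hat\gamma$ would need an endpoint projecting simultaneously to a point in $\{a,c\}$ and in $\{b,d\}$ of $\Sigma$, which is impossible since these are distinct vertices of the quadrilateral. Mixed subcases combine an endpoint constraint on one side with the structural information from Proposition \ref{prop-mm} on the other side to produce an analogous contradiction.

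The main obstacle is the case where both opposite signs arise from the internal-crossing configurations (case (a) in each part of Lemma \ref{lem-p-}). There one must combine the ``positive'' pattern $(a_i,b_i),\hat\alpha_i,(c_i,d_i)$ at the snake-graph tile $G(i)$ with the ``negative'' pattern $(a_j,d_j),\hat\alpha_j,(b_j,c_j)$ at $G(j)$, and show that the rigid boundary-edge structure of $\hat P_-$ from Proposition \ref{prop-mm}, together with the parity rule of Lemma \ref{max-min}, forces the transit arcs of the complete $(\hat T^o,\hat\gamma)$-path to lie on consistent sides of $\hat\alpha_i$ and $\hat\alpha_j$ relative to the fixed orientation of $P_n$. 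This forced consistency is incompatible with the two opposite-sign patterns coexisting, which completes the contradiction and shows that all nonzero $m_i(P_-)$ must share a common sign.
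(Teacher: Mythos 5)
Your framework — lifting to the canonical polygon and reading off the local configurations at each preimage $\hat\alpha_i$ via Lemma \ref{lem-p-} — matches the paper's intent, and the opening observation that $m_i(P_-)=\hat m(\hat P_-,\hat\alpha_i)\in\{-1,0,1\}$ is correct. But your treatment of the key case (both signs arising from the internal-crossing configuration (a)) has a genuine gap, and the mechanism you propose cannot work. The paper's proof is the one-liner ``$\gamma$ intersects itself by Lemma \ref{lem-p-}''; unpacked, the positive configuration at $\hat\alpha_i$ projects to a subarc of $\gamma$ in $\Sigma$ that enters the quadrilateral around $\alpha$ through the side $\alpha_1$ and exits through $\alpha_3$, while the negative configuration at $\hat\alpha_j$ projects to a subarc entering through $\alpha_4$ and exiting through $\alpha_2$. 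These two subarcs both cross $\alpha$ and, by a separating-arc/Jordan-curve argument inside the quadrilateral, must intersect each other; this contradicts $\gamma$ being an embedded arc. That geometric step is exactly where the hypothesis ``$\gamma$ is a simple arc'' enters, and it is absent from your argument. Instead you try to force ``consistency of the transit arcs of the complete $(\hat T^o,\hat\gamma)$-path relative to the fixed orientation of $P_n$'' from Proposition \ref{prop-mm} and Lemma \ref{max-min}. Inside $P_n$, however, $\hat\gamma$ is a simple diagonal and the configurations at the two distinct diagonals $\hat\alpha_i,\hat\alpha_j$ are combinatorially independent; there is no contradiction to be found in $P_n$ alone, and Proposition \ref{prop-mm} and Lemma \ref{max-min} give no relation between the behaviour near $\hat\alpha_i$ and near $\hat\alpha_j$. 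You mention the projection $f$, but never invoke it where it is needed.

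A secondary issue: your dismissal of the endpoint subcases rests on ``$\{a,c\}$ and $\{b,d\}$ are distinct vertices of the quadrilateral.'' In a general orbifold the four vertices of the quadrilateral around a non-self-folded arc $\alpha$ need not be pairwise distinct as points of $\Sigma$ (the paper only guarantees $\{\alpha_1,\alpha_3\}\neq\{\alpha_2,\alpha_4\}$ as unordered pairs of sides, not distinctness of vertices), so $\{a,c\}\cap\{b,d\}=\emptyset$ is not automatic. The robust way to treat these subcases, as with the main one, is again to project the two local configurations to $\Sigma$ and produce the forbidden self-intersection of $\gamma$. In short: the correct tool and the correct case split are identified, but the core step -- converting the opposite-sign local data into a self-intersection of $\gamma$ in $\Sigma$ -- is missing, and the substitute combinatorial argument in $P_n$ is not sound.
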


\begin{proof}
Otherwise, $m_i(P_-)m_j(P_-)<0$ for some $i,j\in \{1,\cdots,\eta_\alpha\}$. We see that $\gamma$ intersects itself by Lemma \ref{lem-p-}, a contradiction.
\end{proof}

\newpage

\section{Partition bijection between triangles}\label{sec:PB}

With the notation in Section \ref{delta1}. Let $q$ be a puncture. Let $\Delta(T^o,q)$ (resp. $\Delta(T'^o,q)$) be the set of triangles incident to $q$ in $T^o$ (resp. $T'^o=\mu_\alpha(T^o)$). In this section, we give a partition bijection $\phi_\alpha^{T^o}:\Delta(T^o,q)\to \Delta(T'^o,q)$.

\subsection{Once-puncture polygon case} We first consider the case that $\Sigma$ is the once-punctured polygon $P_{n,1}$, the disk with $n$ marked points labeled $1,\cdots,n$ clockwise and one puncture labeled $0$. Let $T^o$ be a triangulation of $P_{n,1}$. Assume that $\Delta(T^o,0)=\{\Delta_1,\cdots,\Delta_t\}$ and for any $i\in \{1,\cdots,t\}$ the three edges of $\Delta_i$ are labeled $\tau_{i^-},\tau_i,\tau_{[i]}$, where we denote
\[\begin{array}{ccl} i^+ &=&
\left\{\begin{array}{ll}
i+1, &\mbox{if $1\leq i\leq t-1$}, \\
1, &\mbox{if $i=t$},
\end{array}\right.
\end{array}, \begin{array}{ccl} i^- &=&
\left\{\begin{array}{ll}
i-1, &\mbox{if $2\leq i\leq t$}, \\
t, &\mbox{if $i=1$}.
\end{array}\right.
\end{array}\]

Fix an arc $\alpha\in T^o$, we have one of the following cases happens: (1) $\alpha=\tau_i$ for some $i$; (2) $\alpha=\tau_{[i]}$ for some $i$; (3) the rest cases.

For any $\Delta_j\in \Delta(T^o,0)$, we associate with a subset $\phi_{\alpha}^{T^o,q}(\Delta_j)\subset \Delta(T'^o,0)$ as follows.

\textbf{Case I: $\alpha=\tau_i$ for some $i$.} Then $\Delta(T'^o,0)=\{(\tau_{i^-},\tau_{i^+},\alpha'), \Delta_j\mid j\neq i,i^+\}$. In this case, let
\[\begin{array}{ccl} \phi_{\alpha}^{T^o,q}(\Delta_j) &=&
\left\{\begin{array}{ll}
(\tau_{i^-},\tau_{i^+},\alpha'), &\mbox{if $j=i$ or $i^+$}, \\
\Delta_j, &\mbox{if $j\neq i,i^+$},
\end{array}\right.
\end{array}
\]

\textbf{Case II: $\alpha=\tau_{[i]}$ for some $i$.} Then $\tau_{i-1},\alpha'$ are the two sides of some triangle $\hat\Delta_1$ in $T'^o$ and $\tau_{i},\alpha'$ are the two sides of some triangle $\hat\Delta_2$ in $T'^o$. We have $\Delta(T'^o,0)=\{\hat\Delta_1, \hat\Delta_2, \Delta_j\mid j\neq i\}$.

In this case, let
\[\begin{array}{ccl} \phi_{\alpha}^{T^o,q}(\Delta_j) &=&
\left\{\begin{array}{ll}
\{\hat\Delta_1,\hat\Delta_2\}, &\mbox{if $j=i$}, \\
\Delta_j, &\mbox{if $j\neq i$},
\end{array}\right.
\end{array}
\]

\textbf{Case III: the rest cases.} In this case we have $\Delta(T'^o,0)=\Delta(T^o,0)$. In this case let $\phi_{\alpha}^{T^o,q}(\Delta_j)=\Delta_j$ for all $j\in \{1,\cdots,s\}$.

Similarly we can associate a subset $\phi_{\alpha}^{T'^o,q}(\Delta')$ of $\Delta(T^o,0)$ for any $\Delta'\in \Delta(T'^o,0)$.

Assume that $\Delta(T'^o,0)=\{\Delta'_1,\cdots,\Delta'_{s'}\}$ and for any $i\in \{1,\cdots,s'\}$ the three edges of $\Delta'_i$ are labeled $\tau'_{i^-},\tau'_{i},\tau'_{[i]}$.

\begin{proposition}\label{prop-tri}
With the foregoing notation. $\phi_{\alpha}^{T^o,q}$ is a partition bijection from $\Delta(T^o,0)$ to $\Delta(T'^o,0)$ with inverse $\phi_{\alpha}^{T'^o,q}$. For any $\Delta_i\in \Delta(T^o,0)$ we have
$$|\phi_{\alpha}^{T^o,q}(\Delta_i)|=2^{[m(\Delta_i;\alpha)]_+},$$
where $m(\Delta_i;\alpha)$ is given by (\ref{Eq-mj}), moreover, if $m(\Delta_i;\alpha)=1$, then $\alpha=\tau_{[i]}$ and $\phi_{\alpha}^{T^o,q}(\Delta_i)=\{\Delta'_j, \Delta'_{j+1}\}$ for some $j$ such that $\tau'_j=\alpha'$.
\end{proposition}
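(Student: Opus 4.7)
The plan is to prove Proposition \ref{prop-tri} by a direct case analysis matching the three cases used to define $\phi_{\alpha}^{T^o,q}$. Throughout, I will exploit the equivalent characterization of partition bijections given in Remark \ref{Rmk-par} (so it suffices to check: non-empty values in $\Delta(T'^o,0)$; pairwise either disjoint or equal; union covering $\Delta(T'^o,0)$) and then verify the inversion criterion in Proposition \ref{Pro-parin}.

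First I would verify that Cases I, II, III are mutually exclusive and exhaustive. Since $\alpha \in T^o$, either $\alpha$ is incident to $0$ (Case I: $\alpha = \tau_i$ for a unique $i$), or $\alpha$ is not incident to $0$ but bounds some triangle in $\Delta(T^o,0)$ (Case II: $\alpha = \tau_{[i]}$ for a unique $i$, since the two triangles containing the edge $\alpha$ cannot both lie in $\Delta(T^o,0)$ when $\alpha$ is not incident to $0$), or $\alpha$ is disjoint from every $\Delta_j$ (Case III). In each case the values $\phi_{\alpha}^{T^o,q}(\Delta_j)$ lie in $\Delta(T'^o,0)$ by inspection of the flip: in Case I, flipping $\alpha = \tau_i$ merges the two triangles $\Delta_i, \Delta_{i^+}$ of the quadrilateral $(q, v_a, v_b, v_c)$ (with $v_b$ the vertex opposite $q$ that is removed from the star of $q$) into a single triangle $(\tau_{i^-}, \tau_{i^+}, \alpha')$ at $q$; in Case II, flipping $\alpha = \tau_{[i]}$ splits $\Delta_i$ into two triangles $\hat\Delta_1, \hat\Delta_2$ sharing the new arc $\alpha'$, both incident to $q$; in Case III, the star of $q$ is unchanged so $\Delta(T'^o,0) = \Delta(T^o,0)$.

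Next I would check the cardinality formula $|\phi_{\alpha}^{T^o,q}(\Delta_j)| = 2^{[m(\Delta_j;\alpha)]_+}$ using the definition \eqref{Eq-mj} of $m(\Delta_j;\alpha)$. In Case I, $\alpha = \tau_i$ is a side incident to $q$ of both $\Delta_i$ and $\Delta_{i^+}$ (so $m = -1$) and appears on no other triangle at $q$ (so $m = 0$); thus $2^{[m]_+} = 1$ everywhere, matching $|\phi(\Delta_j)|$. In Case II, $\alpha = \tau_{[i]}$ is the third edge of $\Delta_i$ (so $m(\Delta_i;\alpha) = 1$) and lies on no other triangle at $q$ (so $m = 0$ elsewhere); thus $2^{[m]_+} = 2$ only at $j = i$, matching $|\{\hat\Delta_1, \hat\Delta_2\}| = 2$. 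In Case III all $m$ vanish. This also proves the ``moreover'' clause: $m(\Delta_i;\alpha) = 1$ forces Case II, and with the cyclic relabeling of the new star of $q$ the two split triangles are consecutive, written as $\Delta'_j, \Delta'_{j+1}$ with $\tau'_j = \alpha'$. The partition property (disjoint or equal) is immediate in each case because the only non-trivial value is the shared one.

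Finally, to prove $\phi_{\alpha'}^{T'^o,q}$ is the two-sided inverse, I would invoke Proposition \ref{Pro-parin}, so it suffices to show $\Delta' \in \phi_{\alpha}^{T^o,q}(\Delta)$ iff $\Delta \in \phi_{\alpha'}^{T'^o,q}(\Delta')$. The key observation is that flipping interchanges Cases I and II: if $\alpha$ is in Case I for $T^o$ (incident to $q$), then $\alpha'$ is in Case II for $T'^o$ (opposite to $q$ in the merged triangle), and vice versa, while Case III flips to Case III. In the Case I/Case II duality, the merged triangle $(\tau_{i^-}, \tau_{i^+}, \alpha')$ contains $\alpha'$ as its $\tau'_{[\cdot]}$-edge, so $\phi_{\alpha'}^{T'^o,q}$ splits it back into exactly $\{\Delta_i, \Delta_{i^+}\}$; dually the two triangles $\hat\Delta_1, \hat\Delta_2$ at $q$ sharing the side $\alpha'$ are merged by $\phi_{\alpha'}^{T'^o,q}$ back into $\Delta_i$. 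The potential subtlety — and the place I would spend most care — is bookkeeping the cyclic relabeling of the triangles at $q$ across the flip so that these identifications are literal equalities of triangles, and confirming that no degenerate configuration (such as $\Delta^*$ sharing more than one edge with $\Delta_i$ in Case II) interferes; this is handled by the assumption that $\alpha$ is non-self-folded.
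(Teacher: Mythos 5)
Your proof is correct and is essentially the paper's own argument: the paper's proof is simply ``The result follows easily by checking all the above three cases,'' and your write-up supplies exactly that case check (cardinality in each of Cases I, II, III; the ``moreover'' clause from Case II; and inversion via Proposition \ref{Pro-parin} using the I/II duality under flipping). The only thing to flag is the assertion that the two triangles containing a non-incident $\alpha$ cannot both lie in $\Delta(T^o,0)$; this is fine here because in the disk two triangles with the same vertex set $\{0,a,b\}$ would have to share all three edges, but it is worth noting explicitly since it is what makes the index $i$ in Case II unique.
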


\begin{proof}
The result follows easily by checking all the above three cases.
\end{proof}

The following result is clear.

\begin{lemma}\label{lem-minv}
For any $\Delta_i\in \Delta(T^o,0)$ and $\Delta'\in \phi_{\alpha}^{T^o,q}(\Delta_i)$, we have $m(\Delta_i;\alpha)=-m(\Delta';\alpha')$.
\end{lemma}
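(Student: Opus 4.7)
My plan is to proceed by direct case analysis, matching the three cases used in the construction of $\phi_\alpha^{T^o,q}$ preceding the statement. In each case I would unfold the formula (\ref{Eq-mj}) on both sides of the claimed equality, using only the combinatorial data of which side of a given triangle incident to $q$ the arc $\alpha$ (or $\alpha'$) occupies.

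First, in Case III (neither $\alpha=\tau_i$ nor $\alpha=\tau_{[i]}$ for any $i$), the construction gives $\Delta(T'^o,0)=\Delta(T^o,0)$ with $\phi_\alpha^{T^o,q}(\Delta_i)=\Delta_i$, and neither $\alpha$ nor $\alpha'$ appears as a side of any triangle incident to $q$ (since flipping $\alpha$ only alters the two triangles containing $\alpha$, neither of which lies in $\Delta(T^o,0)$). Thus both $m(\Delta_i;\alpha)$ and $m(\Delta';\alpha')$ vanish. Next, in Case I ($\alpha=\tau_i$), the arc $\alpha$ is a $q$-side of $\Delta_i$ and $\Delta_{i^+}$ but of no other triangle incident to $q$ (since $\tau_{[j]}$ is opposite $q$ and hence not incident to $q$, so cannot equal $\tau_i$). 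A direct read-off from (\ref{Eq-mj}) gives $m(\Delta_i;\alpha)=m(\Delta_{i^+};\alpha)=-1$, $m(\Delta_j;\alpha)=0$ otherwise; while for the new triangle $\Delta'=(\tau_{i^-},\tau_{i^+},\alpha')$ one has $m(\Delta';\alpha')=1-0=1$, matching $-m(\Delta_i;\alpha)$.

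The core geometric step lives in Case II ($\alpha=\tau_{[i]}$). Here I would first observe that the two endpoints of $\alpha'$ are exactly the two vertices of the quadrilateral $\Delta_i\cup\Delta^{\mathrm{out}}$ that do not lie on $\alpha$; one of these is the vertex of $\Delta_i$ opposite $\tau_{[i]}$, which is $q$. Therefore $\alpha'$ is a new arc incident to $q$, appearing as a $q$-side of both $\hat\Delta_1$ and $\hat\Delta_2$, with the non-$q$-side being an edge of the adjacent triangle $\Delta^{\mathrm{out}}$. From this, (\ref{Eq-mj}) yields $m(\hat\Delta_1;\alpha')=m(\hat\Delta_2;\alpha')=-1$ while $m(\Delta_i;\alpha)=1-0=1$; for $j\neq i$, neither $\alpha$ nor $\alpha'$ can appear in $\Delta_j$ and both quantities vanish.

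The main obstacle, such as it is, is the bookkeeping in Case II: verifying that $\alpha'$ is genuinely incident to $q$, and, in degenerate configurations where the same arc appears as $\tau_{[i]}$ on both sides (both triangles adjacent to $\alpha$ being in $\Delta(T^o,0)$), confirming that the construction of $\phi_\alpha^{T^o,q}$ still identifies precisely two new $q$-incident triangles and that Proposition \ref{prop-tri} remains consistent with the cardinalities $2^{[m(\Delta_i;\alpha)]_+}$. Once these geometric facts are pinned down, the equality $m(\Delta_i;\alpha)=-m(\Delta';\alpha')$ is an immediate line-by-line comparison.
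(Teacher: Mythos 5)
The paper offers no argument here—it simply states "The following result is clear"—so your task was to supply the easy verification, and you do so correctly: a three-way case split mirroring the definition of $\phi_\alpha^{T^o,q}$, reading off $m(\cdot;\cdot)$ from (\ref{Eq-mj}) on both sides. Your Case I and Case III computations are right, and so is the core geometric observation in Case II that $\alpha'$ becomes incident to the puncture after the flip and appears as a $q$-side of both $\hat\Delta_1$ and $\hat\Delta_2$, giving $m(\hat\Delta_k;\alpha')=-1$ against $m(\Delta_i;\alpha)=1$. The one concern you flag but do not resolve—that $\alpha$ might be $\tau_{[i]}$ for two distinct $i$—cannot in fact occur: in the once-punctured polygon $P_{s,1}$ (a disk), any arc $\alpha$ not through $0$ separates the disk into two pieces with the puncture in exactly one of them, so only one of the two triangles adjacent to $\alpha$ can have $0$ as a vertex; the supposed degeneracy is vacuous, and your argument closes with no real gap.
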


Next, we generalize the above construction for a sequence of non-self-folded arcs $\alpha_1,\cdots,\alpha_k\in T^o$. Let $\alpha_1,\cdots,\alpha_k\in T^o$ be a sequence of arcs such that $\alpha_i,\alpha_j$ are not two sides of any triangulation in $T^o$ for any $i\neq j$. Denote by $\alpha'_1,\cdots,\alpha'_k$ the new arcs obtained from $T^o$ by flip at $\alpha_1,\cdots,\alpha_k$, respectively. For any triangle $\Delta_i\in \Delta(T^o,0)$, recall that the three sides of $\Delta_i$ are $\tau_{i-1},\tau_i$ and $\tau_{[i]}$, one and exactly one of the following cases happens:
\begin{enumerate}
\item Case I: $\tau_{i-1}=\alpha_a$ for some $a\in \{1,\cdots,k\}$;
\item Case II: $\tau_{i}=\alpha_a$ for some $a\in \{1,\cdots,k\}$;
\item Case III: $\tau_{[i]}=\alpha_a$ for some $a\in \{1,\cdots,k\}$;
\item Case IV: $\tau_{i-1}, \tau_{i},\tau_{[i]}\neq \alpha_a$ for any $a\in \{1,\cdots,k\}$.
\end{enumerate}
We associate a subset $\phi_{\alpha_1,\cdots,\alpha_k}^{T^o,q}(\Delta_i)\subset \Delta(\mu_{\alpha_k}\cdots\mu_{\alpha_1}T^o,0)$ as follows:
\begin{enumerate}
\item In case I, we have $\tau_{i-2},\tau_i,\alpha'_a$ form a triangle in $\mu_{\alpha_k}\cdots\mu_{\alpha_1}T^o$, let
$$\phi_{\alpha_1,\cdots,\alpha_k}^{T^o,q}(\Delta_i)=(\tau_{i-2},\tau_i,\alpha'_a);$$
\item In case II, we have $\tau_{i-1},\tau_{i+1},\alpha'_a$ form a triangle in $\mu_{\alpha_k}\cdots\mu_{\alpha_1}T^o$, let
$$\phi_{\alpha_1,\cdots,\alpha_k}^{T^o,q}(\Delta_i)=(\tau_{i-1},\tau_{i+1},\alpha'_a);$$
\item In case III, we have $\tau_{i-1},\alpha'$ are the two sides of some triangle $\hat\Delta_1$ in $\mu_{\alpha_k}\cdots\mu_{\alpha_1}T^o$ and $\tau_{i},\alpha'$ are the two sides of some triangle $\hat\Delta_2$ in $\mu_{\alpha_k}\cdots\mu_{\alpha_1}T^o$, let
$$\phi_{\alpha_1,\cdots,\alpha_k}^{T^o,q}(\Delta_i)=\{\hat\Delta_1,\hat\Delta_2\};$$
\item In case IV, let
$$\phi_{\alpha_1,\cdots,\alpha_k}^{T^o,q}(\Delta_i)=\Delta_i.$$
\end{enumerate}

Similarly, we can associate a subset $\phi_{\alpha'_1,\cdots,\alpha'_k}^{\mu_{\alpha_k}\cdots\mu_{\alpha_1}T^o,q}(\Delta'_i)\subset \Delta(T^o,0)$ for any $\Delta'_i\in \Delta(\mu_{\alpha_k}\cdots\mu_{\alpha_1}T^o,0)$.

\begin{proposition}\label{Lem-part}
With the foregoing notation. We have $\phi_{\alpha_1,\cdots,\alpha_k}^{T^o,q}$ and $\phi_{\alpha'_1,\cdots,\alpha'_k}^{\mu_{\alpha_k}\cdots\mu_{\alpha_1}T^o,q}$ are partition bijections inverse to each other, moreover, for any $\Delta_i\in \Delta(0)$ we have
$$|\phi_{\alpha_1,\cdots,\alpha_k}^{T^o,q}(\Delta_i)|=2^{\sum_{j=1}^k[m(\Delta_i;\alpha_j)]_+},$$
where $m(\Delta_i;\alpha_j)$ is given by (\ref{Eq-mj}), moreover, if $m(\Delta_i;\alpha_j)=1$ for some $j$, then $\alpha_j=\tau_{[i]}$ and $\phi_{\alpha}^{T^o,q}(\Delta_i)=\{\Delta'_{j'}, \Delta'_{j'+1}\}$ for some $j'$ such that $\tau'_{j'}=\alpha'_i$.
\end{proposition}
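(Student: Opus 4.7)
The plan is to reduce the multi-flip statement to the single-flip case (Proposition \ref{prop-tri}) by iterating. The key structural observation is that since $\alpha_i,\alpha_j$ are not two sides of any triangle in $T^o$ for $i\neq j$, at most one of the three sides $\tau_{i-1},\tau_i,\tau_{[i]}$ of any $\Delta_i\in\Delta(T^o,q)$ can lie in $\{\alpha_1,\ldots,\alpha_k\}$. Consequently, the four cases I--IV defining $\phi_{\alpha_1,\cdots,\alpha_k}^{T^o,q}(\Delta_i)$ are mutually exclusive and exhaustive, so the map is well-defined.

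First I would prove the identity
\[
\phi_{\alpha_1,\cdots,\alpha_k}^{T^o,q}=\phi_{\alpha_k}^{\mu_{\alpha_{k-1}}\cdots\mu_{\alpha_1}T^o,q}\circ\cdots\circ\phi_{\alpha_2}^{\mu_{\alpha_1}T^o,q}\circ\phi_{\alpha_1}^{T^o,q}
\]
by induction on $k$. The base case $k=1$ is Proposition \ref{prop-tri}. For the inductive step, fix $\Delta_i\in\Delta(T^o,q)$ and track how it is transformed. Because no two of the $\alpha_j$ share a triangle, flipping $\alpha_1$ does not affect the triangles incident to $q$ whose edges involve any other $\alpha_j$: the arcs $\alpha_2,\ldots,\alpha_k$ remain arcs of $\mu_{\alpha_1}T^o$, and whether a side of $\Delta_i$ (or of its image after the flip) is among the remaining $\alpha_j$'s is preserved exactly as the single-flip case prescribes. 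In Cases I, II, IV for $\alpha_1$ the map $\phi_{\alpha_1}^{T^o,q}$ either leaves $\Delta_i$ alone or replaces it with a single triangle whose three sides are read off in an obvious way; in Case III for $\alpha_1$ it produces two new triangles, neither of which can have a side in $\{\alpha_2,\ldots,\alpha_k\}$ (by the non-adjacency hypothesis applied to $\alpha_1$ and any $\alpha_j$), so subsequent flips leave them alone. A side-by-side inspection of the four cases versus the sequential composition then yields the claimed identity.

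Given this identity, $\phi_{\alpha_1,\cdots,\alpha_k}^{T^o,q}$ is a composition of partition bijections, hence itself a partition bijection (partition bijections are closed under composition by Proposition \ref{Pro-parin}, since the relation ``$\Delta'\in\phi(\Delta)$'' transports through). Symmetrically, $\phi_{\alpha'_1,\cdots,\alpha'_k}^{\mu_{\alpha_k}\cdots\mu_{\alpha_1}T^o,q}$ is the composition of the inverse partition bijections in reverse order, and hence is the inverse of $\phi_{\alpha_1,\cdots,\alpha_k}^{T^o,q}$ again by Proposition \ref{Pro-parin}.

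Finally I would verify the cardinality formula. Since at most one $\alpha_j$ is a side of $\Delta_i$, at most one summand $[m(\Delta_i;\alpha_j)]_+$ can be nonzero. If no $\alpha_j$ is a side of $\Delta_i$ (Case IV), then $m(\Delta_i;\alpha_j)\leq 0$ for every $j$ and the sum vanishes, matching $|\phi_{\alpha_1,\cdots,\alpha_k}^{T^o,q}(\Delta_i)|=1$. If some $\alpha_j\in\{\tau_{i-1},\tau_i\}$ (Cases I, II), then $m(\Delta_i;\alpha_j)=-1$ and again the sum is $0$, matching $|\phi_{\alpha_1,\cdots,\alpha_k}^{T^o,q}(\Delta_i)|=1$. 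If some $\alpha_j=\tau_{[i]}$ (Case III), then $m(\Delta_i;\alpha_j)=1$, so the sum equals $1$ and $|\phi_{\alpha_1,\cdots,\alpha_k}^{T^o,q}(\Delta_i)|=2$; moreover the two triangles in $\phi_{\alpha_1,\cdots,\alpha_k}^{T^o,q}(\Delta_i)$ are precisely the pair of adjacent triangles around $q$ sharing the edge $\alpha'_j$, which gives the final statement of the proposition. The main (minor) obstacle is the bookkeeping in the inductive step showing that the case assignment for $\Delta_i$ with respect to $\alpha_1$ is compatible with the case assignment for its image with respect to $\alpha_2,\ldots,\alpha_k$; this is precisely where the non-adjacency hypothesis is used.
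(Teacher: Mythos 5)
The paper's own proof of this proposition is a one‑line verification ("follows easily by checking all the above cases"), i.e.\ a direct inspection of the construction: non-emptiness, the disjoint-or-equal property, and the inverse relation are all read off from the four cases. Your plan instead routes through composition, i.e.\ you prove the decomposition
\[
\phi_{\alpha_1,\cdots,\alpha_k}^{T^o,q}=\phi_{\alpha_k}^{\mu_{\alpha_{k-1}}\cdots\mu_{\alpha_1}T^o,q}\circ\cdots\circ\phi_{\alpha_1}^{T^o,q}
\]
and then invoke the single-flip case (Proposition \ref{prop-tri}). This is a legitimate alternative strategy and your verification of the cardinality formula and the $\alpha_j=\tau_{[i]}$ case are correct. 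However, there is a genuine gap in the step where you pass from the single flips to the composite.

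The specific claim ``partition bijections are closed under composition by Proposition \ref{Pro-parin}'' is false. Proposition \ref{Pro-parin} is a criterion for two given partition bijections to be inverse to each other; it says nothing about composability, and in fact partition bijections do not compose in general. Take $\mathcal P=\{a,b\}$, $\mathcal P'=\{1,2,3\}$, $\mathcal P''=\{e,f\}$, $\varphi(a)=\{1,2\}$, $\varphi(b)=\{3\}$, $\psi(1)=\{e\}$, $\psi(2)=\psi(3)=\{f\}$. Then the composite $\chi(a)=\{e,f\}$ and $\chi(b)=\{f\}$ intersect without being equal, so $\chi$ is not a partition bijection. The obstruction is exactly a mismatch between the codomain partition of $\varphi$ and the domain partition of $\psi$. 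In the present setting this obstruction does not occur, but the reason is not automatic: one must show that the non-singleton codomain parts of $\phi_{\alpha_1}^{T^o,q}$ (pairs $\{\Delta'_a,\Delta'_{a+1}\}$ sharing the edge $\alpha'_1$) cannot interlace with the non-singleton domain parts of $\phi_{\alpha_2}^{\mu_{\alpha_1}T^o,q}$ (pairs sharing the edge $\alpha_2$). This follows because $\alpha'_1$ and $\alpha_2$ cannot be two sides of any triangle in $\mu_{\alpha_1}T^o$ (if they were, then in $T^o$ the arc $\alpha_2$ would be a side of the quadrilateral around $\alpha_1$, contradicting the hypothesis that $\alpha_1,\alpha_2$ are not two sides of a triangle). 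You do flag the non-adjacency hypothesis as the load-bearing ingredient, but you attribute its role to the bookkeeping of case assignments rather than to the composability of partition bijections, which is the actual gap; and you offer no argument for composability itself. To repair the proof, either supply this compatibility-of-partitions argument explicitly, or sidestep composition entirely and verify the conditions of Remark \ref{Rmk-par} and Proposition \ref{Pro-parin} directly on the four-case construction, as the paper (and its parallel Theorem \ref{thm-par11}) does.
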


\begin{proof}
The result follows easily by checking all the above three cases.
\end{proof}

Note that $\sum_{j=1}^k[m(\Delta_i(q);\alpha_j)]_+=1,0$ or $-1$.

The following result follows immediately by Lemma \ref{lem-minv}.

\begin{lemma}\label{lem-minv1}
For any $\Delta_i\in \Delta(T^o,0)$ and $\Delta'\in \phi_{\alpha_1,\cdots,\alpha_k}^{T^o,q}(\Delta_i)$, we have $m(\Delta_i;\alpha_j)=-m(\Delta';\alpha'_j)$ for any $j\in \{1,\cdots,k\}$.
\end{lemma}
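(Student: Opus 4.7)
The plan is to reduce Lemma \ref{lem-minv1} to Lemma \ref{lem-minv} by realizing the simultaneous partition bijection $\phi_{\alpha_1,\cdots,\alpha_k}^{T^o,q}$ as an iterated composition of single-arc partition bijections and then tracking how each $m(\,\cdot\,;\alpha_j)$ behaves at each intermediate step.

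First I would verify the decomposition
\[
\phi_{\alpha_1,\cdots,\alpha_k}^{T^o,q}
=\phi_{\alpha_k}^{\mu_{\alpha_{k-1}}\cdots\mu_{\alpha_1}T^o,q}\circ\cdots\circ
\phi_{\alpha_2}^{\mu_{\alpha_1}T^o,q}\circ\phi_{\alpha_1}^{T^o,q}.
\]
This requires two observations: (a) the hypothesis that no two of the $\alpha_i$ are sides of a common triangle of $T^o$ guarantees that each $\alpha_j$ with $j>1$ remains a genuine (non-self-folded) arc of the successive flipped triangulation, so the right-hand side is well defined; (b) the four local cases (I--IV) defining $\phi_{\alpha_j,\ldots}^{T^o,q}(\Delta_i)$ depend only on whether $\alpha_j$ equals one of the edges $\tau_{i-1},\tau_i,\tau_{[i]}$ of $\Delta_i$, and this data is unaffected by flipping any of the other $\alpha_{j'}$. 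Hence the composition is independent of the order and agrees with the direct construction of $\phi_{\alpha_1,\cdots,\alpha_k}^{T^o,q}$.

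Next, fix $j\in\{1,\ldots,k\}$ and, using the order-independence, rearrange the composition so that $\phi_{\alpha_j}$ is applied last:
\[
\phi_{\alpha_1,\cdots,\alpha_k}^{T^o,q}
=\phi_{\alpha_j}^{T''^o,q}\circ \Phi,\qquad
T''^o:=\mu_{\alpha_k}\circ\cdots\circ\widehat{\mu_{\alpha_j}}\circ\cdots\circ\mu_{\alpha_1}(T^o),
\]
where $\Phi$ is the composition of the other $k-1$ single-flip partition bijections. Applying Lemma \ref{lem-minv} to the final flip gives, for any $\Delta''\in\Delta(T''^o,0)$ and $\Delta'\in\phi_{\alpha_j}^{T''^o,q}(\Delta'')$, the identity $m(\Delta'';\alpha_j)=-m(\Delta';\alpha'_j)$. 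It therefore suffices to show that $\Phi$ preserves the value of $m(\,\cdot\,;\alpha_j)$: that is, for the intermediate triangle $\Delta''\in \Phi(\Delta_i)$, we have $m(\Delta_i;\alpha_j)=m(\Delta'';\alpha_j)$.

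This preservation step is the main obstacle, and it will be handled by a case analysis matching the cases in the construction of $\phi_{\alpha_{j'}}^{\cdot,q}$ for a single $j'\neq j$. The key point is that when we flip an arc $\alpha_{j'}$ not adjacent (in any triangle of the current triangulation) to $\alpha_j$, the label $\alpha_j$ occurs on an edge of the image triangle if and only if it occurs on the corresponding edge of the original triangle; in Case IV this is immediate, while in Cases I--III the new triangle(s) replace an edge of $\Delta_i$ that was $\alpha_{j'}$ (not $\alpha_j$) by the flipped arc $\alpha'_{j'}$, leaving the count of edges labeled $\alpha_j$ in $\{\tau_{[\cdot]}\}$ and in $\{\tau_{\cdot-1},\tau_\cdot\}$ unchanged. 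Iterating this over the $k-1$ flips constituting $\Phi$ yields $m(\Delta_i;\alpha_j)=m(\Delta'';\alpha_j)$, and combining with the previous display yields $m(\Delta_i;\alpha_j)=-m(\Delta';\alpha'_j)$, as required. Since $j$ was arbitrary, the lemma follows.
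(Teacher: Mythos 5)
Your proposal is correct and follows the same strategy the paper intends: the paper simply states that Lemma \ref{lem-minv1} ``follows immediately by Lemma \ref{lem-minv},'' and your argument supplies exactly the missing bookkeeping --- decomposing $\phi_{\alpha_1,\cdots,\alpha_k}^{T^o,q}$ into a composition of single-flip partition bijections (justified by the mutual non-adjacency of the $\alpha_a$), isolating the flip at $\alpha_j$, and checking that the remaining flips leave $m(\,\cdot\,;\alpha_j)$ unchanged. One small simplification worth noting: since at most one $\alpha_a$ can be an edge of a given $\Delta_i$, for $j\neq a$ both $m(\Delta_i;\alpha_j)$ and $m(\Delta';\alpha'_j)$ are automatically zero, while for $j=a$ the construction on $\Delta_i$ coincides with the single-flip one and Lemma \ref{lem-minv} applies directly; this avoids the reordering machinery entirely, though your iterated version is also sound.
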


\subsection{General case}

The following is a generalization of ``canonical polygon" in \cite{BR} to ``canonical once - polygon".

\begin{theorem}
Let $T^o$ be an ideal triangulation of $\Sigma$. For any puncture $q$ of $\Delta$, assume that $\Delta(T^o,q)=\{\Delta_1(q),\cdots, \Delta_s(q)\}$ with the three sides of $\Delta_i(q)$ are $\tau_{i-1},\tau_i$ and $\tau_{[i]}$. Then there is a morphism $\pi$ from $P_{s,1}$ to $\Sigma$ such that $\pi(0i)=\tau_i$ for all $i=1,\cdots,s$ and $\pi(i,i^+)=\tau_{[i^+]}$.
\end{theorem}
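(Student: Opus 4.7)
The plan is to construct $\pi$ by gluing together a family of triangle-wise homeomorphisms. I would equip $P_{s,1}$ with its fan triangulation consisting of the $s$ diagonals $01,02,\dots,0s$, which partitions it into triangles $T_i:=(0,i,i^+)$ whose three sides are $0i$, $0i^+$, and the boundary segment $(i,i^+)$. I would then send $T_i$ to the triangle $\Delta_{i^+}(q)\subset\Sigma$ (whose sides are $\tau_i,\tau_{i^+},\tau_{[i^+]}$) via a continuous map taking the vertex $0$ to $q$, the diagonal $0i$ to $\tau_i$, the diagonal $0i^+$ to $\tau_{i^+}$, and the boundary segment $(i,i^+)$ to $\tau_{[i^+]}$. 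First I would fix parametrizations of each arc $\tau_j$ and $\tau_{[j]}$ in $T^o$ once and for all, and choose each $\pi|_{T_i}$ to respect these parametrizations on its boundary edges. If $\Delta_{i^+}(q)$ is non-self-folded, $\pi|_{T_i}$ is a genuine homeomorphism of closed triangles; if it is self-folded, $\pi|_{T_i}$ is the composition of such a homeomorphism onto a standard triangle with the folding identification, which remains continuous and is locally injective in the interior.

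Next I would glue the pieces. The consistency of the chosen boundary parametrizations guarantees that $\pi|_{T_{i-1}}$ and $\pi|_{T_i}$ agree on their common edge $0i$ (both send it to $\tau_i$), so the triangle-wise maps assemble to a continuous $\pi\colon P_{s,1}\to\Sigma$ satisfying $\pi(0i)=\tau_i$ and $\pi(i,i^+)=\tau_{[i^+]}$ by construction.

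Finally I would verify the axioms of an orbifold morphism from Definition 3.5 of \cite{BR}. Since $U_{P_{s,1}}=\emptyset$, the conditions $\pi(U)\subseteq U_\Sigma$ and the two-fold cover condition at points of $I^\pi$ are either vacuous or reduce to pointwise local injectivity. The containment $\pi^{-1}(M_\Sigma\cup U_\Sigma)\subseteq M_{P_{s,1}}$ holds because, by choice of the triangle homeomorphisms, only the vertices of the fan triangulation of $P_{s,1}$ can map to punctures or marked points of $\Sigma$, and all such vertices lie in $M_{P_{s,1}}$. Local injectivity in the interior of each $T_i$ and on each open edge is immediate. At the puncture $0$, the $s$ sectors $T_1,\dots,T_s$ sit in cyclic order matching the clockwise labeling of $\Delta_1(q),\dots,\Delta_s(q)$ around $q$, so a small disk neighborhood of $0$ maps homeomorphically onto a small disk neighborhood of $q$. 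At a boundary vertex $i$, only $T_{i-1}$ and $T_i$ are incident to $i$, and their images $\Delta_i(q)$ and $\Delta_{i^+}(q)$ fit together along the common arc $\tau_i$ to form a neighborhood of $\pi(i)$.

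The main obstacle is verifying local injectivity when self-folded triangles appear among the $\Delta_j(q)$, or when two of the arcs $\tau_{[j]}$ (or two of the $\tau_j$) happen to coincide as arcs in $\Sigma$. The morphism framework permits global non-injectivity, so the issue reduces to checking that these identifications occur only \emph{away} from the interiors of the $T_i$ and away from neighborhoods where cyclic orderings could fail; this follows from the clockwise labeling of the triangles at $q$, but requires a careful case analysis that distinguishes whether $q$ is the outer vertex or the enclosed puncture of a self-folded triangle.
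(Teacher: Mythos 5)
The paper states this theorem with no proof — it is offered as the once-punctured analogue of Berenstein--Retakh's canonical polygon theorem and left to the reader — so there is no argument in the text to compare against. Your triangle-wise construction followed by gluing is the natural reconstruction, and the most substantive verification you single out, namely that a small disk around the internal vertex $0$ maps homeomorphically onto a small disk around $q$ because the fan sectors $T_1,\dots,T_s$ match the clockwise cyclic order of $\Delta_1(q),\dots,\Delta_s(q)$, is exactly the right point to emphasize. (Incidentally, the statement contains a typo: ``puncture $q$ of $\Delta$'' should read ``of $\Sigma$''.)

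However, the verification of the morphism axioms, which you largely wave through, is where the real content lies, and there are two concrete gaps. First, your claim that the two-fold cover condition at points of $I^\pi$ ``reduces to pointwise local injectivity'' conflates two mutually exclusive requirements. If some $\tau_j$ is a pending arc, the vertex $j$ of $P_{s,1}$ maps to an orbifold point $o\in U_\Sigma$, hence $j\in I^\pi$, and Definition~3.5 of \cite{BR} then demands that $\pi$ restricted to a neighborhood of $j$ be a \emph{two-fold cover} of its image ramified at $j$ — i.e.\ generically two-to-one — which is the opposite of local injectivity and has to be matched carefully against the local picture of a half-disk around a boundary marked point mapping onto the cone at $o$. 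Second, the ``careful case analysis'' you defer at the end hides the genuinely difficult configuration: $q$ being the \emph{base} of a self-folded triangle $\Delta$ enclosing another puncture $p'$. There $\Delta$ appears twice consecutively in $\Delta(T^o,q)$, say $\Delta_j(q)=\Delta_{j+1}(q)$, with $\tau_{j-1}=\tau_{j+1}=\ell$ (the loop), $\tau_j=\gamma$ (the radius), and $\tau_{[j]}=\tau_{[j+1]}=\gamma$; your recipe then sends the three edges $(j-1,j)$, $0j$, $(j,j^+)$ all to $\gamma$, sends $j$ to $p'$, and makes both $T_{j-1}$ and $T_j$ cover $\Delta$, so $\pi$ is two-to-one on every neighborhood of $j$. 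But $p'\in M_\Sigma$ is not an orbifold point, so $j\notin I^\pi$ and the definition requires local \emph{injectivity} at $j$. Either the construction must be modified at such vertices, or one must argue that the theorem is only ever invoked under hypotheses excluding this configuration; your proof currently does neither, and this is the one place a complete proof cannot avoid confronting.
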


We call $P_{s,1}$ together with the morphism $\pi$ the \emph{canonical-once punctured polygon} for $q$ concerning $T^o$. It is clear that $\Delta_i(q)$ corresponds to a unique triangle $\Delta_i$ incident to $0$ in $P_{s,1}$ for any $i$.

\begin{example}
In the twice punctured digon with triangulation $T^o$, as shown in the right below figure. We have $\alpha_2=\alpha_4$, $\alpha_{[2]}=\alpha_5$, $\alpha_{[3]}=\alpha_3=\alpha_{[4]}$ and $\alpha_{[5]}=\alpha_1$. Then the canonical once-punctured polygon for $0$ is shown in the left below figure.

\begin{figure}[h]
\centerline{\includegraphics{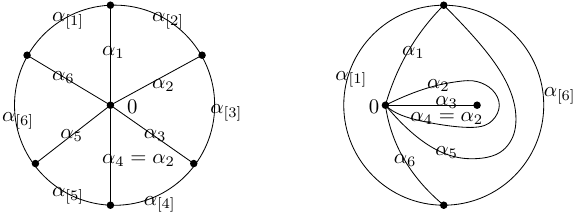}}
\caption{Canonical once-punctured polygon for puncture $0$}
\end{figure}
\end{example}

For any non-self-folded arc $\alpha\in T^o$, let $T'^o=\mu_{\alpha}(T^o)$, we now construct partition bijection $\phi_{\alpha}^{T^o,q}$ from $\Delta(T^o,q)$ to $\Delta(T'^o,0)$.

In case $\alpha\notin \{\alpha_i,\alpha_{[i]}\mid i=1,\cdots, s\}$, we have $\Delta(T'^o,0)=\Delta(T^o,0)$. Then let $\phi_{\alpha}^{T^o,q}(\Delta_i)=\Delta_i$ for any $i=1,\cdots, s$.

In case $\alpha\in \{\alpha_i,\alpha_{[i]}\mid i=1,\cdots, s\}$, we can extend the canonical once punctured polygon $(P_{s,1},\pi)$ to a bigger once punctured polygon $P_{n,1}$ for some $n>s$ with an ideal triangulation $\hat T^o$ and a morphism $\pi':P_{n,1}\to \Sigma$ such that (1) $\pi'|_{P_{s,1}}=\pi$, (2) the preimages of $\alpha$ are inner arcs, and (3) $\pi'(\Delta)$ is a triangle in $T^o$ for any triangle $\Delta$ in $\hat T^o$.

Assume that $\pi^{-1}(\alpha)=\{\alpha_1,\cdots, \alpha_k\}\subset \hat T^o$. It is easy to see that $\alpha_i,\alpha_j$ are not two sides of any triangulation in $\hat T^o$ for all $i\neq j$ from the construction of the canonical once punctured polygon.

For any $\Delta_i(q)\in \Delta(T^o,q)$, let $\phi_{\alpha}^{T^o,q}(\Delta_i(q))=\pi'(\phi_{\alpha_1,\cdots,\alpha_k}^{T^o,q}(\Delta_i))$, where $\Delta_i$ is the triangle incident to $0$ in the canonical once punctured polygon. Similarly, we can define $\phi_{\alpha}^{T'^o,q}$.

\begin{proposition}\label{Lem-part1}
With the foregoing notation. $\phi_{\alpha}^{T^o,q}$ is a partition bijection from $\Delta(T^o,q)$ to $\Delta(T'^o,q)$ with inverse $\phi_{\alpha}^{T'^o,q}$, moreover, for any $\Delta_i(q)\in \Delta(T^o,q)$ we have
$$|\phi_{\alpha}^{T^o,q}(\Delta_i(q))|=2^{[m(\Delta_i(q);\alpha)]_+},$$
where $m(\Delta_i(q);\alpha)$ is given by (\ref{Eq-mj}).
\end{proposition}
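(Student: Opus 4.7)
The plan is to reduce the statement to the once-punctured polygon case, which is already handled by Proposition \ref{Lem-part}, and then pass to the general orbifold through the canonical once-punctured polygon construction $(P_{s,1},\pi)$ just defined.

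First I would dispose of the trivial case $\alpha\notin\{\tau_i,\tau_{[i]}\mid i=1,\ldots,s\}$. Here $\Delta(T'^o,q)=\Delta(T^o,q)$, the map $\phi^{T^o,q}_\alpha$ is the identity, and $m(\Delta_i(q);\alpha)=0$ by the definition (\ref{Eq-mj}), so all claims are immediate, and clearly $\phi^{T^o,q}_{\alpha}$ and $\phi^{T'^o,q}_{\alpha}$ are mutually inverse.

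For the nontrivial case I would use the extension $\pi':P_{n,1}\to\Sigma$ with ideal triangulation $\hat T^o$ and preimage $\pi'^{-1}(\alpha)=\{\alpha_1,\ldots,\alpha_k\}\subset\hat T^o$ consisting of inner arcs that are pairwise not two sides of any triangle of $\hat T^o$. This last property is the key point that makes Proposition \ref{Lem-part} directly applicable: it gives a partition bijection $\phi^{\hat T^o,q}_{\alpha_1,\ldots,\alpha_k}:\Delta(\hat T^o,0)\to \Delta(\mu_{\alpha_k}\cdots\mu_{\alpha_1}\hat T^o,0)$ with inverse $\phi^{\mu_{\alpha_k}\cdots\mu_{\alpha_1}\hat T^o,q}_{\alpha'_1,\ldots,\alpha'_k}$ and the precise size formula. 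Since $\pi'$ sends triangles incident to $0$ bijectively (under the restriction to the canonical sub-polygon) onto triangles incident to $q$, and the flip operations are compatible with $\pi'$, the push-forward by $\pi'$ converts $\phi^{\hat T^o,q}_{\alpha_1,\ldots,\alpha_k}$ into $\phi^{T^o,q}_{\alpha}$. The analogous statement for $\phi^{T'^o,q}_\alpha$ follows by applying the same extension to $T'^o$.

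For the size formula, the main computation I need is the equality
\[
[m(\Delta_i(q);\alpha)]_+=\sum_{j=1}^k[m(\Delta_i;\alpha_j)]_+,
\]
where $\Delta_i$ is the triangle in the canonical once-punctured polygon corresponding to $\Delta_i(q)$. By definition (\ref{Eq-mj}), both sides count with sign the incidences of $\alpha$ with the three sides of the triangle: the right-hand side sums over the preimages $\alpha_1,\ldots,\alpha_k$ of $\alpha$, while the left-hand side counts on $\Sigma$ directly. Since a side of $\Delta_i$ in $\hat T^o$ maps to a side of $\Delta_i(q)$ in $T^o$, the counts agree after summation. Moreover, at most one $j$ can contribute a nonzero value: if $\alpha=\tau_{[i]}$ then exactly one $\alpha_j$ sits as the $\tau_{[i]}$-side of $\Delta_i$ and no $\alpha_{j'}$ equals $\tau_{i-1}$ or $\tau_i$ (otherwise two preimages would bound a common triangle of $\hat T^o$), and symmetrically in the other cases. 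Combining this identity with Proposition \ref{Lem-part} yields
\[
|\phi^{T^o,q}_\alpha(\Delta_i(q))|=|\phi^{\hat T^o,q}_{\alpha_1,\ldots,\alpha_k}(\Delta_i)|=2^{\sum_{j=1}^k[m(\Delta_i;\alpha_j)]_+}=2^{[m(\Delta_i(q);\alpha)]_+}.
\]
Finally, the inverse statement is checked via Proposition \ref{Pro-parin}: $\Delta'\in\phi^{T^o,q}_\alpha(\Delta_i(q))$ if and only if the corresponding $\hat\Delta'\in\phi^{\hat T^o,q}_{\alpha_1,\ldots,\alpha_k}(\Delta_i)$, which by Proposition \ref{Lem-part} is equivalent to $\Delta_i\in\phi^{\mu_{\alpha_k}\cdots\mu_{\alpha_1}\hat T^o,q}_{\alpha'_1,\ldots,\alpha'_k}(\hat\Delta')$, and pushing forward by $\pi'$ this is $\Delta_i(q)\in\phi^{T'^o,q}_\alpha(\Delta')$. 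The main obstacle I anticipate is verifying cleanly that the extension $\pi'$ can always be chosen so that the $\alpha_j$'s avoid sharing a triangle in $\hat T^o$; this is a combinatorial surgery argument on the once-punctured polygon that I would justify by enlarging $P_{s,1}$ one triangle at a time whenever two preimages of $\alpha$ threaten to coexist in a single triangle, so that no additional new sides but flips in $\hat T^o$ are needed near $0$.
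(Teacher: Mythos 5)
Your proposal is correct and follows the same route as the paper, which disposes of the result in one line as an immediate consequence of Proposition~\ref{Lem-part}: you reduce to the once-punctured polygon via the canonical once-punctured polygon morphism $\pi'$, apply Proposition~\ref{Lem-part} there, and push forward. The only caveat is that the separation property you worry about at the end --- no two preimages $\alpha_j$ lying in a common triangle of $\hat T^o$ --- is asserted to hold automatically in the paper (and indeed follows because $\alpha$, being non-self-folded, never occurs twice as a side of any triangle of $T^o$), so no extra surgery on the extension $\pi'$ is actually required.
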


\begin{proof}
It follows immediately by Proposition \ref{Lem-part}.
\end{proof}

The following lemma is clear.

\begin{lemma}\label{lem-deltatodelta}
We have $\Delta_1'(q)\in  \phi_{\alpha}^{T^o,q}(\Delta_1(q))$, where $\Delta'_1(q)$ is the first triangle incident to $q$ in $T'^o$.
\end{lemma}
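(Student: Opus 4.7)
My strategy is to reduce to the canonical once-punctured polygon $P_{s,1}$ (or its extension $P_{n,1}$) via the morphism $\pi'$ used to define $\phi_\alpha^{T^o,q}$. Since $\Delta_i(q)$ corresponds to the triangle $\Delta_i$ incident to $0$ in the polygon under $\pi$, and the bijection $\phi_\alpha^{T^o,q}$ is defined by pushing $\phi_{\alpha_1,\dots,\alpha_k}^{\hat T^o,q}$ down through $\pi'$, it suffices to establish the analogous statement in $P_{n,1}$: that the first triangle $\Delta'_1$ of the flipped polygon triangulation lies in $\phi_{\alpha_1,\dots,\alpha_k}^{\hat T^o,q}(\Delta_1)$. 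The ``first'' labeling in $P_{s,1}$ is inherited from the one in $\Sigma$, so this reduction is compatible with the clockwise indexing determined by $\widetilde\beta$'s approach to $q$.

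I proceed by case analysis on the relationship between $\alpha$ and $\Delta_1(q)$. If $\alpha\notin\{\tau_0(q),\tau_1(q),\tau_{[1]}(q)\}$, then no preimage $\alpha_a$ appears as a side of $\Delta_1$, so Case IV of the polygon construction applies, giving $\phi_{\alpha_1,\dots,\alpha_k}^{\hat T^o,q}(\Delta_1)=\Delta_1$; simultaneously the fan at $q$ near $\Delta_1$ is unaltered by the flip, so $\Delta'_1(q)=\Delta_1(q)$, and the inclusion is immediate. If $\alpha=\tau_1(q)$ or $\alpha=\tau_0(q)=\tau_t(q)$, then Case I of the polygon construction gives $\phi(\Delta_1)=(\tau_{i^-},\tau_{i^+},\alpha')$ for the appropriate index; a direct check using the fact that $\widetilde\beta$ approaches $q$ through the same local region (just now traversing $\alpha'$ instead of $\alpha$ or not crossing $\alpha'$ at all) shows that this merged triangle is precisely $\Delta'_1(q)$. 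Finally if $\alpha=\tau_{[1]}(q)$, then Case III gives $\phi(\Delta_1)=\{\hat\Delta_1,\hat\Delta_2\}$, the two triangles into which $\Delta_1$ splits; since $\widetilde\beta$ enters the new fan at $q$ through one of $\hat\Delta_1$ or $\hat\Delta_2$ (whichever is adjacent to the segment of $\widetilde\beta$ near $q$), the chosen $\Delta'_1(q)$ is one of these two triangles.

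The main obstacle is the sub-case $\alpha=\tau_{[1]}(q)$, where the clockwise labeling convention must be matched carefully: splitting $\Delta_1$ into $\hat\Delta_1,\hat\Delta_2$ increases the number of triangles incident to $q$ by one, and one must verify that the new first triangle selected according to the convention (either $\widetilde\beta$ crosses it, or it is adjacent to $\widetilde\beta\in T'^o$) is the one closest to $\widetilde\beta$'s approach, hence among $\{\hat\Delta_1,\hat\Delta_2\}$. The key observation making this work is that $\widetilde\beta$ is invariant under the flip as a curve in $\Sigma$, so its endpoint approach to $q$ is unchanged; only its crossing pattern with the triangulation differs, and the partition bijection is designed precisely to track this local reorganization of crossings near the flipped arc.
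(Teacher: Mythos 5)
The paper offers no proof of this lemma, labeling it "clear," so there is no paper argument to compare against. Your proof is correct: the reduction to the canonical once-punctured polygon via the morphism $\pi'$ is exactly the setting in which $\phi_\alpha^{T^o,q}$ is defined, and the case analysis on whether $\alpha$ is disjoint from $\Delta_1(q)$, equal to $\tau_0(q)$ or $\tau_1(q)$, or equal to $\tau_{[1]}(q)$ exhausts the possibilities because these are precisely the three sides of $\Delta_1(q)$. In each case the conclusion follows from the observation you identify as the key one: the endpoint segment of $\widetilde\beta$ arriving at $q$ is unchanged by the flip, so the triangle of $T'^o$ containing that segment (which is by convention $\Delta'_1(q)$, or the triangle on the appropriate side of $\widetilde\beta$ when $\widetilde\beta\in T'^o$) is either the unchanged $\Delta_1(q)$, the merged triangle in Case I of the polygon construction, or one of the two split pieces in Case III. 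One minor imprecision worth noting: in the merged case you speak of $\widetilde\beta$ "traversing $\alpha'$ instead of $\alpha$," but $\alpha'$ is not incident to $q$ (the flip moves the arc off the vertex $q$), so $\alpha'$ does not appear in the fan $\Delta(T'^o,q)$ at all; the correct statement is simply that the two old fan triangles sharing the side $\alpha$ at $q$ fuse into one, and $\widetilde\beta$'s terminal segment lies inside the fused triangle. This does not affect the validity of the argument.
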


By Lemma \ref{lem-minv1}, we have

\begin{lemma}\label{lem-minv2}
For any $\Delta\in \Delta(T^o,q)$ and $\Delta'\in \phi_{\alpha}^{T^o,q}(\Delta)$, we have $m(\Delta;\alpha)=-m(\Delta';\alpha')$.
\end{lemma}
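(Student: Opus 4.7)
The plan is to reduce the orbifold statement to the polygon statement Lemma \ref{lem-minv1} via the canonical once-punctured polygon construction developed just before the lemma. The trivial case $\alpha\notin\{\alpha_i,\alpha_{[i]}\mid i=1,\dots,s\}$ is immediate: then $\Delta(T'^o,q)=\Delta(T^o,q)$ and $\phi_{\alpha}^{T^o,q}$ is the identity, and by \eqref{Eq-mj} both $m(\Delta;\alpha)$ and $m(\Delta';\alpha')$ count edges labelled by arcs none of which is $\alpha$ or $\alpha'$, so both sides vanish. So the real content is the case $\alpha\in\{\alpha_i,\alpha_{[i]}\}$.

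In that case, let $(P_{s,1},\pi)$ be the canonical once-punctured polygon for $q$ with respect to $T^o$, and fix an extension $\pi':(P_{n,1},\hat T^o)\to \Sigma$ with $\pi'|_{P_{s,1}}=\pi$, such that every arc in $\pi'^{-1}(\alpha)=\{\alpha_1,\dots,\alpha_k\}$ is an inner arc of $\hat T^o$ and no two of them share a triangle. By construction, for the triangle $\hat\Delta_i\subset P_{n,1}$ lifting $\Delta_i(q)$, any side of $\hat\Delta_i$ labelled $\alpha_j$ maps under $\pi'$ to a side of $\Delta_i(q)$ labelled $\alpha$; conversely, every side of $\Delta_i(q)$ equal to $\alpha$ lifts to a unique side of $\hat\Delta_i$ labelled by some $\alpha_j$. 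Hence, directly from the definition \eqref{Eq-mj},
\begin{equation*}
m(\Delta_i(q);\alpha)=\sum_{j=1}^{k} m(\hat\Delta_i;\alpha_j).
\end{equation*}
The analogous identity holds on the $T'^o$ side with $\alpha$ replaced by $\alpha'$ and $\alpha_j$ by $\alpha'_j$, where $\alpha'_j$ is the arc obtained from $\hat T^o$ by flipping at $\alpha_j$ (these are the preimages of $\alpha'$ in the extended polygon triangulation $\mu_{\alpha_k}\cdots\mu_{\alpha_1}\hat T^o$).

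Now by the definition of $\phi_{\alpha}^{T^o,q}$, the element $\Delta'\in\phi_{\alpha}^{T^o,q}(\Delta_i(q))$ is the image under $\pi'$ of some $\hat\Delta'\in\phi_{\alpha_1,\dots,\alpha_k}^{\hat T^o,q}(\hat\Delta_i)$. Applying Lemma \ref{lem-minv1} in the polygon setting gives
\begin{equation*}
m(\hat\Delta_i;\alpha_j)=-m(\hat\Delta';\alpha'_j)\qquad\text{for every }j\in\{1,\dots,k\}.
\end{equation*}
Summing over $j$ and using the two polygon-to-orbifold identities established above yields
\begin{equation*}
m(\Delta_i(q);\alpha)=\sum_{j=1}^{k} m(\hat\Delta_i;\alpha_j)=-\sum_{j=1}^{k} m(\hat\Delta';\alpha'_j)=-m(\Delta';\alpha'),
\end{equation*}
which is the claim.

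The only delicate point is the additivity identity $m(\Delta_i(q);\alpha)=\sum_j m(\hat\Delta_i;\alpha_j)$; this is where care is needed because in the orbifold several sides of $\Delta_i(q)$ could coincide with $\alpha$ (for instance when $\alpha$ appears as both a $\tau_{i'}$ and a $\tau_{[i'']}$ after identifications), and these repetitions must match the number of distinct preimages $\alpha_j$ lying as sides of $\hat\Delta_i$. The extension step in constructing $(P_{n,1},\hat T^o)$ is precisely what guarantees this bookkeeping works: we enlarged $P_{s,1}$ so that every occurrence of $\alpha$ as a side of some $\Delta_i(q)$ is witnessed by a distinct inner arc $\alpha_j$ in the lift. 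Once this combinatorial correspondence is in place, everything else reduces mechanically to Lemma \ref{lem-minv1}.
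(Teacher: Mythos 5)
Your proof is correct and takes essentially the same approach as the paper: the paper's proof is the one-line reduction ``by Lemma~\ref{lem-minv1}'', implicitly via the canonical once-punctured polygon and the additivity identities you spell out. You have simply made explicit the lift $\hat\Delta_i$, the sign-preserving additivity $m(\Delta_i(q);\alpha)=\sum_j m(\hat\Delta_i;\alpha_j)$ on both sides, and the application of Lemma~\ref{lem-minv1}, which is exactly the bookkeeping the paper leaves to the reader.
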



\begin{lemma}\label{Lem-var1}
For any $\Delta\in \Delta(T^o,q)$ and $\Delta'\in \phi_{\alpha}^{T^o,q}(\Delta)$, for any $\tau\in T^o$,
\begin{enumerate}[$(1)$]
\item if $\tau\neq \alpha,\alpha_1,\alpha_2,\alpha_3,\alpha_4$, then $m(\Delta;\tau)=m(\Delta';\tau)$;
\item if $\tau\in \{\alpha_1,\alpha_2,\alpha_3,\alpha_4\}$ and $m(\Delta;\alpha)=0$, then $m(\Delta;\tau)=m(\Delta';\tau)$.
\end{enumerate}
\end{lemma}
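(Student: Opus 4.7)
The plan is to reduce Lemma~\ref{Lem-var1} to the polygon setting via the canonical once-punctured polygon construction, and then dispatch both statements by inspecting the cases that appear in the construction of $\phi_{\hat\alpha_1,\ldots,\hat\alpha_k}^{\hat T^o,q}$. First I would pass to an extension $(P_{n,1}, \pi', \hat T^o)$ of the canonical once-punctured polygon at $q$ in which $\pi'^{-1}(\alpha) = \{\hat\alpha_1, \ldots, \hat\alpha_k\}$ consists of inner arcs, no two of which lie in a common triangle of $\hat T^o$. Since $\phi_\alpha^{T^o,q}$ is by definition the $\pi'$-image of $\phi_{\hat\alpha_1,\ldots,\hat\alpha_k}^{\hat T^o,q}$, and the quantity $m(\cdot;\cdot)$ of \eqref{Eq-mj} depends only on which arcs appear as sides of the given triangle, both statements transport verbatim to the polygon level; moreover, the four sides of the quadrilateral at each $\hat\alpha_a$ in $\hat T^o$ project under $\pi'$ to $\alpha_1,\alpha_2,\alpha_3,\alpha_4$, so the condition ``$\tau\notin\{\alpha,\alpha_1,\alpha_2,\alpha_3,\alpha_4\}$'' pulls back correctly.

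In the polygon, the construction of $\phi_{\hat\alpha_1,\ldots,\hat\alpha_k}^{\hat T^o,q}$ distinguishes, for the triangle $\hat\Delta = \hat\Delta_j$, the three cases: (I) $\hat\tau_{j-1}$ or $\hat\tau_j$ equals some $\hat\alpha_a$, (II) $\hat\tau_{[j]}$ equals some $\hat\alpha_a$, and (III) none of the sides of $\hat\Delta$ is any $\hat\alpha_a$. Inspecting \eqref{Eq-mj} and using that the three sides of any triangle in a polygon are pairwise distinct yields the key dichotomy: Case~III is characterised by $m(\hat\Delta;\alpha)=0$ together with $\phi_{\hat\alpha_1,\ldots,\hat\alpha_k}^{\hat T^o,q}(\hat\Delta)=\hat\Delta$; Case~I gives $m(\hat\Delta;\alpha)=-1$ with $\hat\Delta$ merged with its $\hat\alpha_a$-neighbour into a single triangle of the form $(\hat\tau_{j^-},\hat\tau_{j^+},\hat\alpha'_a)$; and Case~II gives $m(\hat\Delta;\alpha)=1$ with $\hat\Delta$ split into two triangles whose non-$\hat\alpha'_a$ sides together exhaust the four sides of the quadrilateral at $\hat\alpha_a$.

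For part~(1), fix $\tau\in T^o\setminus\{\alpha,\alpha_1,\alpha_2,\alpha_3,\alpha_4\}$; since $\tau\in T^o$ we also have $\tau\neq\alpha'$. In each of the three cases every side of $\hat\Delta$ and of every triangle in $\phi_{\hat\alpha_1,\ldots,\hat\alpha_k}^{\hat T^o,q}(\hat\Delta)$ projects into $\{\alpha,\alpha_1,\alpha_2,\alpha_3,\alpha_4,\alpha'\}$, so no such side carries the label $\tau$ and the identity $m(\Delta;\tau)=0=m(\Delta';\tau)$ is immediate from \eqref{Eq-mj}. For part~(2), the hypothesis $m(\Delta;\alpha)=0$ forces the trivial Case~III via the dichotomy above, hence $\Delta'=\Delta$ and $m(\Delta;\tau)=m(\Delta';\tau)$ holds for every $\tau\in T^o$, in particular for $\tau\in\{\alpha_1,\alpha_2,\alpha_3,\alpha_4\}$. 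The main bookkeeping hurdle will be the explicit sides-in-quadrilateral identification in Case~II, where two new triangles appear and one must verify that their non-$\hat\alpha'_a$ edges exhaust the four sides of the quadrilateral at $\hat\alpha_a$; this follows from the elementary geometry of flipping a diagonal of a quadrilateral, after which the remaining checks reduce to counting zeros in \eqref{Eq-mj}.
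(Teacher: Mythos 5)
Your overall strategy---reduce to the canonical once-punctured polygon, classify $\hat\Delta$ by whether a preimage $\hat\alpha_a$ of $\alpha$ is one of its sides and if so which one, and then read off the counts from the explicit form of $\phi_{\hat\alpha_1,\ldots,\hat\alpha_k}^{\hat T^o,q}$---is sound, and the dichotomy you set up ($m(\hat\Delta;\alpha)=1,0,-1$ according to which of your Cases II, III, I occurs) is exactly right. Since the paper itself gives no proof of this lemma, there is nothing to compare against.

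However, there is a slip in your proof of part~(1). You assert that \emph{in each of the three cases} every side of $\hat\Delta$ and of every triangle in $\phi_{\hat\alpha_1,\ldots,\hat\alpha_k}^{\hat T^o,q}(\hat\Delta)$ projects into $\{\alpha,\alpha_1,\alpha_2,\alpha_3,\alpha_4,\alpha'\}$, and you conclude $m(\Delta;\tau)=0=m(\Delta';\tau)$. That assertion is false in your Case~III (no side of $\hat\Delta$ is any $\hat\alpha_a$): there $\hat\Delta$ is an arbitrary triangle incident to $q$ whose sides need not lie in the quadrilateral at $\alpha$ at all, so $m(\Delta;\tau)$ can well be nonzero for $\tau\notin\{\alpha,\alpha_1,\ldots,\alpha_4\}$. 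The conclusion of part~(1) still holds in Case~III, but for a different reason---one you already noted when setting up the dichotomy---namely that $\phi_{\hat\alpha_1,\ldots,\hat\alpha_k}^{\hat T^o,q}(\hat\Delta)=\hat\Delta$, so $\Delta'=\Delta$ and the equality $m(\Delta;\tau)=m(\Delta';\tau)$ is trivial for \emph{every} $\tau$. Your ``all sides project into the quadrilateral'' argument is correct only in Cases~I and II, where $\hat\Delta$ shares a side with some $\hat\alpha_a$ and hence its remaining sides lie in the quadrilateral at $\hat\alpha_a$ (and the same for the image triangles, whose sides lie among the quadrilateral's four edges and $\hat\alpha'_a$). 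Once you split part~(1) into ``Cases~I, II: count vanishes on both sides'' and ``Case~III: $\Delta'=\Delta$,'' the argument is complete.
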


\newpage

\section*{Notation}

We fix some notation throughout the rest of this paper. Fix an arc $\widetilde\beta$, an ideal triangulation $T^o$ and a non-self-folded $\alpha\in T^o$, let $T'^o=\mu_\alpha T^o$ and $\alpha'$ be the new arc obtained. Suppose that $\alpha$ is a diagonal of the quadrilateral $(\alpha_1,\alpha_2,\alpha_3,\alpha_4)$ in $T^o$ and $\alpha_1,\alpha_3$ are in the clockwise direction of $\alpha$. As $\alpha$ is not folded in $T^o$, $\{\alpha_1,\alpha_3\}\neq \{\alpha_2,\alpha_4\}$.

Denote by $G_{T^o,\widetilde\beta}$ (resp. $G_{T'^o,\widetilde\beta}$) the associated snake graph with tiles $G_1,\cdots,G_c$ (resp. $G'_1,\cdots,G'_{c'}$) in order and $\mathcal P(G_{T^o,\widetilde\beta})$ (resp. $\mathcal P(G_{T'^o,\widetilde\beta})$) the set of all perfect matching. Denote by $P_\pm$ (resp. $P'_\pm$) the maximal/minimal perfect matching. Denote by $\tau_{i_l}$ (resp. $\tau'_{i_l}$) the label of the diagonal of $G_l$ (resp. $G'_l$).

If $t(\widetilde\beta)=q$ is a puncture, denote by $\Delta_1(q),\Delta_2(q),\cdots, \Delta_t(q)$ (resp. $\Delta'_1(q),\Delta'_2(q),\cdots, \Delta'_{t'}(q)$) the triangles incident to $q$ in $T^o$ (resp. $T'^o$) in clockwise order such that $\widetilde\beta$ crosses $\Delta_1(q)$ (resp. $\Delta'_1(q)$) or $\widetilde \beta$ is the common side of $\Delta_1(q)$ and $\Delta_t(q)$ (resp. $\Delta'_1(q)$ and $\Delta'_{t'}(q)$). Denote the common side of $\Delta_{i}(q)$ and $\Delta_{i+1}(q)$ (resp. $\Delta'_{i}(q)$ and $\Delta'_{i+1}(q)$) by $\tau_{i}(q)$ (resp. $\tau'_{i}(q)$). Denote the third side of $\Delta_i(q)$ (resp. $\Delta'_i(q)$) by $\tau_{[i]}(q)$ (resp. $\tau'_{[i]}(q)$).

If $s(\widetilde\beta)=p$ is a puncture, denote by $\Delta_1(p),\Delta_2(p),\cdots, \Delta_s(p)$ (resp. $\Delta'_1(p),\Delta'_2(p),\cdots, \Delta'_{s'}(p)$) the triangles incident to $p$ in $T^o$ (resp. $T'^o$) in clockwise order such that $\widetilde\beta$ crosses $\Delta_1(p)$ (resp. $\Delta'_1(p)$) or $\widetilde \beta$ is the common side of $\Delta_1(p)$ and $\Delta_s(p)$ (resp. $\Delta'_1(p)$ and $\Delta'_{s'}(p)$). Denote the common side of $\Delta_{i}(p)$ and $\Delta_{i+1}(p)$ (resp. $\Delta'_{i}(p)$ and $\Delta'_{i+1}(p)$) by $\tau_{i}(p)$ (resp. $\tau'_{i}(p)$). Denote the third side of $\Delta_i(p)$ (resp. $\Delta'_i(p)$) by $\tau_{[i]}(p)$ (resp. $\tau'_{[i]}(p)$).

If $\widetilde\beta\notin T^o$ and $t(\widetilde\beta)=q$, denote
\begin{equation*}
        E_1(q)=
            \begin{cases}
                E(G_c), & \mbox{if $rel( G_c,  T^o)=1$},\vspace{1mm}\\
                N(G_c), & \mbox{if $rel( G_c,  T^o)=-1$,}
            \end{cases}\;\;\;
        E_2(q)=
            \begin{cases}
                N(G_c), & \mbox{if $rel(G_c,  T^o)=1$},\vspace{1mm}\\
                E(G_c), & \mbox{if $rel(G_c,  T^o)=-1$.}
            \end{cases}
\end{equation*}

If $\widetilde\beta\notin T^o$ and $s(\widetilde\beta)=p$, denote
\begin{equation*}
        E_1(p)=
            \begin{cases}
                W(G_1), & \mbox{if $rel( G_1,  T^o)=1$},\vspace{1mm}\\
                S(G_1), & \mbox{if $rel( G_1,  T^o)=-1$,}
            \end{cases}\;\;\;
        E_2(p)=
            \begin{cases}
                S(G_1), & \mbox{if $rel(G_1,  T^o)=1$},\vspace{1mm}\\
                W(G_1), & \mbox{if $rel(G_1,  T^o)=-1$.}
            \end{cases}
\end{equation*}

If $\widetilde\beta\notin T'^o$ and $t(\widetilde\beta)=q$, denote
\begin{equation*}
        E'_1(q)=
            \begin{cases}
                E(G'_{c'}), & \mbox{if $rel( G'_{c'},  T'^o)=1$},\vspace{1mm}\\
                N(G'_{c'}), & \mbox{if $rel( G'_{c'},  T'^o)=-1$,}
            \end{cases}\;\;\;
        E'_2(q)=
            \begin{cases}
                N(G'_{c'}), & \mbox{if $rel(G'_{c'},  T'^o)=1$},\vspace{1mm}\\
                E(G'_{c'}), & \mbox{if $rel(G'_{c'},  T'^o)=-1$.}
            \end{cases}
\end{equation*}

If $\widetilde\beta\notin T^o$ and $s(\widetilde\beta)=p$, denote
\begin{equation*}
        E'_1(p)=
            \begin{cases}
                W(G'_1), & \mbox{if $rel( G'_1,  T'^o)=1$},\vspace{1mm}\\
                S(G'_1), & \mbox{if $rel( G'_1,  T'^o)=-1$,}
            \end{cases}\;\;\;
        E'_2(p)=
            \begin{cases}
                S(G'_1), & \mbox{if $rel(G'_1,  T'^o)=1$},\vspace{1mm}\\
                W(G'_1), & \mbox{if $rel(G'_1,  T'^o)=-1$.}
            \end{cases}
\end{equation*}

Let $\mathcal L=\mathcal L(T^o,\widetilde\beta), \mathcal L(T^o,\widetilde\beta^{(q)})$ or $\mathcal L(T^o,\widetilde\beta^{(p,q)})$. Let $\mathcal L'=\mathcal L(T'^o,\widetilde\beta), \mathcal L(T'^o,\widetilde\beta^{(q)})$ or $\mathcal L(T'^o,\widetilde\beta^{(p,q)})$ correspondingly. Denote the minimal elements in $\mathcal L$ and $\mathcal L'$ by ${\bf P}_-$ and ${\bf P}'_-$, respectively.

\newpage

\section{Partition bijection between lattices $\mathcal L$ and $\mathcal L'$}\label{sec-pb}

For any pair ${\bf P}, {\bf Q}\in \mathcal L$, assume that ${\bf P}$ covers ${\bf Q}$, then we have that ${\bf P}$ and ${\bf Q}$ are either related by a twist on a tile $G_l$ or related by two adjacent triangles incident to $p$ or $q$, assume $\tau$ is the label of the diagonal or the common side of the triangles. In both cases, we say that ${\bf P}$ covers ${\bf Q}$ and \emph{related by $\tau$}. We denote by
$$\Omega({\bf P}, {\bf Q})=w({\bf P})-w({\bf Q}).$$

We say that ${\bf P}, {\bf Q}\in \mathcal L$ are \emph{related by a twist at $\tau$} if either ${\bf P}$ covers ${\bf Q}$ and related by $\tau$ or ${\bf Q}$ covers ${\bf P}$ and related by $\tau$.

In this section, we construct a partition bijection from $\mathcal L$ to $\mathcal L'$ via the partition bijections $\varphi^{T^o}_\alpha: \mathcal P(G_{T^o,\widetilde\beta})\to \mathcal P(G_{T'^o,\widetilde\beta})$ and $\phi_{\alpha}^{T^o,q}: \Delta(T^o,q)\to \Delta(T'^o,q)$, which are given in Section \ref{Sec-parcom}.

\subsubsection*{Partition bijection $\pi:\mathcal L(T^o,\widetilde\beta)\to \mathcal L(T'^o,\widetilde\beta)$} Herein, we assume that $\mathcal L=\mathcal L(T^o,\widetilde\beta)$ and $\mathcal L'=\mathcal L(T'^o,\widetilde\beta)$.

Recall that for any $P\in \mathcal P(G_{T^o,\widetilde\beta})$ we have ${\bf m}(P,\alpha)=(m_1(P),\cdots,m_{\eta_\alpha}(P))\in \{-1,0,1\}^{\eta_\alpha}$.

\begin{definition}\label{Def-pair}\cite{H,H1}
For any $P\in \mathcal P(G_{T^o,\widetilde\beta})$, we choose pairs of indices in $\{1,\cdots,\eta_\alpha\}$ via the following algorithm:
\begin{enumerate}[$(i)$]
\item If $m_i(P)\geq 0$ for all $i=1,\cdots,\eta_\alpha$ or $m_i(P)\leq 0$ for all $i=1,\cdots,\eta_\alpha$, then nothing is chosen;
\item If $m_i(P)m_j(P)=-1$ for some $i,j$, choose the pair of indices $(a,b)$ with $a<b$ such that (1) $m_a(P)m_b(P)=-1$, (2) $m_\ell(P)=0$ for all $a<\ell<b$, (3) $m_a(P)m_\ell(P)\geq 0$ for all $\ell<a$.
\item Delete $m_a(P)$ and $m_b(P)$ from ${\bf m}(P,\alpha)$. Then return to step (i).
\end{enumerate}
We call any above chosen pair $(a,b)$ of indices an \emph{${\bf m}(P,\alpha)$-pair}.
\end{definition}

\begin{example}
If ${\bf m}(P,\alpha)=(1,1,0,-1,0,1,-1,1)$. Then all of the ${\bf m}(P,\alpha)$-pairs are $(2,4)$ and $(6,7)$. If ${\bf m}(P,\alpha)=(1,1,-1,0-1,1)$. Then all the the ${\bf m}(P,\alpha)$-pairs are $(2,3)$ and $(1,5)$.
\end{example}

The following is immediate.

\begin{lemma}\label{Lem-pair}
For ${\bf m}(P,\alpha)$, we have
\begin{enumerate}[$(1)$]
\item ${\bf m}(P,\alpha)$-pairs and $-{\bf m}(P,\alpha)$-pairs coincide.
\item
For any ${\bf m}(P,\alpha)$-pair $(a,b)$ with $a<b$ we have
$\sum_{a<\ell<b}m_\ell(P)=0.$
\item If $\sum m_i(P)\geq 0$ then for any $a$ with $m_a(P)<0$ we have $a$ is in some ${\bf m}(P,\alpha)$-pair.
\item If $\sum m_i(P)\leq 0$ then for any $a$ with $m_a(P)>0$ we have $a$ is in some ${\bf m}(P,\alpha)$-pair.
\end{enumerate}
\end{lemma}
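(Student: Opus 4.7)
The plan is to deduce all four items directly from the iterative pair-selection algorithm in Definition~\ref{Def-pair}, without any geometric input; the entire lemma is a combinatorial bookkeeping statement about a matching procedure on a $\{-1,0,1\}$-valued sequence.

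Item (1) is a symmetry observation: the three criteria in step (ii) depend on the entries $m_i(P)$ only through products $m_a(P)m_b(P)$, $m_a(P)m_\ell(P)$ and vanishing conditions $m_\ell(P)=0$, each invariant under the global sign flip $m_i\mapsto -m_i$. Running the algorithm on $\pm{\bf m}(P,\alpha)$ therefore produces the same sequence of chosen pairs.

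The main content is item (2), which I would reduce to a \emph{nesting} property: whenever $(a',b')$ is selected strictly before $(a,b)$, either $(a',b')\cap(a,b)=\emptyset$ or $(a',b')\subsetneq(a,b)$. The two forbidden configurations, namely the crossing $a'<a<b'<b$ and the later-inside-earlier containment $a'<a<b<b'$, are both ruled out by applying condition (ii)(2) at the step of $(a',b')$: the relevant endpoint of $(a,b)$ still survives in the working sequence at that earlier step and lies strictly between $a'$ and $b'$, so its $m$-value must vanish, contradicting $m_a(P)m_b(P)=-1$. Once nesting is established, the indices strictly between $a$ and $b$ split into those still present at the step of $(a,b)$, each contributing $0$ by (ii)(2), and those belonging to earlier pairs entirely nested inside $(a,b)$, each such pair contributing $m_{a'}(P)+m_{b'}(P)=0$ since two entries of $\{-1,0,1\}$ with product $-1$ are opposite nonzero values. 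Summing yields $\sum_{a<\ell<b}m_\ell(P)=0$.

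For items (3) and (4) I would exploit the termination condition of the loop: the algorithm halts precisely when no two surviving entries have product $-1$, so at termination all nonzero survivors share a common sign. Because each removed pair contributes $0$ to the total, $\sum_i m_i(P)$ equals the sum of the surviving entries; if this sum is $\geq 0$ the survivors cannot be entirely negative, so they are all $\geq 0$, and hence every index $a$ with $m_a(P)<0$ was removed in some pair. Item (4) is either the symmetric argument or, more cheaply, the combination of (3) with item (1) applied to $-{\bf m}(P,\alpha)$. I expect the nesting property in item (2) to be the only step requiring any real thought; everything else is direct bookkeeping from the definition.
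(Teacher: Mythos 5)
Your proof is correct, and since the paper states the lemma with the comment ``The following is immediate'' and gives no argument, your write-up supplies exactly the bookkeeping the paper takes for granted; the nesting property you isolate is indeed the only nontrivial content, with (1), (3), (4) falling out of sign-symmetry and the zero-sum-per-pair observation as you describe. One small remark on the nesting step: you list only the two forbidden configurations with $a'<a$ (the crossing $a'<a<b'<b$ and the containment $a'<a<b<b'$), omitting the crossing $a<a'<b<b'$; but the argument you give, that a surviving endpoint of $(a,b)$ would lie strictly between $a'$ and $b'$ and hence must have vanishing $m$-value by (ii)(2), applies verbatim to that case as well, so nothing is actually missing.
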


Let $(a,b)$ be an ${\bf m}(P,\alpha)$-pair. Then either $m_a(P)=1,m_b(P)=-1$ or $m_a(P)=-1,m_b(P)=1$. By Proposition \ref{Lem-tile-11}, in case $m_a(P)=1,m_b(P)=-1$, we have $P$ can twist on the tile $G(b)$ and any $P'\in \varphi_\alpha^{T^o}(P)$ can twist on $G'(a)$; in case $m_a(P)=-1,m_b(P)=1$, we have $P$ can twist on the tile $G(a)$ and any $P'\in \varphi_\alpha^{T^o}(P)$ can twist on $G'(b)$, where $G(a),G(b)$ (resp. $G'(a), G'(b)$) are the tiles with diagonal labeled $\alpha$ (resp. $\alpha'$) given in Proposition \ref{Lem-tile-11}.

\emph{Construction of $\pi$.} For any $P\in \mathcal L$,
$\pi(P)$ contains all $P'\in \varphi_\alpha^{T^o}(P)$ satisfy the following condition: for any ${\bf m}(P,\alpha)$-pair $(a,b)$,

$\bullet$ if $m_a(P)=1,m_b(P)=-1$ then the labels of $P'\cap edge(G'(a))$ and $P\cap edge(G(b))$ coincide;

$\bullet$ if $m_a(P)=-1,m_b(P)=1$ then the labels of $P'\cap edge(G'(b))$ and $P\cap edge(G(a))$ coincide.

Similarly, we can construction a set $\pi'(P')\subset \mathcal L$ for any $P'\in \mathcal L'=\mathcal P(G_{T'^o,\widetilde\beta})$.

\begin{proposition}\label{Lem-pi}
For any $P\in \mathcal L=\mathcal P(G_{T^o,\widetilde\beta})$ we have
$$|\pi(P)|=2^{[\sum m_i(P)]_+},$$
to be precise, for any $P'\in \pi(P)$, we have
\begin{enumerate}[$(1)$]
\item ${\bf m}(P',\alpha')=-{\bf m}(P,\alpha)$,
\item if $m_k(P)=1$ and $k$ is not in any ${\bf m}(P,\alpha)$-pair then $P'$ can twist on the tiles $G'(k)$ and $\mu_{G'(k)}P'\in \pi(P)$.
\end{enumerate}
where $G'(k)$ are the tiles given in Proposition \ref{Lem-tile-11}.
\end{proposition}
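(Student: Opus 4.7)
The plan is to derive the statement from the stronger enumeration already obtained for $\varphi_\alpha^{T^o}(P)$ in Proposition \ref{Lem-parvar}, by explicitly cutting down to those matchings that satisfy the pairing constraint in the definition of $\pi$.

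First, observe that $\pi(P)\subseteq \varphi_\alpha^{T^o}(P)$, and part (1) is inherited directly from Proposition \ref{Lem-parvar}(1). Thus the work is to count $|\pi(P)|$ and verify the closure-under-twist claim (2). Write $n_+=\#\{i:m_i(P)=1\}$ and $n_-=\#\{i:m_i(P)=-1\}$, so $\sum_i m_i(P)=n_+-n_-$ and $|\varphi_\alpha^{T^o}(P)|=2^{n_+}$. By Proposition \ref{Lem-parvar}(2), the $2^{n_+}$ elements of $\varphi_\alpha^{T^o}(P)$ are indexed precisely by the independent binary choice, for each index $k$ with $m_k(P)=1$, of which of the two edge configurations at the tile $G'(k)$ occurs in $P'$ (the tiles $G'(k)$ for distinct such $k$ being distinct, so these choices are genuinely independent). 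I would state this as an auxiliary lemma extracted from Propositions \ref{Lem-tile-11} and \ref{Lem-parvar}.

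Next, I would analyze the effect of the pairing constraints. By Lemma \ref{Lem-pair}, the ${\bf m}(P,\alpha)$-pairs partition a subset of the indices and the total number of pairs is $\min(n_+,n_-)$. For each pair $(a,b)$ with exactly one of $m_a(P),m_b(P)$ equal to $+1$, the defining condition of $\pi(P)$ pins down the labels of the edges of $P'$ at $G'(a)$ or at $G'(b)$ (the tile corresponding to the $+1$-index) to match the labels of $P$ at the partner tile $G(b)$ or $G(a)$; this forces one of the two binary choices at that $+1$-index. Different ${\bf m}(P,\alpha)$-pairs pin down different tiles $G'(k)$, so the constraints are independent and consistent. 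Thus the pairing condition kills exactly $(\text{number of pairs})=\min(n_+,n_-)$ of the binary degrees of freedom, leaving $2^{n_+-\min(n_+,n_-)}=2^{[n_+-n_-]_+}=2^{[\sum m_i(P)]_+}$ elements in $\pi(P)$.

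For part (2), if $m_k(P)=1$ and $k$ lies in no ${\bf m}(P,\alpha)$-pair, then the binary choice at $G'(k)$ is unconstrained, so both values of this choice give elements of $\pi(P)$. By Proposition \ref{Lem-parvar}(2) these two elements differ by the twist at $G'(k)$, hence $\mu_{G'(k)}P'\in\pi(P)$ for every $P'\in\pi(P)$; the twist only modifies edges of $G'(k)$, which are not involved in any other pairing constraint, so membership in $\pi(P)$ is preserved.

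The only real technical point will be verifying that the pairing constraints are genuinely independent, i.e.\ that no index $k$ is pinned by two different ${\bf m}(P,\alpha)$-pairs. This is immediate from Definition \ref{Def-pair}, since the pairs are chosen greedily from a sequence with every chosen index being deleted before the next pair is formed, so each index lies in at most one pair; coupled with the fact that the $+1$-index of each pair determines exactly one tile $G'(k)$, the constraints are set-theoretically disjoint and simultaneously satisfiable. Once this is in place, the counting and the twist-closure assertion both fall out cleanly.
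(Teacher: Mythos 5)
Your proof is correct and takes essentially the same approach as the paper, which gives only the one-line argument ``It follows by Proposition \ref{Lem-parvar} and the construction of $\pi(P)$''; you have simply spelled out the bookkeeping — the binary parametrization of $\varphi_\alpha^{T^o}(P)$ from Propositions \ref{Lem-tile-11} and \ref{Lem-parvar}, the disjointness of the ${\bf m}(P,\alpha)$-pairs, and the count $n_+-\min(n_+,n_-)=[\sum m_i(P)]_+$ — that the paper leaves implicit.
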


\begin{proof}
It follows by Proposition \ref{Lem-parvar} and the construction of $\pi(P)$.
\end{proof}

\begin{remark}\label{Rem-2}
Let $P\in \mathcal L$.
\begin{enumerate}[$(1)$]
\item Denote $r=[\sum m_i(P)]_+$. Denote by $\{k_1,\cdots,k_{r}\}\subset \{1,\cdots,\eta_\alpha \}$ the set of indices that are not in any ${\bf m}(P,\alpha)$-pair and $m_{k_j}=1$. Assume $k_1<k_2<\cdots<$ $k_{r}$. By Proposition \ref{Lem-pi}, we may write $\pi(P)$ as
$\{P(\vec{c})\mid \vec{c}=(c_1,\cdots,c_{r})\in \{0,1\}^{r}\}.$ To be precise, for any $j\in \{1,\cdots,r\}$ the edges labeled $\alpha_1,\alpha_3$ of $G'(k_j)$ are in $P(\vec{c})$ if $c_j=1$ and the the edges labeled $\alpha_2,\alpha_4$ of $G'(k_j)$ are in $P(\vec{c})$ if $c_j=0$.

\item Under the order of $\mathcal L'=\mathcal P(G_{T'^o,\widetilde\beta})$ in Section \ref{Sec-threesets}, we have $P(\vec{c})\leq P(\vec{c}\hspace{2pt}')$ if and only if $c_j\leq c'_j$ for all $j\in \{1,\cdots,r\}$.

\end{enumerate}
\end{remark}

\subsubsection*{Partition bijection $\pi:\mathcal L(T^o,\widetilde\beta^{(q)})\to \mathcal L(T'^o,\widetilde\beta^{(q)})$.} Herein, we assume that $\mathcal L=\mathcal L(T^o,\widetilde\beta^{(q)})$ and $\mathcal L'=\mathcal L(T'^o,\widetilde\beta^{(q)})$.
Recall in (\ref{Eq-mj}) that
\begin{equation*}
\begin{array}{rcl}
m(\Delta_j(q);\alpha)\hspace{-2mm}&= &\hspace{-2mm}
\text{number of edges labeled } \alpha \text{ in } \{\tau_{[j]}(q)\}\vspace{2mm}\\
\hspace{-2mm}&-&\hspace{-2mm}
\text{number of edges labeled } \alpha \text{ in } \{\tau_{j-1}(q),\tau_{j}(q)\}.
\end{array}
\end{equation*}
Note that if $m(\Delta_j(q);\alpha)=1$ then $\alpha=\tau_{[j]}(q)$ and thus either $\tau_{j-1}(q)=\alpha_1, \tau_j(q)=\alpha_4$ or $\tau_{j-1}(q)=\alpha_3, \tau_j(q)=\alpha_2$.

For ${\bf P}=(P,\Delta_j(q))\in \mathcal L$, denote ${\bf m}({\bf P}; \alpha)=({\bf m}(P,\alpha), m(\Delta_j(q);\alpha))$. As $\alpha$ is not a self-folded arc in $T^o$, we have $m(\Delta_j(q);\alpha)\in \{-1,0,1\}$ and thus ${\bf m}({\bf P}; \alpha)\in \{-1,0,1\}^{\eta_\alpha+1}$.

\begin{definition}\label{Def-pair1}
Assume that ${\bf m}({\bf P}; \alpha)=(m_1({\bf P}),\cdots,m_{\eta_\alpha}({\bf P}),m_{\eta_\alpha+1}({\bf P}))$. We choose pairs of indices $(a,b)$ via the following algorithm:
\begin{enumerate}[$(i)$]
\item If $m_{\eta_\alpha}({\bf P})m_{\eta_\alpha+1}({\bf P})=-1$, then first choose $(\eta_\alpha,\eta_\alpha+1)$ and all of the $(m_1({\bf P}),\cdots,m_{\eta_\alpha-1}({\bf P}))$-pairs as in Definition \ref{Def-pair};
\item If $m_{\eta_\alpha}({\bf P})m_{\eta_\alpha+1}({\bf P})\neq -1$, then we choose all of the $(m_1({\bf P}),\cdots,m_{\eta_\alpha+1}({\bf P})$-pairs as in Definition \ref{Def-pair}.
\end{enumerate}
The chosen pairs are called \emph{${\bf m}({\bf P}; \alpha)$-pairs}.
\end{definition}

The following is immediate.

\begin{lemma}\label{Lem-pair1}
For ${\bf m}(P,\alpha)$, we have
\begin{enumerate}[$(1)$]
\item ${\bf m}(P,\alpha)$-pairs and $-{\bf m}(P,\alpha)$-pairs coincide.
\item
For any ${\bf m}(P,\alpha)$-pair $(a,b)$ with $a<b$ we have
$\sum_{a<\ell<b}m_\ell(P)=0.$
\item If $\sum m_i(P)\geq 0$ then for any $a$ with $m_a(P)<0$ we have $a$ is in some ${\bf m}(P,\alpha)$-pair.
\item If $\sum m_i(P)\leq 0$ then for any $a$ with $m_a(P)>0$ we have $a$ is in some ${\bf m}(P,\alpha)$-pair.
\end{enumerate}
\end{lemma}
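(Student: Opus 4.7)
The plan is to reduce each statement to the corresponding statement in Lemma~\ref{Lem-pair}, which treats the analogous situation for ${\bf m}(P,\alpha)$-pairs without the extra coordinate $m_{\eta_\alpha+1}({\bf P})$. The key observation is that Definition~\ref{Def-pair1} splits into two cases according to whether or not $m_{\eta_\alpha}({\bf P})\,m_{\eta_\alpha+1}({\bf P})=-1$, and in each case the remaining (non-terminal) pairs are chosen by applying Definition~\ref{Def-pair} to a specified subvector. So I would handle the two cases in parallel throughout.

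For (1), I would observe that the condition $m_{\eta_\alpha}({\bf P})\,m_{\eta_\alpha+1}({\bf P})=-1$ is invariant under negation of the full vector, so case (i) and case (ii) of Definition~\ref{Def-pair1} are each stable under ${\bf m}({\bf P};\alpha)\mapsto-{\bf m}({\bf P};\alpha)$. In case (i), the pair $(\eta_\alpha,\eta_\alpha+1)$ is forced regardless of sign, and the remaining pairs are the ${\bf m}(P,\alpha)$-pairs of $(m_1({\bf P}),\dots,m_{\eta_\alpha-1}({\bf P}))$, which coincide with the pairs of its negative by Lemma~\ref{Lem-pair}(1). In case (ii) the same reasoning applies to the full $(\eta_\alpha+1)$-tuple. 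For (2), if $(a,b)=(\eta_\alpha,\eta_\alpha+1)$ then the interior sum is empty, hence zero; otherwise $(a,b)$ is an honest ${\bf m}(P,\alpha)$-pair of one of the two subvectors above, and Lemma~\ref{Lem-pair}(2) applies verbatim.

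For (3), suppose $\sum_{i=1}^{\eta_\alpha+1} m_i({\bf P})\geq 0$ and $m_a({\bf P})<0$. In case (ii) of Definition~\ref{Def-pair1} the ${\bf m}({\bf P};\alpha)$-pairs are precisely the ${\bf m}(P,\alpha)$-pairs of the full vector, so Lemma~\ref{Lem-pair}(3) gives the claim directly. In case (i) we have $m_{\eta_\alpha}({\bf P})+m_{\eta_\alpha+1}({\bf P})=0$, so the truncated sum satisfies
\[
\sum_{i=1}^{\eta_\alpha-1} m_i({\bf P})=\sum_{i=1}^{\eta_\alpha+1} m_i({\bf P})\geq 0.
\]
If $a\in\{\eta_\alpha,\eta_\alpha+1\}$ then $a$ already lies in the chosen pair $(\eta_\alpha,\eta_\alpha+1)$. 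Otherwise $a\leq\eta_\alpha-1$, and Lemma~\ref{Lem-pair}(3) applied to $(m_1({\bf P}),\dots,m_{\eta_\alpha-1}({\bf P}))$ shows $a$ lies in some ${\bf m}(P,\alpha)$-pair of that subvector, which by construction is an ${\bf m}({\bf P};\alpha)$-pair. Part (4) is entirely symmetric, swapping ``$<$'' and ``$>$.''

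There is no real obstacle here: the proof is essentially a bookkeeping reduction to Lemma~\ref{Lem-pair}. The only point requiring care is verifying, in case (i) of Definition~\ref{Def-pair1}, that truncating the last two entries preserves the relevant sign condition on the remaining sum, which as noted above follows from $m_{\eta_\alpha}({\bf P})+m_{\eta_\alpha+1}({\bf P})=0$.
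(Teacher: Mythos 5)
Your proposal is correct and is exactly the natural elaboration of the paper's ``The following is immediate'': a case split on whether $m_{\eta_\alpha}({\bf P})m_{\eta_\alpha+1}({\bf P})=-1$, followed by a reduction to Lemma~\ref{Lem-pair} on the appropriate subvector, using $m_{\eta_\alpha}({\bf P})+m_{\eta_\alpha+1}({\bf P})=0$ in case (i) to preserve the sign of the partial sum. No gaps.
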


\emph{Construction of $\pi$.} For ${\bf P}=(P,\Delta_j(q))\in \mathcal L$,
$\pi({\bf P})$ contains all $(P',\Delta')\in \varphi_\alpha^{T^o}(P)\times \phi_\alpha^{T^o,q}(\Delta_j(q))$ satisfy the following condition: for any ${\bf m}(P,\Delta_j(q);\alpha)$-pair\footnote{We use ${\bf m}(P,\Delta_j(q);\alpha)$-pair as in Definition \ref{Def-pair1} to ensure Proposition \ref{prop-cover} holds} $(a,b)$,

$\bullet$ if $m_a({\bf P})=1,m_b({\bf P})=-1$ with $b\neq \eta_\alpha+1$ then the labels of $P'\cap edge(G'(a))$ and $P\cap edge(G(b))$ coincide;

$\bullet$ if $m_a({\bf P})=-1,m_b({\bf P})=1$ with $b\neq \eta_\alpha+1$ then the labels of $P'\cap edge(G'(b))$ and $P\cap edge(G(a))$ coincide;

$\bullet$ if $m_a({\bf P})=1,m_b({\bf P})=-1$ with $b=\eta_\alpha+1$ then the edges labeled $\tau_{j-2}(q)$ and $\tau_{[j]}(q)$ of $edge(G'(a))$ are in $P'$ in case $\tau_{j-1}(q)=\alpha$, or the edges labeled $\tau_{j+1}(q)$ and $\tau_{[j]}(q)$ of $edge(G'(a))$ are in $P'$ in case $\tau_{j}(q)=\alpha$;

$\bullet$ if $m_a({\bf P})=-1,m_b({\bf P})=1$ with $b=\eta_\alpha+1$,
\begin{enumerate}[$(i)$]
\item in case $\tau_{j-1}(q)=\alpha_1, \tau_j(q)=\alpha_4$, then $\Delta'=\{\alpha',\alpha_3,\alpha_4\}$ when the edges in $P\cap edge(G(a))$ labeled $\alpha_1$ and $\alpha_3$, or $\Delta'=\{\alpha',\alpha_1,\alpha_2\}$ when the edges in $P\cap edge(G(a))$ labeled $\alpha_2$ and $\alpha_4$.
\item in case $\tau_{j-1}(q)=\alpha_3, \tau_j(q)=\alpha_2$, then $\Delta'=\{\alpha',\alpha_1,\alpha_2\}$ when the edges in $P\cap edge(G(a))$ labeled $\alpha_1$ and $\alpha_3$, or $\Delta'=\{\alpha',\alpha_3,\alpha_4\}$ when the edges in $P\cap edge(G(a))$ labeled $\alpha_2$ and $\alpha_4$.
\end{enumerate}

Similarly, we can construction a set $\pi'({\bf P}')\subset \mathcal L$ for any ${\bf P}'\in \mathcal L'=\mathcal P(G_{T'^o,\widetilde\beta^{(q)}})$.

By Propositions \ref{Lem-parvar}, \ref{Lem-part1}, from the construction, the following proposition follows.

\begin{proposition}\label{Lem-pi1}
For any ${\bf P}=(P,\Delta_j(q))\in \mathcal L$ we have
$$|\pi({\bf P})|=2^{[\sum m_i({\bf P})]_+},$$
to be precise, for any ${\bf P}'=(P',\Delta')\in \pi({\bf P})$, we have
\begin{enumerate}[$(1)$]
\item ${\bf m}({\bf P}';\alpha')=-{\bf m}({\bf P};\alpha)$;
\item if $k\neq \eta_\alpha+1$ is not in any ${\bf m}({\bf P};\alpha)$-pair and $m_k({\bf P})=1$, then $P'$ can twist on $G'(k)$ and $(\mu_{G'(k)}P',\Delta')\in \pi({\bf P})$, where $G'(k)$ is the tile given in Proposition \ref{Lem-tile-11};
\item if $\eta(\alpha)+1$ is not in any ${\bf m}({\bf P};\alpha)$-pair and $m_{\eta_\alpha+1}({\bf P})=1$, then $(P',\Delta'')\in \pi({\bf P})$, where $\phi_\alpha^{T^o,q}(\Delta_j(q))=\{\Delta',\Delta''\}$.
\end{enumerate}
\end{proposition}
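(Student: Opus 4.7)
The plan is to derive this directly from the factorization $\pi(\mathbf P)\subset \varphi_\alpha^{T^o}(P)\times \phi_\alpha^{T^o,q}(\Delta_j(q))$ together with the pairing constraints coming from Definition \ref{Def-pair1}. By Proposition \ref{Lem-parvar} and Proposition \ref{Lem-part1} we already know
\[
\bigl|\varphi_\alpha^{T^o}(P)\bigr|=2^{\sum_{i=1}^{\eta_\alpha}[m_i(P)]_+}, \qquad \bigl|\phi_\alpha^{T^o,q}(\Delta_j(q))\bigr|=2^{[m(\Delta_j(q);\alpha)]_+},
\]
so the ``unconstrained'' ambient set has cardinality $2^{\sum_{i=1}^{\eta_\alpha+1}[m_i(\mathbf P)]_+}$. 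The construction of $\pi$ now freezes one choice on each $\mathbf m(\mathbf P;\alpha)$-pair: once $P$ is prescribed, the recipe determines the labels of $P'\cap \mathrm{edge}(G'(a))$ (or the triangle $\Delta'$, in the case $b=\eta_\alpha+1$) in terms of the labels at $G(b)$ (resp.\ $G(a)$). Thus each pair removes a factor of $2$.

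So the first step is the combinatorial count: denote by $\sigma_+$ and $\sigma_-$ the number of indices $i$ with $m_i(\mathbf P)=1$ and $m_i(\mathbf P)=-1$ respectively. By Definition \ref{Def-pair1}, the number of $\mathbf m(\mathbf P;\alpha)$-pairs equals $\min(\sigma_+,\sigma_-)$; whichever of $\sigma_+$ or $\sigma_-$ is larger contributes $|\sigma_+-\sigma_-|=[\sum m_i(\mathbf P)]_+$ (if $\sigma_+\geq\sigma_-$) or $0$ (otherwise) unpaired indices with $m_i(\mathbf P)=1$. Hence
\[
|\pi(\mathbf P)|=2^{\sum[m_i(\mathbf P)]_+-\min(\sigma_+,\sigma_-)}=2^{[\sum m_i(\mathbf P)]_+},
\]
as claimed. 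Claim (1) then follows coordinatewise: the equalities $m_i(P')=-m_i(P)$ for $i\leq \eta_\alpha$ and $m(\Delta';\alpha')=-m(\Delta_j(q);\alpha)$ are exactly Propositions \ref{Lem-parvar}(1) and Lemma \ref{lem-minv2}, applied to $P'\in \varphi_\alpha^{T^o}(P)$ and $\Delta'\in \phi_\alpha^{T^o,q}(\Delta_j(q))$.

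For claims (2) and (3), the point is that unpaired indices enjoy genuine freedom while paired ones are frozen. For (2), if $k\neq \eta_\alpha+1$ is unpaired with $m_k(\mathbf P)=1$, Proposition \ref{Lem-parvar}(2) gives that $\varphi_\alpha^{T^o}(P)$ is closed under twists at $G'(k)$; since no $\mathbf m(\mathbf P;\alpha)$-pair constrains the edges of $G'(k)$, the involution $P'\mapsto \mu_{G'(k)}P'$ preserves membership in $\pi(\mathbf P)$ (the $\Delta'$-component is untouched). For (3), if $\eta_\alpha+1$ is unpaired with $m_{\eta_\alpha+1}(\mathbf P)=1$, Proposition \ref{Lem-part1} gives $\phi_\alpha^{T^o,q}(\Delta_j(q))=\{\Delta',\Delta''\}$; the same unconstrained choice argument shows that swapping $\Delta'$ for $\Delta''$ keeps $(P',\Delta'')\in\pi(\mathbf P)$.

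The only genuine subtlety—and the step I expect to need the most care—is verifying that the pair constraints in the last two bullet points of the construction of $\pi$ (the cases with $b=\eta_\alpha+1$) are well-defined and internally consistent: here one must check that if $m_{\eta_\alpha+1}(\mathbf P)\neq 0$ then exactly one of $\tau_{j-1}(q),\tau_j(q)$ equals $\alpha$, and that the two alternatives $\{\tau_{j-1}(q),\tau_j(q)\}=\{\alpha_1,\alpha_4\}$ or $\{\alpha_3,\alpha_2\}$ (up to order) are the only configurations. This is a direct case analysis using that $\alpha$ is not self-folded and that $m(\Delta_j(q);\alpha)=1$ forces $\alpha=\tau_{[j]}(q)$, and once it is in place the bijectivity of the assignment on the paired data is automatic. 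Every other step is a clean bookkeeping consequence of the results cited from Sections \ref{Sec-parcom} and \ref{sec:PB}.
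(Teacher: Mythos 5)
Your proposal is correct and takes essentially the same route as the paper, which proves this proposition in one line by pointing to Propositions \ref{Lem-parvar} and \ref{Lem-part1} together with the construction of $\pi$; you are simply unpacking that reference. The cardinality count via ``$\sigma_+ - \min(\sigma_+,\sigma_-) = [\sum m_i]_+$'', claim (1) read coordinatewise, and the observation that unpaired indices $k$ with $m_k=1$ enjoy genuine freedom (hence (2) and (3)) are exactly what the cited propositions plus the pairing constraints give.

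One small caution: the sentence in your final paragraph conflates two disjoint subcases. If $m_{\eta_\alpha+1}(\mathbf P)=-1$ then exactly one of $\tau_{j-1}(q),\tau_j(q)$ equals $\alpha$ and $\tau_{[j]}(q)\neq\alpha$; if instead $m_{\eta_\alpha+1}(\mathbf P)=1$ then $\tau_{[j]}(q)=\alpha$, \emph{neither} of $\tau_{j-1}(q),\tau_j(q)$ equals $\alpha$, and they form $\{\alpha_1,\alpha_4\}$ or $\{\alpha_2,\alpha_3\}$ --- the configuration already recorded in the paper just before Definition \ref{Def-pair1}. As written your sentence asserts both properties simultaneously, which is contradictory; the intended reading is clearly a case split, and the substance of what needs checking is right, but you should separate the two cases explicitly.
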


\begin{lemma}\label{lem-pj}
For any ${\bf P}=(P,\Delta_j(q))\in \mathcal L$, if $m(\Delta_j(q);\alpha)=1$, then $\phi_{\alpha}^{T^o,q}(\Delta_j(q))=\{\Delta'_1,\Delta'_2\}$ satisfies that $(P',\Delta'_1)<(P',\Delta'_2)$ for any $P'\in \varphi_{\alpha}^{T^o}(P)$.
\end{lemma}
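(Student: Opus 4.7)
The plan is to identify the two triangles in $\phi_{\alpha}^{T^{o},q}(\Delta_j(q))$ as consecutive triangles incident to $q$ in $T'^{o}$, and then to argue that their ordering in the lattice is forced regardless of the choice of $P'$. Since $m(\Delta_j(q);\alpha)=1$ forces $\alpha=\tau_{[j]}(q)$ (the side of $\Delta_j(q)$ opposite to $q$), the other diagonal $\alpha'$ of the quadrilateral containing $\alpha$ is incident to $q$. Therefore $\alpha'=\tau'_{i^{*}}(q)$ for some $i^{*}\in\{1,\dots,t'\}$, and the two triangles in $\phi_{\alpha}^{T^{o},q}(\Delta_j(q))$ are precisely $\Delta'_{i^{*}}(q)$ and $\Delta'_{i^{*}+1}(q)$ (with the convention $\Delta'_{t'+1}(q):=\Delta'_1(q)$).

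First I would dispose of the case $j\neq 1$ cheaply. By Lemma~\ref{lem-deltatodelta}, $\Delta'_1(q)\in \phi_{\alpha}^{T^{o},q}(\Delta_1(q))$, and since $\phi_{\alpha}^{T^{o},q}$ is a partition bijection (Proposition~\ref{Lem-part1}), we have $\Delta'_1(q)\notin \phi_{\alpha}^{T^{o},q}(\Delta_j(q))$ whenever $j\neq 1$. This forces $i^{*}\in\{2,\dots,t'-1\}$, and Lemma~\ref{Lem-cover2} then yields that $(P',\Delta'_{i^{*}+1}(q))$ covers $(P',\Delta'_{i^{*}}(q))$ no matter which of $E'_1(q),E'_2(q)$ is in $P'$. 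Setting $\Delta'_1:=\Delta'_{i^{*}}(q)$ and $\Delta'_2:=\Delta'_{i^{*}+1}(q)$ finishes this case.

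The remaining case $j=1$ is the main obstacle. Here Lemma~\ref{lem-deltatodelta} tells us $\Delta'_1(q)$ itself is one of the two triangles, and a geometric inspection of the quadrilateral containing $\alpha$ shows that the other is either $\Delta'_2(q)$ (giving $i^{*}=1$) or $\Delta'_{t'}(q)$ (giving $i^{*}=t'$), depending on which of the two sides opposite to $q$ in the quadrilateral the curve $\widetilde\beta$ enters from. In either sub-case, the ordering $(P',\Delta'_1)<(P',\Delta'_2)$ reduces via the covering rules in Section~\ref{delta1} to showing that a specific one of $E'_1(q),E'_2(q)$ belongs to every $P'\in\varphi_{\alpha}^{T^{o}}(P)$. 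I plan to verify this by unwinding the construction of $\varphi_{\alpha}^{T^{o}}$ in Section~\ref{Sec-parcom} at the last tile $G_c$ of $G_{T^{o},\widetilde\beta}$, whose diagonal is $\alpha=\tau_{[1]}(q)$: since $\widetilde\beta$ ends at $q$ in $\Delta_1(q)$, the tile $G_c$ is deleted by $\varphi_{\alpha}^{T^{o}}$ and the new terminal tile $G'_{c'}$ inherits its two $q$-incident boundary edges from an edge of $G_{c-1}$ whose label is forced by the matching condition together with $rel(G_c,T^{o})$.

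The technical hurdle is the careful bookkeeping in this final step: in each geometric sub-case one must track the orientation data $rel(G_c,T^{o})$ and the $\alpha$-mutability information from Proposition~\ref{Lem-var} to pin down which of $E'_1(q),E'_2(q)$ is the "forced" edge that survives in every $P'$. Once this identification is made, the sub-cases $i^{*}=1$ and $i^{*}=t'$ are symmetric, and the uniform labeling $\{\Delta'_1,\Delta'_2\}=\phi_{\alpha}^{T^{o},q}(\Delta_1(q))$ with $(P',\Delta'_1)<(P',\Delta'_2)$ follows.
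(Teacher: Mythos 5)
Your proposal is incomplete, and the place where it stalls is exactly where the paper's argument does its real work. You correctly reduce to the question of which of $E'_1(q),E'_2(q)$ belongs to $P'$, and for $j\neq 1$ your partition-plus-Lemma~\ref{Lem-cover2} argument is a valid shortcut. But for $j=1$ you describe a plan (``I plan to verify\dots'', ``The technical hurdle is the careful bookkeeping\dots'') rather than carry out the argument, and that bookkeeping is not routine: unwinding $\varphi_\alpha^{T^o}$ at the last tile $G_c$ and tracking $rel(G_c,T^o)$ through each orientation sub-case is precisely the kind of case explosion the paper's proof is designed to avoid.

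The paper argues uniformly for all $j$ by contradiction, and the key observation is missing from your plan. Suppose no consistent labeling $\{\Delta'_1,\Delta'_2\}$ exists. By Lemma~\ref{Lem-cover2} the ordering can only flip between different $P'$ when $\Delta'_1(q)$ is one of the two triangles \emph{and} there exist $P',Q'\in\varphi_\alpha^{T^o}(P)$ with $E'_1(q)\in P'$ and $E'_2(q)\in Q'$. Since $E'_1(q),E'_2(q)$ are the two boundary edges of the terminal tile $G'_{c'}$ and elements of $\varphi_\alpha^{T^o}(P)$ differ only by twists on tiles whose diagonal is labeled $\alpha'$ (Proposition~\ref{Lem-parvar}), such $P',Q'$ force the diagonal of $G'_{c'}$ to be $\alpha'$. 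But $G'_{c'}$ is the last crossing of $\widetilde\beta$ before reaching $q$, so $\alpha'=\tau'_{[1]}(q)$ and hence $m(\Delta'_1(q);\alpha')=1$, contradicting Lemma~\ref{lem-minv2} which gives $m(\Delta'_1(q);\alpha')=-m(\Delta_j(q);\alpha)=-1$. This sign-reversal input (Lemma~\ref{lem-minv2}) is the ingredient your outline does not use; without it, you are left doing the orientation bookkeeping by hand, which is exactly the step you flag as uncertain.
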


\begin{proof}
Otherwise, by Lemma \ref{Lem-cover2}, we have $\Delta'_1(q)\in \phi_{\alpha}^{T^o,q}(\Delta_j(q))$ and there exist $P',Q'\in \varphi_{\alpha}^{T^o}(P)$ such that $E'_1(q)\in P', E'_2(q)\in Q'$. Thus by Proposition \ref{Lem-parvar}, the diagonal of $G'_{c'}$ is labeled $\alpha'$.
Thus $m(\Delta'_1(q),\alpha')=1$. It contradicts to $m(\Delta_j(q);\alpha)=1$ and Lemma \ref{lem-minv2}.
\end{proof}

\begin{remark}\label{Rem-3}
Denote $r=[\sum m_i({\bf P})]_+$. Denote by $\{k_1,\cdots,k_{r}\}\subset \{1,\cdots,\eta_\alpha+1\}$ the set of indices that are not in any ${\bf m}({\bf P};\alpha)$-pair and $m_{k_\ell}({\bf P})=1$ for $\ell=1,\cdots,r$. Suppose that $k_1<k_2<\cdots <k_{r}$. By Proposition \ref{Lem-pi1}, we may write $\pi({\bf P})$ as
$\{{\bf P}(\vec{c})\mid \vec{c}\in \{0,1\}^{r}\},$ to be precise, assume that ${\bf P}(\vec{c})=(P',\Delta'),$ for any $\ell\in \{1,\cdots,r\}$,
\begin{enumerate}[$(i)$]
\item if $k_\ell\neq \eta_\alpha+1$ then the edges labeled $\alpha_1,\alpha_3$ of $G'(k_\ell)$ are in $P$ in case $c_\ell=1$ and the the edges labeled $\alpha_2,\alpha_4$ of $G'(k_\ell)$ are in $P$ in case $c_\ell=0$;
\item if $k_\ell=\eta_\alpha+1$, then $\Delta'=\Delta'_1$ in case $c_\ell=0$ and $\Delta'=\Delta'_2$ in case $c_\ell=1$, where $\Delta'_1$ and $\Delta'_2$ are given by Lemma \ref{lem-pj}.
\end{enumerate}
\end{remark}

\begin{lemma}\label{lem-nu-pair}
Suppose that $\widetilde\beta\notin T'^o$. For any ${\bf P}=(P,\Delta_j(q))\in \mathcal L$,
\begin{enumerate}[$(1)$]
\item if $\phi_{\alpha}^{T^o,q}(\Delta_j(q))=\{\Delta'_1(q),\Delta'_2(q)\}$ and $E'_2(q)\in P'$ for some $P'\in \varphi_{\alpha}^{T^o}(P)$, then $m_{\eta_\alpha}({\bf P})=-1, m_{\eta_\alpha+1}({\bf P})=1$;
\item If $\phi_{\alpha}^{T^o,q}(\Delta_j(q))=\{\Delta'_{t'}(q),\Delta'_1(q)\}$ and $E'_1(q)\in P'$ for some $P'\in \varphi_{\alpha}^{T^o}(P)$, then $m_{\eta_\alpha}({\bf P})=-1, m_{\eta_\alpha+1}({\bf P})=1$.
\end{enumerate}
Consequently, we have $(\eta_\alpha, \eta_\alpha+1)$ is an ${\bf m}({\bf P};\alpha)$-pair.
\end{lemma}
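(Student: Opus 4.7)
The plan is to establish the two component values $m_{\eta_\alpha+1}(\mathbf{P})=1$ and $m_{\eta_\alpha}(\mathbf{P})=-1$ separately, after which the ``consequently'' conclusion follows immediately from step (i) of Definition \ref{Def-pair1}. I will give the argument for part (1) in detail; part (2) is symmetric. Since $\phi_{\alpha}^{T^o,q}(\Delta_j(q))$ has cardinality two, Proposition \ref{Lem-part1} forces $[m(\Delta_j(q);\alpha)]_+=1$, so $m_{\eta_\alpha+1}(\mathbf{P})=m(\Delta_j(q);\alpha)=1$. Next, tracing the construction of $\phi_{\alpha}^{T^o,q}$ through the canonical once-punctured polygon reduces the $|\phi|=2$ case to Case II of Proposition \ref{prop-tri}, where the two output triangles share the new arc $\alpha'$. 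Combined with the hypothesis that these are $\Delta'_1(q)$ and $\Delta'_2(q)$, this reads off $\tau'_1(q)=\alpha'$, and by (\ref{E1q}) applied to $T'^o$ the edge $E'_2(q)$ of $G'_{c'}$ is therefore labeled $\alpha'$.

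For $m_{\eta_\alpha}(\mathbf{P})=-1$, Proposition \ref{Lem-parvar}(1) reduces this to proving $m_{\eta_\alpha}(P')=1$ for any $P'\in\varphi_\alpha^{T^o}(P)$ with $E'_2(q)\in P'$. Let $\overrightarrow{\xi'}$ be the complete $(T'^o,\widetilde\beta)$-path corresponding to $P'$. Under the natural bijection $\mathcal{P}(G_{T'^o,\widetilde\beta})\cong\mathcal{CP}(T'^o,\widetilde\beta)$, the condition $E'_2(q)\in P'$ is precisely $\xi'_{2c'+1}=\alpha'$: the final oriented segment of $\overrightarrow{\xi'}$ runs along $\alpha'$ into $q$. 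Lifting to the canonical polygon, $\hat{\overrightarrow{\xi'}}_{2c'+1}$ equals the preimage of $\alpha'$ incident to $\hat q$, which by the ordering convention for $f^{-1}(\alpha')$ along $\hat{\widetilde\beta}$ is precisely $\hat\alpha'_{\eta_\alpha}$; this gives a contribution of $+1$ to $m(\hat{\overrightarrow{\xi'}},\hat\alpha'_{\eta_\alpha})$.

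To show this is the \emph{unique} occurrence of $\hat\alpha'_{\eta_\alpha}$ in $\hat{\overrightarrow{\xi'}}$: the last arc crossed by $\widetilde\beta$ in $T'^o$ is $\tau'_{i_{c'}}=\tau'_{[1]}(q)\neq\tau'_1(q)=\alpha'$ (the self-folded case being a separate easy check that uses $\widetilde\beta\notin T'^o$), so $\hat\alpha'_{\eta_\alpha}$ is not crossed by $\hat{\widetilde\beta}$ and contributes nothing to even positions; for any odd position $2k+1$ with $k<c'$, the path axioms force the target of $\overrightarrow{\xi'}_{2k+1}$ to be a vertex distinct from $\hat q$ (since the path reaches $\hat q$ only at its very end), whereas $\hat\alpha'_{\eta_\alpha}$ has $\hat q$ as an endpoint, so $\hat{\overrightarrow{\xi'}}_{2k+1}\neq\hat\alpha'_{\eta_\alpha}$. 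Hence $m_{\eta_\alpha}(P')=1$ and $m_{\eta_\alpha}(\mathbf{P})=-1$. Part (2) is symmetric: the hypothesis forces the common edge of $\Delta'_{t'}(q)$ and $\Delta'_1(q)$ to be $\alpha'$, so $\tau'_{t'}(q)=\alpha'$ and $E'_1(q)$ is labeled $\alpha'$, and the same ``final odd-position'' argument applies verbatim. Once $m_{\eta_\alpha}(\mathbf{P})\cdot m_{\eta_\alpha+1}(\mathbf{P})=-1$ is established, step (i) of Definition \ref{Def-pair1} immediately selects $(\eta_\alpha,\eta_\alpha+1)$ as an $\mathbf{m}(\mathbf{P};\alpha)$-pair, giving the consequence. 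The main difficulty is the uniqueness step, which requires pinning down that $\hat\alpha'_{\eta_\alpha}$ really sits at $\hat q$ rather than being accessible as an intermediate odd arc of $\hat{\overrightarrow{\xi'}}$; this is precisely where the hypothesis $\widetilde\beta\notin T'^o$ enters, ensuring that $G'_{c'}$ is a bona fide terminal tile of the snake graph.
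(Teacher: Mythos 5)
Your proof follows essentially the same route as the paper's: identify that the $|\phi|=2$ hypothesis forces $\tau'_1(q)=\alpha'$ and $E'_2(q)$ to be labeled $\alpha'$, then deduce $m_{\eta_\alpha+1}(\mathbf{P})=1$ and $m_{\eta_\alpha}(P')=1$, and apply the sign-reversal from $\varphi_\alpha^{T^o}$. The one place where you differ meaningfully is the step $m_{\eta_\alpha}(P')=1$: the paper simply asserts it (because $E'_2(q)$ labeled $\alpha'$ lies in $P'$, so the last odd arc of the corresponding complete path is $\alpha'$, and in the canonical polygon $m$-values lie in $\{-1,0,1\}$), while you spell out the uniqueness of the occurrence of $\hat\alpha'_{\eta_\alpha}$ in the path $\hat{\overrightarrow{\xi'}}$ — arguing it cannot appear in even positions because the last diagonal is $\tau'_{[1]}(q)\ne\alpha'$, and cannot appear in intermediate odd positions because those arcs are traversed before the path ever reaches $\hat q$. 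You also obtain $m(\Delta_j(q);\alpha)=1$ directly from cardinality via Proposition \ref{Lem-part1}, whereas the paper derives it from $m(\Delta'_1(q);\alpha')=-1$ and Lemma \ref{lem-minv2}; these are trivially equivalent. Your extra care on the uniqueness step is the main added value, since that is the only part the paper leaves implicit, and your handling of the self-folded and $\widetilde\beta\notin T'^o$ edge cases is consistent with what the paper relies on tacitly.
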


\begin{proof}
We shall only prove the first statement, as the second one can be proved similarly. As $\phi_{\alpha}^{T^o,q}(\Delta)=\{\Delta'_1(q),\Delta'_2(q)\}$, we see that $\tau'_1(q)=\alpha$ and the diagonal of $G'_{c'}$ is labeled $\alpha_1$ or $\alpha_3$. It follows that $m(\Delta'_1(q))=-1$ and $E'_2(q)$ is labeled $\alpha'$. Thus $m(\Delta_j(q))=1$ and $m_{\eta_\alpha}(P')=1$. It follows that $m_{\eta_\alpha}({\bf P})=-m_{\eta_\alpha}(P')=-1$ and $m_{\eta_\alpha+1}({\bf P})=m(\Delta_j(q))=1$.
\end{proof}

\begin{proposition}\label{prop-cover}
Suppose that $r=\sum m_i({\bf P})\geq 0$. Given any $k$, for $\vec{c}, \vec{c}\hspace{.8mm}'\in \{0,1\}^{r}$ such that $c_k=1,c'_k=0$ and $c_a=c'_a$ for $a\neq k$, then ${\bf P}(\vec{c})$ covers ${\bf P}(\vec{c}\hspace{.8mm}')$.
\end{proposition}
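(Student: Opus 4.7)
The plan is to pin down the unique position $k \in \{1, \ldots, r\}$ at which $\vec c$ and $\vec c\hspace{.8mm}'$ differ, use Remark \ref{Rem-3} to identify which elementary move in $\mathcal L'$ relates ${\bf P}(\vec c)$ and ${\bf P}(\vec c\hspace{.8mm}')$, and then verify the covering relation case by case. Two cases arise according as $k_k \leq \eta_\alpha$ or $k_k = \eta_\alpha + 1$.

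In the first case, Remark \ref{Rem-3}(i) gives ${\bf P}(\vec c) = (P_1, \Delta')$ and ${\bf P}(\vec c\hspace{.8mm}') = (P_2, \Delta')$, sharing the same triangle component, whose matchings differ only at the tile $G'(k_k)$ (which has diagonal labeled $\alpha'$): $P_1$ contains the sides labeled $\alpha_1, \alpha_3$ whereas $P_2$ contains the sides labeled $\alpha_2, \alpha_4$. By Proposition \ref{Lem-pi1}(2), $P_1 = \mu_{G'(k_k)} P_2$. I would then show $P_2 < P_1$ in $\mathcal P(G_{T'^o, \widetilde\beta})$ by tracking the orientation: after flipping at $\alpha$, the new diagonal $\alpha'$ sits in the quadrilateral with $\alpha_2, \alpha_4$ in its clockwise direction, so the $\{\alpha_1,\alpha_3\}$-pair matches the $W/E$ or $N/S$ labelling of $G'(k_k)$ that corresponds to the upper choice in Proposition \ref{prop-mm}, regardless of $\mathrm{rel}(G'(k_k), T'^o)$. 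Because a single-tile twist is by definition a cover of the Hasse graph of $\mathcal P(G_{T'^o, \widetilde\beta})$ (Proposition \ref{Pro-la}), and rule (2) of the partial order on $\mathcal L(T'^o, \widetilde\beta^{(q)})$ preserves covers in the matching coordinate when the triangle is fixed, ${\bf P}(\vec c)$ covers ${\bf P}(\vec c\hspace{.8mm}')$.

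In the second case, Remark \ref{Rem-3}(ii) gives ${\bf P}(\vec c) = (P', \Delta'_2)$ and ${\bf P}(\vec c\hspace{.8mm}') = (P', \Delta'_1)$, where $\phi^{T^o,q}_\alpha(\Delta_j(q)) = \{\Delta'_1, \Delta'_2\}$ is arranged so that $\Delta'_1, \Delta'_2$ are consecutive triangles at $q$ in $T'^o$. Lemma \ref{lem-pj} supplies the strict inequality $(P', \Delta'_1) < (P', \Delta'_2)$. To promote this to a covering relation, by Lemma \ref{Lem-cover2} the only obstruction would be that either $E'_1(q) \in P'$ with $\Delta'_1 = \Delta'_1(q)$, or $E'_2(q) \in P'$ with $\Delta'_2 = \Delta'_1(q)$. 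Since by hypothesis the index $\eta_\alpha + 1$ is \emph{not} in any ${\bf m}({\bf P}; \alpha)$-pair, Lemma \ref{lem-nu-pair} (applied contrapositively) excludes both of these configurations, so ${\bf P}(\vec c)$ covers ${\bf P}(\vec c\hspace{.8mm}')$ in $\mathcal L'$.

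The main obstacle is the orientation bookkeeping in the first case: matching the combinatorial convention ``$c_k = 1$ means the edges $\alpha_1, \alpha_3$ of $G'(k_k)$ lie in the matching'' with the lattice-theoretic convention ``twist on $G'(k_k)$ moves upward''. This demands a careful tracking of how the labels $\alpha_1, \alpha_2, \alpha_3, \alpha_4$ of the quadrilateral sitting around $\alpha$ in $T^o$ descend, via the flip at $\alpha$, to the $W/N/E/S$ labels of $G'(k_k)$ together with $\mathrm{rel}(G'(k_k), T'^o)$. Once this uniform orientation identification is established for free slots, Case 1 reduces entirely to Proposition \ref{Pro-la} and Case 2 entirely to the combination of Lemma \ref{Lem-cover2} and Lemma \ref{lem-nu-pair}.
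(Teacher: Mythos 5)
Your Case 2 (the slot $k_k = \eta_\alpha+1$) is essentially the paper's argument: it invokes Lemma \ref{lem-pj} for the strict inequality, identifies the obstruction via Lemma \ref{Lem-cover2}, and rules it out with Lemma \ref{lem-nu-pair} and the fact that $(\eta_\alpha,\eta_\alpha+1)$ cannot be a free slot. This part is sound.

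Case 1 (the slot $k_k\le\eta_\alpha$), however, contains a genuine gap. You assert that ``rule (2) of the partial order on $\mathcal L(T'^o,\widetilde\beta^{(q)})$ preserves covers in the matching coordinate when the triangle is fixed,'' but this is false. Rule (2) only gives $(P,\Delta')<(Q,\Delta')$ when $P<Q$; it does not promote a cover in $\mathcal P(G_{T'^o,\widetilde\beta})$ to a cover in $\mathcal L(T'^o,\widetilde\beta^{(q)})$. Lemma \ref{Lem-cover} makes this precise and exhibits an exception: when $G'(k_k)=G'_{c'}$ is the last tile (which is possible, since $G'(k_k)$ has diagonal labeled $\alpha'$) and $\Delta'=\Delta'_1(q)$, the element $(\mu_{G'_{c'}}P_2,\Delta'_1(q))$ need not cover $(P_2,\Delta'_1(q))$ because intermediate elements arise from the cyclic chain of rule (1). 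Your proof never mentions Lemma \ref{Lem-cover}, so this exceptional configuration is silently assumed away. The paper handles it by assuming the cover fails, concluding $\Delta'=\Delta'_1(q)$ and $G'_l=G'_{c'}$, deducing $m_{\eta_\alpha}({\bf P})=1$, $m_{\eta_\alpha+1}({\bf P})=-1$, so $(\eta_\alpha,\eta_\alpha+1)$ is an ${\bf m}({\bf P};\alpha)$-pair, and then using the construction of $\pi$ to show the two elements cannot both lie in $\pi({\bf P})$ --- a contradiction. You need an analogous argument; the orientation bookkeeping you flag as the main obstacle in Case 1 is not the real issue, the missing invocation and dispatch of the Lemma \ref{Lem-cover} exception is.
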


\begin{proof}
Assume that ${\bf P}(\vec{c}\hspace{.8mm}')=(P'_1,\Delta'_1)$ and ${\bf P}(\vec{c})=(P'_2,\Delta'_2)$. By Remark \ref{Rem-3}, we have either $P'_1=P'_2$ or $\Delta'_1=\Delta'_2$.

(Case I) $P'_1=P'_2$. We have $(P'_2,\Delta'_2)=(P'_1,\Delta'_2)>(P'_1,\Delta'_1)$ by Remark \ref{Rem-3} (2). By Lemma \ref{Lem-cover2}, $(P'_1,\Delta'_2)$ does not cover $(P'_1,\Delta'_1)$ if and only if either $E'_2(q)\in P'_1,\Delta'_2=\Delta'_1(q), \Delta'_1=\Delta'_2(q)$ or $E'_1(q)\in P'_1, \Delta'_2=\Delta'_{t'}(q), \Delta'_1=\Delta'_1(q)$.
We have $(\eta_\alpha,\eta_\alpha+1)$ is an ${\bf m}({\bf P};\alpha)$-pair by Lemma \ref{lem-nu-pair}. From the construction of $\pi$, we see that $(P'_1,\Delta'_1)$ and $(P'_1,\Delta'_2)$ can not belong to $\pi({\bf P})$, a contradiction.

(Case II) $\Delta'_1=\Delta'_2$. We have $P'_1<P'_2=\mu_{G'_l}P'_1$ for some tile $G'_l$ with diagonal labeled $\alpha'$ by Remark \ref{Rem-3} (1). If $(P'_2,\Delta'_2)=(\mu_{G'_l}P'_1,\Delta'_1)$ does not cover $(P'_1,\Delta'_1)$, by Lemma \ref{Lem-cover}, we have $\Delta'_1=\Delta'_1(q)$ and $G'_l=G'_{c'}$ is the last tile of $G_{T'^o,\widetilde\beta}$. It follows that $m_{\eta_\alpha+1}(P'_1,\Delta'_1)=1$ and $m_{\eta_\alpha}(P'_1,\Delta'_1)=-1$. Thus $m_{\eta_\alpha+1}({\bf P})=-1$ and $m_{\eta_\alpha}({\bf P})=1$. It follows that $(\eta_\alpha,\eta_\alpha+1)$ is an ${\bf m}({\bf P};\alpha)$-pair. From the construction of $\pi$, we see that $(P'_1,\Delta'_1)$ and $(P'_2,\Delta'_1)$ can not belong to $\pi({\bf P})$, a contradiction.
\end{proof}

\subsubsection*{Partition bijection $\pi_\alpha^{T^o,p,q}:\mathcal L(T^o,\widetilde\beta^{(p,q)})\to \mathcal L(T'^o,\widetilde\beta^{(p,q)})$}
 Herein we assume that $\mathcal L=\mathcal L(T^o,\widetilde\beta^{(p,q)})$ and $\mathcal L'=\mathcal L(T'^o,\widetilde\beta^{(p,q)})$.

Recall in (\ref{Eq-mi}) that
\begin{equation*}
\begin{array}{rcl}
m(\Delta_i(p);\alpha)\hspace{-2mm}&= &\hspace{-2mm}
\text{number of edges labeled } \alpha \text{ in } \{\tau_{[i]}(p)\}\vspace{2mm}\\
\hspace{-2mm}&-&\hspace{-2mm}
\text{number of edges labeled } \alpha \text{ in } \{\tau_{i-1}(p),\tau_{i}(p)\}.
\end{array}
\end{equation*}

\begin{lemma}\label{Lem-de1}
If $m(\Delta_i(p);\alpha)=1$ and $m(\Delta_j(q);\alpha)=-1$, assume that $\phi_\alpha^{T^o,q}(\Delta_j(q))=\{\Delta''\}$, then there is a unique $\Delta'\in \phi_\alpha^{T^o,p}(\Delta_i(p))$ such that
$$x^{T}(\Delta_i(p))x^{T}(\Delta_j(q))=x^{T'}(\Delta')x^{T'}(\Delta'').$$
\end{lemma}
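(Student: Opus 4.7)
The plan is to verify the identity by an explicit weight computation, using the definition of $x^T(\Delta)$ from Definition \ref{Def-wei2} and the construction of $\phi_\alpha^{T^o,p}$ and $\phi_\alpha^{T^o,q}$ from Section \ref{sec:PB}. The hypothesis $m(\Delta_i(p);\alpha)=1$ forces $\tau_{[i]}(p)=\alpha$, so $\Delta_i(p)$ is one of the two triangles of the quadrilateral $(\alpha_1,\alpha_2,\alpha_3,\alpha_4)$ adjacent to $\alpha$, with $p$ as the vertex opposite to $\alpha$. The hypothesis $m(\Delta_j(q);\alpha)=-1$ forces exactly one of $\tau_{j-1}(q),\tau_j(q)$ to equal $\alpha$ (while $\tau_{[j]}(q)\neq \alpha$), so $q$ is an endpoint of $\alpha$ and $\Delta_j(q)$ is again one of the two triangles of this quadrilateral adjacent to $\alpha$. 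By Proposition \ref{Lem-part1}, $|\phi_\alpha^{T^o,p}(\Delta_i(p))|=2$, so there are two candidate triangles $\Delta_1',\Delta_2'$, while $|\phi_\alpha^{T^o,q}(\Delta_j(q))|=1$, so $\Delta''$ is unique. In the non-degenerate picture, $\Delta_1',\Delta_2'$ are the two triangles at $p$ in the flipped quadrilateral (both sharing $\alpha'$), and $\Delta''$ is the unique triangle at $q$ obtained by merging the two original triangles at $\alpha$ into a single triangle across $\alpha'$.

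Next I would expand both sides. Writing $x^T(\Delta_i(p))=x^{-1}_{\tau_{i-1}(p)}x_\alpha x^{-1}_{\tau_i(p)}$ and $x^T(\Delta_j(q))=x^{-1}_{\tau_{j-1}(q)}x_{\tau_{[j]}(q)}x^{-1}_{\tau_j(q)}$, the factor $x_\alpha$ in the numerator of the first cancels the $x_\alpha$ appearing in the denominator of the second (coming from whichever of $\tau_{j-1}(q),\tau_j(q)$ equals $\alpha$). The left-hand side thus reduces to a Laurent monomial involving only $x_{\tau_{[j]}(q)}$, $x_{\tau_{i-1}(p)}$, $x_{\tau_i(p)}$, and the remaining side of $\Delta_j(q)$ at $q$. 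On the right, since $p$ is an endpoint of $\alpha'$ after the flip, the variable $x_{\alpha'}$ appears in the denominator of both $x^{T'}(\Delta_k')$ and in the numerator of $x^{T'}(\Delta'')$, giving an immediate $x_{\alpha'}$-cancellation in each product $x^{T'}(\Delta_k')\cdot x^{T'}(\Delta'')$, yielding a monomial in the $x_{\alpha_r}$'s and $x_{\tau_{[j]}(q)}$.

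The comparison then determines $\Delta'$: one of the two candidates produces precisely the monomial on the left, while the other leaves a spurious factor $x_{\alpha_r}/x_{\alpha_s}$ (for some $r,s\in\{1,2,3,4\}$) that does not appear on the left-hand side. Geometrically, the correct $\Delta'$ is the triangle at $p$ in $T'^o$ paired with $\Delta''$ so that $\Delta'\cup\Delta''$ covers the same portion of $\Sigma$ as $\Delta_i(p)\cup \Delta_j(q)$: when $\Delta_i(p)=\Delta_j(q)$ this forces $\Delta'=\Delta''$, while when $\Delta_i(p)\neq\Delta_j(q)$ the two triangles $\Delta',\Delta''$ are the two halves of the flipped quadrilateral, and are distinct. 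Since the two candidates are visibly distinct monomials, uniqueness is automatic.

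The main obstacle is the bookkeeping across possible sub-configurations: which of $\alpha_1,\alpha_2,\alpha_3,\alpha_4$ play the roles of $\tau_{i-1}(p),\tau_i(p)$ and of the non-$\alpha$ side of $\Delta_j(q)$ at $q$, and whether $\Delta_i(p)=\Delta_j(q)$ or they lie on opposite sides of $\alpha$. Up to the obvious symmetries of the quadrilateral (relabeling $(\alpha_1,\alpha_2)\leftrightarrow(\alpha_3,\alpha_4)$ and choosing which of $\tau_{j-1}(q),\tau_j(q)$ is $\alpha$), the argument reduces to essentially two representative sub-cases, in each of which the monomial cancellation is a one-line verification. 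A secondary subtlety arises when the quadrilateral is degenerate (when $\alpha$ borders a self-folded triangle, or when $p=q$); as in the construction of the partition bijections in Section \ref{sec:PB}, this can be handled by passing to the canonical once-punctured polygonal cover, where the arcs are formally distinct and the cancellation above carries through verbatim.
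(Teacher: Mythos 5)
Your proposal takes essentially the same route as the paper's proof: read off the combinatorial constraints on $\Delta_i(p)$ (namely $\tau_{[i]}(p)=\alpha$) and $\Delta_j(q)$ (namely $\alpha\in\{\tau_{j-1}(q),\tau_j(q)\}$) from the $m$-values, observe via Proposition~\ref{Lem-part1} that $|\phi_\alpha^{T^o,p}(\Delta_i(p))|=2$ and $|\phi_\alpha^{T^o,q}(\Delta_j(q))|=1$, then enumerate the (up-to-symmetry) sub-cases, expand the weight monomials via Definition~\ref{Def-wei2}, and match after cancellation. The paper simply writes out one representative sub-case ($\Delta_j(q)=\{\alpha_1,\alpha_4,\alpha\}$, $\tau_{[j]}(q)=\alpha_1$) and the two possible $\Delta_i(p)$, computing the products explicitly; this is exactly the ``one-line verification'' you describe.

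One spot where your write-up is thinner than it should be is the uniqueness claim. You assert that ``the two candidates are visibly distinct monomials,'' but this is only true under a non-degeneracy hypothesis: the ratio of the two candidate products is a monomial of the form $x_{\alpha_2}x_{\alpha_3}/(x_{\alpha_1}x_{\alpha_4})$ (or a symmetric variant), and the argument needs that this is not~$1$. The paper addresses this by invoking the standing assumption $\{\alpha_1,\alpha_3\}\neq\{\alpha_2,\alpha_4\}$ (i.e.\ $\alpha$ is not self-folded in $T^o$), stated in the Notation section preceding Section~\ref{sec-pb}, and observing that $\alpha_2=\alpha_3$ and $\alpha_1=\alpha_4$ cannot both hold. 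You should cite that hypothesis explicitly rather than appeal to visibility. Also, your parenthetical suggestion to handle degenerate quadrilaterals by passing to the canonical once-punctured polygonal cover is a mismatch for this particular lemma: the weight computation here is done directly in $\Sigma$, and the cover is used elsewhere (for the $\phi$- and $\varphi$-bijections), not for comparing $x^T$-weights. Dropping that remark and replacing it with the $\{\alpha_1,\alpha_3\}\neq\{\alpha_2,\alpha_4\}$ justification would tighten the proof to match the paper's.
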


\begin{proof}
Since $m(\Delta_j(q);\alpha)=-1$, we have the following cases: $\Delta_j(q)=\{\alpha_1,\alpha_4,\alpha\}$, $\tau_{[j]}(q)=\alpha_1$; $\Delta_j(q)=\{\alpha_1,\alpha_4,\alpha\}$, $\tau_{[j]}(q)=\alpha_4$; $\Delta_j(q)=\{\alpha_2,\alpha_3,\alpha\}$, $\tau_{[j]}(q)=\alpha_2$; $\Delta_j(q)=\{\alpha_2,\alpha_3,\alpha\}$, $\tau_{[j]}(q)=\alpha_3$. We shall only consider the case that $\Delta_j(q)=\{\alpha_1,\alpha_4,\alpha\}$, $\tau_{[j]}(q)=\alpha_1$, as the remaining cases can be proved similarly. Then $x^{T}(\Delta_j(q))=x_{\alpha_1}x^{-1}_{\alpha_4}x^{-1}_{\alpha}$, $\Delta''=\{\alpha_3,\alpha_4,\alpha'\}$ with $m(\Delta'';\alpha')=1$ and thus
$x^{T'}(\Delta'')=x_{\alpha'}x^{-1}_{\alpha_3}x^{-1}_{\alpha_4}$

Since $m(\Delta_i(p);\alpha)=1$, we have $\tau_{[i]}(p)=\alpha$ and $\Delta_i(p)=\{\alpha_1,\alpha_4,\alpha\}$ or $\{\alpha_2,\alpha_3,\alpha\}$.

In case $\Delta_i(p)=\{\alpha_1,\alpha_4,\alpha\}$, then $x^{T}(\Delta_i(p))=x_{\alpha}x^{-1}_{\alpha_1}x^{-1}_{\alpha_4}$. Thus $\Delta'=\{\alpha_3,\alpha_4,\alpha'\}$ with $m(\Delta';\alpha_3)=1$. Therefore $x^{T}(\Delta_i(p))x^{T}(\Delta_j(q))=\frac{1}{x^2_{\alpha_4}}=x^{T'}(\Delta')x^{T'}(\Delta'').$ The uniqueness of $\Delta'$ follows as $\alpha_2=\alpha_3$ and $\alpha_1=\alpha_4$ can not hold simultaneous.

\begin{figure}[h]
\centerline{\includegraphics{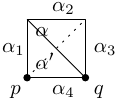}}

\end{figure}

In case $\Delta_i(p)=\{\alpha_2,\alpha_3,\alpha\}$, then $x^{T}(\Delta_i(p))=x_{\alpha}x^{-1}_{\alpha_2}x^{-1}_{\alpha_3}$. Thus $\Delta'=\{\alpha_1,\alpha_2,\alpha'\}$ with $m(\Delta';\alpha_1)=1$. Therefore $x^{T}(\Delta_i(p))x^{T}(\Delta_j(q))=\frac{x_{\alpha_1}}{x_{\alpha_2}x_{\alpha_3}x_{\alpha_4}}=x^{T'}(\Delta')x^{T'}(\Delta'').$ The uniqueness of $\Delta'$ follows as $\alpha_2=\alpha_3$ and $\alpha_1=\alpha_4$ can not hold simultaneous.
\end{proof}

Similarly, we have the following result.

\begin{lemma}\label{Lem-de2}
If $m(\Delta_i(p);\alpha)=-1$ and $m(\Delta_j(q);\alpha)=1$, assume that $\phi_\alpha^{T^o,p}(\Delta_i(p))=\{\Delta'\}$, then there is a unique $\Delta''\in \phi_\alpha^{T^o,q}(\Delta_j(q))$ such that
$$x^{T}(\Delta_i(p))x^{T}(\Delta_j(q))=x^{T'}(\Delta')x^{T'}(\Delta'').$$
\end{lemma}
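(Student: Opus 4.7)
The plan is to mirror the proof of Lemma \ref{Lem-de1} by swapping the roles of $p$ and $q$. Because $m(\Delta_i(p);\alpha)=-1$, the arc $\alpha$ appears as one of the two non-$\tau_{[i]}$ sides of $\Delta_i(p)$, so $\Delta_i(p)$ is one of the triangles $\{\alpha_1,\alpha_4,\alpha\}$ or $\{\alpha_2,\alpha_3,\alpha\}$, with $\tau_{[i]}(p)\in\{\alpha_1,\alpha_4\}$ or $\{\alpha_2,\alpha_3\}$ respectively. Since $m(\Delta_j(q);\alpha)=1$, we have $\tau_{[j]}(q)=\alpha$ and $\Delta_j(q)$ is $\{\alpha_1,\alpha_4,\alpha\}$ or $\{\alpha_2,\alpha_3,\alpha\}$; accordingly $\phi_\alpha^{T^o,q}(\Delta_j(q))$ has exactly two elements by Proposition \ref{Lem-part1}.

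I would treat each of the (at most four) combinations of the two shapes for $\Delta_i(p)$ and the two shapes for $\Delta_j(q)$ separately, just as in Lemma \ref{Lem-de1}. For each combination I would first read off $\Delta'=\phi_\alpha^{T^o,p}(\Delta_i(p))$ by applying the rules of Section \ref{sec:PB} (the triangle obtained by replacing the two relevant sides of $\Delta_i(p)$ around $\alpha$ with the edges of the flipped quadrilateral incident to $\alpha'$), compute $x^{T}(\Delta_i(p))$ and $x^{T'}(\Delta')$ explicitly, and then solve the scalar equation
\[
x^{T'}(\Delta'') \;=\; \frac{x^{T}(\Delta_i(p))\,x^{T}(\Delta_j(q))}{x^{T'}(\Delta')}
\]
for $\Delta''$. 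For example, in the case $\Delta_j(q)=\{\alpha_1,\alpha_4,\alpha\}$ with $\tau_{[j]}(q)=\alpha$, the two candidates in $\phi_\alpha^{T^o,q}(\Delta_j(q))$ are $\{\alpha_1,\alpha_2,\alpha'\}$ and $\{\alpha_3,\alpha_4,\alpha'\}$, with weights $x_{\alpha'}x^{-1}_{\alpha_1}x^{-1}_{\alpha_2}$ and $x_{\alpha'}x^{-1}_{\alpha_3}x^{-1}_{\alpha_4}$; a direct comparison with $x^{T}(\Delta_i(p))x^{T}(\Delta_j(q))/x^{T'}(\Delta')$ singles out exactly one of them.

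The uniqueness claim is guaranteed by the hypothesis that $\alpha$ is not a self-folded arc in $T^o$, hence $\{\alpha_1,\alpha_3\}\neq\{\alpha_2,\alpha_4\}$, so the two candidate triangles in $\phi_\alpha^{T^o,q}(\Delta_j(q))$ carry different weights and at most one of them can match the prescribed product. This is precisely the non-degeneracy argument used at the end of each case of Lemma \ref{Lem-de1}.

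The main (and essentially only) obstacle is the bookkeeping: one must ensure that the labels around the flipped quadrilateral match up consistently across the two sides of the equation, and that the rules of the partition bijection $\phi_\alpha^{T^o,p}$ and $\phi_\alpha^{T^o,q}$ produce the triangles I have in mind in each sub-case. Once the four cases are tabulated and the weights are written out, the identity becomes a routine cancellation of the form $x_{\alpha}x_{\alpha'}\cdot\text{(common factors)}$ against $\text{(common factors)}$, entirely parallel to the computations in Lemma \ref{Lem-de1}. Since no new phenomena arise, the argument of Lemma \ref{Lem-de1} applies verbatim after exchanging the roles of $p$ and $q$, so I would simply state ``the proof is analogous to Lemma \ref{Lem-de1}, exchanging the roles of $p$ and $q$.''
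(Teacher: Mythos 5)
Your proposal is correct and takes essentially the same approach the paper does: the paper gives no separate proof of Lemma~\ref{Lem-de2}, merely stating it follows ``similarly'' to Lemma~\ref{Lem-de1}, and your proposal is a faithful unpacking of exactly that analogy (relabel $p\leftrightarrow q$, $i\leftrightarrow j$, $\Delta'\leftrightarrow\Delta''$ in Lemma~\ref{Lem-de1}, then repeat the explicit weight computations and use $\{\alpha_1,\alpha_3\}\neq\{\alpha_2,\alpha_4\}$ for uniqueness). One very minor bookkeeping slip: for $m(\Delta_i(p);\alpha)=-1$ there are four sub-cases (two choices of triangle $\times$ two choices of $\tau_{[i]}(p)$), not two, so the total enumeration is slightly larger than ``at most four,'' but this does not affect the soundness of the argument and mirrors the case list already used in the proof of Lemma~\ref{Lem-de1}.
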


For ${\bf P}=(\Delta_i(p),P,\Delta_j(q))\in \mathcal L$, denote ${\bf m}({\bf P}; \alpha)=(m(\Delta_i(p);\alpha),{\bf m}(P,\alpha), m(\Delta_j(q);\alpha))$. As $\alpha$ is not a self-folded arc in $T^o$, we have $m(\Delta_j(q);\alpha)\in \{-1,0,1\}$ and thus ${\bf m}({\bf P}; \alpha)\in \{-1,0,1\}^{\eta_\alpha+1}$. Denote ${\bf m}({\bf P}; \alpha)=(m_0({\bf P}),m_1({\bf P}),\cdots,m_{\eta_\alpha}({\bf P}),m_{\eta_\alpha+1}({\bf P}))$.

\begin{definition}\label{Def-pair1}
For ${\bf m}({\bf P}; \alpha)$, we choose pairs of indices $(a,b)$ via the following algorithm:
\begin{enumerate}[$(i)$]
\item If $m_{0}({\bf P})m_{1}({\bf P})=-1$, then choose $(0,1)$ and let ${\bf m}'=(m_2({\bf P}),\cdots,m_{\eta_\alpha+1}({\bf P}))$, otherwise let ${\bf m}'={\bf m}({\bf P}; \alpha)$;
\item In ${\bf m}'$, if $m_{\eta_\alpha}({\bf P})m_{\eta_\alpha+1}({\bf P})=-1$, then choose $(\eta_\alpha,\eta_\alpha+1)$ and let ${\bf m}''$ be obtained by delete $m_{\eta_\alpha}({\bf P}),m_{\eta_\alpha+1}({\bf P})$ from ${\bf m}'$, otherwise let ${\bf m}''={\bf m}'$;
\item Choose all of the ${\bf m}''$-pairs as in Definition \ref{Def-pair}.
\end{enumerate}
The chosen pairs are called \emph{${\bf m}({\bf P}; \alpha)$-pairs}.
\end{definition}

The following is immediate.

\begin{lemma}\label{Lem-pair2}
For ${\bf m}(P,\alpha)$, we have
\begin{enumerate}[$(1)$]
\item ${\bf m}(P,\alpha)$-pairs and $-{\bf m}(P,\alpha)$-pairs coincide.
\item
For any ${\bf m}(P,\alpha)$-pair $(a,b)$ with $a<b$ we have
$\sum_{a<\ell<b}m_\ell(P)=0.$
\item If $\sum m_i(P)\geq 0$ then for any $a$ with $m_a(P)<0$ we have $a$ is in some ${\bf m}(P,\alpha)$-pair.
\item If $\sum m_i(P)\leq 0$ then for any $a$ with $m_a(P)>0$ we have $a$ is in some ${\bf m}(P,\alpha)$-pair.
\end{enumerate}
\end{lemma}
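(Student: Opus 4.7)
The plan is to reduce this to the two earlier analogous lemmas, Lemma \ref{Lem-pair} for the unadorned $\mathbf m(P,\alpha)$ and Lemma \ref{Lem-pair1} for the one-notched version. Indeed, Definition \ref{Def-pair1} is built in two preliminary passes that possibly extract the ``boundary pairs'' $(0,1)$ and $(\eta_\alpha,\eta_\alpha+1)$, followed by a third pass that applies Definition \ref{Def-pair} verbatim to whatever interior vector $\mathbf m''$ remains. The strategy is therefore to verify each of (1)--(4) first for the two boundary pairs by direct inspection, then transport the corresponding statements for Definition \ref{Def-pair} to the interior pairs via Lemma \ref{Lem-pair}.

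For (1) I would observe that every test used in Definition \ref{Def-pair1} is either of the form $m_a m_b = -1$ or identifies positions of nonzero entries; each such test is invariant under the global sign change $\mathbf m \mapsto -\mathbf m$. In particular, steps (i) and (ii) extract $(0,1)$ and $(\eta_\alpha,\eta_\alpha+1)$ under exactly the same conditions for $\mathbf m(\mathbf P;\alpha)$ and $-\mathbf m(\mathbf P;\alpha)$, and step (iii) then reduces to Definition \ref{Def-pair} on the same residual vector $\mathbf m''$ up to sign, whose pair structure is sign-invariant by Lemma \ref{Lem-pair}(1).

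For (2) the boundary pairs are trivial because $b=a+1$ in both, so the interior sum is empty. For an interior pair $(a,b)$ produced in step (iii), note that the removed boundary indices are among $\{0,1,\eta_\alpha,\eta_\alpha+1\}$; hence any interior pair satisfies $2\le a<b\le\eta_\alpha-1$ whenever both boundary pairs are extracted (and an analogous range otherwise), so no removed index lies strictly between $a$ and $b$. Consequently the sum $\sum_{a<\ell<b}m_\ell(\mathbf P)$ taken in $\mathbf m(\mathbf P;\alpha)$ agrees with the corresponding sum in $\mathbf m''$, which vanishes by Lemma \ref{Lem-pair}(2).

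For (3) and (4), I would argue by contradiction. Suppose $\sum_i m_i(\mathbf P)\ge 0$ but some index $a$ with $m_a(\mathbf P)=-1$ is unpaired. If $a\in\{0,\eta_\alpha+1\}$ then the boundary test in step (i) or (ii) failed, which forces the neighbour to have value in $\{0,-1\}$; one then shifts attention to the residual vector $\mathbf m''$, in which $a$ (now relabelled) remains negative and the total sum is still $\ge 0$, so Lemma \ref{Lem-pair}(3) produces a contradiction. If instead $a$ is interior, it survives into $\mathbf m''$ with the same sign, and the same appeal to Lemma \ref{Lem-pair}(3) applies after checking that the passage to $\mathbf m''$ does not decrease the total sum below zero (which holds because the discarded boundary pair always has total $0$). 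Statement (4) follows by the sign symmetry already used in (1). The only point requiring care, and the main obstacle, will be confirming that the sum of a boundary pair is always $0$ so that passing from $\mathbf m(\mathbf P;\alpha)$ to $\mathbf m''$ preserves the sign of the total sum; this is immediate from $m_a+m_b=0$ when $m_am_b=-1$ with $m_a,m_b\in\{-1,0,1\}$.
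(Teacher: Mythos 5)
Your proof is correct and follows the natural reduction: since Definition \ref{Def-pair1} first extracts the possible boundary pairs $(0,1)$ and $(\eta_\alpha,\eta_\alpha+1)$ by sign tests and then applies Definition \ref{Def-pair} to the residual vector, each of (1)--(4) transfers from Lemma \ref{Lem-pair} once one notes that boundary pairs have sum $0$ (preserving the sign of $\sum_i m_i$) and are adjacent (so no interior pair can straddle them). The paper states this lemma without proof as ``immediate,'' and your write-up is a faithful expansion of exactly the argument the author evidently had in mind.
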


Note that if $m_{0}({\bf P})=1$ then $\alpha=\tau_{[i]}(p)$ and thus either either $\tau_{j-1}(p)=\alpha_1, \tau_j(p)=\alpha_4$ or $\tau_{j-1}(p)=\alpha_3, \tau_j(p)=\alpha_2$.
Similarly, if $m_{\eta_\alpha+1}({\bf P})=1$ then $\alpha=\tau_{[j]}(q)$ and thus either $\tau_{j-1}(q)=\alpha_1, \tau_j(q)=\alpha_4$ or $\tau_{j-1}(q)=\alpha_3, \tau_j(q)=\alpha_2$.

\emph{Construction of $\pi$.} For ${\bf P}=(\Delta_i(p),P,\Delta_j(q))\in \mathcal L$,
$\pi({\bf P})$ contains all $(\Delta',P',\Delta'')\in \phi_\alpha^{T^o,p}(\Delta_i(p))\times \varphi_\alpha^{T^o}(P)\times \phi_\alpha^{T^o,q}(\Delta_j(q))$ satisfy the following condition: for any ${\bf m}({\bf P};\alpha)$-pair $(a,b)$,

$\bullet$ if $m_a({\bf P})=1,m_b({\bf P})=-1$ with $a\neq 0, b\neq \eta_\alpha+1$ then the labels of $P'\cap edge(G'(a))$ and $P\cap edge(G(b))$ coincide;

$\bullet$ if $m_a({\bf P})=-1,m_b({\bf P})=1$ with $a\neq 0, b\neq \eta_\alpha+1$ then the labels of $P'\cap edge(G'(b))$ and $P\cap edge(G(a))$ coincide;

$\bullet$ if $m_a({\bf P})=1,m_b({\bf P})=-1$ with $a=0,b\neq \eta_\alpha+1$,
\begin{enumerate}[$(i)$]
\item in case $\tau_{j-1}(p)=\alpha_1, \tau_j(p)=\alpha_4$, then $\Delta'=\{\alpha',\alpha_3,\alpha_4\}$ when the edges in $P\cap edge(G(b))$ labeled $\alpha_1$ and $\alpha_3$, or $\Delta'=\{\alpha',\alpha_1,\alpha_2\}$ when the edges in $P\cap edge(G(b))$ labeled $\alpha_2$ and $\alpha_4$.
\item in case $\tau_{j-1}(p)=\alpha_3, \tau_j(p)=\alpha_2$, then $\Delta'=\{\alpha',\alpha_1,\alpha_2\}$ when the edges in $P\cap edge(G(b))$ labeled $\alpha_1$ and $\alpha_3$, or $\Delta'=\{\alpha',\alpha_3,\alpha_4\}$ when the edges in $P\cap edge(G(b))$ labeled $\alpha_2$ and $\alpha_4$.
\end{enumerate}

$\bullet$ if $m_a({\bf P})=-1,m_b({\bf P})=1$ with $a=0,b\neq \eta_\alpha+1$, then the edges labeled $\tau_{i-2}(p)$ and $\tau_{[i]}(p)$ of $edge(G'(b))$ are in $P'$ in case $\tau_{i-1}(p)=\alpha$, or the edges labeled $\tau_{i+1}(p)$ and $\tau_{[i]}(p)$ of $edge(G'(a))$ are in $P'$ in case $\tau_{i}(p)=\alpha$;

$\bullet$ if $m_a({\bf P})=1,m_b({\bf P})=-1$ with $a\neq 0,b=\eta_\alpha+1$ then the edges labeled $\tau_{j-2}(q)$ and $\tau_{[j]}(q)$ of $edge(G'(a))$ are in $P'$ in case $\tau_{j-1}(q)=\alpha$, or the edges labeled $\tau_{j+1}(q)$ and $\tau_{[j]}(q)$ of $edge(G'(a))$ are in $P'$ in case $\tau_{j-1}(q)=\alpha$;

$\bullet$ if $m_a({\bf P})=-1,m_b({\bf P})=1$ with $a\neq 0,b=\eta_\alpha+1$,
\begin{enumerate}[$(i)$]
\item in case $\tau_{j-1}(q)=\alpha_1, \tau_j(q)=\alpha_4$, then $\Delta'=\{\alpha',\alpha_3,\alpha_4\}$ when the edges in $P\cap edge(G(a))$ labeled $\alpha_1$ and $\alpha_3$, or $\Delta'=\{\alpha',\alpha_1,\alpha_2\}$ when the edges in $P\cap edge(G(a))$ labeled $\alpha_2$ and $\alpha_4$.
\item in case $\tau_{j-1}(q)=\alpha_3, \tau_j(q)=\alpha_2$, then $\Delta'=\{\alpha',\alpha_1,\alpha_2\}$ when the edges in $P\cap edge(G(a))$ labeled $\alpha_1$ and $\alpha_3$, or $\Delta'=\{\alpha',\alpha_3,\alpha_4\}$ when the edges in $P\cap edge(G(a))$ labeled $\alpha_2$ and $\alpha_4$.
\end{enumerate}

$\bullet$ if $m_a({\bf P})=1,m_b({\bf P})=-1$ with $a=0,b=\eta_\alpha+1$, then $\Delta'$ is given by Lemma \ref{Lem-de1}.

$\bullet$ if $m_a({\bf P})=-1,m_b({\bf P})=1$ with $a=0,b=\eta_\alpha+1$, then $\Delta''$ is given by Lemma \ref{Lem-de2}.

Similarly, we can construction a set $\pi'({\bf P}')\subset \mathcal L$ for any ${\bf P}'\in \mathcal L'=\mathcal P(G_{T'^o,\widetilde\beta^{(p,q)}})$.

By Propositions \ref{Lem-parvar}, \ref{Lem-part1}, from the construction, the following proposition follows.

\begin{proposition}\label{Lem-pi2}
For any ${\bf P}=(\Delta_i(p),P,\Delta_j(q))\in \mathcal L$ we have
$$|\pi({\bf P})|=2^{[\sum m_a({\bf P})]_+},$$
more precisely, for any ${\bf P}'=(\Delta',P',\Delta'')\in \pi({\bf P})$,

\begin{enumerate}[$(1)$]
\item ${\bf m}({\bf P}')=-{\bf m}({\bf P})$;
\item if $k(\neq 0, \eta_\alpha+1)$ is not in any ${\bf m}({\bf P};\alpha)$-pair and $m_k({\bf P})=1$, then $P'$ can twist on $G'(k)$ and $(\Delta',\mu_{G'(k)}P',\Delta'')\in \pi({\bf P})$;
\item if $0$ is not in any ${\bf m}({\bf P};\alpha)$-pair and $m_{0}({\bf P})=1$, then there is a triangle $\widetilde\Delta'\in \Delta_p(T'^o)$ such that $\widetilde\Delta'$ and $\Delta'$ share the common side $\alpha'$, and $(\widetilde\Delta',P',\Delta'')\in \pi({\bf P})$.
\item if $\eta(\alpha)+1$ is not in any ${\bf m}({\bf P};\alpha)$-pair and $m_{\eta_\alpha+1}({\bf P})=1$, then there is a triangle $\widetilde\Delta''\in \Delta_q(T'^o)$ such that $\widetilde\Delta''$ and $\Delta''$ share the common side $\alpha'$, and $(\Delta',P',\widetilde\Delta'')\in \pi({\bf P})$.
\end{enumerate}
where $G'(k)$ are the tiles given in Proposition \ref{Lem-tile-11}.
\end{proposition}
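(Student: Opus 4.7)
The plan is to prove this proposition by leveraging the three building-block partition bijections already established, namely $\varphi_\alpha^{T^o}$ on $\mathcal P(G_{T^o,\widetilde\beta})$, $\phi_\alpha^{T^o,p}$ on $\Delta(T^o,p)$, and $\phi_\alpha^{T^o,q}$ on $\Delta(T^o,q)$, together with their numerical invariants from Propositions \ref{Lem-parvar} and \ref{Lem-part1}. The structure of the argument will mirror the proofs of Propositions \ref{Lem-pi} and \ref{Lem-pi1}, but with the new wrinkle that a sign change can occur at either end of the index vector ${\bf m}({\bf P};\alpha)$, so the pairing algorithm in Definition \ref{Def-pair1} must be respected carefully.

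First I would verify statement (1), ${\bf m}({\bf P}';\alpha')=-{\bf m}({\bf P};\alpha)$. For the middle coordinates $m_i$ with $1\leq i\leq \eta_\alpha$, this is immediate from Proposition \ref{Lem-parvar}(1). For $m_0$ (respectively $m_{\eta_\alpha+1}$), it follows from Lemma \ref{lem-minv2} applied to $\phi_\alpha^{T^o,p}$ (respectively $\phi_\alpha^{T^o,q}$). Next, for the cardinality formula $|\pi({\bf P})|=2^{[\sum m_a({\bf P})]_+}$, I would count the degrees of freedom in the construction: each ${\bf m}({\bf P};\alpha)$-pair contributes exactly one choice (the condition prescribes the outcome uniquely in all eight cases of the construction), while each singleton index $k$ with $m_k({\bf P})=1$ contributes a factor of $2$. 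Since by Lemma \ref{Lem-pair2}(3) the indices with $m_a=-1$ are all paired up when $\sum m_a\geq 0$, the total count is $2^{\sum [m_a({\bf P})]_+}=2^{[\sum m_a({\bf P})]_+}$.

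For statements (2), (3), (4), I would take a $k$ not in any pair with $m_k({\bf P})=1$ and split into cases based on whether $k\in \{1,\dots,\eta_\alpha\}$, $k=0$, or $k=\eta_\alpha+1$. In the middle case, Proposition \ref{Lem-parvar}(2) provides a tile $G'(k)$ with diagonal labeled $\alpha'$ on which every $P'\in \varphi_\alpha^{T^o}(P)$ can twist, and a direct inspection of the defining conditions shows that $(\Delta',\mu_{G'(k)}P',\Delta'')$ satisfies all the pair-constraints (since none of them involve $G'(k)$). In the cases $k=0$ or $k=\eta_\alpha+1$, the analogue of Proposition \ref{Lem-part1} together with Proposition \ref{prop-tri} (and specifically Case II there) guarantees that $\phi_\alpha^{T^o,p}(\Delta_i(p))$ or $\phi_\alpha^{T^o,q}(\Delta_j(q))$ contains two triangles $\Delta',\widetilde\Delta'$ sharing the side $\alpha'$, and swapping one for the other preserves all pair-conditions because these conditions only constrain the second factor when $0$ or $\eta_\alpha+1$ is itself paired.

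The main obstacle, as in the single-puncture case, will be verifying the well-definedness when both $0$ and $\eta_\alpha+1$ contribute simultaneously. Specifically, when $(0,\eta_\alpha+1)$ becomes an ${\bf m}({\bf P};\alpha)$-pair (which can happen via step (i) or (ii) of Definition \ref{Def-pair1} interacting with a possible global pairing across the whole sequence), the two choices of $(\Delta',\Delta'')$ must produce the same weight $x^T(\Delta_i(p))x^T(\Delta_j(q))$, and Lemmas \ref{Lem-de1} and \ref{Lem-de2} must be invoked to ensure a unique consistent assignment. The delicate interplay between these two ``boundary'' pairs and any interior $(a,b)$-pair requires a careful case analysis, since the choice of ${\bf m}''$ in Definition \ref{Def-pair1} depends on whether the endpoint pairings are made first; I would verify that the construction's conditions remain consistent regardless of the order in which these distinct pair types are resolved, using Lemma \ref{Lem-pair2}(2) repeatedly to localize the analysis.
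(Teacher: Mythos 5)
Your proposal is essentially correct and matches the paper's route, which is to read off all four assertions from the construction of $\pi$ together with Propositions~\ref{Lem-parvar} and~\ref{Lem-part1} (the paper's own proof is precisely the one-line citation of those two propositions ``from the construction''). Your unpacking of the counting is sound: the product $\phi_\alpha^{T^o,p}(\Delta_i(p))\times\varphi_\alpha^{T^o}(P)\times\phi_\alpha^{T^o,q}(\Delta_j(q))$ has $2^{\sum_a[m_a({\bf P})]_+}$ elements, each ${\bf m}({\bf P};\alpha)$-pair kills one binary degree of freedom, and by Lemma~\ref{Lem-pair2}(3) every index with $m_a=-1$ is paired when $\sum m_a\ge 0$, giving $|\pi({\bf P})|=2^{\sum m_a}=2^{[\sum m_a]_+}$; and (2)--(4) hold because the pair constraints only touch indices appearing in a pair, so a free index $k$ can be toggled without violating any constraint.

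The one place where your plan overcomplicates matters is the final paragraph. There is no well-definedness issue arising from ``the order in which pair types are resolved'': Definition~\ref{Def-pair1} is a deterministic algorithm that first tests $(0,1)$, then $(\eta_\alpha,\eta_\alpha+1)$ on ${\bf m}'$, then runs the Definition~\ref{Def-pair} algorithm on ${\bf m}''$, so the set of pairs is uniquely prescribed with no ambiguity to reconcile. Likewise the weight identity $x^T(\Delta_i(p))x^T(\Delta_j(q))=x^{T'}(\Delta')x^{T'}(\Delta'')$ from Lemmas~\ref{Lem-de1}--\ref{Lem-de2} is not needed for this proposition (it is used later for the $y$-equivalence computations); here those lemmas are invoked only to pin down which triangle the construction selects. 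You do need one small addition to close statement~(1) at the boundary coordinates: for $m_0$ and $m_{\eta_\alpha+1}$ the sign flip comes from Lemma~\ref{lem-minv2}, which you correctly identify; for the middle coordinates it is Proposition~\ref{Lem-parvar}(1). Aside from the superfluous final worry, the argument is complete.
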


The following lemmas are similar to Lemma \ref{lem-pj}, the proofs are also similar, we omit them.

\begin{lemma}\label{lem-pj1}
For any ${\bf P}=(\Delta_i(p),P,\Delta_j(q))\in \mathcal L$, if $m(\Delta_j(q);\alpha)=1$, then $\phi_{\alpha}^{T^o,q}(\Delta_j(q))=\{\Delta''_1,\Delta''_2\}$ satisfies that $(\Delta',P',\Delta''_1)<(\Delta',P',\Delta''_2)$ for any $P'\in \varphi_{\alpha}^{T^o}(P)$ and $\Delta'\in \phi_{\alpha}^{T^o,p}(\Delta_i(p))$.
\end{lemma}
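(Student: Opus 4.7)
The plan is to mimic the proof of Lemma~\ref{lem-pj}, replacing the use of Lemma~\ref{Lem-cover2} with its three-coordinate analogue Lemma~\ref{Lem-cover3}. The key observation is that comparability in $\mathcal L(T^o,\widetilde\beta^{(p,q)})$ between two elements sharing the same first coordinate $\Delta'$ and same middle coordinate $P'$ is governed purely by the $q$-side data, so the $p$-coordinate plays no essential role and the argument reduces to (a cover-relation version of) the situation in Lemma~\ref{lem-pj}.

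I would argue by contradiction. Suppose there exist $P'\in \varphi_{\alpha}^{T^o}(P)$ and $\Delta'\in \phi_{\alpha}^{T^o,p}(\Delta_i(p))$ for which the inequality $(\Delta',P',\Delta''_1)<(\Delta',P',\Delta''_2)$ fails. Since $m(\Delta_j(q);\alpha)=1$, Proposition~\ref{Lem-part1} gives $|\phi_{\alpha}^{T^o,q}(\Delta_j(q))|=2$, so $\Delta''_1$ and $\Delta''_2$ are the two consecutive triangles incident to $q$ in $T'^o$ sharing the common side $\alpha'$. By Lemma~\ref{Lem-cover3}, the order in the third slot between $(\Delta',P',\Delta''_1)$ and $(\Delta',P',\Delta''_2)$ can fail to go the expected direction only in the "wrap-around" situation where (after reindexing if necessary) $\Delta''_1=\Delta'_1(q)$, $\Delta''_2=\Delta'_{t'}(q)$, and the value of $E'_1(q)$ or $E'_2(q)$ in $P'$ points the wrong way.

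Next, I would argue that this forces both possibilities to occur among the fiber $\varphi_{\alpha}^{T^o}(P)$. Indeed, if only one of $E'_1(q)\in P'$ or $E'_2(q)\in P'$ were realized as $P'$ ranges over $\varphi_{\alpha}^{T^o}(P)$, then the lemma would already hold. Hence there exist $P'_1,P'_2\in \varphi_{\alpha}^{T^o}(P)$ with $E'_1(q)\in P'_1$ and $E'_2(q)\in P'_2$. By Proposition~\ref{Lem-parvar}, the fact that both edges of the tile $G'_{c'}$ opposite to each other appear in the fiber forces the diagonal of $G'_{c'}$ to be labeled $\alpha'$; equivalently $\tau'_{t'}(q)=\alpha'$. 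But then the wrap-around triangle $\Delta'_1(q)$ has $\tau'_{t'}(q)=\alpha'$ as a side, which yields $m(\Delta'_1(q);\alpha')=1$.

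This contradicts Lemma~\ref{lem-minv2}, which asserts that $m(\Delta'';\alpha')=-m(\Delta_j(q);\alpha)=-1$ for every $\Delta''\in \phi_{\alpha}^{T^o,q}(\Delta_j(q))$. The only non-routine point is to verify that the cover-relation analysis really is independent of $\Delta'$; this follows from the definition of the partial order on $\mathcal L(T^o,\widetilde\beta^{(p,q)})$ in Section~\ref{delta2}, where the third-coordinate comparisons in Lemma~\ref{Lem-cover3} are stated with $\Delta'$ held fixed. Thus no genuine obstacle is expected, and the proof is essentially a translation of the proof of Lemma~\ref{lem-pj} into the three-coordinate setting.
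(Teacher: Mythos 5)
Your overall strategy is the right one, and it matches what the paper intends: the paper explicitly omits the proof of Lemma~\ref{lem-pj1}, pointing to the proof of Lemma~\ref{lem-pj} as the template, and you correctly observe that the $p$-coordinate is inert in the third-slot comparisons (Lemma~\ref{Lem-cover3} holds $\Delta'$ fixed), so the argument reduces to the two-coordinate case. The reduction to a wrap-around scenario via the partial order, the use of Lemma~\ref{Lem-cover3}, the appeal to Proposition~\ref{Lem-parvar} to place the last tile's diagonal, and the final contradiction against Lemma~\ref{lem-minv2} are all the right moves.

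There is, however, a concrete error in the middle of your argument. You write that the diagonal of $G'_{c'}$ being labeled $\alpha'$ is ``equivalently $\tau'_{t'}(q)=\alpha'$,'' and then conclude $m(\Delta'_1(q);\alpha')=1$ because $\tau'_{t'}(q)=\alpha'$ is a side of $\Delta'_1(q)$. Both steps are off. The diagonal of the last tile $G'_{c'}$ is the last arc crossed by $\widetilde\beta$, which is the side of $\Delta'_1(q)$ \emph{opposite} to $q$, namely $\tau'_{[1]}(q)$ — not the common side $\tau'_{t'}(q)=\tau'_0(q)$. Moreover, in the formula (\ref{Eq-mj}), the $q$-incident sides $\tau'_{t'}(q)$ and $\tau'_1(q)$ contribute \emph{negatively}: if $\tau'_{t'}(q)=\alpha'$, you would get $m(\Delta'_1(q);\alpha')\le 0$, not $1$. (The identity $\tau'_{t'}(q)=\alpha'$ is in fact true, but it follows from the wrap-around structure, not from the diagonal of $G'_{c'}$.) The correct inference is: from $P', Q'$ differing at $G'_{c'}$ and Proposition~\ref{Lem-parvar}, $\tau'_{[1]}(q)=\alpha'$; since $\alpha'$ is not a radius of a self-folded triangle in $T'^o$, $\alpha'$ cannot also be one of $\tau'_{t'}(q)$ or $\tau'_1(q)$, so $m(\Delta'_1(q);\alpha')=1$, which contradicts $m(\Delta'_1(q);\alpha')=-m(\Delta_j(q);\alpha)=-1$ from Lemma~\ref{lem-minv2}. (Alternatively, $\tau'_{[1]}(q)=\alpha'=\tau'_{t'}(q)$ is already impossible since $\alpha'$ is not self-folded.) With this local correction the proposal goes through.
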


\begin{lemma}\label{lem-pj2}
For any ${\bf P}=(\Delta_i(p),P,\Delta_j(q))\in \mathcal L$, if $m(\Delta_i(p);\alpha)=1$, then $\phi_{\alpha}^{T^o,p}(\Delta_i(p))=\{\Delta'_1,\Delta'_2\}$ satisfies that $(\Delta'_1,P',\Delta')<(\Delta'_2,P',\Delta')$ for any $P'\in \varphi_{\alpha}^{T^o}(P)$ and $\Delta'\in \phi_{\alpha}^{T^o,q}(\Delta_j(q))$.
\end{lemma}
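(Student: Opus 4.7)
The plan is to mirror the argument for Lemma~\ref{lem-pj} at the $p$-end of the snake graph, invoking the ``$p$-analogues'' of the results used there. More precisely, the previous lemma uses: (i) Lemma~\ref{Lem-cover2}, which describes when $(P,\Delta_{j+1}(q))$ fails to cover $(P,\Delta_{j}(q))$; (ii) Proposition~\ref{Lem-parvar}, to force the last tile $G'_{c'}$ of $G_{T'^o,\widetilde\beta}$ to have diagonal labeled $\alpha'$; and (iii) Lemma~\ref{lem-minv2}, which says $m(\Delta;\alpha)=-m(\Delta';\alpha')$ for $\Delta'\in\phi_{\alpha}^{T^o,q}(\Delta)$. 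For the present lemma the $q$-side playing a role in Lemma~\ref{lem-pj} is replaced throughout by the $p$-side, using Lemma~\ref{Lem-cover3}(4)(5)(6) in place of Lemma~\ref{Lem-cover2}, the first tile $G'_1$ (with markers $E'_1(p),E'_2(p)$) in place of $G'_{c'}$, and the partition bijection $\phi_{\alpha}^{T^o,p}$ on $\Delta(T^o,p)$ in place of $\phi_{\alpha}^{T^o,q}$.

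Concretely, I will argue by contradiction. Suppose, aiming for a contradiction, that there exist $P'\in\varphi_{\alpha}^{T^o}(P)$ and $\Delta'\in\phi_{\alpha}^{T^o,q}(\Delta_j(q))$ for which $(\Delta'_1,P',\Delta')<(\Delta'_2,P',\Delta')$ fails. Then by Lemma~\ref{Lem-cover3}(4)(5)(6) applied to the $p$-component, one of $\Delta'_1,\Delta'_2$ must equal $\Delta'_1(p)$ and one must equal $\Delta'_{s'}(p)$, so in particular $\Delta'_1(p)\in\phi_{\alpha}^{T^o,p}(\Delta_i(p))$; moreover there must exist $P',Q'\in\varphi_{\alpha}^{T^o}(P)$ with $E'_1(p)\in P'$ and $E'_2(p)\in Q'$ (otherwise Lemma~\ref{Lem-cover3}(4) or (5) would already force the order between $(\Delta'_1,P',\Delta')$ and $(\Delta'_2,P',\Delta')$ in the desired direction, for every $P'$).

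From the fact that both $E'_1(p)$ and $E'_2(p)$ appear across $\varphi_{\alpha}^{T^o}(P)$, Proposition~\ref{Lem-parvar} forces the diagonal of the first tile $G'_1$ of $G_{T'^o,\widetilde\beta}$ to be labeled $\alpha'$ (this is the $p$-analogue of the step in \ref{lem-pj} that forces the diagonal of $G'_{c'}$ to be $\alpha'$; it follows because $E'_1(p)$ and $E'_2(p)$ are the two edges of $G'_1$ adjacent to the $p$-corner, and the only way both can occur in perfect matchings in the image of $\varphi_{\alpha}^{T^o}$ is that the $\alpha$-mutability at $G'_1$ is available, i.e.~its diagonal is $\alpha'$). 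Combining this with $\Delta'_1(p)\in\phi_{\alpha}^{T^o,p}(\Delta_i(p))$, one reads off from the definition in Section~\ref{delta2} that $\tau'_1(p)=\alpha'$ and the two sides $\tau'_s(p),\tau'_1(p)$ of $\Delta'_1(p)$ together with the third side $\tau'_{[1]}(p)$ give $m(\Delta'_1(p);\alpha')=1$.

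Finally, Lemma~\ref{lem-minv2} (applied to $\phi_{\alpha}^{T^o,p}$) gives
\[
m(\Delta'_1(p);\alpha')=-m(\Delta_i(p);\alpha)=-1,
\]
contradicting $m(\Delta'_1(p);\alpha')=1$. Hence the assumption fails and $(\Delta'_1,P',\Delta')<(\Delta'_2,P',\Delta')$ holds uniformly in $P'$ and $\Delta'$, which is the claim. The only delicate step is identifying correctly which of $\Delta'_1,\Delta'_2$ plays the role of $\Delta'_1(p)$ and checking the sign bookkeeping on $m(\Delta'_1(p);\alpha')$; everything else is a direct transcription of the proof of Lemma~\ref{lem-pj} with $q$ replaced by $p$ and ``last tile'' replaced by ``first tile''.
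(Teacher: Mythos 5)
Your proof follows exactly the route the paper intends: the paper omits the proof of this lemma, stating it is ``similar to Lemma \ref{lem-pj}'', and your argument is precisely that proof transcribed from the $q$-end to the $p$-end (Lemma \ref{Lem-cover3} in place of Lemma \ref{Lem-cover2}, the first tile $G'_1$ with $E'_1(p),E'_2(p)$ in place of $G'_{c'}$ with $E'_1(q),E'_2(q)$, and Lemma \ref{lem-minv2} applied at $p$). One small slip: from the diagonal of $G'_1$ being labeled $\alpha'$ you should conclude $\tau'_{[1]}(p)=\alpha'$ rather than $\tau'_1(p)=\alpha'$ (the latter would be incompatible with $m(\Delta'_1(p);\alpha')=1$ by the definition (\ref{Eq-mi})); with that correction the contradiction with Lemma \ref{lem-minv2} goes through exactly as you state.
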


\begin{remark}\label{Rem-4}
Denote $r=[\sum m_i({\bf P})]_+$. Denote by $\{k_1,\cdots,k_{r}\}\subset \{1,\cdots,\eta_\alpha+1\}$ the set of indices that are not in any ${\bf m}({\bf P};\alpha)$-pair and $m_{k_\ell}({\bf P})=1$ for $\ell=1,\cdots,r$. Suppose that $k_1<k_2<\cdots <k_{r}$. By Proposition \ref{Lem-pi2}, we may write $\pi({\bf P})$ as
$\{{\bf P}(\vec{c})\mid \vec{c}\in \{0,1\}^{r}\},$ to be precise, assume that ${\bf P}(\vec{c})=(\Delta',P',\Delta''),$ for any $\ell\in \{1,\cdots,r\}$,
\begin{enumerate}[$(i)$]
\item if $k_\ell\neq 0,\eta_\alpha+1$, then the edges labeled $\alpha_1,\alpha_3$ of $G'(k_j)$ are in $P'$ in case $c_j=1$ and the the edges labeled $\alpha_2,\alpha_4$ of $G'(k_j)$ are in $P'$ in case $c_j=0$;
\item if $k_\ell=0$, then $\Delta'=\Delta'_1$ in case $c_\ell=0$ and $\Delta'=\Delta'_2$ in case $c_\ell=1$, where $\Delta'_1$ and $\Delta'_2$ are given by Lemma \ref{lem-pj2};
\item if $k_\ell=\eta_\alpha+1$, then $\Delta''=\Delta''_1$ in case $c_\ell=0$ and $\Delta''=\Delta''_2$ in case $c_\ell=1$, where $\Delta''_1$ and $\Delta''_2$ are given by Lemma \ref{lem-pj1}.
\end{enumerate}

\end{remark}

\begin{proposition}\label{prop-cover1}
Suppose that $r=\sum m_a({\bf P})\geq 0$. Given any $k$, assume that $\vec{c}, \vec{c}\hspace{.8mm}'$ in $\{0,1\}^{r}$ satisfy $c_k=1,c'_k=0$ and $c_a=c'_a$ for $a\neq k$, then ${\bf P}(\vec{c})$ covers ${\bf P}(\vec{c}\hspace{.8mm}')$.
\end{proposition}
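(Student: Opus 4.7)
The plan is to mimic the two coordinate proof of Proposition \ref{prop-cover}, now with an additional triangle coordinate at $p$. Writing ${\bf P}(\vec c\hspace{.8mm}')=(\Delta'_1,P'_1,\Delta''_1)$ and ${\bf P}(\vec c)=(\Delta'_2,P'_2,\Delta''_2)$, Remark \ref{Rem-4} shows these triples agree in two coordinates, differ in exactly one (the one indexed by $k_k$), and satisfy ${\bf P}(\vec c)>{\bf P}(\vec c\hspace{.8mm}')$; what remains is to rule out any intermediate element. The analysis naturally splits into three cases according to whether $k_k=\eta_\alpha+1$ (only the $q$-triangle moves), $k_k=0$ (only the $p$-triangle moves), or $1\leq k_k\leq\eta_\alpha$ (only the matching moves).

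The case $k_k=\eta_\alpha+1$ is the direct analogue of Case (II) of Proposition \ref{prop-cover}: if covering fails then Lemma \ref{Lem-cover3}(1)(2) forces either $E'_2(q)\in P'_1$ with $\Delta''_2=\Delta'_1(q),\Delta''_1=\Delta'_2(q)$, or $E'_1(q)\in P'_1$ with $\Delta''_2=\Delta'_{t'}(q),\Delta''_1=\Delta'_1(q)$. Both are exactly the hypotheses of Lemma \ref{lem-nu-pair}, so $(\eta_\alpha,\eta_\alpha+1)$ would be an ${\bf m}({\bf P};\alpha)$-pair, contradicting the fact that $k_k=\eta_\alpha+1$ is outside every pair. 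The case $k_k=0$ is dual: I would first record the mirror of Lemma \ref{lem-nu-pair} at $p$ (same proof, now at the first tile $G'_1$ and involving $E'_1(p),E'_2(p)$), whereupon the mirror of Lemma \ref{Lem-cover3}(4)(5) yields the analogous contradiction, forcing $(0,1)$ to be an ${\bf m}({\bf P};\alpha)$-pair.

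For the matching case $1\leq k_k\leq\eta_\alpha$ we have $\Delta'_1=\Delta'_2$, $\Delta''_1=\Delta''_2$, and $P'_2=\mu_{G'_l}P'_1$ for the tile $G'_l$ of diagonal $\alpha'$ determined by $k_k$ (Remark \ref{Rem-4}(i)). By Lemma \ref{Lem-cover1} the only obstruction to covering is that either $\Delta'_1=\Delta'_1(p)$ with $G'_l=G'_1$, or $\Delta''_1=\Delta'_1(q)$ with $G'_l=G'_{c'}$. I would translate each exceptional configuration through the partition bijection $\pi$ and through $\varphi^{T^o}_\alpha$: the forced labels on the extreme tile of $G_{T'^o,\widetilde\beta}$ imply $m_0({\bf P})=-1$ in the first sub-case and $m_{\eta_\alpha+1}({\bf P})=-1$ in the second, which together with $m_{k_k}({\bf P})=1$ makes $(0,k_k)$ or $(k_k,\eta_\alpha+1)$ an ${\bf m}({\bf P};\alpha)$-pair by Definition \ref{Def-pair1}, contradicting that $k_k$ lies outside every pair.

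The technical heart of the argument, and the step I expect to be the main obstacle, is this matching case: the left- and right-end exceptional configurations of Lemma \ref{Lem-cover1} each require one to track carefully which edges of $G'_l$ lie in $P'_1$ versus $P'_2$ and how these edges are matched through $\pi$ back to the $m_0$ and $m_{\eta_\alpha+1}$ coordinates of ${\bf m}({\bf P};\alpha)$. Once the mirror of Lemma \ref{lem-nu-pair} at $p$ and its two mixed-endpoint analogues are in place, the three cases assemble into the desired covering statement exactly along the lines of Proposition \ref{prop-cover}.
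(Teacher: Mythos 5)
Your overall case split (by which of the three coordinates changes) is the same as the paper's, and your treatment of the matching case $1\le k_k\le\eta_\alpha$ is essentially the paper's argument. But for the two triangle cases ($k_k=0$ and $k_k=\eta_\alpha+1$) the paper's actual proof is a one-liner: it invokes Lemma \ref{Lem-cover3}(3) (resp.\ (6)), which directly assert that for consecutively indexed triangles at $q$ (resp.\ $p$), ``$>$ implies covers.'' You instead run the exceptional-configuration analysis of Proposition \ref{prop-cover} again, citing Lemma \ref{lem-nu-pair} to rule out the obstructing configurations. That is a genuinely different route, and it introduces a gap: Lemma \ref{lem-nu-pair} and its ``consequently $(\eta_\alpha,\eta_\alpha+1)$ is an ${\bf m}({\bf P};\alpha)$-pair'' are formulated in $\mathcal L(T^o,\widetilde\beta^{(q)})$, where Definition \ref{Def-pair1} pairs $(\eta_\alpha,\eta_\alpha+1)$ unconditionally whenever $m_{\eta_\alpha}m_{\eta_\alpha+1}=-1$. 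In $\mathcal L(T^o,\widetilde\beta^{(p,q)})$ the pairing algorithm (the second Definition \ref{Def-pair1}) first processes $(0,1)$ in step (i) and only afterwards looks at $(\eta_\alpha,\eta_\alpha+1)$ inside ${\bf m}'$. When $\eta_\alpha=1$, step (i) can consume $m_{\eta_\alpha}=m_1$ (this happens precisely when $m_0m_1=-1$), and then step (ii) never fires, so $(\eta_\alpha,\eta_\alpha+1)$ is \emph{not} an ${\bf m}({\bf P};\alpha)$-pair and your claimed contradiction for $k_k=\eta_\alpha+1$ evaporates. Exactly this sub-case is the one the paper wrestles with explicitly in the matching case (the second exceptional sub-case, where it concludes via $\widetilde\beta=\alpha$, $\eta_\alpha=1$, and $r=-1<0$), and your streamlined ``together with $m_{k_k}({\bf P})=1$ makes $(0,k_k)$ or $(k_k,\eta_\alpha+1)$ an ${\bf m}({\bf P};\alpha)$-pair'' glides past the same subtlety even in the case it most resembles. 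To close your argument along your chosen route you would need either to check directly that the $\eta_\alpha=1,m_0m_1=-1$ configuration cannot coexist with the exceptional triangle configuration, or to fall back to a contradiction via $r\ge 0$ as the paper does. The simpler fix is just to use Lemma \ref{Lem-cover3}(3)(6) for the two triangle coordinates, as the paper's proof does.

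(A small slip: you write ``Case (II) of Proposition \ref{prop-cover}'' for the pure $q$-triangle move, but in the paper's labelling Case (I) is $P'_1=P'_2$, i.e.\ the triangle move; Case (II) is the matching move.)
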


\begin{proof}
Assume that ${\bf P}(\vec{c}\hspace{.8mm}')=(\Delta'_1,P'_1,\Delta''_1)$ and ${\bf P}(\vec{c})=(\Delta'_2,P'_2,\Delta''_2)$. By Remark \ref{Rem-4}, one of the following three cases happens: (1) $P'_1=P'_2$ and $\Delta''_1=\Delta''_2$, (2) $\Delta'_1=\Delta'_2$ and $\Delta''_1=\Delta''_2$, (3) $\Delta'_1=\Delta'_2$ and $P'_1=P'_2$.

In case $P'_1=P'_2$ and $\Delta''_1=\Delta''_2$, we have $(\Delta'_2,P''_2,\Delta''_2)=(\Delta'_2,P''_1,\Delta''_2)>(\Delta'_1,P'_1,\Delta'_1)$ by Remark \ref{Rem-4} (2). By Lemma \ref{Lem-cover3}, $(\Delta'_2,P''_2,\Delta''_2)=(\Delta'_2,P''_1,\Delta''_2)$ covers $(\Delta'_1,P'_1,\Delta'_1)$.

Similarly, $(\Delta'_2,P''_2,\Delta''_2)$ covers $(\Delta'_1,P'_1,\Delta'_1)$ in case $\Delta'_1=\Delta'_2$ and $P'_1=P'_2$.

In case $\Delta'_1=\Delta'_2$ and $\Delta''_1=\Delta''_2$, by Remark \ref{Rem-4} (1), $P'_1<P'_2=\mu_{G'_l}P'_1$ for some tile $G'_l$ with diagonal labeled $\alpha'$. Otherwise, if $(\Delta'_1,\mu_{G'_l}P'_1,\Delta''_1)$ does not cover $(\Delta'_1,P'_1,\Delta''_1)$, by Lemma \ref{Lem-cover1}, we have either $\Delta'_1=\Delta'_1(p)$ is the first triangle in $\Delta(T'^o,p)$ and $G'_l$ is the first tile of $G_{T'^o,\widetilde\beta}$ $\Delta''_1=\Delta'_1(q)$ is the first triangle in $\Delta(T'^o,q)$ and $G'_l$ is the last tile of $G_{T'^o,\widetilde\beta}$.

Suppose that $\Delta'_1=\Delta'_1(p)$ is the first triangle in $\Delta(T'^o,p)$ and $G'_l$ is the first tile of $G_{T'^o,\widetilde\beta}$. It follows that the diagonal $G'_l=G'_1$ is labeled $\alpha'$ and $m_{0}(\Delta'_1, P'_1,\Delta''_1)=1$. Thus $m_{0}({\bf P})=-1$ and $m_{1}({\bf P})=1$, thus $(0,1)$ is an ${\bf m}({\bf P};\alpha)$-pair.
From the construction of $\pi$, we see that $(\Delta'_1,\mu_{G'_l}P'_1,\Delta''_1)$ and $(\Delta'_1,P'_1,\Delta''_1)$ can not belong to $\pi({\bf P})$, a contradiction.

Suppose that $\Delta''_1=\Delta'_1(q)$ is the first triangle in $\Delta(T'^o,q)$ and $G'_l$ is the last tile of $G_{T'^o,\widetilde\beta}$. Similarly, $m_{\eta_\alpha}({\bf P})=1$, $m_{\eta_\alpha+1}({\bf P})=-1$ and $(\eta_\alpha,\eta_\alpha+1)$ can not be an ${\bf m}({\bf P};\alpha)$-pair. Thus, $\widetilde\beta=\alpha$, $\eta_\alpha=1$ and $m_{0}({\bf P})=-1$ with $(0,1=\eta_\alpha)$ is an ${\bf m}({\bf P};\alpha)$-pair. It follows that $r=\sum m_a({\bf P})=-1+1-1=-1$, contradicts to $r=\sum m_a({\bf P})\geq 0$.

The proof is complete.
\end{proof}

In summary of Propositions \ref{Lem-pi}, \ref{Lem-pi1}, \ref{Lem-pi2}, \ref{prop-cover}, \ref{prop-cover1} and Remarks \ref{Rem-2}, \ref{Rem-3}, \ref{Rem-4}, we obtain the following theorem.

\begin{theorem}\label{thm:pi}
For any ${\bf P}\in \mathcal L$, we have
\begin{enumerate}[$(1)$]
\item
$|\pi({\bf P})|=2^{[\sum m_i({\bf P})]_+}$ and ${\bf m}(\bf P',\alpha')=-{\bf m}(\bf P,\alpha)$ for all $\bf P'\in \pi(\bf P)$;
\item There is a bijective map $\{0, 1\}^{[\sum m_i({\bf P})]_+}\to \pi({\bf P}), \vec{c}\mapsto {\bf P}(\vec c)$ such that ${\bf P}(\vec c)$ covers ${\bf P}(\vec c\hspace{.8mm}')$ in $\mathcal L$ if and only if $\vec c$ covers $\vec c\hspace{.8mm}'$ in the lattice $\{0, 1\}^{[\sum m_i({\bf P})]_+}$.
\end{enumerate}
\end{theorem}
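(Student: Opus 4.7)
The plan is to reduce the theorem to the three cases $\mathcal L = \mathcal L(T^o,\widetilde\beta)$, $\mathcal L(T^o,\widetilde\beta^{(q)})$, and $\mathcal L(T^o,\widetilde\beta^{(p,q)})$, since the partition bijection $\pi$ and the explicit parameterization of $\pi({\bf P})$ were constructed separately in each case. For statement (1), the cardinality formula and the sign-reversal ${\bf m}({\bf P}',\alpha') = -{\bf m}({\bf P},\alpha)$ follow directly by citing Propositions \ref{Lem-pi}, \ref{Lem-pi1}, \ref{Lem-pi2} in the respective cases; no additional argument is needed.

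For statement (2), I would define the bijection $\vec c \mapsto {\bf P}(\vec c)$ exactly as described in Remarks \ref{Rem-2}, \ref{Rem-3}, \ref{Rem-4}: enumerate the ``free'' positions $k_1 < \cdots < k_r$ (with $r = [\sum m_i({\bf P})]_+$) consisting of those indices that do not belong to any ${\bf m}({\bf P};\alpha)$-pair and satisfy $m_{k_\ell}({\bf P}) = 1$. Each coordinate $c_\ell$ then selects one of two local choices at position $k_\ell$: either which pair of edges around the tile $G'(k_\ell)$ of $G_{T'^o,\widetilde\beta}$ lies in the matching (when $k_\ell$ is an interior index), or which of the two triangles in $\phi_\alpha^{T^o,p}(\Delta_i(p))$ or $\phi_\alpha^{T^o,q}(\Delta_j(q))$ is chosen (when $k_\ell \in \{0,\,\eta_\alpha+1\}$). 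By Lemmas \ref{lem-pj}, \ref{lem-pj1}, \ref{lem-pj2}, the latter two triangle choices respect the partial order on $\mathcal L'$.

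One direction of the equivalence of covers --- namely, if $\vec c$ covers $\vec c\hspace{.8mm}'$ in $\{0,1\}^r$, then ${\bf P}(\vec c)$ covers ${\bf P}(\vec c\hspace{.8mm}')$ in $\mathcal L'$ --- is essentially supplied by Propositions \ref{prop-cover} (once-notched case) and \ref{prop-cover1} (twice-notched case), while the untagged case follows from the lattice structure in Proposition \ref{Pro-la} combined with the monotonicity recorded in Remark \ref{Rem-2}(2). The main obstacle I expect will be the converse: ruling out that ${\bf P}(\vec c)$ covers ${\bf P}(\vec c\hspace{.8mm}')$ when $\vec c$ and $\vec c\hspace{.8mm}'$ differ in more than one coordinate.

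To handle the converse I would first verify that the map $\vec c \mapsto {\bf P}(\vec c)$ is order-preserving, so that ${\bf P}(\vec c) > {\bf P}(\vec c\hspace{.8mm}')$ in $\mathcal L'$ forces $c_\ell \geq c'_\ell$ for every $\ell$; this uses the descriptions in Remarks \ref{Rem-2}, \ref{Rem-3}, \ref{Rem-4} of how each coordinate flip corresponds to a single elementary move. Then, if $\vec c$ and $\vec c\hspace{.8mm}'$ differ in at least two coordinates, I would produce an intermediate ${\bf P}(\vec c\hspace{.8mm}'')$ by flipping one single free coordinate of $\vec c$ from $1$ to $0$; Propositions \ref{Lem-pi}(2), \ref{Lem-pi1}(2)--(3), \ref{Lem-pi2}(2)--(4) guarantee that this intermediate still lies in $\pi({\bf P}) \subseteq \mathcal L'$, so it sits strictly between ${\bf P}(\vec c\hspace{.8mm}')$ and ${\bf P}(\vec c)$, contradicting the covering hypothesis. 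The bijectivity, combined with both directions of the cover correspondence, completes the proof.
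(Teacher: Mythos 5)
Your proposal is correct and follows the same route as the paper: the text immediately preceding Theorem~\ref{thm:pi} states that it is obtained ``in summary of Propositions~\ref{Lem-pi}, \ref{Lem-pi1}, \ref{Lem-pi2}, \ref{prop-cover}, \ref{prop-cover1} and Remarks~\ref{Rem-2}, \ref{Rem-3}, \ref{Rem-4},'' which are exactly the ingredients you invoke and in the same roles, with part~(1) coming from the three $\mathrm{Lem}$-pi propositions and part~(2) from the remarks together with prop-cover/prop-cover1. The one phrase to tidy is ``order-preserving'': what you actually need (and what Remark~\ref{Rem-2}(2) supplies in the untagged case, and what the fact that two elements of $\pi({\bf P})$ can disagree only at the tile/triangle positions $k_\ell$ supplies in the tagged cases) is the order-reflecting direction, i.e.\ that $\vec c\mapsto{\bf P}(\vec c)$ is an order embedding; with that wording the intermediate-element argument closes as you describe.
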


We need the following lemma for later use.

\begin{lemma}\label{lem:ome1}
For any ${\bf P}\in \mathcal L$ with $r=\sum m_i({\bf P};\alpha)>0$, for any $\ell\in \{1,\cdots,r\}$,
assume that $\vec{c}, \vec{c}\hspace{.8mm}'$ in $\{0,1\}^{r}$ satisfy $c_\ell=1,c'_\ell=0$ and $c_i=c'_i$ for $i\neq \ell$,
we have
    \begin{equation*}
    \Omega({\bf P}(\vec{c}\hspace{.8mm}'),{\bf P}(\vec c))=d(\alpha')(2\ell-1-r).
    \end{equation*}
\end{lemma}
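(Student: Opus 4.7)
The setup is that ${\bf P}(\vec c\hspace{.8mm}')$ and ${\bf P}(\vec c)$ lie in $\pi({\bf P})\subseteq \mathcal L'$ and differ only in the $\ell$-th coordinate, with $c_\ell=1 > 0 = c'_\ell$. By Theorem~\ref{thm:pi}(2), ${\bf P}(\vec c)$ covers ${\bf P}(\vec c\hspace{.8mm}')$ in $\mathcal L'$; unpacking the descriptions in Remarks~\ref{Rem-2}, \ref{Rem-3} and \ref{Rem-4}, this cover is realised either as (a) a twist on the tile $G'(k_\ell)$ of $G_{T'^o,\widetilde\beta}$ whose diagonal is labeled $\alpha'$, when $k_\ell\in\{1,\dots,\eta_\alpha\}$, or (b) a step between two adjacent triangles in $\Delta(T'^o,p)$ (respectively $\Delta(T'^o,q)$) sharing $\alpha'$ as their common side, when $k_\ell=0$ (respectively $k_\ell=\eta_\alpha+1$). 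Applying the defining recurrence of $w$ on $\mathcal L'$ from Proposition~\ref{Prop-v1}, \ref{Pro-vm}, or \ref{pro-vpq} (as appropriate to which of the three lattices $\mathcal L'$ is), the claim reduces to evaluating the corresponding gradient number from Definitions~\ref{Def-ome}, \ref{Def-gra1}, \ref{Def-gra2}, \ref{Def-gra3}, \ref{Def-gra4}, \ref{Def-gra5} at ${\bf P}(\vec c\hspace{.8mm}')$.

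The calculation splits into two combinatorial identities. First, I will identify the tiles in $G_{T'^o,\widetilde\beta}$ whose diagonal is labeled $\alpha'$: by the construction of $\varphi^{T^o}_\alpha$ through the canonical-polygon lift of Subsection~\ref{Sec-varphi} together with Proposition~\ref{Lem-tile-11}, these are precisely the tiles $G'(k_j)$ for $k_j\in\{1,\dots,\eta_\alpha\}$, appearing in the snake graph in the same order $k_1 < k_2 < \cdots < k_r$. Combined with the analogous identification at the triangle boundaries coming from Proposition~\ref{Lem-part1}, this gives
\[
n(G'^+_{k_\ell};\alpha') - n(G'^-_{k_\ell};\alpha') = (r-\ell)-(\ell-1) = r+1-2\ell,
\]
while the two boundary possibilities $k_\ell=0$ and $k_\ell=\eta_\alpha+1$ contribute $m(\Delta'(p);\alpha')$ and $m(\Delta'(q);\alpha')$ terms whose sign and magnitude are read off directly from how the triangle-partition bijection $\phi_\alpha^{T^o,p}$, $\phi_\alpha^{T^o,q}$ distributes the triangles containing $\alpha'$.

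Second, I will show the matching contribution vanishes, i.e.\
\[
m^+\bigl({\bf P}(\vec c\hspace{.8mm}'),G'(k_\ell);\alpha'\bigr) - m^-\bigl({\bf P}(\vec c\hspace{.8mm}'),G'(k_\ell);\alpha'\bigr) = 0.
\]
The point is that every tile $G'(k_j)$ has its four boundary edges labeled in $\{\alpha_1,\alpha_2,\alpha_3,\alpha_4\}$ and never $\alpha'$ itself, so none of the ``free'' choices encoded by $\vec c\hspace{.8mm}'$ produce $\alpha'$-labeled edges in ${\bf P}(\vec c\hspace{.8mm}')$. Hence every $\alpha'$-edge of the matching lives on a tile outside $\{G'(k_1),\dots,G'(k_r)\}$ and is fully determined by the base matching $P$ transported through $\varphi^{T^o}_\alpha$. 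Proposition~\ref{Lem-parvar}(1) together with Lemma~\ref{Lem-var} and the mutation-compatibility Proposition~\ref{Prop-mutation4} then pair those external $\alpha'$-edges on either side of $G'(k_\ell)$ with tile diagonals so that $m^+-m^-=0$. Substituting both identities into the gradient formula yields
\[
w\bigl({\bf P}(\vec c)\bigr) - w\bigl({\bf P}(\vec c\hspace{.8mm}')\bigr) = d(\alpha')\bigl[0-(r+1-2\ell)\bigr] = d(\alpha')(2\ell-1-r),
\]
which is the claimed value of $\Omega({\bf P}(\vec c\hspace{.8mm}'),{\bf P}(\vec c))$.

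The hard part will be the vanishing of $m^+-m^-$: tracking how $\alpha'$-labeled edges enter the matching through tiles outside $\{G'(k_1),\dots,G'(k_r)\}$ requires careful use of the canonical-polygon lift and, in the tagged cases, of the interplay with the triangle-partition bijections $\phi^{T^o,p}_\alpha$ and $\phi^{T^o,q}_\alpha$. If a direct bookkeeping turns out to be unwieldy, the cleaner route is induction on $r$: the base case $r=1$ is immediate because $n^+-n^-=0$ and the single $\alpha'$-twist is ``free''; the inductive step then moves the extra $\alpha'$-twists away from $G'(k_\ell)$ one at a time using the path-independence identities Lemma~\ref{lem-gra} and Lemmas~\ref{Lem-det}, \ref{Lem-det1}, \ref{Lem-det2}, \ref{Lem-det3}, \ref{Lem-det4}, each relocation contributing a net shift of $d(\alpha')$ and thereby accumulating into the asserted linear dependence on $\ell$.
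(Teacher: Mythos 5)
There is a genuine gap in your plan, located in the two ``combinatorial identities'' that you want to verify.

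You assert that the tiles of $G_{T'^o,\widetilde\beta}$ with diagonal labeled $\alpha'$ are precisely the tiles $G'(k_j)$ for $j=1,\dots,r$, and conclude $n(G'^+_{k_\ell};\alpha')-n(G'^-_{k_\ell};\alpha')=(r-\ell)-(\ell-1)$ and separately $m^+-m^-=0$. Neither of these holds once ${\bf m}({\bf P};\alpha)$ contains a pair. The tiles of $G_{T'^o,\widetilde\beta}$ with diagonal $\alpha'$ are indexed by all crossings of $\widetilde\beta$ with $\alpha'$, i.e.\ by every preimage $\hat\alpha'_k$ that $\hat\gamma$ crosses in the canonical-polygon lift; this includes indices $k$ in ${\bf m}({\bf P};\alpha)$-pairs (where $m_k({\bf P})=\pm 1$ but $k\notin\{k_1,\dots,k_r\}$) and indices where a crossing exists but $m_k({\bf P})=0$. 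For instance, with ${\bf m}({\bf P};\alpha)=(1,-1,1)$ one has $r=1$, $k_1=3$, but $G_{T'^o,\widetilde\beta}$ has three $\alpha'$-diagonals and $n^+-n^-=-2\neq 0$; likewise $m^+-m^-=-2\neq 0$, so your base case ``$n^+-n^-=0$'' already fails. Your proposal reaches the right total only because you are implicitly using, without isolating it, the complete-path identity $m^{\pm}(P',G'(k_\ell);\alpha')-n(G'^{\pm}_{k_\ell};\alpha')=\sum_{i\gtrless k_\ell}m_i(P')$: granting your false premise about the tiles, both of your claims become equivalent to this identity, so your two errors are not independent and cancel by construction rather than by coincidence.

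The correct argument (which is what the paper's one-line proof encodes) is to work only with the differences $m^{\pm}-n^{\pm}$. The complete-path identity above, together with ${\bf m}({\bf P}';\alpha')=-{\bf m}({\bf P};\alpha)$ (Proposition~\ref{Lem-parvar}) and the non-crossing structure of ${\bf m}({\bf P};\alpha)$-pairs (Lemma~\ref{Lem-pair}(2), which forces each pair to lie entirely on one side of $k_\ell$ and to contribute $0$ to the one-sided sums), gives
\[
m^{+}-n^{+}=\sum_{i>k_\ell}m_i(P')=-(r-\ell),\qquad m^{-}-n^{-}=\sum_{i<k_\ell}m_i(P')=-(\ell-1),
\]
so $\Omega(P';G'(k_\ell))=d(\alpha')\bigl[-(r-\ell)-(-(\ell-1))\bigr]=d(\alpha')(2\ell-1-r)$, and similarly for a triangle-step cover in the tagged cases via Definitions~\ref{Def-gra1}--\ref{Def-gra5}. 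You cannot split this computation into separate $n^{+}-n^{-}$ and $m^{+}-m^{-}$ calculations without first proving the one-sided pair-cancellation, and the numbers you propose for each piece are simply not the correct ones.
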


\begin{proof}
We have $\Omega({\bf P}(\vec{c}\hspace{.8mm}'),{\bf P}(\vec c))=d(\alpha')\left(-(r-\ell)-(-\ell-1)\right)=d(\alpha')(2\ell-1-r).$
\end{proof}

\begin{proof}
It follows by Lemma \ref{Lem-var} and Lemma \ref{Lem-var1}.
\end{proof}

The main result in this section is the following.

\begin{theorem}\label{thm:partitionbij}
$\pi$ is a partition bijection with inverse $\pi'$.
\end{theorem}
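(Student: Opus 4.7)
The plan is to apply Proposition \ref{Pro-parin}, which reduces the problem to checking the symmetric membership condition: for every ${\bf P}\in\mathcal L$ and every ${\bf P}'\in\mathcal L'$, one has ${\bf P}'\in\pi({\bf P})$ if and only if ${\bf P}\in\pi'({\bf P}')$. First I would verify that $\pi$ genuinely lands in the claimed partition structure; that is, $\pi({\bf P})\neq\emptyset$ (immediate from Theorem \ref{thm:pi}(1)) and whenever $\pi({\bf P})\cap\pi({\bf Q})\neq\emptyset$ we have $\pi({\bf P})=\pi({\bf Q})$, together with $\bigsqcup_{{\bf P}\in\mathcal L}\pi({\bf P})=\mathcal L'$. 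The surjectivity onto $\mathcal L'$ will follow by symmetry once the membership characterization is established. The same must be checked for $\pi'$ by the same argument with $T^o$ and $T'^o$ swapped and $\alpha$ replaced by $\alpha'$.

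Next I would reduce the membership condition ${\bf P}'\in\pi({\bf P})\iff {\bf P}\in\pi'({\bf P}')$ to three compatible assertions on the underlying data. Writing ${\bf P}=(\Delta_i(p),P,\Delta_j(q))$ and ${\bf P}'=(\Delta',P',\Delta'')$ (with the obvious degenerations in the three flavors of $\mathcal L$), the claim is that ${\bf P}'\in\pi({\bf P})$ forces:
\begin{itemize}
\item $P'\in\varphi^{T^o}_\alpha(P)$, $\Delta'\in\phi^{T^o,p}_\alpha(\Delta_i(p))$, $\Delta''\in\phi^{T^o,q}_\alpha(\Delta_j(q))$;
\item a matching of the specific $\alpha$-mutable/$\alpha'$-mutable edges prescribed by the ${\bf m}({\bf P};\alpha)$-pairs.
\end{itemize}
For the first bullet, the partition bijections $\varphi^{T^o}_\alpha$ and $\phi^{T^o,p}_\alpha,\phi^{T^o,q}_\alpha$ were already shown in Sections \ref{Sec-parcom} and \ref{sec:PB} (Theorem \ref{parbi1}, Proposition \ref{Lem-part1}) to be inverse to the corresponding primed bijections, so this level of data is symmetric by construction. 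The content is therefore concentrated in the pair-matching condition.

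The key input is that the vectors ${\bf m}({\bf P};\alpha)$ and ${\bf m}({\bf P}';\alpha')$ are negatives of each other (Theorem \ref{thm:pi}(1)), and by Lemmas \ref{Lem-pair}(1), \ref{Lem-pair1}(1), \ref{Lem-pair2}(1) the ${\bf m}({\bf P};\alpha)$-pairs and ${\bf m}({\bf P}';\alpha')$-pairs coincide as sets of index pairs. I would then go through each of the pair types listed in the construction of $\pi$ (interior $\alpha$-crossings, the boundary indices $0$ and $\eta_\alpha+1$ corresponding to the triangles at $p$ and $q$, and the mixed boundary/interior pairs) and verify case by case that the compatibility condition prescribed for $\pi$ at ${\bf P}$ is identical to the condition prescribed for $\pi'$ at ${\bf P}'$. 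Here the auxiliary Lemmas \ref{Lem-de1} and \ref{Lem-de2} handle the $(0,\eta_\alpha+1)$-pair exactly because they assert the same unique triangle choice from either side, and the non-$\alpha$-mutable edge count equalities of Proposition \ref{Lem-tile-2}(4) guarantee that the other boundary rules are symmetric.

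The main obstacle will be this case analysis: the construction of $\pi$ has many subcases because the ``pair'' matching interacts both with the edges of the snake graph tiles $G(k),G'(k)$ and with the triangle-side labelings at the punctures, and one must confirm that each rule written from the $T^o$ side translates to the identical rule from the $T'^o$ side after identifying ${\bf m}({\bf P};\alpha)$-pairs with ${\bf m}({\bf P}';\alpha')$-pairs. Once all cases are verified, Proposition \ref{Pro-parin} delivers that $\pi$ and $\pi'$ are partition bijections inverse to each other, completing the proof.
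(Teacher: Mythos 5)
Your proposal follows the same route as the paper: nonemptiness of $\pi({\bf P})$ from Theorem \ref{thm:pi}, the disjoint-or-equal property via the fact that members of a common preimage are related by twists at $\alpha$, the coincidence of ${\bf m}({\bf P};\alpha)$-pairs with ${\bf m}({\bf P}';\alpha')$-pairs from Lemmas \ref{Lem-pair}, \ref{Lem-pair1}, \ref{Lem-pair2} together with Theorem \ref{thm:pi}(1), and then Remark \ref{Rmk-par} plus Proposition \ref{Pro-parin} to conclude. The paper states the symmetry of the pairing rules briefly while you flag the case analysis explicitly, but the argument is the same.
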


\begin{proof}
By Theorem \ref{thm:pi}, $\pi({\bf P})\neq\emptyset$ for any ${\bf P}\in \mathcal L$.

If $\pi({\bf P})\cap\pi({\bf Q})\neq \emptyset$ for some ${\bf P},{\bf Q}$, from the construction of $\pi$, ${\bf P}$ and ${\bf Q}$ are related by a sequence of twists at $\alpha$. By Theorem \ref{thm:pi}, we have ${\bf m}(\bf P,\alpha)={\bf m}(\bf Q,\alpha)$. Thus $\pi(P)=\pi(Q)$ by the construction of $\pi$.

For any ${\bf P}\in \mathcal L, {\bf P}'\in \pi({\bf P})$, we have ${\bf m}({\bf P},\alpha)$-pairs and ${\bf m}({\bf P}',\alpha')$-pairs coincide by Lemmas \ref{Lem-pair}, \ref{Lem-pair1}, \ref{Lem-pair2} and Theorem \ref{thm:pi}. Thus ${\bf P}\in \pi'({\bf P}')$ iff ${\bf P}'\in \pi({\bf P})$. Therefore, $\bigcup_{{\bf P}\in \mathcal L}\pi({\bf P})=\mathcal L'$.

Therefore, by Remark \ref{Rmk-par}, $\pi$ is a partition bijection. Similarly, $\pi'$ is a partition bijection. Furthermore, $\pi'$ is the inverse of $\pi$ by Proposition \ref{Pro-parin}.
\end{proof}

\newpage

\section{Compatibility of the partition bijections and the lattice structures}\label{sec:CPP}

For any pair ${\bf P}, {\bf Q}\in \mathcal L$, assume that ${\bf P}$ covers ${\bf Q}$, then we have that ${\bf P}$ and ${\bf Q}$ are either related by a twist on a tile $G_l$ or related by two adjacent triangles incident to $p$ or $q$, assume $\tau$ is the label of the diagonal or the common side of the triangles. In both cases, we say that ${\bf P}$ covers ${\bf Q}$ and \emph{related by $\tau$}. We denote
$$\Omega({\bf P}, {\bf Q})=w({\bf P})-w({\bf Q}).$$

We say that ${\bf P}, {\bf Q}\in \mathcal L$ are \emph{related by a twist at $\alpha$} if either ${\bf P}$ covers ${\bf Q}$ and related by $\tau$ or ${\bf Q}$ covers ${\bf P}$ and related by $\tau$.

{\bf Assumption}: We always assume that $T$ and $T'$ contain no arc tagged notched at $q$ in case $\mathcal L=\mathcal L(T^o,\widetilde\beta^{(q)})$, $T$ and $T'$ contain no arc tagged notched at $p$ or $q$ in case $\mathcal L=\mathcal L(T^o,\widetilde\beta^{(p,q)})$ if there is no other state.

\begin{proposition}\label{lem-p-5}
We have ${\bf P}'_-\in \pi({\bf P}_-)$. In particular, we have ${\bf P}'_-={\bf P}_-(0,\cdots, 0)$.
\end{proposition}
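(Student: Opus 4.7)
The plan is to treat the three cases $\mathcal L = \mathcal L(T^o,\widetilde\beta)$, $\mathcal L(T^o,\widetilde\beta^{(q)})$, and $\mathcal L(T^o,\widetilde\beta^{(p,q)})$ separately. In each case the strategy is identical: first verify that ${\bf P}'_- \in \pi({\bf P}_-)$ by direct inspection of the construction of $\pi$ in Section \ref{sec-pb}, and then deduce ${\bf P}'_- = {\bf P}_-(0,\ldots,0)$ from Theorem \ref{thm:pi}(2). This second step will be automatic, since Theorem \ref{thm:pi}(2) realises the bijection $\{0,1\}^{[\sum m_i({\bf P}_-)]_+} \to \pi({\bf P}_-)$ as a poset isomorphism, and ${\bf P}'_-$, being the global minimum of $\mathcal L'$, is a fortiori the minimum of the subset $\pi({\bf P}_-)$; hence it must correspond to the bottom vector $(0,\ldots,0)$.

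For $\mathcal L = \mathcal L(T^o,\widetilde\beta)$, the argument is essentially immediate. By Lemma \ref{lem-p--} all coordinates of ${\bf m}(P_-,\alpha)$ share the same non-strict sign, so the set of ${\bf m}(P_-,\alpha)$-pairs is empty and the construction of $\pi(P_-)$ imposes no conditions beyond $\pi(P_-) = \varphi_\alpha^{T^o}(P_-)$. Lemma \ref{lem-p-12} then gives $P'_- \in \pi(P_-)$ at once.

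For the two punctured cases, the base ingredients are still supplied by Lemmas \ref{lem-p-12} and \ref{lem-deltatodelta}, which place $P'_-$ in $\varphi_\alpha^{T^o}(P_-)$ and $\Delta'_1(q),\Delta'_1(p)$ in $\phi_\alpha^{T^o,q}(\Delta_1(q))$, $\phi_\alpha^{T^o,p}(\Delta_1(p))$ respectively. The delicate point is to verify the constraints that the construction of $\pi$ attaches to each ${\bf m}({\bf P}_-;\alpha)$-pair. By Lemma \ref{lem-p--} no such pair can lie wholly inside $\{1,\ldots,\eta_\alpha\}$, so every pair must involve one of the ``boundary'' indices $0$ or $\eta_\alpha+1$ that encode the triangle contributions $m(\Delta_1(p);\alpha)$ and $m(\Delta_1(q);\alpha)$. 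For each subcase (according to which of $\alpha_1,\alpha_2,\alpha_3,\alpha_4$ is a side of $\Delta_1(p)$ or $\Delta_1(q)$ and the resulting signs of $m_0,m_{\eta_\alpha+1}$) one reads off the edges of $P_-$ forced by Proposition \ref{prop-mm} together with the identity of $E_i(q),E_i(p),E'_i(q),E'_i(p)$ coming from \eqref{E1q}, \eqref{E1p} and their primed analogues, and checks that the condition prescribed by the construction of $\pi$ is exactly matched by $(\Delta'_1(p),P'_-,\Delta'_1(q))$.

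The main obstacle is this last case-by-case bookkeeping, which simultaneously tracks the minimum-matching description of Proposition \ref{prop-mm} in both $T^o$ and $T'^o$ and the behaviour of the triangles $\Delta_1(p),\Delta_1(q)$ under the partition bijections $\phi^{T^o,\bullet}_\alpha$ of Section \ref{sec:PB}. Once ${\bf P}'_- \in \pi({\bf P}_-)$ is established in each subcase, Theorem \ref{thm:pi}(2) together with the minimality of ${\bf P}'_-$ in $\mathcal L'$ yields ${\bf P}'_- = {\bf P}_-(0,\ldots,0)$ and completes the proof.
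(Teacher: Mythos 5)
Your proposal is correct and follows essentially the same route as the paper: Lemma \ref{lem-p-12} and Lemma \ref{lem-deltatodelta} place the factors $P'_-$ and $\Delta'_1(p),\Delta'_1(q)$ where they need to be, Lemma \ref{lem-p--} controls the sign pattern of ${\bf m}(P_-,\alpha)$, and the rest is exactly the case-by-case verification the paper carries out (which, as in the paper, sometimes rules a subcase out entirely via Lemma \ref{lem-p-} rather than matching a constraint). Your observation that Lemma \ref{lem-p--} confines every ${\bf m}({\bf P}_-;\alpha)$-pair to involve a boundary index, and your explicit derivation of ${\bf P}'_-={\bf P}_-(0,\ldots,0)$ from Theorem \ref{thm:pi}(2) and global minimality, are clean articulations of steps the paper's proof leaves implicit, but they do not change the strategy.
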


\begin{proof}
By Lemma \ref{lem-p-12}, $P'_-\in \varphi_\alpha^{T^o}(P_-)$. By Proposition \ref{Lem-parvar} we may assume $\sum m_i(P_-)\geq 0$. By Lemma \ref{lem-p--}, $m_i(P_-)\geq 0$ for any $i$. We have $\Delta'_1(q)\in \phi_{\alpha}^{T^o,q}(\Delta_1(q))$, $\Delta'_1(p)\in \phi_{\alpha}^{T^o,p}(\Delta_1(p))$ from Lemma \ref{lem-deltatodelta}.

The situation that $\mathcal L=\mathcal L(T^o,\widetilde\beta)$ follows by Lemma \ref{lem-p-12} and Lemma \ref{lem-p--}.

We then consider the situation that $\mathcal L=\mathcal L(T^o,\widetilde\beta^{(q)})$.

Case 1. If $m(\Delta_1(q);\alpha)\geq 0$, as $\Delta'_1(q)\in \phi_{\alpha}^{T^o,q}(\Delta_1(q))$, we see that $(P'_-,\Delta'_1(q))\in \pi(P_-,\Delta_1(q))$.

Case 2. If $m(\Delta_1(q);\alpha)<0$, then $m(\Delta_1(q);\alpha)=-1$ and $\tau_1(q)=\alpha$ or $\tau_t(q)=\alpha$. Thus we have $\phi_{\alpha}^{T^o,q}(\Delta_1(q))=\{\Delta'_1(q)\}$.

Case 2.1. $\tau_1(q)=\alpha$. We may assume that $\tau_t(q)=\alpha_4$ and $\tau_{[1]}(q)=\alpha_1$. We have either $\widetilde\beta=\tau_t(q)$ or $\widetilde\beta$ crosses $\alpha_1$ and ending at $q$.

Case 2.1.1. $\widetilde\beta=\tau_t(q)$. Then $\widetilde\beta=\alpha_4\in T^o\cap T'^o$. Thus $P_{-}=P'_-=P_{\widetilde\beta}$ and $\varphi_\alpha^{T^o}(P_-)=\{P'_-\}$ and thus $(P'_-,\Delta'_1(q))\in \pi(P_-,\Delta_1(q))$.

Case 2.1.2. $\widetilde\beta$ crosses $\alpha_1$ and ending at $q$.
If $\sum m_i(P_-)=0$, then $\varphi_\alpha^{T^o}(P_-)=\{P'_-\}$ and thus $(P'_-,\Delta'_1(q))\in \pi(P_-,\Delta_1(q))$. If $\sum m_i(P_-)>0$, then $m_i(P_-)=1$ for some $i$. By Lemma \ref{lem-p-}, one the following happens: $\widetilde\beta$ crosses $\alpha_2,\alpha,\alpha_4$ consecutively; or $\widetilde\beta$ starts from the common endpoint of $\alpha$ and $\alpha_1$ then crosses $\alpha_4$; or $\widetilde\beta$ starts from $q$ then crosses $\alpha_2$, see Figure \ref{Fig-proof}. We have either $\widetilde\beta$ intersects itself or the two endpoints are $q$, a contradiction.

\begin{figure}[h]
\centerline{\includegraphics{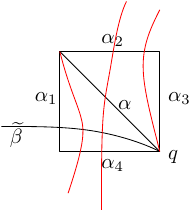}}

\caption{}\label{Fig-proof}

\end{figure}

Case 2.2. $\tau_t(q)=\alpha$, we may assume that $\tau_1(q)=\alpha_1$ and $\tau_{[1]}(q)=\alpha_4$. We have either $\widetilde\beta=\alpha$ or $\widetilde\beta$ crosses $\alpha_4$ and ending at $q$.

Case 2.2.1. $\widetilde\beta=\alpha$. Thus we have $\phi_{\alpha}^{T^o,q}(\Delta_1(q))=\{\Delta'_1(q)\}$, $P_{-}=P_{\widetilde\beta}$ and $\varphi_\alpha^{T^o}(P_-)=\{P'_-,P'_+\}$. For any $(P',\Delta')\in \pi(P_-,\Delta_1(q))$, from the construction of $\pi$, the edges labeled $\alpha_2$ and $\tau_{[1]}(q)=\alpha_4$ are in $P'$. Thus $(P'_-,\Delta'_1(q))\in \pi(P_-,\Delta_1(q))$.

Case 2.2.2. $\widetilde\beta$ crosses $\alpha_4$ and ending at $q$. Then $m_{\eta_\alpha}(P_-)=1$ and thus $(\eta_\alpha,\eta_\alpha+1)$ is an ${\bf m}({\bf P}_-;\alpha)$-pair. For any $(P',\Delta')\in \pi({\bf P}_-)$, from the construction of $\pi$, we have the edges labeled $\tau_{t-1}(q)=\alpha_2$ and $\tau_{[1]}(q)=\alpha_4$ of $G'(\eta_\alpha)$ are in $P'$. Thus $(P'_-,\Delta'_1(q))\in \pi(P_-,\Delta_1(q))$.

Last, we consider the situation that $\mathcal L=\mathcal L(T^o,\widetilde\beta^{(p,q)})$.

Case 1. $m(\Delta_1(p);\alpha),m(\Delta_1(q);\alpha)\geq 0$. As $\Delta'_1(p)\in \phi_{\alpha}^{T^o,p}(\Delta_1(p))$ and $\Delta'_1(q)\in \phi_{\alpha}^{T^o,q}(\Delta_1(q))$, we see that $(\Delta'_1(p),P'_-,\Delta'_1(q))\in \pi(\Delta_1(p),P_-,\Delta_1(q))$.

Case 2. $m(\Delta_1(p);\alpha)\geq 0, m(\Delta_1(q);\alpha)< 0$. Then $m(\Delta_1(q);\alpha)=-1$ and $\tau_1(q)=\alpha$ or $\tau_t(q)=\alpha$. Thus we have $\phi_{\alpha}^{T^o,q}(\Delta_1(q))=\{\Delta'_1(q)\}$.

Case 2.1. $\tau_1(q)=\alpha$. We may assume that $\tau_t(q)=\alpha_4$ and $\tau_{[1]}(q)=\alpha_1$. We have either $\widetilde\beta=\tau_t(q)$ or $\widetilde\beta$ crosses $\alpha_1$ and ending at $q$.

Case 2.1.1. $\widetilde\beta=\tau_t(q)$. Then $\widetilde\beta=\alpha_4\in T^o\cap T'^o$. Thus $P_{-}=P'_-=P_{\widetilde\beta}$ and $m(\Delta_1(p),\alpha)=0$.
Therefore, $\varphi_\alpha^{T^o}(P_-)=\{P'_-\}$ and thus $(\Delta'_1(p),P'_-,\Delta'_1(q))\in \pi(\Delta_1(p),P_-,\Delta_1(q))$.

Case 2.1.2. $\widetilde\beta$ crosses $\alpha_1$ and ending at $q$.

If $\sum m_i(P_-)>0$, then $m_i(P_-)=1$ for some $i$. From the above discussion, we have $p=q$, $\widetilde\beta$ starts from $q$ then crosses $\alpha_2$ and $m_i(P_-)=0$ for all $i$ with $1<i<\eta_\alpha$. We see that $\tau_{[1]}(p)=\alpha_2$, $m(\Delta_1(p);\alpha)=-1$ and $m_1(P_-)=1$. Contradicts to the assumption that $m(\Delta_1(p);\alpha)\geq 0$.



If $\sum m_i(P_-)=0$ and $m(\Delta_1(p);\alpha)=1$, then $\varphi_\alpha^{T^o}(P_-)=\{P'_-\}$ and $\widetilde\beta$ starts from the common endpoint of $\alpha_2$ and $\alpha_3$ then crosses $\alpha$ and $\alpha_1$, see Figure \ref{Fig-proof1}. Then we have $\tau_{i_1}=\alpha, \tau_{i_2}=\alpha_1$, thus $m_1(P_-)=-1$, see Figure \ref{Fig-proof1}, contradicts to $\sum m_i(P_-)=0$.
\begin{figure}[h]
\centerline{\includegraphics{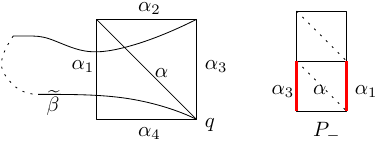}}

\caption{}\label{Fig-proof1}

\end{figure}

Case 2.2. $\tau_t(q)=\alpha$, we may assume that $\tau_1(q)=\alpha_1$ and $\tau_{[1]}(q)=\alpha_4$. We have either $\widetilde\beta=\alpha$ or $\widetilde\beta$ crosses $\alpha_4$ and ending at $q$.

Case 2.2.1. $\widetilde\beta=\alpha$. Then we have $\phi_{\alpha}^{T^o,q}(\Delta_1(q))=\{\Delta'_1(q)\}$, $P_{-}=P_{\widetilde\beta}$ and $\Delta_1(p)=\{\tau_s(p)=\alpha,\tau_1(p)=\alpha_3,\tau_{[1]}(p)=\alpha_2\}$. Thus $\varphi_\alpha^{T^o}(P_-)=\{P'_-,P'_+\}$, $\phi_{\alpha}^{T^o,p}(\Delta_1(p))=\{\Delta'_1(p)\}$ and $(1,2)$ is an ${\bf m}({\bf P}_-;\alpha)$-pair.
For any $(\Delta',P',\Delta'')\in \pi({\bf P})$, from the construction of $\pi$, the edges labeled $\alpha_2=\tau_{[1]}(p)$ and $\alpha_4$ are in $P'$. Thus $(\Delta'_1(p),P'_-,\Delta'_1(q))\in \pi({\bf P}_-)$.

Case 2.2.2. $\widetilde\beta$ crosses $\alpha_4$ and ending at $q$. Then $m_{\eta_\alpha}(P_-)=1$ and thus $\sum m_i(P_-)>0$. As $m(\Delta_1(p);\alpha)\geq 0$, we have $(\eta_\alpha,\eta_\alpha+1)$ is an ${\bf m}({\bf P}_-;\alpha)$-pair. For any $(\Delta',P',\Delta'')\in \pi({\bf P}_-)$, from the construction of $\pi$, we have the edges labeled $\tau_{t-1}(q)=\alpha_2$ and $\tau_{[1]}(q)=\alpha_4$ of $G'(\eta_\alpha)$ are in $P'$. Thus $(\Delta'_1(p),P'_-,\Delta'_1(q))\in \pi({\bf P}_-)$.

Case 3. $m(\Delta_1(p);\alpha)<0, m(\Delta_1(q);\alpha)\geq 0$. This case is similar to case 2.

Case 4. $m(\Delta_1(p);\alpha), m(\Delta_1(q);\alpha)<0$. Thus we have $\phi_{\alpha}^{T^o,q}(\Delta_1(q))=\{\Delta'_1(q)\}$ and $\phi_{\alpha}^{T^o,p}(\Delta_1(p))=\{\Delta'_1(p)\}$.

If $\sum m_i(P_-)\leq 0$, then $\varphi_\alpha^{T^o}(P_-)=\{P'_-\}$. Thus $\{(\Delta'_1(p),P'_-,\Delta'_1(q))\}=\pi({\bf P}_-)$.

If $\sum m_i(P_-)> 0$, then by Lemma \ref{lem-p-} we see that $\tau_{[1]}(p),\tau_{[1]}(q)\in \{\alpha_2,\alpha_4\}$. For any $(\Delta',P',\Delta'')\in \pi({\bf P}_-)$, from the construction of $\pi$, we have the edges labeled $\alpha_2$ and $\alpha_4$ of $G'(a)$ are in $P'$, where $a$ is in some ${\bf m}({\bf P}_-;\alpha)$-pair. Thus $(\Delta'_1(p),P'_-,\Delta'_1(q))\in \pi({\bf P}_-)$.

The proof is complete.
\end{proof}

\begin{thm}\label{thm-compatible1}
Under the {\bf Assumption}, assume that ${\bf P}$ covers ${\bf Q}$ and related by $\tau$. Denote ${\bf P}(0,\cdots,0)={\bf P}'$ and ${\bf Q}(0,\cdots,0)={\bf Q}'$.
\begin{enumerate}[$(1)$]
\item If $\tau\neq \alpha,\alpha_1,\alpha_2,\alpha_3,\alpha_4$, then ${\bf P}'$ covers ${\bf Q}'$ and related by $\tau$. Moreover, we have $$\Omega({\bf P}, {\bf Q})=\Omega({\bf P}', {\bf Q}').$$
\item If $\tau=\alpha$, then we have either

$\bullet$ ${\bf P}'={\bf Q}'$ or

$\bullet$ ${\bf P}'$ is covered by ${\bf Q}'$ and related by $\alpha'$ with $\Omega({\bf P}, {\bf Q})=-\Omega({\bf P}', {\bf Q}').$
\end{enumerate}
\end{thm}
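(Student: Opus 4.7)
The proof proceeds by case analysis on how the covering relation ${\bf P} \succ {\bf Q}$ is realized in $\mathcal L$: either (i) by a twist on a common tile $G_l$ whose diagonal carries the label $\tau = \tau_{i_l}$, or (ii) by a flip across two adjacent triangles at $p$ or $q$ sharing the side $\tau$, together with the position of $\tau$ relative to the quadrilateral $(\alpha_1, \alpha_2, \alpha_3, \alpha_4)$ around $\alpha$. The guiding principle is that the partition bijection $\pi$ is assembled from the perfect-matching bijection $\varphi_\alpha^{T^o}$ of Section \ref{Sec-varphi} and the triangle bijections $\phi_\alpha^{T^o,q}, \phi_\alpha^{T^o,p}$ of Section \ref{sec:PB}, both of which are transparent away from $\{\alpha, \alpha_1, \ldots, \alpha_4\}$ by Lemmas \ref{Lem-var} and \ref{Lem-var1}.

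For part (1), the hypothesis $\tau \notin \{\alpha, \alpha_1, \ldots, \alpha_4\}$ ensures the covering data transfer rigidly across $\pi$. In the tile subcase, Proposition \ref{Prop-mutation4}(1) produces a tile $G'_{[l]}$ of $G_{T'^o, \widetilde\beta}$ with diagonal $\tau$, such that $\mu_{G'_{[l]}}$ implements the twist on the $T'^o$-side and the counting quantities $m^\pm(P, G_l; \tau)$ and $n(G_l^\pm; \tau)$ transfer unchanged. Combined with Proposition \ref{Lem-parvar}(1) this gives ${\bf m}({\bf P};\alpha) = {\bf m}({\bf Q};\alpha)$, so the ${\bf m}$-pair structure (Definitions \ref{Def-pair}, \ref{Def-pair1}) and selection rule defining $\pi$ agree bit-by-bit. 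Hence ${\bf P}'$ covers ${\bf Q}'$ by the twist on $G'_{[l]}$, and substituting the preserved data into Definitions \ref{Def-ome}, \ref{Def-gra2}, \ref{Def-gra3} yields $\Omega({\bf P}, {\bf Q}) = \Omega({\bf P}', {\bf Q}')$. The triangle subcase is parallel, using Lemma \ref{Lem-var1}(1) together with the triangle gradient formulas in Definitions \ref{Def-gra1}, \ref{Def-gra5}, \ref{Def-gra4}.

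For part (2), when $\tau = \alpha$, I handle the two subcases separately. In the tile subcase, Proposition \ref{Prop-mutation4}(2) gives $\varphi_\alpha^{T^o}(P) = \varphi_\alpha^{T^o}(\mu_{G_l} P)$; applying Proposition \ref{Lem-parvar}(1) to both $P$ and $\mu_{G_l} P$ forces ${\bf m}(P, \alpha) = {\bf m}(\mu_{G_l} P, \alpha)$, whence $\pi({\bf P}) = \pi({\bf Q})$ as sets with matching indexings, so ${\bf P}' = {\bf Q}'$. In the triangle subcase, ${\bf P}$ and ${\bf Q}$ differ by a flip across a side labeled $\alpha$, and the triangle collapsing rule in Proposition \ref{Lem-part1} identifies the two sides of $\phi_\alpha^{T^o,q}$ (or $\phi_\alpha^{T^o,p}$). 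When the resulting ${\bf m}$-pair assignments for ${\bf P}$ and ${\bf Q}$ coincide (the generic case, when none of the accompanying $\tau_{[\cdot]}(*)$ equal $\alpha$), we again obtain ${\bf P}' = {\bf Q}'$; otherwise the discrepancy is precisely a single coordinate of $\vec c$ flipping between "paired" and "free" status, which by Proposition \ref{Lem-tile-11}(2) and Remark \ref{Rem-2}(2) corresponds to ${\bf Q}'$ covering ${\bf P}'$ via a twist on a tile $G'(k)$ with diagonal $\alpha'$. The reversed orientation of the covering relation gives the sign in $\Omega({\bf P}, {\bf Q}) = -\Omega({\bf P}', {\bf Q}')$, which emerges from ${\bf m}(P', \alpha') = -{\bf m}(P, \alpha)$ combined with the skew-symmetrizability identity $d(\alpha) b^{T^o}_{\alpha\zeta} = -d(\zeta) b^{T^o}_{\zeta\alpha}$, as in the proofs of Lemmas \ref{Lem-det1}, \ref{Lem-det3}.

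The main obstacle will be the triangle subcase of part (2): one must enumerate how the pair-assignment algorithm of Definition \ref{Def-pair1} reacts when a single triangle coordinate $m_{\eta_\alpha+1}({\bf P})$ (or $m_0({\bf P})$ in the $(p,q)$ setting) flips, tracking the potential cascade through neighboring indices, and match the net single-bit difference to a concrete twist on a tile in $G_{T'^o, \widetilde\beta}$ with diagonal $\alpha'$. Verifying the lattice-order reversal additionally requires careful use of Remark \ref{Rem-2}(2) and the compatibility between the minimal elements $P_-, P'_-$ established in Proposition \ref{lem-p-5}.
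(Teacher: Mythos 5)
There is a genuine gap in part (2), in the tile subcase (where ${\bf P}$ covers ${\bf Q}$ via a twist on a tile $G_l$ with $\tau_{i_l}=\alpha$). You conclude ${\bf P}'={\bf Q}'$ from the two facts that $\varphi_\alpha^{T^o}(P)=\varphi_\alpha^{T^o}(\mu_{G_l}P)$ and ${\bf m}(P,\alpha)={\bf m}(\mu_{G_l}P,\alpha)$, but the last inference is false: the partition bijection $\pi$ of Section \ref{sec-pb} selects a subset of the ambient set $\varphi_\alpha^{T^o}(P)$ by a rule that, for each ${\bf m}(P,\alpha)$-pair $(a,b)$ with $m_a(P)=-1$ and $m_b(P)=1$, refers to the \emph{concrete edge labels} of $P\cap edge(G(a))$, not merely to the ${\bf m}$-vector. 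By Proposition \ref{Prop-mutation4}(2) a twist on a tile with diagonal labeled $\alpha$ occurs at $G_l=G(k)$ for some $k$ with $m_k(P)=-1$. If $k$ lies in an ${\bf m}(P,\alpha)$-pair $(k,k')$, then $P$ and $\mu_{G_l}P$ differ precisely at $G(k)$, so the selection rules for $\pi({\bf P})$ and $\pi({\bf Q})$ pick out \emph{disjoint} subsets of the common set $\varphi_\alpha^{T^o}(P)$, and one finds ${\bf Q}'=\mu_{G'(k')}{\bf P}'$ with ${\bf Q}'$ covering ${\bf P}'$ related by $\alpha'$. In other words, the second bullet of part (2) is realized already in the tile subcase; your proposal attributes the covering branch exclusively to the triangle subcase and thus skips this case and its accompanying $\Omega$-sign computation (which uses ${\bf m}(P',\alpha')=-{\bf m}(P,\alpha)$ together with Lemma \ref{Lem-pair}(2), i.e.\ that the indices between $k$ and $k'$ sum to zero, so the two gradient numbers cancel with opposite sign).

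Part (1) and the triangle branch of part (2) in your outline are essentially the paper's argument, but it is worth noting why the analogous failure does not occur in part (1): under $\tau\neq\alpha,\alpha_1,\ldots,\alpha_4$, no tile with diagonal $\tau$ can be adjacent in the snake graph to a tile with diagonal $\alpha$ (adjacent tiles share a triangle whose three sides would then include both $\tau$ and $\alpha$, forcing $\tau\in\{\alpha_1,\ldots,\alpha_4\}$). Thus the twist on $G_l$ cannot alter $P\cap edge(G(a))$ for any $a$, which is exactly what your ``agree bit-by-bit'' step needs. That shielding vanishes as soon as $\tau=\alpha$, which is the source of the gap above.
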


\begin{proof}
(1) As $\tau\neq \alpha,\alpha_1,\alpha_2,\alpha_3,\alpha_4$, we have ${\bf m}({\bf P};\alpha)={\bf m}({\bf Q};\alpha)$.

We first consider the situation that $\mathcal L=\mathcal L(T^o,\widetilde\beta)$. Then ${\bf P}=P$ and ${\bf Q}=\mu_{G_l}P$. By Proposition \ref{Prop-mutation4}, there is a tile $G'_{[l]}$ of $G_{T'^o,\widetilde\beta}$ with diagonal labeled $\tau$ such that any $P'\in \varphi_\alpha^{T^o}(P)$ can twist on $G'_{[l]}$, $P'>\mu_{G'_{[l]}}(P')$, and $\varphi_\alpha^{T^o}({\bf Q})=\mu_{G'_{[l]}}(\varphi_\alpha^{T^o}(P))$. Thus ${\bf P}'$ covers ${\bf Q}'$ and related by $\tau$. As ${\bf m}(P,\alpha)= {\bf m}(\mu_{G_l}P,\alpha)$, we see that $\pi({\bf Q})=\mu_{G'_{[l]}}(\pi(P))$. By Proposition \ref{Prop-mutation4} (1), we have $m^{\pm}(P,G_l;\tau_{i_l})=m^{\pm}({\bf P}',G'_{[l]};\tau_{i_l})$ and $n(G_l^{\pm};\tau_{i_l})=n(G'^{\pm}_{[l]};\tau_{i_l})$, thus $\Omega({\bf P}',{\bf Q}')=\Omega({\bf P},{\bf Q})$.

We then consider the situation that $\mathcal L=\mathcal L(T^o,\widetilde\beta^{(q)})$. Assume that ${\bf P}=(P,\Delta_j(q))$.

If ${\bf Q}=(\mu_{G_l}P,\Delta_j(q))$, then by Proposition \ref{Prop-mutation4} there is a tile $G'_{[l]}$ of $G_{T'^o,\widetilde\beta}$ with diagonal labeled $\tau$ such that
$\pi(Q)=\{(\mu_{G'_{[l]}}(P'),\Delta')\mid (P',\Delta')\in \pi({\bf P})\}$ and $(P',\Delta')>(\mu_{G'_{[l]}}(P'),\Delta')$. In particular, ${\bf P}'>{\bf Q}'$. Next, we show ${\bf P}'$ covers ${\bf Q}'$. Otherwise, by Lemma \ref{Lem-cover} we have $G'_{[l]}$ is the last tile of $G_{T'^o,\widetilde\beta}$ and $\Delta'=\Delta'_1(q)$ is the first triangle incidents to $q$ in $T'^o$. As $\tau\neq \alpha,\alpha_1,\alpha_2,\alpha_3,\alpha_4$, we see that $G_l$ is the last tile of $G_{T^o,\widetilde\beta}$ and $m(\Delta';\alpha)=0$ and thus $\Delta_j(q)=\Delta'=\Delta_1(q)$ is the first triangle incidents to $q$ in $T^o$. By Lemma \ref{Lem-cover}, ${\bf P}=(P,\Delta_j(q))$ does not cover ${\bf Q}=(\mu_{G_{l}}P,\Delta_j(q))$, a contradiction. Therefore, ${\bf P}'$ covers ${\bf Q}'$. Assume that ${\bf P}'=(P',\Delta')$. Then ${\bf Q}'=(\mu_{G'_{[l]}}(P'),\Delta')$. From the previous case, we have $\Omega(P,\mu_{G_l}P)=\Omega(P',\mu_{G'_{[l]}}(P'))$. By Lemma \ref{Lem-var1}, we see that $\Omega({\bf P}',{\bf Q}')=\Omega({\bf P},{\bf Q})$.

If ${\bf Q}=(P,\Delta_{j-1}(q))$, as $\tau\neq \alpha,\alpha_1,\alpha_2,\alpha_3,\alpha_4$, we have $m(\Delta_{j-1}(q);\alpha)=m(\Delta_{j}(q);\alpha)=0$. Thus ${\bf P}'=(P',\Delta_j(q))$ for some $P'$ and ${\bf Q}'=(P',\Delta_{j-1}(q))$. In case neither $\Delta_{j-1}(q)$ nor $\Delta_{j}(q)$ is the first triangle incident to $q$ in $T'^o$, by Lemma \ref{Lem-cover2} ${\bf P}'$ covers ${\bf Q}'$. In case either $\Delta_{j-1}(q)$ or $\Delta_{j}(q)$ is the first triangle incident to $q$ in $T'^o$, as $\tau=\tau_{j-1}(q)\neq \alpha,\alpha_1,\alpha_2,\alpha_3,\alpha_4$, we see the last tiles of $G_{T^o,\widetilde\beta}$ and $G_{T'^o,\widetilde\beta}$ are the same and $E_1(p)\in P$ iff $E'_1(p)\in P'$. As ${\bf P}$ coves ${\bf Q}$, by Lemma \ref{Lem-cover2} we have ${\bf P}'$ covers ${\bf Q}'$. By Lemma \ref{Lem-var}, we have $\Omega({\bf P}',{\bf Q}')=\Omega(P,Q)$.

The situation that $\mathcal L=\mathcal L(T^o,\widetilde\beta^{(p,q)})$ is similar to the case $\mathcal L=\mathcal L(T^o,\widetilde\beta^{(q)})$.

(2) As $\tau=\alpha$, we have ${\bf m}({\bf P};\alpha)={\bf m}({\bf Q};\alpha)$. Suppose that ${\bf P}'\neq {\bf Q}'$.

We first consider the situation that $\mathcal L=\mathcal L(T^o,\widetilde\beta)$. Then ${\bf P}=P$ and ${\bf Q}=\mu_{G_l}P$. By Proposition \ref{Prop-mutation4} (2), we have $m_k(P)=-1$ and $G_{l}=G(k)$ for some $k$.
As ${\bf P}'\neq {\bf Q}'$, $k$ is in some ${\bf m}(P,\alpha)={\bf m}(\mu_{G_l}P,\alpha)$-pair. Assume that $(k,k')$ is an ${\bf m}(P,\alpha)$-pair, then $m_{k'}(\mu_{G_l}P)=m_{k'}(P)=1$. Thus the labels of ${\bf P}'\cap edge(G'(k'))$ and $P\cap edge(G(k))$ coincide, the labels of ${\bf Q}'\cap edge(G'(k'))$ and $\mu_{G_l}P\cap edge(G(k))$ coincide. Therefore ${\bf Q}'=\mu_{G'(k')}{\bf P}'$ and ${\bf Q}'> {\bf P}'$. Denote $G'_{[l]}=G'(k')$. We have
\begin{equation*}
m^{+}(P,G_l;\alpha)-n(G^+_l;\alpha)=\sum_{i>k}m_i(P),\hspace{5mm} m^{-}(P,G_l;\alpha)-n(G^-_l;\alpha)=\sum_{i<k}m_i(P),
\end{equation*}
\begin{equation*}
m^{+}({\bf P}',G'_{[l]};\alpha')-n(G'^+_{[l]};\alpha')=\sum_{i>k'}m_i({\bf P}'),\hspace{5mm} m^{-}({\bf P}',G'_{[l]};\alpha')-n({\bf G}'^-_{[l]};\alpha')=\sum_{i<k'}m_i(P').
\end{equation*}
It follows that
$$\Omega(P,\mu_{G_l}P)=d(\alpha)(\sum_{i>k}m_i(P)-\sum_{i<k}m_i(P)),\hspace{5mm }\Omega({\bf P}',{\bf Q}')=d(\alpha')(\sum_{i>k'}m_i({\bf P}')-\sum_{i<k'}m_i({\bf P}')).$$ As ${\bf m}(P,\alpha)=-{\bf m}({\bf P}',\alpha')$ and $(k,k')$ is an ${\bf m}(P,\alpha)$-pair, we obtain $\Omega(P,\mu_{G_l}P)=-\Omega({\bf P}',{\bf Q}')$.

We then consider the situation that $\mathcal L=\mathcal L(T^o,\widetilde\beta^{(q)})$. Assume that ${\bf P}=(P,\Delta_j(q))$.

If ${\bf Q}=(\mu_{G_l}P,\Delta_j(q))$, then by Proposition \ref{Prop-mutation4} (2) we have $m_k({\bf P})=-1$ and $G_{l}=G(k)$ for some $k$.
As ${\bf P}'\neq {\bf Q}'$, $k$ is in some ${\bf m}({\bf P},\alpha)$-pair. Assume that $(k,k')$ is an ${\bf m}({\bf P},\alpha)$-pair, then $m_{k'}({\bf Q})=m_{k'}({\bf P})=1$. Assume ${\bf P}'=(P',\Delta')$.

In case $k'\neq \eta_\alpha+1$, we have ${\bf Q}'=(\mu_{G'(k')}(P'),\Delta')>{\bf P}'$. We claim ${\bf Q}'$ covers ${\bf P}'$. Otherwise, by Lemma \ref{Lem-cover} $G'(k')$ is the last tile of $G_{T'^o,\widetilde\beta}$ and $\Delta'=\Delta'_1(q)$. It follows $k'=\eta_\alpha$ and $m_{\eta_\alpha}({\bf P}')=-1, m_{\eta_\alpha+1}({\bf P}')=1$. Thus $(k'=\eta_\alpha,\eta_\alpha+1)$ is an ${\bf m}({\bf P},\alpha)$-pair, contradicts to $(k,k')$ is an ${\bf m}({\bf P},\alpha)$-pair. Therefore, ${\bf Q}'$ covers ${\bf P}'$. Similar to the case $\mathcal L=\mathcal L(T^o,\widetilde\beta)$, we have $\Omega({\bf P},{\bf Q})=-\Omega({\bf P}',{\bf Q}')$.

In case $k'=\eta_\alpha+1$, we have $m_{\eta_\alpha+1}({\bf P})=1$, we may assume that $\tau_{j-1}(q)=\alpha_1, \tau_j(q)=\alpha_4$. Thus, $\Delta'=\{\alpha',\alpha_1,\alpha_2\}$ and ${\bf Q}'=(P',\Delta'')$, where $\Delta''=\{\alpha',\alpha_3,\alpha_4\}$, see Figure \ref{Fig-proof3}. We claim ${\bf Q}'$ covers ${\bf P}'$. Otherwise, by Lemma \ref{Lem-cover2}, we have either $\Delta'=\Delta'_1(q), E_2'(q)\in P'$ or $\Delta''=\Delta'_1(q), E_1'(q)\in P'$. In both cases, we have $\Delta_j(q)=\Delta_1(q)$ and $\tau_{i_c}=\alpha$. If $\Delta'=\Delta'_1(q), E_2'(q)\in P'$, then $E'_2(q)$ is labeled $\alpha'$. If $\Delta''=\Delta'_1(q), E_1'(q)\in P'$, then $E'_1(q)$ is labeled $\alpha'$. Thus $m_{\eta_\alpha}({\bf P'})=1$ and $m_{\eta_\alpha}({\bf P})=-1$. It follows that $G_l=G_c$ is the last tile. It contradicts to ${\bf P}=(P,\Delta_j(q))$ covers ${\bf Q}=(\mu_{G_l}P,\Delta_j(q))$. Therefore, ${\bf Q}'$ covers ${\bf P}'$. Similar to the case $\mathcal L=\mathcal L(T^o,\widetilde\beta)$, we have $\Omega({\bf P},{\bf Q})=-\Omega({\bf P}',{\bf Q}')$.
\begin{figure}[h]
\centerline{\includegraphics{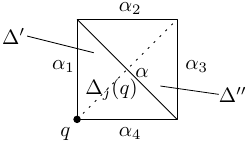}}

\caption{}\label{Fig-proof3}

\end{figure}

If ${\bf Q}=(P,\Delta_{j-1}(q))$, then $m_{\eta_\alpha+1}({\bf P})=-1$. As ${\bf P}'\neq {\bf Q}'$, $\eta_\alpha+1$ is in some ${\bf m}({\bf P},\alpha)$-pair. Assume that $(k,\eta_\alpha+1)$ is an ${\bf m}({\bf P},\alpha)$-pair and ${\bf P}'=(P',\Delta')$. Then ${\bf Q}'=(\mu_{G'(k')}P',\Delta')$. Dual to the case $k'=\eta_\alpha+1$ for ${\bf Q}=(\mu_{G_l}P,\Delta_j(q))$, we can prove ${\bf Q}'$ covers ${\bf P}'$ and $\Omega({\bf P},{\bf Q})=-\Omega({\bf P}',{\bf Q}')$.

The situation that $\mathcal L=\mathcal L(T^o,\widetilde\beta^{(p,q)})$ is similar to the case $\mathcal L=\mathcal L(T^o,\widetilde\beta^{(q)})$.
\end{proof}

\begin{thm}\label{thm-compatible2}
Under the {\bf Assumption}, let ${\bf P}=P,(P,\Delta_j(q))$ or $(\Delta_i(p),P,\Delta_j(q))$. Assume that ${\bf P}$ covers some ${\bf Q}$ and related by $\alpha_k$ for some $k\in \{1,2,3,4\}$ with $\alpha_k\neq \alpha_{k-1},\alpha_{k+1}$. Suppose that all $\alpha$-mutable edges of $P$ are labeled $\alpha_{k-1}$ or $\alpha_{k+1}$.
\begin{enumerate}[$(1)$]
\item Assume that $k\in\{1,3\}$, denote ${\bf P}(0,\cdots,0)={\bf P}'$ and ${\bf Q}(0,\cdots, 0)={\bf Q}'$. Then except for the following cases,

 $\bullet$ ${\bf P}=(P,\Delta_j(q)), {\bf Q}=(\mu_{G_l}P,\Delta_j(q))$ such that $\tau_j(q)=\alpha$ and $(P,\Delta_{j+1}(q))$ covers ${\bf P}$,

 $\bullet$ ${\bf P}=(\Delta_i(p),P,\Delta_j(q)), {\bf Q}=(\Delta_i(p),\mu_{G_l}P,\Delta_j(q))$ such that $\tau_i(p)\neq \alpha, \tau_j(q)=\alpha$ and $(\Delta_i(p),P,\Delta_{j+1}(q))$ covers ${\bf P}$,

 $\bullet$ ${\bf P}=(\Delta_i(p),P,\Delta_j(q)), {\bf Q}=(\Delta_i(p),\mu_{G_l}P,\Delta_j(q))$ such that $\tau_j(q)\neq \alpha, \tau_i(p)=\alpha$ and $(\Delta_{i+1}(p),P,\Delta_{j}(q))$ covers ${\bf P}$,

 $\bullet$ ${\bf P}=(\Delta_i(p),P,\Delta_j(q)), {\bf Q}=(\Delta_i(p),\mu_{G_l}P,\Delta_j(q))$ such that $\tau_i(p)=\tau_j(q)=\alpha$, $(\Delta_{i+1}(p),P,\Delta_{j}(q))$ covers ${\bf P}$ and $(\Delta_{i+1}(p),P,\Delta_{j+1}(q))$ covers $(\Delta_{i+1}(p),P,\Delta_{j}(q))$,

 $\bullet$ ${\bf P}=(\Delta_{i+1}(p),P,\Delta_j(q)), {\bf Q}=(\Delta_i(p),P,\Delta_j(q))$ such that $\tau_j(q)=\alpha$ and $(\Delta_{i+1}(p),P,\Delta_{j+1}(q))$ covers ${\bf P}$,

 $\bullet$ ${\bf P}=(\Delta_{i}(p),P,\Delta_{j+1}(q)), {\bf Q}=(\Delta_i(p),P,\Delta_j(q))$ such that $\tau_i(p)=\alpha$ and $(\Delta_{i+1}(p),P,\Delta_{j+1}(q))$ covers ${\bf P}$,

we have ${\bf P}'$ covers ${\bf Q}'$ and related by $\alpha_k$. Moreover, we have ${\bf P}={\bf P}'(1,\cdots,1)$ covers ${\bf Q}={\bf Q}'(1,\cdots,1)$ and related by $\alpha_k$, and $$\Omega({\bf P}, {\bf Q}_1)=\Omega({\bf P}', {\bf Q}').$$
\item Assume that $k\in\{2,4\}$, denote ${\bf P}(1,\cdots,1)={\bf P}'$ and ${\bf Q}(1,\cdots, 1)={\bf Q}'$. Then except for the following cases,

 $\bullet$ ${\bf P}=(P,\Delta_j(q)), {\bf Q}=(\mu_{G_l}P,\Delta_j(q))$ such that $\tau_{j-1}(q)=\alpha$ and ${\bf P}$ covers $(P,\Delta_{j-1}(q))$,

 $\bullet$ ${\bf P}=(\Delta_i(p),P,\Delta_j(q)), {\bf Q}=(\Delta_i(p),\mu_{G_l}P,\Delta_j(q))$ such that $\tau_{i-1}(p)\neq \alpha, \tau_{j-1}(q)=\alpha$ and ${\bf P}$ covers $(\Delta_i(p),P,\Delta_{j-1}(q))$,

 $\bullet$ ${\bf P}=(\Delta_i(p),P,\Delta_j(q)), {\bf Q}=(\Delta_i(p),\mu_{G_l}P,\Delta_j(q))$ such that $\tau_{j-1}(q)\neq \alpha, \tau_{i-1}(p)=\alpha$ and ${\bf P}$ covers $(\Delta_{i-1}(p),P,\Delta_{j}(q))$,

 $\bullet$ ${\bf P}=(\Delta_i(p),P,\Delta_j(q)), {\bf Q}=(\Delta_i(p),\mu_{G_l}P,\Delta_j(q))$ such that $\tau_{i-1}(p)=\tau_{j-1}(q)=\alpha$, ${\bf P}$ covers $(\Delta_{i-1}(p),P,\Delta_{j}(q))$ and $(\Delta_{i-1}(p),P,\Delta_{j}(q))$ covers $(\Delta_{i-1}(p),P,\Delta_{j-1}(q))$,

 $\bullet$ ${\bf P}=(\Delta_{i+1}(p),P,\Delta_j(q)), {\bf Q}=(\Delta_i(p),P,\Delta_j(q))$ such that $\tau_{j-1}(q)=\alpha$ and ${\bf P}$ covers $(\Delta_{i+1}(p),P,\Delta_{j-1}(q))$,

 $\bullet$ ${\bf P}=(\Delta_{i}(p),P,\Delta_{j+1}(q)), {\bf Q}=(\Delta_i(p),P,\Delta_j(q))$ such that $\tau_{i-1}(p)=\alpha$ and ${\bf P}$ covers $(\Delta_{i-1}(p),P,\Delta_{j+1}(q))$,

 we have ${\bf P}'$ covers ${\bf Q}'$ and related by $\alpha_k$. Moreover, assume that ${\bf P}_1={\bf P}'(0,\cdots,0), {\bf Q}_1={\bf Q}'(0,\cdots,0)$, then ${\bf P}_1$ covers ${\bf Q}_1$ and related by $\alpha_k$, and $$\Omega({\bf P}_1, {\bf Q}_1)=\Omega({\bf P}', {\bf Q}').$$
\end{enumerate}
\end{thm}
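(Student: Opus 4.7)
The plan is to prove part $(1)$ in detail, since part $(2)$ follows by the same argument with the roles of $\alpha_{k\pm 1}$ and the coordinate assignments $0 \leftrightarrow 1$ in Remarks \ref{Rem-2}, \ref{Rem-3}, \ref{Rem-4} interchanged. For part $(1)$, I would proceed by a case analysis on the type of cover: (a) a tile-twist cover ($P > \mu_{G_l}P$ with the diagonal of $G_l$ labeled $\alpha_k$); (b) a triangle-change cover at $q$ (with $\tau_j(q)=\alpha_k$); or (c) a triangle-change cover at $p$ (with $\tau_i(p)=\alpha_k$). Since by Theorem \ref{thm:pi} the partition bijection $\pi$ sends ${\bf P}$ to a Boolean lattice $\{0,1\}^r$, the key observation is that the hypothesis ``all $\alpha$-mutable edges of $P$ are labeled $\alpha_{k\pm 1}$'' forces the special corner ${\bf P}(0,\ldots,0)$ (for $k\in\{1,3\}$) to be the unique element of $\pi({\bf P})$ whose boundary edges on each $G'(k_\ell)$ are labeled $\alpha_{k\pm 1}$, which is exactly the configuration needed to transport the $\alpha_k$-mutability to $\mathcal L'$.

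In case (a), Proposition \ref{Prop-mutation4} (3) supplies a tile $G'_{[l]}$ of $G_{T'^o,\widetilde\beta}$ with diagonal labeled $\alpha_k$ such that any $P'\in\varphi_\alpha^{T^o}(P)$ with the $\alpha_{k\pm 1}$-configuration at the $\alpha'$-tiles can twist on $G'_{[l]}$ into $\varphi_\alpha^{T^o}(\mu_{G_l}P)$. The construction of $\pi$ then identifies $\mu_{G'_{[l]}}{\bf P}'$ with ${\bf Q}'={\bf Q}(0,\ldots,0)$, and Proposition \ref{Prop-mutation4} (3)(a)(b) yields the equality $\Omega({\bf P},{\bf Q})=\Omega({\bf P}',{\bf Q}')$ of gradient numbers term-by-term. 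That ${\bf P}'$ actually covers ${\bf Q}'$ in $\mathcal L'$ (not merely ${\bf P}'>{\bf Q}'$) follows from Lemma \ref{Lem-cover} for $\mathcal L=\mathcal L(T^o,\widetilde\beta)$, and from Lemmas \ref{Lem-cover}, \ref{Lem-cover2}, \ref{Lem-cover1}, \ref{Lem-cover3} for the tagged lattices — except precisely when $G'_{[l]}$ is the last tile and $\Delta'=\Delta'_1(q)$ (or symmetrically at $p$), which are exactly the exceptional hypotheses isolated in Lemmas \ref{lem:cop}, \ref{lem:cop1}. Cases (b) and (c) are handled by invoking $\phi_\alpha^{T^o,q}$ and $\phi_\alpha^{T^o,p}$: Lemma \ref{Lem-var1} ensures that the $m$-values at all other arcs (and hence the pair structure of Definitions \ref{Def-pair}, \ref{Def-pair1}) are invariant, so again ${\bf P}'={\bf P}(0,\ldots,0)$ and ${\bf Q}'={\bf Q}(0,\ldots,0)$ are well-defined. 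The cover ${\bf P}'$ covers ${\bf Q}'$ is then the content of the diamond Lemmas \ref{lem:diamond}, \ref{lem:diamond1}, \ref{lem:diamond4}, \ref{lem:diamond5}, \ref{lem:diamond2}, \ref{lem:diamond3}, each of which produces the required $\alpha_k$-edge in $\mathcal L'$ provided an auxiliary cover is present. The exceptional cases listed are exactly those where this auxiliary cover fails, i.e., the hypotheses of Lemmas \ref{lem:cop}, \ref{lem:cop1}, \ref{lem:cop2}, \ref{lem:cop3}, and in those situations the weights $m_{\eta_\alpha}({\bf P})$ or $m_1({\bf P})$ are forced to equal $1$, explaining why the corner $(0,\ldots,0)$ is degenerate.

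The main obstacle is the bookkeeping required to verify, in each non-exceptional case, that the list of ${\bf m}({\bf P};\alpha)$-pairs coincides with the list of ${\bf m}({\bf Q};\alpha)$-pairs, so that the index sets $\{k_1,\ldots,k_r\}$ parameterizing $\pi({\bf P})$ and $\pi({\bf Q})$ agree and the identification ${\bf P}(\vec 0)\leftrightarrow{\bf Q}(\vec 0)$ is genuinely a covering in $\mathcal L'$ of the same ``type'' as the cover ${\bf P}$ covers ${\bf Q}$. This compatibility is delicate when the diagonal of $G_l$ is $\alpha_k$, because changing $P$ to $\mu_{G_l}P$ alters the $\alpha_k$-labeled multiset of boundary edges on neighboring tiles and could in principle introduce or remove ${\bf m}$-pairs; the hypothesis that the $\alpha$-mutable edges of $P$ lie only among $\alpha_{k\pm 1}$ is precisely what rules this out, by Proposition \ref{Prop-mutation4} (3)(a) together with Proposition \ref{Lem-parvar}. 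Once this invariance is secured, the identification $\Omega({\bf P},{\bf Q})=\Omega({\bf P}',{\bf Q}')$ reduces to the quantitative equalities already established in Propositions \ref{Prop-mutation4}, \ref{Lem-parvar} and Lemmas \ref{Lem-var}, \ref{Lem-var1}.
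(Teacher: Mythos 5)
Your overall skeleton — case analysis on the cover type, transporting tile-twist covers via Proposition \ref{Prop-mutation4}(3), and identifying the $(0,\ldots,0)$ corner as the one carrying the $\alpha_{k\pm 1}$-configuration — matches the paper's proof of Theorem \ref{thm-compatible2}. However, there are two genuine misunderstandings.

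First, you have the characterization of the exceptional cases reversed. You write that ``the exceptional cases listed are exactly those where this auxiliary cover fails, i.e., the hypotheses of Lemmas \ref{lem:cop}, \ref{lem:cop1}, \ref{lem:cop2}, \ref{lem:cop3}.'' In fact it is the opposite: the bullet-listed exceptional hypotheses in the theorem statement are the cases where the auxiliary cover \emph{is present} (e.g.\ ``$\tau_j(q)=\alpha$ and $(P,\Delta_{j+1}(q))$ covers ${\bf P}$''). The lemmas \ref{lem:cop}--\ref{lem:cop3} cover the complementary (non-exceptional) situation, where $\tau_j(q)=\alpha$ but the auxiliary cover does \emph{not} exist, and their conclusions ($G_l \neq G_c$, $m_{\eta_\alpha}({\bf P})=1$, $(\eta_\alpha,\eta_\alpha+1)$ is an ${\bf m}$-pair) are what make the construction of $\pi$ transport the $\alpha_k$-cover successfully. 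This is precisely the paper's Case 1.2. Conversely, when the auxiliary cover is present, the corner ${\bf P}(0,\ldots,0)$ can degenerate and the theorem's conclusion need not hold; those are the cases excluded, and they are handled elsewhere (via the diamond lemmas, in the proof of Lemma \ref{lem-a}).

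Second, the role you assign to the diamond Lemmas \ref{lem:diamond}, \ref{lem:diamond1}--\ref{lem:diamond5} is off. Those lemmas describe the cover structure inside the \emph{source} lattice $\mathcal L$, not how to produce an $\alpha_k$-related cover in $\mathcal L'$. The paper does not invoke them in the proof of Theorem \ref{thm-compatible2}; the cover relation ${\bf P}'$ covers ${\bf Q}'$ is obtained directly from the construction of $\pi$ together with Lemmas \ref{Lem-cover}, \ref{Lem-cover2}, \ref{Lem-cover1}, \ref{Lem-cover3}, after the lem:cop conclusions rule out the degenerate configuration. The diamond lemmas enter only later (Lemma \ref{lem-a}, proof case 2, and Lemmas \ref{lem:covera}, \ref{lem:covera4}), where they are used to show the exceptional-case elements are $\sim$-equivalent and $\approx$-equivalent to elements that \emph{do} satisfy the hypotheses of Theorem \ref{thm-compatible2}.
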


\begin{proof}
We shall prove (1) as (2) can be proved similarly. 

We first consider the situation that ${\bf P}=P$. Then ${\bf Q}=\mu_{G_l}P$ for some tile $G_l$. As all the $\alpha$-mutable edges in $P$ are labeled $\alpha_{2}$ or $\alpha_{4}$, we see that all the $\alpha$-mutable edges in $\mu_{G_l}P$ and $\alpha'$-mutable edges in ${\bf P}'$ are labeled $\alpha_2$ or $\alpha_4$. By Proposition \ref{Prop-mutation4} (3), there is a tile $G'_{[l]}$ of $G_{T'^o,\widetilde\beta}$ with diagonal labeled $\alpha_1$ such that ${\bf P}'$ can twist on $G'_{[l]}$ and ${\bf P}'>\mu_{G'_{[l]}}{\bf P}'\in \varphi_{\alpha}^{T^o}(\mu_{G_l}P)$. It follows that all the $\alpha'$-mutable edges in $\mu_{G'_{[l]}}{\bf P}'$ are labeled $\alpha_2$ or $\alpha_4$. Therefore, we have $\mu_{G'_{[l]}}{\bf P}'\in \pi(\mu_{G_l}P)$ and $\mu_{G'_{[l]}}{\bf P}'=(\mu_{G_l}P)(0,\cdots,0),(\mu_{G'_{[l]}}{\bf P}')(1,\cdots,1)=\mu_{G_l}(P)$. By Proposition \ref{Prop-mutation4} (3), we have $\Omega({\bf P}';G'_{[l]})=\Omega(P;G_{l})$.

We then consider the situation that ${\bf P}=(P,\Delta_j(q))$.

Case 1. ${\bf Q}=(\mu_{G_l}P,\Delta_j(q))$. As the case that $\tau_j(q)=\alpha$ and $(P,\Delta_{j+1}(q))$ covers ${\bf P}$ is excepted, we shall consider the following cases:

Case 1.1. $\tau_{j}(q)\neq \alpha$;
Case 1.2. $\tau_{j}(q)=\alpha$ but $(P,\Delta_{j+1}(q))$ does not cover ${\bf P}$.

In Case 1.1, we have either $m(\Delta_j(q);\alpha)\geq 0$ or $m(\Delta_j(q);\alpha)=-1$ with $\tau_{[j]}(q)\in \{\alpha_2,\alpha_4\}$, the case can be proved similar to the case that ${\bf P}=P$.

In Case 1.2, by Lemma \ref{lem:cop} we have $G_l\neq G_c$ and $(\eta_\alpha,\eta_\alpha+1)$ is an ${\bf m}({\bf P};\alpha)$ and ${\bf m}({\bf Q};\alpha)$-pair. Assume that ${\bf P}'=(P',\Delta')$. Thus $P'\cap edge(G'(\eta_\alpha))$ is labeled $\alpha_1,\alpha_3$. By Proposition \ref{Prop-mutation4} (3), there is a tile $G'_{[l]}$ of $G_{T'^o,\widetilde\beta}$ with diagonal labeled $\alpha_1$ such that ${\bf P}'$ can twist on $G'_{[l]}$ and ${\bf P}'>\mu_{G'_{[l]}}{\bf P}'\in \varphi_{\alpha}^{T^o}(\mu_{G_l}P)$. Since $G_l\neq G_c$, we have $\mu_{G'_{[l]}}P'\cap edge(G'(\eta_\alpha))$ is labeled $\alpha_1,\alpha_3$. Thus $(\mu_{G'_{[l]}}P',\Delta')\in \pi({\bf Q})$ and ${\bf Q}'=(\mu_{G'_{[l]}}P',\Delta')$. By Proposition \ref{Prop-mutation4} (3), we have $\Omega({\bf P}';G'_{[l]})=\Omega(P;G_{l})$.

Case 2. ${\bf Q}=(P,\Delta_{j-1}(q))$. Since all $\alpha$-mutable edges of $P$ are labeled $\alpha_{2}$ or $\alpha_{4}$, this case can be proved similar to the case that ${\bf P}=P$.

The situation that ${\bf P}=(\Delta_i(p), P,\Delta_j(q))$ can be proved similarly to situation that ${\bf P}=(P,\Delta_j(q))$ by using Proposition \ref{Prop-mutation4} (3) and Lemmas \ref{lem:cop1}, \ref{lem:cop2}, \ref{lem:cop3}.
\end{proof}

The case that $\alpha_k=\alpha_{k-1}$ or $\alpha_{k+1}$ need more careful discussion.

\begin{thm}\label{thm-compatible3}
Under the {\bf Assumption}, assume that ${\bf P}$ covers ${\bf Q}$ and related by $\alpha_k$ for some $k\in \{1,2,3,4\}$ with $\alpha_k=\alpha_{k-1}$ or $\alpha_{k+1}$, without loss of generality, we may assume $k=1$ and $\alpha_1=\alpha_2$.
Then one of the following holds:
\begin{enumerate}[$(i)$]
\item there exist ${\bf P}_0, {\bf Q}_0\in \mathcal L$ and ${\bf P}',{\bf R}',{\bf Q}'\in \mathcal L'$ satisfying
\begin{enumerate}[$(1)$]
\item ${\bf P}_0$ and ${\bf P}$ are related by a sequence of twists at $\alpha$;
\item ${\bf Q}_0$ and ${\bf Q}$ are related by a sequence of twists at $\alpha$;
\item ${\bf P}_0$ covers ${\bf Q}_0$ and related by $\alpha_1$;
\item ${\bf P}'={\bf P}_0(0,\cdots,0)$ or ${\bf P}_0(1,\cdots,1)$ and ${\bf Q}'={\bf Q}_0(0,\cdots,0)$ or ${\bf Q}_0(1,\cdots,1)$;
\item ${\bf P}_0={\bf P}'(0,\cdots,0)$ or ${\bf P}'(1,\cdots,1)$ and ${\bf Q}_0={\bf Q}'(0,\cdots,0)$ or ${\bf Q}'(1,\cdots,1)$;
\item ${\bf P}'$ covers ${\bf R}'$ and ${\bf R}'$ covers ${\bf Q}'$.
\end{enumerate}

Moreover, we have
$\Omega({\bf P}_0,{\bf Q}_0)=\Omega({\bf P}',{\bf R}')+\Omega({\bf R}',{\bf Q}').$

\item there exist ${\bf P}_0,{\bf R}_0, {\bf Q}_0\in \mathcal L$ and ${\bf P}',{\bf Q}'\in \mathcal L'$ satisfying
\begin{enumerate}[$(1)$]
\item ${\bf P}_0$ and ${\bf P}$ are related by a sequence of twists at $\alpha$;
\item ${\bf Q}_0$ and ${\bf Q}$ are related by a sequence of twists at $\alpha$;
\item ${\bf P}_0$ covers ${\bf R_0}$ and ${\bf R}_0$ covers ${\bf Q}_0$;
\item ${\bf P}'={\bf P}_0(0,\cdots,0)$ or ${\bf P}_0(1,\cdots,1)$ and ${\bf Q}'={\bf Q}_0(0,\cdots,0)$ or ${\bf Q}_0(1,\cdots,1)$;
\item ${\bf P}_0={\bf P}'(0,\cdots,0)$ or ${\bf P}'(1,\cdots,1)$ and ${\bf Q}_0={\bf Q}'(0,\cdots,0)$ or ${\bf Q}'(1,\cdots,1)$;
\item ${\bf P}'$ covers ${\bf Q}'$.
\end{enumerate}

Moreover, we have
$\Omega({\bf P}_0,{\bf R}_0)+\Omega({\bf R}_0,{\bf Q}_0)=\Omega({\bf P}',{\bf Q}').$

\end{enumerate}
\end{thm}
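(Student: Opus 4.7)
The plan is to exploit Proposition~\ref{dProp-mutation4}, which was tailor-made for the degenerate configuration $\alpha_1=\alpha_2$. First I would reduce to the matching-twist case: if ${\bf P}$ covers ${\bf Q}$ via a triangle rotation at $p$ or $q$ whose shared side is labeled $\alpha_1$, the argument is parallel, using Lemmas~\ref{lem:diamond4} and \ref{lem:diamond5} in place of Proposition~\ref{dProp-mutation4} and tracking the triangle partition bijection $\phi^{T^o,p}_\alpha$ (resp.\ $\phi^{T^o,q}_\alpha$) on the degenerate triangle. So I may assume $P>Q:=\mu_{G_l}P$ for some tile $G_l$ of $G_{T^o,\widetilde\beta}$ with diagonal labeled $\alpha_1$, and after flipping orientation may assume $N(G_l),W(G_l)$ are labeled $\alpha$, putting me in the setting of Proposition~\ref{dProp-mutation4}.

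Next I would apply the trichotomy from Proposition~\ref{dProp-mutation4}: the pair $(m_k({\bf P}),m_{k+1}({\bf P}))$ must be one of $(-1,1)$, $(0,0)$, or $(0,1)$.

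In the first two subcases, the proposition produces, for every $Q'\in\varphi_\alpha^{T^o}(Q)$ (resp.\ $P'\in\varphi_\alpha^{T^o}(P)$), a length-two chain in $\mathcal L'$ whose two non-endpoint elements both lie in $\varphi_\alpha^{T^o}(P)$ (resp.\ $\varphi_\alpha^{T^o}(Q)$). Taking ${\bf P}_0:={\bf P}$ and ${\bf Q}_0:={\bf Q}$, I would identify ${\bf P}',{\bf R}',{\bf Q}'$ with the three elements along the chain and verify, using the coordinate description of $\pi({\bf P}_0)$ and $\pi({\bf Q}_0)$ in Remarks~\ref{Rem-2}, \ref{Rem-3}, \ref{Rem-4}, that ${\bf P}'$ and ${\bf Q}'$ sit at the extremal corners $\vec 0$ or $\vec 1$. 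This realizes case~(i), with $\Omega({\bf P}_0,{\bf Q}_0)=\Omega({\bf P}',{\bf R}')+\Omega({\bf R}',{\bf Q}')$ coming directly from Proposition~\ref{dProp-mutation4}(1)(d) and (2)(c). In the $(0,1)$ subcase, Proposition~\ref{dProp-mutation4}(3) instead supplies a single covering ${\bf P}'>\mu_{G'}{\bf P}'$ in $\mathcal L'$ and a two-step chain $\mu_{G'(k+1)}{\bf P}'>{\bf P}'>\mu_{G'}{\bf P}'>\mu_{G'(k)}\mu_{G'}{\bf P}'$ on the $\mathcal L$ side, whose middle element corresponds to an $\alpha$-twist of ${\bf P}$ or ${\bf Q}$. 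Replacing ${\bf P},{\bf Q}$ by suitably chosen $\alpha$-twists ${\bf P}_0,{\bf R}_0,{\bf Q}_0$ realizes case~(ii), and the $\Omega$ identity $\Omega({\bf P}_0,{\bf R}_0)+\Omega({\bf R}_0,{\bf Q}_0)=\Omega({\bf P}',{\bf Q}')$ is Proposition~\ref{dProp-mutation4}(3)(c).

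The main obstacle will be the bookkeeping at the interface between the lattice $\pi({\bf P}_0)$ and the outputs of Proposition~\ref{dProp-mutation4}: one must confirm that the elements produced lie at the prescribed extremal corners $\vec 0$ or $\vec 1$, which amounts to checking that the tile edges selected by Proposition~\ref{dProp-mutation4} coincide with those selected by the ${\bf m}({\bf P}_0;\alpha)$-pairing rules of Section~\ref{sec-pb}. A secondary complication is that in the tagged variants $\mathcal L(T^o,\widetilde\beta^{(q)})$ and $\mathcal L(T^o,\widetilde\beta^{(p,q)})$, the triangle components $\Delta_i(p),\Delta_j(q)$ can themselves have a side labeled $\alpha_1$, triggering additional interaction with the degenerate geometry; these sub-cases are handled by invoking Lemmas~\ref{lem:cop1}--\ref{lem:cop3} and \ref{lem:diamond2}--\ref{lem:diamond3} in parallel with the matching case, so that the classification into (i) and (ii) proceeds uniformly.
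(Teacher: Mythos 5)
Your proposal has the right overall skeleton (reduce to the matching-twist case, invoke the trichotomy $(m_k,m_{k+1})\in\{(-1,1),(0,0),(0,1)\}$ from Proposition~\ref{dProp-mutation4}, read off the $\Omega$-identities from that proposition), but it omits the one step that makes the theorem's extremal-corner conditions (i)(4)--(5), (ii)(4)--(5) actually hold: the normalization of ${\bf P}$ and ${\bf Q}$ to $\alpha$-twist representatives. In the first two subcases you write ``taking ${\bf P}_0:={\bf P}$ and ${\bf Q}_0:={\bf Q}$'' and then propose to \emph{verify} that the element $\mu_{G'(k+1)}\mu_{G'}Q'$ produced by Proposition~\ref{dProp-mutation4} lands at a corner $\vec 0$ or $\vec 1$ of $\pi({\bf P})$. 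That verification fails in general: if $P$ has $\alpha$-mutable tiles whose two edges in $P$ are labeled $\alpha_1,\alpha_4$ at one tile and $\alpha_2,\alpha_3$ at another, the element the proposition hands you is \emph{not} extremal in $\pi({\bf P})$, because the ``other'' $\alpha'$-tiles inherit their labels from $P$ and are therefore mixed. The paper fixes this by first twisting $P$ (and simultaneously $Q$) on every tile $G_a$ with diagonal $\alpha$ at which the two matched edges are $\alpha_1,\alpha_4$, obtaining $P_0,Q_0$ whose $\alpha$-mutable edges are \emph{uniformly} $\alpha_1,\alpha_3$; only then does $\mu_{G'(k+1)}\mu_{G'}Q_0(1,\cdots,1)$ coincide with $P_0(1,\cdots,1)$. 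The clauses (i)(1),(2) and (ii)(1),(2) in the statement exist precisely to license this replacement, and the proof cannot be run with ${\bf P}_0={\bf P}$ in general.

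A secondary problem: you assign the $(0,1)$ subcase of the ordinary-arc situation to conclusion~(ii), but in the paper all three subcases of the matching-twist in $\mathcal L(T^o,\widetilde\beta)$ yield conclusion~(i) (a length-two chain on the $\mathcal L'$ side). Conclusion~(ii) -- a length-two chain on the $\mathcal L$ side -- arises only in the tagged variants, for instance when the flipped tile is $G_c$ and the current triangle is not $\Delta_1(q)$, where Lemma~\ref{Lem-cover} forces a gap between $(P_0,\Delta_j(q))$ and $(Q_0,\Delta_j(q))$ in $\mathcal L(T^o,\widetilde\beta^{(q)})$. That dichotomy is governed by where the ``cover obstruction'' sits, not by the trichotomy of $(m_k,m_{k+1})$, and your classification conflates the two.
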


\begin{proof}
We first consider the situation that ${\bf P}=P$. Assume that $Q=\mu_{G_l}P$ and the conditions in Proposition \ref{dProp-mutation4} hold.

In case $(m_k(P),m_{k+1}(P))=(-1,1)$, do twists for $P$ on the tiles $G_a$ with diagonal labeled $\alpha$ and $P\cap Edge(G_a)$ are labeled $\alpha_1,\alpha_4$, we obtain a perfect matching $P_0$. Thus all $\alpha$-twist-able edges in $P_0$ and $Q_0$ are labeled $\alpha_1,\alpha_3$. Since $m_k(P)=-1$, we have $\tau_{i_{l-1}}=\alpha$. We see that $Q$ can twist on these tiles $G_a$. Do the same twists for $Q$ on these $G_a$, we obtain $Q_0$. Therefore, $Q_0=\mu_{G_l}P_0<P_0$.
Let $Q'=Q_0(1,\cdots,1)$. By Proposition \ref{dProp-mutation4}, we have $P'=\mu_{G'(k+1)}\mu_{G'}Q'\in \varphi^{T^o}_\alpha(P_0)$. Since all $\alpha$-twist-able edges in $P_0$ are labeled $\alpha_1,\alpha_3$ and all $\alpha'$-twist-able edges in $P'$ are labeled $\alpha_1,\alpha_3$, we have $P'=P_0(1,\cdots,1)$. Let $R'=\mu_{G'}Q'$. By Proposition \ref{dProp-mutation4} (1) (c), we have $\Omega(P_0, Q_0)=\Omega( P', R')+\Omega( R',Q').$

The case $(m_k(P),m_{k+1}(P))=(0,0)$ is similar to the case that $(m_k(P),m_{k+1}(P))=(-1,1)$ as $(m_k(Q),m_{k+1}(Q))=(1,-1)$.

In case $(m_k(P),m_{k+1}(P))=(0,1)$, do twist for $P$ on the tiles $G_a$ with diagonal labeled $\alpha$ such that $P\cap Edge(G_a)$ are labeled $\alpha_1,\alpha_4$, we obtain a perfect matching $P_0$. Thus all $\alpha$-twist-able edges in $P_0$ and $Q_0$ are labeled $\alpha_1,\alpha_3$. Since $m_k(P)=0$, we see that $Q$ can twist on these tiles $G_a$. Do the same twists for $Q$ on these $G_a$, we obtain $Q_0$. Therefore, $Q_0=\mu_{G_l}P_0<P_0$.
Let $P'=P_0(1,\cdots,1)$. Then $P'>\mu_{G'(k+1)}P'$. By Proposition \ref{dProp-mutation4}, $\mu_{G'(k+1)}P'$ can twist on $G'$ and $Q':=\mu_{G'}\mu_{G'(k+1)}P'\in \varphi^{T^o}_\alpha(Q_0)$. Since all $\alpha$-twist-able edges in $Q_0$ are labeled $\alpha_1,\alpha_3$ and all $\alpha'$-twist-able edges in $Q'$ are labeled $\alpha_1,\alpha_3$, we have $Q'=Q_0(1,\cdots,1)$. Let $R'=\mu_{G'(k+1)}P'$. By Proposition \ref{dProp-mutation4} (1) (c), we have $\Omega(P_0, Q_0)=\Omega( P', R')+\Omega( R',Q').$

We then consider the situation that ${\bf P}=(P,\Delta_j(q))$. We have the following two cases: ${\bf Q}=(P,\Delta_{j-1}(q))$ or $(\mu_{G_l}P,\Delta_j(q))$.

Case 1. ${\bf Q}=(P,\Delta_{j-1}(q))$. Since $T'$ contains no arc tagged notched at $q$, we have $\tau_{[j-1]}(q)=\tau_{[j]}(q)=\alpha, \tau_{j-2}(q)=\alpha_3, \tau_{j}(q)=\alpha_4$. Do twist for $P$ on the tiles $G_a(\neq G_c)$ with diagonal labeled $\alpha$ such that $P\cap Edge(G_a)$ are labeled $\alpha_1,\alpha_4$, we obtain a perfect matching $P_0$. Let ${\bf P}_0=(P_0,\Delta_j(q))$ and ${\bf Q}_0=(P_0,\Delta_{j-1}(q))$. Since $G_a\neq G_c$, we have ${\bf P}_0$ and ${\bf P}$ are related by a sequence of twists at $\alpha$, ${\bf Q}_0$ and ${\bf Q}$ are related by a sequence of twists at $\alpha$ and ${\bf P}_0$ covers ${\bf Q}_0$. Let ${\bf P}'_0={\bf P}_0(1,\cdots,1)$ and ${\bf Q}'_0={\bf Q}_0(1,\cdots,1)$. Assume ${\bf P}'_0=(P',\Delta'_{j'}(q))$.

If $\tau_{i_c}=\alpha$ and $P$ can twist on $G_c$ with $P\cap edge(G_c)$ are labeled $\alpha_1,\alpha_4$. Then $(\eta_\alpha,\eta_\alpha+1)$ is an $m({\bf P}_0;\alpha)$ and $m({\bf Q}_0;\alpha)$-pair. Thus $\Delta'_{j'}(q)=(\alpha_1,\alpha',\alpha_1)$ with $\tau'_{j'}(q)=\alpha', \tau'_{j'-1}(q)=\alpha_1$ and ${\bf Q}'_0=(P',\Delta'_{j'-2}(q))$ with $\Delta'_{j'-2}(q)=(\alpha',\alpha_3,\alpha_4)$ and $\tau'_{j'-2}(q)=\alpha', \tau'_{j'-3}(q)=\alpha_3$. Therefore, $\Delta'_{j'-1}(q)=(\alpha_1,\alpha',\alpha_1)$. Let ${\bf R}'=(P',\Delta'_{j'-1}(q))$. Since all $\alpha$-twist-able edges in $P_0$ (except for the edges in $G_c$) are labeled $\alpha_1,\alpha_3$ and $(\eta_\alpha,\eta_\alpha+1)$ is an $m({\bf P}_0;\alpha)$ and $m({\bf Q}_0;\alpha)$-pair, we have ${\bf P}_0={\bf P}'(0,\cdots,0)$ and ${\bf Q}_0={\bf Q}'(0,\cdots,0)$.
\begin{figure}[h]
\centerline{\includegraphics{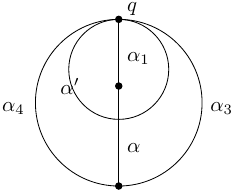}}


\end{figure}

Otherwise, we have $\Delta'_{j'}(q)=(\alpha_4,\alpha',\alpha_3)$ with $\tau'_{j'}(q)=\alpha_4, \tau'_{j'-1}(q)=\alpha'$ and ${\bf Q}'_0=(P',\Delta'_{j'-2}(q))$ with $\Delta'_{j'-2}(q)=(\alpha_1,\alpha',\alpha_1)$ and $\tau'_{j'-2}(q)=\alpha_1, \tau'_{j'-3}(q)=\alpha'$. Thus $\Delta'_{j'-1}(q)=(\alpha',\alpha_1,\alpha_1)$. Let ${\bf R}'=(P',\Delta'_{j'-1}(q))$. Since all $\alpha$-twist-able edges in $P_0$ are labeled $\alpha_1,\alpha_3$,  we have ${\bf P}_0={\bf P}'(0,\cdots,0)$ and ${\bf Q}_0={\bf Q}'(0,\cdots,0)$.

In both cases, one see that $E_1(q)\in P$ iff $E'_1(q)\in P'$. Since ${\bf P}$ covers ${\bf Q}$, we have ${\bf P}'$ covers ${\bf R}'$ and ${\bf R}'$ covers ${\bf Q}'$ by Lemma \ref{Lem-cover2}. We have
$\Omega({\bf P}_0, {\bf Q}_0)=\Omega({\bf P}', {\bf R}')+\Omega({\bf R}',{\bf Q}')$ by Proposition \ref{dProp-mutation4} (4).

Case 2. ${\bf Q}=(\mu_{G_l}P,\Delta_j(q))$. We assume that the conditions in Proposition \ref{dProp-mutation4} hold since the other cases can be proved similarly.

Case 2.1. $(m_k(P),m_{k+1}(P))=(-1,1)$.

Case 2.1.1. We first assume that $G_l=G_c$. Since the conditions in Proposition \ref{dProp-mutation4} hold, we have $\Delta_1(q)=(\alpha,\alpha_4,\alpha_1)$ with $m(\Delta_1(q);\alpha_1)=1$. Do twist for $P$ on the tiles $G_a$ with diagonal labeled $\alpha$ such that $P\cap Edge(G_a)$ are labeled $\alpha_1,\alpha_3$, we obtain a perfect matching $P_0$. Do twist for $\mu_{G_l}(P)$ on these tiles $G_a\neq G_{c-1}$, we obtain a perfect matching $Q_0$. Thus, $Q_0<\mu_{G_{c-1}} P_0<P_0$ and all $\alpha$-twist-able edges in $P_0$ and $Q_0$ are labeled $\alpha_1,\alpha_4$. Let ${\bf P}_0=(P_0,\Delta_j(q))$, ${\bf R}_0=(\mu_{G_{c-1}}P_0,\Delta_j(q))$ and ${\bf Q}_0=(Q_0,\Delta_j(q))$. By Lemma \ref{Lem-cover}, we have ${\bf P}_0$ and ${\bf P}$ are related by a sequence of twist at $\alpha$, ${\bf Q}_0$ and ${\bf Q}$ are related by a sequence of twist at $\alpha$, and ${\bf P}_0$ covers ${\bf R}_0$ and ${\bf R}_0$ covers ${\bf Q}_0$. Let ${\bf P}'={\bf P}_0(0,\cdots,0)$ and ${\bf Q}'={\bf Q}_0(0,\cdots,0)$. From Proposition \ref{dProp-mutation4} (1), we see that ${\bf P}'$ covers ${\bf Q}'$ and related by $\alpha_1$. Since ${\bf P}$ covers ${\bf Q}$, by Lemma \ref{Lem-cover}, we have $\Delta_j(q)\neq \Delta_1(q)$. Therefore, $\Delta_j(q)=\Delta_2(q)=(\alpha,\alpha_3,\alpha_1)$ with $m(\Delta_2(q);\alpha_1)=1$ in case $m_{\eta_\alpha+1}({\bf P};\alpha)=-1$ (i.e., $(\eta_\alpha,\eta_\alpha+1)$ is an ${\bf m}({\bf P};\alpha)$-pair). As all $\alpha$-twist-able edges in $P_0$ and $Q_0$ are labeled $\alpha_1,\alpha_4$, we have ${\bf P}_0={\bf P}'(1,\cdots,1)$ and ${\bf Q}_0={\bf Q}'(1,\cdots,1)$. By Proposition \ref{dProp-mutation4} (1) (d), we have
$\Omega({\bf P}_0, {\bf R}_0)+ \Omega({\bf R}_0, {\bf Q}_0)=\Omega({\bf P}', {\bf Q}')$.

Case 2.1.2. We then assume that $G_l\neq G_c$.

Case 2.1.2.1. $\tau_{i_c}\neq \alpha$ and $m(\Delta_j(q);\alpha)=-1$ with $m(\Delta_j(q);\alpha_3)=-1$. Then do twist for $P$ on the tiles $G_a$ with diagonal labeled $\alpha$ such that $P\cap Edge(G_a)$ are labeled $\alpha_1,\alpha_3$, we obtain a perfect matching $P_0$. Do twist for $\mu_{G_l}(P)$ on these tiles $G_a\neq G_{l-1}$, we obtain a perfect matching $Q_0$. Thus, $Q_0<\mu_{G_{l-1}} P_0<P_0$ and all $\alpha$-twist-able edges in $P_0$ and $Q_0$ are labeled $\alpha_1,\alpha_4$. Let ${\bf P}_0=(P_0,\Delta_j(q))$, ${\bf R}_0=(\mu_{G_{l-1}}P_0,\Delta_j(q))$, ${\bf Q}_0=(Q_0,\Delta_j(q))$, ${\bf P}'={\bf P}_0(0,\cdots,0)$ and ${\bf Q}'={\bf Q}_0(0,\cdots,0)$. This case can be proved similarly to the case $G_l=G_c$.

Case 2.1.2.2. $\tau_{i_c}\neq \alpha$ and $m(\Delta_j(q);\alpha)\geq 0$ or $m(\Delta_j(q);\alpha)=-1$ with $m(\Delta_j(q);\alpha_4)=-1$. Then do twist for $P$ on the tiles $G_a$ with diagonal labeled $\alpha$ such that $P\cap Edge(G_a)$ are labeled $\alpha_1,\alpha_4$, we obtain a perfect matching $P_0$. Do twist for $\mu_{G_l}(P)$ on these tiles $G_a$, we obtain a perfect matching $Q_0$. Thus, $Q_0<P_0$ and all $\alpha$-twist-able edges in $P_0$ and $Q_0$ are labeled $\alpha_1,\alpha_3$. Let ${\bf P}_0=(P_0,\Delta_j(q))$, ${\bf Q}_0=(Q_0,\Delta_j(q))$, ${\bf P}'={\bf P}_0(1,\cdots,1)$ and ${\bf Q}'={\bf Q}_0(1,\cdots,1)$. Assume that ${\bf Q}'=(Q',\Delta')$. By Proposition \ref{dProp-mutation4}, we see that ${\bf P}'=(\mu_{G'(k+1)}\mu_{G'}Q',\Delta')$. Let ${\bf R}'=(\mu_{G'}Q',\Delta')$. This case can be proved similarly to the situation that ${\bf P}=P$.

Case 2.1.2.3. $\tau_{i_c}=\alpha$ and $\Delta_j(q)\neq \Delta_1(q)$.

Case 2.1.2.3.1. If $m(\Delta_j(q);\alpha)\geq 0$ or $m(\Delta_j(q);\alpha)=-1$ with $m(\Delta_j(q);\alpha_4)=-1$, then it can be proved similarly to the case that $\tau_{i_c}\neq \alpha$ and $m(\Delta_j(q);\alpha)\geq 0$ or $m(\Delta_j(q);\alpha)=-1$ with $m(\Delta_j(q);\alpha_4)=-1$.

Case 2.1.2.3.2. If $m(\Delta_j(q);\alpha)=-1$ with $m(\Delta_j(q);\alpha_3)=-1$, then it can be proved similarly to the case that $\tau_{i_c}\neq \alpha$ and $m(\Delta_j(q);\alpha)=-1$ with $m(\Delta_j(q);\alpha_3)=-1$.

Case 2.1.2.4. $\tau_{i_c}=\alpha$ and $\Delta_j(q)=\Delta_1(q)$. Then $(\eta_\alpha,\eta_\alpha+1)$ is an ${\bf m}({\bf P};\alpha)$ and ${\bf m}({\bf Q};\alpha)$-pair. Then do twist for $P$ on the tiles $G_a\neq G_c$ with diagonal labeled $\alpha$ such that $P\cap Edge(G_a)$ are labeled $\alpha_1,\alpha_4$, we obtain a perfect matching $P_0$. Do twist for $\mu_{G_l}(P)$ on these tiles $G_a\neq G_c$, we obtain a perfect matching $Q_0$. Thus, $Q_0<P_0$. Let ${\bf P}_0=(P_0,\Delta_j(q))$, ${\bf Q}_0=(Q_0,\Delta_j(q))$, ${\bf P}'={\bf P}_0(1,\cdots,1)$ and ${\bf Q}'={\bf Q}_0(1,\cdots,1)$. Assume that ${\bf Q}'=(Q',\Delta')$. By Proposition \ref{dProp-mutation4}, we see that ${\bf P}'=(\mu_{G'(k+1)}\mu_{G'}Q',\Delta')$. Let ${\bf R}'=(\mu_{G'}Q',\Delta')$. This case can be proved similarly to the situation that ${\bf P}=P$.

Case 2.2. $(m_k(P),m_{k+1}(P))=(0,0)$. The case is similar to the case that $(m_k(P),m_{k+1}(P))=(-1,1)$ as $(m_k(Q),m_{k+1}(Q))=(1,-1)$.

Case 2.3. $(m_k(P),m_{k+1}(P))=(0,1)$.

Case 2.3.1. $G_l=G_c$. The case can be proved similarly to the Case 2.1.1.

Case 2.3.2. $G_l\neq G_c$ and $\tau_{i_c}\neq \alpha$. The case can be proved similarly to the Case 2.1.2.2.

Case 2.3.3. $G_l\neq G_c$ and $\tau_{i_c}=\alpha$.

Case 2.3.3.1. $\Delta_j(q)\neq \Delta_1(q)$. The case can be proved similarly to the Case 2.1.2.3.1

Case 2.3.3.2. $\Delta_j(q)=\Delta_1(q)$. The case can be proved similarly to the Case 2.1.2.4.

The situation that ${\bf P}=(\Delta_i(p),P,\Delta_j(q))$ can be proved similarly to the situation that ${\bf P}=(P,\Delta_j(q))$.
\end{proof}

\newpage

\section{Two equivalent relations in $\mathcal L$.}\label{sec:TEQ}

{\bf Assumption}: We always assume that $T$ and $T'$ contain no arc tagged notched at $q$ in case $\mathcal L=\mathcal L(T^o,\widetilde\beta^{(q)})$, $T$ and $T'$ contain no arc tagged notched at $p$ or $q$ in case $\mathcal L=\mathcal L(T^o,\widetilde\beta^{(p,q)})$ if there is no other state.

In this section, we introduce two equivalent relations in $\mathcal L$ and show any two elements of $\mathcal L$ are equivalent.

For any ${\bf P}\in \mathcal L$, we denote $\pi'\pi({\bf P})_{\pm}$ the maximum/minimum element in $\pi'\pi({\bf P})$ and $\pi({\bf P})_{\pm}$ the maximum/minimum element in $\pi({\bf P})$.

\begin{lemma}\label{lem:pm}
For any ${\bf P}\in \mathcal L$, we have
$$w(\pi'\pi({\bf P}_{+}))=w(\pi'\pi({\bf P}_{-})), \hspace{5mm} w(\pi({\bf P}_{+}))=w(\pi({\bf P}_{-})).$$
\end{lemma}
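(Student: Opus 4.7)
The plan is to reduce both identities to the arithmetic fact $\sum_{\ell=1}^{r}(r+1-2\ell)=0$, combined with the hypercube parameterization of $\pi(\mathbf{P})$ from Theorem~\ref{thm:pi}. Setting $r=[\sum_i m_i(\mathbf{P};\alpha)]_+$, Theorem~\ref{thm:pi}(2) indexes $\pi(\mathbf{P})$ as $\{\mathbf{P}(\vec c):\vec c\in\{0,1\}^{r}\}$ in such a way that $\mathbf{P}(\vec c)$ covers $\mathbf{P}(\vec c\,')$ exactly when $\vec c$ covers $\vec c\,'$ in the Boolean lattice. In particular $\pi(\mathbf{P})_{+}=\mathbf{P}(\vec 1)$ and $\pi(\mathbf{P})_{-}=\mathbf{P}(\vec 0)$, and any saturated chain between them has length $r$.

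Next, I would pick the chain $\vec 0=\vec c^{(0)}<\vec c^{(1)}<\cdots<\vec c^{(r)}=\vec 1$ where $\vec c^{(\ell)}$ is obtained from $\vec c^{(\ell-1)}$ by flipping the $\ell$-th coordinate. Lemma~\ref{lem:ome1} gives
\[
w(\mathbf{P}(\vec c^{(\ell)}))-w(\mathbf{P}(\vec c^{(\ell-1)}))=d(\alpha')(r+1-2\ell),
\]
so telescoping yields
\[
w(\pi(\mathbf{P})_{+})-w(\pi(\mathbf{P})_{-})=d(\alpha')\sum_{\ell=1}^{r}(r+1-2\ell)=0,
\]
which is the second identity. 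The case $r=0$ is trivial, since then $\pi(\mathbf{P})$ is a singleton.

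For the first identity, Theorem~\ref{thm:partitionbij} tells us $\pi$ and $\pi'$ are inverse partition bijections, hence $\pi'\pi(\mathbf{P})=\pi'(\mathbf{Q}')$ for any chosen $\mathbf{Q}'\in\pi(\mathbf{P})$. Interchanging the roles of $T^o$ and $T'^o$ (and of $\alpha$ and $\alpha'$) in the constructions of Sections~\ref{Sec-parcom}--\ref{sec-pb} and in Lemma~\ref{lem:ome1} supplies the same Boolean-lattice parameterization of $\pi'(\mathbf{Q}')$, this time with weight increments of the form $d(\alpha)(r'+1-2\ell)$ where $r'=[\sum_i m_i(\mathbf{Q}';\alpha')]_+$. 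The identical telescoping then delivers $w(\pi'\pi(\mathbf{P})_{+})=w(\pi'\pi(\mathbf{P})_{-})$.

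The only real obstacle is verifying that Lemma~\ref{lem:ome1} transposes to the reverse direction, i.e.\ that the valuation map on $\mathcal L'$ satisfies the analogous recurrence under the parameterization produced by $\pi'$. Since the constructions of $\pi$, $\pi'$, and the valuation maps $w$ in Section~\ref{sec:VM} are all manifestly symmetric under the flip $T^o\leftrightarrow T'^o$, $\alpha\leftrightarrow\alpha'$, this ought to be a direct transcription rather than new work; once that symmetry is noted, both equalities collapse to the single elementary sum above.
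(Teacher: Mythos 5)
Your proof is correct and takes essentially the same approach as the paper: the paper's entire proof reads ``It follows by Lemma~\ref{lem:ome1}'', which is precisely the telescoping along a saturated chain in the Boolean lattice $\{0,1\}^r$ that you carried out, together with the observation (which you correctly flagged and resolved) that $\pi'\pi(\mathbf{P})=\pi'(\mathbf{Q}')$ and the analogue of Lemma~\ref{lem:ome1} holds with $T^o\leftrightarrow T'^o$, $\alpha\leftrightarrow\alpha'$ interchanged.
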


\begin{proof}
It follows by Lemma \ref{lem:ome1}.
\end{proof}

\begin{definition}
For any ${\bf P},{\bf Q}\in \mathcal L$,
\begin{enumerate}[$(1)$]
\item we say that ${\bf P}$ and ${\bf Q}$ are \emph{$y$-equivalent}, denoted as ${\bf P}\sim {\bf Q}$, if in $\mathbb P$,
$$\frac{\bigoplus_{{\bf R}\in \pi'\pi({\bf P})}y^{T}({\bf R})}
{\bigoplus_{{\bf R}'\in \pi({\bf P})}y^{T'}({\bf R}')}
=\frac{\bigoplus_{{\bf R}\in \pi'\pi({\bf Q})}y^{T}({\bf R})}
{\bigoplus_{{\bf R}'\in \pi({\bf Q})}y^{T'}({\bf R}')}.$$
\item we say that ${\bf P}$ and ${\bf Q}$ are \emph{$w$-equivalent}, denoted as ${\bf P}\approx {\bf Q}$, if
$$w(\pi'\pi({\bf P})_{\pm})-w(\pi({\bf P})_{\pm})=w(\pi'\pi({\bf Q})_{\pm})-w(\pi({\bf Q})_{\pm}).$$
\end{enumerate}
\end{definition}

It is clear that $\sim$, $\approx$ are equivalence relations on $\mathcal L(T^o,\widetilde\beta^{(q)})$.

\begin{lemma}\label{basic4}
For any ${\bf P}\in \mathcal L$ such that $r=\sum m_i({\bf P})\geq 0$, for ${\bf P}'={\bf P}(\vec c)\in \pi({\bf P})$ corresponding to some $\vec c\in \{0,1\}^{r}$ as in Theorem \ref{thm:pi}.
Then in $\mathbb P$ we have

\begin{enumerate}[$(1)$]
\item $\bigoplus_{{\bf Q}'\in \pi({\bf P})}y^{T'}({\bf Q}')=y^{T'}({\bf P}')\cdot(1\oplus y^{T'^o}_{\alpha'})^{r}$ if $\vec c=(0,0,\cdots,0)$.
\item $\bigoplus_{{\bf Q}'\in \pi({\bf P})}y^{T'}({\bf Q}')=y^{T'}({\bf P}')\cdot(1\oplus y^{T^o}_{\alpha})^{r}$ if $\vec c=(1,1,\cdots,1)$.
\end{enumerate}
\end{lemma}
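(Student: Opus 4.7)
\textbf{Proof plan for Lemma \ref{basic4}.}

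The strategy is to use the explicit parametrization $\vec c \mapsto {\bf P}(\vec c)$ of $\pi({\bf P})$ provided by Theorem \ref{thm:pi}, reduce the tropical sum to a single-variable geometric sum in $y^{T'^o}_{\alpha'}$, and then invoke Lemma \ref{lem-yf1}(2) to rewrite the answer in terms of $y^{T^o}_{\alpha}$ for case (2). First I would observe that by Theorem \ref{thm:pi}(2), whenever $\vec c$ and $\vec c\hspace{.8mm}'$ differ in exactly one coordinate, ${\bf P}(\vec c)$ covers (or is covered by) ${\bf P}(\vec c\hspace{.8mm}')$ in $\mathcal L'$. Moreover, a careful inspection of the constructions in Propositions \ref{Lem-pi}, \ref{Lem-pi1}, \ref{Lem-pi2} (together with Lemmas \ref{lem-pj}, \ref{lem-pj1}, \ref{lem-pj2}) shows that each such cover is either a twist on a tile of $G_{T'^o,\widetilde\beta}$ whose diagonal is labeled $\alpha'$, or a change between two triangles in $\Delta(T'^o,p)$ or $\Delta(T'^o,q)$ sharing the common side $\alpha'$.

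Next, I would apply Lemma \ref{Lem-yf}, together with Lemmas \ref{lem-ycover}, \ref{lem-ycover1}, \ref{lem-dcover1}, \ref{lem-dcover2}, \ref{lem-dcover3} (depending on which of the three lattices $\mathcal L$ we are in), to conclude uniformly that each such elementary cover multiplies the specialized height monomial $y^{T'}(\cdot)$ by $y^{T'^o}_{\alpha'}$. Consequently, for every $\vec c \in \{0,1\}^r$,
\[
y^{T'}({\bf P}(\vec c)) \;=\; y^{T'}({\bf P}(\vec 0))\cdot (y^{T'^o}_{\alpha'})^{|\vec c|}\;=\;y^{T'}({\bf P}(\vec 1))\cdot (y^{T'^o}_{\alpha'})^{|\vec c|-r},
\]
where $|\vec c|=\sum c_i$. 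Since $\oplus$ is idempotent in the tropical semifield, the tropical sum over $\{0,1\}^r$ collapses to the sum over the $r+1$ distinct values $(y^{T'^o}_{\alpha'})^k$, $k=0,\dots,r$. A short verification in coordinates (writing $y^{T'^o}_{\alpha'}=\prod u_i^{a_i}$ and taking minima) yields the identity
\[
\bigoplus_{k=0}^{r}(y^{T'^o}_{\alpha'})^{k}\;=\;(1\oplus y^{T'^o}_{\alpha'})^{r}.
\]

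Combining these two ingredients gives statement (1) immediately when $\vec c=(0,\dots,0)$. For statement (2), I would factor out $(y^{T'^o}_{\alpha'})^{-r}$ from the $\vec c=(1,\dots,1)$ normalization, obtaining
\[
\bigoplus_{{\bf Q}'\in\pi({\bf P})}y^{T'}({\bf Q}') \;=\; y^{T'}({\bf P}(\vec 1))\,(y^{T'^o}_{\alpha'})^{-r}(1\oplus y^{T'^o}_{\alpha'})^{r}\;=\;y^{T'}({\bf P}(\vec 1))\,\bigl((y^{T'^o}_{\alpha'})^{-1}\oplus 1\bigr)^{r},
\]
and then apply Lemma \ref{lem-yf1}(2), which gives $(y^{T'^o}_{\alpha'})^{-1}=y^{T^o}_{\alpha}$, to obtain the desired factor $(1\oplus y^{T^o}_{\alpha})^r$.

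The main obstacle, and the only place that requires genuine case analysis rather than routine computation, is the uniform verification that \emph{every} elementary cover ${\bf P}(\vec c)\gtrdot{\bf P}(\vec c\hspace{.8mm}')$ in the parametrized sublattice $\pi({\bf P})$ is related by $\alpha'$. For the cases when $k_\ell\in\{1,\dots,\eta_\alpha\}$ this is built into the tile $G'(k_\ell)$ produced by Proposition \ref{Lem-tile-11}, so the corresponding $y$-ratio is $y^{T'^o}_{\alpha'}$ by Lemma \ref{Lem-yf}. The subtler situation is when $k_\ell\in\{0,\eta_\alpha+1\}$: here the cover is a triangle swap across $q$ or $p$, and Lemmas \ref{lem-pj}, \ref{lem-pj1}, \ref{lem-pj2} are exactly what ensures that the two triangles in $\phi_\alpha^{T^o,q}(\Delta_j(q))$ (resp.\ $\phi_\alpha^{T^o,p}(\Delta_i(p))$) share the side $\alpha'$, so that the $y$-ratio is again $y^{T'^o}_{\alpha'}$ by Lemmas \ref{lem-ycover1}, \ref{lem-dcover2}, \ref{lem-dcover3}. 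Once this is dispatched, the remainder of the argument is a single line of tropical algebra.
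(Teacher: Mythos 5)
Your proof is correct. The paper itself does not supply a proof of Lemma \ref{basic4} — it is stated without argument — so you are filling a genuine gap, and you fill it along the route the surrounding machinery was built for: Theorem \ref{thm:pi} and Remarks \ref{Rem-2}, \ref{Rem-3}, \ref{Rem-4} give the parametrization $\pi({\bf P})=\{{\bf P}(\vec c)\mid \vec c\in\{0,1\}^r\}$; Propositions \ref{prop-cover}, \ref{prop-cover1} say each coordinate flip is a cover in $\mathcal L'$; Proposition \ref{Lem-tile-11}(2), Lemmas \ref{lem-pj}, \ref{lem-pj1}, \ref{lem-pj2} together with Proposition \ref{prop-tri} and Proposition \ref{Lem-part} guarantee that every such cover is related by $\alpha'$; and then Lemmas \ref{Lem-yf}, \ref{lem-ycover}, \ref{lem-ycover1}, \ref{lem-dcover1}--\ref{lem-dcover3} identify the $y$-ratio as $y^{T'^o}_{\alpha'}$. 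Two small remarks. First, the identity $(1\oplus y)^r=\bigoplus_{\vec c\in\{0,1\}^r}y^{|\vec c|}$ is just distributivity of $\oplus$ over $\cdot$ — expanding the $r$-fold product — and so does not actually rely on idempotence of $\oplus$; your appeal to idempotence and the coordinatewise minimum calculation is an unnecessarily tropical-specific detour, though of course it is harmless since $\mathbb P$ here is ${\rm Trop}(u_1,\dots,u_l)$. Second, the step $(y^{T'^o}_{\alpha'})^{-r}(1\oplus y^{T'^o}_{\alpha'})^r=\bigl((y^{T'^o}_{\alpha'})^{-1}\oplus 1\bigr)^r$ also follows from distributivity, and then Lemma \ref{lem-yf1}(2) closes case (2), exactly as you say.
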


Summarize Lemmas \ref{Lem-yf}, \ref{lem-ycover}, \ref{lem-ycover1}, \ref{lem-dcover1},  \ref{lem-dcover2} and  \ref{lem-dcover3}, we obtain the following result.

\begin{lemma}\label{lem:ycover}
Assume that ${\bf P}$ covers ${\bf Q}$ and related by $\tau$. Then we have
$$\frac{y^T({\bf Q})}{y^T({\bf P})}=y^{T^o}_{\tau}.$$
\end{lemma}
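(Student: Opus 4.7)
The proof will be a direct case analysis: since the statement is explicitly flagged as a summary of Lemmas \ref{Lem-yf}, \ref{lem-ycover}, \ref{lem-ycover1}, \ref{lem-dcover1}, \ref{lem-dcover2}, \ref{lem-dcover3}, the plan is simply to partition the possible covering relations in $\mathcal L$ according to which of the three lattices $\mathcal L(T^o,\widetilde\beta)$, $\mathcal L(T^o,\widetilde\beta^{(q)})$, $\mathcal L(T^o,\widetilde\beta^{(p,q)})$ we are working in, and then according to which component of $\mathbf{P}=(\cdots)$ changes when passing to $\mathbf{Q}$.

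First, I would treat the ordinary arc case $\mathcal L=\mathcal L(T^o,\widetilde\beta)$: here $\mathbf{P},\mathbf{Q}$ are perfect matchings of $G_{T^o,\widetilde\beta}$, and by the definition of the covering relation in Section~\ref{Sec-threesets} they differ by a twist on a tile $G_l$ whose diagonal is labeled $\tau=\tau_{i_l}$; this case is precisely Lemma~\ref{Lem-yf}. Next, in the lattice $\mathcal L(T^o,\widetilde\beta^{(q)})$ with $\mathbf{P}=(P,\Delta_j(q))$ and $\mathbf{Q}=(P',\Delta_{j'}(q))$, a covering relation forces either $j=j'$ with $P,P'$ related by a twist on a tile whose diagonal is labeled $\tau$ (apply Lemma~\ref{lem-ycover}), or $P=P'$ with $\{j,j'\}=\{j,j+1\}$ and common side labeled $\tau=\tau_j(q)$ (apply Lemma~\ref{lem-ycover1}). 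Finally, in $\mathcal L(T^o,\widetilde\beta^{(p,q)})$ the covering relation changes exactly one of the three coordinates $\Delta_i(p)$, $P$, $\Delta_j(q)$, yielding three subcases handled respectively by Lemmas~\ref{lem-dcover2}, \ref{lem-dcover1}, and \ref{lem-dcover3}, with $\tau$ equal in each case to the diagonal or common side dictating the relation.

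In every subcase the cited lemma gives a ratio of specialized height monomials equal to $y^{T^o}_\tau$, so it only remains to verify that the direction convention for ``covers and related by $\tau$'' used in the statement matches the direction given in each cited lemma. The plan is therefore to reconcile the orientation: in the cited lemmas the quotient is $y^T(\text{bigger})/y^T(\text{smaller})$, so I would simply verify that ``$\mathbf{P}$ covers $\mathbf{Q}$ related by $\tau$'' is consistently interpreted (passing to the reciprocal if needed) and that this matches the form asserted here, after which the identification becomes a one-line invocation of the appropriate lemma.

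The only mild subtlety, and the one place worth being careful, is the boundary behavior in the once- and twice-notched lattices at the distinguished triangle $\Delta_1(q)$ (and $\Delta_1(p)$): the covering relation there can jump between $\Delta_t(q)$ and $\Delta_1(q)$ or between $\Delta_t(q)$ and $\Delta_2(q)$ depending on whether $E_1(q)$ or $E_2(q)$ lies in $P$, so I would check explicitly that the label $\tau$ associated to such boundary covers is exactly the $\tau_j(q)$ (respectively $\tau_i(p)$) appearing in the relevant intermediate lemma; once that bookkeeping is confirmed, no genuine computation is needed and the proof concludes immediately. This boundary matching is the only nontrivial point, and it is essentially already built into the proofs of Lemmas~\ref{lem-ycover1} and \ref{lem-dcover2}, \ref{lem-dcover3}.
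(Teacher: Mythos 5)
Your decomposition (by which of the three lattices, then by which coordinate of $\mathbf{P}$ changes) and the six lemmas you invoke are exactly the intended proof; the paper in fact gives no separate argument and simply introduces Lemma~\ref{lem:ycover} as the summary of Lemmas~\ref{Lem-yf}, \ref{lem-ycover}, \ref{lem-ycover1}, \ref{lem-dcover1}, \ref{lem-dcover2}, \ref{lem-dcover3}.

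The direction check you flag as a ``mild subtlety'' is, however, more than bookkeeping, and you should not leave it as a deferred verification: carrying it out reveals that the printed statement has the ratio inverted. Every one of the six cited lemmas gives $\frac{y^T(\text{bigger})}{y^T(\text{smaller})}=y^{T^o}_{\tau}$ --- e.g.\ Lemma~\ref{Lem-yf} takes $P<Q$ and concludes $y^T(Q)/y^T(P)=y^{T^o}_{\tau_i}$, and Lemma~\ref{lem-ycover1} with $(P,\Delta_{j+1}(q))$ covering $(P,\Delta_j(q))$ concludes $y^T(P,\Delta_{j+1}(q))/y^T(P,\Delta_j(q))=y^{T^o}_{\tau_j(q)}$. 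Since $\mathbf{P}$ covers $\mathbf{Q}$ means $\mathbf{P}>\mathbf{Q}$, the conclusion they uniformly deliver is $\frac{y^T(\mathbf{P})}{y^T(\mathbf{Q})}=y^{T^o}_{\tau}$, not $\frac{y^T(\mathbf{Q})}{y^T(\mathbf{P})}=y^{T^o}_{\tau}$ as printed. This is also the form used downstream, e.g.\ in the proof of Lemma~\ref{lem-na}, which invokes Lemma~\ref{lem:ycover} to assert $\frac{y^T(\mathbf{P})}{y^T(\mathbf{Q})}=y^{T^o}_{\tau}$ with $\mathbf{P}$ covering $\mathbf{Q}$. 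So your verification plan, executed honestly, does not end in a one-line invocation but in the observation that $\mathbf{P}$ and $\mathbf{Q}$ are transposed in the displayed ratio of the lemma; you should state the conclusion with the corrected orientation rather than hope the statement ``matches the form asserted here.''
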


The main result in this section is the following.

\begin{theorem}\label{thm-sim3}
For any ${\bf P}, {\bf Q}\in \mathcal L$, we have ${\bf P}\sim {\bf Q}$ and ${\bf P}\approx {\bf Q}$.
\end{theorem}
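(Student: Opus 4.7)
Since both $\sim$ and $\approx$ are equivalence relations on the finite set $\mathcal L$, and since the Hasse graph of $\mathcal L$ is connected by Lemmas \ref{lem-H1}, \ref{lem-H2}, and \ref{lem-H3}, it suffices to verify ${\bf P} \sim {\bf Q}$ and ${\bf P} \approx {\bf Q}$ whenever ${\bf P}$ covers ${\bf Q}$ in $\mathcal L$. Write $\tau$ for the arc realizing this cover and split into four cases: (i) $\tau \notin \{\alpha,\alpha_1,\alpha_2,\alpha_3,\alpha_4\}$; (ii) $\tau=\alpha$; (iii) $\tau=\alpha_k$ with $\alpha_k\neq \alpha_{k\pm 1}$; (iv) $\tau=\alpha_k$ with $\alpha_k\in\{\alpha_{k-1},\alpha_{k+1}\}$. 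A useful preliminary reduction: if ${\bf P}$ and ${\bf P}_0$ are related by a sequence of twists at $\alpha$, then ${\bf m}({\bf P},\alpha)={\bf m}({\bf P}_0,\alpha)$ and an inspection of the construction of $\pi$ shows $\pi({\bf P})=\pi({\bf P}_0)$, hence also $\pi'\pi({\bf P})=\pi'\pi({\bf P}_0)$. Consequently ${\bf P}\sim{\bf P}_0$ and ${\bf P}\approx{\bf P}_0$, so the ``replacement steps'' appearing in Theorem \ref{thm-compatible3} are free of charge.

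In cases (i), (ii), and (iii), Theorems \ref{thm-compatible1} and \ref{thm-compatible2} supply a matched cover ${\bf P}'\gtrdot{\bf Q}'$ in $\mathcal L'$ between the extremal elements $\pi({\bf P})(\vec{0})$ or $\pi({\bf P})(\vec{1})$ and the corresponding extremes of $\pi({\bf Q})$, with $\Omega({\bf P},{\bf Q})=\pm\Omega({\bf P}',{\bf Q}')$ and with $r({\bf P})=r({\bf Q})$. By Lemma \ref{basic4} applied to both ${\bf P}$ and ${\bf Q}$, we may factor
\[
\bigoplus_{{\bf R}'\in \pi({\bf P})} y^{T'}({\bf R}')=y^{T'}({\bf P}')\cdot (1\oplus y^{T'^o}_{\alpha'})^{r({\bf P})},\qquad \bigoplus_{{\bf R}'\in \pi({\bf Q})} y^{T'}({\bf R}')=y^{T'}({\bf Q}')\cdot (1\oplus y^{T'^o}_{\alpha'})^{r({\bf Q})},
\]
so the ratio of these tropical sums collapses to $y^{T'}({\bf P}')/y^{T'}({\bf Q}')$, which equals $y^{T'^o}_\tau$ or its inverse by Lemma \ref{lem:ycover}. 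The analogous factorisation applies to the tropical sums over $\pi'\pi({\bf P})$ and $\pi'\pi({\bf Q})$, and the common factors cancel, giving ${\bf P}\sim{\bf Q}$. For $\approx$, the identity $\Omega({\bf P},{\bf Q})=\pm\Omega({\bf P}',{\bf Q}')$ together with Lemma \ref{lem:pm} yields ${\bf P}\approx{\bf Q}$ directly.

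Case (iv) is the main obstacle, controlled by Theorem \ref{thm-compatible3}. After replacing $({\bf P},{\bf Q})$ by a twist-equivalent pair $({\bf P}_0,{\bf Q}_0)$, the single cover ${\bf P}_0\gtrdot{\bf Q}_0$ in $\mathcal L$ corresponds to either a two-step chain ${\bf P}'\gtrdot {\bf R}'\gtrdot{\bf Q}'$ in $\mathcal L'$ (situation (i) of the theorem) or, symmetrically, a two-step chain in $\mathcal L$ matched with a single cover in $\mathcal L'$ (situation (ii)). The $w$-equivalence is immediate from the additive identity $\Omega({\bf P}_0,{\bf Q}_0)=\Omega({\bf P}',{\bf R}')+\Omega({\bf R}',{\bf Q}')$ (or its reverse) together with Lemma \ref{lem:pm}. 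The delicate part is the $y$-equivalence: the presence of the middle element ${\bf R}'$ introduces a third term in the tropical sum over $\pi({\bf P}_0)$ or $\pi({\bf Q}_0)$, which a priori breaks the clean factorisation of Lemma \ref{basic4}. The plan is to show that the exponent $r$ differs by exactly one between ${\bf P}_0$ and ${\bf Q}_0$, so that the tropical sum on the side of the two-step chain contains precisely one more factor of $(1\oplus y^{T'^o}_{\alpha'})$ (or $(1\oplus y^{T^o}_\alpha)$) than the other, and to verify via Lemma \ref{lem:ycover} and Proposition \ref{lem-p-5} that this extra factor matches exactly what appears symmetrically in the numerator after applying $\pi'$ to the chain. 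Executing this step requires the three-subcase analysis $(m_k,m_{k+1})=(-1,1),(0,0),(0,1)$ of Theorem \ref{thm-compatible3} together with its analogues for the situations ${\bf P}=(P,\Delta_j(q))$ and ${\bf P}=(\Delta_i(p),P,\Delta_j(q))$, but in each subcase the identification of the extra $(1\oplus y)$ factor is a direct edge-tracking computation from the construction of $\pi$. With both equivalences established on covers, transitivity yields the result.
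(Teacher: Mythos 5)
There are several genuine gaps here, and the overall structure of the argument is weaker than it appears. First, your preliminary reduction is based on a false claim. If ${\bf P}$ and ${\bf P}_0$ are related by a twist at a tile $G(k)$ with $m_k=-1$, you do have ${\bf m}({\bf P},\alpha)={\bf m}({\bf P}_0,\alpha)$, but it is \emph{not} true that $\pi({\bf P})=\pi({\bf P}_0)$ in general: if $k$ lies in an ${\bf m}$-pair $(a,k)$, the construction of $\pi$ fixes $P'\cap\text{edge}(G'(a))$ by the labels of $P\cap\text{edge}(G(k))$, which are flipped by the twist, so $\pi({\bf P})$ and $\pi({\bf P}_0)$ are disjoint subsets of $\varphi_\alpha^{T^o}({\bf P})$. (The paper's Theorem \ref{thm:partitionbij} only says that a \emph{nonempty intersection} forces equality, which is a different and weaker statement.) Twist-at-$\alpha$ equivalence does give ${\bf P}\sim{\bf P}_0$ and ${\bf P}\approx{\bf P}_0$, but this must be proved by the explicit ratio computation in Lemma \ref{lem-na}, not by the claimed identity $\pi({\bf P})=\pi({\bf P}_0)$. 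Second, your claims about the exponent $r=[\sum m_i]_+$ are exactly backwards: in case (iii) (that is, $\tau=\alpha_k$ with $\alpha_k\neq\alpha_{k\pm1}$), Lemma \ref{lem-a} shows $\sum m_i({\bf P})-\sum m_i({\bf Q})=b^{T^o}_{\alpha\alpha_1}\neq 0$, so the extra factor $(1\oplus y^{T'^o}_{\alpha'})^{b^{T^o}_{\alpha\alpha_1}}$ survives and is absorbed via the $y$-mutation identity (Lemma \ref{lem-yf1}(3a)), it does not cancel; while in case (iv) (where $\alpha_k=\alpha_{k\pm1}$), $b^{T^o}_{\alpha\alpha_1}=0$, so $r$ is unchanged and the middle element ${\bf R}'$ lies \emph{outside} both $\pi({\bf P}_0)$ and $\pi({\bf Q}_0)$---its role is purely to factor the ratio $y^{T'}({\bf P}')/y^{T'}({\bf Q}')$ through two covers, not to add a third term to the tropical sums.

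The most serious omission is that in case (iii) you invoke Theorem \ref{thm-compatible2} without addressing its hypothesis that all $\alpha$-mutable edges of $P$ are labeled $\alpha_{k\pm1}$, which is the crux of the matter. This condition fails for arbitrary covers related by $\alpha_k$, and the reduction to it cannot be achieved by twists at $\alpha$ alone: the paper's Lemmas \ref{lem:covera0}, \ref{lem:covera1}, and \ref{lem:covera3} perform this reduction and reveal two obstacles that your sketch never confronts. The intermediate twists $P_\ell\to P_{\ell-1}$ at $\alpha$ might fail to give covers $(P_\ell,\Delta_j(q))\gtrdot(P_{\ell-1},\Delta_j(q))$ in $\mathcal L(T^o,\widetilde\beta^{(q)})$ (Lemma \ref{Lem-cover} has an exception when $j=1$ and $l=c$), and even after reaching the normalized $R$, the triangle $\Delta_j(q)$ may satisfy $m(\Delta_j(q);\alpha)=-1$ with the wrong label, so that Lemma \ref{lem-a} still does not apply directly. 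Both problems are resolved by detouring through another triangle $\Delta_\ell(q)$ with $m(\Delta_\ell(q);\alpha)\geq 0$; this requires $|\Delta(T^o,q)|\geq 2$, which is exactly where the standing Assumption (no tagged notched arcs at $p$ or $q$) enters. Your proposal does not mention the Assumption and has no mechanism to handle these obstructions, so case (iii) of the argument is not complete.
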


In the rest of this section, we give the proof of Theorem \ref{thm-sim3}

\begin{lemma}\label{lem-na}
Assume that ${\bf P}$ covers ${\bf Q}$ and related by $\tau$. If $\tau\neq \alpha_1,\alpha_2,\alpha_3,\alpha_4$, then we have ${\bf P}\sim {\bf Q}$ and ${\bf P}\approx {\bf Q}$.
\end{lemma}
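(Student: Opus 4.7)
The plan is to use Theorem \ref{thm-compatible1} to transfer the covering relation in $\mathcal L$ to a covering relation in $\mathcal L'$ between distinguished representatives of the partition blocks, and then to combine Lemma \ref{basic4} (which gives a closed form for the semifield sum over a block) with Lemma \ref{lem:pm} (which says that the extremal elements of $\pi({\bf P})$ and of $\pi'\pi({\bf P})$ have equal $w$-values). The argument splits naturally into the two cases distinguished by Theorem \ref{thm-compatible1}, namely $\tau\neq\alpha$ and $\tau=\alpha$.

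First, suppose $\tau\neq\alpha$. By Theorem \ref{thm-compatible1}(1) the elements ${\bf P}'={\bf P}(0,\dots,0)$ and ${\bf Q}'={\bf Q}(0,\dots,0)$ of $\mathcal L'$ satisfy ``${\bf P}'$ covers ${\bf Q}'$ and related by $\tau$'' with $\Omega({\bf P},{\bf Q})=\Omega({\bf P}',{\bf Q}')$. Since $\tau\neq\alpha,\alpha_1,\dots,\alpha_4$, Lemma \ref{lem-yf1}(1) gives $y^{T'^o}_\tau=y^{T^o}_\tau$, and ${\bf m}({\bf P};\alpha)={\bf m}({\bf Q};\alpha)$ so that $|\pi({\bf P})|=|\pi({\bf Q})|$ and the two blocks have the same value of $r=[\sum m_i]_+$. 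Applying Lemma \ref{basic4} to both ${\bf P}$ and ${\bf Q}$ (and symmetrically for $\pi'$ on $\pi({\bf P})_-$ and $\pi({\bf Q})_-$) reduces the $\sim$-claim to the identity $y^T({\bf Q})/y^T({\bf P})=y^{T^o}_\tau=y^{T'^o}_\tau=y^{T'}({\bf Q}')/y^{T'}({\bf P}')$, which follows from Lemma \ref{lem:ycover} applied in both $\mathcal L$ and $\mathcal L'$. For $\approx$, Theorem \ref{thm:pi} makes $\pi({\bf P})_-\mapsto\pi({\bf Q})_-$ and $\pi'\pi({\bf P})_-\mapsto\pi'\pi({\bf Q})_-$ into ``the same covering'' as ${\bf P}\to{\bf Q}$ (up to passage through the $(0,\dots,0)$-corner), and Lemma \ref{lem:pm} lets us freely switch $_-$ with $_+$; the equality $\Omega({\bf P},{\bf Q})=\Omega({\bf P}',{\bf Q}')$ from Theorem \ref{thm-compatible1}(1) then yields $w(\pi'\pi({\bf P})_\pm)-w(\pi({\bf P})_\pm)=w(\pi'\pi({\bf Q})_\pm)-w(\pi({\bf Q})_\pm)$.

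Next, suppose $\tau=\alpha$. By Theorem \ref{thm-compatible1}(2), either (a) ${\bf P}(0,\dots,0)={\bf Q}(0,\dots,0)$, or (b) ${\bf Q}(0,\dots,0)$ covers ${\bf P}(0,\dots,0)$ along $\alpha'$ with $\Omega({\bf P},{\bf Q})=-\Omega({\bf P}',{\bf Q}')$. In case (a) the partition blocks coincide, $\pi({\bf P})=\pi({\bf Q})$ and $\pi'\pi({\bf P})=\pi'\pi({\bf Q})$, so both equivalences are immediate. In case (b) the vectors ${\bf m}({\bf P};\alpha)$ and ${\bf m}({\bf Q};\alpha)$ differ by flipping one coordinate from $-1$ to $+1$ (or vice versa) at an unpaired slot, so $r=[\sum m_i({\bf Q})]_+=[\sum m_i({\bf P})]_+\pm 1$; combining Lemma \ref{basic4} with Lemma \ref{lem-yf1}(2), which gives $y^{T'^o}_{\alpha'}=(y^{T^o}_\alpha)^{-1}$, the ratio in the definition of $\sim$ telescopes to $y^T({\bf P})/y^{T'}({\bf P}(0,\dots,0))$ times an explicit power of $(1\oplus y^{T^o}_\alpha)$ that is symmetric in ${\bf P}$ and ${\bf Q}$. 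For $\approx$, the sign reversal of $\Omega$ is exactly compensated by the shift of the extremal index of the block (the $(2\ell-1-r)$-term of Lemma \ref{lem:ome1}), which shows $w(\pi'\pi({\bf P})_-)-w(\pi({\bf P})_-)=w(\pi'\pi({\bf Q})_-)-w(\pi({\bf Q})_-)$.

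The main obstacle is the bookkeeping in case (b) of $\tau=\alpha$: one must track how moving from ${\bf P}$ to ${\bf Q}$ changes the list of ${\bf m}({\bf P};\alpha)$-pairs and, consequently, which coordinates of the $\{0,1\}^r$-parametrization of $\pi({\bf P})$ survive into $\pi({\bf Q})$, and then show that the apparent asymmetry (the blocks have different sizes) is absorbed by the $(1\oplus y^{T^o}_\alpha)$-factors coming from Lemma \ref{basic4} and by the corresponding arithmetic-progression shifts in $w$ from Lemma \ref{lem:ome1}. Once this matching is made explicit, the proofs of both $\sim$ and $\approx$ are straightforward substitutions.
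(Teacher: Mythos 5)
Your plan correctly identifies the paper's strategy: case split on $\tau = \alpha$ versus $\tau \neq \alpha$, pass to the corner elements ${\bf P}(0,\dots,0)$, ${\bf Q}(0,\dots,0)$ via Theorem \ref{thm-compatible1}, and reduce the $\sim$-claim to ratio identities using Lemma \ref{basic4} and Lemma \ref{lem-yf1}. Your handling of $\tau \neq \alpha$ and of case (a) of $\tau = \alpha$ is essentially correct and matches the paper.

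However, your analysis of case (b) of $\tau = \alpha$ rests on a false premise. You claim that ``the vectors ${\bf m}({\bf P};\alpha)$ and ${\bf m}({\bf Q};\alpha)$ differ by flipping one coordinate from $-1$ to $+1$ (or vice versa) at an unpaired slot, so $r = [\sum m_i({\bf Q})]_+ = [\sum m_i({\bf P})]_+ \pm 1$.'' This is wrong. When ${\bf P}$ and ${\bf Q}$ are related by a twist on a tile with diagonal labeled $\alpha$, the two edges swapped by the twist are labeled by the quadrilateral sides $\alpha_1,\dots,\alpha_4$, never by $\alpha$ itself (a diagonal is not a side of its own quadrilateral). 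Consequently $m({\bf P};\alpha) = m({\bf Q};\alpha)$, every coordinate $m_k$ is unchanged, and $r$ is the same for both blocks -- the paper states exactly this at the outset: ``As $\tau \neq \alpha_1,\alpha_2,\alpha_3,\alpha_4$, we have $\sum m_i({\bf P}) = \sum m_i({\bf Q})$,'' which covers $\tau = \alpha$ as well. (See also Proposition \ref{Prop-mutation4}(2) combined with Proposition \ref{Lem-parvar}(1): both $P$ and $\mu_{G_l}P$ land in the same $\varphi_\alpha^{T^o}$-block, forcing ${\bf m}(P,\alpha) = {\bf m}(\mu_{G_l}P,\alpha)$.)

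Because there is no size mismatch, the subsequent ``telescoping'' argument you outline -- absorbing an ``apparent asymmetry'' between block sizes via extra $(1 \oplus y^{T^o}_\alpha)$-factors and an index shift in Lemma \ref{lem:ome1} -- is solving a problem that does not arise, and does not establish what is needed. The correct mechanism is simpler: with $\sum m_i({\bf P}) = \sum m_i({\bf Q})$ the $(1 \oplus y^{T'^o}_{\alpha'})$-powers in Lemma \ref{basic4} cancel outright, leaving the bare ratio $y^{T'}({\bf P}')/y^{T'}({\bf Q}')$, which by Theorem \ref{thm-compatible1}(2) (the covering is \emph{reversed} in $\mathcal L'$) compares with $y^T({\bf P})/y^T({\bf Q})$ via Lemma \ref{lem-yf1}(2); and for $\approx$, the sign flip $\Omega({\bf P},{\bf Q}) = -\Omega({\bf P}',{\bf Q}')$ is precisely what makes $w({\bf P}) - w(\pi({\bf P})_-) = w({\bf Q}) - w(\pi({\bf Q})_-)$ once the reversed covering orientation is accounted for. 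You should also note the normalization $\sum m_i({\bf P}) \geq 0$ (WLOG), which the paper uses to ensure $\pi'\pi({\bf P}) = \{{\bf P}\}$ and hence that the numerator sums collapse; without it the intended reduction is not available.
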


\begin{proof}
As $\tau\neq \alpha_1,\alpha_2,\alpha_3,\alpha_4$, we have $\sum m_i({\bf P})=\sum m_i ({\bf Q})$. Without loss of generality, we assume that $\sum m_i({\bf P})=\sum m_i ({\bf Q})\geq 0$. Then we have
$\pi'\pi({\bf P})=\{{\bf P}\}$ and $\pi'\pi({\bf Q})=\{{\bf Q}\}$ by Theorem \ref{thm:pi}. Denote ${\bf P}(0,\cdots,0)={\bf P}'$ and ${\bf Q}(0,\cdots,0)={\bf Q}'$.

By Lemma \ref{lem:ycover}, we have
\begin{equation}\label{eq-sum1}
\frac{\bigoplus_{{\bf R}\in \pi'\pi({\bf P})}y^{T}({\bf R})}{\bigoplus_{{\bf R}\in \pi'\pi({\bf Q})}y^{T}({\bf R})}
=\frac{y^{T}({\bf P})}{y^{T}({\bf Q})}=y^{T^o}_{\tau}.
\end{equation}

In case $\tau\neq \alpha$, by Theorem \ref{thm-compatible1} (1), ${\bf P}'$ covers ${\bf Q}'$ and related by $\tau$, and
\begin{equation}\label{eq-ome5}
\Omega({\bf P},{\bf Q})=\Omega({\bf P}',{\bf Q}').
\end{equation}

By Lemmas \ref{basic4} and \ref{lem:ycover}, we have
\begin{equation}\label{eq-sum2}
\frac{\bigoplus_{{\bf R}'\in \pi({\bf P})}y^{T'}({\bf R}')}{\bigoplus_{{\bf R}'\in \pi({\bf Q})}y^{T'}({\bf R}')}
=\frac{y^{T'}({\bf P}')\cdot(1\oplus y^{T'^o}_{\alpha'})^{\sum m_i({\bf P})}}{y^{T'}({\bf Q}')\cdot(1\oplus y^{T'^o}_{\alpha'})^{\sum m_i({\bf Q})}}=y^{T'^o}_{\tau}.
\end{equation}

Then ${\bf P}\sim {\bf Q}$ follows by (\ref{eq-sum1}), (\ref{eq-sum2}) and Lemma \ref{lem-yf1} (1).

By (\ref{eq-ome5}), $w({\bf P})-w({\bf Q})=w({\bf P}')-w({\bf Q}')=w({\bf P}(0,\cdots,0))-w({\bf Q}(0,\cdots,0))$. Thus ${\bf P}\approx {\bf Q}$.

In case $\tau=\alpha$, by Theorem \ref{thm-compatible1} (2), ${\bf P}'$ is covered ${\bf Q}'$ and related by $\alpha'$, and
\begin{equation}\label{eq-ome6}
\Omega({\bf P},{\bf Q})=-\Omega({\bf P}',{\bf Q}').
\end{equation}

By Lemmas \ref{basic4} and \ref{lem:ycover}, we have
\begin{equation}\label{eq-sum3}
\frac{\bigoplus_{{\bf R}'\in \pi({\bf P})}y^{T'}({\bf R}')}{\bigoplus_{{\bf R}'\in \pi({\bf Q})}y^{T'}({\bf R}')}
=\frac{y^{T'}({\bf P}')\cdot(1\oplus y^{T'^o}_{\alpha'})^{\sum m_i({\bf P})}}{y^{T'}({\bf Q}')\cdot(1\oplus y^{T'^o}_{\alpha'})^{\sum m_i({\bf Q})}}=y^{T'^o}_{\alpha'}.
\end{equation}

Then ${\bf P}\sim {\bf Q}$ follows by (\ref{eq-sum1}), (\ref{eq-sum3}) and Lemma \ref{lem-yf1} (2).

By (\ref{eq-ome6}), $w({\bf P})-w({\bf Q})=w({\bf P}')-w({\bf Q}')=w({\bf P}(0,\cdots,0))-w({\bf Q}(0,\cdots,0))$. Thus ${\bf P}\approx {\bf Q}$.

The proof is complete.
\end{proof}

\begin{lemma}\label{lem-a1}
Assume that ${\bf P}$ covers ${\bf Q}$ and related by $\alpha_k$ for some $k\in \{1,2,3,4\}$ with $\alpha_k=\alpha_{k-1}$ or $\alpha_{k+1}$. Then we have ${\bf P}\sim {\bf Q}$ and ${\bf P}\approx {\bf Q}$.
\end{lemma}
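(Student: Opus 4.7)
The plan is to reduce to Theorem \ref{thm-compatible3}, which converts the present degenerate single covering into a length-two chain on the opposite lattice, and then to verify the two equivalences by explicit ratio computations using Lemmas \ref{lem:ycover}, \ref{lem-yf1}, and \ref{basic4}. Without loss of generality take $k=1$ and $\alpha_1=\alpha_2$, so that $\alpha_1$ is the radius of a self-folded triangle in $T^o$ and, after flipping $\alpha$, also the radius of a self-folded triangle in $T'^o$ whose loop is $\alpha'$.

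First I would invoke Theorem \ref{thm-compatible3} to obtain auxiliary elements ${\bf P}_0, {\bf Q}_0\in\mathcal L$ that are related to ${\bf P}, {\bf Q}$ by sequences of twists at $\alpha$. Since $\alpha\notin\{\alpha_1,\alpha_2,\alpha_3,\alpha_4\}$, every such cover falls under Lemma \ref{lem-na}, so iterating yields ${\bf P}\sim {\bf P}_0$, ${\bf P}\approx {\bf P}_0$, ${\bf Q}\sim {\bf Q}_0$, and ${\bf Q}\approx {\bf Q}_0$. By transitivity of the two equivalence relations, it suffices to prove ${\bf P}_0\sim {\bf Q}_0$ and ${\bf P}_0\approx {\bf Q}_0$ in each of the two cases of the theorem.

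In case (i), ${\bf P}_0$ covers ${\bf Q}_0$ related by $\alpha_1$ while a length-two chain ${\bf P}'>{\bf R}'>{\bf Q}'$ appears in $\mathcal L'$; inspection of the proof of Theorem \ref{thm-compatible3} via Proposition \ref{dProp-mutation4} shows the two primed-side covers are related by $\alpha'$ and $\alpha_1$. Lemma \ref{lem:ycover} then gives
\begin{equation*}
\frac{y^T({\bf Q}_0)}{y^T({\bf P}_0)}=y^{T^o}_{\alpha_1},\qquad \frac{y^{T'}({\bf Q}')}{y^{T'}({\bf P}')}=y^{T'^o}_{\alpha_1}\,y^{T'^o}_{\alpha'},
\end{equation*}
and the self-folded structure makes these agree via Lemma \ref{lem-yf1}(3)(b), which yields $y^{T^o}_{\alpha_1}=y^{T'^o}_{\alpha_1}\,y^{T'^o}_{\alpha'}$. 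Lemma \ref{basic4} and its analogue for $\pi'\pi$ expand the $\bigoplus$-sums on each side as the extremal element times a power of $(1\oplus y^{T'^o}_{\alpha'})$ or $(1\oplus y^{T^o}_\alpha)$; these cancel across numerator and denominator using $y^{T^o}_\alpha=(y^{T'^o}_{\alpha'})^{-1}$ from Lemma \ref{lem-yf1}(2), giving ${\bf P}_0\sim {\bf Q}_0$. For ${\bf P}_0\approx {\bf Q}_0$, the identity $\Omega({\bf P}_0,{\bf Q}_0)=\Omega({\bf P}',{\bf R}')+\Omega({\bf R}',{\bf Q}')$ in Theorem \ref{thm-compatible3}(i) reads as $w({\bf P}_0)-w({\bf Q}_0)=w({\bf P}')-w({\bf Q}')$, and parts (4)(5) of that theorem combined with Lemma \ref{lem:pm} identify these differences with the required ones at extremal points of $\pi'\pi(\cdot)$ and $\pi(\cdot)$. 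Case (ii) is the mirror image: a chain ${\bf P}_0>{\bf R}_0>{\bf Q}_0$ in $\mathcal L$ with successive labels $\alpha_1$ and $\alpha$ is matched by a single cover ${\bf P}'$ over ${\bf Q}'$ in $\mathcal L'$ related by $\alpha_1$, and the identity $y^{T^o}_{\alpha_1}\cdot y^{T^o}_\alpha=y^{T'^o}_{\alpha_1}$ (via Lemma \ref{lem-yf1}(2),(3)(b)) aligns the $y$-ratios while the $\Omega$-identity of Theorem \ref{thm-compatible3}(ii) handles $\approx$.

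The main obstacle will be tracking which extremum $(0,\ldots,0)$ or $(1,\ldots,1)$ of each $\pi$-fiber appears at each slot, since Theorem \ref{thm-compatible3}(4)(5) permits both possibilities; the choice dictates which branch of Lemma \ref{basic4} is invoked on each side. A careful case-split on these choices is needed, but in every subcase the same pair of identities from Lemma \ref{lem-yf1} delivers the required cancellation, and the extremum-independence of $w$ guaranteed by Lemma \ref{lem:pm} ensures the $\approx$-computation does not depend on this choice.
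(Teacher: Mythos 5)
Your proposal follows the paper's proof essentially step for step: reduce via Theorem \ref{thm-compatible3} and Lemma \ref{lem-na} to the auxiliary pair ${\bf P}_0,{\bf Q}_0$, compute the $y$-ratios with Lemma \ref{lem:ycover} and Lemma \ref{basic4}, close the $\sim$ argument with the self-folded identity $y^{T^o}_{\alpha_1}=y^{T'^o}_{\alpha_1}y^{T'^o}_{\alpha'}$ from Lemma \ref{lem-yf1}(3)(b), and derive $\approx$ from the $\Omega$-identity in Theorem \ref{thm-compatible3} together with Lemma \ref{lem:pm}. The only slight detour is your appeal to $y^{T^o}_\alpha=(y^{T'^o}_{\alpha'})^{-1}$ to cancel $(1\oplus y)$ factors; the paper avoids this by noting $b^{T^o}_{\alpha\alpha_1}=0$ (so $\sum m_i({\bf P}_0)=\sum m_i({\bf Q}_0)$), which makes the $(1\oplus y)$ exponents on the two sides identical and lets them cancel directly.
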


\begin{proof}
We may assume that $k=1$ and $\alpha_1=\alpha_2$.

We shall only consider the statement (i) of Theorem \ref{thm-compatible3} holds, as it can be proved similarly if the statement (ii) holds.

Let ${\bf P}_0$, ${\bf Q}_0$, ${\bf P}', {\bf R}'$ and ${\bf Q}'$ be the elements given in Theorem \ref{thm-compatible3}. Then ${\bf P}_0$ and ${\bf P}$ are related by a sequence of twists at $\alpha$. By Lemma \ref{lem-na} we have ${\bf P}_0\sim {\bf P},{\bf P}_0\approx {\bf P}$. Similarly, ${\bf Q}_0\sim {\bf Q},{\bf Q}_0\approx {\bf Q}$. Thus it suffices to prove ${\bf P}_0\sim {\bf Q}_0$ and ${\bf P}_0\approx {\bf Q}_0$.

From Theorem \ref{thm-compatible3}, we have $\Omega({\bf P}_0,{\bf Q}_0)=\Omega({\bf P}',{\bf R}')+\Omega({\bf R}',{\bf Q}')$. Thus, $w({\bf P}_0)-w({\bf Q}_0)=w({\bf P}')-w({\bf Q}')$. Therefore, ${\bf P}_0\approx {\bf Q}_0$ follows by Theorem \ref{thm-compatible3} (i) (4), (5).

We now prove ${\bf P}_0\sim {\bf Q}_0$.

As ${\bf P}_0$ covers ${\bf Q}_0$ and related by $\alpha_1$, we see that $\sum m_i({\bf P}_0)-\sum m_i({\bf Q}_0)=b^{T^o}_{\alpha \alpha_1}=0$. We may assume that $\sum m_i({\bf P}_0)=\sum m_i({\bf Q}_0)\geq 0$. Then
$\pi'\pi({\bf P}_0)=\{{\bf P}_0\}$ and $\pi'\pi({\bf Q}_0)=\{{\bf Q}_0\}$ by Theorem \ref{thm:pi}.

By Lemma \ref{lem:ycover}, we have
\begin{equation}\label{eq-sum8}
\frac{\bigoplus_{{\bf R}\in \pi'\pi({\bf P}_0)}y^{T}({\bf R})}{\bigoplus_{{\bf R}\in \pi'\pi({\bf Q}_0)}y^{T}({\bf R})}
=\frac{y^{T}({\bf P}_0)}{y^{T}({\bf Q}_0)}=y^{T^o}_{\alpha_1}.
\end{equation}

By Theorem \ref{thm-compatible3} and Lemmas \ref{basic4}, \ref{lem:ycover}, we have
\begin{equation}\label{eq-sum9}
\frac{\bigoplus_{{\bf S}'\in \pi({\bf P}_0)}y^{T'}({\bf S}')}{\bigoplus_{{\bf S}'\in \pi({\bf Q}_0)}y^{T'}({\bf S}')}
=\frac{y^{T'}({\bf P}')}{y^{T'}({\bf Q}')}=\frac{y^{T'}({\bf P}')}{y^{T'}({\bf R}')}\frac{y^{T'}({\bf R}')}{y^{T'}({\bf Q}')}=y^{T'^o}_{\alpha_1}y^{T'^o}_{\alpha'}.
\end{equation}

As $\alpha_1=\alpha_2$, $\alpha_1$ is the radius of the self-folded triangle $(\alpha_1,\alpha_1,\alpha')$ in $T'^o$. By Lemma \ref{lem-yf1} (3) (b), $y^{T^o}_{\alpha_1}=y^{T'^o}_{\alpha_1}y^{T'^o}_{\alpha'}$. Thus, ${\bf P}\sim {\bf Q}$ follows by (\ref{eq-sum8}), (\ref{eq-sum9}).

The proof is complete.
\end{proof}

\begin{lemma}\label{lem-a}
Let ${\bf P}=P,(P,\Delta_j(q))$ or $(\Delta_i(p),P,\Delta_j(q))$. Assume that ${\bf P}$ covers some ${\bf Q}$ and related by $\alpha_k$ for some $k\in \{1,2,3,4\}$ with $\alpha_k\neq \alpha_{k-1},\alpha_{k+1}$. If all the $\alpha$-mutable edges of $P$ are labeled $\alpha_{k-1}$ or $\alpha_{k+1}$, then we have ${\bf P}\sim {\bf Q}$ and ${\bf P}\approx {\bf Q}$.
\end{lemma}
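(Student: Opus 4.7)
The plan is to invoke Theorem \ref{thm-compatible2} to transfer the covering relation $\mathbf P \succ \mathbf Q$ from $\mathcal L$ to $\mathcal L'$ via distinguished representatives of the partition blocks, and then use Lemmas \ref{basic4}, \ref{lem:ycover}, \ref{lem-yf1} to verify both equivalences by direct computation, handling the exceptional cases of Theorem \ref{thm-compatible2} through detours through $\alpha$-related covers already covered by Lemma \ref{lem-na}.

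First I would treat the non-exceptional situation of Theorem \ref{thm-compatible2}. Set $\mathbf P':=\mathbf P(\vec 0)$ and $\mathbf Q':=\mathbf Q(\vec 0)$ when $k\in\{1,3\}$, and $\mathbf P':=\mathbf P(\vec 1)$, $\mathbf Q':=\mathbf Q(\vec 1)$ when $k\in\{2,4\}$, using the parameterisation from Theorem \ref{thm:pi}. Theorem \ref{thm-compatible2} then asserts that $\mathbf P'$ covers $\mathbf Q'$ in $\mathcal L'$ related by $\alpha_k$, that $\Omega(\mathbf P,\mathbf Q)=\Omega(\mathbf P',\mathbf Q')$, and conversely that $\mathbf P$ is the opposite-extremal element $\mathbf P'(\vec 1)$ (respectively $\mathbf P'(\vec 0)$) in the block $\pi'(\mathbf P')=\pi'\pi(\mathbf P)$, and similarly for $\mathbf Q$. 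Combined with Lemma \ref{lem:pm}, which allows us to evaluate $w(\pi'\pi(\mathbf P)_{\pm})$ and $w(\pi(\mathbf P)_{\pm})$ at either extremum, the $w$-equivalence $\mathbf P\approx\mathbf Q$ reduces to
\[
w(\mathbf P)-w(\mathbf P')=w(\mathbf Q)-w(\mathbf Q'),
\]
which is precisely the conclusion $\Omega(\mathbf P,\mathbf Q)=\Omega(\mathbf P',\mathbf Q')$ of Theorem \ref{thm-compatible2}.

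For the $y$-equivalence, a cover related by $\alpha_k\neq\alpha$ does not alter the $\mathbf m(\cdot,\alpha)$ counts, so $r:=\sum_i m_i(\mathbf P)=\sum_i m_i(\mathbf Q)$; swapping roles between the two sides of the partition bijection if necessary, we may assume $r\ge 0$. Then Lemma \ref{basic4}(1) gives
\[
\bigoplus_{\mathbf R'\in\pi(\mathbf P)}y^{T'}(\mathbf R')=y^{T'}(\mathbf P')(1\oplus y^{T'^o}_{\alpha'})^{r},\qquad \bigoplus_{\mathbf R'\in\pi(\mathbf Q)}y^{T'}(\mathbf R')=y^{T'}(\mathbf Q')(1\oplus y^{T'^o}_{\alpha'})^{r},
\]
so the $\pi$-ratio collapses to $y^{T'}(\mathbf P')/y^{T'}(\mathbf Q')=y^{T'^o}_{\alpha_k}$ by Lemma \ref{lem:ycover}. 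The parallel computation on the $\pi'\pi$-side, using the dual statement of Lemma \ref{basic4} applied to $\pi'$ and the fact that $\mathbf P,\mathbf Q$ are extremal in their blocks, yields the $\pi'\pi$-ratio $y^T(\mathbf P)/y^T(\mathbf Q)=y^{T^o}_{\alpha_k}$. Since $\alpha_k\neq\alpha_{k\pm 1}$, Lemma \ref{lem-yf1}(3)(a,b) expresses $y^{T^o}_{\alpha_k}$ as $y^{T'^o}_{\alpha_k}$ multiplied by a correction in $y^{T'^o}_{\alpha'}$ controlled by $b^{T^o}_{\alpha\alpha_k}$; a careful tracking of the extra $\alpha_k$-labelled edges arising from the cover shows that this correction is precisely accounted for by the way the counts $r$ change on the two sides, delivering $\mathbf P\sim\mathbf Q$.

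The main obstacle will be the exceptional cases enumerated in Theorem \ref{thm-compatible2}. Each exception forces $\alpha$ itself to be a side of a triangle $\Delta_i(p)$ or $\Delta_j(q)$ adjacent to $\mathbf P$, together with an additional cover above $\mathbf P$ or below $\mathbf Q$ that is related by $\alpha$ (for instance, $\tau_j(q)=\alpha$ and $(P,\Delta_{j+1}(q))\succ\mathbf P$). The plan in these cases is to replace $\mathbf P$ by a neighbour $\widetilde{\mathbf P}$ reached along this extra $\alpha$-cover, and correspondingly replace $\mathbf Q$ by $\widetilde{\mathbf Q}$, so that the cover $\widetilde{\mathbf P}\succ\widetilde{\mathbf Q}$ now falls into a non-exceptional situation of Theorem \ref{thm-compatible2}; the transfer is guaranteed by Lemmas \ref{lem:diamond}--\ref{lem:diamond5}, which supply the required diamond configurations in $\mathcal L$. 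Since each step of this detour is a cover related by $\alpha$, Lemma \ref{lem-na} yields $\mathbf P\sim\widetilde{\mathbf P}$, $\mathbf P\approx\widetilde{\mathbf P}$ and $\mathbf Q\sim\widetilde{\mathbf Q}$, $\mathbf Q\approx\widetilde{\mathbf Q}$, and transitivity of the two equivalence relations then concludes the argument. The real work is the case-by-case verification that such a detour can always be arranged, exhausting the several exceptional configurations on either side.
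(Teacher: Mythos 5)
Your overall strategy matches the paper's: split into the exceptional and non-exceptional cases of Theorem \ref{thm-compatible2}, derive $\approx$ directly from the $\Omega$-identity of that theorem, compute the $\sim$ ratios using Lemma \ref{basic4} and \ref{lem:ycover}, and reduce the exceptional cases to $\alpha$-related covers via the diamond lemmas and Lemma \ref{lem-na}. The $\approx$ argument and the exceptional-case plan are both sound.

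There is, however, a genuine error in the $\sim$ computation. You assert that ``a cover related by $\alpha_k\neq\alpha$ does not alter the $\mathbf m(\cdot,\alpha)$ counts, so $r:=\sum_i m_i(\mathbf P)=\sum_i m_i(\mathbf Q)$.'' This is false. Twisting at a tile whose diagonal is a side $\alpha_k$ of the quadrilateral around $\alpha$ does change the number of $\alpha$-labelled edges in the matching (see Proposition \ref{prop-com}(3)(4) and Proposition \ref{dProp-mutation4}). The correct relation, which the paper establishes immediately after fixing $k=1$, is $\sum_i m_i(\mathbf P)-\sum_i m_i(\mathbf Q)=b^{T^o}_{\alpha\alpha_k}>0$. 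If you run your computation with equal $r$ on both sides, the $(1\oplus y^{T'^o}_{\alpha'})^{r}$ factors cancel and the $\pi$-ratio becomes $y^{T'^o}_{\alpha_k}$ while the $\pi'\pi$-ratio is $y^{T^o}_{\alpha_k}$; these are \emph{not} equal, since by Lemma \ref{lem-yf1}(3)(a) they differ precisely by $(1\oplus y^{T'^o}_{\alpha'})^{b^{T^o}_{\alpha\alpha_k}}$. The mismatch $b^{T^o}_{\alpha\alpha_k}$ between $\sum m_i(\mathbf P)$ and $\sum m_i(\mathbf Q)$ is exactly what produces the residual factor $(1\oplus y^{T'^o}_{\alpha'})^{b^{T^o}_{\alpha\alpha_k}}$ in the $\pi$-ratio, and this is what makes the two sides agree. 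You later gesture at ``the way the counts $r$ change on the two sides'', which contradicts your opening claim that they do not change; the argument as written is internally inconsistent, and the explicit display you give (same exponent $r$ in both $\bigoplus$ identities) would not deliver $\mathbf P\sim\mathbf Q$.

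You also do not distinguish, as the paper does, the sub-case $\sum m_i(\mathbf P)\ge 0>\sum m_i(\mathbf Q)$, where $\pi(\mathbf Q)$ is a singleton and the balancing factor appears on the $\pi'\pi$-side rather than the $\pi$-side. That sub-case requires a separate (though parallel) computation once the correct count relation is in place.
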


\begin{proof}
We may assume that $k=1$. Then $\sum m_i({\bf P})-\sum m_i({\bf Q})=b^{T^o}_{\alpha \alpha_1}>0$. We may further assume that $\sum m_i({\bf P})\geq 0$. Then we have
$\pi'\pi({\bf P})=\{{\bf P}\}$ by Theorem \ref{thm:pi}. Denote ${\bf P}(0,\cdots,0)={\bf P}'$ and ${\bf Q}(0,\cdots,0)={\bf Q}'$.

We divide the proof into two parts according to the conditions in Theorem \ref{thm-compatible2} (1).

1) The cases except for the exceptional cases in Theorem \ref{thm-compatible2} (1).

As $\alpha_1\neq \alpha_2,\alpha_4$, by Theorem \ref{thm-compatible2} (1), we see that $$w({\bf P}'(1,\cdots,1))-w({\bf Q}'(1,\cdots,1))=w({\bf P}(0,\cdots,0))-w({\bf Q}(0,\cdots,0)).$$ Thus ${\bf P}\approx {\bf Q}$.

We now show that ${\bf P}\sim {\bf Q}$.

We first consider the case that $\sum m_i({\bf P})\geq \sum m_i({\bf Q})\geq 0$. Then by Theorem \ref{thm:pi} $\pi'\pi({\bf Q})=\{{\bf Q}\}$.

By Lemma \ref{lem:ycover}, we have
\begin{equation}\label{eq-sum4}
\frac{\bigoplus_{{\bf R}\in \pi'\pi({\bf P})}y^{T}({\bf R})}{\bigoplus_{{\bf R}\in \pi'\pi({\bf Q})}y^{T}({\bf R})}
=\frac{y^{T}({\bf P})}{y^{T}({\bf Q})}=y^{T^o}_{\alpha_1}.
\end{equation}

By Lemmas \ref{basic4} and \ref{lem:ycover}, we have
\begin{equation}\label{eq-sum5}
\frac{\bigoplus_{{\bf R}'\in \pi({\bf P})}y^{T'}({\bf R}')}{\bigoplus_{{\bf R}'\in \pi({\bf Q})}y^{T'}({\bf R}')}
=\frac{y^{T'}({\bf P}')\cdot(1\oplus y^{T'^o}_{\alpha'})^{\sum m_i({\bf P})}}{y^{T'}({\bf Q}')\cdot(1\oplus y^{T'^o}_{\alpha'})^{\sum m_i({\bf Q})}}=y^{T'^o}_{\alpha_1}(1\oplus y^{T'^o}_{\alpha'})^{b_{\alpha\alpha_1}^{T^o}}.
\end{equation}

By (\ref{eq-sum4}), (\ref{eq-sum5}) and Lemma \ref{lem-yf1} (3) (a), we obtain ${\bf P}\sim {\bf Q}$.

We then consider the case that $\sum m_i({\bf P})\geq 0> \sum m_i({\bf Q})$. By Theorem \ref{thm:pi}, we have $\pi({\bf Q})=\{{\bf Q}'\}$.

By Lemmas \ref{basic4} and \ref{lem:ycover}, we have
\begin{equation}\label{eq-sum6}
\frac{\bigoplus_{{\bf R}\in \pi'\pi({\bf P})}y^{T}({\bf R})}{\bigoplus_{{\bf R}\in \pi'\pi({\bf Q})}y^{T}({\bf R})}
=\frac{y^{T}({\bf P})}{y^{T}({\bf Q})\cdot(1\oplus y^{T'^o}_{\alpha'})^{-\sum m_i({\bf Q})}}=y^{T^o}_{\alpha_1}\cdot (1\oplus y^{T'^o}_{\alpha'})^{\sum m_i({\bf Q})}.
\end{equation}

By Lemmas \ref{basic4} and \ref{lem:ycover}, we have
\begin{equation}\label{eq-sum7}
\frac{\bigoplus_{{\bf R}'\in \pi({\bf P})}y^{T'}({\bf R}')}{\bigoplus_{{\bf R}'\in \pi({\bf Q})}y^{T'}({\bf R}')}
=\frac{y^{T'}({\bf P}')\cdot(1\oplus y^{T'^o}_{\alpha'})^{\sum m_i({\bf P})}}{y^{T'}({\bf Q}')}=y^{T'^o}_{\alpha_1}(1\oplus y^{T'^o}_{\alpha'})^{\sum m_i({\bf P})}.
\end{equation}

By (\ref{eq-sum6}), (\ref{eq-sum7}) and Lemma \ref{lem-yf1} (3) (a), we obtain ${\bf P}\sim {\bf Q}$.

2) The exceptional cases in Theorem \ref{thm-compatible2} (1).

Case 1. ${\bf P}=(P,\Delta_j(q)), {\bf Q}=(\mu_{G_l}P,\Delta_j(q))$ such that $\tau_j(q)=\alpha$ and $(P,\Delta_{j+1}(q))$ covers ${\bf P}$. By Lemma \ref{lem:diamond1}, we have $(P,\Delta_{j+1}(q))$ covers $(\mu_{G_l}P,\Delta_{j+1}(q))$ and $(\mu_{G_l}P,\Delta_{j+1}(q))$ covers ${\bf Q}$. By the previous discussion, we have $(P,\Delta_{j+1}(q))\star \mu_{G_l}P,\Delta_{j+1}(q))$ for $\star\in \{\sim,\approx\}$. Since $\tau_j(q)=\alpha$, we have $(P,\Delta_{j+1}(q))\star {\bf P}$ and $(\mu_{G_l}P,\Delta_{j+1}(q))\star {\bf Q}$ for $\star\in \{\sim,\approx\}$ by Lemma \ref{lem-na}. Therefore, we have ${\bf P}\sim {\bf Q}$ and ${\bf P}\approx {\bf Q}$.

The remaining exceptional cases in Theorem \ref{thm-compatible2} (1) can be proved similarly to Case 1 by using Lemmas \ref{lem:diamond4}, \ref{lem:diamond5}, \ref{lem:diamond2} and \ref{lem:diamond3}.

The proof is complete.
\end{proof}


\begin{lemma}\label{lem:covera}
Assume that ${\bf P}=(P,\Delta_{j+1}(q))$ (resp. $(\Delta_i(p),P,\Delta_{j+1}(q))$) covers ${\bf Q}=(P,\Delta_{j}(q))$ (resp. $(\Delta_i(p),P,\Delta_{j}(q))$) with $\tau_j(q)=\alpha_k$ for some $k\in \{1,2,3,4\}$. Then we have ${\bf P}\sim {\bf Q}$ and ${\bf P}\approx {\bf Q}$.
\end{lemma}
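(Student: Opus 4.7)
The strategy is to reduce to Lemmas~\ref{lem-a} and~\ref{lem-a1}, whose conclusions are exactly ${\bf P}\sim{\bf Q}$ and ${\bf P}\approx{\bf Q}$ under a cover relation by $\alpha_k$, but under the additional structural hypothesis that every $\alpha$-mutable edge of the involved perfect matching is labeled $\alpha_{k-1}$ or $\alpha_{k+1}$. The task is to remove that hypothesis and transport the conclusion back to ${\bf P},{\bf Q}$. The key observation is that $\alpha$ is the diagonal of the quadrilateral $(\alpha_1,\alpha_2,\alpha_3,\alpha_4)$, hence $\alpha\notin\{\alpha_1,\alpha_2,\alpha_3,\alpha_4\}$; so any cover in $\mathcal L$ related by $\alpha$ falls under Lemma~\ref{lem-na}, which is the bridge we will use to change $P$ without changing the two $q$-triangles.

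Concretely, I would list the tiles $G_{l_1},\dots,G_{l_m}$ of $G_{T^o,\widetilde\beta}$ whose diagonal is labeled $\alpha$ and on which $P\cap\mathrm{edge}(G_{l_a})$ is labeled by the ``wrong'' pair $\{\alpha_k,\alpha_{k+2}\}$ (indices mod $4$). Set $P^{(0)}=P$, $P^{(a)}=\mu_{G_{l_a}}P^{(a-1)}$, $P_0=P^{(m)}$; by construction every $\alpha$-mutable edge of $P_0$ is labeled $\alpha_{k-1}$ or $\alpha_{k+1}$. Let ${\bf P}^{(a)}=(P^{(a)},\Delta_{j+1}(q))$ (respectively $(\Delta_i(p),P^{(a)},\Delta_{j+1}(q))$). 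By Lemma~\ref{Lem-cover}, consecutive elements ${\bf P}^{(a-1)},{\bf P}^{(a)}$ sit in a cover relation in $\mathcal L$ related by $\alpha$, so Lemma~\ref{lem-na} gives ${\bf P}^{(a-1)}\sim{\bf P}^{(a)}$ and ${\bf P}^{(a-1)}\approx{\bf P}^{(a)}$; by transitivity ${\bf P}\sim{\bf P}_0$ and ${\bf P}\approx{\bf P}_0$ with ${\bf P}_0=(P_0,\Delta_{j+1}(q))$. Running exactly the same reduction with $\Delta_j(q)$ in place of $\Delta_{j+1}(q)$ yields ${\bf Q}\sim{\bf Q}_0$, ${\bf Q}\approx{\bf Q}_0$ for ${\bf Q}_0=(P_0,\Delta_j(q))$.

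Next I would check that ${\bf P}_0$ still covers ${\bf Q}_0$ in $\mathcal L$ related by $\tau_j(q)=\alpha_k$. The two $q$-triangles are unchanged, and by Lemma~\ref{Lem-cover2} the cover relation only depends on which of $E_1(q),E_2(q)$ belongs to the matching. Since each twist used in constructing $P_0$ is performed on a tile whose diagonal is $\alpha\neq E_1(q),E_2(q)$-label in the generic situation, the membership is preserved; in the single delicate subcase where $\tau_{i_c}=\alpha$ the twist at $G_c$ swaps $E_1(q)\leftrightarrow E_2(q)$ and will require a separate check. Once ${\bf P}_0$ covers ${\bf Q}_0$ is established, Lemma~\ref{lem-a} (when $\alpha_k\neq\alpha_{k-1},\alpha_{k+1}$) or Lemma~\ref{lem-a1} (when $\alpha_k$ coincides with $\alpha_{k-1}$ or $\alpha_{k+1}$) applies because the new hypothesis on $P_0$ holds by construction, giving ${\bf P}_0\sim{\bf Q}_0$ and ${\bf P}_0\approx{\bf Q}_0$. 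Transitivity of $\sim$ and $\approx$ then concludes the proof.

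The main obstacle is the boundary bookkeeping at the last tile $G_c$ (and, in the two-puncture version of the lemma, at the first tile $G_1$). A twist at $G_c$ when $\tau_{i_c}=\alpha$ exchanges $E_1(q)$ and $E_2(q)$, which is exactly the data controlling the cover between $\Delta_t(q)$ and $\Delta_1(q)$ in Lemma~\ref{Lem-cover2}. I plan to handle this either by arranging such a twist as the final step in the reduction and verifying directly, using the explicit description of the order on $\mathcal L(T^o,\widetilde\beta^{(q)})$ in Subsection~\ref{delta1}, that the cover ${\bf P}_0\succ{\bf Q}_0$ still holds after this swap, or by treating the exceptional configuration separately via a short case analysis that again ends in an application of Lemma~\ref{lem-a1}. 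Once this single boundary case is cleared, the uniform argument above handles all of $\mathcal L(T^o,\widetilde\beta^{(q)})$ and $\mathcal L(T^o,\widetilde\beta^{(p,q)})$ at once.
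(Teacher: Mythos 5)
The strategy of your proposal---reduce to a matching $R$ (your $P_0$) whose $\alpha$-mutable edges are all labeled $\alpha_{k-1},\alpha_{k+1}$, then chain $\sim,\approx$ through the twist sequence and finish with Lemmas~\ref{lem-a}/\ref{lem-a1}---is exactly the paper's strategy. The genuine gap is that you leave the heart of the argument, namely that all the needed cover relations in $\mathcal L$ actually hold, as something you ``plan to handle'' later. The paper resolves this in one stroke with the diamond lemmas \ref{lem:diamond1} (and \ref{lem:diamond2} for the $(p,q)$ version), which you did not invoke. These lemmas take as input the hypothesis that $(P,\Delta_{j+1}(q))$ covers $(P,\Delta_j(q))$, together with an $\alpha$-twist $P<\mu_{G_l}P$ (or $P>\mu_{G_l}P$, depending on $k$), and output all four cover relations of the diamond, in particular that $(\mu_{G_l}P,\Delta_{j+1}(q))$ covers $(\mu_{G_l}P,\Delta_{j}(q))$. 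Applied inductively along an \emph{increasing} chain $P=P_0<P_1<\cdots<P_n=R$, this automatically propagates the $\Delta$-cover through every step and simultaneously certifies that each $(P_{\ell},\Delta_\ast)$ covers $(P_{\ell-1},\Delta_\ast)$, so the boundary bookkeeping at $G_c$ (and at $G_1$ in the $(p,q)$ case) never has to be done separately.

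Your alternative of checking the lifted covers with Lemma~\ref{Lem-cover} and the final $\Delta$-cover with Lemma~\ref{Lem-cover2} leads directly into the exceptional case ($l=c$, $j=1$) that Lemma~\ref{Lem-cover} carves out, and you correctly flag this as a problem but offer only a sketch (``a short case analysis'') rather than a resolution. Two further points need attention if you pursue that route: (a) you never specify the direction of your twists, whereas the diamond lemma's two clauses are sensitive to whether the twist is increasing or decreasing, so the paper's choice of an increasing chain is doing real work; (b) in the $(\Delta_i(p),P,\Delta_j(q))$ version there is an analogous boundary issue at $G_1$ and the $p$-triangles, which your writeup only mentions in passing. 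Until the diamond-lemma machinery (or an explicit replacement for it) is in place, the proposal is not a complete proof.
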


\begin{proof}
We may assume that $k=1$. We can do twists on the tiles with diagonals labeled $\alpha$ for $P$ to obtain a perfect matching $R$ such that all $\alpha$-mutable edges are labeled $\alpha_2$ or $\alpha_4$. We obtain a sequence of perfect matchings $P=P_0<P_1<\cdots<P_n=R$ such that $P_{\ell}$ covers $P_{\ell-1}$ for all $\ell=1,\cdots, n$.

We first consider the case that ${\bf P}=(P,\Delta_{j+1}(q))$.  By Lemma \ref{lem:diamond1}, we have $(P_\ell,\Delta_{j+1}(q))$ covers $(P_{\ell-1},\Delta_{j+1}(q))$, $(P_\ell,\Delta_{j}(q))$ covers $(P_{\ell-1},\Delta_{j}(q))$ for all $\ell$ with $1\leq \ell\leq n$ and $(R,\Delta_{j+1}(q))$ covers $(R,\Delta_{j}(q))$. Thus by Lemmas \ref{lem-na}, \ref{lem-a} and \ref{lem-a1}, for $\star\in \{\sim,\approx\}$, we obtain $${\bf P}\star (R,\Delta_{j+1}(q))\star (R,\Delta_j(q))\star {\bf Q}.$$

The case that ${\bf P}=(\Delta_i(p),P,\Delta_{j+1}(q))$ can be proved similarly by Lemmas \ref{lem:diamond2}, \ref{lem-na}, \ref{lem-a} and \ref{lem-a1}.
\end{proof}

The following result can be proved similarly by Lemmas \ref{lem:diamond3}, \ref{lem-na}, \ref{lem-a1} and \ref{lem-a}.

\begin{lemma}\label{lem:covera4}
Assume that ${\bf P}=(\Delta_{i+1}(p),P,\Delta_{j}(q))$ covers ${\bf Q}=(\Delta_i(p),P,\Delta_{j}(q))$ with $\tau_j(q)=\alpha_k$ for some $k\in \{1,2,3,4\}$. Then we have ${\bf P}\sim {\bf Q}$ and ${\bf P}\approx {\bf Q}$.
\end{lemma}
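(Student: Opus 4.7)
The plan is to adapt the proof of Lemma \ref{lem:covera} by swapping the roles of the puncture $p$ and the puncture $q$, so that all "switch-the-triangle" steps happen at $p$ instead of $q$. Throughout we may assume $k=1$ (the other values of $k$ are symmetric). First I would dispose of the degenerate case $\alpha_1\in\{\alpha_2,\alpha_4\}$ immediately by invoking Lemma \ref{lem-a1}, since ${\bf P}$ already covers ${\bf Q}$ related by $\tau_i(p)=\alpha_1$; so from now on I assume $\alpha_1\ne\alpha_2,\alpha_4$, which is exactly the hypothesis required to feed into Lemmas \ref{lem:diamond3} and \ref{lem-a}.

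Next, I would construct the bridging perfect matching: starting from $P$, perform a sequence of upward twists on tiles of $G_{T^o,\widetilde\beta}$ whose diagonals are labeled $\alpha$ and whose matched edges in the current perfect matching are labeled $\alpha_1$ or $\alpha_3$. This yields a chain
\[
P=P_0<P_1<\cdots<P_n=R,
\]
with each $P_\ell$ covering $P_{\ell-1}$ via a twist on a tile with diagonal labeled $\alpha$, and with the property that every $\alpha$-mutable edge of $R$ is labeled $\alpha_2$ or $\alpha_4$. At each step, Lemma \ref{lem:diamond3}(1) (applicable because $\tau_i(p)=\alpha_1\notin\{\alpha_2,\alpha_4\}$ and $P_{\ell-1}<P_\ell$) produces a diamond: $(\Delta_{i+1}(p),P_\ell,\Delta_j(q))$ covers both $(\Delta_{i+1}(p),P_{\ell-1},\Delta_j(q))$ and $(\Delta_i(p),P_\ell,\Delta_j(q))$, and each of those in turn covers $(\Delta_i(p),P_{\ell-1},\Delta_j(q))$. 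So I get two parallel chains in $\mathcal L$, one above $\Delta_{i+1}(p)$ and one above $\Delta_i(p)$, with every covering relation in these chains being of type $\alpha$.

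With the chains in place, I would conclude using the equivalences already established. Every cover in the two parallel chains is related by $\alpha$, so Lemma \ref{lem-na} gives
\[
{\bf P}\star(\Delta_{i+1}(p),R,\Delta_j(q)),\qquad (\Delta_i(p),R,\Delta_j(q))\star{\bf Q}
\]
for $\star\in\{\sim,\approx\}$. The remaining bridge $(\Delta_{i+1}(p),R,\Delta_j(q))\star(\Delta_i(p),R,\Delta_j(q))$ is a covering related by $\tau_i(p)=\alpha_1\ne\alpha_{0},\alpha_{2}$ in which every $\alpha$-mutable edge of $R$ is labeled $\alpha_2$ or $\alpha_4$ (i.e.\ $\alpha_{k\pm1}$); this is exactly the hypothesis of Lemma \ref{lem-a} in its $(\Delta_i(p),P,\Delta_j(q))$-form, which gives the equivalence for that step. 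Composing these three links by transitivity of $\sim$ and $\approx$ yields ${\bf P}\sim{\bf Q}$ and ${\bf P}\approx{\bf Q}$.

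The only subtlety, which I expect to be the main obstacle, is the directionality issue when $k\in\{2,4\}$: there Lemma \ref{lem:diamond3}(2) demands $P>\mu_{G_l}P$, so I must instead chain \emph{downward} twists on $\alpha$-diagonal tiles to reach a perfect matching $R$ whose $\alpha$-mutable edges are labeled $\alpha_1$ or $\alpha_3$ (again $\alpha_{k\pm1}$), which is what Lemma \ref{lem-a} requires in that case. One also has to verify at each step that the two parallel chains stay entirely inside $\mathcal L$ and that no covering relation accidentally fails because of a degenerate triangle at $p$ or $q$; but since the whole argument only twists $\alpha$-diagonal tiles and toggles the $p$-triangle between two fixed adjacent choices $\Delta_i(p),\Delta_{i+1}(p)$, the lattice-structure bookkeeping is handled uniformly by Lemma \ref{lem:diamond3}, exactly as Lemmas \ref{lem:diamond1} and \ref{lem:diamond2} did in the proof of Lemma \ref{lem:covera}.
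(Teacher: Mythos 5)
Your proposal is correct and is essentially the paper's own argument: the paper disposes of this lemma by saying it "can be proved similarly" to Lemma \ref{lem:covera} using Lemmas \ref{lem:diamond3}, \ref{lem-na}, \ref{lem-a1} and \ref{lem-a}, which is exactly the reduction-to-$k=1$, degenerate case via Lemma \ref{lem-a1}, increasing $\alpha$-twist chain $P=P_0<\cdots<P_n=R$, diamond propagation via Lemma \ref{lem:diamond3}, and final bridge via Lemma \ref{lem-a} that you spell out. Your reading of the hypothesis as the cover being related by $\tau_i(p)=\alpha_k$ (rather than the literal ``$\tau_j(q)=\alpha_k$'') is the intended one, so no gap there.
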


\begin{lemma}\label{lem:covera0}
Suppose that $\mathcal L=\mathcal L(T^o,\widetilde\beta)$. Assume that $P\in \mathcal L$ can twist on $G_l$ such that $P$ covers $Q=\mu_{G_l}P$. If $\tau_{i_l}=\alpha_k$ for some $k\in \{1,2,3,4\}$, then we have $P\sim Q$ and $P\approx Q$.
\end{lemma}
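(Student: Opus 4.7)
The plan is to reduce the statement to Lemma \ref{lem-a} (or Lemma \ref{lem-a1} in the degenerate case), by exploiting the fact that twists on tiles whose diagonal is labeled $\alpha$ preserve both equivalence relations. This is precisely the content of Lemma \ref{lem-na} applied with $\tau = \alpha$, which is legitimate since $\alpha \neq \alpha_1, \alpha_2, \alpha_3, \alpha_4$.

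First, if $\alpha_k = \alpha_{k-1}$ or $\alpha_k = \alpha_{k+1}$, the conclusion is immediate from Lemma \ref{lem-a1}, so I would assume $\alpha_k \neq \alpha_{k-1}, \alpha_{k+1}$. I would then construct a perfect matching $P_0$ related to $P$ by a sequence of twists at $\alpha$-tiles, with the property that every $\alpha$-mutable edge of $P_0$ is labeled $\alpha_{k-1}$ or $\alpha_{k+1}$. Concretely, each tile $G_a$ of $G_{T^o,\widetilde\beta}$ with diagonal $\alpha$ has its four edges labeled by $\alpha_1, \alpha_2, \alpha_3, \alpha_4$, with opposite pairs $\{\alpha_{k-1}, \alpha_{k+1}\}$ and $\{\alpha_k, \alpha_{k+2}\}$; for every such tile at which $P$ currently holds the latter pair, I would apply the twist $\mu_{G_a}$, producing a chain
\begin{equation*}
P = P^{(0)}, \; P^{(1)}, \; \ldots, \; P^{(r)} = P_0
\end{equation*}
in which consecutive matchings differ by a single $\alpha$-twist.

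The next step is to verify that the same sequence of twists can be executed on $Q = \mu_{G_l} P$, yielding $Q_0 = \mu_{G_l} P_0$, and that at every stage $P^{(i)}$ covers $Q^{(i)} := \mu_{G_l} P^{(i)}$ in $\mathcal L$, related by $\alpha_k$. The underlying commutation between $\mu_{G_a}$ and $\mu_{G_l}$ is trivial when $G_a$ and $G_l$ are non-adjacent in the snake graph; when $G_a = G_{l\pm 1}$, the shared edge is labeled by the third side of the triangle $(\alpha, \alpha_k, \alpha_{k\pm 1})$, a direct case check analogous to Proposition \ref{dProp-mutation4} shows the twists still commute with the covering relations preserved. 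With all this in place, Lemma \ref{lem-na} yields $P^{(i-1)} \sim P^{(i)}$, $P^{(i-1)} \approx P^{(i)}$ and $Q^{(i-1)} \sim Q^{(i)}$, $Q^{(i-1)} \approx Q^{(i)}$ for each $i$; hence $P \sim P_0$, $P \approx P_0$, $Q \sim Q_0$, $Q \approx Q_0$. On the other hand, $P_0$ covers $Q_0$ with all $\alpha$-mutable edges labeled in $\{\alpha_{k-1}, \alpha_{k+1}\}$, so Lemma \ref{lem-a} applies and yields $P_0 \sim Q_0$ and $P_0 \approx Q_0$. Transitivity of $\sim$ and $\approx$ then delivers $P \sim Q$ and $P \approx Q$.

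The hardest part will be the commutation argument in the intermediate step: one must ensure that each $\alpha$-twist on $P^{(i-1)}$ can be mirrored by the corresponding $\alpha$-twist on $Q^{(i-1)}$, and that the covering $P^{(i)} > Q^{(i)}$ persists throughout. Since $\alpha_k \neq \alpha_{k-1}, \alpha_{k+1}$, the shared edges between an $\alpha$-tile adjacent to $G_l$ in the snake graph are labeled by $\alpha_{k-1}$ or $\alpha_{k+1}$ rather than by $\alpha$ itself, so the exceptional self-folded configurations handled in Proposition \ref{dProp-mutation4} do not arise here and the bookkeeping should remain routine.
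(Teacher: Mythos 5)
Your proposal matches the paper's proof: dispatch the degenerate case $\alpha_k = \alpha_{k\pm 1}$ via Lemma \ref{lem-a1}, then twist the $\alpha$-diagonal tiles on $P$ to reach a matching $P_0$ (the paper's $R$) where all $\alpha$-mutable edges lie in $\{\alpha_{k-1},\alpha_{k+1}\}$ so that Lemma \ref{lem-a} applies, and transport equivalence along the chains by Lemma \ref{lem-na}. The only difference is presentational: you propagate the $\sim$ and $\approx$ relations step-by-step through the pairs $(P^{(i)},Q^{(i)})$, whereas the paper invokes Lemma \ref{lem-na} once for each of $P\sim R$ and $\mu_{G_l}P\sim\mu_{G_l}R$; the substance is identical.
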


\begin{proof}
We may assume that $k=1$. Then we have $\alpha_1=\tau_{i_l}$. By Lemma \ref{lem-a1}, it suffices to consider the case that $\alpha_1\neq \alpha_2,\alpha_4$. We can do twists on the tiles with diagonals labeled $\alpha$ for $P$ to obtain a perfect matching $R$ such that all $\alpha$-mutable edges are labeled $\alpha_2$ or $\alpha_4$. We obtain a sequence of perfect matchings $P=P_0<P_1<\cdots<P_n=R$ such that $P_{\ell}$ covers $P_{\ell-1}$ for all $\ell=1,\cdots, n$.

As $P>\mu_{G_l}P$, we see that $P_\ell$ can twist on $G_l$ and $P_\ell>\mu_{G_l}P$ for any $\ell\in \{1,2,\cdots, n\}$. Thus $R$ covers $\mu_{G_l}R$ in $\mathcal L$.
For any $\star\in\{\sim,\approx\}$, we have $P\star R$ and $\mu_{G_l}P \star \mu_{G_l}R$ by Lemma \ref{lem-na} and $R \star \mu_{G_l}R$ by Lemma \ref{lem-a}.

Therefore, $P\sim Q=\mu_{G_l}P$ and $P\approx Q=\mu_{G_l}P$.
\end{proof}

The following proposition follows immediately by Lemmas \ref{lem-na} and \ref{lem:covera0}.

\begin{proposition}\label{prop-ga}
For any ${\bf P},{\bf Q}\in \mathcal L(T^o,\widetilde\beta)$, we have ${\bf P}\sim {\bf Q}$ and ${\bf P}\approx {\bf Q}$.
\end{proposition}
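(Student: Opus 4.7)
The plan is to exploit the two lemmas \ref{lem-na} and \ref{lem:covera0}, which together cover every possible covering relation in the lattice $\mathcal L(T^o,\widetilde\beta)=\mathcal P(G_{T^o,\widetilde\beta})$, and then to propagate the equivalences using connectedness of the Hasse graph.

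First, I would reduce the statement to pairs related by a single covering. Since both $\sim$ and $\approx$ are equivalence relations on $\mathcal L(T^o,\widetilde\beta)$, transitivity allows us to chain together covering equivalences into arbitrary ones. By Lemma \ref{lem-H1}, the Hasse graph of $\mathcal L(T^o,\widetilde\beta)$ is connected, so any two perfect matchings $\mathbf P,\mathbf Q$ can be joined by a finite zig-zag of coverings $\mathbf P = \mathbf R_0, \mathbf R_1, \dots, \mathbf R_n = \mathbf Q$ where consecutive pairs are related by a single twist. Therefore it is enough to verify $\mathbf R_i \sim \mathbf R_{i+1}$ and $\mathbf R_i \approx \mathbf R_{i+1}$ whenever $\mathbf R_i$ covers $\mathbf R_{i+1}$ (or conversely) in $\mathcal L(T^o,\widetilde\beta)$.

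Next, I would split into cases according to the label $\tau_{i_l}$ of the diagonal of the tile $G_l$ realizing the covering $P > \mu_{G_l}(P)$. Every covering in $\mathcal P(G_{T^o,\widetilde\beta})$ is of this form, with $\tau_{i_l}$ some arc in $T^o$, so exactly one of the following holds: either $\tau_{i_l}\in\{\alpha_1,\alpha_2,\alpha_3,\alpha_4\}$, or $\tau_{i_l}\notin\{\alpha_1,\alpha_2,\alpha_3,\alpha_4\}$ (this second case includes in particular $\tau_{i_l}=\alpha$, which is the content that Lemma \ref{lem-na} treats via part (2) of Theorem \ref{thm-compatible1}). In the first case, Lemma \ref{lem:covera0} applies directly (with $k$ chosen so that $\alpha_k = \tau_{i_l}$) and yields both $P\sim \mu_{G_l}(P)$ and $P\approx \mu_{G_l}(P)$. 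In the second case, Lemma \ref{lem-na} applies with $\tau = \tau_{i_l}$ and gives the same two conclusions.

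Combining the two cases exhausts every covering relation, and transitivity of $\sim$ and $\approx$ then promotes covering equivalence to equivalence between arbitrary $\mathbf P,\mathbf Q\in\mathcal L(T^o,\widetilde\beta)$. The real content is already carried by Lemmas \ref{lem-na} and \ref{lem:covera0}, so no new obstacle arises at this stage; the proposition is essentially a packaging statement that assembles them via connectedness.
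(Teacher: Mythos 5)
Your proof is correct and matches the paper's intent exactly: the paper states that Proposition \ref{prop-ga} ``follows immediately by Lemmas \ref{lem-na} and \ref{lem:covera0},'' and you have simply spelled out the assembly step (connectedness of the Hasse graph from Lemma \ref{lem-H1}, transitivity of $\sim$ and $\approx$, and the exhaustive case split on whether the twisted diagonal label lies in $\{\alpha_1,\alpha_2,\alpha_3,\alpha_4\}$).
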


We now consider the case that $\mathcal L=\mathcal L(T^o,\widetilde\beta^{(q)})$.

The following is a corollary of Lemmas \ref{lem-na} and \ref{lem:covera}.

\begin{corollary}\label{cor:puncturecover}
Suppose that $\mathcal L=\mathcal L(T^o,\widetilde\beta^{(q)})$. For any ${\bf P}=(P,\Delta_{i}(q))$ and ${\bf Q}=(P,\Delta_{j}(q))$, we have ${\bf P}\sim {\bf Q}$ and ${\bf P}\approx {\bf Q}$.
\end{corollary}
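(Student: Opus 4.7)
The plan is to reduce the statement to the covering relations between consecutive triangles in $\Delta(T^o,q)$ for a fixed perfect matching $P$, which are precisely the situations handled by Lemmas \ref{lem-na} and \ref{lem:covera}. First I would recall from Subsection \ref{delta1} that for any fixed $P\in\mathcal P(G_{T^o,\widetilde\beta})$ the elements $\{(P,\Delta_j(q)): j=1,\ldots,t\}$ form a single linear chain in $\mathcal L(T^o,\widetilde\beta^{(q)})$: when $E_1(q)\in P$ (or when $\widetilde\beta\in T^o$) the chain is $(P,\Delta_1(q))<(P,\Delta_2(q))<\cdots<(P,\Delta_t(q))$, while when $E_2(q)\in P$ it is the cyclically shifted chain $(P,\Delta_2(q))<\cdots<(P,\Delta_t(q))<(P,\Delta_1(q))$. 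In either case any two elements $(P,\Delta_i(q))$ and $(P,\Delta_j(q))$ can be connected by a sequence of covering steps, each of the form $(P,\Delta_{k+1}(q))$ covers $(P,\Delta_k(q))$ (indices read cyclically).

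Since $\sim$ and $\approx$ are both equivalence relations, transitivity reduces the corollary to the single case where ${\bf P}=(P,\Delta_{k+1}(q))$ covers ${\bf Q}=(P,\Delta_{k}(q))$. By construction this covering is related by the common side $\tau_k(q)\in T^o$ of the two adjacent triangles. I would then split into two mutually exhaustive cases according to whether $\tau_k(q)$ belongs to the quadrilateral data $\{\alpha_1,\alpha_2,\alpha_3,\alpha_4\}$ around $\alpha$:
\begin{itemize}
\item If $\tau_k(q)=\alpha_l$ for some $l\in\{1,2,3,4\}$, then Lemma \ref{lem:covera} applies directly and gives ${\bf P}\sim {\bf Q}$ and ${\bf P}\approx {\bf Q}$.
\item If $\tau_k(q)\notin\{\alpha_1,\alpha_2,\alpha_3,\alpha_4\}$ (including the permitted subcase $\tau_k(q)=\alpha$), then Lemma \ref{lem-na} applies and yields the same conclusion.
\end{itemize}

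Chaining these pairwise equivalences along the linear chain completes the argument. There is no serious obstacle; the corollary is essentially a bookkeeping consequence, and the only point that requires care is verifying that the two lemmas genuinely exhaust all possible labels for the edge $\tau_k(q)\in T^o$ responsible for each covering step, which is immediate from the dichotomy above.
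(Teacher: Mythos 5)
Your proposal is correct and takes the same approach the paper intends: the paper simply asserts the corollary follows from Lemmas \ref{lem-na} and \ref{lem:covera}, and you have filled in the routine argument, namely that for fixed $P$ the elements $(P,\Delta_j(q))$ form a single covering chain (by the partial order in Subsection \ref{delta1} and Lemma \ref{Lem-cover2}), each covering step is related by the common side $\tau_k(q)$, and that edge label either lies in $\{\alpha_1,\alpha_2,\alpha_3,\alpha_4\}$ (Lemma \ref{lem:covera}) or does not (Lemma \ref{lem-na}, which also covers $\tau_k(q)=\alpha$), so transitivity of $\sim$ and $\approx$ finishes the proof.
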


\begin{lemma}\label{lem:covera1}
Suppose that $\mathcal L=\mathcal L(T^o,\widetilde\beta^{(q)})$.
If ${\bf P}=(P,\Delta_j(q))$ covers ${\bf Q}=(\mu_{G_l}P,\Delta_j(q))$ with $\tau_{i_l}=\alpha_k$ for some $k\in \{1,2,3,4\}$, then ${\bf P}\sim {\bf Q}$ and ${\bf P}\approx {\bf Q}$.
\end{lemma}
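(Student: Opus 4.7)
The plan is to mimic the strategy used in Lemma~\ref{lem:covera0}, which handled the analogous statement in $\mathcal L(T^o,\widetilde\beta)$, and then adapt it to carry the extra triangle coordinate $\Delta_j(q)$. First, by symmetry we may assume $k=1$, and by Lemma~\ref{lem-a1} we may further assume $\alpha_1\neq \alpha_2,\alpha_4$, so that Lemma~\ref{lem-a} becomes applicable once we arrange all $\alpha$-mutable edges of the perfect matching component to be labeled $\alpha_2$ or $\alpha_4$.

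Next, as in the proof of Lemma~\ref{lem:covera0}, I would perform a sequence of twists on tiles whose diagonals are labeled $\alpha$, starting from $P$ and obtaining a chain of coverings $P=P_0<P_1<\cdots <P_n=R$ in $\mathcal P(G_{T^o,\widetilde\beta})$ such that every $\alpha$-mutable edge of $R$ is labeled $\alpha_2$ or $\alpha_4$. Since the tile $G_l$ has diagonal labeled $\alpha_1\neq \alpha$, each twist used in forming the chain is performed on a tile distinct from $G_l$ (tiles with distinct labels on their diagonals can be twisted independently), so each $P_\ell$ still covers $\mu_{G_l}P_\ell$ and $\mu_{G_l}R$ remains well-defined. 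Lifting the chain to $\mathcal L(T^o,\widetilde\beta^{(q)})$ by attaching $\Delta_j(q)$ to every term, Lemma~\ref{Lem-cover} guarantees that $(P_\ell,\Delta_j(q))$ covers $(P_{\ell-1},\Delta_j(q))$ and $(\mu_{G_l}P_\ell,\Delta_j(q))$ covers $(\mu_{G_l}P_{\ell-1},\Delta_j(q))$ for every $\ell$, and similarly that $(R,\Delta_j(q))$ covers $(\mu_{G_l}R,\Delta_j(q))$; the only potential failure is the case $j=1$ and the relevant twist tile equals $G_c$, which I address below.

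With the chain in hand, for each $\star\in\{\sim,\approx\}$, Lemma~\ref{lem-na} yields $(P,\Delta_j(q))\star (R,\Delta_j(q))$ and $(\mu_{G_l}P,\Delta_j(q))\star (\mu_{G_l}R,\Delta_j(q))$, since each step of the chain is a covering related by the arc $\alpha\notin\{\alpha_1,\alpha_2,\alpha_3,\alpha_4\}$. Then Lemma~\ref{lem-a} applies to the covering $(R,\Delta_j(q))>(\mu_{G_l}R,\Delta_j(q))$ because all $\alpha$-mutable edges of $R$ are labeled $\alpha_2$ or $\alpha_4$, giving $(R,\Delta_j(q))\star (\mu_{G_l}R,\Delta_j(q))$. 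Transitivity of $\sim$ and $\approx$ then yields ${\bf P}\sim{\bf Q}$ and ${\bf P}\approx{\bf Q}$.

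The main obstacle, as I see it, is the boundary situation where some intermediate $P_\ell$ fails to lift to a covering in $\mathcal L(T^o,\widetilde\beta^{(q)})$ because $\Delta_j(q)=\Delta_1(q)$ and the tile being twisted is $G_c$, the exceptional case in Lemma~\ref{Lem-cover}. To handle this, I would use Corollary~\ref{cor:puncturecover} to replace the triangle coordinate $\Delta_1(q)$ with any other $\Delta_{j'}(q)$ that avoids the exception, perform the entire argument above at the perturbed triangle, and then use Corollary~\ref{cor:puncturecover} again to return to $\Delta_1(q)$; this detour is harmless because the $\alpha_k$-covering between $(P,\Delta_{j'}(q))$ and $(\mu_{G_l}P,\Delta_{j'}(q))$ is genuine once $j'\neq 1$ or $l\neq c$. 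This reduction, combined with the chain argument above, should complete the proof.
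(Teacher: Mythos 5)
Your proposal follows the same strategy as the paper's proof: reduce to $k=1$ and $\alpha_1\neq\alpha_2,\alpha_4$ via Lemma~\ref{lem-a1}; build a chain of twists on tiles with diagonal labeled $\alpha$ from $P$ to a canonical $R$ whose $\alpha$-mutable edges are all labeled $\alpha_2,\alpha_4$; propagate the $\sim,\approx$ relations along the chain using Lemma~\ref{lem-na}; apply Lemma~\ref{lem-a} to the covering by $R$ and $\mu_{G_l}R$; and use Corollary~\ref{cor:puncturecover} to handle the triangle coordinate. You also correctly anticipate the obstruction from Lemma~\ref{Lem-cover} when $j=1$ and the relevant tile is $G_c$.

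The one place where your exposition diverges from the paper's is in how the detour is organized. You keep the original $\Delta_j(q)$ and propose to swap it out by Corollary~\ref{cor:puncturecover} only when the exception actually bites; the paper instead routes \emph{uniformly}: for each step in the $\alpha$-twist chain it passes through $\Delta_2(q)$ (which is never $\Delta_1(q)$, since the ``no arc tagged notched at $q$'' hypothesis forces $|\Delta(T^o,q)|\geq 2$), sandwiching each covering with two applications of Corollary~\ref{cor:puncturecover}; and for the final Lemma~\ref{lem-a} step it routes through a triangle $\Delta_\ell(q)$ with $m(\Delta_\ell(q);\alpha)\geq 0$. Your ad hoc version is logically equivalent once you observe, as you do, that a single $j'\neq 1$ simultaneously sidesteps every instance of the $\ell=1$, $l=c$ exception (both for the intermediate $\alpha$-twist coverings and for the $G_l$-twist itself), since all the tiles involved are simply required to not collide with the $j'=1$ condition. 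The paper's uniform routing makes this a little easier to check line by line, but the content is the same, so there is no real gap in your proposal.
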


\begin{proof}
We may assume that $k=1$. By Lemma \ref{lem-a1}, it suffices to consider the case that $\alpha_1\neq \alpha_2,\alpha_4$. We can do twists on the tiles with diagonals labeled $\alpha$ for $P$ to obtain a perfect matching $R$ such that all $\alpha$-mutable edges are labeled $\alpha_2$ or $\alpha_4$. We obtain a sequence of perfect matchings $P=P_0<P_1<\cdots<P_n=R$ such that $P_{\ell}$ covers $P_{\ell-1}$ for all $\ell=1,\cdots, n$. From the proof of Lemma \ref{lem:covera0}, $R$ can twist on $G_l$.


We first prove that
\begin{equation}\label{eq:*eq1}
(P,\Delta_j(q))\star (R,\Delta_j(q)), \;\;\;\; (\mu_{G_l}P,\Delta_j(q))\star (\mu_{G_l}R,\Delta_j(q))
\end{equation}
 for $\star=\sim, \approx$. It suffices to prove that, for all $\ell=1,\cdots, n$,
\begin{equation*}
(P_{\ell},\Delta_{j}(q))\star (P_{\ell-1},\Delta_{j}(q)),\;\;\;\;\;(\mu_{G_l}P_{\ell},\Delta_{j}(q))\star (\mu_{G_l} P_{\ell-1},\Delta_{j}(q)).
\end{equation*}

As $T$ contains no arc tagged notched at $q$, we have $|\Delta(T^o,q)|\geq 2$, it implies $\Delta_2(q)\in \Delta(T^o,q)$. For any $\ell\in \{1,\cdots,n\}$, by Lemma \ref{Lem-cover}, we have $(P_{\ell},\Delta_{2}(q))$ covers $(P_{\ell-1},\Delta_{2}(q))$. Therefore, by Lemma \ref{lem-na} and Corollary \ref{cor:puncturecover}, we have
$$(P_{\ell},\Delta_{j}(q))\star (P_{\ell},\Delta_{2}(q))\star (P_{\ell-1},\Delta_{2}(q))\star (P_{\ell-1},\Delta_{j}(q)).$$

Similarly, $(\mu_{G_l}P_{\ell},\Delta_{j}(q))\star (\mu_{G_l}P_{\ell-1},\Delta_{j}(q))$ for all $\ell=1,\cdots, n$.

We then prove that $(R,\Delta_j(q))\star (\mu_{G_l}R,\Delta_j(q))$. As $T$ and $T'$ contain no arc tagged notched at $q$, we can choose $\Delta_\ell(q)$ such that $m(\Delta_{\ell}(q);\alpha)\geq 0$. Then  by Lemma \ref{lem-a} and Corollary \ref{cor:puncturecover}, we have
\begin{equation}\label{eq:*eq2}
(R,\Delta_{j}(q))\star (R,\Delta_{\ell}(q))\star (\mu_{G_l}R,\Delta_{\ell}(q))\star (\mu_{G_l}R,\Delta_{j}(q)).
\end{equation}

Therefore, we obtain $(P,\Delta_{j}(q))\star (\mu_{G_l}P,\Delta_{j}(q))$ by (\ref{eq:*eq1}) (\ref{eq:*eq2}).
\end{proof}

The following proposition follows immediately by Lemmas \ref{lem-na}, \ref{lem:covera} and \ref{lem:covera1}.

\begin{proposition}\label{prop-ga1}
For any ${\bf P},{\bf Q}\in \mathcal L(T^o,\widetilde\beta^{(q)})$, we have ${\bf P}\sim {\bf Q}$ and ${\bf P}\approx {\bf Q}$.
\end{proposition}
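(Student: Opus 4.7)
The plan is to exploit the fact that both $\sim$ and $\approx$ are equivalence relations on $\mathcal L(T^o,\widetilde\beta^{(q)})$, so that it suffices to verify the equivalence between any two elements that are related by a single covering relation in the Hasse graph, and then invoke connectedness of the Hasse graph (Lemma \ref{lem-H2}) to propagate the equivalence to arbitrary pairs.

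First, I would reduce to covering pairs. Given ${\bf P},{\bf Q}\in\mathcal L(T^o,\widetilde\beta^{(q)})$, Lemma \ref{lem-H2} produces a chain ${\bf P}={\bf R}_0,{\bf R}_1,\dots,{\bf R}_n={\bf Q}$ in which consecutive terms are comparable by a cover. Since $\sim$ and $\approx$ are transitive, it suffices to prove ${\bf R}_i\star{\bf R}_{i+1}$ for $\star\in\{\sim,\approx\}$ whenever ${\bf R}_{i+1}$ covers ${\bf R}_i$ (or vice versa).

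Next, I would classify the covering relations using the description of the partial order on $\mathcal L(T^o,\widetilde\beta^{(q)})$ from Subsection \ref{delta1}. Every cover ${\bf P}$ of ${\bf Q}$ is ``related by $\tau$'' for some arc $\tau\in T^o$ and falls into exactly one of the following three types:
\begin{enumerate}[(a)]
\item $\tau\notin\{\alpha_1,\alpha_2,\alpha_3,\alpha_4\}$ (including the case $\tau=\alpha$), which is dealt with by Lemma \ref{lem-na};
\item ${\bf P}=(P,\Delta_{j+1}(q))$, ${\bf Q}=(P,\Delta_j(q))$ with $\tau=\tau_j(q)=\alpha_k$ for some $k\in\{1,2,3,4\}$, which is handled by Lemma \ref{lem:covera};
\item ${\bf P}=(P,\Delta_j(q))$, ${\bf Q}=(\mu_{G_l}P,\Delta_j(q))$ with $\tau=\tau_{i_l}=\alpha_k$ for some $k\in\{1,2,3,4\}$, which is handled by Lemma \ref{lem:covera1}.
\end{enumerate}
In each case the cited lemma delivers both ${\bf P}\sim{\bf Q}$ and ${\bf P}\approx{\bf Q}$.

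Combining these three observations, every consecutive pair in the chain satisfies ${\bf R}_i\sim{\bf R}_{i+1}$ and ${\bf R}_i\approx{\bf R}_{i+1}$, so transitivity yields ${\bf P}\sim{\bf Q}$ and ${\bf P}\approx{\bf Q}$. The only subtlety I anticipate is a bookkeeping check that the classification (a)--(c) is exhaustive, i.e.\ that there are no covering pairs left out by the partial order descriptions in Subsection \ref{delta1}; this is immediate from the construction of the order but should be stated explicitly. No further computation is needed because all the delicate analysis of gradient numbers and specialized height monomials has already been absorbed into Lemmas \ref{lem-na}, \ref{lem:covera}, and \ref{lem:covera1}.
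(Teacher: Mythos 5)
Your proof is correct and follows essentially the same route as the paper, which simply states that Proposition \ref{prop-ga1} ``follows immediately by Lemmas \ref{lem-na}, \ref{lem:covera} and \ref{lem:covera1}.'' You spell out the implicit reduction — connectedness of the Hasse graph (Lemma \ref{lem-H2}) plus transitivity of $\sim$ and $\approx$ — and correctly observe that the covering relations in $\mathcal L(T^o,\widetilde\beta^{(q)})$ fall into exactly the three types those lemmas address, so your argument matches the intended one.
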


We now turn to the case that $\mathcal L=\mathcal L(T^o,\widetilde\beta^{(p,q)})$.

The following is a corollary of Lemmas \ref{lem-na}, \ref{lem:covera} and \ref{lem:covera4}.

\begin{corollary}\label{cor:puncturecover1}
Suppose that $\mathcal L=\mathcal L(T^o,\widetilde\beta^{(p,q)})$.
For any ${\bf P}=(\Delta_{i}(p),P,\Delta_{j}(q))$ and ${\bf Q}=(\Delta_{i'}(p),P,\Delta_{j'}(q))$, we have ${\bf P}\sim {\bf Q}$ and ${\bf P}\approx {\bf Q}$.
\end{corollary}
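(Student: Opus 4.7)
The plan is to connect ${\bf P}=(\Delta_{i}(p),P,\Delta_{j}(q))$ to ${\bf Q}=(\Delta_{i'}(p),P,\Delta_{j'}(q))$ through a chain of cover relations in $\mathcal L(T^o,\widetilde\beta^{(p,q)})$ in which the middle perfect matching stays equal to $P$, and then to invoke the already-established single-step equivalences together with transitivity of $\sim$ and $\approx$.

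First I would factor the passage through the intermediate element ${\bf R}=(\Delta_{i}(p),P,\Delta_{j'}(q))$, so that it suffices to prove ${\bf P}\star{\bf R}$ and ${\bf R}\star{\bf Q}$ for $\star\in\{\sim,\approx\}$. For ${\bf P}\star{\bf R}$ the middle and the $p$-component are fixed and we only move along the cyclic chain $\Delta_{1}(q),\Delta_{2}(q),\ldots,\Delta_{t}(q)$. By the definition of the partial order in Section \ref{delta2} (restricted to elements $(\Delta_{i}(p),P,-)$), any two consecutive triangles $\Delta_{k}(q),\Delta_{k+1}(q)$ give elements one of which covers the other, with common side labeled $\tau_{k}(q)$. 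If $\tau_{k}(q)\notin\{\alpha_{1},\alpha_{2},\alpha_{3},\alpha_{4}\}$, Lemma \ref{lem-na} applies. Otherwise $\tau_{k}(q)$ equals some $\alpha_{\ell}$, and the cover falls under the hypothesis of Lemma \ref{lem:covera}, which delivers $\star$-equivalence. Walking around the cycle step by step and using transitivity yields ${\bf P}\star{\bf R}$.

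For ${\bf R}\star{\bf Q}$ the argument is symmetric: only the $p$-component varies, and we traverse the cyclic chain $\Delta_{1}(p),\ldots,\Delta_{s}(p)$. Consecutive pairs cover each other with common side $\tau_{k}(p)$, and the $\star$-equivalence follows from Lemma \ref{lem-na} when $\tau_{k}(p)\notin\{\alpha_{1},\alpha_{2},\alpha_{3},\alpha_{4}\}$, and from Lemma \ref{lem:covera4} when $\tau_{k}(p)$ coincides with one of the $\alpha_{\ell}$'s. Chaining these one-step equivalences and combining with the previous paragraph gives ${\bf P}\star{\bf Q}$ for both $\star=\sim$ and $\star=\approx$.

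The step I expect to require the most care is the bookkeeping that shows every consecutive pair in these cyclic chains really is in a cover (or covered-by) relation as described—this depends on which of $E_{1}(p),E_{2}(p),E_{1}(q),E_{2}(q)$ lies in $P$, so the cyclic order on $\Delta(T^{o},q)$ and on $\Delta(T^{o},p)$ has a single ``shift'' at one position as in Subsection \ref{delta2}. However, the shift only affects where the chain starts, not the fact that consecutive triangles are $\lessgtr$-adjacent in $\mathcal L$, so the argument goes through unchanged; the deepest content is already packaged in Lemmas \ref{lem-na}, \ref{lem:covera}, \ref{lem:covera4} (and ultimately in Theorems \ref{thm-compatible1}--\ref{thm-compatible3}), so this corollary is a clean assembly step rather than a new computation.
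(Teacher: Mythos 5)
Your proposal is correct and takes exactly the approach the paper intends: the paper simply cites Lemmas \ref{lem-na}, \ref{lem:covera}, and \ref{lem:covera4} as the proof, and your argument fills in the routine chain/transitivity details — moving the $q$-triangle first (via \ref{lem-na} or \ref{lem:covera} depending on whether $\tau_k(q)\in\{\alpha_1,\dots,\alpha_4\}$), then the $p$-triangle (via \ref{lem-na} or \ref{lem:covera4}). Your caveat about the cyclic shift in the cover chain is handled by Lemma \ref{Lem-cover3}, which guarantees consecutive triangles are cover-related with $P$ fixed.
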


\begin{lemma}\label{lem:covera3}
Suppose that $\mathcal L=\mathcal L(T^o,\widetilde\beta^{(p,q)})$.
If ${\bf P}=(\Delta_{i}(p),P,\Delta_{j}(q))$ covers ${\bf Q}=(\Delta_{i}(p),\mu_{G_l}P,\Delta_{j}(q))$ with $\tau_{i_l}=\alpha_k$ for some $k\in \{1,2,3,4\}$, then ${\bf P}\sim {\bf Q}$ and ${\bf P}\approx {\bf Q}$.
\end{lemma}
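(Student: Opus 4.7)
The plan is to mimic the strategy of Lemma \ref{lem:covera1}, upgraded to the two-punctures setting where triangles sit on \emph{both} sides of the perfect matching factor. First, by Lemma \ref{lem-a1}, I may assume $\alpha_k \neq \alpha_{k-1},\alpha_{k+1}$, and after relabeling I set $k=1$, so $\alpha_1 \neq \alpha_2,\alpha_4$. The goal is to reduce the general twist at $G_l$ to a twist performed on a perfect matching $R$ whose $\alpha$-mutable edges are all labeled by $\alpha_2$ or $\alpha_4$, since in that regime Lemma \ref{lem-a} directly applies.

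First I build a chain. By repeatedly applying Lemma \ref{lem-a1} (for the case $\alpha_1=\alpha_2$) and Lemma \ref{lem-na} to twists at $\alpha$, I construct $P=P_0<P_1<\cdots<P_n=R$ where each $P_{\ell}$ covers $P_{\ell-1}$ by a twist at $\alpha$ and $R$ has the property that every $\alpha$-mutable edge is labeled $\alpha_2$ or $\alpha_4$. Exactly as in the proof of Lemma \ref{lem:covera0}, each $P_\ell$ can still twist on $G_l$, and $P_\ell > \mu_{G_l}P_\ell$, so in particular $R$ covers $\mu_{G_l}R$.

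Next I propagate the chain up to lattice elements. Fix $\star\in\{\sim,\approx\}$. For each $\ell$, I want
\[
(\Delta_i(p),P_\ell,\Delta_j(q))\;\star\;(\Delta_i(p),P_{\ell-1},\Delta_j(q))
\]
and the same with $P$ replaced by $\mu_{G_l}P$. The subtlety is that $(\Delta_i(p),P_\ell,\Delta_j(q))$ may fail to cover $(\Delta_i(p),P_{\ell-1},\Delta_j(q))$ in the exceptional positions of Lemma \ref{Lem-cover1} ($i=1,l=1$ or $j=1,l=c$). To bypass this, I use the hypothesis that $T$ and $T'$ contain no arcs tagged notched at $p$ or $q$, which forces $|\Delta(T^o,p)|\geq 2$ and $|\Delta(T^o,q)|\geq 2$; I insert intermediate triangles $\Delta_{i'}(p),\Delta_{j'}(q)$ with $i'\neq 1$ and $j'\neq 1$, and use Corollary \ref{cor:puncturecover1} to move freely between choices of triangles at $p$ and $q$. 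Once the triangle indices are in the ``safe'' range, Lemma \ref{Lem-cover1} guarantees the twist at $\alpha$ produces an actual cover, and Lemma \ref{lem-na} applies to give the $\star$-equivalence for each step of the chain.

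Finally, I dispose of the base step $(\Delta_i(p),R,\Delta_j(q))\,\star\,(\Delta_i(p),\mu_{G_l}R,\Delta_j(q))$. Because $T$ and $T'$ have no notched arcs at either $p$ or $q$, I can pick $\Delta_{i'}(p)$ and $\Delta_{j'}(q)$ with $m(\Delta_{i'}(p);\alpha)\geq 0$ and $m(\Delta_{j'}(q);\alpha)\geq 0$; then Corollary \ref{cor:puncturecover1} gives
\[
(\Delta_i(p),R,\Delta_j(q))\;\star\;(\Delta_{i'}(p),R,\Delta_{j'}(q))
\]
and similarly after twisting at $G_l$. Now $R$ has all $\alpha$-mutable edges labeled $\alpha_2$ or $\alpha_4$, so the hypothesis of Lemma \ref{lem-a} is met at $(\Delta_{i'}(p),R,\Delta_{j'}(q))$ and it delivers $(\Delta_{i'}(p),R,\Delta_{j'}(q))\,\star\,(\Delta_{i'}(p),\mu_{G_l}R,\Delta_{j'}(q))$. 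Transitivity of $\sim$ and $\approx$ chains everything together to conclude ${\bf P}\sim {\bf Q}$ and ${\bf P}\approx {\bf Q}$.

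The main obstacle will be the bookkeeping in the middle step: verifying that the cover relations survive after propagation through the chain $P_0<\cdots<P_n$ when the pair $(i,j)$ sits in an exceptional position of Lemma \ref{Lem-cover1}. The resolution is routinely to slide the triangle indices $i,j$ away from $1$ first (using Corollary \ref{cor:puncturecover1}), perform the twist-at-$\alpha$ chain there, and then slide back; the only thing to check carefully is that such intermediate triangles $\Delta_{i'}(p)$ and $\Delta_{j'}(q)$ exist, which is exactly what the no-notched-arc hypothesis provides.
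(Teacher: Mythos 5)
Your proposal is correct and follows essentially the same route as the paper: reduce to $\alpha_1 \neq \alpha_2,\alpha_4$ via Lemma~\ref{lem-a1}, build the chain $P=P_0<\cdots<P_n=R$ by twists at $\alpha$ so that $R$ has all $\alpha$-mutable edges labeled $\alpha_2$ or $\alpha_4$, propagate the chain through the lattice by sliding to safe triangle indices via Corollary~\ref{cor:puncturecover1} (using the no-notched-arcs hypothesis for $|\Delta(T^o,p)|,|\Delta(T^o,q)|\geq 2$ and Lemma~\ref{Lem-cover1}), apply Lemma~\ref{lem-na} along the chain, and finish the base step with Lemma~\ref{lem-a} after again moving to triangles with $m(\cdot;\alpha)\geq 0$. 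The only imprecision is the stray parenthetical suggesting Lemma~\ref{lem-a1} is reused during the chain-propagation step; it is only needed for the initial reduction, and the rest of your argument does not depend on it.
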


\begin{proof}
We may assume that $k=1$. By Lemma \ref{lem-a1}, it suffices to consider the case that $\alpha_1\neq \alpha_2,\alpha_4$. We can do twists on the tiles with diagonals labeled $\alpha$ from $P$ to obtain a perfect matching $R$ such that all $\alpha$-mutable edges are labeled $\alpha_2$ or $\alpha_4$. We obtain a sequence of perfect matchings $P=P_0<P_1<\cdots<P_n=R$ such that $P_{\ell}$ covers $P_{\ell-1}$ for all $\ell=1,\cdots, n$.

We first prove that
\begin{equation}\label{eq:*eq3}
(\Delta_{i}(p),P,\Delta_j(q))\star (\Delta_{i}(p),R,\Delta_j(q)), \;\;\;\; (\Delta_{i}(p),\mu_{G_l}P,\Delta_j(q))\star (\Delta_{i}(p),\mu_{G_l}R,\Delta_j(q))
\end{equation}
 for $\star=\sim, \approx$. It suffices to prove that, for all $\ell=1,\cdots, n$,
\begin{equation*}
(\Delta_{i}(p),P_{\ell},\Delta_{j}(q))\star (\Delta_{i}(p),P_{\ell-1},\Delta_{j}(q)),\;\;\;\;\;(\Delta_{i}(p),\mu_{G_l}P_{\ell},\Delta_{j}(q))\star (\Delta_{i}(p),\mu_{G_l} P_{\ell-1},\Delta_{j}(q)).
\end{equation*}

As $T$ contains no arc tagged notched at $p$ or $q$, we have $|\Delta(T^o,p)|, |\Delta(T^o,q)|\geq 2$, it implies $\Delta_2(p)\in \Delta(T^o,p),\Delta_2(q)\in \Delta(T^o,q)$. For any $\ell\in \{1,\cdots,n\}$, by Lemma \ref{Lem-cover1}, we have $(\Delta_{2}(p),P_{\ell},\Delta_{2}(q))$ covers $(\Delta_{2}(p),P_{\ell-1},\Delta_{2}(q))$. Therefore, by Lemma \ref{lem-na} and Corollary \ref{cor:puncturecover1}, we have
$$(\Delta_{i}(p),P_{\ell},\Delta_{j}(q))\star (\Delta_{2}(p),P_{\ell},\Delta_{2}(q))\star (\Delta_{2}(p),P_{\ell-1},\Delta_{2}(q))\star (\Delta_{i}(p),P_{\ell-1},\Delta_{j}(q)).$$

Similarly, $(\Delta_{i}(p),\mu_{G_l}P_{\ell},\Delta_{j}(q))\star (\Delta_{i}(p),\mu_{G_l}P_{\ell-1},\Delta_{j}(q))$ for all $\ell=1,\cdots, n$.

We then prove that $(\Delta_{i}(p),R,\Delta_j(q))\star (\Delta_{i}(p),\mu_{G_l}R,\Delta_j(q))$. As $T$ and $T'$ contain no arc tagged notched at $q$, we can choose $\Delta_\ell(p),\Delta_{\ell'}(q)$ such that $m(\Delta_{\ell}(p);\alpha), m(\Delta_{\ell'}(q);\alpha)\geq 0$. Then by Lemma \ref{lem-a} and Corollary \ref{cor:puncturecover1}, we have
\begin{equation}\label{eq:*eq4}
(\Delta_{i}(p),R,\Delta_{j}(q))\star (\Delta_{\ell}(p),R,\Delta_{\ell'}(q))\star (\Delta_{\ell}(p),\mu_{G_l}R,\Delta_{\ell'}(q))\star (\Delta_{i}(p),\mu_{G_l}R,\Delta_{j}(q)).
\end{equation}

Therefore, we obtain $(\Delta_{i}(p),P,\Delta_{j}(q))\star (\Delta_{i}(p),\mu_{G_l}P,\Delta_{j}(q))$ by (\ref{eq:*eq3}) (\ref{eq:*eq4}).
\end{proof}

The following proposition follows immediately by Lemmas \ref{lem-na}, \ref{lem:covera3} and Corollary \ref{cor:puncturecover1}.

\begin{proposition}\label{prop-ga2}
For any ${\bf P},{\bf Q}\in \mathcal L(T^o,\widetilde\beta^{(p,q)})$, we have ${\bf P}\sim {\bf Q}$ and ${\bf P}\approx {\bf Q}$.
\end{proposition}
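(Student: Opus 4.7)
The plan is to mirror the structure of the proofs of Propositions \ref{prop-ga} and \ref{prop-ga1}, leveraging the fact that $\sim$ and $\approx$ are equivalence relations. Since the Hasse graph of $\mathcal L(T^o,\widetilde\beta^{(p,q)})$ is connected by Lemma \ref{lem-H3}, it suffices to establish ${\bf P}\sim {\bf Q}$ and ${\bf P}\approx {\bf Q}$ whenever ${\bf P}$ covers ${\bf Q}$ in $\mathcal L(T^o,\widetilde\beta^{(p,q)})$. By Lemma \ref{Lem-cover3} together with the description of the order on $\mathcal L(T^o,\widetilde\beta^{(p,q)})$ in Section \ref{delta2}, every such cover relation falls into exactly one of three types: (a) ${\bf P}=(\Delta_i(p),P,\Delta_j(q))$ and ${\bf Q}=(\Delta_i(p),\mu_{G_l}P,\Delta_j(q))$ for some twistable tile $G_l$, related by the label $\tau_{i_l}$ of its diagonal; (b) ${\bf P}=(\Delta_i(p),P,\Delta_{j+1}(q))$ and ${\bf Q}=(\Delta_i(p),P,\Delta_j(q))$, related by $\tau_j(q)$; or (c) ${\bf P}=(\Delta_{i+1}(p),P,\Delta_j(q))$ and ${\bf Q}=(\Delta_i(p),P,\Delta_j(q))$, related by $\tau_i(p)$.

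Cases (b) and (c) involve two elements of $\mathcal L(T^o,\widetilde\beta^{(p,q)})$ sharing the same middle coordinate $P$, so they are dispatched directly by Corollary \ref{cor:puncturecover1}, which yields ${\bf P}\sim {\bf Q}$ and ${\bf P}\approx {\bf Q}$. For case (a), I would split according to the label $\tau_{i_l}$: if $\tau_{i_l}\notin\{\alpha_1,\alpha_2,\alpha_3,\alpha_4\}$, then Lemma \ref{lem-na} applies directly (recall that this lemma already permits $\tau=\alpha$, so the genuinely excluded labels are only the four sides of the quadrilateral around $\alpha$); if $\tau_{i_l}\in\{\alpha_1,\alpha_2,\alpha_3,\alpha_4\}$, then Lemma \ref{lem:covera3} is the relevant result, as it is exactly tailored to covers of the form $(\Delta_i(p),P,\Delta_j(q))\succ(\Delta_i(p),\mu_{G_l}P,\Delta_j(q))$ when $\tau_{i_l}=\alpha_k$. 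Together these two sub-cases handle all twists occurring in the two-puncture lattice.

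There is no real obstacle at this stage of the paper, because the heavy lifting has already been carried out: the delicate interaction between the partition bijection $\pi$ and tile-twists across the labels $\alpha_1,\ldots,\alpha_4$ was absorbed into Lemma \ref{lem:covera3} (via Theorems \ref{thm-compatible2} and \ref{thm-compatible3} and Proposition \ref{dProp-mutation4}), and the machinery for commuting cover relations involving triangle changes past tile twists was absorbed into Corollary \ref{cor:puncturecover1}. What the proof needs to do, therefore, is only to observe that these two results together with Lemma \ref{lem-na} exhaust the cases produced by Lemma \ref{Lem-cover3}, invoke transitivity of the equivalence relations along a path in the Hasse graph (which exists by Lemma \ref{lem-H3}), and conclude. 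The only point requiring a brief sentence of justification is the applicability of Lemma \ref{lem-na} to the case $\tau=\alpha$, since $\alpha$ is the arc being flipped; this is explicit in the hypothesis of that lemma and needs only to be recorded.
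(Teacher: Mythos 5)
Your proposal is correct and follows essentially the same route as the paper: the paper states that Proposition \ref{prop-ga2} follows immediately from Lemmas \ref{lem-na}, \ref{lem:covera3} and Corollary \ref{cor:puncturecover1}, which is exactly the case analysis you spell out, with Lemma \ref{lem-H3} and transitivity supplying the reduction to covers. Your expansion simply makes the word ``immediately'' explicit.
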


\emph{Proof of Theorem \ref{thm-sim3}}
It follows by Propositions \ref{prop-ga}, \ref{prop-ga1} and \ref{prop-ga2}.
\endproof

\newpage

\section{Proof for the expansion formulas}\label{sec:PROOF}

Let $x_0,x_1$ be two variables such that $x_0x_1=v^{2\lambda} x_1x_0$ for some $\lambda\in \mathbb Z$. Denote $\Lambda(x_0,x_1)=-\Lambda(x_1,x_0)=\lambda$ and $\Lambda(x_0,x_0)=\Lambda(x_1,x_1)=0$. Let $r\in \mathbb N$. For any ${\vec c}=(c_1,\cdots,c_r)\in \{0,1\}^r$, denote
\begin{equation}\label{eq-lambda}
\lambda({\vec c})=\sum_{i< j} \Lambda(x_{c_i},x_{c_j}),\;\;\;\;x^{\vec c}= v^{-\lambda({\vec c})}x_{c_1}x_{c_2}\cdots x_{c_r}.
\end{equation}
Thus, we have
\begin{equation}\label{eq-bino}
(x_0+x_1)^r=\sum_{{\vec c}\in \{0,1\}^r}v^{\lambda ({\vec c})} x^{\vec c}.
\end{equation}

We have the following observation.

\begin{lemma}\label{lem1} Let $r\in \mathbb N$.

\begin{enumerate}[$(1)$]
    \item For any $i$ with $1\leq i\leq r$, given ${\vec c,\vec c\hspace{1.5pt}'}\in \{0,1\}^r$ with $c_i=1, c'_i=0$ and $c_j=c'_j$ for $j\neq i$, we have
    $\lambda({\vec c})-\lambda({\vec c\hspace{1.5pt}'})=(2i-r-1)\lambda.$
    \item For any $i$ with $1\leq i\leq n-1$, given ${\vec c, \vec c\hspace{1.5pt}'}\in \{0,1\}^r$ with $c_i=c'_{i+1}=0, c_{i+1}=c'_i=1$ and $c_j=c'_j$ for $j\neq i,i+1$, we have
    $\lambda({\vec c})-\lambda({\vec c\hspace{1.5pt}'})=2\lambda.$
\end{enumerate}

\end{lemma}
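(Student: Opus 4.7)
\textbf{Proof plan for Lemma \ref{lem1}.}

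The plan is to reduce everything to the compact identity
\[
\Lambda(x_{c_i}, x_{c_j}) = (c_j - c_i)\,\lambda,
\]
which is checked case-by-case on the four possibilities $(c_i,c_j)\in\{0,1\}^2$ using the definition $\Lambda(x_0,x_1)=\lambda=-\Lambda(x_1,x_0)$ and $\Lambda(x_0,x_0)=\Lambda(x_1,x_1)=0$. With this in hand,
\[
\lambda(\vec c) \;=\; \lambda \sum_{i<j}(c_j - c_i),
\]
and both statements become straightforward bookkeeping about which terms change when we modify one or two coordinates of $\vec c$.

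For part (1), I would isolate the contributions to $\lambda(\vec c)-\lambda(\vec c\hspace{1.5pt}')$ according to whether the pair index contains the changed position $i$ on the left or on the right. Pairs $(k,i)$ with $k<i$ each contribute $(c_i-c'_i)\lambda=\lambda$, giving a total of $(i-1)\lambda$, while pairs $(i,l)$ with $l>i$ each contribute $(c'_i-c_i)\lambda=-\lambda$, giving a total of $-(r-i)\lambda$. Summing yields $(i-1)\lambda-(r-i)\lambda=(2i-r-1)\lambda$, as required.

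For part (2), only pairs meeting $\{i,i+1\}$ can contribute. For $k<i$, the pair $(k,i)$ contributes $-\lambda$ and the pair $(k,i+1)$ contributes $+\lambda$, so they cancel; symmetrically, for $l>i+1$, pairs $(i,l)$ and $(i+1,l)$ also cancel in pairs. The only surviving contribution is the pair $(i,i+1)$ itself, which changes from $\Lambda(x_0,x_1)=\lambda$ to $\Lambda(x_1,x_0)=-\lambda$, producing a net difference of $2\lambda$. There is no real obstacle here: the only thing to watch is sign conventions in the skew-symmetric form $\Lambda$, so I would be careful to spell out the case analysis for $\Lambda(x_{c_i},x_{c_j})$ before computing, to avoid sign errors in the telescoping cancellations of part (2).
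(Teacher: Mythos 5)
Your proof is correct and follows essentially the same direct bookkeeping approach as the paper: isolate the pairs whose contribution changes and sum the signed differences, with part (1) counting $(i-1)\lambda - (r-i)\lambda$ and part (2) observing that all cross-terms with positions outside $\{i,i+1\}$ cancel, leaving only $\Lambda(x_0,x_1)-\Lambda(x_1,x_0)=2\lambda$. Packaging the case analysis into the identity $\Lambda(x_{c_i},x_{c_j})=(c_j-c_i)\lambda$ is a tidy reorganization that the paper does not state explicitly, but it does not change the substance of the argument.
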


\begin{proof}
(1) As $c_i=1, c'_i=0$ and $c_j=c'_j$ for $j\neq i$, we have
\begin{align*}
&\lambda({\vec c})-\lambda({\vec c\hspace{1.5pt}'})\\
={}& \sum_{s<t} \Lambda(x_{c_s},x_{c_t})-\sum_{s< t} \Lambda(x_{c'_s},x_{c'_t})\\
={}& \sum_{s<i} (\Lambda(x_{c_s},x_1)-\Lambda(x_{c_s},x_0))+\sum_{i<s}  (\Lambda(x_{c_s},x_1)-\Lambda(x_{c_s},x_0))\\
={}& (i-1)\lambda-(r-i)\lambda\\
={}& (2i-r-1)\lambda.
\end{align*}

(2) As $c_i=c'_{i+1}=0, c_{i+1}=c'_i=1$ and $c_j=c'_j$ for $j\neq i,i+1$,
\begin{align*}
&\lambda({\vec c})-\lambda({\vec c\hspace{1.5pt}'})\\
={}& \sum_{s<t} \Lambda(x_{c_s},x_{c_t})-\sum_{s< t} \Lambda(x_{c'_s},x_{c'_t})\\
={}& \Lambda(x_0,x_1)-\Lambda(x_1,x_0)\\
={}& 2\lambda.
\end{align*}

\end{proof}

Throughout this section, denote by $b^{T'^o}_{\alpha'}$ the $\alpha'$-th column of the extended exchange matrix $\widetilde B(T'^o)$ and $(b^{T'^o}_{\alpha'})_{\pm}$ are the positive and negative part, respectively, of $b^{T'^o}_{\alpha'}$.

Let $\mathcal L=\mathcal L(T^o,\widetilde\beta), \mathcal L(T^o,\widetilde\beta^{(q)})$ or $\mathcal L(T^o,\widetilde\beta^{(p,q)})$. Correspondingly, let $\mathcal L'=\mathcal L(T'^o,\widetilde\beta), \mathcal L(T'^o,\widetilde\beta^{(q)})$ or $\mathcal L(T'^o,\widetilde\beta^{(p,q)})$, $\pi=\pi, \pi$ or $\pi_\alpha^{T^o,p,q}$ and $\pi'=\pi_{\alpha'}^{T'^o}, \pi_{\alpha'}^{T'^o,q}$ or $\pi_{\alpha'}^{T'^o,p,q}$. Denote the minimal elements in $\mathcal L$ and $\mathcal L'$ by ${\bf P}_-$ and ${\bf P}'_-$, respectively.

As a corollary of Theorem \ref{thm-sim3}, by the definition of the $y$-equivalent, we have the following.

\begin{proposition}\label{prop-any}
For any ${\bf P}\in \mathcal L$, we have
$$\frac{\bigoplus_{{\bf Q}\in \pi'\pi({\bf P})}y^{T}({\bf Q})} {\bigoplus_{{\bf Q}'\in \pi({\bf P})}y^{T'}({\bf Q'})}
=\frac{\bigoplus_{{\bf Q}\in \mathcal L}y^{T}({\bf Q})} {\bigoplus_{{\bf Q}'\in \mathcal L'}y^{T'}({\bf Q'})}.$$
\end{proposition}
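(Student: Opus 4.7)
The plan is to combine Theorem \ref{thm-sim3} with the partition bijection structure of $\pi$ and $\pi'$ given by Theorem \ref{thm:partitionbij}. By the definition of $y$-equivalence, Theorem \ref{thm-sim3} tells us that the ratio
$$c_{\bf P} := \frac{\bigoplus_{{\bf Q}\in \pi'\pi({\bf P})}y^{T}({\bf Q})}{\bigoplus_{{\bf Q}'\in \pi({\bf P})}y^{T'}({\bf Q'})}$$
is independent of the choice of ${\bf P}\in \mathcal L$. Call this common value $c \in \mathbb P$. It then suffices to show that $c$ coincides with the global ratio on the right-hand side of Proposition \ref{prop-any}.

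First, I would use Theorem \ref{thm:partitionbij} to choose representatives ${\bf P}_1,\ldots,{\bf P}_N \in \mathcal L$ of the blocks of the partition of $\mathcal L$ induced by $\pi$. Since $\pi$ and $\pi'$ are mutually inverse partition bijections (Remark \ref{Rmk-par}, Proposition \ref{Pro-parin}), $\pi'\pi({\bf P}_k)$ is precisely the block of $\mathcal L$ containing ${\bf P}_k$, and $\pi({\bf P}_k)$ is its image block in $\mathcal L'$. Consequently we have the two disjoint decompositions
$$\mathcal L = \bigsqcup_{k=1}^{N} \pi'\pi({\bf P}_k), \qquad \mathcal L' = \bigsqcup_{k=1}^{N} \pi({\bf P}_k).$$

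Next, by the constancy of $c_{\bf P}$ in the semifield $\mathbb P$, for every $k$ we have the identity
$$\bigoplus_{{\bf Q}\in \pi'\pi({\bf P}_k)} y^{T}({\bf Q}) \;=\; c \cdot \bigoplus_{{\bf Q}'\in \pi({\bf P}_k)} y^{T'}({\bf Q}').$$
Summing over $k$ using the distributivity of multiplication over $\oplus$ in the semifield $\mathbb P$ yields
$$\bigoplus_{{\bf Q}\in \mathcal L} y^{T}({\bf Q}) \;=\; c \cdot \bigoplus_{{\bf Q}'\in \mathcal L'} y^{T'}({\bf Q}'),$$
from which the desired equality $c = \bigoplus_{\mathcal L} y^{T}/\bigoplus_{\mathcal L'} y^{T'}$ follows at once. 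Since the argument is purely formal, there is no real obstacle beyond verifying that $\pi$ and $\pi'$ genuinely induce the same partition of $\mathcal L$, which is exactly the content of Proposition \ref{Pro-parin} already established.
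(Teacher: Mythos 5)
Your proposal is correct and takes the same route as the paper: the paper states Proposition~\ref{prop-any} directly as a corollary of Theorem~\ref{thm-sim3} and the definition of $y$-equivalence, and what you have done is spell out the implicit mechanics — picking block representatives via the partition bijection structure from Theorem~\ref{thm:partitionbij}, using constancy of the ratio $c_{\bf P}$, and invoking distributivity of $\cdot$ over $\oplus$ in the semifield to assemble the global identity from the per-block ones. Nothing is missing; this is exactly the bookkeeping the paper elides.
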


\begin{proposition}\label{prop-c1}
For any ${\bf P}\in \mathcal L$ with $r=\sum m_i({\bf P})\geq 0$, we have
\begin{equation*}
X^{T}({\bf P})|_{v=1}=\sum_{{\bf P}'\in \pi({\bf P})}X^{T'}({\bf P}')|_{v=1}.
\end{equation*}
\end{proposition}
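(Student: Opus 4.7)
The plan is to reduce the identity at $v=1$ to an algebraic substitution via the exchange relation at $\alpha$ followed by a binomial expansion whose $2^{r}$ terms correspond bijectively to the $2^{r}$ elements of $\pi({\bf P})$ given by Theorem \ref{thm:pi}. Setting $v=1$ makes every variable commute, so the claim becomes an identity of ordinary Laurent monomials modulo a tropical denominator, and the proof is a careful matching of exponents and $y$-factors. Throughout I write $Y_{T}:=\bigoplus_{{\bf R}\in\mathcal L}y^{T}({\bf R})$ and $Y_{T'}:=\bigoplus_{{\bf R}'\in\mathcal L'}y^{T'}({\bf R}')$ for the tropical denominators appearing in the definitions of $X^{T}({\bf P})|_{v=1}$ and $X^{T'}({\bf P}')|_{v=1}$.

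First I would identify $r=\sum m_{i}({\bf P})$ as the net exponent of $x_{\alpha}$ in $x^{T}({\bf P})$. By (\ref{Equ-mp}), (\ref{Equ-n}) and (\ref{equ-m2}) this sum equals $m(P;\alpha)-n(G_{T^o,\widetilde\beta};\alpha)$, i.e.\ the excess of edges of $P$ labelled $\alpha$ over diagonals of $G_{T^o,\widetilde\beta}$ labelled $\alpha$. Hence at $v=1$ I may write $x^{T}({\bf P})=M\cdot x_{\alpha}^{r}$ for a Laurent monomial $M$ in the cluster variables other than $x_{\alpha}$. The exchange relation of Lemma \ref{lem-yf1}(4) then gives
\begin{equation*}
x_{\alpha}^{r}
\;=\;
\frac{\bigl(x_{\alpha_{2}}x_{\alpha_{4}}+x_{\alpha_{1}}x_{\alpha_{3}}y^{T'^o}_{\alpha'}\bigr)^{r}}{x_{\alpha'}^{r}\,(1\oplus y^{T'^o}_{\alpha'})^{r}},
\end{equation*}
and expanding the binomial in the numerator produces exactly $2^{r}$ terms indexed by $\vec{c}\in\{0,1\}^{r}$.

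Next I would match the $\vec{c}$-th binomial term, combined with $M$ and the appropriate $y$-factor, with $x^{T'}({\bf P}(\vec{c}))\,y^{T'}({\bf P}(\vec{c}))$. The structural descriptions of $\pi({\bf P})$ in Remarks \ref{Rem-2}--\ref{Rem-4} are what makes this work: ${\bf P}(\vec{c})$ is obtained from ${\bf P}(\vec 0)$ by replacing, on each of the $r$ tiles $G'(k_{\ell})$ (or, when $k_{\ell}\in\{0,\eta_{\alpha}+1\}$, on the corresponding triangle at $p$ or $q$), the pair of matching edges labelled $\{\alpha_{2},\alpha_{4}\}$ when $c_{\ell}=0$ by the pair labelled $\{\alpha_{1},\alpha_{3}\}$ when $c_{\ell}=1$. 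This is precisely the dichotomy encoded by the two monomials $x_{\alpha_{2}}x_{\alpha_{4}}$ and $x_{\alpha_{1}}x_{\alpha_{3}}y^{T'^o}_{\alpha'}$ in the binomial, while the factor $x_{\alpha'}^{-r}$ accounts for the $r$ new diagonals labelled $\alpha'$ in $G_{T'^o,\widetilde\beta}$. Contributions from non-mutable edges and from the remaining diagonals agree across $T^o$ and $T'^o$ by Lemma \ref{Lem-var}, and for ${\bf P}=(P,\Delta_{j}(q))$ or $(\Delta_{i}(p),P,\Delta_{j}(q))$ the triangle factors are matched via Lemma \ref{Lem-var1} together with, in the mixed cases where a triangle is paired with an edge or two triangles are paired with each other, Lemmas \ref{Lem-de1}--\ref{Lem-de2}. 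Finally, the tropical denominators are reconciled by Proposition \ref{prop-any},
\begin{equation*}
\frac{Y_{T}}{Y_{T'}}
\;=\;
\frac{\bigoplus_{{\bf Q}\in\pi'\pi({\bf P})}y^{T}({\bf Q})}{\bigoplus_{{\bf Q}'\in\pi({\bf P})}y^{T'}({\bf Q}')},
\end{equation*}
together with Lemma \ref{basic4}, which identifies $\bigoplus_{{\bf Q}'\in\pi({\bf P})}y^{T'}({\bf Q}')$ with $y^{T'}({\bf P}(\vec 0))\cdot(1\oplus y^{T'^o}_{\alpha'})^{r}$. The factor $(1\oplus y^{T'^o}_{\alpha'})^{r}$ cancels exactly the denominator produced by the exchange substitution, while the residual $y$-factors are converted into the correct $y^{T'}$-factors via the substitution rules of Lemma \ref{lem-yf1}(3) and Lemmas \ref{Lem-yf}, \ref{lem-ycover}--\ref{lem-dcover3}.

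The main obstacle will be the matching in the preceding paragraph: verifying that the binomial index $\vec{c}$ is paired with exactly the element ${\bf P}(\vec{c})$, not with some other matching in $\pi({\bf P})$. The labels $\alpha_{1},\alpha_{2},\alpha_{3},\alpha_{4}$ need not be distinct --- for instance $\alpha_{1}=\alpha_{2}$ arises when $\alpha'$ becomes the radius of a self-folded triangle in $T'^o$, forcing the degenerate case of Lemma \ref{lem-yf1}(3)(b) and the special partition-bijection behaviour recorded in Proposition \ref{dProp-mutation4}. Handling this degeneration uniformly across the three settings $\mathcal L=\mathcal L(T^o,\widetilde\beta)$, $\mathcal L(T^o,\widetilde\beta^{(q)})$ and $\mathcal L(T^o,\widetilde\beta^{(p,q)})$, and in particular bookkeeping the $y$-variables correctly when the exceptional pairs $(0,1)$ or $(\eta_{\alpha},\eta_{\alpha}+1)$ appear in Definition \ref{Def-pair1}, is the most delicate part of the argument.
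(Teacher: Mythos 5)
Your proposal is correct and follows essentially the same route as the paper's proof: write $x^{T}({\bf P})=M\cdot x_{\alpha}^{r}$, substitute the exchange relation from Lemma \ref{lem-yf1}(4), expand the $r$-th power binomially, match the $2^{r}$ terms to $\pi({\bf P})$ via the indexing of Remarks \ref{Rem-2}--\ref{Rem-4} and Lemmas \ref{Lem-var}, \ref{Lem-var1}, \ref{lem-ycover}--\ref{lem-dcover3}, and reconcile the tropical denominators via Lemma \ref{basic4} and Proposition \ref{prop-any}. Your concluding discussion of the degenerate labels ($\alpha_{1}=\alpha_{2}$, etc.) correctly flags where care is needed, though the paper handles that uniformly because the lemmas it invokes already account for those cases.
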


\begin{proof}
As $r\geq 0$, we have that in the cluster algebra $\mathcal A_v(\Sigma)|_{v=1}$
\begin{equation}\label{Eq-xp11}
    X^{T}({\bf P})|_{v=1}=\frac{M\cdot(x_{\alpha})^{r}\cdot y^{T}({\bf P})}{\bigoplus_{{\bf Q}\in \mathcal L}y^{T}({\bf Q})},
\end{equation}
where $M$ is the cluster Laurent monomial corresponding to the edges not labeled $\alpha$ in $P$, the diagonals not labeled $\alpha$ of $G_{T^o,\widetilde\beta}$ and the sides of $\Delta$ not equal $\alpha$.

For each $\vec c\in \{0,1\}^r$, by Theorem \ref{thm:pi}, Lemmas \ref{Lem-var}, \ref{Lem-var1}, \ref{lem-ycover} and \ref{lem-ycover1}, we have
$$x^{T'}({\bf P}(\vec c))=M\cdot (x_{\alpha'})^{-r}\cdot (x_{\alpha_2}x_{\alpha_4})^{r-\sum_i c_i}\cdot (x_{\alpha_1}x_{\alpha_3})^{\sum_i c_i},$$
$$y^{T'}({\bf P}(\vec c))=y^{T'}({\bf P}(0,\cdots,0))(y^{T'^o}_{\alpha'})^{\sum_i c_i}.$$
Therefore, by Definitions \ref{Def-weight}, \ref{Def-wei1}, \ref{Def-wei2}, we have
$$\begin{array}{rcl}
& & \sum_{\vec c\in \{0,1\}^r}X^{T'}({\bf P}(\vec c))|_{v=1} \vspace{4pt}\\

& = & \sum_{\vec c}\frac{M\cdot (x_{\alpha'})^{-r}\cdot (x_{\alpha_2}x_{\alpha_4})^{r-\sum_i c_i}\cdot (x_{\alpha_1}x_{\alpha_3})^{\sum_i c_i}\cdot  y^{T'}({\bf P}(0,\cdots,0))(y^{T'^o}_{\alpha'})^{\sum_i c_i}}{\oplus_{{\bf Q}'\in \mathcal L'}y^{T'}({\bf Q}')}\vspace{4pt}\\

& = & M\cdot (x_{\alpha'})^{-r}\cdot y^{T'}({\bf P}(0,\cdots,0)) \frac{\textstyle\sum_{\vec c\in \{0,1\}^r}(x_{\alpha_2}x_{\alpha_4})^{r-\sum c_i}\cdot (x_{\alpha_1}x_{\alpha_3}y^{T'^o}_{\alpha'})^{\sum c_i}}{\bigoplus_{{\bf Q}'\in \mathcal L'}y^{T'}({\bf Q}')} \vspace{4pt} \\

& = & M\cdot (x_{\alpha'})^{-r}\cdot y^{T'}({\bf P}(0,\cdots,0)) \frac{(x_{\alpha_2}x_{\alpha_4}+x_{\alpha_1}x_{\alpha_3}y^{T'^o}_{\alpha'})^r}{\bigoplus_{{\bf Q}'\in \mathcal L'}y^{T'}({\bf Q}')}, \vspace{2pt} \\
\end{array}
$$

By Lemma \ref{lem-yf1} (4), $x_{\alpha}=\frac{x_{\alpha_2}x_{\alpha_4}+x_{\alpha_1}x_{\alpha_3}y^{T'^o}_{\alpha'}}{x_{\alpha'}\cdot(1\oplus y^{T'^o}_{\alpha'})}$. Thus we have

\begin{equation*}
     \sum_{\vec c\in \{0,1\}^r}X^{T'}({\bf P}(\vec c))|_{v=1}= \frac{M\cdot (x_{\alpha})^r \cdot y^{T'}({\bf P}(0,\cdots,0))\cdot (1\oplus y^{T'^o}_{\alpha'})^r}{\bigoplus_{{\bf Q}'\in \mathcal L'}y^{T'}({\bf Q}')}.
\end{equation*}

By Lemma \ref{basic4}, we obtain

\begin{equation}\label{Eq-xp12}
     \sum_{\vec c\in \{0,1\}^r}X^{T'}({\bf P}(\vec c))|_{v=1}= \frac{M\cdot (x_{\alpha})^r \cdot \bigoplus_{{\bf Q}'\in \pi({\bf P})}y^{T'}({\bf Q}')}{\bigoplus_{{\bf Q}'\in \mathcal L}y^{T'}({\bf Q}')}.
\end{equation}
Then the result follows by (\ref{Eq-xp11}) (\ref{Eq-xp12}) and Proposition \ref{prop-any}.
\end{proof}

From the proof, we see the following result.

\begin{lemma}\label{lem-plam1}
For any ${\bf P}\in \mathcal L$ with $r=\sum m_i({\bf P})\geq 0$, assume that $X^T({\bf P})=(X^T)^{\overrightarrow{{\bf P}}+re_{\alpha}}$, then for any $\vec c\in \{0,1\}^{r}$ we have
$$X^{T'}({\bf P}(\vec c))=(X^{T'})^{\overrightarrow{{\bf P}}-r e_{\alpha'}+\sum c_i (b_{\alpha'}^{T'^o})_-+(r-\sum c_i)(b_{\alpha'}^{T'^o})_+}.$$
\end{lemma}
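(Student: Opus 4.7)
The plan is to extract the claimed exponent vector directly from the commutative computation already carried out in the proof of Proposition~\ref{prop-c1}. A Laurent monomial in the quantum torus $\mathcal T(T'^o)$ is uniquely determined by its exponent vector, and $X^{T'}({\bf P}(\vec c))=X^{a({\bf P}(\vec c))}$ is defined by Definition~\ref{Def-weight}(4) (and its analogues in Definitions~\ref{Def-wei1}, \ref{Def-wei2}) as the unique such monomial whose specialization at $v=1$ equals the prescribed commutative weight. Hence it suffices to compute $x^{T'}({\bf P}(\vec c))\cdot y^{T'}({\bf P}(\vec c))\big/\bigoplus y^{T'}$ and read off its exponent in the cluster $\widetilde x(T'^o)$.

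First, I would reuse the factorization
\[
x^{T'}({\bf P}(\vec c))=M\cdot x_{\alpha'}^{-r}\cdot (x_{\alpha_2}x_{\alpha_4})^{r-\sum c_i}\cdot (x_{\alpha_1}x_{\alpha_3})^{\sum c_i}
\]
displayed in the proof of Proposition~\ref{prop-c1}, where $M$ collects the contributions of all edges and triangle sides that are not among $\alpha,\alpha',\alpha_1,\alpha_2,\alpha_3,\alpha_4$. By hypothesis, the commutative weight $M\cdot x_\alpha^{r}\cdot(\text{$\alpha_i$-factors})$ associated with $X^T({\bf P})$ equals $(\text{initial cluster monomial})^{\overrightarrow{{\bf P}}+re_\alpha}$. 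Since the variables $x_{\alpha_1},\dots,x_{\alpha_4}$ and the support of $M$ are common to the clusters $\widetilde x(T^o)$ and $\widetilde x(T'^o)$, stripping the $x_\alpha^r$ factor shows that the coordinates of the exponent of $X^{T'}({\bf P}(\vec c))$ along all variables other than $\alpha'$ agree with $\overrightarrow{{\bf P}}$ shifted only by the $\alpha_i$-contributions produced by the choice of $\vec c$.

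Next, I would interpret the column $b^{T'^o}_{\alpha'}$ of the extended exchange matrix via the sign convention of the signed adjacency matrix: its positive part picks out the pair among $\{\alpha_1,\alpha_2,\alpha_3,\alpha_4\}$ that follows $\alpha'$ clockwise in the two triangles of $T'^o$ containing $\alpha'$, and its negative part picks out the remaining pair. Using the orientation convention fixed before Lemma~\ref{lem-yf1} (namely that $\alpha_1,\alpha_3$ are in the clockwise direction of $\alpha$ in $T^o$), one checks that $\{\alpha_2,\alpha_4\}$ corresponds to $(b^{T'^o}_{\alpha'})_+$ and $\{\alpha_1,\alpha_3\}$ to $(b^{T'^o}_{\alpha'})_-$. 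Combining this with Remark~\ref{Rem-2} (which says $c_i=1$ selects the pair $\{\alpha_1,\alpha_3\}$ while $c_i=0$ selects $\{\alpha_2,\alpha_4\}$), the total shift in the exponent is $\sum c_i(b^{T'^o}_{\alpha'})_-+(r-\sum c_i)(b^{T'^o}_{\alpha'})_+$, yielding the claimed formula.

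The main obstacle is purely bookkeeping: tracking the orientation conventions to make sure $(b^{T'^o}_{\alpha'})_\pm$ are matched with the correct pair of quadrilateral sides, and verifying that $M$ really does contribute the same exponent in both clusters (this uses that the tiles, matchings, and triangle monomials outside the local quadrilateral are preserved by the partition bijection $\pi$, as recorded in Lemmas~\ref{Lem-var} and \ref{Lem-var1}). No new quantum computation is needed beyond the one already performed for Proposition~\ref{prop-c1}.
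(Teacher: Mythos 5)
Your proposal is essentially the approach the paper has in mind (the paper gives no explicit proof, only ``From the proof, we see the following result,'' referring to the computation in Proposition \ref{prop-c1}). The overall strategy — determine $X^{T'}({\bf P}(\vec c))$ by its commutative specialization and read off the exponent vector — is correct, and your invocation of the factorization of $x^{T'}({\bf P}(\vec c))$ and of Remark \ref{Rem-2} is right.

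There is, however, one gap worth closing. The exponent vector in the claimed formula, $\overrightarrow{{\bf P}}-r e_{\alpha'}+\sum c_i (b_{\alpha'}^{T'^o})_-+(r-\sum c_i)(b_{\alpha'}^{T'^o})_+$, lives in $\mathbb Z^m$ with $m>n$: the columns $(b_{\alpha'}^{T'^o})_{\pm}$ have both cluster rows (which you do account for, via the $x_{\alpha_1},\dots,x_{\alpha_4}$ factors) and frozen rows. Your argument derives the shift solely from $x^{T'}({\bf P}(\vec c))$, which determines only the first $n$ coordinates of the exponent. To pin down the remaining $m-n$ coordinates you must also invoke the $y$-part computation from the proof of Proposition \ref{prop-c1}, namely $y^{T'}({\bf P}(\vec c))=y^{T'}({\bf P}(0,\cdots,0))(y^{T'^o}_{\alpha'})^{\sum c_i}$, and then identify the frozen-variable monomial $y^{T'^o}_{\alpha'}$ with the frozen rows of the column $b^{T'^o}_{\alpha'}$ (this is what Lemma \ref{lem-yf1} and the geometric-type convention of Subsection \ref{sec:C} supply). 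Alternatively, and perhaps more cleanly, one can sidestep the frozen-coordinate bookkeeping entirely: at $v=1$, expand $(x^T)^{\overrightarrow{{\bf P}}+re_\alpha}=(x^{T'})^{\overrightarrow{{\bf P}}}\cdot x_\alpha^{\,r}$ using the commutative exchange relation, compare with $\sum_{\vec c} X^{T'}({\bf P}(\vec c))|_{v=1}$ from Proposition \ref{prop-c1}, and match the two sums term by term using that the summands (which depend only on $\sum c_i$) are distinct Laurent monomials and Laurent monomials factor uniquely. This matching simultaneously fixes all $m$ coordinates of each exponent. Finally, be aware that the ``one checks'' step identifying $\{\alpha_1,\alpha_3\}$ with $(b^{T'^o}_{\alpha'})_-$ (rather than $(b^{T'^o}_{\alpha'})_+$) requires carefully unwinding the paper's clockwise sign convention together with the mutation rule $b^{T'^o}_{i\alpha'}=-b^{T^o}_{i\alpha}$ and skew-symmetrizability; it is not automatic, and it is the single place where a sign error would propagate into the statement.
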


\begin{proposition}
We have
$$\sum_{{\bf P}\in \mathcal L} X^{T}({\bf P})|_{v=1}=\sum_{{\bf P}'\in \mathcal L'} X^{T'}({\bf P}')|_{v=1}.$$
\end{proposition}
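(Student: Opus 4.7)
The plan is to use the partition bijection $\pi\colon \mathcal L \to \mathcal L'$ from Theorem~\ref{thm:partitionbij} to split both sums into matching blocks. Since $\pi$ and $\pi'$ are inverse partition bijections, they induce a common decomposition $\mathcal L = \bigsqcup_i \mathcal O_i$ and $\mathcal L' = \bigsqcup_i \mathcal O'_i$ such that $\pi$ restricts to a bijection $\mathcal O_i \leftrightarrow \mathcal O'_i$, where $\mathcal O_i = \pi'({\bf P}'_i)$ and $\mathcal O'_i = \pi({\bf P}_i)$ for any representatives. It therefore suffices to establish, block by block, the identity
\begin{equation*}
\sum_{{\bf P}\in \mathcal O_i} X^{T}({\bf P})|_{v=1} \;=\; \sum_{{\bf P}'\in \mathcal O'_i} X^{T'}({\bf P}')|_{v=1}.
\end{equation*}

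First I would invoke Theorem~\ref{thm:pi}(1), which forces a dichotomy on each block. If $r := \sum m_i({\bf P}_i) \geq 0$ for a representative ${\bf P}_i \in \mathcal O_i$, then ${\bf m}({\bf P}',\alpha') = -{\bf m}({\bf P}_i,\alpha)$ for every ${\bf P}' \in \pi({\bf P}_i)$, so $|\pi'({\bf P}')| = 2^0 = 1$; consequently $\mathcal O_i = \{{\bf P}_i\}$ is a singleton and $|\mathcal O'_i| = 2^r$. In this case the block identity is exactly Proposition~\ref{prop-c1}. If instead $r < 0$, then $|\mathcal O'_i| = 1$, say $\mathcal O'_i = \{{\bf P}'_i\}$, and $\sum m_j({\bf P}'_i,\alpha') = -r > 0$. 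Applying Proposition~\ref{prop-c1} with the roles of $(T^o,\alpha)$ and $(T'^o,\alpha')$ swapped gives
\begin{equation*}
X^{T'}({\bf P}'_i)|_{v=1} \;=\; \sum_{{\bf Q}\in \pi'({\bf P}'_i)} X^{T}({\bf Q})|_{v=1} \;=\; \sum_{{\bf Q}\in \mathcal O_i} X^{T}({\bf Q})|_{v=1},
\end{equation*}
which is the required block identity.

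Summing over the blocks then yields the theorem. The only substantive point to verify is that Proposition~\ref{prop-c1} is genuinely symmetric under the exchange $T^o \leftrightarrow T'^o$, $\alpha \leftrightarrow \alpha'$; this is clear because its proof relied only on Lemma~\ref{lem-yf1}(4), the partition-bijection properties in Theorem~\ref{thm:pi}, and the $y$-equivalence statement in Proposition~\ref{prop-any}, all of which are manifestly symmetric between $T^o$ and $T'^o = \mu_\alpha T^o$ (with inverse flip $T^o = \mu_{\alpha'}T'^o$). The main (and essentially only) obstacle is organizational: one must make sure the dichotomy $r \geq 0$ vs.\ $r < 0$ is handled coherently across the three cases $\mathcal L = \mathcal L(T^o,\widetilde\beta),\ \mathcal L(T^o,\widetilde\beta^{(q)}),\ \mathcal L(T^o,\widetilde\beta^{(p,q)})$, but since Theorem~\ref{thm:pi} treats all three uniformly, no further case analysis is needed.
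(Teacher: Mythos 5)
Your proof is correct and matches the paper's approach: the paper's one-line proof invokes Proposition~\ref{prop-c1} together with the partition-bijection property, which is exactly the block-by-block decomposition you spell out (with the $r<0$ case handled by applying Proposition~\ref{prop-c1} to the inverse flip). You simply make explicit the symmetry under $T^o \leftrightarrow T'^o$ that the paper leaves implicit.
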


\begin{proof}
The result follows by Proposition \ref{prop-c1} and $\pi$ is a partition bijection.
\end{proof}

It follows that the commutative version of Theorems \ref{Thm-1}, \ref{Thm-2}, \ref{Thm-M3} hold.

\begin{theorem}
For $X=X_{\beta}, X_{\beta^{(q)}}$ or $X_{\beta^{(p,q)}}$, in the commutative cluster algebra $\mathcal A_v(\Sigma)\mid_{v=1}$ we have
$$X|_{v=1}=\sum_{{\bf P}\in \mathcal L} X^{T}({\bf P})|_{v=1}.$$
\end{theorem}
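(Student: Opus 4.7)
The plan is to combine the flip invariance just proved with the connectivity of admissible triangulations to reduce the claim to a direct verification at a convenient ``base'' triangulation. The preceding proposition shows that for any non-self-folded arc $\alpha \in T^o$, if we set $T'^o = \mu_\alpha(T^o)$, then $\sum_{{\bf P}\in \mathcal L}X^{T}({\bf P})|_{v=1} = \sum_{{\bf P}'\in \mathcal L'}X^{T'}({\bf P}')|_{v=1}$. In the cases $\mathcal L = \mathcal L(T^o,\widetilde\beta^{(q)})$ and $\mathcal L = \mathcal L(T^o,\widetilde\beta^{(p,q)})$ we may only apply this to flips which keep $T$ free of arcs tagged notched at $q$ (resp.\ at $p$ or $q$). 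Fortunately, Lemma \ref{lem:flip-p} and Lemma \ref{lem:flip-pq} guarantee that any two ideal triangulations satisfying these conditions are connected by a sequence of such admissible flips; and in the purely ordinary case, \cite[Proposition 3.8]{FST} guarantees the same without restriction. Consequently the sum $\sum_{{\bf P}\in \mathcal L}X^T({\bf P})|_{v=1}$ depends only on the (tagged) arc and on $\Sigma$, not on the choice of $T^o$.

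Given this invariance, the plan is to pick a specially adapted triangulation $T^o_*$ in which $\widetilde\beta$ already lies among the arcs of $T^o_*$. Such a triangulation exists and still satisfies the notch-avoidance hypotheses for the cases at hand. For $\widetilde\beta \in T^o_*$ the snake graph $G_{T^o_*,\widetilde\beta}$ reduces to a single edge and $\mathcal L(T^o_*,\widetilde\beta) = \{P_{\widetilde\beta}\}$ with $X^{T_*}(P_{\widetilde\beta}) = X_{\widetilde\beta}$; this settles the ordinary case. For $\widetilde\beta^{(q)}$ the lattice $\mathcal L(T^o_*,\widetilde\beta^{(q)})$ becomes $\{(P_{\widetilde\beta},\Delta_j(q))\}_{j=1}^{t}$, and what has to be shown is that
\[
\sum_{j=1}^{t} X^{T_*}(P_{\widetilde\beta},\Delta_j(q))|_{v=1} = X_{\widetilde\beta^{(q)}}|_{v=1}.
\]
This follows from the identity (specific to tagged arcs at a puncture) expressing $X_{\widetilde\beta^{(q)}}$ in terms of $X_{\widetilde\beta}$ and the arcs $\tau_i(q), \tau_{[i]}(q)$ around $q$; this identity is encoded combinatorially in the weights $x^{T}(\Delta_j(q))$ and specialized heights $y^{T}(P_{\widetilde\beta},\Delta_j(q))$ given in Definition \ref{Def-wei1}, and reduces to the standard exchange relation at the puncture $q$. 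The case $\widetilde\beta^{(p,q)}$ is handled by an entirely analogous computation, applying the relation at both punctures simultaneously using Definition \ref{Def-wei2}.

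The main technical obstacle is the base-case verification, in particular for the twice-notched case $\widetilde\beta^{(p,q)}$: one must show that the double sum indexed by $\Delta(T^o_*,p) \times \{P_{\widetilde\beta}\} \times \Delta(T^o_*,q)$ collapses to the correct cluster variable. Two complications arise. First, the specialized height monomial $h^T(\Delta_i(p),P_{\widetilde\beta},\Delta_j(q))$ has a non-symmetric piecewise definition (see (\ref{Eq-hpq})), and one must check that the resulting sum matches the product of exchange relations at $p$ and $q$ after dividing by the shared coefficient $\bigoplus y^T(\Delta,R,\Delta')$. Second, when $\widetilde\beta$ is a pending arc of weight $1/2$ (so $\widetilde\beta = sl(\beta)$ rather than $\beta$), one must carefully track the appearance of special loops versus the pending arc itself in the cluster variable $X_{\widetilde\beta^{(p,q)}}$; this is the place where the definition (\ref{Eq-wideg}) of $\widetilde\beta$ enters non-trivially, and is handled by the tag-flipping isomorphism of Proposition \ref{Pro-iso} together with the reduction to three cases in Remark \ref{Rem-3cases}. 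Once the base case is in hand, the theorem in all three cases follows by the flip invariance from the preceding proposition.
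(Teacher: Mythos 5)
Your proposal correctly identifies the overall strategy---commutative flip invariance (the preceding proposition) plus the connectivity Lemmas \ref{lem:flip-p}, \ref{lem:flip-pq} reduce everything to one convenient base-case triangulation---and this is essentially how the paper proceeds when proving Theorems \ref{Thm-1}--\ref{Thm-M3}. However, for the tagged cases your choice of base case differs from the paper's and leaves a genuine gap. You pick $T^o_*$ with $\widetilde\beta\in T^o_*$ but no self-folded triangle at $q$, so the lattice still has $t$ elements and you must show that the sum over the triangles around $q$ equals $X_{\widetilde\beta^{(q)}}|_{v=1}$. You assert this ``reduces to the standard exchange relation at the puncture $q$,'' but that is precisely the nontrivial statement the theorem asserts at this base case; as written the argument is circular, and the identity must be verified (or referred explicitly to the commutative formulas established in \cite{H2}).

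The paper sidesteps this entirely. For $\beta^{(q)}$ it takes $T_0^o$ containing both $\beta$ \emph{and} $l_q(\beta)$, so $\mathcal L(T_0^o,\widetilde\beta^{(q)})$ is a singleton and $X^{T_0}(P_\beta,\Delta)=X_{\beta^{(q)}}$ is immediate from Definition \ref{Def-weight}---no computation. Because this $T_0$ \emph{does} contain a notched arc at $q$, it is not notch-admissible, so the paper bridges by hand to $T'^o_0=\mu_{l_q(\beta)}(T_0^o)$ via Lemma \ref{lem:step1} / Proposition \ref{lem:step2}; only from $T'^o_0$ onward does the general flip invariance (Theorem \ref{thm-sum4}) combined with Lemma \ref{lem:flip-p} apply. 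For $\beta^{(p,q)}$ the paper likewise does not attempt a direct double-sum base-case verification but reduces to the once-notched case via the tag-flipping isomorphism $\sigma$ of Section \ref{Sec-iso} (Propositions \ref{prop-equ4}, \ref{lem:step2}, \ref{prop-equ5}, \ref{lem:step3}, \ref{prop:ll}), which also cleanly absorbs the pending-arc / $sl(\beta)$ subtleties you flag. Your route can be made to work, but the critical base-case identity needs an actual argument or a citation, and you should note that the bridging step for the paper's choice of base triangulation is outside the scope of the admissible-flip invariance you invoke.
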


\medskip

We now turn to the quantum case.


\begin{lemma}\label{lem-wp1}
For any ${\bf P}\in \mathcal L$ with $r=\sum m_i({\bf P})\geq 0$, the following are equivalent:
\begin{enumerate}[$(i)$]
\item $v^{w({\bf P})}X^{T}({\bf P})=\sum_{{\bf P}'\in \pi({\bf P})}v^{w({\bf P}')}X^{T'}({\bf P}'),$
\item $w({\bf P})=w({\bf P}(0,\cdots,0))$,
\item $w({\bf P})=w({\bf P}(1,\cdots,1)).$
\end{enumerate}
\end{lemma}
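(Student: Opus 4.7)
The plan is to establish (ii) $\Leftrightarrow$ (iii) directly by an arithmetic cancellation, and then reduce (i) $\Leftrightarrow$ (ii) to a careful bookkeeping computation in the mutated quantum torus $\mathcal T(T')$. For (ii) $\Leftrightarrow$ (iii), the observation is that by Proposition \ref{prop-cover} and Lemma \ref{lem:ome1}, flipping the $\ell$-th coordinate of $\vec c$ from $0$ to $1$ alters $w({\bf P}(\vec c))$ by a quantity proportional to $r+1-2\ell$. Summing these elementary changes along any path from $(0,\ldots,0)$ to $(1,\ldots,1)$ in $\{0,1\}^{r}$ and using $\sum_{\ell=1}^{r}(r+1-2\ell)=0$ gives $w({\bf P}(0,\ldots,0))=w({\bf P}(1,\ldots,1))$, which is precisely (ii) $\Leftrightarrow$ (iii).

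For (i) $\Leftrightarrow$ (ii), the idea is to expand $v^{w({\bf P})}X^{T}({\bf P})$ inside $\mathcal T(T')$ and match the right-hand side monomial by monomial. Writing the quantum exchange relation as $X_{\alpha}=M_{+}+M_{-}$ with $M_{\pm}=X(T')^{-e_{\alpha'}+(b^{T'^o}_{\alpha'})_{\pm}}$, and observing that $\overrightarrow{{\bf P}}$ carries no $\alpha$-coordinate (so $(X^{T})^{\overrightarrow{{\bf P}}}$ descends to $\mathcal T(T')$ unchanged), one gets
\begin{equation*}
X^{T}({\bf P})=v^{-\Lambda_T(\overrightarrow{{\bf P}},re_{\alpha})}\,(X^{T'})^{\overrightarrow{{\bf P}}}\,(M_{+}+M_{-})^{r}.
\end{equation*}
A first key step is to compute $\Lambda_{T'}(M_{+},M_{-})=-d(\alpha')$: the compatibility condition $\widetilde B^{T}(T')\Lambda(T')=(D\;0)$ forces $\Lambda_{T'}(b^{T'^o}_{\alpha'},v)=0$ for every $v$ with zero $\alpha'$-coordinate, and applying this to $v=(b^{T'^o}_{\alpha'})_{-}$ yields $\Lambda_{T'}((b^{T'^o}_{\alpha'})_{+},(b^{T'^o}_{\alpha'})_{-})=0$, which collapses the expansion of $\Lambda_{T'}(M_{+},M_{-})$ to the desired value.

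With $\Lambda_{T'}(M_{+},M_{-})=-d(\alpha')$ in hand, the quantum binomial identity (\ref{eq-bino}) applied with $x_{0}=M_{+}$, $x_{1}=M_{-}$, together with Lemma \ref{lem-plam1}, gives an expansion
\begin{equation*}
X^{T}({\bf P})=\sum_{\vec c}v^{\rho(\vec c)}X^{T'}({\bf P}(\vec c)),\qquad \rho(\vec c)=\lambda(\vec c)+\nu\bigl(\textstyle\sum c_{i}\bigr)-\Lambda_T(\overrightarrow{{\bf P}},re_{\alpha}),
\end{equation*}
where $\nu(s)=\Lambda_{T'}\bigl(\overrightarrow{{\bf P}},-re_{\alpha'}+s(b^{T'^o}_{\alpha'})_{-}+(r-s)(b^{T'^o}_{\alpha'})_{+}\bigr)$ arises from reordering $(X^{T'})^{\overrightarrow{{\bf P}}}M^{\vec c}$ into the standard form. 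A second application of compatibility --- using $\Lambda_{T'}(\overrightarrow{{\bf P}},b^{T'^o}_{\alpha'})=0$ because $\overrightarrow{{\bf P}}$ also has no $\alpha'$-coordinate --- shows $\nu(s)$ is independent of $s$; and the $\Lambda$-mutation formula $\Lambda_{T'}(e_{i},e_{\alpha'})=\Lambda_{T}(e_{i},-e_{\alpha}+(b^{T^o}_{\alpha})_{+})$, combined with the vanishing $\Lambda_T(\overrightarrow{{\bf P}},b^{T^o}_{\alpha})=0$, identifies $\nu(0)$ with $\Lambda_T(\overrightarrow{{\bf P}},re_{\alpha})$. Therefore $\rho(\vec c)=\lambda(\vec c)$.

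The final step puts it together. By Lemma \ref{lem:ome1} together with Lemma \ref{lem1}(1), flipping the $\ell$-th coordinate changes both $w({\bf P}(\vec c))$ and $\lambda(\vec c)$ by the same quantity $d(\alpha')(r+1-2\ell)$, so $w({\bf P}(\vec c))=w({\bf P}(0,\ldots,0))+\lambda(\vec c)$. Since $X^{T'}({\bf P}(\vec c))$ depends only on $\sum c_{i}$, grouping both sides of (i) by $s=\sum c_{i}$ and canceling the common factor $\sum_{\vec c:\sum c_{i}=s}v^{\lambda(\vec c)}$ collapses identity (i) to the single scalar equality $w({\bf P})=w({\bf P}(0,\ldots,0))$, which is exactly (ii). The principal obstacle is the compatibility bookkeeping --- each nontrivial vanishing ($\Lambda_{T'}((b^{T'^o}_{\alpha'})_{+},(b^{T'^o}_{\alpha'})_{-})=0$, the $s$-independence of $\nu(s)$, and the identification $\nu(0)=\Lambda_T(\overrightarrow{{\bf P}},re_{\alpha})$) rests on the fact that $\overrightarrow{{\bf P}}$ has no coordinate at the mutated index and on the $\Lambda$-mutation formula.
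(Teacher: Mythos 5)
Your proof is correct and takes essentially the same approach as the paper: expand $X^{T}({\bf P})$ in $\mathcal T(T')$ via the exchange relation and the quantum binomial identity (\ref{eq-bino}), use the compatibility of $(\widetilde B(T'),\Lambda(T'))$ and the $\Lambda$-mutation rule to absorb the conjugation factors, and then match against $\sum_{\vec c}v^{w({\bf P}(\vec c))}X^{T'}({\bf P}(\vec c))$ term by term via the identity $w({\bf P}(\vec c))-w({\bf P}(\vec c\hspace{.5pt}'))=\lambda(\vec c)-\lambda(\vec c\hspace{.5pt}')$ from Lemma \ref{lem:ome1} and Lemma \ref{lem1}. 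The only cosmetic differences are that you separate out (ii)$\Leftrightarrow$(iii) (which is already Lemma \ref{lem:pm}), make the commutation parameter $\Lambda_{T'}(M_{+},M_{-})$ explicit rather than leaving it implicit in the binomial formula, and use the opposite labeling of $x_0,x_1$.
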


\begin{proof}
As in Lemma \ref{lem-plam1}, we may write $X^{T}({\bf P})$ as
$(X^T)^{re_{\alpha}+\overrightarrow{{\bf P}}}$. Hence the coordinates of $e_{\alpha}$ and $e_{\alpha'}$ in $\overrightarrow{{\bf P}}$ are zero. It follows that $(X^T)^{\overrightarrow{{\bf P}}}=(X^{T'})^{\overrightarrow{{\bf P}}}$.

Thus
\begin{equation}\label{eq-xxxp11}
\begin{array}{rcl}

& & X^{T}({\bf P}) =  (X^T)^{re_{\alpha}+\overrightarrow{{\bf P}}} =  v^{-\Lambda^{T}(\overrightarrow{{\bf P}},re_{\alpha})}(X^{T})^{\overrightarrow{{\bf P}}}\cdot (X^{T})^{re_{\alpha}} \vspace{2pt} \\

& = & v^{-\Lambda^{T}(\overrightarrow{{\bf P}},re_{\alpha})}(X^{T'})^{\overrightarrow{{\bf P}}}\cdot \left((X^{T'})^{-e_{\alpha'}+(b^{T'^o}_{\alpha'})_+}+(X^{T'})^{-e_{\alpha'}+(b^{T'^o}_{\alpha'})_-}\right)^{re_{\alpha}} \vspace{2pt} \\

& = & v^{-\Lambda^{T}(\overrightarrow{{\bf P}},re_{\alpha})}(X^{T'})^{\overrightarrow{{\bf P}}}\cdot\textstyle\sum_{\vec c\in \{0,1\}^r}
v^{\lambda(\vec c)}(X^{T'})^{-re_{\alpha'}+\sum c_i(b^{T'^o}_{\alpha'})_-+(r-\sum c_i)(b^{T'^o}_{\alpha'})_+} \vspace{2pt} \\

& = & v^{-\Lambda^{T}(\overrightarrow{{\bf P}},re_{\alpha})}v^{\Lambda^{T'}(\overrightarrow{{\bf P}}, -re_{\alpha'}+r(b^{T'^o}_{\alpha'})_+)}\textstyle\sum_{\vec c\in \{0,1\}^r}
v^{\lambda(\vec c)}(X^{T'})^{\overrightarrow{{\bf P}}-re_{\alpha'}+\sum c_i(b^{T'^o}_{\alpha'})_-+(r-\sum c_i)(b^{T'^o}_{\alpha'})_+} \vspace{2pt} \\

& = & \textstyle\sum_{\vec c\in \{0,1\}^r}
v^{\lambda(\vec c)}(X^{T'})^{\overrightarrow{{\bf P}}-re_{\alpha'}+\sum c_i(b^{T'^o}_{\alpha'})_-+(r-\sum c_i)(b^{T'^o}_{\alpha'})_+},
\end{array}
\end{equation}
where the fourth equality follows by (\ref{eq-bino}) and $\lambda(\vec c)$ is given by (\ref{eq-lambda}) under the convention that $x_0=(X^{T'})^{-e_{\alpha'}+(b^{T'^o}_{\alpha'})_-}, x_1=(X^{T'})^{-e_{\alpha'}+(b^{T'^o}_{\alpha'})_+}$.

On the other hand, by Remark \ref{Rem-3}, Proposition \ref{prop-cover} and Lemma \ref{lem-plam1}, we have
$$\sum_{{\bf P}'\in \pi({\bf P})}v^{w({\bf P}')}X^{T'}({\bf P}')=\sum_{\vec c\in \{0,1\}^r}v^{w({\bf P}(\vec c))} (X^{T'})^{\overrightarrow{{\bf P}}-r e_{\alpha'}+\sum c_i (b_{\alpha'}^{T'^o})_-+(r-\sum c_i)(b_{\alpha'}^{T'^o})_+}.$$

By Lemma \ref{lem:ome1} and Lemma \ref{lem1} (1), for any $i\in \{1,\cdots, r\}$ and ${\vec c,\vec c\hspace{1.5pt}'}\in \{0,1\}^r$ with $c_i=1, c'_i=0$ and $c_j=c'_j$ for $j\neq i$, we have
$$w({\bf P}(\vec c))-w({\bf P}(\vec c\hspace{1.5pt}'))=(2i-r-1)d(\alpha')=\lambda({\vec c})-\lambda({\vec c\hspace{1.5pt}'}),$$
by (\ref{eq-lambda}), we have $\lambda(0,\cdots, 0)=\lambda(1,\cdots,1)=0$,
it follows that
\begin{equation}\label{eq-xxxp21}
\begin{array}{rcl} & & \sum_{{\bf P}'\in \pi({\bf P})}v^{w({\bf P}')}X^{T'}({\bf P}')\vspace{2pt} \\

& = & v^{w({\bf P}(0,\cdots,0))}\sum_{\vec c\in \{0,1\}^r}v^{\lambda(\vec c)} (X^{T'})^{\overrightarrow{{\bf P}}-r e_{\alpha'}+\sum c_i (b_{\alpha'}^{T'^o})_-+(r-\sum c_i)(b_{\alpha'}^{T'^o})_+} \vspace{2pt} \\

& = & v^{w({\bf P}(1,\cdots,1))}\sum_{\vec c\in \{0,1\}^r}v^{\lambda(\vec c)} (X^{T'})^{\overrightarrow{{\bf P}}-r e_{\alpha'}+\sum c_i (b_{\alpha'}^{T'^o})_-+(r-\sum c_i)(b_{\alpha'}^{T'^o})_+}.
\end{array}
\end{equation}

Then the result follows by (\ref{eq-xxxp11}) and (\ref{eq-xxxp21}).
\end{proof}

As a corollary of Lemma \ref{lem-wp1} and Proposition \ref{lem-p-5}, we have the following.

\begin{lemma}\label{lem-p-4}
With the foregoing notation. We have
\begin{equation*}
\sum_{{\bf P}\in \pi'\pi({\bf P}_-)}v^{w({\bf P})}X^{T}({\bf P})=\sum_{{\bf P}'\in \pi({\bf P}_-)}v^{w({\bf P}')}X^{T'}({\bf P}').
\end{equation*}
\end{lemma}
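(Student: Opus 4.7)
The plan is to show that when $r := \sum_i m_i({\bf P}_-,\alpha) \geq 0$ the equivalence class $\pi'\pi({\bf P}_-)$ collapses to $\{{\bf P}_-\}$ and the identity reduces to Lemma \ref{lem-wp1}(i) applied to ${\bf P}_-$; when $r<0$ the argument is dual.

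First I would observe that $w({\bf P}_-) = w({\bf P}'_-) = 0$ by the initial conditions in Propositions \ref{Prop-v1}, \ref{Pro-vm} and \ref{pro-vpq}, and that by Proposition \ref{lem-p-5}, ${\bf P}'_- \in \pi({\bf P}_-)$ with ${\bf P}'_- = {\bf P}_-(0,\ldots,0)$. Hence $w({\bf P}_-) = w({\bf P}_-(0,\ldots,0))$, so condition (ii) of Lemma \ref{lem-wp1} holds for ${\bf P}_-$. Applying Proposition \ref{lem-p-5} with the roles of $(T^o,\pi)$ and $(T'^o,\pi')$ exchanged---which is legitimate because all constructions in the paper are symmetric in the two triangulations related by a flip---one obtains ${\bf P}_- \in \pi'({\bf P}'_-)$ with ${\bf P}_- = {\bf P}'_-(0,\ldots,0)$ in the $\pi'$-labeling, establishing the analogous hypothesis for ${\bf P}'_-$ relative to $\pi'$.

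If $r \geq 0$, Lemma \ref{lem-wp1} gives
\[
v^{w({\bf P}_-)} X^T({\bf P}_-) = \sum_{{\bf P}' \in \pi({\bf P}_-)} v^{w({\bf P}')} X^{T'}({\bf P}').
\]
By Theorem \ref{thm:pi}(1) one has $\sum_i m_i({\bf P}'_-,\alpha') = -r \leq 0$, so Theorem \ref{thm:pi}(2) forces $|\pi'({\bf P}'_-)| = 2^{[-r]_+} = 1$. Since $\pi,\pi'$ are inverse partition bijections (Theorem \ref{thm:partitionbij}), $\pi'\pi({\bf P}_-) = \pi'({\bf P}'_-) = \{{\bf P}_-\}$, so the left-hand side of the lemma collapses to the single term $v^{w({\bf P}_-)} X^T({\bf P}_-)$, and the claim follows. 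If $r<0$ the argument is dual: $|\pi({\bf P}_-)|=1$ collapses the right-hand side to $v^{w({\bf P}'_-)} X^{T'}({\bf P}'_-)$, and the analog of Lemma \ref{lem-wp1} obtained by swapping $T^o\leftrightarrow T'^o$, applied to ${\bf P}'_-$ (applicable since $-r>0$ and its condition (ii) was verified above), yields
\[
v^{w({\bf P}'_-)} X^{T'}({\bf P}'_-) = \sum_{{\bf P} \in \pi'({\bf P}'_-)} v^{w({\bf P})} X^T({\bf P}) = \sum_{{\bf P} \in \pi'\pi({\bf P}_-)} v^{w({\bf P})} X^T({\bf P}).
\]

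The main technical point is the systematic use of the symmetry between $T^o$ and $T'^o$: since the lattices, valuation maps, partition bijections and quantum tori are constructed in a manner invariant under exchanging the two triangulations, every statement proved for $\pi$ has an immediate analog for $\pi'$ that can be invoked without any additional calculation.
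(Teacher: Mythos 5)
Your proof is correct and matches the route the paper intends: the result is a direct corollary of Proposition~\ref{lem-p-5} (giving ${\bf P}'_- = {\bf P}_-(0,\ldots,0)$ and hence condition (ii) of Lemma~\ref{lem-wp1}) together with the observation that one side of the identity collapses to a single term because $\pi$ and $\pi'$ are inverse partition bijections with $|\pi'\pi({\bf P}_-)| = 1$ when $\sum m_i({\bf P}_-) \ge 0$ (and dually). The only minor slip is the attribution ``Theorem~\ref{thm:pi}(2) forces $|\pi'({\bf P}'_-)|=1$''---the cardinality statement is part (1) of that theorem, though part (2) also implies it---and the invocation of ``Theorem~\ref{thm:pi}'' for $\pi'$ strictly requires the $\pi'$-analogue, which exists by the same symmetry you invoke for Proposition~\ref{lem-p-5}; neither affects the substance.
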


As a consequence of Theorem \ref{thm-sim3} and Lemma \ref{lem-p-4}, we obtain the following result.

\begin{proposition}\label{prop-all211}
With the foregoing notation. For any ${\bf P}\in \mathcal L$ we have
\begin{equation*}
\sum_{{\bf Q}\in \pi'\pi({\bf P})}v^{w({\bf Q})}X^{T}({\bf Q})=\sum_{{\bf Q}'\in \pi({\bf P})}v^{w({\bf Q}')}X^{T'}({\bf Q}').
\end{equation*}
\end{proposition}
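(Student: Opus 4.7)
My plan is to propagate Lemma \ref{lem-p-4} (which establishes the identity for ${\bf P} = {\bf P}_-$) to an arbitrary ${\bf P} \in \mathcal L$, using the $w$-equivalence from Theorem \ref{thm-sim3} together with Lemma \ref{lem-wp1}. The argument will split into two cases according to the sign of $r := \sum_i m_i({\bf P}; \alpha)$.

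Before splitting, I would first record the key weight identity
\begin{equation*}
w(\pi'\pi({\bf P})_-) - w(\pi({\bf P})_-) = 0 \qquad \text{for every } {\bf P} \in \mathcal L.
\end{equation*}
By Proposition \ref{lem-p-5}, ${\bf P}'_- = {\bf P}_-(0,\ldots,0) \in \pi({\bf P}_-)$. Since ${\bf P}_-$ and ${\bf P}'_-$ are the global minima of $\mathcal L$ and $\mathcal L'$, they must also be the minimum elements of the blocks $\pi'\pi({\bf P}_-)$ and $\pi({\bf P}_-)$ respectively, and both carry weight zero by the initial condition in the defining propositions of $w$. The $w$-equivalence ${\bf P} \approx {\bf P}_-$ supplied by Theorem \ref{thm-sim3} then forces the displayed vanishing for arbitrary ${\bf P}$.

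In the case $r \geq 0$, Theorem \ref{thm:pi} gives $|\pi'({\bf P}')| = 2^{[-r]_+} = 1$ for every ${\bf P}' \in \pi({\bf P})$. Since ${\bf P}$ belongs to $\pi'({\bf P}')$ for each such ${\bf P}'$ by the inverse partition bijection property (Proposition \ref{Pro-parin}), I deduce $\pi'\pi({\bf P}) = \{{\bf P}\}$, so $\pi'\pi({\bf P})_- = {\bf P}$. The identity above then becomes $w({\bf P}) = w(\pi({\bf P})_-) = w({\bf P}(0,\ldots,0))$, which is precisely the hypothesis of Lemma \ref{lem-wp1}; applying that lemma closes this case.

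In the case $r < 0$, $\pi({\bf P}) = \{{\bf P}'\}$ is a singleton, so $\pi({\bf P})_- = {\bf P}'$ and $\pi'\pi({\bf P}) = \pi'({\bf P}')$. Now ${\bf P}' \in \mathcal L'$ satisfies $\sum_i m_i({\bf P}'; \alpha') = -r > 0$, so I would apply the symmetric analog of Lemma \ref{lem-wp1} with $T^o$ and $T'^o$ interchanged; its hypothesis $w({\bf P}') = w(\pi'({\bf P}')_-) = w(\pi'\pi({\bf P})_-)$ is exactly the vanishing already established. The main delicacy here will be the bookkeeping of the Boolean lattice structures on $\pi({\bf P}) \cong \{0,1\}^{[r]_+}$ and on $\pi'({\bf P}') \cong \{0,1\}^{[-r]_+}$ via Theorem \ref{thm:pi}(2), needed to correctly pinpoint the minimum elements on each side; once this identification is in place, the two cases fit together to give the proposition.
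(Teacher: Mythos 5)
Your proof is correct and takes essentially the same route as the paper: the paper's own argument for Proposition \ref{prop-all211} is stated as an immediate consequence of Theorem \ref{thm-sim3} and Lemma \ref{lem-p-4}, and your write-up is precisely the detailed unpacking of that implication, using Lemma \ref{lem-wp1} (respectively its $T\leftrightarrow T'$ analogue) after transporting the weight normalization from ${\bf P}_-$ to an arbitrary ${\bf P}$ via the $\approx$-equivalence.
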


It follows that

\begin{theorem}\label{thm-sum4}
With the foregoing notation. We have
\begin{equation*}
\sum_{{\bf P}\in \mathcal L}v^{w({\bf P})}X^{T}({\bf P})=\sum_{{\bf P}'\in \mathcal L'}v^{w({\bf P}')}X^{T'}({\bf P}').
\end{equation*}
\end{theorem}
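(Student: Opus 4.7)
The plan is to deduce Theorem \ref{thm-sum4} directly from Proposition \ref{prop-all211} together with the fact, established in Theorem \ref{thm:partitionbij}, that $\pi \colon \mathcal L \to \mathcal L'$ is a partition bijection with inverse $\pi'$. All of the conceptual work (the combinatorial construction of $\pi$, the compatibility of $\pi$ with the lattice structures, and the coincidence of the $y$- and $w$-equivalence relations) has been absorbed into these earlier results, so what remains is a clean bookkeeping step: a ``local to global'' summation.

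First I would invoke Theorem \ref{thm:partitionbij} in the form of Remark \ref{Rmk-par}: as $\mathbf P$ ranges over $\mathcal L$, the subsets $\pi(\mathbf P) \subseteq \mathcal L'$ are pairwise either equal or disjoint, and their union is all of $\mathcal L'$; symmetrically, the preimages $\pi'\pi(\mathbf P) \subseteq \mathcal L$ partition $\mathcal L$. I would then pick a set of representatives $\{\mathbf P_k : k \in K\}$ so that
\begin{equation*}
\mathcal L = \bigsqcup_{k \in K} \pi'\pi(\mathbf P_k), \qquad \mathcal L' = \bigsqcup_{k \in K} \pi(\mathbf P_k).
\end{equation*}

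Next, for each representative $\mathbf P_k$, Proposition \ref{prop-all211} supplies the local identity
\begin{equation*}
\sum_{\mathbf Q \in \pi'\pi(\mathbf P_k)} v^{w(\mathbf Q)} X^{T}(\mathbf Q) \;=\; \sum_{\mathbf Q' \in \pi(\mathbf P_k)} v^{w(\mathbf Q')} X^{T'}(\mathbf Q').
\end{equation*}
Summing over $k \in K$ and using the two disjoint union decompositions above yields exactly the required global equality, proving Theorem \ref{thm-sum4}.

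Since the heavy lifting has been done upstream, no serious obstacle remains at this step. The one point to double-check is that the standing assumption of the section---that $T$ and $T'$ contain no arc tagged notched at the relevant punctures---transfers cleanly from $T^o$ to $T'^o = \mu_\alpha T^o$; this is automatic because $\alpha$ is a non-self-folded arc and flipping such an arc introduces no new notched tags at $p$ or $q$. Thus the hypotheses needed to apply Proposition \ref{prop-all211} are satisfied for every representative $\mathbf P_k$, and the short bookkeeping argument above completes the proof.
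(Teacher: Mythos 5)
Your argument is correct and is exactly the route the paper takes: the theorem is stated immediately after Proposition \ref{prop-all211} with no further proof beyond ``It follows that,'' since summing the local identity of Proposition \ref{prop-all211} over a set of representatives of the partition (guaranteed by Theorem \ref{thm:partitionbij}) gives the global equality. The one inaccuracy is in your closing remark: the standing Assumption that $T'$ has no notched arcs at the relevant punctures is \emph{not} automatic from $\alpha$ being non-self-folded (the flip could create a self-folded triangle around $p$ or $q$); it is a hypothesis carried through the section and is preserved along the specific flip sequences produced by Lemmas \ref{lem:flip-p} and \ref{lem:flip-pq} when the theorem is later applied.
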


\subsection{Proofs of Theorems \ref{Thm-1}, \ref{Thm-2}}
We can now give the proofs of Theorems \ref{Thm-1}, \ref{Thm-2}.

\emph{Proof of Theorem \ref{Thm-1}:}
If $\beta\in T^o$, then $\mathcal P(G_{T^o,\widetilde\beta})$ contains only one perfect matching $P$ with $X^{T}(P)=X_{\beta}$ and $w(P)=0$. Thus the result holds in case $\beta\in T^o$. If $\beta\not\in T^o$, as any two ideal triangulations are related by a sequence of flips, the result follows by Theorem \ref{thm-sum4} and the case that $\beta\in T^o$. The proof is complete.
\endproof

\begin{lemma}\label{lem:step1}
Assume that $p\neq q$, for an ideal triangulation $T_0^o$ such that $\beta,l_q(\beta)\in T_0^o$, let $\alpha=l_q(\beta)$ and $T'^o_0=\mu_\alpha(T_0^o)$. Then we have
$X^{T_0}(P_{\beta},\Delta)=\sum_{{\bf P}'\in \mathcal L(T'^o_0,\beta)}v^{w({\bf P}')}X^{T'_0}({\bf P}')$.
\end{lemma}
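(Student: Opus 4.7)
Since $\beta\in T_0^o$ and $l_q(\beta)=\alpha\in T_0^o$ form the unique self-folded triangle $\Delta=\Delta_1(q)=(\beta,\beta,l_q(\beta))$ at $q$, the snake graph $G_{T_0^o,\widetilde\beta}$ consists of the single edge $P_\beta$ and $\Delta(T_0^o,q)=\{\Delta\}$.  Hence $\mathcal L(T_0^o,\widetilde\beta^{(q)})=\{(P_\beta,\Delta)\}$ and a direct unwinding of Definition \ref{Def-wei1} gives $X^{T_0}(P_\beta,\Delta)|_{v=1}=x_\beta\cdot (x_\alpha/x_\beta^2)=x_\alpha/x_\beta=x_{\beta^{(q)}}$, using the convention $x_{l_q(\beta)}=x_\beta x_{\beta^{(q)}}$.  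So the left-hand side is the cluster variable $X_{\beta^{(q)}}$ in its trivial one-term expansion.

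For the right-hand side I would read the indexing set in the natural once-tagged sense $\mathcal L(T'^o_0,\widetilde\beta^{(q)})$ (the excerpt appears to be missing the $^{(q)}$ superscript).  After the flip $\mu_\alpha$ the self-folded triangle is destroyed, $q$ acquires two incident arcs $\beta$ and $\alpha'$, and hence there are exactly two triangles $\Delta'_1(q),\Delta'_2(q)$ incident to $q$ in $T'^o_0$, whose respective third sides are the two non-$\alpha$ sides $\eta_1,\eta_2$ of the outer triangle of $\alpha$ in $T_0^o$.  Consequently the right-hand sum has precisely two summands, $v^{w(P_\beta,\Delta'_j(q))}X^{T'_0}(P_\beta,\Delta'_j(q))$ for $j=1,2$.

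The core step is to invoke the quantum exchange relation for $\mu_\alpha$, namely
\[
X_\alpha=(X^{T'_0})^{-e_{\alpha'}+(b^{T'^o_0}_{\alpha'})_+}+(X^{T'_0})^{-e_{\alpha'}+(b^{T'^o_0}_{\alpha'})_-},
\]
multiply on the left by $X_\beta^{-1}$, and compare the two resulting quantum monomials with $v^{w({\bf P}')}X^{T'_0}({\bf P}')$ for ${\bf P}'=(P_\beta,\Delta'_j(q))$.  At $v=1$ this reduces to Lemma \ref{lem-yf1}(4) combined with the direct computation $x^{T'_0}(P_\beta,\Delta'_j(q))|_{v=1}=x_\beta\cdot x_{\eta_j}/(x_\beta x_{\alpha'})=x_{\eta_j}/x_{\alpha'}$; the $y$-coefficient match comes from Lemma \ref{lem-yf1}(3) together with the definition of $y^{T'_0}(P_\beta,\Delta'_j(q))$.

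The principal obstacle is the bookkeeping of the $v$-exponents.  By the initial condition of Proposition \ref{Pro-vm} one has $w(P_\beta,\Delta'_1(q))=0$, and the recurrence then gives $w(P_\beta,\Delta'_2(q))=\Omega(P_\beta,\underline{\Delta'_1(q)})=-d(\tau'_1(q))\hat m(P_\beta,\tau'_1(q))$.  I would verify that this number equals the $v$-shift that arises when commuting $X_\beta^{-1}$ past each monomial of the quantum exchange relation, which ultimately reduces to reading off a single entry of the compatibility identity $\widetilde B(T'^o_0)^T\Lambda(T'^o_0)=(D\ 0)$; once this is done the identity at the level of $v=1$ upgrades termwise to the asserted quantum identity.
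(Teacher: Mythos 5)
The paper does not actually supply a proof of this lemma; it is stated without argument just before the proof of Theorem~\ref{Thm-2} and is evidently expected to follow by a direct verification analogous to the fully written proof of Proposition~\ref{lem:step2}. Your overall outline is the right one: the left-hand side is $X_{\beta^{(q)}}$ (you are also right that the index set on the right should read $\mathcal L(T'^o_0,\widetilde\beta^{(q)})$ --- this is a typo), $T'^o_0$ has exactly the two triangles $\Delta'_1(q),\Delta'_2(q)$ at $q$ (with the common sides $\beta,\alpha'$ and third sides $\eta_1,\eta_2$), and the identity should come out of the quantum exchange relation for the flip $T'_0\to T_0$.

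There is, however, a genuine gap in the core step. The displayed relation $X_\alpha=(X^{T'_0})^{-e_{\alpha'}+(b^{T'^o_0}_{\alpha'})_+}+(X^{T'_0})^{-e_{\alpha'}+(b^{T'^o_0}_{\alpha'})_-}$ is, exactly as written, the mutation formula for the \emph{tagged} quantum cluster variable $X_{\beta^{(q)}}$ produced by $\mu_{\alpha'}\colon T'_0\to T_0$, because $b_{\beta,\alpha'}^{T'^o_0}=0$ (the two triangles $(\beta,\alpha',\eta_1)$ and $(\beta,\alpha',\eta_2)$ contribute with opposite signs). So the right-hand side of your equation already equals $X_{\beta^{(q)}}$, and multiplying by $X_\beta^{-1}$ introduces a spurious factor $X_\beta^{-1}$. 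The cancellation of $x_\beta$ belongs only to the commutative route via Lemma~\ref{lem-yf1}(4), where $\alpha_3=\alpha_4=\beta$ forces an explicit $x_\beta$ in the numerator that one divides away from $x_\alpha=x_\beta x_{\beta^{(q)}}$; the quantum mutation formula never sees that $x_\beta$. If you really want to argue through a ``virtual'' $X_\alpha$ with $X_\alpha=v^\lambda X_\beta X_{\beta^{(q)}}$, you would need to write the exchange relation for that object, in which case an $X_\beta$-monomial must appear in the exponents; as stated your two equations are incompatible.

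Your anticipated $v$-bookkeeping is also largely moot. Since $\widetilde\beta=\beta\in T'^o_0$, the convention for the one-edge snake graph gives $\hat m(P_\beta;\zeta)=\delta_{\zeta,\beta}$, so $\hat m(P_\beta;\tau'_1(q))=\hat m(P_\beta;\alpha')=0$ and hence $w(P_\beta,\Delta'_2(q))=\Omega(P_\beta,\underline{\Delta'_1(q)})=-d(\alpha')\cdot 0=0$. Both $v^{w}$ factors are $1$, and the two monomials of the exchange relation for $X_{\beta^{(q)}}$ match $X^{T'_0}(P_\beta,\Delta'_1(q))$ and $X^{T'_0}(P_\beta,\Delta'_2(q))$ directly (after identifying the labels as in your $v=1$ computation), with no commutation correction to track. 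Dropping the artificial $X_\alpha$ and the $X_\beta^{-1}$ step, and using $w\equiv 0$, closes the argument.
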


\emph{Proof of Theorem \ref{Thm-2}:}
Choose an ideal triangulation $T_0^o$ such that $\beta,l_q(\beta)\in T_0^o$. Then $\mathcal L(T_0^o,\widetilde \beta^{(q)})=\{(P_{\beta},\Delta)\}$ contains a unique element, where $\Delta=(\beta,\beta,l_q(\beta))$. In this case, we have $w(P_{\beta},\Delta)=0$. From Definition \ref{Def-weight}, we see that $X^{T_0}(P_{\beta},\Delta)=X_{\beta^{(q)}}$. By Lemma \ref{lem:step1}, we see the result holds for $T'_0$. For any ideal triangulation $T^o$ such that the corresponding tagged triangulation $T$ contains no arc tagged notched at $q$. We see the result holds for $T$ by Lemma \ref{lem:flip-p} and Theorem \ref{thm-sum4}. The proof is complete.
\endproof

The rest of this section is devoted to giving the proof of Theorem \ref{Thm-M3}.

Given a quantum cluster algebra $\mathcal A_v(\Sigma)$ and a puncture $p$, recall the quantum cluster algebra $\mathcal A^{(p)}_v(\Sigma)$ and the isomorphism $\sigma: \mathcal A_v(\Sigma)\to \mathcal A^{(p)}_v(\Sigma)$ in Section \ref{Sec-iso}. Denote by $\sigma\mid_{v=1}$ the corresponding isomorphism between $\mathcal A_v(\Sigma)\mid_{v=1}$ and $\mathcal A^{(p)}_v(\Sigma)\mid_{v=1}$. For a given tagged triangulation $T$, for any $\Delta_j(p)\in \Delta(T^o,p)$, denote by $x^{T,(p)}(P_\beta,\Delta_j(p))$, $h^{T,(p)}(P_\beta,\Delta_j(p))$, $y^{T,(p)}(P_\beta,\Delta_j(p))$ and $X^{T,(p)}(P_\beta,\Delta_j(p))$ the weight, height monomial, specialized height monomial and quantum weight, respectively of $(P_\beta,\Delta_j(p))$ in $\mathcal A^{(p)}_v(\Sigma)$, which are given in Definition \ref{Def-wei1}.

We divide the proof of Theorem \ref{Thm-M3} into two cases: $p\neq q$ or $p=q$.

\subsection{Proof of Theorem \ref{Thm-M3} in case $p\neq q$} As $p\neq q$, we have $\widetilde\beta=\beta$, $\beta$ and $\beta^{(p)}$ are compatible. We may choose a tagged triangulation $T_0$ such that $\beta,\beta^{(p)}\in T_0$ and any arc $\gamma(\neq \beta^{(p)}) \in T_0$ does not tagged notched. Thus $T_0^{(p)}=T_0$. Let $T_0^o=l(T_0)$ be the corresponding ideal triangulation. Thus $l_p(\beta)\in T_0^o$ and $\Delta=(\beta,\beta,l_p(\beta))$ is a self-folded triangle in $T^o_0$. Denote $\Delta(T_0^o,q)=\{\Delta_1(q),\cdots, \Delta_t(q)\}$. Then $\tau_{t-1}(q)=\tau_1(q)=l_p(\beta), \tau_t(q)=\tau_{[1]}(q)=\tau_{[t]}(q)=\beta$ and $\tau_{j}[q]\neq \beta,l_p(\beta)$ for any $j\in \{2,\cdots, t-1\}$.
Thus we have
$$\mathcal L:=\mathcal L(T^o_0,\beta^{(p,q)})=\{\Delta\}\times \{P_\beta\}\times \{\Delta_1(q),\Delta_3(q),\cdots, \Delta_{t-3}(q),\Delta_{t}(q)\},$$
$$\mathcal L(T^o_0,\beta^{(q)})=\{P_\beta\}\times \{\Delta_1(q),\Delta_2(q),\cdots, \Delta_{t-1}(q),\Delta_{t}(q)\}.$$

\begin{figure}[h]
\centerline{\includegraphics{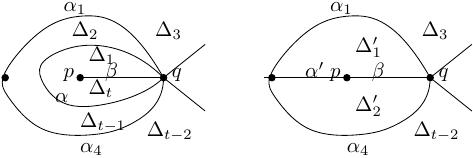}}

\caption{} \label{Fig:proof4}
\end{figure}

For $j<t$, let $$y^{T_0}(\Delta,P_\beta,\Delta_j(q))=y^{T_0}_\beta y^{T_0}_{\tau_2(q)}\cdots y^{T_0}_{\tau_{j-1}(q)},$$
 and
$$y^{T_0}(\Delta,P_\beta,\Delta_t(q))=y^{T_0}_\beta y^{T_0}_{\tau_2(q)}\cdots y^{T_0}_{\tau_{t-2}(q)}y^{T_0}_\beta.$$

We have the following lemma.

\begin{lemma}\label{lem-p-q1} For any $(\Delta,P_\beta,\Delta_j(q))\in \mathcal L$, we have  
\begin{enumerate}[$(1)$]
\item $\sigma\mid_{v=1}\left(x^{T_0}(\Delta,P_\beta,\Delta_j(q))\right)=x^{T_0,(p)}(P_\beta,\Delta_j(q))$,
\item $\sigma\mid_{v=1}\left(y^{T_0}(\Delta,P_\beta,\Delta_j(q))\right)=y^{T_0,(p)}(P_\beta,\Delta_j(q))$,
\item $\sigma\left(X^{T_0}(\Delta,P_\beta,\Delta_j(q))\right)=X^{T_0,(p)}(P_\beta,\Delta_j(q)).$
\end{enumerate}
\end{lemma}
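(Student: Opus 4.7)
The plan is to verify the three assertions by direct computation, leveraging the fact that at our special triangulation $T_0$ the isomorphism $\sigma$ of Proposition \ref{Pro-iso} acts by swapping $\beta$ and $\beta^{(p)}$ (the only two $p$-tagged arcs in $T_0$) and fixing every other arc. Concretely, since $\beta,\beta^{(p)}\in T_0$, the ideal triangulation $T_0^o=l(T_0)$ contains the self-folded triangle $\Delta=(\beta,\beta,l_p(\beta))$ as its unique $p$-incident triangle, so no arc of $T_0$ other than $\beta,\beta^{(p)}$ is incident to $p$. Proposition \ref{Pro-iso} then gives $\sigma(X_\beta)=X^{(p)}_{\beta^{(p)}}$, $\sigma(X_{\beta^{(p)}})=X^{(p)}_\beta$, and $\sigma(X_\gamma)=X^{(p)}_\gamma$ for every other $\gamma\in T_0$; the same swap $\beta\leftrightarrow\beta^{(p)}$ governs $\sigma\mid_{v=1}$ on both the $x$- and $y$-variables.

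For (1), I would apply Definition \ref{Def-wei2}(3) using $s=1$, $\tau_0(p)=\tau_1(p)=\beta$, $\tau_{[1]}(p)=l_p(\beta)$, $x_{l_p(\beta)}=x_\beta x_{\beta^{(p)}}$, and $x^{T_0}(P_\beta)=x_\beta$; the three $p$-side factors from $\Delta$ collapse via $x_\beta^{-1}\cdot(x_\beta x_{\beta^{(p)}})\cdot x_\beta^{-1}\cdot x_\beta=x_{\beta^{(p)}}$, yielding $x^{T_0}(\Delta,P_\beta,\Delta_j(q))=x_{\beta^{(p)}}\cdot x_{\tau_{j-1}(q)}^{-1}x_{\tau_{[j]}(q)}x_{\tau_j(q)}^{-1}$. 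Applying $\sigma\mid_{v=1}$ turns $x_{\beta^{(p)}}$ into $x^{(p)}_\beta$ while fixing the other factors. On the right, Definition \ref{Def-wei1}(2) gives $x^{T_0,(p)}(P_\beta,\Delta_j(q))=x^{(p)}_\beta\cdot x_{\tau_{j-1}(q)}^{-1}x_{\tau_{[j]}(q)}x_{\tau_j(q)}^{-1}$ since $\beta\in T_0^o$ forces $x^{T_0,(p)}(P_\beta)=x^{(p)}_\beta$, and the two expressions coincide.

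For (2), applying $\sigma\mid_{v=1}$ to the excerpt's closed-form definition of $y^{T_0}(\Delta,P_\beta,\Delta_j(q))$ sends each $y^{T_0}_\beta$ factor to $y^{T_0,(p)}_{\beta^{(p)}}$ and fixes every middle $y^{T_0}_{\tau_i(q)}$. On the right I would expand $y^{T_0,(p)}(P_\beta,\Delta_j(q))$ using Definition \ref{Def-wei1}(3)(b) together with the map $\Phi$ of (\ref{Eq-phi}): the loop $\tau_1(q)=l_p(\beta)$ (and $\tau_{t-1}(q)$ when $j=t$) contributes $y^{T_0,(p)}_{\beta^{(p)}}$ via the loop clause of $\Phi$, while every middle $\tau_i(q)$ contributes $y^{T_0,(p)}_{\tau_i(q)}$ via the generic clause. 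The two sides match term-by-term.

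Finally, (3) follows formally from (1), (2), and the quantum compatibility of $\sigma$. By Proposition \ref{Pro-iso}, $\sigma$ is a $\mathbb Z[v^{\pm 1}]$-algebra isomorphism carrying the quantum torus at $T_0$ onto that at $T_0^{(p)}=T_0$ in $\mathcal A^{(p)}_v$, so the image of a pure quantum monomial $X^a$ is again a pure monomial, whose exponent is determined by its $v=1$ specialization. Combining (1), (2), and the identity $\sigma\mid_{v=1}\bigl(\bigoplus_{(\Delta,R,\Delta')\in\mathcal L}y^{T_0}(\Delta,R,\Delta')\bigr)=\bigoplus_{(R,\Delta)\in\mathcal L(T_0^o,\beta^{(q)})}y^{T_0,(p)}(R,\Delta)$, which is (2) applied to every element of the lattice, shows that the $v=1$ specializations in Definitions \ref{Def-wei2}(6) and \ref{Def-wei1}(5) agree; the exponents are therefore equal and (3) follows. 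The main obstacle is paragraph (2): one has to reconcile the excerpt's ad hoc closed form for $y^{T_0}(\Delta,P_\beta,\Delta_j(q))$ with the genuine $\Phi$-image of the height monomial from Definition \ref{Def-wei2}(4), which requires the cancellation $\Phi(h^T_\beta)\cdot\Phi(h^T_{l_p(\beta)})=(y^{T_0}_\beta/y^{T_0}_{\beta^{(p)}})\cdot y^{T_0}_{\beta^{(p)}}=y^{T_0}_\beta$ between radius and loop, together with the case $j=t$ where this cancellation appears at both ends of the product and accounts for the extra $y^{T_0}_\beta$ factor in the stated formula.
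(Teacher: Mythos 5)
Your proposal is correct and follows the same route as the paper's proof: direct term-by-term computation of $x^{T_0}(\Delta,P_\beta,\Delta_j(q))$ and $y^{T_0}(\Delta,P_\beta,\Delta_j(q))$, application of $\sigma\mid_{v=1}$ via the $\beta\leftrightarrow\beta^{(p)}$ swap (with the relation $x_{l_p(\beta)}=x_\beta x_{\beta^{(p)}}$ handling the loop), and formal deduction of (3) from (1) and (2). The one mis-step is the closing paragraph: since $s=1$ here, Definition \ref{Def-wei2}(4)(b) — which assumes $s,t\geq 2$ — simply does not apply, so the displayed closed form for $y^{T_0}(\Delta,P_\beta,\Delta_j(q))$ preceding the lemma is the definition in this setting, not an ad hoc expression that needs reconciling with a more fundamental formula (though your $\Phi$-cancellation check is a valid sanity test of that choice).
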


\begin{proof}
(1) For $j=1,t$, we have $x^{T_0}(\Delta,P_\beta,\Delta_j(q))=(x_{l_p(\beta)}x_{\beta}^{-2})x_\beta(x_\beta x_\beta^{-1}x_{l_p(\beta)}^{-1})=x_\beta^{-1}$. $x^{T_0,(p)}(P_\beta,\Delta_j(q))=x^{(p)}_{\beta}(x^{(p)}_\beta (x^{(p)}_\beta)^{-1}(x^{(p)}_{l_p(\beta)})^{-1})=(x^{(p)}_{\beta^{(p)}})^{-1}$. Thus the result holds for $j=1,t$. For $j\neq 1,t$, we have $x^{T_0}(\Delta,P_\beta,\Delta_j(q))=(x_{l_p(\beta)}x_{\beta}^{-2})x_\beta x^{T_0}(\Delta_j(q))=x_{\beta^{(p)}}x^{T_0}(\Delta_j(q))$. Thus $\sigma\mid_{v=1}\left(x^{T_0}(\Delta,P_\beta,\Delta_j(q))\right)=x^{(p)}_{\beta}x^{T_0,(p)}(\Delta_j(q))=x^{T_0,(p)}(P_\beta,\Delta_j(q))$.

(2) Since $\tau_{j}[q]\neq \beta,l_p(\beta)$ for any $j\in \{2,\cdots, t-1\}$, we have for $j<t$
$$\sigma\mid_{v=1}\left(y^{T_0}(\Delta,P_\beta,\Delta_j(q)\right)=y^{T_0,(p)}_{\beta^{(p)}} y^{T_0,(p)}_{\tau_2(q)}\cdots y^{T_0,(p)}_{\tau_{j-1}(q)}=y^{T_0,(p)}(P_\beta,\Delta_j(q)),$$
$$\sigma\mid_{v=1}\left(y^{T_0}(\Delta,P_\beta,\Delta_t(q)\right)=y^{T_0,(p)}_{\beta^{(p)}} y^{T_0,(p)}_{\tau_2(q)}\cdots y^{T_0,(p)}_{\tau_{t-2}(q)}y^{T_0,(p)}_{\beta^{(p)}}=y^{T_0,(p)}(P_\beta,\Delta_t(q)).$$

(3) It follows by (1) and (2).
\end{proof}

The following proposition follows immediately by Lemma \ref{lem-p-q1}.

\begin{proposition}\label{prop-equ4}
With the foregoing notation. We have
$$\sum_{{\bf P}=(\Delta,P_{\beta},\Delta_j(q))\in \mathcal L} v^{w(P_{\beta},\Delta_j(q))}\sigma\left(X^{T_0}({\bf P})\right)
=\sum_{{\bf P}\in \mathcal L(T_0^o,\beta^{(q)})} v^{w({\bf P})}X^{T_0,(p)}({\bf P}).$$
\end{proposition}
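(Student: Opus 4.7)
The plan is to apply the isomorphism $\sigma$ termwise to the left-hand side and invoke Lemma \ref{lem-p-q1}(3) to convert each summand directly into the corresponding summand on the right. Since $\sigma$ is a $\mathbb{Z}[v^{\pm 1}]$-algebra isomorphism by Proposition \ref{Pro-iso}, it commutes with scalars in $\mathbb{Z}[v^{\pm 1}]$, so for each ${\bf P} = (\Delta, P_\beta, \Delta_j(q)) \in \mathcal L$ one has
$$
v^{w(P_\beta, \Delta_j(q))} \sigma\bigl(X^{T_0}({\bf P})\bigr)
= v^{w(P_\beta, \Delta_j(q))} X^{T_0,(p)}(P_\beta, \Delta_j(q)),
$$
where the last equality is exactly Lemma \ref{lem-p-q1}(3). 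Crucially, the coefficient $v^{w(P_\beta, \Delta_j(q))}$ appearing on the left is, by construction, the weight supplied by Proposition \ref{Pro-vm} at $(P_\beta, \Delta_j(q)) \in \mathcal L(T_0^o, \beta^{(q)})$, so it requires no translation to appear as the coefficient $v^{w({\bf P}')}$ on the right.

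Having performed this substitution termwise, I would identify the two indexing sets via the projection $(\Delta, P_\beta, \Delta_j(q)) \mapsto (P_\beta, \Delta_j(q))$. Because $\beta \in T_0^o$ forces the middle coordinate to be the unique perfect matching $P_\beta$ of $G_{T_0^o,\beta}$ and the first coordinate to be the self-folded triangle $\Delta = (\beta,\beta,l_p(\beta))$ (so the $p$-triangle lattice has $s=1$), the projection is determined by its third coordinate alone. The remaining point is to verify that the set of third coordinates indexing $\mathcal L$, as spelled out just before the statement, matches $\Delta(T_0^o, q)$ in such a way that summing termwise over $\mathcal L$ on the left reproduces the sum over $\mathcal L(T_0^o, \beta^{(q)}) = \{P_\beta\} \times \Delta(T_0^o, q)$ on the right.

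The main (and essentially only) obstacle is this combinatorial bookkeeping around the self-folded triangle at $q$: one needs to unpack carefully the definition of $\mathcal L(T_0^o, \beta^{(p,q)})$ in the degenerate setting where $\widetilde\beta \in T_0^o$ is the radius of $\Delta$, and confirm that with the conventions $\tau_{t-1}(q)=\tau_1(q)=l_p(\beta)$ and $\tau_t(q)=\tau_{[1]}(q)=\tau_{[t]}(q)=\beta$, the third coordinates of elements of $\mathcal L$ are in bijective correspondence with $\Delta(T_0^o, q)$. Once this identification is in hand, no further computation is required; the proposition follows by summing Lemma \ref{lem-p-q1}(3) against the coefficients $v^{w(P_\beta, \Delta_j(q))}$ and comparing the two sides term by term.
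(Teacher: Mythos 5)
Your proof is correct and follows essentially the same route as the paper, which dispatches the proposition with the single sentence that it follows immediately from Lemma \ref{lem-p-q1}. Your termwise substitution via Lemma \ref{lem-p-q1}(3), the observation that the coefficient $v^{w(P_\beta,\Delta_j(q))}$ is already the weight from Proposition \ref{Pro-vm} on $\mathcal L(T_0^o,\beta^{(q)})$ (so no translation of the valuation map is needed), and the identification of the two indexing sets by projecting off the frozen first two coordinates ($\Delta(T_0^o,p)=\{\Delta\}$ since $s=1$, and $\mathcal P(G_{T_0^o,\beta})=\{P_\beta\}$ since $\beta\in T_0^o$) together constitute exactly the content the paper compresses into ``follows immediately.'' The remaining bookkeeping you flag --- matching the third coordinates of $\mathcal L$ against $\Delta(T_0^o,q)$ --- is indeed routine and is implicit in the product decomposition (\ref{Eq-L2}); you are right to single it out as the only point requiring care.
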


\begin{proposition}\label{lem:step2}
Assume that $p\neq q$, for any ideal triangulation $T_0^o$ such that $\beta,l_q(\beta)\in T_0^o$, let $\alpha=l_q(\beta)$ and $T_0'^o=\mu_\alpha(T^o_0)$. Then we have
$$\sum_{{\bf P}=(\Delta,P_{\beta},\Delta_j(q))\in \mathcal L(T_0^o,\beta^{(p,q)})} v^{w(P_{\beta},\Delta_j(q))}X^{T_0}({\bf P})
=\sum_{{\bf P}\in \mathcal L(T_0'^o,\beta^{(p,q)})} v^{w({\bf P})}X^{T'_0}({\bf P}).$$
\end{proposition}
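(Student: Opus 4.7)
The plan is to show that both sides of the claimed equality equal the quantum cluster variable $X_{\beta^{(p,q)}}$, and to deduce the conclusion from that. The key combinatorial observation is that at the tagged level, flipping $\alpha=l_q(\beta)$ transports the self-folded structure from $q$ to $p$: the tagged version of $l_q(\beta)$ in $T_0$ is $\beta^{(q)}$, whose tagged flip is $\beta^{(p)}$, so $T_0'$ contains $\{\beta,\beta^{(p)}\}$ and $T_0'^o$ contains $\{\beta,l_p(\beta)\}$. Thus $T_0'^o$ fits the hypothesis of Proposition \ref{prop-equ4}, while $T_0^o$ fits the $p\leftrightarrow q$ dual hypothesis.

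For the right-hand side, I would apply Proposition \ref{prop-equ4} directly to $T_0'^o$, obtaining
$$\sigma_p\Bigl(\sum_{{\bf P}\in\mathcal L(T_0'^o,\beta^{(p,q)})}v^{w({\bf P})}X^{T_0'}({\bf P})\Bigr)
=\sum_{{\bf P}'\in\mathcal L(T_0'^o,\beta^{(q)})}v^{w({\bf P}')}X^{T_0',(p)}({\bf P}'),$$
where $\sigma_p:\mathcal A_v(\Sigma)\to\mathcal A_v^{(p)}(\Sigma)$ is the tag-changing isomorphism of Proposition \ref{Pro-iso}. Since $T_0'^{(p)}$ contains no arc notched at $q$ (the only notched arc in $T_0'$ is $\beta^{(p)}$, which loses its notch under $\sigma_p$), the already-proved Theorem \ref{Thm-2}, applied in $\mathcal A_v^{(p)}(\Sigma)$, identifies the right-hand side with $X^{(p)}_{\beta^{(q)}}=\sigma_p(X_{\beta^{(p,q)}})$. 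Applying $\sigma_p^{-1}$ gives that the right-hand side of the proposition equals $X_{\beta^{(p,q)}}$.

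For the left-hand side, I would mirror the argument using $\sigma_q:\mathcal A_v(\Sigma)\to\mathcal A_v^{(q)}(\Sigma)$ together with the $q$-analog of Proposition \ref{prop-equ4}. This analog is the term-by-term identity obtained from Lemma \ref{lem-p-q1} by interchanging $p$ and $q$: with the self-folded triangle now at $q$ (so $\Delta_1(q)$ is fixed) and the triangles at $p$ varying, one establishes $\sigma_q|_{v=1}\bigl(x^{T_0}(\Delta_i(p),P_\beta,\Delta_1(q))\bigr)=x^{T_0^{(q)},(q)}(P_\beta,\Delta_i(p))$ and the analogous identities for the specialized height monomial and the full quantum weight, so that
$$\sigma_q\Bigl(\sum_{{\bf P}\in\mathcal L(T_0^o,\beta^{(p,q)})}v^{w({\bf P})}X^{T_0}({\bf P})\Bigr)
=\sum_{(P,\Delta_i(p))\in\mathcal L(T_0^o,\beta^{(p)})}v^{w(P,\Delta_i(p))}X^{T_0^{(q)},(q)}(P,\Delta_i(p)).$$
Since $T_0^{(q)}$ contains no arc notched at $p$, Theorem \ref{Thm-2} in $\mathcal A_v^{(q)}(\Sigma)$ applied to $\beta^{(p)}$ identifies the right-hand side with $X^{(q)}_{\beta^{(p)}}=\sigma_q(X_{\beta^{(p,q)}})$, whence the left-hand side of the proposition equals $X_{\beta^{(p,q)}}$.

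The main obstacle is the computational verification of the $q$-analog of Lemma \ref{lem-p-q1}. This is a local calculation: $\sigma_q$ exchanges $x_\beta\leftrightarrow x_{\beta^{(q)}}$ via $x_{l_q(\beta)}=x_\beta x_{\beta^{(q)}}$ while fixing every other generator, so the factor $x^{T_0}(\Delta_1(q))=1/x_{l_q(\beta)}$ supplied by the self-folded triangle at $q$ furnishes exactly the single $x_{\beta^{(q)}}^{-1}$ needed to convert $\sigma_q|_{v=1}\bigl(x^{T_0}(\Delta_i(p),P_\beta,\Delta_1(q))\bigr)$ into $x^{T_0^{(q)},(q)}(P_\beta,\Delta_i(p))$; a short case analysis handles those $\Delta_i(p)$ whose sides include $\beta$ or $l_q(\beta)$. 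The height-monomial identity is immediate because $\sigma_q$ acts nontrivially only on generators supported at $q$, and the quantum upgrade from the commutative identity is automatic since a quantum cluster monomial is uniquely determined by its commutative specialization.
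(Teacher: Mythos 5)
Your proof hinges on a combinatorial claim that is false: flipping the outer loop $\alpha$ of the self-folded triangle does \emph{not} transport the self-folded structure from one puncture to the other. Flipping $\alpha$ in $T_0^o$ replaces the loop by an ordinary arc $\alpha'$ joining the enclosed puncture to the third vertex of the triangle outside the loop, so the self-folded triangle disappears entirely; in tagged terms the flip of the notched arc is the plain arc $\alpha'$, not $\beta^{(p)}$. The paper's own proof makes this explicit: it writes $\Delta(T_0'^o,p)=\{\Delta'_1,\Delta'_2\}$ with $\Delta'_i=\{\beta,\alpha_i,\alpha'\}$ two \emph{ordinary} triangles, and even handles the possibility that $\alpha'$ ends at $q$ as a degenerate Case~II. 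Since $T_0'^o$ has no self-folded triangle with radius $\beta$ and $T_0'$ contains no $\beta^{(p)}$, Proposition~\ref{prop-equ4} simply does not apply to $T_0'^o$, and your argument that the right-hand side equals $X_{\beta^{(p,q)}}$ has no foundation. (It also risks circularity: ``RHS $=X_{\beta^{(p,q)}}$'' is exactly Theorem~\ref{Thm-M3} for $T_0'$, which is what Proposition~\ref{lem:step2} is a lemma toward.) The $q$-analog you invoke on the left also presupposes that the fixed triangle is $\Delta_1(q)$ and that $\Delta_i(p)$ varies, which does not match the sum written in the statement.

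The paper instead proves the identity by a direct term-by-term computation: it constructs an explicit partition bijection $\pi:\mathcal L(T_0^o,\beta^{(p,q)})\to\mathcal L(T_0'^o,\beta^{(p,q)})$ (distinguishing the cases where $\alpha'$ does or does not have $q$ as an endpoint), verifies block-by-block that the weights and specialized height monomials match using Lemma~\ref{lem-yf1}, and checks that all the valuation-map values $w$ are zero so the $v$-powers agree trivially. This elementary verification, rather than an appeal to the two one-notched expansion results, is what makes the induction in Theorem~\ref{Thm-M3} go through.
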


\begin{proof}
Assume $\alpha_2=\alpha_3=\beta$. Denote $\Delta'_1=\{\beta,\alpha_1,\alpha'\}$ and $\Delta'_2=\{\beta,\alpha_4,\alpha'\}$. Assume that the endpoints of $\gamma$ are $p$ and $p'$. We have the following two cases $p'\neq q$ and $p'=q$.

Case I: $p'\neq q$. Then we have $\Delta(T'^o_0,q)=\{\Delta'_1,\Delta_3(q),\cdots, \Delta_{t-2}(q),\Delta'_2\}$ and $\Delta(T'^o_0,p)=\{\Delta'_2,\Delta'_1\}$. See Figure \ref{Fig:proof4}. Then the following assignments
$$\pi(\Delta,P_\beta,\Delta_1(q))=(\Delta'_2,P_\beta,\Delta'_2),\;\;\; \pi(\Delta,P_\beta,\Delta_2(q))=(\Delta'_2,P_\beta,\Delta'_1),$$ $$\pi(\Delta,P_\beta,\Delta_j(q))=\{(\Delta'_2,P_\beta,\Delta_j(q)),(\Delta'_1,P_\beta,\Delta_j(q))\}$$ for $3\leq j\leq t-2$,
$$\pi(\Delta,P_\beta,\Delta_{t-1}(q))=(\Delta'_1,P_\beta,\Delta'_2),\;\;\; \pi(\Delta,P_\beta,\Delta_t(q))=(\Delta'_1,P_\beta,\Delta'_1)$$
give a partition bijection $\pi:\mathcal L(T_0^o,\beta^{(p,q)})\to \mathcal L(T'^o_0,\beta^{(p,q)})$.

We have
$$x^{T_0}(\Delta,P_\beta,\Delta_1(q))=\frac{x_{\alpha}}{x_\beta^2}x_\beta \frac{x_\beta}{x_\alpha x_\beta}=\frac{1}{x_\beta},\;\; y^{T_0}(\Delta,P_\beta,\Delta_1(q))=1,$$
$$x^{T'_0}(\Delta'_2,P_\beta,\Delta'_2)=\frac{x_{\alpha_4}}{x_\beta x_{\alpha'}}x_\beta \frac{x_{\alpha'}}{x_{\alpha_4} x_\beta}=\frac{1}{x_\beta},\;\; y^{T'_0}(\Delta'_2,P_\beta,\Delta'_2)=1.$$
Thus $x^{T_0}(\Delta,P_\beta,\Delta_1(q))y^{T_0}(\Delta,P_\beta,\Delta_1(q))=x^{T'_0}(\Delta'_2,P_\beta,\Delta'_2)y^{T'_0}(\Delta'_2,P_\beta,\Delta'_2).$

We have
$$x^{T_0}(\Delta,P_\beta,\Delta_2(q))=\frac{x_{\alpha}}{x_\beta^2}x_\beta \frac{x_{\alpha_4}}{x_\alpha x_{\alpha_1}}=\frac{x_{\alpha_4}}{x_\beta x_{\alpha_1}},\;\; y^{T_0}(\Delta,P_\beta,\Delta_2(q))=y^{T_0}_\beta,$$
$$x^{T'_0}(\Delta'_2,P_\beta,\Delta'_1)=\frac{x_{\alpha_4}}{x_\beta x_{\alpha'}}x_\beta \frac{x_{\alpha'}}{x_{\alpha_1} x_\beta}=\frac{x_{\alpha_4}}{x_\beta x_{\alpha_1}},\;\; y^{T'_0}(\Delta'_2,P_\beta,\Delta'_1)=y^{T'_0}_\beta.$$
Thus $x^{T_0}(\Delta,P_\beta,\Delta_2(q))y^{T_0}(\Delta,P_\beta,\Delta_2(q))=x^{T'_0}(\Delta'_2,P_\beta,\Delta'_1)y^{T'_0}(\Delta'_2,P_\beta,\Delta'_1).$

For $3\leq j\leq t-2$, we have

\begin{equation*}
\begin{array}{rcl} x^{T_0}(\Delta,P_\beta,\Delta_j(q))y^{T_0}(\Delta,P_\beta,\Delta_j(q))&=&y^{T_0}(\Delta,P_\beta,\Delta_j(q))\frac{x_{\alpha}}{x_\beta^2}x_\beta \frac{x_{[j]}(q)}{x_{\tau_{j-1}(q)} x_{\tau_j(q)}}\vspace{2pt} \\
& = & y^{T_0}(\Delta,P_\beta,\Delta_j(q)) \frac{x_\alpha}{x_\beta}\frac{x_{[j]}(q)}{x_{\tau_{j-1}(q)} x_{\tau_j(q)}}.
\end{array}
\end{equation*}

\begin{equation*}
\begin{array}{rcl} \sum\limits_{{\bf P'}\in \pi(\Delta,P_\beta,\Delta_j(q))} x^{T'_0}({\bf P'})y^{T'_0}({\bf P'})&=&y^{T'_0}(\Delta'_2,P_\beta,\Delta_j(q))(\frac{x_{\alpha_4}}{x_\beta x_{\alpha'}}+y^{T'_0}_{\alpha'}\frac{x_{\alpha_1}}{x_\beta x_{\alpha'}}) x_\beta \frac{x_{[j]}(q)}{x_{\tau_{j-1}(q)} x_{\tau_j(q)}}\vspace{2pt} \\
& = & y^{T'_0}(\Delta'_2,P_\beta,\Delta_j(q))\frac{x_\alpha(1+y^{T'_0}_{\alpha'})}{x_\beta^2} x_\beta \frac{x_{[j]}(q)}{x_{\tau_{j-1}(q)} x_{\tau_j(q)}}.
\end{array}
\end{equation*}

By Lemma \ref{lem-yf1} (3)(a), we have $y^{T_0}(\Delta,P_\beta,\Delta_j(q))=y^{T'_0}(\Delta'_2,P_\beta,\Delta_j(q))(1+y^{T'_0}_{\alpha'})$. Thus
$$ x^{T_0}(\Delta,P_\beta,\Delta_j(q))y^{T_0}(\Delta,P_\beta,\Delta_j(q))=\sum\limits_{{\bf P'}\in \pi(\Delta,P_\beta,\Delta_j(q))} x^{T'_0}({\bf P'})y^{T'_0}({\bf P'}).$$

We have
$$x^{T_0}(\Delta,P_\beta,\Delta_{t-1}(q))=\frac{x_{\alpha}}{x_\beta^2}x_\beta \frac{x_{\alpha_1}}{x_\beta x_{\alpha_4}}=\frac{x_{\alpha_1}}{x_\beta x_{\alpha_4}},$$
$$y^{T_0}(\Delta,P_\beta,\Delta_{t-1}(q))=y^{T_0}_\beta y^{T_0}_{\tau_2(q)}\cdots y^{T_0}_{\tau_{t-2}(q)},$$
$$x^{T'_0}(\Delta'_1,P_\beta,\Delta'_2)=\frac{x_{\alpha_1}}{x_\beta x_{\alpha'}}x_\beta \frac{x_{\alpha'}}{x_{\alpha_4} x_\beta}=\frac{x_{\alpha_1}}{x_\beta x_{\alpha_4}},$$
$$y^{T'_0}(\Delta'_1,P_\beta,\Delta'_2)=y^{T'_0}_\beta y^{T'_0}_{\tau_2(q)}\cdots y^{T'_0}_{\tau_{t-2}(q)} y^{T'_0}_{\alpha'}.$$
Since $\tau_2(q)=\alpha_1,\tau_{t-2}(q)=\alpha_4$, we have $y^{T_0}(\Delta,P_\beta,\Delta_{t-1}(q))=y^{T'_0}(\Delta'_1,P_\beta,\Delta'_2)$ by Lemma \ref{lem-yf1} (3)(a). Thus $x^{T_0}(\Delta,P_\beta,\Delta_{t-1}(q))y^{T_0}(\Delta,P_\beta,\Delta_{t-1}(q))=x^{T'_0}(\Delta'_1,P_\beta,\Delta'_2)y^{T'_0}(\Delta'_1,P_\beta,\Delta'_2).$

We have
$$x^{T_0}(\Delta,P_\beta,\Delta_{t}(q))=\frac{x_{\alpha}}{x_\beta^2}x_\beta \frac{x_{\beta}}{x_\beta x_{\alpha}}=\frac{1}{x_\beta},$$
$$y^{T_0}(\Delta,P_\beta,\Delta_{t}(q))=y^{T_0}_\beta y^{T_0}(\Delta,P_\beta,\Delta_{t-1}(q)),$$
$$x^{T'_0}(\Delta'_1,P_\beta,\Delta'_1)=\frac{x_{\alpha_1}}{x_\beta x_{\alpha'}}x_\beta \frac{x_{\alpha'}}{x_{\alpha_1} x_\beta}=\frac{1}{x_\beta},$$
$$y^{T'_0}(\Delta'_1,P_\beta,\Delta'_1)=y^{T'_0}_\beta y^{T'_0}(\Delta'_2,P_\beta,\Delta'_1).$$
Thus $y^{T_0}(\Delta,P_\beta,\Delta_{t}(q))=y^{T'_0}(\Delta'_2,P_\beta,\Delta'_1)$ and
$$x^{T_0}(\Delta,P_\beta,\Delta_t(q))y^{T_0}(\Delta,P_\beta,\Delta_t(q))=x^{T'_0}(\Delta'_1,P_\beta,\Delta'_1)y^{T'_0}(\Delta'_1,P_\beta,\Delta'_1).$$

From the above, we see that $\oplus_{{\bf P}\in \mathcal L(T_0^o,\beta^{(p,q)})}y^{T_0}({\bf P})=\oplus_{{\bf P'}\in \mathcal L(T'^o_0,\beta^{(p,q)})}y^{T'_0}({\bf P'})$. Thus, we have $X^{T_0}({\bf P})=\sum_{{\bf P'}\in \pi({\bf P})} X^{T'_0}({\bf P'})$ for any ${\bf P}\in \mathcal L(T^o_0,\beta^{(p,q)})$. To end the proof, it remains to prove that $w(P_\beta,\Delta_j(q))=w({\bf P'})$ for any $(\Delta,P_\beta,\Delta_j(q))\in \mathcal L(T^o_0,\beta^{(p,q)})$ and ${\bf P'}\in \pi(\Delta,P_\beta,\Delta_j(q))$.

It is easy to see that $w(P_\beta,\Delta_j(q))=0$ for all $j\in \{1,2,\cdots, t\}$ and $w({\bf P}')=0$ for all ${\bf P}'\in \mathcal L(T'^o_0,\beta^{(p,q)})$.

Case II. $p'=q$. Then there exists $k$ such that $\tau_k(q)=\alpha_1$ and $\tau_{k+1}(q)=\alpha_4$. See Figure \ref{Fig:proof5}. Thus we have $\Delta(T'^o_0,q)=\{\Delta'_1,\Delta_3(q),\cdots,\Delta_k(q),\Delta'_{k+1},\Delta''_{k+1}, \Delta_{k+2}(q),\cdots, \Delta_{t-2}(q),\Delta'_2\}$ and $\Delta(T'^o_0,p)=\{\Delta'_2,\Delta'_1\}$, where $\Delta'_{k+1}=\Delta'_1$ with $m(\Delta'_{k+1};\alpha')=-1$ and $\Delta''_{k+1}=\Delta_2'$ with $m(\Delta''_{k+1};\alpha')=-1$.
 Then the following assignments
$$\pi(\Delta,P_\beta,\Delta_1(q))=(\Delta'_2,P_\beta,\Delta'_2),\;\;\; \pi(\Delta,P_\beta,\Delta_2(q))=(\Delta'_2,P_\beta,\Delta'_1),$$
$$\pi(\Delta,P_\beta,\Delta_{k+1}(q))=\{(\Delta'_1,P_\beta,\Delta'_{k+1}),(\Delta'_1,P_\beta,\Delta''_{k+1}),(\Delta'_2,P_\beta,\Delta'_{k+1}),(\Delta'_2,P_\beta,\Delta''_{k+1})\},$$
$$\pi(\Delta,P_\beta,\Delta_j(q))=\{(\Delta'_2,P_\beta,\Delta_j(q)),(\Delta'_1,P_\beta,\Delta_j(q))\}$$ for $3\leq j\leq k$ or $k+2\leq j\leq t-2$,
$$\pi(\Delta,P_\beta,\Delta_{t-1}(q))=(\Delta'_1,P_\beta,\Delta'_2),\;\;\; \pi(\Delta,P_\beta,\Delta_t(q))=(\Delta'_1,P_\beta,\Delta'_1)$$
give a partition bijection $\pi:\mathcal L(T_0^o,\beta^{(p,q)})\to \mathcal L(T'^o_0,\beta^{(p,q)})$.

\begin{figure}[h]
\centerline{\includegraphics[width=10cm]{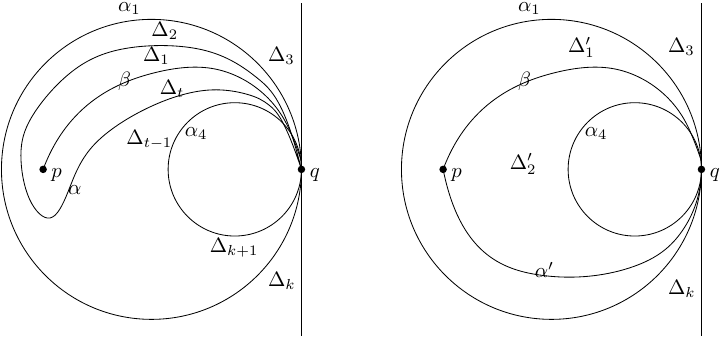}}

\caption{} \label{Fig:proof5}
\end{figure}

As Case I, we can similarly prove $X^{T_0}({\bf P})=\sum_{{\bf P'}\in \pi({\bf P})} X^{T'_0}({\bf P'})$ for any ${\bf P}\in \mathcal L(T^o_0,\beta^{(p,q)})$ and $w(P_\beta,\Delta_j(q))=0$ for all $j\in \{1,2,\cdots, t\}$ and $w({\bf P}')=0$ for all ${\bf P}'\in \mathcal L(T'^o_0,\beta^{(p,q)})$.

The proof is complete.
\end{proof}

\medskip

\emph{Proof of Theorem \ref{Thm-M3} in case $p\neq q$} Choose a tagged triangulation $T_0$ contains $\beta,\beta^{(p)}$ and any arc $\gamma(\neq \beta^{(p)}) \in T_0$ does not tagged notched. By Theorem \ref{Thm-2}, we have
$$\sum_{(P_\beta,\Delta_j(q))\in \mathcal L(T_0^o,\beta^{(q)})} v^{w(P_\beta,\Delta_j(q))}X^{(p)}(P_\beta,\Delta_j(q))=X^{(p)}_{\beta^{(q)}}.$$

By Propositions \ref{prop-equ4} and \ref{lem:step2}, we have Theorem \ref{Thm-M3} holds for $T'_0$. For any ideal triangulation $T^o$ such that the corresponding tagged triangulation $T$ contains no arc tagged notched at $p$ or $q$, we see Theorem \ref{Thm-M3} holds for $T$ by Lemma \ref{lem:flip-pq} and Theorem \ref{thm-sum4}.
\endproof

\subsection{Proof of Theorem \ref{Thm-M3} in case $p=q$} As $(S,M)$ is not a once-punctured closed surface, we may choose an arc $\gamma$ incident to $p$ with another endpoint different from $p$ and compatible with $\widetilde\beta$. Then we have $\gamma$ and $\gamma^{(p)}$ are compatible. We may choose a tagged triangulation $T_0$ such that $\gamma,\gamma^{(p)}\in T_0$. Thus $T_0^{(p)}=T_0$. Let $T_0^o=l(T_0)$ be the corresponding ideal triangulation. Thus $l_p(\gamma)\in T_0^o$ and $\Delta=(\gamma,\gamma,l_p(\gamma))$ is a self-folded triangle in $T_0^o$.

Thus
$$\mathcal L(T^o_0,\widetilde\beta^{(p,q)})=\{\Delta\}\times \mathcal P(G_{T^o_0,\widetilde\beta})\times \{\Delta\},$$
$$\mathcal L(T^o_0,\widetilde\beta)=\mathcal P(G_{T^o_0,\widetilde\beta}).$$

For any $(\Delta,P,\Delta)\in \mathcal L(T_0^o, \widetilde\beta^{(p,q)})$, let $y^{T_0}(\Delta,P,\Delta)=\sigma\mid_{v=1}(y^{T_0,(p)}(P))$.

The following result follows by Lemma \ref{Lem-yf}.

\begin{lemma}\label{lem:coverl}
Assume that $P>\mu_{G_l}P$. Then we have
\[\begin{array}{ccl} \frac{y^{T_0}(\Delta,P,\Delta)}{y^{T_0}(\Delta,Q,\Delta)} &=&
\left\{\begin{array}{ll}
y^{T_0}_\gamma, &\mbox{if $\tau_{i_l}=l_q(\gamma)$}, \vspace{2.5pt}\\
y^{T^o_0}_{\tau_{i_1}}, &\mbox{otherwise}.
\end{array}\right.
\end{array}\]
\end{lemma}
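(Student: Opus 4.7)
The plan is to transport the formula through the isomorphism $\sigma$ to an identity in the companion algebra $\mathcal A^{(p)}_v(\Sigma)$, where Lemma~\ref{Lem-yf} applies directly. By the defining relation $y^{T_0}(\Delta,P,\Delta)=\sigma|_{v=1}(y^{T_0,(p)}(P))$, and likewise for $Q=\mu_{G_l}P$, the ratio in question equals
$$
\frac{y^{T_0}(\Delta,P,\Delta)}{y^{T_0}(\Delta,Q,\Delta)}=\sigma|_{v=1}\!\left(\frac{y^{T_0,(p)}(P)}{y^{T_0,(p)}(Q)}\right).
$$
The snake graph $G_{T_0^o,\widetilde\beta}$ and its perfect matchings are intrinsic to the ideal triangulation $T_0^o$, and $T_0^{(p)}$ coincides with $T_0$ as a set of tagged arcs; hence $P,Q$ are genuine perfect matchings in the context of $\mathcal A^{(p)}_v(\Sigma)$ as well, and Lemma~\ref{Lem-yf} applied there yields $y^{T_0,(p)}(P)/y^{T_0,(p)}(Q)=y^{T_0^o,(p)}_{\tau_{i_l}}$.

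Next I would unfold $y^{T_0^o,(p)}_{\tau_{i_l}}$ using the definition $\Phi$ of (\ref{Eq-phi}), and then apply $\sigma|_{v=1}$ case by case. The key geometric input is that $\gamma$ is compatible with $\widetilde\beta$, so $\widetilde\beta$ does not cross $\gamma$; in particular $\tau_{i_l}\neq\gamma$, and the only other arc of $T_0^o$ incident to $p$ is the loop $l_p(\gamma)$.

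When $\tau_{i_l}=l_p(\gamma)$, the formula $\Phi$ gives $y^{T_0^o,(p)}_{l_p(\gamma)}=y^{T_0^{(p)},(p)}_{\gamma^{(p)}}$, and the identification $\sigma(y^{T_0}_\gamma)=y^{T_0^{(p)},(p)}_{\gamma^{(p)}}$ built into Proposition~\ref{Pro-iso} converts this to $y^{T_0}_\gamma$. When $\tau_{i_l}\neq l_p(\gamma)$, the arc $\tau_{i_l}$ does not touch $p$, so tag-changing at $p$ leaves $\tau_{i_l}$ fixed; the procedure $\Phi$ then produces the same formal expression in $\mathcal A_v(\Sigma)$ and $\mathcal A^{(p)}_v(\Sigma)$ (self-folded triangles based at other punctures $p'\neq p$ are untouched by $\sigma$), whence $\sigma|_{v=1}(y^{T_0^o,(p)}_{\tau_{i_l}})=y^{T_0^o}_{\tau_{i_l}}$.

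The main delicate point is the bookkeeping at the self-folded triangle based at $p$: the isomorphism $\sigma$ swaps the roles of the radius $\gamma$ and the tagged loop $\gamma^{(p)}$, and it is exactly this swap that converts the $(p)$-decorated value $y^{T_0^{(p)},(p)}_{\gamma^{(p)}}$ back to $y^{T_0}_\gamma$ rather than $y^{T_0}_{\gamma^{(p)}}$, producing the asymmetry between the two cases of the stated formula.
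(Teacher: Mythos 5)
Your proof is correct and takes essentially the same approach as the paper, whose proof is a one-line invocation of Lemma~\ref{Lem-yf}: you transport the ratio to $\mathcal A^{(p)}_v(\Sigma)$ via the defining relation $y^{T_0}(\Delta,P,\Delta)=\sigma|_{v=1}(y^{T_0,(p)}(P))$, apply Lemma~\ref{Lem-yf} there, and unwind $\Phi$ together with the $\sigma$-identification of the coefficient variables. You are merely spelling out the unpacking, including the useful observation that compatibility of $\gamma$ with $\widetilde\beta$ rules out $\tau_{i_l}=\gamma$, which the paper leaves implicit.
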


\begin{lemma}\label{lem-p=q4} For any $(\Delta,P,\Delta)\in \mathcal L(T_0^o, \widetilde\beta^{(p,q)})$, we have  
\begin{enumerate}[$(1)$]
\item $\sigma\mid_{v=1}\left(x^{T_0}(\Delta,P,\Delta)\right)=x^{T_0,(p)}(P)$,
\item $\sigma\mid_{v=1}\left(y^{T_0}(\Delta,P,\Delta)\right)=y^{T_0,(p)}(P)$,
\item $\sigma\left(X^{T_0}(\Delta,P,\Delta)\right)=X^{T_0,(p)}(P)$.
\end{enumerate}
\end{lemma}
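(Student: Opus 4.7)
The plan is to verify (2) directly from the definition, establish (1) by a direct computation reduced to one combinatorial identity on the snake graph, and derive (3) from (1) and (2) via the quantum-torus compatibility of $\sigma$.

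For (2): by the definition immediately preceding the lemma, $y^{T_0}(\Delta,P,\Delta):=\sigma|_{v=1}(y^{T_0,(p)}(P))$. Since $\sigma|_{v=1}$ is an involution on the coefficient semifield (as $(\tau^{(p)})^{(p)}=\tau$ for every tagged arc at $p$, so that the parallel isomorphism from $\mathcal A^{(p)}_v$ back to $\mathcal A_v$ inverts $\sigma$), applying $\sigma|_{v=1}$ to both sides of the definition yields (2).

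For (1), first compute $x^T(\Delta)$ from Definition \ref{Def-wei2}: since $\Delta=(\gamma,\gamma,l_p(\gamma))$ is self-folded, $\tau_{i-1}(p)=\tau_i(p)=\gamma$ and $\tau_{[i]}(p)=l_p(\gamma)$, so $x^T(\Delta)=x_\gamma^{-1}x_{l_p(\gamma)}x_\gamma^{-1}=x_{\gamma^{(p)}}/x_\gamma$ after substituting $x_{l_p(\gamma)}=x_\gamma x_{\gamma^{(p)}}$. The hypothesis $p=q$ forces $\Delta(T_0^o,p)=\Delta(T_0^o,q)=\{\Delta\}$, hence
\begin{equation*}
x^{T_0}(\Delta,P,\Delta)=\Bigl(\tfrac{x_{\gamma^{(p)}}}{x_\gamma}\Bigr)^{2}x^T(P).
\end{equation*}
Applying $\sigma|_{v=1}$, which sends $x_\gamma\mapsto x^{(p)}_{\gamma^{(p)}}$, $x_{\gamma^{(p)}}\mapsto x^{(p)}_\gamma$, $x_{l_p(\gamma)}\mapsto x^{(p)}_{l_p(\gamma)}$, and $x_\tau\mapsto x^{(p)}_\tau$ otherwise, an edge-by-edge accounting of how each label $\gamma$ in the snake graph contributes differently to $\sigma|_{v=1}(x^T(P))$ and to $x^{T_0,(p)}(P)$ gives
\begin{equation*}
\sigma|_{v=1}(x^T(P))=x^{T_0,(p)}(P)\cdot\Bigl(\tfrac{x^{(p)}_{\gamma^{(p)}}}{x^{(p)}_\gamma}\Bigr)^{m(P;\gamma)-n(G_{T_0^o,\widetilde\beta};\gamma)},
\end{equation*}
so (1) reduces to the combinatorial identity $m(P;\gamma)-n(G_{T_0^o,\widetilde\beta};\gamma)=2$ for every $P\in\mathcal P(G_{T_0^o,\widetilde\beta})$. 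The main obstacle is to establish this identity. It should follow from the observation that the snake graph carries two distinguished ``puncture'' vertices, one in $G_1$ and one in $G_c$, corresponding to the two endpoints of $\widetilde\beta$ at $p$ inside $\Delta$; each such vertex is incident to exactly two adjacent boundary edges of its tile, both labelled $\gamma$ (the two copies of the radius in $\Delta$), so that every perfect matching $P$ is forced to contain exactly one $\gamma$-labelled edge at each, contributing $2$ to $m(P;\gamma)$. Any remaining $\gamma$-labelled edges arise only in tiles whose diagonal is labelled $\gamma$ (coming from crossings of $\widetilde\beta$ with the radius), and in such tiles the two $\gamma$-labelled edges opposite the diagonal are symmetric so that $m(P;\gamma)$ and $n(G;\gamma)$ change in lockstep under twists. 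Constancy of the left-hand side across $\mathcal P(G_{T_0^o,\widetilde\beta})$ then propagates along the connected Hasse diagram of Lemma \ref{lem-H1}.

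For (3), by Definition \ref{Def-wei2} the quantum weight $X^{T_0}(\Delta,P,\Delta)$ is the unique Laurent monomial in $\mathcal T(T_0)$ whose $v=1$ specialisation equals $x^{T_0}(\Delta,P,\Delta)\,y^{T_0}(\Delta,P,\Delta)/\bigoplus_{{\bf R}\in\mathcal L(T_0^o,\widetilde\beta^{(p,q)})}y^{T_0}({\bf R})$, and similarly $X^{T_0,(p)}(P)$ is determined by its $v=1$ limit in $\mathcal T^{(p)}(T_0^{(p)})$. By Proposition \ref{Pro-iso}, $\sigma$ is a $\mathbb Z[v^{\pm 1}]$-algebra isomorphism of quantum tori under $X_\tau\mapsto X^{(p)}_{\tau^{(p)}}$, so $\sigma(X^{T_0}(\Delta,P,\Delta))$ is a Laurent monomial in $\mathcal T^{(p)}(T_0^{(p)})$ whose $v=1$ specialisation equals $\sigma|_{v=1}$ applied to that of $X^{T_0}(\Delta,P,\Delta)$. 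The bijection ${\bf R}=(\Delta,P,\Delta)\leftrightarrow P$ identifies $\mathcal L(T_0^o,\widetilde\beta^{(p,q)})$ with $\mathcal P(G_{T_0^o,\widetilde\beta})$, and applying (2) uniformly across this bijection identifies the two denominator $\oplus$-sums. Combining with (1) and (2) for the numerator, the $v=1$ specialisations of $\sigma(X^{T_0}(\Delta,P,\Delta))$ and $X^{T_0,(p)}(P)$ coincide, which forces equality of the quantum monomials themselves.
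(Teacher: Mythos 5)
Your proof is correct and follows essentially the same approach as the paper's: for (1) you repackage the paper's direct observation (that every perfect matching contains exactly one $\gamma$-labelled edge at each corner vertex of the first and last tiles, and that $\gamma$ labels no other edge or diagonal because $\widetilde\beta$ is compatible with $\gamma$) as the equivalent combinatorial identity $m(P;\gamma)-n(G_{T_0^o,\widetilde\beta};\gamma)=2$. The paper leaves (2) and (3) as one-line remarks and your version supplies the obvious details, modulo the slightly loose phrase "involution on the coefficient semifield" (here $\sigma$ is an involutive automorphism of $\mathcal A_v(\Sigma)$ since $T_0^{(p)}=T_0$).
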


\begin{proof}
(1) We have the west edge $E_1$ and south edge $E_2$ of the first tile of $G_{T_0^o,\widetilde\beta}$ are labeled $\gamma$. The east edge $F_1$ and north edge $F_2$ of the last tile of $G_{T_0^o,\widetilde\beta}$ are labeled $\gamma$. For any $P\in \mathcal P(G_{T_0^o,\widetilde\beta})$, we have either $E_1\in P$ or $E_2\in P$, either $F_1\in P$ or $F_2\in P$. Thus $x^{T_0}(\Delta,P,\Delta)=\frac{x_{l_p(\gamma)}}{x_{\gamma}^2}x_\gamma X x_\gamma \frac{x_{l_p(\gamma)}}{x_{\gamma}^2}=x_{\gamma^{(p)}}^2 X$, where $X$ is the cluster Laurent monomial corresponding to the edges except $E_1,E_2,F_1,F_2$ in $P$ and the diagonals of $G_{T_0^o,\widetilde\beta}$. Since $x_\gamma$ is not a factor of $X$, we have
$\sigma\mid_{v=1}\left(x^{T_0}(\Delta,P,\Delta)\right)=(x^{(p)}_{\gamma})^2 X^{(p)}=x^{T_0,(p)}(P)$.

(2) It follows by the definition of $y^{T_0}(\Delta,P,\Delta)$.

(3) It follows by (1) and (2).
\end{proof}

The following proposition follows immediately by Lemma \ref{lem-p=q4}.

\begin{proposition}\label{prop-equ5}
With the foregoing notation. We have
$$\sum_{(\Delta,P,\Delta)\in \mathcal L(T_0^o,\widetilde\beta^{(p,q)})} v^{w(P)}\sigma\left(X^{T_0}(\Delta,P,\Delta)\right)
=\sum_{P\in \mathcal L(T_0^o,\widetilde\beta)} v^{w(P)}X^{T_0,(p)}(P).$$
\end{proposition}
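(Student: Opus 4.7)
The plan is to match the two sums termwise through the natural bijection $(\Delta,P,\Delta)\longleftrightarrow P$ between $\mathcal L(T_0^o,\widetilde\beta^{(p,q)})$ and $\mathcal L(T_0^o,\widetilde\beta)=\mathcal P(G_{T_0^o,\widetilde\beta})$. Lemma~\ref{lem-p=q4}(3) already delivers the required identity of quantum weights, namely $\sigma(X^{T_0}(\Delta,P,\Delta))=X^{T_0,(p)}(P)$, so the only remaining task is to verify that the two $w$-valuations---one furnished by Proposition~\ref{pro-vpq}, the other by Proposition~\ref{Prop-v1}---assign the same integer to corresponding points.

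To carry this out I would exploit the fact that the choice of $T_0^o$ is highly constrained: the self-folded triangle $\Delta=(\gamma,\gamma,l_p(\gamma))$ is the unique triangle of $T_0^o$ incident to the puncture $p=q$, so $\Delta(T_0^o,p)=\Delta(T_0^o,q)=\{\Delta\}$ and the integers $s$ and $t$ from Section~\ref{Sec-threesets} are both equal to $1$. In this degenerate situation the recurrence conditions (a) and (b) of Proposition~\ref{pro-vpq} never come into play, since they presuppose the existence of distinct adjacent triangles $\Delta_{i+1}(p)$ or $\Delta_{j+1}(q)$. Only recurrence (c) survives, and substituting $\Delta_i(p)=\Delta_j(q)=\Delta$ into Definition~\ref{Def-gra3} yields
\[
\Omega(\Delta,P,\Delta;G_l)=\Omega(P;G_l)+d(\tau_{i_l})\bigl[m(\Delta;\tau_{i_l})-m(\Delta;\tau_{i_l})\bigr]=\Omega(P;G_l),
\]
which is precisely the recurrence defining the map $w$ on $\mathcal L(T_0^o,\widetilde\beta)$ in Proposition~\ref{Prop-v1}.

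Since the initial conditions also agree ($w({\bf P}_-)=0=w(P_-)$, where ${\bf P}_-=(\Delta,P_-,\Delta)$), the uniqueness clauses of Propositions~\ref{Prop-v1} and~\ref{pro-vpq} force $w(\Delta,P,\Delta)=w(P)$ for every $P\in\mathcal P(G_{T_0^o,\widetilde\beta})$. Combining this identification of exponents with Lemma~\ref{lem-p=q4}(3) gives the desired termwise equality, and summing over $P$ yields the proposition. There is no real obstacle here: Lemma~\ref{lem-p=q4} absorbs all of the cluster-algebraic content, and the only genuine bookkeeping is the verification that the degeneracy $s=t=1$ collapses the valuation map of the two-ends tagged lattice down to that of the ordinary perfect-matching lattice.
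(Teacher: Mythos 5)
Your proof is correct and your key step matches the paper, which simply cites Lemma \ref{lem-p=q4} and stops: part (3) of that lemma gives $\sigma\left(X^{T_0}(\Delta,P,\Delta)\right)=X^{T_0,(p)}(P)$ termwise, and since $\mathcal L(T_0^o,\widetilde\beta^{(p,q)})=\{\Delta\}\times\mathcal P(G_{T_0^o,\widetilde\beta})\times\{\Delta\}$ the two sums run over the same index set. One remark: the proposition as stated already carries $v^{w(P)}$ on \emph{both} sides, with $w$ meaning the map of Proposition \ref{Prop-v1} on $\mathcal P(G_{T_0^o,\widetilde\beta})$ in each case, so the quantum-weight identity from Lemma \ref{lem-p=q4}(3) is literally all that is required. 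Your additional verification — that the $s=t=1$ degeneracy collapses the valuation of Proposition \ref{pro-vpq} to that of Proposition \ref{Prop-v1}, i.e.\ $w(\Delta,P,\Delta)=w(P)$ — is therefore not needed to prove the statement as phrased, but it is a correct observation (the covering relations of the two lattices coincide here because the exceptional cases in Lemma \ref{Lem-cover1} require intermediate triangles that do not exist when $s=t=1$, and Definition \ref{Def-gra3} collapses by telescoping); it makes explicit the justification, left implicit in the paper, for writing the left-hand exponent as $w(P)$ rather than $w(\Delta,P,\Delta)$, and it is exactly the fact used downstream when Propositions \ref{prop-equ5} and \ref{lem:step3} are combined to prove Theorem \ref{Thm-M3}.
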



\begin{proposition}\label{lem:step3}
Assume that $p=q$, let $\alpha=l_q(\gamma)$ and $T'^o_0=\mu_\alpha(T^o_0)$. Then we have
$$\sum_{(\Delta,P,\Delta)\in \mathcal L(T_0^o,\widetilde\beta^{(p,q)})} v^{w(P)}X^{T_0}(\Delta,P,\Delta)
=\sum_{{\bf P}\in \mathcal L(T_0'^o,\beta^{(p,q)})} v^{w({\bf P})}X^{T'_0}({\bf P}).$$
\end{proposition}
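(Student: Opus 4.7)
The strategy will follow the template of Proposition \ref{lem:step2}. I will describe the flipped triangulation $T_0'^o = \mu_\alpha(T_0^o)$ explicitly, construct an explicit partition bijection $\pi : \mathcal L(T_0^o, \widetilde\beta^{(p,q)}) \to \mathcal L(T_0'^o, \widetilde\beta^{(p,q)})$, and verify that for every $P \in \mathcal P(G_{T_0^o, \widetilde\beta})$,
$$v^{w(P)} X^{T_0}(\Delta, P, \Delta) = \sum_{{\bf P}' \in \pi(\Delta, P, \Delta)} v^{w({\bf P}')} X^{T_0'}({\bf P}').$$
In $T_0^o$ the loop $\alpha = l_p(\gamma)$ encloses the self-folded triangle $\Delta = (\gamma, \gamma, \alpha)$, and on the outside of $\alpha$ sits another triangle with remaining sides $\tau_1, \tau_2$. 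Flipping $\alpha$ produces a new radius $\alpha'$ at $p$, so in $T_0'^o$ the set $\Delta(T_0'^o, p) = \Delta(T_0'^o, q)$ consists of the two triangles $\Delta_1', \Delta_2'$ sharing $\alpha'$, whose other sides are $\gamma$ together with $\tau_1$, respectively $\tau_2$.

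I will assemble $\pi$ from the perfect-matching partition bijection $\varphi_\alpha^{T_0^o}$ of Subsection \ref{Sec-varphi} together with the triangle bijection $\phi_\alpha^{T_0^o, p} = \phi_\alpha^{T_0^o, q} : \{\Delta\} \to \{\Delta_1', \Delta_2'\}$ of Proposition \ref{Lem-part1}, subject to compatibility rules analogous to those in Section \ref{sec-pb} but adjusted to the present situation in which both the $p$-slot and the $q$-slot are initially occupied by the same self-folded $\Delta$. The matching of classical weights then boils down to a squared exchange relation at $\alpha$: because the factor $x^{T_0}(\Delta) = x_\alpha/x_\gamma^2$ appears twice, $x^{T_0}(\Delta, P, \Delta)$ contains $x_\alpha^2$, and applying Lemma \ref{lem-yf1}(4) in the degenerate self-folded case (where one of the pairs $\{\alpha_1, \alpha_3\}$ or $\{\alpha_2, \alpha_4\}$ coincides with $\gamma$) expands $x_\alpha^2$ into at most four monomials, each corresponding to a choice $(\Delta_i', P', \Delta_j') \in \pi(\Delta, P, \Delta)$; the $y$-factors are reconciled through Lemma \ref{lem-yf1}(3) and the $s,t \geq 2$ height-monomial formula (\ref{Eq-hpq}). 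The quantum equality will follow from Proposition \ref{pro-vpq} applied to both $T_0$ and $T_0'$ together with the $\{0,1\}$-cube structure of Theorem \ref{thm:pi}, ensuring that the $v$-powers of the branches of each splitting fit the same binomial pattern encountered in the proofs of Lemmas \ref{lem-plam1} and \ref{lem-wp1}.

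The principal obstacle is that the \textbf{Assumption} maintained throughout Sections \ref{sec-pb}--\ref{sec:TEQ} --- that $T$ and $T'$ contain no arc tagged notched at $p$ or $q$ --- fails here, since $\gamma^{(p)} \in T_0$. Consequently Theorem \ref{thm-sum4} cannot be invoked as a black box and the partition-bijection/weight-matching argument must be carried out by hand. The technical core lies in handling the self-folded identities $x_{l_p(\gamma)} = x_\gamma x_{\gamma^{(p)}}$ and the specialization (\ref{Eq-phi}) for $y^{T_0}_{l_p(\gamma)}$, combined with the doubling of the flip effect across both end-triangle slots simultaneously. A natural case split will be between $\widetilde\beta \in T_0^o$, where $G_{T_0^o, \widetilde\beta}$ reduces to a single edge, $w$ vanishes throughout, and the analysis structurally mirrors Proposition \ref{lem:step2}, and $\widetilde\beta \notin T_0^o$, where a genuine binomial unfolding over $\mathcal P(G_{T_0^o, \widetilde\beta})$ is needed and the weight tracking requires the machinery of Proposition \ref{prop-c1} applied at the $p$- and $q$-slots at once.
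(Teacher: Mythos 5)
Your overall strategy --- an explicit partition bijection between $\mathcal L(T_0^o,\widetilde\beta^{(p,q)})$ and $\mathcal L(T_0'^o,\widetilde\beta^{(p,q)})$ with direct term-by-term verification, bypassing Theorem \ref{thm-sum4} because $\gamma^{(p)}\in T_0$ --- is the right one and matches the paper's. But there are two concrete missteps. First, you plan to get the $v$-exponent on the $T_0$ side from ``Proposition \ref{pro-vpq} applied to both $T_0$ and $T_0'$''; that proposition explicitly requires $T$ to contain no arc tagged notched at $p$ or $q$, and $T_0$ contains $\gamma^{(p)}$, so it furnishes no valuation on $\mathcal L(T_0^o,\widetilde\beta^{(p,q)})$. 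The statement you are proving already sidesteps this: the left-hand exponent is $w(P)$ from Proposition \ref{Prop-v1} on $\mathcal P(G_{T_0^o,\widetilde\beta})$ alone, and the right move is to set $w(\Delta,P,\Delta):=w(P)$, verify the covering compatibility for this ad hoc valuation by hand, and invoke Proposition \ref{pro-vpq} only on the $T_0'$ side (where it is legitimate, since $T'^o_0$ has no self-folded triangle at $p$).

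Second, the claim that $x^{T_0}(\Delta,P,\Delta)$ ``contains $x_\alpha^2$'' which Lemma \ref{lem-yf1}(4) expands ``into at most four monomials'' misdescribes the mechanism. Since $\gamma$ and $\widetilde\beta$ are compatible, $\widetilde\beta$ crosses $T_0^o$ at $c+2$ points and the first and last tiles $G_0,G_{c+1}$ of $G_{T_0^o,\widetilde\beta}$ both have diagonal labeled $\alpha$; the resulting $x_\alpha^{-2}$ in the crossing monomial largely cancels the $x_\alpha^2$ coming from the two $x^{T_0}(\Delta)$-factors, and the residual $\alpha$-exponent is the $P$-dependent quantity $\sum_a m_a(\Delta,P,\Delta)$ (equal to $1$, not $2$, at the minimal matching $P_-$). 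It is this quantity that controls $|\pi(\Delta,P,\Delta)|$ and the number of branches in the binomial unfolding; the fourfold choice of end triangles does not occur for every $P$, but is sorted according to which of $W(G_0),S(G_1),E(G_c),N(G_{c+1})$ lie in $P$. Working this out requires identifying the common subgraph $H$ of $G_{T_0^o,\widetilde\beta}$ and $G_{T_0'^o,\widetilde\beta}$ and restricting perfect matchings to it, rather than expanding a literal $x_\alpha^2$. With these two corrections the argument you outline does close.
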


We will prove Proposition \ref{lem:step3} in Section \ref{sec:step3}.

\medskip

\emph{Proof of Theorem \ref{Thm-M3} in case $p=q$.} By Theorem \ref{Thm-1}, we have
$$\sum_{P\in \mathcal L(T^o_0,\widetilde\beta)}v^{w(P)}X^{T_0,(p)}(P)=X^{(p)}_{\widetilde\beta}.$$
By Proposition \ref{prop-equ5}, we have
Theorem \ref{Thm-M3} holds for $T_0$. By Proposition \ref{lem:step3}, Theorem \ref{Thm-M3} holds for $T'_0$.
For any ideal triangulation $T^o$ such that the corresponding tagged triangulation $T$ contains no arc tagged notched at $p$, we see Theorem \ref{Thm-M3} holds for $T$ by Lemma \ref{lem:flip-pq} and Theorem \ref{thm-sum4}.

The proof of Theorem \ref{Thm-M3} is complete.
\endproof

\subsection{Proof of Proposition \ref{lem:step3}.}\label{sec:step3}
Assume that the tiles of $G_{T'^o_0,\widetilde\beta}$ are $G'_1,\cdots,G'_c$ with diagonals labeled $\tau'_{i_1},\cdots,\tau'_{i_c}$. Then $\tau'_{i_1},\tau'_{i_c}\in \{\alpha_1,\alpha_4\}$. Since $\gamma,\widetilde\beta$ are compatible, $\widetilde\beta$ crosses $T_0^o$ $c+2$ times. Assume that the tiles of $G_{T_0^o,\widetilde\beta}$ are $G_0,G_1,\cdots,G_c,G_{c+1}$ with diagonals labeled $\tau_{i_j}, j=0,\cdots,c+1$. Then $\tau_{i_j}=\tau'_{i_j}$ for $j=1,\cdots, c$ and $\tau_{i_0},\tau_{i_{c+1}}=\alpha$. We may further assume that $rel(T_0^o,G_1)=rel(T_0^o,G_c)=rel(T'^o_0,G'_1)=rel(T'^o_0,G'_c)=1$. (Here the assumption implies $c$ is odd, the result can be proved similarly in case $c$ is even, the assumption fixes the labels of the edges of these tiles.)


Denote $\mathcal L=\mathcal L(T_0^o,\widetilde\beta^{(p,q)})$, $\mathcal L'=\mathcal L(T'^o_0,\widetilde\beta^{(p,q)})$.

By symmetry, we shall consider the following cases. Case I: $\tau'_{i_1}=\alpha_1,\tau'_{i_c}=\alpha_4$; Case II: $\tau'_{i_1}=\alpha_1,\tau'_{i_c}=\alpha_1$ and Case III: $\tau'_{i_1}=\alpha_4,\tau'_{i_c}=\alpha_4$.

We consider the Case I, as the other cases can be proved similarly.

\begin{figure}[h]
\centerline{\includegraphics{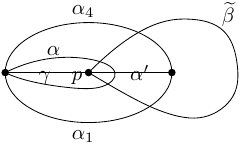}}
\end{figure}

In this case, the subgraph of $G_{T_0^o,\widetilde\beta}$ formed by $N(G_1),E(G_1), S(G_c), W(G_c)$ and $G_2,\cdots,G_{c-1}$ equals the subgraph of $G_{T'^o_0,\widetilde\beta}$ formed by $N(G'_1),E(G'_1), S(G'_c), W(G'_c)$ and $G'_2,\cdots,G'_{c-1}$. See Figure \ref{Fig:proof6}. Denote this common subgraph by $H$.

\begin{figure}[h]
\centerline{\includegraphics{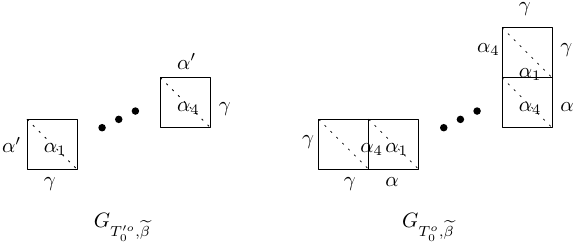}}
\caption{}\label{Fig:proof6}
\end{figure}

The following lemma is immediate.

\begin{lemma}
For any $P\in \mathcal P(G_{T_0^o,\widetilde\beta})$, there is a unique $P'\in \mathcal P(G_{T'^o_0,\widetilde\beta})$ such that $P\cap edge(H)=P'\cap edge(H)$. We denote $P\mid_{G_{T'^o_0,\widetilde\beta}}=P'$. In particular, $P_-\mid_{G_{T'^o_0,\widetilde\beta}}=P'_-$ is the minimal perfect matching.
\end{lemma}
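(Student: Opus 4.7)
The plan is to analyze the structure of the vertex sets $V(G_{T_0^o,\widetilde\beta})$, $V(G_{T'^o_0,\widetilde\beta})$ and their common subgraph $H$, then show that completing $P\cap edge(H)$ to a perfect matching of $G_{T'^o_0,\widetilde\beta}$ admits exactly one choice. First I would identify the ``interface'' between $H$ and the rest. On the $G_1$ side, $H$ contains $N(G_1)$ and $E(G_1)$ but not $S(G_1)$ or $W(G_1)$, so the $H$-vertices of the tile $G_1$ are precisely its $NW$, $NE$ and $SE$ corners, while the $SW$ corner lies outside $H$. The symmetric picture holds for $G_c$ (with $NE(G_c)$ being the one corner outside $H$). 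Denote by $B\subseteq V(H)$ the set of $H$-vertices that are incident to some edge of $G_{T_0^o,\widetilde\beta}$ outside $edge(H)$; a direct check gives $B=\{NW(G_1),SE(G_1),NW(G_c),SE(G_c)\}$, and this set is the same in $G_{T'^o_0,\widetilde\beta}$ because the tiles $G_1,G_c$ and $G'_1,G'_c$ share their $N,E,S,W$ edge-frame even though the diagonals and the labels of $S,W$ (resp.\ $N,E$) differ.

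Next, given $P\in\mathcal P(G_{T_0^o,\widetilde\beta})$, I would set $P_H:=P\cap edge(H)$ and let $B'\subseteq B$ be the vertices of $B$ that are \emph{not} covered by $P_H$ (these are the vertices of $H$ that $P$ matches to a vertex outside $H$). For the desired $P'$, we need $P'\cap edge(H)=P_H$, and since $edge(G_{T'^o_0,\widetilde\beta})=edge(H)\sqcup\{S(G'_1),W(G'_1),N(G'_c),E(G'_c)\}$, we must choose edges among these four to cover $B'\cup\{SW(G'_1),NE(G'_c)\}$. The key combinatorial input is that $\{S(G'_1),W(G'_1)\}$ is the unique pair of edges containing $SW(G'_1)$, and $S(G'_1)$ (resp.\ $W(G'_1)$) meets the $H$-vertex $SE(G'_1)$ (resp.\ $NW(G'_1)$); similarly for $G'_c$. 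Thus the element of $\{SE(G_1),NW(G_1)\}\cap B'$ forces precisely one of $S(G'_1),W(G'_1)$ into $P'$, and symmetrically for $G'_c$. For this to yield a genuine perfect matching, one needs $|B'\cap\{SE(G_1),NW(G_1)\}|=1=|B'\cap\{SE(G_c),NW(G_c)\}|$; this is forced by the fact that $P$ itself is a perfect matching of $G_{T_0^o,\widetilde\beta}$, because on the $G_0$-side of $G_1$ the pair $\{S(G_1),W(G_1)\}$ contains exactly one $P$-edge (the one that passes to $G_0$), and this edge's $H$-endpoint is exactly the unique element of $B'\cap\{SE(G_1),NW(G_1)\}$. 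The same argument handles the $G_{c+1}$-side. Existence and uniqueness of $P'$ follow.

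For the minimum statement, I would invoke Proposition \ref{prop-mm}: both $P_-$ and $P'_-$ consist exclusively of boundary edges, and their intersections with $edge(H)$ depend only on $rel(G_i,T^o)$ for $1\le i\le c$, which is the same in $T_0^o$ and $T'^o_0$ for the interior tiles $G_2,\ldots,G_{c-1}$, and coincides on $N(G_1),E(G_1),S(G_c),W(G_c)$ by our normalization $rel(T_0^o,G_1)=rel(T_0^o,G_c)=rel(T'^o_0,G'_1)=rel(T'^o_0,G'_c)=1$. Thus $P_-\cap edge(H)=P'_-\cap edge(H)$, and by uniqueness in the first part $P_-\mid_{G_{T'^o_0,\widetilde\beta}}=P'_-$.

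The main obstacle is the case analysis of how $G_1,G_c$ glue to their neighbors (specifically the four possible configurations of ``$S(G_1)$ or $W(G_1)$ shared with $G_0$'' combined with ``$N(G_c)$ or $E(G_c)$ shared with $G_{c+1}$''), and verifying in each configuration that the ``parity'' of $B'$ on the $G_1$ and $G_c$ sides really is $1$, i.e.\ that $P$ forces exactly one of $\{SE(G_1),NW(G_1)\}$ to be matched outside $H$. This is a straightforward but tedious bookkeeping argument; it is cleanest to subsume it under the observation that any perfect matching of a snake graph, when restricted to an initial segment of tiles, covers exactly one of the two ``exit'' corners at the boundary with the remaining tiles.
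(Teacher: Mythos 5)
Your overall strategy -- identify that $SW(G'_1)$ and $NE(G'_c)$ are the only vertices of $G_{T'^o_0,\widetilde\beta}$ outside $V(H)$, so a completion of $P\cap edge(H)$ is forced once one shows exactly one of $NW(G_1),SE(G_1)$ (and exactly one of $NW(G_c),SE(G_c)$) is left uncovered -- is the right one, and the paper offers no proof of its own. But the parity step you give is not correct. The claim that ``the pair $\{S(G_1),W(G_1)\}$ contains exactly one $P$-edge (the one that passes to $G_0$)'' fails: if $G_0,G_1$ share the edge $N(G_0)=S(G_1)$ and $P$ contains $N(G_1)$, $E(G_0)$ and $W(G_0)$, then neither $S(G_1)$ nor $W(G_1)$ is a $P$-edge (all four vertices of $G_0$ are already matched by $E(G_0),W(G_0)$, and $NW(G_1)$ is matched by $N(G_1)$). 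Your closing ``observation'' is also inaccurate as phrased: restricting a perfect matching to an initial segment $G_1,\dots,G_k$ of tiles covers an \emph{even} number (zero or two) of the two endpoints of the edge shared with $G_{k+1}$, never exactly one.

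The count that actually works is on the whole tail after deleting \emph{both} $H$-edges $N(G_1),E(G_1)$, not on $\{S(G_1),W(G_1)\}$ alone. Let $L_1$ be the subgraph of $G_{T_0^o,\widetilde\beta}$ with edge set $\{S(G_1),W(G_1)\}\cup edge(G_0)$. Then $|V(L_1)|=5$, $V(L_1)\cap V(H)=\{NW(G_1),SE(G_1)\}$, and the other three vertices of $L_1$ (namely $SW(G_1)$ and the two $G_0$-vertices not shared with $G_1$) have all incident edges in $L_1$, so they must be covered by $P_L:=P\cap edge(L_1)$. Since $P_L$ covers an even number of vertices, it covers exactly one of $NW(G_1),SE(G_1)$; the other is covered by $P\cap edge(H)$. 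This, together with the symmetric count on the $G_c,G_{c+1}$ side, establishes existence and uniqueness of $P'$. With this repair the remainder of your argument, including the $P_-\mapsto P'_-$ step via Proposition \ref{prop-mm} and the agreement of $rel$ on $G_1,\dots,G_c$, goes through.
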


Denote $\Delta_1=\{\gamma,\alpha',\alpha_1\}$ and $\Delta_2=\{\gamma,\alpha',\alpha_4\}$. Then the maximal/minimal element in $\mathcal L'$ is $(\Delta_1,P'_\pm,\Delta_2)$. In this case the partition bijection $\pi:\mathcal L\to \mathcal L'$ has the following explicit construction: for any ${\bf P}=(\Delta,P,\Delta)\in \mathcal L$,

\begin{enumerate}[$(i)$]
\item if $S(G_1), E(G_c)\in P$, then $\pi({\bf P})=\{(\Delta_1,P',\Delta_1),(\Delta_1,P',\Delta_2),(\Delta_2,P',\Delta_1),(\Delta_2,P',\Delta_2)\}$;
\item If $S(G_1)\in P, E(G_c)\notin P$, then $\pi({\bf P})=\{(\Delta_1,P',\Delta_1),(\Delta_2,P',\Delta_1)\}$ in case $N(G_{c+1})\in P$ and $\pi({\bf P})=\{(\Delta_1,P',\Delta_2),(\Delta_2,P',\Delta_2)\}$ in case $N(G_{c+1})\notin P$;
\item If $S(G_1)\notin P, E(G_c)\in P$, then $\pi({\bf P})=\{(\Delta_1,P',\Delta_1),(\Delta_1,P',\Delta_2)\}$ in case $W(G_{0})\notin P$ and $\pi({\bf P})=\{(\Delta_2,P',\Delta_1),(\Delta_2,P',\Delta_2)\}$ in case $W(G_{0})\in P$;
\item If $S(G_1), E(G_c)\notin P$, then $\pi({\bf P})=\{(\Delta_1,P',\Delta_1)\}$ in case $W(G_{0})\notin P, N(G_{c+1})\in P$, $\pi({\bf P})=\{(\Delta_2,P',\Delta_1)\}$ in case $W(G_{0}), N(G_{c+1})\in P$, $\pi({\bf P})=\{(\Delta_1,P',\Delta_2)\}$ in case $W(G_{0}), N(G_{c+1})\notin P$ and $\pi({\bf P})=\{(\Delta_2,P',\Delta_2)\}$ in case $W(G_{0})\in P, N(G_{c+1})\notin P$,
\end{enumerate}
where $P'=P\mid_{G_{T'^o_0,\widetilde\beta}}$.

The following is immediate.

\begin{lemma}\label{lem:ll}
$\pi(\Delta,P_-,\Delta)$ contains the minimal element $(\Delta_1,P'_-,\Delta_2)$.
\end{lemma}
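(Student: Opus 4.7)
The plan is to verify the lemma by direct inspection of the explicit construction of $\pi$ given in the paragraphs immediately preceding the statement. I aim to show that $(\Delta,P_-,\Delta)$ lies in bullet (iv) of that construction, in the subcase whose output is the singleton $\{(\Delta_1, P'_-, \Delta_2)\}$.

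First I would check that the restriction $P_-\mid_{G_{T'^o_0,\widetilde\beta}}$ equals $P'_-$. Both $P_-$ and $P'_-$ consist solely of boundary edges by Proposition \ref{prop-mm}, and the common subgraph $H$ (formed by the interior tiles $G_2,\ldots,G_{c-1}$ together with the $W/E$ sides of $G_1, G_c$) carries the two minimal matchings identically: the $rel$-values on all interior tiles agree in the two snake graphs, and the hypotheses $rel(G_1,T_0^o)=rel(G'_1,T'^o_0)=1$ (and similarly for $G_c, G'_c$) force the $P_-$- and $P'_-$-edges in $G_1, G_c$ to lie in $H$.

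Next I would verify that none of the four distinguishing edges $S(G_1)$, $E(G_c)$, $W(G_0)$, $N(G_{c+1})$ belongs to $P_-$. The first two are ruled out directly by Proposition \ref{prop-mm}(2) combined with $rel(G_1,T_0^o)=rel(G_c,T_0^o)=1$. Since consecutive tiles of a snake graph carry opposite $rel$-values, $rel(G_0,T_0^o)=rel(G_{c+1},T_0^o)=-1$, so Proposition \ref{prop-mm}(2) places all $P_-$-edges in $G_0, G_{c+1}$ inside $\{N,S\}$, immediately excluding $W(G_0)$. The exclusion of $N(G_{c+1})$ requires a short local computation with Lemma \ref{max-min}: since $G_{c+1}$ is the last tile of $G_{T_0^o,\widetilde\beta}$ (glued to $G_c$ along the $\alpha_4$-labelled side) with diagonal $\alpha$, the minimal matching's edge on the remaining boundary of $G_{c+1}$ is forced to be $S(G_{c+1})$ rather than $N(G_{c+1})$. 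Once all four non-memberships are in hand, the construction of $\pi$ returns $\pi(\Delta,P_-,\Delta)=\{(\Delta_1,P'_-,\Delta_2)\}$, which proves the lemma.

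The main obstacle is the local verification that $N(G_{c+1})\notin P_-$: the ``north'' side of the last tile depends on the gluing convention and on whether $G_{c+1}$ sits above or to the east of its neighbour $G_c$. However, the rigid local picture in Case I, pinned down by $\tau'_{i_1}=\alpha_1, \tau'_{i_c}=\alpha_4$ and the identification $\alpha_2=\alpha_3=\gamma$ forced by the self-folded triangle $\Delta$, reduces the check to a direct application of Lemma \ref{max-min}.
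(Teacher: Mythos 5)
Your overall strategy — check that $P_-$ restricts to $P'_-$, determine which of the four distinguishing edges $S(G_1)$, $E(G_c)$, $W(G_0)$, $N(G_{c+1})$ lie in $P_-$, and read off $\pi(\Delta,P_-,\Delta)$ from the explicit case list — is the right one, and your handling of $S(G_1)\notin P_-$, $W(G_0)\notin P_-$, and the identification $P_-\mid_{G_{T'^o_0,\widetilde\beta}}=P'_-$ is correct. However, your determination of the other two edges is wrong, and this sends you into the wrong branch of the case analysis.

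Concretely, for $E(G_c)$: Proposition \ref{prop-mm}(2) with $rel(G_c,T_0^o)=1$ places the $P_-$-edges of $G_c$ \emph{inside} $\{W(G_c),E(G_c)\}$; this does not rule out $E(G_c)$ — it allows it, and since $N(G_c)=S(G_{c+1})$ is the edge shared with $G_{c+1}$ (not $\alpha_4$ as you wrote — the shared edge is the third side of $\Delta'=(\alpha,\alpha_1,\alpha_4)$ for the segment between the $\alpha_4$- and $\alpha$-crossings, which is $\alpha_1$), the corner vertex of $G_c$ not adjacent to $G_{c+1}$ must be matched by $E(G_c)$, so $E(G_c)\in P_-$. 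For $N(G_{c+1})$: $rel(G_{c+1},T_0^o)=-1$ places the $P_-$-edges of $G_{c+1}$ in $\{N(G_{c+1}),S(G_{c+1})\}$, and since $S(G_{c+1})=N(G_c)$ is internal it cannot lie in a matching consisting only of boundary edges; hence $N(G_{c+1})\in P_-$, not $S(G_{c+1})$ as you conclude. (Both facts are stated explicitly in the first line of the paper's proof of Proposition \ref{prop:ll}, and can be cross-checked against the weight identity there, where $x^{T_0}(\Delta,P_-,\Delta)$ picks up exactly one $\alpha$-labelled edge and one $\alpha_1$-labelled edge from $P_-\setminus H$.) Consequently $(\Delta,P_-,\Delta)$ falls into case (iii) of the construction with $W(G_0)\notin P_-$, which gives $\pi(\Delta,P_-,\Delta)=\{(\Delta_1,P'_-,\Delta_1),(\Delta_1,P'_-,\Delta_2)\}$ — a two-element set, not the singleton you claim. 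The lemma's conclusion that $(\Delta_1,P'_-,\Delta_2)$ is in this set is still correct, but your argument as written is invalid: it misapplies Proposition \ref{prop-mm}(2), misidentifies the gluing edge, and asserts an equality $\pi(\Delta,P_-,\Delta)=\{(\Delta_1,P'_-,\Delta_2)\}$ that is false.
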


\begin{lemma}\label{lem:lll}
Assume that ${\bf P}=(\Delta,P,\Delta)\in \mathcal L$ covers ${\bf Q}=(\Delta,\mu_{G_l}P,\Delta)\in \mathcal L$.
\begin{enumerate}[$(1)$]
\item If $l=0$, then for any $(\Delta',P',\Delta'')\in \pi({\bf P})$ we have $\Delta'=\Delta_2$ and
$$\pi({\bf Q})=\{(\Delta_1,P',\Delta'')\mid (\Delta',P',\Delta'')\in \pi({\bf P})\}.$$
Moreover, ${\bf P}(0,\cdots,0)$ covers ${\bf Q}(0,\cdots,0)$ and related by $\gamma$, and
$$\Omega(P,\mu_{G_l}P)=\Omega({\bf P}(0,\cdots,0),{\bf Q}(0,\cdots,0));$$
\item If $l=1$, then for any $(\Delta',Q',\Delta'')\in \pi({\bf Q})$ we have $\Delta'=\Delta_2$, $Q'$ can twist on $G'_1$ with $Q'<\mu_{G'_1}(Q')$ and
$$\pi({\bf P})=\{(\Delta_1,\mu_{G'_1}Q',\Delta''),(\Delta_2,\mu_{G'_1}Q',\Delta'')\mid (\Delta',Q',\Delta'')\in \pi({\bf Q})\}.$$
Moreover, ${\bf P}(0,\cdots,0)$ covers ${\bf Q}(0,\cdots,0)$ and related by $\alpha_1$, and
$$\Omega(P,\mu_{G_l}P)=\Omega({\bf P}(0,\cdots,0),{\bf Q}(0,\cdots,0));$$
\item If $2\leq l\leq c-1$, then for any $(\Delta',P',\Delta'')\in \pi({\bf P})$ we have $P'$ can twist on $G'_l$ with $P'>\mu_{G'_l}(P')$ and
$$\pi({\bf Q})=\{(\Delta',\mu_{G_l}P',\Delta'')\mid (\Delta',P',\Delta'')\in \pi({\bf P})\}.$$
Moreover, ${\bf P}(0,\cdots,0)$ covers ${\bf Q}(0,\cdots,0)$ and related by $\tau_{i_l}$, and
$$\Omega(P,\mu_{G_l}P)=\Omega({\bf P}(0,\cdots,0),{\bf Q}(0,\cdots,0));$$
\item If $l=c$, then for any $(\Delta',P',\Delta'')\in \pi({\bf P})$ we have $\Delta''=\Delta_1$, $P'$ can twist on $G'_c$ with $P'>\mu_{G'_c}(P')$ and
$$\pi({\bf Q})=\{(\Delta',\mu_{G'_c}P',\Delta_1),(\Delta',\mu_{G'_c}P',\Delta_2)\mid (\Delta',Q',\Delta'')\in \pi({\bf P})\}.$$
Moreover, ${\bf P}(1,\cdots,1)$ covers ${\bf Q}(1,\cdots,1)$ and related by $\alpha_1$, and
$$\Omega(P,\mu_{G_l}P)=\Omega({\bf P}(1,\cdots,1),{\bf Q}(1,\cdots,1));$$
\item If $l=c+1$, then for any $(\Delta',P',\Delta'')\in \pi({\bf P})$ we have $\Delta''=\Delta_2$ and
$$\pi({\bf Q})=\{(\Delta',P',\Delta_1)\mid (\Delta',P',\Delta'')\in \pi({\bf P})\}.$$
Moreover, ${\bf P}(0,\cdots,0)$ covers ${\bf Q}(0,\cdots,0)$ and related by $\gamma$, and
$$\Omega(P,\mu_{G_l}P)=\Omega({\bf P}(0,\cdots,0),{\bf Q}(0,\cdots,0)).$$
\end{enumerate}

\end{lemma}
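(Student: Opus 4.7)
The plan is to verify each of the five cases by a direct case-by-case analysis using the explicit description of the partition bijection $\pi:\mathcal L\to\mathcal L'$ given immediately before the statement. I will work in Case I of the setup ($\tau'_{i_1}=\alpha_1,\tau'_{i_c}=\alpha_4$); Cases II and III are entirely parallel.

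First I would handle the "middle" case $2\le l\le c-1$. Here the tile $G_l$ lies in the shared subgraph $H$, so twisting it does not alter any of the boundary edges $S(G_1), W(G_0), E(G_c), N(G_{c+1})$ that govern the construction of $\pi$. The map $(\Delta',P',\Delta'')\mapsto(\Delta',\mu_{G'_l}P',\Delta'')$ therefore gives a bijection from $\pi({\bf P})$ onto $\pi({\bf Q})$, the covering relation ${\bf P}(0,\dots,0)$ covers ${\bf Q}(0,\dots,0)$ follows from Lemma \ref{Lem-cover1}, and the equality of gradient numbers is immediate because $\tau_{i_l}=\tau'_{i_l}$ and all local data entering Definition \ref{Def-ome} is identical on the two sides of the flip.

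Next I would treat the "self-folded boundary" cases $l=0$ and $l=c+1$. Here $\tau_{i_l}=\alpha$, and inspecting the explicit construction of $\pi$ shows that the twist toggles exactly the condition that selects between $\Delta_1$ and $\Delta_2$ in the first (resp. last) coordinate, leaving $P\mid_{G_{T'^o_0,\widetilde\beta}}$ unchanged. This delivers the stated description of $\pi({\bf Q})$ and identifies the covering in $\mathcal L'$ as being related by $\gamma$, the common side of $\Delta_1$ and $\Delta_2$. Matching the two gradient numbers uses the identity $y^{T^o_0}_\alpha=y^{T'^o_0}_\gamma$ coming from Lemma \ref{lem-yf1}(3)(b), together with $d(\alpha)=d(\gamma)$ since $\alpha$ is the loop enclosing the radius $\gamma$.

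The main obstacle lies in the remaining "interaction" cases $l=1$ and $l=c$, where a single twist in $G_{T_0^o,\widetilde\beta}$ simultaneously affects the boundary condition selecting $\Delta'$ (or $\Delta''$) and a tile $G'_l$ of $G_{T_0'^o,\widetilde\beta}$. For $l=1$, the $\alpha$-labeled edge $S(G_1)=N(G_0)$ is traded with a non-$\alpha$ edge of $G_1$ under the twist, so that in $\mathcal L'$ only one representative per matching remains in $\pi({\bf Q})$ while $\pi({\bf P})$ acquires two, exactly matching the asserted formula. To conclude, I would invoke Proposition \ref{dProp-mutation4}, which is tailor-made for twists on a tile whose two $\alpha$-labeled edges sit next to the self-folded pattern $\alpha_2=\alpha_3=\gamma$; it yields simultaneously the covering ${\bf P}(0,\dots,0)$ covers ${\bf Q}(0,\dots,0)$ related by $\alpha_1$ and the identity $\Omega(P,\mu_{G_1}P)=\Omega({\bf P}(0,\dots,0),{\bf Q}(0,\dots,0))$. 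The case $l=c$ is its mirror image, handled identically except that the base point shifts from $(0,\dots,0)$ to $(1,\dots,1)$ since here the twist enlarges $\pi({\bf Q})$ rather than $\pi({\bf P})$.
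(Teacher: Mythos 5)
Your sketch handles the structure of $\pi({\bf P})$ and $\pi({\bf Q})$ in all five cases correctly, and your treatment of the middle case $2\le l\le c-1$ matches the paper's. However, there are two genuine gaps in the way you propose to verify the gradient-number identities.

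For the boundary cases $l=0$ and $l=c+1$ you write that the equality $\Omega(P,\mu_{G_l}P)=\Omega({\bf P}(0,\dots,0),{\bf Q}(0,\dots,0))$ follows from a $y$-variable identity $y^{T^o_0}_\alpha=y^{T'^o_0}_\gamma$ plus $d(\alpha)=d(\gamma)$. This conflates two distinct objects: $\Omega$ is an integer (a difference of values of the valuation map $w$, i.e.\ a power of $v$), while the $y$-identity lives in the coefficient semifield $\mathbb P$ and is what one needs for a \emph{different} lemma (the paper's Lemma~\ref{lem:llll}). No manipulation of specialized heights can prove the numerical statement about $\Omega$. In the paper's proof this is handled by a short direct computation, splitting on whether $E(G_c)\in P$ or not and then reading off $m(\Delta_1;\gamma)$, $m(\Delta_2;\gamma)$ explicitly.

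For the interaction cases $l=1$ and $l=c$ your plan is to invoke Proposition~\ref{dProp-mutation4}. Its hypotheses are not met here. That proposition requires $\alpha_1=\alpha_2$ and a tile $G_l$ with diagonal labeled $\alpha_1$ (the radius of the self-folded triangle) whose north \emph{and} west edges are both labeled $\alpha$ — i.e.\ a tile with diagonal $\gamma$ sitting between two $\alpha$-tiles. In the setting of Lemma~\ref{lem:lll} the self-folded identification is $\alpha_2=\alpha_3=\gamma$, $\widetilde\beta$ is chosen compatible with $\gamma$ so the snake graph has \emph{no} tile with diagonal $\gamma$, and the tile $G_1$ (diagonal $\alpha_1$, in the paper's Case~I) is adjacent to only one $\alpha$-tile, namely $G_0$. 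So neither the combinatorial pattern $\alpha_1=\alpha_2$, nor the condition $N(G_l),W(G_l)$ both labeled $\alpha$, holds. The paper instead proves (2) and (4) directly: it observes $S(G'_1)\in P'$ forces the covering to be related by $\alpha_1$, then splits on whether $N(G_c)\in P$ and evaluates $m(\Delta_1;\alpha_1)-m(\Delta_2;\alpha_1)$, respectively $m(\Delta_2;\alpha_1)-m(\Delta_2;\alpha_1)$, in each subcase. Your sketch would need to be replaced by a computation of this sort; there is no existing proposition to outsource it to.
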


\begin{proof}
The relation between $\pi({\bf P})$ and $\pi({\bf Q})$ follows immediately by the construction of $\pi$.

(1) Since $W(G'_1)\in P'$, we have ${\bf P}(0,\cdots,0)$ covers ${\bf Q}(0,\cdots,0)$ and related by $\gamma$. If $E(G_c)\in P$, then $\Omega(P,\mu_{G_l}P)=0$ and $E(G'_c)\in P'$. Thus $$\Omega({\bf P}(0,\cdots,0),{\bf Q}(0,\cdots,0))=1+m(\Delta_2;\gamma)=1-1=0.$$ If $E(G_c)\notin P$, then $\Omega(P,\mu_{G_l}P)=-1$ and $S(G'_c)\in P'$. Thus $$\Omega({\bf P}(0,\cdots,0),{\bf Q}(0,\cdots,0))=0+m(\Delta_1;\gamma)=0-1=-1.$$

(2) Since $l=1$, we have $S(G'_1)\in P'$. Thus ${\bf P}(0,\cdots,0)$ covers ${\bf Q}(0,\cdots,0)$ and related by $\alpha_1$. If $N(G_c)\in P$, then $\Omega(P,\mu_{G_l}P)=1$ and $N(G_{c+1})\in P, N(G'_c)\in P'$. Thus ${\bf P}(0,\cdots,0)=(\Delta_2,\mu_{G'_1}Q',\Delta_1)$ and $$\Omega({\bf P}(0,\cdots,0),{\bf Q}(0,\cdots,0))=m(\Delta_1;\alpha_1)-m(\Delta_2;\alpha_1)=1-0=1.$$ If $N(G_c)\notin P$, then $\Omega(P,\mu_{G_l}P)=0$ and $E(G_c)\in P$ in case $N(G_{c+1})\in P$. Thus ${\bf P}(0,\cdots,0)=(\Delta_2,\mu_{G'_1}Q',\Delta_2)$ and $$\Omega({\bf P}(0,\cdots,0),{\bf Q}(0,\cdots,0))=m(\Delta_2;\alpha_1)-m(\Delta_2;\alpha_1)=0.$$

(3) Since $2\leq l\leq c-1$, we have ${\bf P}(0,\cdots,0)$ covers ${\bf Q}(0,\cdots,0)$ and related by $\tau_{i_l}$, and
$$\Omega(P,\mu_{G_l}P)=\Omega({\bf P}(0,\cdots,0),{\bf Q}(0,\cdots,0))=0.$$

(4) and (5) are similar to (2) and (1).
\end{proof}

\begin{lemma}\label{lem:llll}
Assume that ${\bf P}=(\Delta,P,\Delta)\in \mathcal L$ covers ${\bf Q}=(\Delta,\mu_{G_l}P,\Delta)\in \mathcal L$. Then
$$\frac{y^{T_0}({\bf P})}{y^{T_0}({\bf Q})}=
\frac{\bigoplus_{{\bf P}'\in \pi({\bf P})}y^{T'_0}({\bf P}')}
{\bigoplus_{{\bf Q}'\in \pi({\bf Q})}y^{T'_0}({\bf Q}')}.$$
\end{lemma}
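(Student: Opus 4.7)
The plan is to verify the identity by case analysis on the index $l\in\{0,1,2,\ldots,c,c+1\}$ of the tile where the twist occurs, using the explicit description of $\pi({\bf P})$ and $\pi({\bf Q})$ supplied by Lemma \ref{lem:lll} together with Lemma \ref{lem:coverl} (which computes the left-hand ratio) and Lemma \ref{lem-yf1} (which relates the $y$-variables in $T_0^o$ and $T'^o_0$ under the flip at $\alpha=l_q(\gamma)$).

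First I would handle the ``interior'' case $2\le l\le c-1$, where $\tau_{i_l}=\tau'_{i_l}$ is unaffected by the flip. Here Lemma \ref{lem:lll}(3) gives a bijection $\pi({\bf P})\leftrightarrow\pi({\bf Q})$ induced by the single twist on $G'_l$, so by Lemma \ref{Lem-yf} each paired ratio equals $y^{T'^o_0}_{\tau_{i_l}}=y^{T_0^o}_{\tau_{i_l}}$, and the tropical sums factor out. The left-hand side equals $y^{T_0^o}_{\tau_{i_l}}$ by Lemma \ref{lem:coverl}, matching.

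Next I would treat the boundary cases $l=0$ and $l=c+1$, where $\tau_{i_l}=\alpha$ and the twist on $G_l$ changes whether $W(G_0)$ or $N(G_{c+1})$ lies in $P$; by Lemma \ref{lem:coverl} the left-hand ratio is $y^{T_0^o}_\gamma$. By Lemma \ref{lem:lll}(1),(5) the partition bijection changes only the triangle component ($\Delta_2\leftrightarrow\Delta_1$ at one puncture), so the right-hand ratio collapses to $y^{T'^o_0}_\gamma$, and these agree since $\gamma\in T_0\cap T'_0$ is unaffected by the flip.

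The main content is in the cases $l=1$ and $l=c$, where $\tau_{i_l}=\alpha_1$ or $\alpha_4$ and one of $\pi({\bf P}),\pi({\bf Q})$ has twice as many elements as the other (Lemma \ref{lem:lll}(2),(4)). Here the left-hand ratio is $y^{T_0^o}_{\alpha_1}$ (or $y^{T_0^o}_{\alpha_4}$) by Lemma \ref{lem:coverl}, while the right-hand side becomes, after pairing the doubled terms via Lemma \ref{Lem-yf}, a ratio of the form
\[
\frac{y^{T'^o_0}_{\alpha_1}\cdot(1\oplus y^{T'^o_0}_{\alpha'})}{1}
\quad\text{or}\quad
\frac{1}{(y^{T'^o_0}_{\alpha_1})^{-1}\cdot(1\oplus y^{T'^o_0}_{\alpha'})^{-1}},
\]
and Lemma \ref{lem-yf1}(3)(a) gives exactly $y^{T_0^o}_{\alpha_1}=y^{T'^o_0}_{\alpha_1}(1\oplus y^{T'^o_0}_{\alpha'})^{-b^{T_0^o}_{\alpha\alpha_1}}$ with $b^{T_0^o}_{\alpha\alpha_1}=-1$ (for our Case I orientation), so the identity holds. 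The analogous case with $\alpha_4$ is symmetric. The step I anticipate as most delicate is bookkeeping the signs of $b^{T_0^o}_{\alpha\alpha_1},b^{T_0^o}_{\alpha\alpha_4}$ together with the orientations $rel(G_1,T_0^o)=rel(G_c,T_0^o)=1$ fixed earlier, and checking that the ``doubling'' in the tropical sum in $\pi({\bf P})$ versus $\pi({\bf Q})$ really produces the factor $(1\oplus y^{T'^o_0}_{\alpha'})$ with the correct exponent. Once these signs are aligned, Cases II and III (with $\tau'_{i_1},\tau'_{i_c}$ both $\alpha_1$ or both $\alpha_4$) follow by the same argument with the labels permuted.
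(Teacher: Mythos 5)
Your proposal follows the same route as the paper: a case split on $l\in\{0,1,2,\ldots,c,c+1\}$, with Lemma \ref{lem:coverl} computing the left-hand ratio, Lemma \ref{lem:lll} describing how $\pi$ transforms, and Lemma \ref{lem-yf1}(3)(a) supplying the $(1\oplus y^{T'^o_0}_{\alpha'})$-factor that absorbs the doubling in the boundary cases $l=1$ and $l=c$. The only slip is in how you quote Lemma \ref{lem-yf1}(3)(a): it actually reads $y_{\alpha_i}^{T^o}=y_{\alpha_i}^{T'^o}(y_{\alpha'}^{T'^o})^{[-b^{T^o}_{\alpha \alpha_i}]_+}(1\oplus y_{\alpha'}^{T'^o})^{b^{T^o}_{\alpha \alpha_i}}$ (positive exponent on the $(1\oplus\,\cdot\,)$-factor and an extra $(y_{\alpha'}^{T'^o})^{[-b]_+}$ term), so the exponent you want, $(1\oplus y^{T'^o_0}_{\alpha'})^{+1}$, corresponds to $b^{T_0^o}_{\alpha\alpha_1}=+1$, not $-1$; you've inverted both the sign in the quoted formula and the sign of $b$, and the two errors cancel. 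Since you explicitly flag the $b$-sign bookkeeping as the delicate point and arrive at the same intermediate identity $y^{T_0^o}_{\alpha_1}=y^{T'^o_0}_{\alpha_1}(1\oplus y^{T'^o_0}_{\alpha'})$ that the paper writes for $l=1$ (and the analogous identity with the extra $(y^{T'^o_0}_{\alpha'})^{-1}$ for $l=c$, coming from the $(y_{\alpha'}^{T'^o})^{[-b]_+}$ term you dropped), the argument is substantively correct but should cite the lemma verbatim so that the exponent bookkeeping is visibly right rather than accidentally right.
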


\begin{proof}
If $l=0$, then by Lemmas \ref{lem:coverl}, \ref{lem-dcover1} and \ref{lem:lll} we have
$$\frac{y^{T_0}({\bf P})}{y^{T_0}({\bf Q})}=y^{T_0}_\gamma=y^{T'_0}_\gamma=\frac{\bigoplus_{{\bf P}'\in \pi({\bf P})}y^{T'_0}({\bf P}')}
{\bigoplus_{{\bf Q}'\in \pi({\bf Q})}y^{T'_0}({\bf Q}')}.$$

If $l=1$, then by Lemmas \ref{lem:coverl}, \ref{lem-yf1}, \ref{lem-dcover1} and \ref{lem:lll} we have
$$\frac{y^{T_0}({\bf P})}{y^{T_0}({\bf Q})}=y^{T^o_0}_{\alpha_1}=y^{T'^o_0}_{\alpha_1}(1\oplus y^{T'^o_0}_{\alpha'})
=\frac{\bigoplus_{{\bf P}'\in \pi({\bf P})}y^{T'_0}({\bf P}')}
{\bigoplus_{{\bf Q}'\in \pi({\bf Q})}y^{T'_0}({\bf Q}')}.$$

If $2\leq l\leq c-1$, then by Lemmas \ref{lem-dcover1} and \ref{lem:lll} we have
$$\frac{y^{T_0}({\bf P})}{y^{T_0}({\bf Q})}=y^{T^o_0}_{\tau_{i_l}}=y^{T'^o_0}_{\tau_{i_l}}=\frac{\bigoplus_{{\bf P}'\in \pi({\bf P})}y^{T'_0}({\bf P}')}
{\bigoplus_{{\bf Q}'\in \pi({\bf Q})}y^{T'_0}({\bf Q}')}.$$

If $l=c$, then by Lemmas \ref{lem:coverl}, \ref{lem-yf1}, \ref{lem-dcover1} and \ref{lem:lll} we have
$$\frac{y^{T_0}({\bf P})}{y^{T_0}({\bf Q})}=y^{T^o_0}_{\alpha_4}=y^{T'^o_0}_{\alpha_4}\frac{1}{y^{T'^o_0}_{\alpha'}}(1\oplus y^{T'^o_0}_{\alpha'})
=\frac{\bigoplus_{{\bf P}'\in \pi({\bf P})}y^{T'_0}({\bf P}')}
{\bigoplus_{{\bf Q}'\in \pi({\bf Q})}y^{T'_0}({\bf Q}')}.$$

If $l=c+1$, then by Lemmas \ref{lem:coverl}, \ref{lem-dcover1} and \ref{lem:lll} we have
$$\frac{y^{T_0}({\bf P})}{y^{T_0}({\bf Q})}=y^{T_0}_\gamma=y^{T'_0}_\gamma=\frac{\bigoplus_{{\bf P}'\in \pi({\bf P})}y^{T'_0}({\bf P}')}
{\bigoplus_{{\bf Q}'\in \pi({\bf Q})}y^{T'_0}({\bf Q}')}.$$

The proof is complete.
\end{proof}

\begin{lemma}
Assume that ${\bf P}=(\Delta,P,\Delta)\in \mathcal L$ covers ${\bf Q}=(\Delta,\mu_{G_l}P,\Delta)\in \mathcal L$. Then we have
$w(P)=w({\bf P}(0,\cdots,0))=w({\bf P}(1,\cdots,1))$.
\end{lemma}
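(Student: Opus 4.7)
The plan is to prove the statement by induction on the number of twists required to pass from the minimal perfect matching $P_-$ of $\mathcal P(G_{T_0^o,\widetilde\beta})$ to $P$. The second equality $w({\bf P}(0,\ldots,0))=w({\bf P}(1,\ldots,1))$ comes for free from Lemma~\ref{lem:pm}, because under the parametrization of Remark~\ref{Rem-4} these are respectively the minimum and maximum elements of $\pi({\bf P})$; so the only real content is the first equality $w(P)=w({\bf P}(0,\ldots,0))$.

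For the base case $P=P_-$, Lemma~\ref{lem:ll} tells us that $\pi(\Delta,P_-,\Delta)$ contains the minimum element $(\Delta_1,P'_-,\Delta_2)$ of $\mathcal L'$. Since this must also be the minimum of the subset $\pi(\Delta,P_-,\Delta)$, it coincides with $(\Delta,P_-,\Delta)(0,\ldots,0)$. Both $w(P_-)$, computed as a perfect matching via Proposition~\ref{Prop-v1}, and $w((\Delta,P_-,\Delta)(0,\ldots,0))$, computed as the minimum of $\mathcal L'$ via Proposition~\ref{pro-vpq}, then equal zero by their respective initial conditions.

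For the inductive step, suppose the identity holds for $\mu_{G_l}P$, so that $w(\mu_{G_l}P)=w({\bf Q}(0,\ldots,0))=w({\bf Q}(1,\ldots,1))$. I will split into the five cases of Lemma~\ref{lem:lll}. In the cases $l\in\{0,1,\ldots,c-1,c+1\}$, that lemma gives that ${\bf P}(0,\ldots,0)$ covers ${\bf Q}(0,\ldots,0)$ in $\mathcal L'$, and that the gradient number matches: $\Omega({\bf P}(0,\ldots,0),{\bf Q}(0,\ldots,0))=\Omega(P,\mu_{G_l}P)$. The recurrence conditions of Propositions~\ref{Prop-v1} and~\ref{pro-vpq} then transfer the equality from $\mu_{G_l}P$ to $P$, yielding $w(P)=w({\bf P}(0,\ldots,0))$. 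In the single exceptional case $l=c$, the analogous identity is asserted at the top corner, giving $w(P)=w({\bf P}(1,\ldots,1))$, and Lemma~\ref{lem:pm} converts this into the desired $w(P)=w({\bf P}(0,\ldots,0))$.

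The main obstacle I expect is purely bookkeeping: I must verify in each case of Lemma~\ref{lem:lll} that the pair $({\bf P}(\vec c),{\bf Q}(\vec c))$ (with $\vec c=(0,\ldots,0)$ or $(1,\ldots,1)$) is actually a cover relation of the type appearing in the recurrence for $w$ on $\mathcal L'$, and that the edge label and $\Omega$-value computed via Proposition~\ref{pro-vpq} agree with the values $\Omega(P,\mu_{G_l}P)$ computed on $\mathcal L$ via Proposition~\ref{Prop-v1}. These matchings are precisely what the last sentence of each case of Lemma~\ref{lem:lll} provides, so once the inductive framework is set up, the argument reduces to invoking the appropriate case and applying the recurrence on both sides.
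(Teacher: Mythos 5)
Your proof is correct and follows exactly the paper's (very terse) approach, which consists of citing Lemmas~\ref{lem:ll} and~\ref{lem:lll}. You have simply filled in the inductive scaffolding those two lemmas implicitly support: Lemma~\ref{lem:ll} anchors the claim at $P_-$, Lemma~\ref{lem:lll} supplies the matching cover relation and equal gradient number in each of the five cases, and Lemma~\ref{lem:pm} transfers the result from the top corner to the bottom corner in the single case $l=c$.
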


\begin{proof}
It follows by Lemmas \ref{lem:ll} and \ref{lem:lll}.
\end{proof}

\begin{proposition}\label{prop:ll}
For any ${\bf P}=(\Delta,P,\Delta)\in \mathcal L$, we have
 $$v^{w(P)}X^{T_0}({\bf P})=\sum\limits_{{\bf P'}\in \pi({\bf P})} v^{w({\bf P'})}X^{T'_0}({\bf P'}).$$
\end{proposition}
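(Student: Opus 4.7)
The plan is to induct along the Hasse graph of $\mathcal L = \{\Delta\}\times\mathcal P(G_{T^o_0,\widetilde\beta})\times\{\Delta\}$, which is connected by Lemma \ref{lem-H1}. The base case concerns ${\bf P}_- = (\Delta, P_-, \Delta)$. Here I would compute $v^{w(P_-)}X^{T_0}({\bf P}_-) = X^{T_0}({\bf P}_-)$ (since $w(P_-)=0$) and use Lemma \ref{lem-p=q4}(3) to push it through the isomorphism $\sigma$, identifying it with $X^{T_0,(p)}(P_-) \in \mathcal A_v^{(p)}(\Sigma)$. Then Theorem \ref{Thm-1} applied in $\mathcal A_v^{(p)}(\Sigma)$ expands $X^{(p)}_{\widetilde\beta}$ over $\mathcal P(G_{T'^o_0,\widetilde\beta})$, and by Lemma \ref{lem:ll} the minimum $(\Delta_1, P'_-, \Delta_2)$ of $\mathcal L'$ lies in $\pi({\bf P}_-)$. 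Grouping the resulting sum according to the explicit structure of $\pi({\bf P}_-)$ (cases (i)--(iv) in the partition bijection construction) recovers $\sum_{{\bf P}'\in\pi({\bf P}_-)} v^{w({\bf P}')}X^{T'_0}({\bf P}')$.

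For the inductive step, assume ${\bf P}$ covers ${\bf Q} = (\Delta, \mu_{G_l}P, \Delta)$ and that the identity is known for ${\bf Q}$. In the routine cases $l=0$, $l=c+1$ and $2 \le l \le c-1$ of Lemma \ref{lem:lll}, the partition bijection restricts to a bijection $\pi({\bf P}) \leftrightarrow \pi({\bf Q})$, and one combines Lemma \ref{lem:llll} with the final lemma before the proposition (giving $w(P) = w({\bf P}(0,\ldots,0)) = w({\bf P}(1,\ldots,1))$) to check that each pair of corresponding terms on the two sides scales by the same ratio in both $x$-weight and $v$-power. The equality of quantum weights $X^{T_0}({\bf P})/X^{T_0}({\bf Q}) = X^{T'_0}(\bullet)/X^{T'_0}(\bullet)$ then follows because both the $\Lambda$-twists and the frozen factors are transported across without change of rank.

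The main obstacle is the pair of cases $l=1$ and $l=c$ of Lemma \ref{lem:lll}, where $|\pi({\bf P})| = 2|\pi({\bf Q})|$ (respectively $|\pi({\bf Q})| = 2|\pi({\bf P})|$). Here the two elements of $\pi({\bf P})$ lying over a single element of $\pi({\bf Q})$ are related by a twist at a tile of $G_{T'^o_0,\widetilde\beta}$ whose diagonal is labelled $\alpha'$ (they are $(\Delta_1, \mu_{G'_1}Q', \Delta'')$ and $(\Delta_2, \mu_{G'_1}Q', \Delta'')$ in Lemma \ref{lem:lll}(2), and their analogue in (4)). I would handle them by adapting the argument of Lemma \ref{lem-wp1}: expand $X_\alpha$ via the quantum exchange relation in $\mathcal T(T'^o_0)$ using the binomial identity (\ref{eq-bino}), and then read off the $v$-powers through Lemma \ref{lem1}(1), matching them to the $w$-differences supplied by Lemma \ref{lem:ome1} (with the appropriate zero-extension since the triangle factor contributes an extra exponent). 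The compatibility of $\Lambda^{T_0}$ and $\Lambda^{T'_0}$ under mutation — already used implicitly in the proof of Lemma \ref{lem-wp1} — ensures the quantum normalisation works out.

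Once the inductive step is verified in all five subcases of Lemma \ref{lem:lll}, connectedness of the Hasse graph of $\mathcal L$ propagates the identity from ${\bf P}_-$ to every ${\bf P} \in \mathcal L$, completing the proof. The bookkeeping for the $l=1$ and $l=c$ cases is expected to be the most delicate step, as it is the only place where the quantum exchange relation for $X_\alpha$ is genuinely used rather than merely its commutative shadow.
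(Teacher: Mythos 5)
Your inductive step is essentially correct and matches the paper's strategy: propagate the per-element identity along the connected Hasse graph of $\mathcal L$, using Lemma \ref{lem:llll} for the $y$-side and the lemma immediately preceding the proposition (which gives $w(P)=w({\bf P}(0,\ldots,0))=w({\bf P}(1,\ldots,1))$) for the $w$-side, and handling the $l=1$ and $l=c$ cases of Lemma \ref{lem:lll} via the quantum exchange relation in the style of Lemma \ref{lem-wp1}. One small inaccuracy: in the $l=1$ case, the two elements of $\pi({\bf P})$ lying over a single element of $\pi({\bf Q})$ are $(\Delta_1,\mu_{G'_1}Q',\Delta'')$ and $(\Delta_2,\mu_{G'_1}Q',\Delta'')$, so they differ in the triangle slot at $p$ (the two triangles $\Delta_1,\Delta_2$ sharing the side $\alpha'$), not by a twist on a tile with diagonal $\alpha'$; the effect is still a single application of the $X_{\alpha'}$ exchange relation, so your calculation would go through with this relabelling.

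Your base case, however, has a genuine gap. Applying Theorem \ref{Thm-1} in $\mathcal A^{(p)}_v(\Sigma)$ yields only the \emph{global} sum identity
$X^{(p)}_{\widetilde\beta}=\sum_{P'\in\mathcal P(G_{T'^o_0,\widetilde\beta})}v^{w(P')}X^{T'_0,(p)}(P')$,
not a per-element identity for the single term $X^{T_0,(p)}(P_-)$ that the proposition requires. Isolating that term's contribution from the global sum would require precisely the per-element identity you are trying to prove. Furthermore, the sum produced by Theorem \ref{Thm-1} is indexed by $\mathcal P(G_{T'^o_0,\widetilde\beta})$, whereas $\pi({\bf P}_-)$ is a subset of $\mathcal L(T'^o_0,\widetilde\beta^{(p,q)})$, which carries extra triangle factors; the translation between the two is exactly what Proposition \ref{lem:step3} supplies, and that proposition rests on the present one, so the argument becomes circular. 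The correct base case is a direct computation: since $W(G_0),S(G_1)\notin P_-$ and $E(G_c),N(G_{c+1})\in P_-$, one computes $x^{T_0}({\bf P}_-)=x(P_-|_H)\,x_\alpha/(x_\gamma^2 x_{\alpha_4})$ with $y^{T_0}({\bf P}_-)=1$, and similarly $x^{T'_0}(\Delta_1,P'_-,\Delta_1)$, $x^{T'_0}(\Delta_1,P'_-,\Delta_2)$ together with their specialized height monomials; one then checks by hand, using the exchange relation for $x_\alpha$ in $\mathcal T(T'_0)$, that the commutative identity holds at ${\bf P}_-$, after which the quantum normalisation follows by the Lemma \ref{lem-wp1} mechanism. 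You should replace your base case with this explicit verification.
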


\begin{proof}
For ${\bf P}=(\Delta,P_-,\Delta)$, we have $W(G_0),S(G_1)\notin P, E(G_c),N(G_{c+1})\in P$. Then $$x^{T_0}(\Delta,P,\Delta)=\frac{x_\alpha}{x^2_\gamma}x_\gamma x^{-1}_{\alpha} x(P\mid_H)x^{-1}_{\alpha_4} x_\gamma \frac{x_\alpha}{x^2_\gamma}=x(P\mid_H) \frac{x_\alpha}{x^2_\gamma x_{\alpha_4}},\;\;y^{T_0}(\Delta,P,\Delta)=1,$$
$$x^{T'_0}(\Delta_1,P'_-,\Delta_1)=\frac{x_{\alpha_1}}{x_\gamma x_{\alpha'}} x_{\alpha'}x^{-1}_{\alpha_1} x(P'_-\mid_H) x^{-1}_{\alpha_4} x_{\gamma} \frac{x_{\alpha_1}}{x_\gamma x_{\alpha'}}=x(P'_-\mid_H) \frac{x_{\alpha_1}}{x_{\alpha_4}x_\gamma x_{\alpha'}},$$
$$x^{T'_0}(\Delta_1,P'_-,\Delta_2)=x^{T'_0}(\Delta_1,P'_-,\Delta_1)\frac{x_{\alpha_4}}{x_{\alpha_1}}=x(P'_-\mid_H) \frac{1}{x_\gamma x_{\alpha'}},$$
$$y^{T_0}(\Delta_1,P'_-,\Delta_1)=y^{T_0}_{\alpha'},y^{T_0}(\Delta_1,P'_-,\Delta_2)=1.$$
where $x(P\mid_H)$ (resp. $x(P'_-\mid_H)$) is the Laurent monomial determined by the edges $P\cap edge(H)$ (resp. $P'_-\cap edge(H)$) and the diagonals of $H$.
Thus, $$x^{T_0}({\bf P})y^{T_0}({\bf P})=\frac{1}{\oplus_{{\bf P'}\in \pi({\bf P})}y^{T'_0}({\bf P'})}\sum_{{\bf P'}\in \pi({\bf P})}x^{T'_0}({\bf P'})y^{T'_0}({\bf P'}).$$

Then the proof is similar to Proposition \ref{prop-all211} by using Lemma \ref{lem:llll}.
\end{proof}

\emph{Proof of Proposition \ref{lem:step3}.} It follows immediately by Proposition \ref{prop:ll}.
\newpage

%


\begin{thebibliography}{99}


\bibitem{BHR} A. Berenstein, M. Huang, V. Retakh, Noncommutative marked surfaces II: tagged triangulations, clusters, and their symmetries, in Preparation.

\bibitem{BR} A. Berenstein, V. Retakh, Noncommutative marked surfaces, Adv. Math. 328 (2018),
1010-1087.

\bibitem{BZ} A. Berenstein, A. Zelevinsky, Quantum cluster algebras. Adv Math 195 (2) (2005) 405-455.

\bibitem{CC} P. Caldero, F. Chapoton, Cluster algebras as Hall algebras of quiver representations, Comment. Math. Helv. 81 (2006) 595-616.

\bibitem{CK} P. Caldero, B. Keller, From triangulated categories to cluster algebras, Invent. Math. 172 (2008) 169-211.

\bibitem{CK1} P. Caldero, B. Keller, From triangulated categories to cluster algebras II, Ann. Sci. $\acute{E}$cole Norm. Sup. (4) 39 (6) (2006) 983-1009.

\bibitem{CHL} P.G. Cao, M. Huang, F. Li, Categorification of sign-skew-symmetric cluster algebras and some conjectures on g-vectors, Algebr. Represent. Theory 25 (2022), no. 6, 1685-1698.

\bibitem{CL} I. Canakci, P. Lampe, An expansion formula for type $A$ and Kronecker quantum cluster algebras, J. Combin. Theory Ser. A 171 (2020), 105132, 30 pp.

\bibitem{CS} I. Canakci, R. Schiffler, Snake graph calculus and cluster algebras from surfaces, J. Algebra 2013, 382, 240-281.

\bibitem{CS1} I. Canakci and S. Schroll, Lattice bijections for string modules, snake graphs and the weak bruhat order, Adv. in Appl. Math. 126 (2021), Paper No. 102094, 22 pp.

\bibitem{D} B. Davison, Positivity for quantum cluster algebras, Annals of Mathematics 187 (2018) 157-219.

\bibitem{DE} L. Demonet, Categorification of skew-symmerizable cluster algebras, Algebr Represent Theory 14 (2011) 1087-1162.

\bibitem{DWZ} H. Derksen, J. Weyman, A. Zelevinsky, Quivers with potentials and their representations II: applications to cluster algebras, J. Amer. Math. Soc. 23(3) (2010) 749-790.

\bibitem{FST3} A. Felikson, M. Shapiro, P. Tumarkin, Cluster algebras and triangulated orbifolds, Adv. Math. 231 (2012) 2953-3002.


\bibitem{FST4} A. Felikson, P. Tumarkin, Bases for cluster algebras from orbifolds, Adv. Math. 318 (2017) 191-232.

\bibitem{FST} S. Fomin, M. Shapiro, D. Thurston, Cluster algebras and triangulated surfaces. Part I: Cluster complexes, Acta Math. 201 (2008) 83-146.

\bibitem{FT} S. Fomin, D. Thurston, Cluster algebras and triangulated surfaces. Part II: Lambda lengths, Mem. Amer. Math. Soc. 255 (2018), no. 1223, v+97 pp.

\bibitem{fz1} S. Fomin, A. Zelevinsky, Cluster algebras I: Foundations, J. Amer. Math. Soc. 15 (2002) 497-529.

\bibitem{fz4} S. Fomin, A. Zelevinsky, Cluster algebras IV: Coefficients, Comp. Math. 143 (2007) 112-164.

\bibitem{FK} C.J. Fu, B. Keller, On cluster algebras with coefficients and 2-Calabi-Yau categories, Trans. Amer.Math. Soc. 362 (2) (2010) 859-895.

\bibitem{GHKK} M. Gross, P. Hacking, S. Keel, M. Kontsevich, Canonical bases for cluster algebras, J. Amer. Math. Soc. 31 (2018) 497-608.

\bibitem{H} M. Huang, An expansion formula for quantum cluster algebras from unpunctured triangulated surfaces, Selecta Math. (N.S.) 28 (2022), no. 2, Paper No. 21, 58 pp.

\bibitem{H1} M. Huang, Positivity for quantum cluster algebras from unpunctured orbifolds, Trans. Amer. Math. Soc. 376 (2023), no. 2, 1155-1197.

\bibitem{H2} M. Huang, New expansion formulas for cluster algebras from surfaces, Journal of Algebra 588 (2021) 538-573.

\bibitem{HL} M. Huang, F. Li, Unfolding of sign-skew-symmetric cluster algebras and its applications to positivity and F-polynomials, Adv. Math. 340 (2018) 221-283.


\bibitem{KKKO} S.J Kang, M. Kashiwara, M. Kim and S.J Oh, Monoidal categorification of cluster algebras, J. Amer. Math. Soc. 31 (2018) 349-426.

\bibitem{K} M. Kashiwara, Bases cristallines, C. R. Acad. Sci. Paris S$\acute{e}$r. I Math., 311 (6) (1990) 277-280.

\bibitem{KQ} Y. Kimura, F. Qin, Graded quiver varieties, quantum cluster algebras and dual canonical basis,
Adv. Math. 262 (2014) 261-312.

\bibitem{LS} K.Y. Lee, R. Schiffler, Positivity for cluster algebras, Annals of Mathematics 182 (2015) 73-125.

\bibitem{L-F} D. Labardini-Fragoso, Quivers with potentials associated to triangulated surfaces, Proc. Lond. Math. Soc. (3) 98 (3) (2009) 797-839.

\bibitem{L} G. Lusztig, Canonical bases arising from quantized enveloping algebras, J. Amer. Math. Soc., 3(2) (1990) 447-498.

\bibitem{M} G. Muller, Skein and cluster algebras of marked surfaces, Quantum Topol. 7 (2016), no. 3, 435-503.

\bibitem{MS} G. Musiker, R. Schiffler, Cluster expansion formulas and perfect matchings, J. Algebraic Combin. 32 (2) (2010) 187-209.

\bibitem{MSW} G. Musiker, R. Schiffler, and L. Williams, Positivity for cluster algebras from surfaces, Adv. Math. 227 (2011) 2241-2308.

\bibitem{MSW1} G. Musiker, R. Schiffler, L. Williams, Bases for cluster algebras form surfaces, Compos. Math. 149(2) (2013) 217-263.

\bibitem{P} Y. Palu, Cluster characters for triangulated 2-Calabi-Yau categories, Ann. Inst. Fourier 58 (6) (2008) 2221-2248.

\bibitem{PP} P.-G. Plamondon, Cluster characters for cluster categories with infinite-dimensional morphism spaces, Adv. Math. 227 (2011) 1-39.

\bibitem{PP1} P.-G. Plamondon, Cluster algebras via cluster categories with infinite-dimensional morphism spaces, Compositio Math. 147 (2011) 1921-1954.

\bibitem{Q} F. Qin, Triangular bases in quantum cluster algebras and monoidal categorification conjectures, Duke Math. J. 166, Number 12 (2017) 2337-2442.

\bibitem{Q1} F. Qin, Dual canonical bases and quantum cluster algebras, arXiv.2003.13674.

\bibitem{R1} D. Rupel, On Quantum analogue of the Caldero-Chapoton formula, Int. Math. Res. Not. IMRN 14 (2011) 3207-3236.

\bibitem{R} D. Rupel, Quantum cluster characters for valued quivers, Trans. Amer. Math. Soc. 367 (2015), no. 10, 7061-7102.

\bibitem{S} R. Schiffler, On cluster algebras arising from unpunctured surfaces II, Adv. Math. 223 (6) (2010) 1885-1923.

\bibitem{ST} R. Schiffler, H. Thomas, On cluster algebras arising from unpunctured surfaces, Int. Math. Res. Not. IMRN 17 (2009) 3160-3189.

\bibitem{Y} T. Yurikusa, Combinatorial cluster expansion formulas from triangulated surfaces, Electron. J. Combin. 26 (2019), no. 2, Paper No. 2.33, 39 pp.

\end{thebibliography}
\end{document}